\documentclass[11pt,english]{smfart}

\usepackage{amsmath,sfheaders,leftidx,amsthm}
\usepackage[all]{xy}
\usepackage{xspace,smfthm}
\usepackage[psamsfonts]{amssymb}
\usepackage[latin1]{inputenc}
\usepackage{graphicx,color,mathrsfs}
\usepackage{fancyhdr}
\usepackage{appendix}
\usepackage{hyperref}
\usepackage{xcolor}

\usepackage{enumitem}

\makeatletter
\def\th@plain{%
  \let\thm@indent\noindent
  \thm@headfont{\bfseries\smf@boldmath\itshape}%
  \thm@notefont{\bfseries\smf@boldmath\upshape}%
  \thm@preskip.5\linespacing \@plus .5\linespacing
  \thm@postskip\thm@preskip
  \thm@headpunct{\MakePointrait}
  \itshape
}
\def\th@definition{\th@plain\upshape}
\def\th@remark{\th@definition}
\makeatother

\newtheorem{theorem}{\bf Theorem}[section]
\newtheorem{proposition}[theorem]{\bf Proposition}
\newtheorem{definition}[theorem]{\bf Definition}
\newtheorem{Theorem}{\bf Theorem}

\newtheorem*{Conjecture}{\bf Conjecture}

\newtheorem{lemma}[theorem]{\bf Lemma}
\newtheorem{corollary}[theorem]{\bf Corollary}
\newtheorem{conjecture}[theorem]{\bf Conjecture}
\newtheorem{Corollary}[Theorem]{\bf Corollary}

\theoremstyle{definition}
\newtheorem{rk}[theorem]{\bf Remark}

\def\C{{\mathbb C}}

\def\R{{\mathbb R}}

\def\Q{{\mathbb Q}}

\def\p{\mathbb{P}}

\def\supp{\textup{supp}}
\def\id{\mathrm{id}}

\newcommand{\Bb}{\mathbb{B}}
\newcommand{\Db}{\mathbb{D}}
\newcommand{\Pb}{\mathbb{P}}
\newcommand{\Cb}{\mathbb{C}}
\newcommand{\Nb}{\mathbb{N}}
\newcommand{\Zb}{\mathbb{Z}}
\newcommand{\Qb}{\mathbb{Q}}
\newcommand{\Rb}{\mathbb{R}}
\newcommand{\Sb}{\mathbb{S}}

\newcommand{\codim}{\text{codim}}

\newcommand{\ddc}{{dd^c}}
\newcommand{\diag}{{\rm diag}}
\newcommand{\Ab}{\mathbb{A}}

\newcommand{\End}{\operatorname{End}}
\newcommand{\Poly}{\operatorname{Poly}}
\newcommand{\PGL}{\operatorname{PGL}}
\newcommand{\Aff}{\operatorname{Aff}}

\title[Sparsity and uniformity in dynamical Manin--Mumford problems]{Sparsity and uniformity in dynamical Manin--Mumford problems}
\author{Thomas Gauthier}
\address{Laboratoire de Math\'ematiques d'Orsay, B\^atiment 307, Universit\'e Paris-Saclay, 91405 Orsay Cedex, France}
\email{thomas.gauthier@universite-paris-saclay.fr}
\author{Johan Taflin}
\address{Universit\'e Bourgogne Europe,
Institut de Math\'ematiques de Bourgogne,
Facult\'e des Sciences Mirande,
9 avenue Alain Savary,
21000 Dijon, France}
\email{johan.taflin@ube.fr}

\begin{document}

\begin{abstract} For endomorphisms of the complex projective plane of a given degree, we prove that the existence of a special curve of bounded degree, in the sense of the dynamical Manin--Mumford conjecture, is confined to a proper algebraic subvariety of the parameter space. This confirms a conjecture of DeMarco and Mavraki in dimension two.

For general regular polynomial endomorphisms of $\mathbb A^k$, we obtain a uniform dynamical Manin--Mumford bound outside an exceptional hypersurface.

These results arise from a common strategy, valid in every dimension and codimension, which reduces such questions to the geometry of exceptional loci in the space of pairs formed by subvarieties and maps. We control these loci by combining arithmetic equidistribution, blender constructions, dynamical directions, monodromy arguments, and degenerations at the boundary of parameter space.
\end{abstract}

\maketitle
\tableofcontents

\section{Introduction}
For many algebraic dynamical systems, periodic points, or more generally preperiodic points, are Zariski dense. It is therefore natural to study their algebraic distribution.

This question was formalized in the setting of polarized endomorphisms by Zhang~\cite{zhang} in the so-called dynamical Manin-Mumford conjecture (see also~\cite{DTZ-dmm,ght} for an updated version of the conjecture), which is a central question in arithmetic dynamics.
To be more precise, let $X$ be a complex projective variety and let $L$ be an ample line bundle on $X$. An endomorphism $f:X\to X$ is said to be \emph{polarized} by $L$ of degree $d\geq 2$ if
\[f^*L\simeq L^{\otimes d}.\]
By a theorem of Fakhruddin~\cite{Fakhruddin}, for such a polarized endomorphism the set
\[\mathrm{Preper}(f)
:=\{x\in X\; ;\; f^m(x)=f^n(x) \text{ for some } m>n\geq 0\}\]
of \emph{preperiodic points} of $f$ is countable and Zariski dense in $X$. On the other hand, a proper subvariety $Z\subsetneq X$ is said to be \emph{$f$-special} if one of the following conditions holds:
\begin{enumerate}
\item either $Z$ is preperiodic under iteration of $f$;
\item or there exist a closed subvariety $Y\subseteq X$, an endomorphism $g:Y\to Y$ polarized, and an integer $n\geq 1$ such that $Z\subseteq Y$, $f^n(Y)=Y$, the maps $f^n$ and $g$ commute on $Y$, and $Z$ is preperiodic under iteration of $g$.
\end{enumerate}
We also say that the pair $(Z,f)$ is special. Observe that the first case is included in the second, but not conversely (see~\cite{DTZ-dmm}). The two are distinguished because one might, at first sight, expect most examples to fall into the first case.

The conjecture can then be formulated as follows.
\begin{Conjecture}[Dynamical Manin-Mumford]
Let $f:X\to X$ be a polarized endomorphism and let $Z\subsetneq X$ be a closed subvariety. If
\[Z\cap \mathrm{Preper}(f)\]
is Zariski dense in $Z$, then $Z$ is $f$-special.
\end{Conjecture}
The conjecture has been established only in a limited number of cases. The first and most emblematic one is the case of abelian varieties. If $A$ is an abelian variety and $f\colon A\to A$ is multiplication by an integer $N\geq 2$, then
\[\mathrm{Preper}(f)=A_{\mathrm{tor}},\]
the set of torsion points of $A$. In this setting, the dynamical Manin-Mumford conjecture reduces to the original Manin-Mumford conjecture, proved by Raynaud~\cite{raynaud1,raynaud2}.

Beyond this classical case, the conjecture was proved by Ghioca, Nguyen and Ye for split maps
\[f:(\mathbb{P}^1)^N\to(\mathbb{P}^1)^N,\qquad f(z_1,\ldots,z_N)=(f_1(z_1),\ldots,f_N(z_N)),\]
where each $f_i$ is a rational map of degree $d$ (see~\cite{GNY-split-2,GNKY-Maninmumford}), which extends the theorem of Ihara-Serre-Tate (see \cite{Lang-Division}) for power maps. Apart from these two families, another case is that of regular polynomial endomorphisms of $\mathbb{C}^2$, proved by Dujardin, Favre, and Ruggiero under an additional mild assumption (see~\cite{DFR}).

We also mention that, for dissipative polynomial automorphisms of $\mathbb{C}^2$, which are not polarized, an analogue of the conjecture was proved by Dujardin and Favre~\cite{DF-dmm}.

Finally, Fakhruddin~\cite[Theorem~1.3]{fakhruddin-generic} proved that, for very general endomorphisms of $\mathbb{P}^k$, every proper subvariety contains only finitely many preperiodic points. Passing from very general to general endomorphisms, even for subvarieties of bounded degree, is however a fundamentally different problem.

\begin{center}
\rule{0.5\linewidth}{0.4pt}
\end{center}

\bigskip

A complementary direction, which does not directly settle the dynamical Manin-Mumford conjecture but rather completes the picture, is to study how such pairs $(Z,f)$, for which $Z\cap \mathrm{Preper}(f)$ is Zariski dense in $Z$, are distributed in families. Assuming the dynamical Manin-Mumford conjecture, this amounts to studying the repartition of pairs $(Z,f)$ such that $Z$ is $f$-special.

An illustration of this theme is the following conjecture of DeMarco and Mavraki (see \cite[Conjecture 4.1]{udmm} and also \cite[Conjecture 2.7]{demarco-cirm} for a weaker formulation). Here, for $k\geq 2$ and $d\geq 2$, $\mathrm{End}_d^k$ denotes the space of all endomorphisms of $\mathbb{P}^k$ of degree $d$, which is naturally an affine variety.
\begin{Conjecture}[DeMarco-Mavraki]
For each $D \geq 1$, there exists a proper closed subvariety $V_D \subsetneq \mathrm{End}_d^k$ such that any $f \in \mathrm{End}_d^k$ admitting a positive-dimensional $f$-special subvariety of degree at most $D$ belongs to $V_D$.
\end{Conjecture}
The main purpose of this article is to develop a general strategy for problems of this kind. Our contribution is twofold:
\begin{itemize}
\item a general mechanism reducing these questions to the study of \emph{exceptional loci} in the product $\mathcal X_D^u\times\mathrm{End}_d^k$, where $\mathcal X_D^u$ denotes the Chow variety of effective $u$-cycles of degree $D$ in $\mathbb P^k$,
\item geometric control of these loci, yielding sparsity and uniformity results and, in particular, a proof of the above conjecture in dimension two.
\end{itemize}
The first part is a far-reaching generalization of the methods developed in~\cite{gtv}. Its distinctive feature is that it treats arbitrary subvarieties
in every dimension and codimension within a common framework, without relying on the connection between critical dynamics and stability that was essential in \cite{gtv}, see Section~\ref{sec-strat}. 

The resulting geometric reformulation opens the way to deformation and degeneration arguments, with degenerations at the boundary of $\mathrm{End}_d^2$ providing the decisive step in dimension two.

In higher dimensions, the same framework suggests a route towards resolving the conjecture in full generality, which would call for a more systematic understanding of degenerations in higher-dimensional dynamics, extending tools that are now well developed in dimension one, such as rescaling limits and non-Archimedean methods.

\bigskip

In another direction, our strategy also yields uniformity results. The expected non-density of preperiodic points for a general pair $(Z,f)$ suggests asking for a uniform quantitative refinement, in the spirit of recent results on the Manin--Mumford, Bogomolov and Mordell--Lang conjectures \cite{DKY1,Dimitrov-Gao-Habegger,Kuhne,Yuan-Bogomolov,Poineau-BFT,GGK}. Can the quantity
\[
\#\bigl(Z\cap\operatorname{Preper}(f)\bigr)
\]
be bounded uniformly for general pairs $(Z,f)$? Our second main result establishes such a bound on the space $\mathrm{Poly}_d^k$ of regular polynomial endomorphisms of $\mathbb A^k$ of degree $d$, after removing a hypersurface of uniformly bounded degree depending only on $f$.

In what follows, we first present our main results. We then describe the general strategy underlying them in Section~\ref{sec-strat}. In Section~\ref{sec-coarse}, we explain how this strategy yields a first coarse control of the exceptional locus and leads to Theorem~\ref{th-special-hypersurface}. We next discuss the additional ingredients needed to prove Theorem~\ref{th-demarco2} and Theorem~\ref{th-unif} in Sections~\ref{sec-thb} and~\ref{sec-thc}, respectively. Finally, we discuss further developments and conjectures suggested by our approach.

\subsection{Main results}
We start with a first consequence of our approach, which gives directly the sparsity of special pairs. Here and throughout the paper, all maps are defined over the field $\C$ of complex numbers, and elements of $\mathcal X_D^u$ are identified with their supports.
\begin{Theorem}\label{th-sparsity}
Let $M$ be either $\mathrm{End}_d^k$ or $\mathrm{Poly}_d^k$. Then, for every $D\geq 1$ and $1 \leq u \leq k-1$, there exists a proper closed subvariety $\mathcal Z_D^u\subsetneq \mathcal X_D^u \times M$ such that, if $Y \in \mathcal X_D^u$ is $f$-special for some $f \in M$, then $(Y,f) \in \mathcal Z_D^u$.
\end{Theorem}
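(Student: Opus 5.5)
Here is the approach I would take. Consider the incidence locus $\mathcal S\subset \mathcal X_D^u\times M$ of pairs $(Y,f)$ with $Y$ $f$-special. I would show that $\mathcal S$ is contained in a countable union of closed subvarieties, each of them proper. Then I would use an arithmetic/specialization argument to show that the Zariski closure of $\mathcal S$ is proper. The key point is that being special is equivalent, by the arithmetic equidistribution theorem of Yuan (applied over $\overline{\Q}$ and extended by specialization), to a measure-theoretic condition. For a special $Y$ (of pure dimension $u$), preperiodic points are dense in $Y$. Equidistribution of small points then forces the natural measure $\mu_{f,Y}:=(T_f|_Y)^{u}$ to satisfy an invariance/rigidity property. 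Concretely, for $f^n(Y')=Y'$ with $Y'$ the periodic component, $\mu_{f,Y'}$ is the measure of maximal entropy of the restriction, and the canonical height satisfies $\hat h_f(Y)=0$.

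Step 1 (algebraic reformulation). Special implies $\hat h_f(Y)=0$, where the height of a cycle is taken via Zhang's adelic intersection theory. Over a function field, or over $\overline{\Q}$ in families, the vanishing of this height along $\mathcal X_D^u\times M$ is governed by the bifurcation-type current
\[
T_{\mathrm{bif}}:=\pi_*\bigl(\widehat{T}^{\,u+1}\bigr)
\]
on $\mathcal X_D^u\times M$. Here $\widehat T$ is the fibered Green current on the universal family $\{(x,Y,f): x\in Y\}$. By the family version of equidistribution (as in \cite{gtv}), the closure $\overline{\mathcal S}$ has the following property: if it contained a component $W$ dense in $\mathcal X_D^u\times M$, the special parameters would be dense. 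One would then get $\mu_{f,Y}$-rigidity, namely that the $(u+1)$-fold self-intersection of $\widehat T$ restricted to the family of preperiodic cycles degenerates, for \emph{all} $(Y,f)$. So it suffices to exhibit one pair $(Y_0,f_0)$ where a robust obstruction holds on an open set. The obstruction is that $Y_0\cap\mathrm{Preper}(f_0)$ is not dense, stably in a neighborhood, e.g. because $\mu_{f_0,Y_0}$ is not carried by an invariant subvariety.

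Step 2 (robust non-special open set). Fix $f_0$ with a blender, i.e. a robustly transverse hyperbolic set meeting all $u$-dimensional subvarieties in an open set of Chow space. I would then argue that for $(Y,f)$ near $(Y_0,f_0)$, $Y$ meets the support of $\mu_f$ in a way incompatible with $Y$ containing a dense set of preperiodic points. Preperiodic points of a special $Y$ equidistribute to $\mu_{f,Y}$, whose Lyapunov exponents along $Y$ must match those of $f$ restricted to a polarized $g$-invariant subvariety, which is impossible near a generic blender. This gives an open set of $\mathcal X_D^u\times M$ disjoint from $\mathcal S$.

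Step 3 (properness). I would then show that $\mathcal S$ is a countable union of constructible sets. The conditions ``$f^m(Y)=f^n(Y)$'' are algebraic, and the commuting condition with a polarized $g$ of some degree is algebraic in each degree. Since this union misses a nonempty open set, none of these sets is dense. However, countably many proper subvarieties need not have proper closure. This is the main obstacle, and the one that the equidistribution/height argument of Step 1 is designed to address. The plan is to show that any irreducible closed $W\subset\mathcal X_D^u\times M$ in which special pairs are Zariski dense is itself a ``special family'': the fibered height vanishes identically on $W$. That vanishing is a closed condition, the degeneracy $T_{\mathrm{bif}}|_W=0$, so the union of all such $W$ lies in the closed analytic, and by algebraicity of the relevant Green functions' degeneracy locus, algebraic set $\mathcal Z_D^u$. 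Step 2 then shows $\mathcal Z_D^u\neq\mathcal X_D^u\times M$. For $M=\mathrm{Poly}_d^k$ the same argument applies, with blenders constructed inside the hyperplane at infinity dynamics.
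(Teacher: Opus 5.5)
There is a genuine gap, and it lies where the paper does most of its work. The equidistribution argument yields a contradiction only if the open set you exhibit is \emph{charged by the limit measure}. You need a family $\mathcal Y$ defined over $\overline{\Qb}$ and a nonempty open set $\Omega\subset\mathcal Y$ that is contained in $\supp(\mu_{\mathcal Y})$ \emph{and} contains no special pair. Your Step~2 only targets the second property, and the ``Lyapunov exponents must match'' heuristic does not establish even that. The paper obtains it in Proposition~\ref{prop-special} from the Zariski density of $W^u_{p(f)}$ (Proposition~\ref{prop-zariski-dense-unstable}), the inclination lemma, and Y.~Zhang's theorem on iterated centralizers. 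Nothing in your proposal shows that the bifurcation measure is nonzero, let alone that it charges a neighbourhood of $(Y_0,f_0)$. Moreover, $\mu$ vanishes on families that are not maximally varying, in particular on $\mathcal X_D^u\times M$ itself because of the positive-dimensional conjugacy orbits. So Theorem~\ref{tm:small-distrib} cannot be invoked there; one must first pass to a dynamically full, maximally varying slice built from $\mathcal S_d^k$ or $\mathcal T_d^k$.

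The missing statement is Theorem~\ref{th-main-blender}: if $f\in\Omega_u$ and some $f^n(Y)$ meets $W^s_{p(f),loc}$ transversely, then $(Y,f)\in\supp(\mu_{\mathcal Y})$. Its proof needs the saddle-type Misiurewicz criterion (Proposition~\ref{prop-positivevolume}, fed by Assumption~\ref{misu}). It also needs $\dim\mathcal Y$ independent stable/unstable relations. These are obtained by first exhausting the dynamical parameters via independence of multipliers (Theorem~\ref{th-point}), then the Chow parameters via the product-breaking Assumptions~\ref{decalage} and~\ref{grille} (Theorem~\ref{th-subvar}). A blender alone gives robust intersections, not this maximal independence.

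Your Step~3 rests on a false implication. Zariski density of height-zero pairs in an irreducible $W$ does not force the fibered height, or $T_{\mathrm{bif}}|_W$, to vanish. Equidistribution says exactly that such pairs distribute along a possibly nonzero $\mu_W$. Already for critical points of rational maps of $\Pb^1$, postcritically finite maps are Zariski dense in moduli space while the bifurcation measure is nonzero. So there is no ``degeneracy locus'' to serve as $\mathcal Z_D^u$. Nor would Step~2 rule out $\mathcal Z_D^u=\mathcal X_D^u\times M$, since it says nothing about $T_{\mathrm{bif}}$.

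The countable-union difficulty is handled more simply in the paper. One takes $\mathcal Z_D^u$ to be the Zariski closure of all special pairs, which is closed by definition. By Lemma~\ref{lem-closure-Qbar}, $\overline{\Qb}$-special pairs are Zariski dense in each of its components. One then argues by contradiction on an irreducible component of $\mathcal X_D^u\times M$ where special pairs would be Zariski dense, after reducing to generically irreducible families as in Section~\ref{sec-chow}. Finally, for $\mathrm{Poly}_d^k$ the blender must lie in $\Ab^k$, as in Theorem~\ref{th-existence}, not in the dynamics at infinity.
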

Note that the extension from preperiodic subvarieties to special pairs relies in part on a recent result of Yugang Zhang~\cite{zhang-iterated-centralizer} on centralizers of general endomorphisms.

In what follows, we informally call \emph{exceptional locus} the smallest subvariety $\mathcal Z_D^u$ satisfying the above statement.
The proofs of our other results then reduce to obtaining further control of this locus. By showing that its general fibers cannot cover the whole projective space, we obtain the following weak form of the DeMarco--Mavraki conjecture.
\begin{Theorem}\label{th-special-hypersurface}
Let $M$ and $D\geq1$ as above. There exist a dense Zariski open subset
$\mathcal U_D\subset M$, an integer $B\geq1$, and a morphism
\[
\mathcal H_D:\mathcal U_D\longrightarrow\mathcal X_B^{k-1}
\]
such that, for every $f\in\mathcal U_D$ and every
$1\leq u\leq k-1$, each $f$-special
$Y\in\mathcal X_D^u$ is contained in $\mathcal H_D(f)$.
\end{Theorem}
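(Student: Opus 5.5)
We deduce Theorem~\ref{th-special-hypersurface} from Theorem~\ref{th-sparsity}. The idea is to show that, for general $f$, the union of the $u$-cycles $Y$ with $(Y,f)\in\mathcal Z_D^u$ is a proper Zariski closed subset of $\mathbb P^k$, and then to put that union inside a hypersurface whose degree is bounded uniformly in $f$.

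\emph{Step 1: an incidence variety.} For each $1\leq u\leq k-1$ we consider
\[
\mathcal I^u:=\{(x,Y,f)\in \mathbb P^k\times\mathcal X_D^u\times M \;;\; (Y,f)\in\mathcal Z_D^u,\ x\in \supp(Y)\},
\]
which is closed, and its image $\mathcal W^u\subset \mathbb P^k\times M$. Projection along the proper factor $\mathcal X_D^u$ is closed, so $\mathcal W^u$ is closed. For each $f$, the fiber $\mathcal W^u_f$ is exactly the union of the supports of the $Y$ with $(Y,f)\in\mathcal Z_D^u$. By Theorem~\ref{th-sparsity}, every $f$-special $Y\in\mathcal X_D^u$ lies in $\mathcal W^u_f$.

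\emph{Step 2: the main obstacle.} We must show that $\mathcal W^u_f\neq\mathbb P^k$ for general $f$, i.e.\ that $\mathcal W^u\neq \mathbb P^k\times M$. Theorem~\ref{th-sparsity} alone does not give this: a proper subvariety of $\mathcal X_D^u\times M$ can still have fibers sweeping out $\mathbb P^k$. So one has to go back into the construction of $\mathcal Z_D^u$. The plan is as follows.
\begin{enumerate}
\item Suppose $\mathcal W^u=\mathbb P^k\times M$. Then every irreducible component of $\mathcal Z_D^u$ that dominates $M$ and has covering fibers would give, for $f$ in a Zariski dense set, an algebraic family of exceptional subvarieties through every point of $\mathbb P^k$.
\item We specialize to parameters where the exceptional condition can be tested, for instance where preperiodic subvarieties of bounded degree are dense, or to explicit maps whose dynamics is well understood, such as maps with a repelling periodic point in general position or perturbations near blender-type configurations.
\item The intended contradiction: near a suitable repelling point $x$, the exceptional condition cut out by Theorem~\ref{th-sparsity} (coming from equidistribution or degenerate heights) should force $Y\ni x$ to be tangent to a proper set of invariant directions. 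This is incompatible with an algebraic family of such $Y$ through every point in every direction.
\end{enumerate}
This is the step I expect to be hardest. I would first try the "dynamical directions" idea: exceptional subvarieties through a point must follow finitely many distinguished directions, so the family cannot fill $\mathbb P^k$.

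\emph{Step 3: a uniform hypersurface.} Once $\mathcal W^u\subsetneq\mathbb P^k\times M$ is known for each $u$, take the union $\mathcal W:=\bigcup_u\mathcal W^u$. By generic flatness there is a dense open $\mathcal U_D\subset M$ over which $\mathcal W$ is flat with proper fibers. Hence the fibers $\mathcal W_f$ have bounded degree and vary in a flat family.

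We then choose a hypersurface containing each fiber, compatibly in $f$. For a large integer $B$, the space $H^0(\mathbb P^k,\mathcal I_{\mathcal W_f}(B))$ is nonzero and, after shrinking $\mathcal U_D$, has constant dimension, so these spaces form a vector bundle over $\mathcal U_D$. Shrinking $\mathcal U_D$ further, we can trivialize this bundle (or pick a nowhere-vanishing section locally and spread it out), which gives a morphism $f\mapsto \mathcal H_D(f)\in\mathcal X_B^{k-1}$ with $\mathcal W_f\subseteq\mathcal H_D(f)$. Since every $f$-special $Y\in\mathcal X_D^u$ is contained in $\mathcal W_f$ by Step 1, it is contained in $\mathcal H_D(f)$, which proves the theorem.
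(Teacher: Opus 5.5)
\textbf{Steps 1 and 3 are fine; Step 2 has a genuine gap.} Steps 1 and 3 match the outer structure of the paper's proof. The incidence image is closed because $\mathcal X_D^u$ is projective. Your vector-bundle choice of a degree-$B$ hypersurface over a Zariski open set works as well as the paper's choice of a hypersurface in $\Pb^k\times M$ containing the image. The problem is Step 2, which is the whole content of the theorem and which you only sketch heuristically.

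\textbf{First missing point: you must use the minimal exceptional locus.} Theorem~\ref{th-sparsity} only gives \emph{some} proper $\mathcal Z_D^u$. Any larger proper closed set also works, and its fibers can perfectly well sweep out $\Pb^k$. So no argument about ``the exceptional condition'' can succeed until you take $\mathcal Z_D^u$ to be the Zariski closure of the special pairs, where special pairs are Zariski dense in each component (Lemma~\ref{lem-closure-Qbar}). You then re-run the arithmetic argument on a dominating component. The paper proceeds as follows.
\begin{enumerate}
\item Take a component $\mathcal Z$, reduced to generically irreducible members, defined over a number field and conjugacy invariant, whose incidence map dominates $\Pb^k\times M$.
\item Produce $(Y,f)\in\mathcal Z$ with $f\in\Omega_u$ and $f^n(Y)$ transverse to $W^s_{p(f),loc}$.
\item Conclude by Lemma~\ref{le-arithmetic-transverse}. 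Theorem~\ref{th-main-blender} and Proposition~\ref{prop-special} give an open subset of $\mathrm{supp}(\mu_{\mathcal Y})$ containing no special pair. Equidistribution then shows special pairs cannot be Zariski dense in $\mathcal Z$, a contradiction.
\end{enumerate}
Note also that the relevant object is a saddle point $p(f)$ of index $(s+1,u)$, and the condition is transversality of $u$-dimensional pieces of $f^n(Y)$ to its $(k-u)$-dimensional stable manifold. A repelling point only suffices for $u=k-1$.

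\textbf{Second missing point: covering does not give every direction.} Your contradiction relies on the family having members ``through every point in every direction''. Dominance of the incidence map only gives members through every point, and their tangent spaces may be constrained (think of curves tangent to a fixed direction field). Nothing prevents all the planes $D_xf^m(T_xY)$, for $x\in f^{-m}(p(f))$, from meeting $T_{p(f)}W^s_{p(f),loc}$ nontrivially. This is exactly where the paper needs Theorem~\ref{th-mono-gorbo}.
\begin{enumerate}
\item Pick $x_0\in f_0^{-m}(p(f_0))$ in the Zariski open set where the incidence is a finite covering, and a member $Y_0\ni x_0$.
\item Complete $D_{x_0}f_0^m(T_{x_0}Y_0)$ by a span $E_0$ of eigenlines of $D_{p(f_0)}f_0$.
\item Use the transitivity of monodromy on eigendirections from \cite{t-gorbovickis}: along a loop in $M$, lifted through the covering, $E_0$ is carried onto $T_{p(f_0)}W^s_{p(f_0),loc}$ while complementarity is preserved.
\end{enumerate}
Without this monodromy input, or a genuine substitute, Step 2 does not go through.
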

Of course, we expect that $\mathcal H_D(f)$ could be replaced by the hyperplane at infinity $H_\infty$ when $M=\mathrm{Poly}_d^k$, and by the empty set when
$M=\mathrm{End}_d^k$. Still, this already provides uniform geometric control of special subvarieties of bounded degree as the endomorphism varies, in arbitrary dimension and codimension. In particular, it gives a uniform bound on the number of irreducible $f$-special hypersurfaces of bounded degree, since each such hypersurface must be an irreducible component of $\mathcal H_D(f)$. Related finiteness and rigidity results for periodic curves of regular polynomial endomorphisms of $\mathbb A^2$ were obtained by Xie~\cite{Xie-algebraicity}, with no restriction on the degree $D$ but without uniformity, and Zhong~\cite{Zhong-periodic-curves}.

\bigskip

Starting from this, our two other main results provide refinements of a rather different nature, depending on whether we focus on endomorphisms of $\mathbb{P}^k$  (in which case we work in $M=\mathrm{End}_d^k$) or on regular polynomial endomorphisms of $\mathbb{A}^k$ (in which case we work in $M=\mathrm{Poly}_d^k$).

In the former case, we restrict ourselves to $k=2$ and study the degenerations of the set $\mathcal Z_D^1$ at infinity inside $\mathcal X_D^1 \times \mathrm{End}_d^2$. This allows us to exclude the possibility that $\mathcal Z_D^1$ surjects onto $\mathrm{End}_d^2$. We thereby prove the DeMarco-Mavraki conjecture in dimension $k=2$.
\begin{Theorem}\label{th-demarco2}
For every $d\geq 2$ and $D\geq 1$, there exists a proper closed subvariety $V_D \subsetneq \mathrm{End}_d^2$ such that any $f \in \mathrm{End}_d^2$ admitting an $f$-special curve of degree at most $D$ belongs to $V_D$.
\end{Theorem}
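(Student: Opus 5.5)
The plan is to use Theorem~\ref{th-sparsity} with $M=\mathrm{End}_d^2$, $u=1$, and each degree $D'\leq D$. This gives proper closed subvarieties $\mathcal Z_{D'}^1\subsetneq \mathcal X_{D'}^1\times\mathrm{End}_d^2$ containing every special pair $(C,f)$ with $\deg C=D'$. Curves of degree at most $D$ are finitely many Chow varieties, and the projection $\pi:\mathcal X_{D'}^1\times \mathrm{End}_d^2\to\mathrm{End}_d^2$ is proper because $\mathcal X_{D'}^1$ is projective. So it suffices to show, for each $D'$, that no irreducible component $\mathcal W$ of $\mathcal Z_{D'}^1$ with $\pi(\mathcal W)=\mathrm{End}_d^2$ contains a Zariski-dense set of special pairs. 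Components not surjecting onto $\mathrm{End}_d^2$ have closed proper images, which go into $V_D$. For a dominant component $\mathcal W$, we replace it by the Zariski closure of its special pairs. We then argue by contradiction: suppose special pairs are dense in a component $\mathcal W$ dominating $\mathrm{End}_d^2$. Then, after a finite base change, the general $f$ carries a family of curves $C_f$, of degree $D'$, each of which is $f$-special.

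First, I would pin down the structure over a general $f$. Theorem~\ref{th-special-hypersurface} shows that the curves in $\pi^{-1}(f)\cap\mathcal W$ lie in a curve $\mathcal H_D(f)$ of bounded degree. So generically $\mathcal W\to\mathrm{End}_d^2$ is generically finite, and the $C_f$ are finitely many. Using Zhang's centralizer result, for general $f$ the special curves are actually preperiodic. Passing to a further finite cover, we get a multivalued algebraic section $f\mapsto C_f$, defined on a dense open set, with $f^{n+p}(C_f)=f^n(C_f)$ for fixed $n,p$. After replacing $C_f$ by $f^n(C_f)$, we may take $C_f$ periodic of period $p$ for all $f$ in the open set. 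This is again an algebraic condition.

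The decisive step is to degenerate to the boundary of $\mathrm{End}_d^2$ and obtain a contradiction there. Choose an algebraic curve (or a one-parameter family) $f_t\in\mathrm{End}_d^2$, $t\in\mathbb D^*$, degenerating as $t\to0$ to a map of the form $[x:y:z]\mapsto [P:Q:R]$ with a common zero, i.e.\ a point of the resultant hypersurface. Good candidates are degenerate split or skew-product maps where periodic curves are classifiable: for instance $f_0=[x^d:y^d:h]$ with $h$ vanishing at some point, or rescalings $[x^d + t\,\cdot : y^d + t\,\cdot : z^d + t\,\cdot]$ near power maps. Over this family, the periodic curves $C_{f_t}$ have a limit $C_0\in\mathcal X_{D'}^1$ by properness of the Chow variety. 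This limit is invariant under $f_0^p$ away from its indeterminacy locus, and more generally invariant under any rescaling limit obtained by conjugating $f_t$ by $t$-dependent projective maps. My plan is to choose two or more degenerations, or rescalings at different scales, so that the limiting invariant curves are forced into incompatible configurations. For example, one rescaling forces the limit to be a union of coordinate lines, and another forces it to avoid them. I would also use monodromy as $t$ winds around $0$: it permutes the finitely many branches $C_{f_t}$, and it should be incompatible with the rigid combinatorics of the limit. The alternative route is to find a single explicit $f\in\mathrm{End}_d^2$ with no periodic curve of degree $\leq D$. Upper semicontinuity would then spread this to an open set, but exhibiting such an $f$ for all $D$ is exactly what the boundary analysis should deliver.

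I expect this degeneration step to be the main obstacle. Two things must be controlled: curves can acquire components in the indeterminacy locus of $f_0$, and degrees can drop in the limit. I would try first to control these via the Chow limit together with a degree-counting argument: $f_t^p$ multiplies degree by $d^p$ on invariant curves with multiplicity. Finally, $V_D$ is the union of the closures of the images of the non-dominant components, together with the complements of the open sets where the above holds.
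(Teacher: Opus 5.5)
There is a genuine gap at the step where you go from ``special pairs are Zariski dense in a component $\mathcal W$ dominating $\mathrm{End}_d^2$'' to ``after a finite base change, the general $f$ carries $f$-special curves $C_f$ with $f^{n+p}(C_f)=f^n(C_f)$ for fixed $n,p$''. Over a general $f$, the special pairs are the preperiodic ones. They therefore form the countable union over $(n,p)$ of the constructible loci $P_{n,p}=\{(C,f)\,;\,f^{n+p}(C)=f^n(C)\}$, and Zariski density of this union in $\mathcal W$ does not force any single $P_{n,p}$ to be dense in $\mathcal W$.

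In fact none of them can be dense. The image of $P_{n,p}$ in $\mathrm{End}_d^2$ is constructible. It is not dense, because by Fakhruddin's theorem on very general endomorphisms a very general $f$ has no preperiodic curve (such a curve would carry infinitely many preperiodic points). So if your inference were valid, the theorem would follow at once, with no degeneration at all.

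The whole content of the statement is to rule out a dominant $\mathcal W$ whose special pairs lie over a countable, Zariski dense union of proper subvarieties of $\mathrm{End}_d^2$, with unbounded periods. This is the passage from very general to general maps stressed in the introduction, and your degeneration step cannot see it. Along a one-parameter family $f_t$, special curves would then exist only for countably many $t$, with periods $p_j\to\infty$. Their Chow limits are then invariant under no fixed iterate of $f_0$. As written, the step is also only a list of candidate degenerations, with no incompatibility actually derived. The same issue defeats your alternative route. A single map without periodic curves of degree $\leq D$ spreads to no open set, since that locus is only a countable union of closed sets, and very general maps already have none.

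The paper never produces special curves over an open set of maps. Suppose a component of the closure $\mathcal Z_D^1$ of the special pairs dominates $\mathrm{End}_d^2$. The paper passes to a generically irreducible family and applies Theorem~\ref{th-chasse-int} to \emph{all} its pairs, special or not. This gives some $(Z,f)$ with $f\in\Omega_1$ such that $f^n(Z)$ is transverse to $W^s_{p(f),loc}$. Since this is an open condition, Theorem~\ref{th-main-blender} and Proposition~\ref{prop-special} give an open subset of $\mathrm{supp}(\mu_{\mathcal Y})$ containing no special pair. On the other hand, Theorem~\ref{tm:small-distrib} makes the Galois orbits of a generic sequence of height-zero special pairs equidistribute towards $\mu_{\mathcal Y}$. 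This contradiction (Lemma~\ref{le-arithmetic-transverse}) treats all periods at once, and it is the ingredient missing from your proposal.

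Degeneration enters only in the proof of Theorem~\ref{th-chasse-int}, and it is applied to the family $\mathcal Z$ itself. For the rational maps $f_\lambda$ with indeterminacy point $I$, the stable manifold $W^s_{p}$ becomes $L_a\setminus\{I\}$, and B\'ezout forces every curve to meet $L_a$. Failure of proper intersection is then excluded by Proposition~\ref{prop-rigidity}, Proposition~\ref{prop-DMM} and the monodromy argument of Theorem~\ref{th-inter}. Transversality is obtained from Fakhruddin and Bedford--Lyubich--Smillie, and the rescaling to correspondences gives $\Lambda(f)\subset J_2(f)$.
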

The main ideas of the proof of Theorem~\ref{th-demarco2} apply in arbitrary dimension and codimension. Controlling the degenerations in this generality is, however, considerably more technical, especially in codimension greater than $1$. We nevertheless expect the case of codimension $1$ to be within reach in arbitrary dimension using the same approach.

\bigskip

We now return to the uniformity question raised above. For regular polynomial endomorphisms of $\mathbb A^k$, the canonical height decomposes as a sum of non-negative local Green functions, and this extra positivity yields a uniform statement of dynamical Manin--Mumford type, strengthening Theorem~\ref{th-special-hypersurface}.

\begin{Theorem}\label{th-unif}
For every $d\geq2$ and $D\geq1$, there exist a dense Zariski open subset $\mathcal U_D\subset\mathrm{Poly}_d^k$, an integer $B\geq1$ and a morphism
\[
\mathcal H_D\colon\mathcal U_D\to\mathcal X_B^{k-1}
\]
such that, for all $f\in\mathcal U_D$ and all $1\leq u\leq k-1$, each $Y\in\mathcal X_D^u$ satisfies
\[
\#\bigl((\mathrm{Preper}(f)\cap Y)\setminus\mathcal H_D(f)\bigr)\leq B.
\]
\end{Theorem}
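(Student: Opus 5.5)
We would follow the strategy behind Theorem~\ref{th-special-hypersurface}, replacing the Zariski-density condition by a counting condition. For $N\geq 1$ and $1\leq u\leq k-1$, consider the incidence set
\[
\mathcal E_N^u:=\{(Y,f,x_1,\dots,x_N)\in \mathcal X_D^u\times\mathrm{Poly}_d^k\times(\mathbb A^k)^N \;;\; x_i\in Y\cap\mathrm{Preper}(f),\ x_i \text{ pairwise distinct}\},
\]
and its Zariski closure $\overline{\mathcal E_N^u}$. The proof reduces to two claims. First, there is $N$ such that $\overline{\mathcal E_N^u}$ does not dominate $\mathcal X_D^u\times\mathrm{Poly}_d^k$ in a controlled way. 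Second, the images of its components in $\mathbb A^k$ are governed by a hypersurface that depends algebraically on $f$. This is the uniform analogue of Theorem~\ref{th-sparsity}. Points defined over $\overline{\mathbb Q}$ are dense in the relevant spaces, so we may argue at the level of heights and then pass to Zariski closures by a specialization argument.

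\textbf{Step 1: a uniform height gap.} For a regular polynomial map, the canonical height is $\hat h_f=\sum_v g_{f,v}$ with each local Green function $g_{f,v}\geq0$. The key point is a Bogomolov-type inequality in families. Over the generic point of a component $\mathcal W$ of the exceptional locus that is \emph{not} already special, the pair $(Y,f)$ has positive essential minimum. We would prove this by arithmetic equidistribution for small points: if the minimum vanished, the Galois orbits of small points of $Y$ would equidistribute both to $\mu_{f,Y}$ (the restriction of the slice measure) and, by varying $f$ along $\mathcal W$, to a bifurcation-type measure. The geometric input from Theorem~\ref{th-sparsity} (blenders and dynamical directions) rules this coincidence out. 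Positivity of the $g_{f,v}$ is what makes the height lower bound behave well uniformly. Following the DGH/Kühne method, we would obtain constants $c,N_0$ and a proper closed subset $\mathcal W'\subsetneq\mathcal W$ such that, for $(Y,f)\notin\mathcal W'$, the set $\{x\in Y : \hat h_f(x)\leq c\,\max(1,h(f))\}$ lies, except for at most $N_0$ points, in a fixed proper subvariety. Preperiodic points have height $0$, so they fall under this bound.

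\textbf{Step 2: from a uniform bound to the hypersurface $\mathcal H_D(f)$.} The "fixed proper subvariety" from Step~1 and the special subvarieties of Theorem~\ref{th-special-hypersurface} must be absorbed into one hypersurface depending only on $f$. We would argue by induction on $u$ and on $\deg$, applying the argument again to the degenerate subvarieties; each has bounded degree by Chow-variety finiteness. After finitely many steps, every residual component lies either in $H_\infty$-type loci or in special subvarieties of bounded degree $D'$. Theorem~\ref{th-special-hypersurface}, applied with degree $D'$, places these in $\mathcal H_{D'}(f)$. We then set $\mathcal H_D(f)$ to be the union of these hypersurfaces with a hypersurface that projects the non-generic fibres. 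Shrinking $\mathcal U_D$ makes this union a morphism to $\mathcal X_B^{k-1}$ by Chow-variety flattening.

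\textbf{Main obstacle.} The hardest step is the non-degeneracy in Step~1: showing that along the generic point of every non-special component, the family height does not collapse onto the fibre heights. Concretely, we need the relative Betti form to have maximal rank on a dense open set. We would try first to derive this from the transversality already produced in Theorem~\ref{th-sparsity}: a point where preperiodic points are not locally rigid in $Y$ gives a dynamical direction along which they move.
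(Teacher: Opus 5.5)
There is a genuine gap, and it sits exactly where you place your ``main obstacle''. Step~1 asks for a height gap at the generic point of \emph{every} non-special component, in fact a DGH/K\"uhne-type statement with threshold $c\max(1,h(f))$. You hope to extract the required non-degeneracy from the transversality behind Theorem~\ref{th-sparsity}, but that transversality is not available for an arbitrary family. The blender construction only produces an open subset of $\supp(\mu^{\mathrm{pol}}_{\mathcal Y})$ (Theorem~\ref{th-blender-poly}) for a family containing a pair $(Y,f)$ with $f\in\Omega_u$ satisfying \eqref{eq-pitch} at a point of $\Ab^k$. The available tool for producing such a pair, Theorem~\ref{th-mono-gorbo}, needs the incidence map $\{(x,Y,f):x\in Y\}\to\Pb^k\times\mathrm{Poly}_d^k$ to be dominant. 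For a family whose members only cover a proper closed subset of $\Pb^k$ for general $f$, no such pair is guaranteed, and non-specialness is no substitute. Subvarieties of $H_\infty$ are the extreme case: there your claim would already be a uniform Bogomolov statement for $f_{|H_\infty}$ on $\Pb^{k-1}$.

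The strong inequality is also unnecessary. Preperiodic points have height $0$, so a fixed $\varepsilon$ suffices. What is actually available is the weaker GGK-type conclusion of Theorem~\ref{prop-empty-interior}: points of height at most $\varepsilon$ lie in a proper subvariety $W_{Y,f}\subsetneq Y$ of degree at most $N$. Its only input is $\int G_{\mathcal Y}\,\mu^{\mathrm{pol}}_{\mathcal Y}>0$. This follows from the open subset of the support, since $G_{\mathcal Y}\geq0$ is continuous and cannot vanish there (Lemma~\ref{le-arithmetic-transverse}). No maximal-rank Betti form and no comparison of two limit measures is involved.

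Step~2 inherits the problem. You put the residual components inside special subvarieties and invoke Theorem~\ref{th-special-hypersurface}, but nothing shows they are special; even Zariski density of preperiodic points would not give this without the dynamical Manin--Mumford conjecture. In fact specialness plays no role in the argument, and the dichotomy must be on incidence. Run a Noetherian induction over the components $\mathcal Z$ of the spaces of pairs formed by reduced irreducible cycles of degree $e\leq D$:
\begin{itemize}
\item[(a)] If the incidence of $\mathcal Z$ is dominant, Theorem~\ref{th-mono-gorbo} and Lemma~\ref{le-arithmetic-transverse} give $\mathrm{Preper}(f)\cap Y\subseteq W_{Y,f}$ on a dense invariant open subset $\mathcal Z^\circ$, and you recurse on $\mathcal Z\setminus\mathcal Z^\circ$.
\item[(b)] Otherwise, put the closure of its incidence image into a proper closed subset $\mathcal E\subset\Pb^k\times\mathrm{Poly}_d^k$.
\end{itemize}
This $\mathcal E$, not a locus of special subvarieties, is what becomes $\mathcal H_D(f)$.

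The count then requires an induction on $u$ with a changing degree bound. Since $W_{Y,f}$ depends wildly on $(Y,f)$, you cannot follow it algebraically. Instead, apply the statement already proved in dimensions $<u$ with degree bound $N$ to its at most $N$ components, and add those exceptional sets and $H_\infty\times\mathrm{Poly}_d^k$ to $\mathcal E$. The base case is $u=0$, where the degree bounds the number of points. Finally, the bound obtained over $\overline{\Qb}$ passes to $\Cb$ by density of $\overline{\Qb}$-points in the algebraic set of tuples $(Y,f,z_1,\ldots,z_{B+1})$ with prescribed preperiods.
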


For curves, B\'ezout's theorem turns it into a bound on the full intersection since an irreducible curve of degree at most $D$ not contained in $\mathcal H_D(f)$ meets this hypersurface in at most $DB$ points. In dimension two, the curves contained in $\mathcal H_D(f)$ are among its at most $B$ components, so that only finitely many curves escape the bound.

\begin{Corollary}\label{cor-unif-dim2}
For every $d\geq2$ and $D\geq1$, there exist a dense Zariski open subset $\mathcal U_D\subset\mathrm{Poly}_d^2$ and an integer $B\geq1$ such that, for every $f\in\mathcal U_D$, all but at most $B$ irreducible curves $Y\subset\Pb^2$ of degree at most $D$ satisfy
\[
 \#\bigl(Y\cap\mathrm{Preper}(f)\bigr)\leq B.
\]
\end{Corollary}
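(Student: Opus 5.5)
We deduce Corollary~\ref{cor-unif-dim2} directly from Theorem~\ref{th-unif} applied with $k=2$ and $u=1$. We keep the same open set $\mathcal U_D\subset\mathrm{Poly}_d^2$ and the same morphism $\mathcal H_D\colon\mathcal U_D\to\mathcal X_{B_0}^{1}$. Here $B_0$ denotes the integer provided by Theorem~\ref{th-unif}. At the end we replace $B_0$ by a larger constant $B$ depending only on $d$ and $D$.

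Fix $f\in\mathcal U_D$ and write $H=\mathcal H_D(f)$. This is a curve in $\Pb^2$ of degree at most $B_0$, identified with its support. Let $Y\subset\Pb^2$ be an irreducible curve of degree at most $D$. There are two cases.
\begin{enumerate}
\item Suppose $Y\not\subseteq H$. Then $Y$ and $H$ have no common component, so B\'ezout's theorem gives $\#(Y\cap H)\leq DB_0$. Preperiodic points of $Y$ lying outside $H$ number at most $B_0$ by Theorem~\ref{th-unif}. Hence
\[
\#\bigl(Y\cap\mathrm{Preper}(f)\bigr)\leq B_0+DB_0=(D+1)B_0 .
\]
\item Suppose $Y\subseteq H$. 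Since $Y$ is an irreducible curve, it is an irreducible component of the support of $H$. A curve of degree at most $B_0$ has at most $B_0$ irreducible components, so at most $B_0$ such curves $Y$ occur.
\end{enumerate}

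Setting $B=(D+1)B_0$, both bounds hold with this uniform $B$. Thus for every $f\in\mathcal U_D$, all irreducible curves of degree at most $D$ satisfy $\#(Y\cap\mathrm{Preper}(f))\leq B$, except for the at most $B_0\leq B$ components of $\mathcal H_D(f)$.

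One minor point needs checking. Theorem~\ref{th-unif} requires $Y\in\mathcal X_D^1$, that is, a cycle of degree exactly $D$. A curve of degree $D'<D$ is handled by applying the theorem for each $D'\le D$ and intersecting the finitely many open sets $\mathcal U_{D'}$. We then take the union of the hypersurfaces $\mathcal H_{D'}(f)$, which has degree at most $\sum_{D'\le D} B_{D'}$. The maximum over $D'$ of the pointwise bounds still works. Alternatively, we can add to $Y$ a multiple of a line to reach degree $D$, but this changes the support, so the first approach is preferable.

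There is no real obstacle here: the deep input is entirely Theorem~\ref{th-unif}, and the only care needed is in this degree bookkeeping and the use of B\'ezout.
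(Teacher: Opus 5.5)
Your proof is correct and follows the paper's own deduction. You apply Theorem~\ref{th-unif} with $k=2$, $u=1$, use B\'ezout to bound $\#(Y\cap\mathcal H_D(f))$ by $DB$ when $Y\not\subseteq\mathcal H_D(f)$, and note that the remaining curves are among the at most $B$ irreducible components of $\mathcal H_D(f)$.

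The degree bookkeeping is harmless, since the proof of Theorem~\ref{th-unif} already treats all degrees $\leq D$. The padding trick you set aside also works: $Y\subseteq Y\cup L$, so a bound for the larger support implies the bound for $Y$.
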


One of the exceptional curves is of course the line at infinity, and we expect it to be the only one. More generally, we expect that $\mathcal H_D(f)$ can be replaced by $H_\infty$ in Theorem~\ref{th-unif} (see Conjecture~\ref{conj-unif}). Controlling preperiodic points inside $H_\infty$ itself would require uniform results for general endomorphisms of $\Pb^{k-1}$, which remain beyond the scope of our present arithmetic methods.

Corollary~\ref{cor-unif-dim2} complements the work of Dujardin, Favre and Ruggiero~\cite{DFR}, beyond the uniform aspect. Indeed, they prove the dynamical Manin--Mumford conjecture in $\mathrm{Poly}_d^2$ for curves in $\Pb^2$ whose intersection with the line at infinity is not contained in the set of points eventually mapped to a superattracting periodic point. For a fixed map $f\in\mathrm{Poly}_d^2$ with a non-exceptional superattracting periodic point at infinity, this condition is not Zariski generic in $\mathcal X_D^1$, for any $D\geq1$. On the other hand, for a very general map $f\in\mathrm{Poly}_d^2$, there is no superattracting periodic point at infinity, and their result holds for all $Y\in\mathcal X_D^1$.

This illustrates the complementarity of the two approaches. Ours yields uniform results for general pairs, but gives only limited control over the exceptional locus. By contrast, the approach of \cite{DFR} is more bottom-up, with the potential to lead to a complete solution of the conjecture in the two-dimensional affine case.

\subsection{The general mechanism}\label{sec-strat}
We now describe the mechanism underlying our results. It extends the approach of~\cite{gtv}, which was tailored to the critical hypersurface, to arbitrary families of subvarieties of any codimension. Its arithmetic part carries over to this setting, but its complex-dynamical part requires substantial new developments.

We choose a suitable subvariety $\mathcal{S}_d^k$ of $\mathrm{End}_d^k$ (respectively of $\mathrm{Poly}_d^k$) such that the induced map to the moduli space $\mathscr{M}_d^k$ of degree $d$ endomorphisms of $\mathbb{P}^k$ (respectively to the moduli space $\mathscr{P}_d^k$ of regular polynomial endomorphisms of $\mathbb{A}^k$) is dominant and generically finite. Then, for any closed subvariety $\mathcal{Y}\subseteq \mathcal{X}_D^u\times \mathcal{S}_d^k$ that projects dominantly onto $\mathcal{S}_d^k$ and that is defined over a number field, we obtain a family
\[
\hat{f}:\mathbb{P}^k\times\mathcal{Y}\to\mathbb{P}^k\times\mathcal{Y},\quad \hat f(z,Y,f):=(f(z),Y,f),
\]
together with a family $\hat{\mathcal{Y}}$ of $u$-dimensional subvarieties of $\mathbb{P}^k$, both parametrized by $\mathcal{Y}$. The strategy rests on four main ingredients:
\begin{enumerate}
\item a dynamical natural height function on $\mathcal{Y}$ vanishing on special pairs,
\item an equidistribution result towards a bifurcation measure (or, alternatively, an arithmetic positivity criterion for regular polynomial endomorphisms),
\item a positivity criterion for the associated bifurcation measure,
\item a dynamical mechanism allowing one to verify this criterion in a robust way.
\end{enumerate}
{The first two ingredients were already available, in the required generality, in \cite{gtv}, relying in particular on work of Call-Silverman~\cite{CS-height}, Zhang~\cite{Zhang-adelic}, Gauthier~\cite{Good-height} and Yuan-Zhang~\cite{YZ-adelic}. Let us explain briefly the context: any map $f:\p^k\to\p^k$ of degree $d$ defined over a number field has a \emph{canonical height function} $\hat{h}_f:\p^k(\bar{\mathbb{Q}})\to\R_+$ given by
\[\hat{h}_f:=\lim_{n\to\infty}\frac{1}{d^n}h_{\mathrm{nv}}\circ f^n,\]
where $h_{\mathrm{nv}}$ is the na\"ive logarithmic Weil height function. In particular, it is characterized by the properties $\hat{h}_f=h_{\mathrm{nv}}+O(1)$ and $\hat{h}_f\circ f=d\cdot \hat{h}_f$. In particular, by the Northcott property, the set $\{\hat{h}_f=0\}$ coincides with $\mathrm{Preper}(f)$. As this canonical height function is induced by an adelic line bundle, this can also be evaluated on subvarieties.  By Zhang's inequality, $\hat{h}_f(Y)=0$ whenever $Y$ is $f$-special, since the $f$-preperiodic points
are Zariski dense in $Y$.

When coming in a family $\hat{f}$ as above, the canonical height $\hat{h}_{f_t}$ depends on the parameter $t$ and this allows us to define a dynamical height function $\hat{h}_\mathcal{Y}:\mathcal{Y}(\bar{\mathbb{Q}})\to\R_+$ given by
\[\hat{h}_\mathcal{Y}(t):=\hat{h}_{f_t}(Y_t), \quad t\in \mathcal{Y}(\bar{\mathbb{Q}}).\]
Again, this height function is induced by an adelic line bundle on $\mathcal{Y}$ and the equidistribution theorem of small points of Yuan and Zhang~\cite{YZ-adelic} applies (see also~\cite{Good-height}).

We now want to describe the limit measure $\mu_\mathcal{Y}$ (which is a complex Monge-Amp\`ere measure on $\mathcal{Y}(\C)$). When acting on the complex space $\p^k(\C)\times\mathcal{Y}(\C)$, $\hat{f}$ admits a fibered Green current defined as
\[\hat{T}_{\hat{f}}:=\lim_{n\to\infty}\frac{1}{d^n}(\hat{f}^n)^*(\hat{\omega}),\]
where $\hat{\omega}$ is the $(1,1)$-form on $\p^k(\C)\times\mathcal{Y}(\C)$ defined as the pull-back by the first projection of the Fubini-Study form. The \emph{bifurcation current} $T_{\mathcal{Y}}$ of the pair $(\hat{f},\hat{\mathcal{Y}})$ is given by 
\[
T_{\mathcal{Y}}
:=(\pi_{\mathcal{Y}})_*
\left(
  \hat{T}_{\hat{f}}^{\wedge(u+1)}
  \wedge[\hat{\mathcal{Y}}(\C)]
\right).
\]
A similar construction in the fibered space $(\p^k)^{\dim\mathcal{Y}}\times\mathcal{Y}$ gives a bifurcation measure $\mu_\mathcal{Y}$ (which coincides with $T_\mathcal{Y}^{\wedge \dim\mathcal{Y}}$ when $u=k-1$), see~Section~\ref{sec-courant} for more details. This is the limit measure appearing in the equidistribution result of points of small height.
}

Starting from these ingredients, the proof of Theorem~\ref{th-sparsity} proceeds by contradiction. Suppose that the special pairs $(Z,f)$ defined over $\overline{\Qb}$ form a Zariski dense subset of the relevant parameter space. These pairs have height zero, since $Z\cap\operatorname{Preper}(f)$ is Zariski dense in $Z$. By equidistribution, a generic sequence of Galois orbits of such pairs must converge towards the normalized bifurcation measure $\mu_{\mathcal Y}$.

The key point is therefore to find a Euclidean open set $\Omega\subseteq\mathcal{Y}(\C)$ contained in the support of this bifurcation measure, but disjoint from the locus of pairs $(Z,f)$ for which $Z$ is $f$-special. This is incompatible with the equidistribution of height-zero pairs, and gives the desired sparsity statement.

It remains to construct such an open set $\Omega$. This is where the last two ingredients above enter.

\bigskip

\noindent{\bf A positivity criterion for bifurcation currents.}
Positivity criteria for bifurcation currents and measures have a long history. They rely on a local similarity between the dynamical space and the parameter space near special parameters. In complex dynamics, such phase-parameter correspondences appear in the work of Tan Lei~\cite{Tan-similarity} and Shishikura~\cite{Shishikura2}. Buff and Epstein~\cite{buffepstein} then interpreted this similarity property in terms of bifurcation current and measure, leading to many different variations of this similarity result in many different contexts, see~e.g.~\cite{Article1,AGMV,BBD,gauthier-abscont,DeMarco-Mavraki,gtv}.

In the situations considered so far, such criteria usually involve a proper intersection between the motion of a hyperbolic repeller, often a repelling periodic point, and the motion of a hypersurface.

In higher codimension, we need a more general version involving hyperbolic saddle sets. The following statement gives a simple form of the criterion for bifurcation currents. See Section~\ref{sec-misiu} for the corresponding statement for measures.

\begin{theorem}\label{th-misu-intro}
Suppose that there exist $(Y_0,f_0) \in \mathcal Y$ and a saddle point $r(f_0)$ of index $(s,u+1)$, with $s=k-u-1$, such that:
\begin{itemize}
\item[\textbf{(1)}] $T_{f_0}^{\wedge(u+1)}\wedge[W^u_{r(f_0),loc}]\neq 0$;
\item[\textbf{(2)}] there exists $n_0 \geq 0$ such that
\[f_0^{n_0}(Y_0)\cap W^s_{r(f_0),loc}\neq\varnothing;\]
\item[\textbf{(3)}] there exist pairs $(Y,f)\in\mathcal Y$, arbitrarily close to $(Y_0,f_0)$, such that
\[f^{n_0}(Y)\cap W^s_{r(f),loc}=\varnothing.\]
\end{itemize}
Then $(Y_0,f_0)$ belongs to the support of the current $T_{\mathcal Y}$.
\end{theorem}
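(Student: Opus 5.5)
We argue by contradiction: suppose there is a neighbourhood $\Omega$ of $(Y_0,f_0)$ in $\mathcal Y$ on which $T_{\mathcal Y}=0$. Following the Buff--Epstein philosophy, we show that the mass of $T_{\mathcal Y}$ near $(Y_0,f_0)$ dominates a transported copy of the fibre mass $T_{f_0}^{\wedge(u+1)}\wedge[W^u_{r(f_0),loc}]$, which is non-zero by \textbf{(1)}.

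\textbf{Step 1: localization and pullback.} Since $\hat f^*\hat T_{\hat f}=d\,\hat T_{\hat f}$, the current $\hat T_{\hat f}^{\wedge(u+1)}\wedge[\hat{\mathcal Y}]$ is pushed forward by $\hat f^{n_0}$ to $d^{\,n_0(u+1)}$ times the analogous current for the family $(f^{n_0}(Y),f)$, up to multiplicity from the degree of $\hat f^{n_0}|_{\hat{\mathcal Y}}$. Replacing $Y$ by $f^{n_0}(Y)$, we may therefore assume $n_0=0$. It then suffices to show that the current
\[
(\pi_{\mathcal Y})_*\Bigl(\hat T_{\hat f}^{\wedge(u+1)}\wedge[\hat{\mathcal Y}]\big|_{U\times\Omega}\Bigr)
\]
is non-zero for a small neighbourhood $U$ of the intersection point $p_0\in Y_0\cap W^s_{r(f_0),loc}$. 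Here we use positivity, since restricting a positive current only decreases mass.

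\textbf{Step 2: holomorphic motions and a transversal disc.} The saddle point $r(f)$ and its local stable and unstable manifolds move holomorphically over $\Omega$. Choose a smooth point $p_0$ of $Y_0$; by a small perturbation of $p_0$ inside $Y_0\cap W^s$ we may take this point generic. Because $\dim Y+\dim W^s=u+s=k-1$, the condition ``$Y$ meets $W^s_{r(f)}$'' has expected codimension one in $\Omega$. By \textbf{(3)} it is a proper analytic condition, so the set
\[
\mathcal B=\{(Y,f)\in\Omega : Y\cap W^s_{r(f),loc}\neq\varnothing\}
\]
is a non-empty analytic subset of $\Omega$ distinct from $\Omega$, hence of codimension at least one. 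We pick a holomorphic disc $\Delta\ni 0$ in $\Omega$, through $(Y_0,f_0)$, meeting $\mathcal B$ only at $0$, isolatedly. Restricting to $\Delta$, the point $0$ is an isolated zero of the holomorphic function measuring the displacement of $Y_t$ off $W^s_{r(f_t)}$, in the direction transverse to $Y$ and $W^s$, which is one-dimensional. It then suffices to show that $T_{\mathcal Y}|_\Delta\neq0$ near $0$, that is, that the slice of $\hat T^{u+1}\wedge[\hat{\mathcal Y}]$ over $\Delta$ has positive mass.

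\textbf{Step 3: inclination lemma and rescaling (main obstacle).} Iterating, $\hat f^n$ maps the piece of $\hat{\mathcal Y}|_\Delta$ near $p_0$, a $(u+1)$-dimensional object, to graphs converging to $\bigcup_{t}W^u_{r(f_t),loc}\times\{t\}$ over a disc of radius about $\lambda^{-n}$. This is a parametric $\lambda$-lemma, using the isolated transverse zero from Step 2. After rescaling $\Delta$ by the unstable eigenvalue, the $(u+1)$-dimensional sets $\hat f^n(\hat{\mathcal Y}|_{\Delta_n})$ converge to $W^u_{r(f_0),loc}$, essentially fibrewise. Using $\hat f^*\hat T=d\hat T$ and continuity of the Green potentials, we obtain
\[
\bigl\|\hat T^{u+1}\wedge[\hat{\mathcal Y}|_{\Delta_n}]\bigr\|\;\gtrsim\; d^{-n(u+1)}\,\bigl\|T_{f_0}^{u+1}\wedge[W^u_{r(f_0),loc}]\bigr\|>0 .
\]
This contradicts $T_{\mathcal Y}=0$ on $\Omega$. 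The delicate point is to control the convergence of $\hat T^{u+1}$ wedged against varying $(u+1)$-dimensional analytic sets that degenerate to a fibre over a shrinking disc. We plan to handle this with continuity of wedge products of currents having continuous potentials, that is, Bedford--Taylor theory, against currents of integration converging in mass. The parameter directions add no mass because they are killed by $T_{f}^{u+1}$ restricted to the unstable slice in the limit. We also need the semi-continuity of $Y\cap W^s$ in order to obtain the transverse disc in Step 2.
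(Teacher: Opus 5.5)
Your outline has the same architecture as the paper's proof, which is Proposition~\ref{prop-positivevolume} with $m=1$. You slice the parameter space by a disc through $(Y_0,f_0)$ on which the intersection with the stable manifold becomes isolated. This needs the support-inclusion lemma for restrictions, \cite[Lemma~6.3]{Article1}, which relies on continuity of the potentials; you assert it without comment. You then push forward by $\hat f^{n}$ and compare masses using $\hat f^*\hat T_{\hat f}=d\,\hat T_{\hat f}$ and continuity of wedge products.

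\textbf{The gap is Step~3}, exactly where you locate the difficulty. A rescaled parametric $\lambda$-lemma of Tan Lei type needs a scalar displacement function with a \emph{simple} zero. Even defining that function requires $Y_0$ to be smooth at $p_0$, with $T_{p_0}Y_0+T_{p_0}W^s_{r(f_0),loc}$ a hyperplane. Nothing in (2)--(3) provides this:
\begin{itemize}
\item $p_0$ may be a singular point of $Y_0$, or a point of tangency with $W^s_{r(f_0),loc}$;
\item the zero on $\Delta$ may have any multiplicity;
\item your ``perturb $p_0$ inside $Y_0\cap W^s$'' is unavailable when this intersection is isolated, and when it is not isolated, the sliced intersection is no longer isolated either.
\end{itemize}
You also give no mechanism for the uniform mass bound that Bedford--Taylor continuity requires. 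Nor do you show that the limit fills the local unstable manifold on which (1) is tested. As written, your rescaling route works only in a transverse, generic situation, which the statement does not assume.

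The paper needs none of this and does not rescale. Set $T:=\hat T_{\hat f}^{\wedge(u+1)}$, and replace $f$ by an iterate so that $r$ is fixed. The paper then works in a box $\mathcal N\cong N_s\times N_u$ around $r(f_{t_0})$ in which every $f_t$, for $t$ in the slice $M'$, is a degree-one horizontal-like map. It takes $\Gamma_0:=\hat f^{n_0}(U)$, arranged to meet $\mathcal W^s_{r,loc}$ at a single, possibly non-transverse, point and to avoid $\partial_s\mathcal N\times M'\cup\mathcal N\times\partial M'$.

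By Proposition~\ref{prop-renor}, $\Gamma_{n+1}:=F(\Gamma_n)\cap(\mathcal N\times M')$ is a ramified covering of $N_u$. Its degree is its intersection number with the total space of a family of $s$-graphs. This degree is constant, since $f_t^{-1}$ sends $s$-graphs to $s$-graphs. Bishop's theorem then gives a limit. The limit lies in $\mathcal W^u_{r,loc}$, and only over $t_0$, because fibres over $t\neq t_0$ miss $W^s$ and escape. Thus $[\Gamma_n]\to c\,[W^u_{r(f_{t_0}),loc}\times\{t_0\}]$, where $c$ is the intersection multiplicity.

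Positivity then follows from (1), using $d^{-n(u+1)}\int T\wedge[\Gamma_n]\le\int T\wedge[\Gamma_0]$ together with continuity of $T\wedge[\Gamma_n]$. No discussion of parameter directions is needed. The limit sits in a single fibre, where the slice of $T$ is $T_{f_0}^{\wedge(u+1)}$.
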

Although the proof is not difficult, this criterion naturally generalizes the notion of \emph{Misiurewicz parameters} developed in \cite{BBD}, and may be useful beyond the present setting whenever higher-codimensional subvarieties arise.

It also highlights the role played by the \emph{intermediate Julia sets}
\[J_p(f):=\operatorname{supp}(T_f^{\wedge p}),\]
introduced by Forn\ae ss and Sibony in \cite{fs-montreal} and Hubbard and Papadopol in \cite{HP}. Apart from the two extremal cases, namely $J_1(f)$, the usual Julia set, and $J_k(f)$, the \emph{small Julia set}, which is the support of the unique measure of maximal entropy, these sets have so far been much less used and studied (see \cite{dujardin-directions} and \cite{t-attractor} for two exceptions).

\bigskip

\noindent{\bf Open sets of mixed-type maximal bifurcations.}
Once this criterion is established, it remains to verify it in a sufficiently strong form to obtain the desired arithmetic consequences. For Theorem~\ref{th-sparsity}, this amounts to proving that it holds densely in an open set $\Omega \subset \mathcal X_D^u\times M$ with the additional property that, for every $(Z,f)\in \Omega$, the subvariety $Z$ is not $f$-special. Observe that this last property relies crucially on a recent result of Yugang Zhang~\cite{zhang-iterated-centralizer}, combined with a criterion for the Zariski density of unstable manifolds (see Proposition~\ref{prop-zariski-dense-unstable}).

As in \cite{gtv}, the construction used here relies on a robust heterodimensional cycle between a periodic point and a hyperbolic set with a blender property.

Let $1\leq u\leq k-1$ be the dimension of the subvarieties under consideration, and set $s:=k-u-1$. We consider a hyperbolic set $\Lambda(f)$ for $f$, of index $(s,u+1)$, such that:
\begin{itemize}
\item its local stable set
\[W^s_{\Lambda(f),loc}:=\bigcup_{x\in\Lambda(f)}W^s_{x,loc}\]
intersects every element of an open family $\mathcal F$ of local submanifolds of dimension $u$;
\item this property is robust under small perturbations of $f$.
\end{itemize}
Each local stable manifold $W^s_{x,loc}$ has dimension $s$, and therefore cannot robustly intersect elements of $\mathcal F$, which have dimension $u$, since $s+u=k-1<k$. The robustness of the intersection must therefore come from a special feature of the whole hyperbolic set $\Lambda(f)$. As explained by Dujardin, this may be due either to the topology of $\Lambda(f)$, see \cite[Section~3]{dujardin-bif}, or to a subtler fractal-geometric property, see \cite[Section~4]{dujardin-bif}, which was introduced by Bonatti and D\'iaz in \cite{bonatti-diaz-blender} under the name of \emph{blender}. Since then, blenders have been used in many contexts, both in real dynamics (see e.g. \cite{bonatti-diaz-tan}, \cite{berger-parablender} and \cite{Li-Turaev-cycles}) and in complex dynamics (see e.g. \cite{t-blender} and \cite{biebler-hetero}).

Although blenders may at first seem rather intricate, the construction we use here is simple and can be viewed as a perturbation of an affine iterated function system with a covering property. This elementary and affine nature is crucial in our approach, especially when we study degenerations at infinity in the proof of Theorem~\ref{th-demarco2}, see Section~\ref{sec-thb}.

In addition to the blender $\Lambda(f)$, we also consider a saddle fixed point $p(f)$ of index $(s+1,u)$ such that:
\begin{itemize}
\item its local unstable manifold $W^u_{p(f),loc}$ belongs to the special family $\mathcal F$ associated with $\Lambda(f)$,
\item its local stable manifold $W^s_{p(f),loc}$ robustly intersects the unstable manifolds of $\Lambda(f)$.
\end{itemize}
The second property is easy to achieve, since $\dim W^s_{p(f),loc}+\dim W^u_{x,loc}=(s+1)+(u+1)=k+1>k.$
On the other hand, the blender property of $\Lambda(f)$ ensures that $W^u_{p(f),loc}$ intersects $W^s_{\Lambda(f),loc}$ robustly under perturbations of $f$.
In this situation, $p(f)$ and $\Lambda(f)$ form a \emph{robust heterodimensional cycle}, which is a source of strong bifurcations.

In Section~\ref{sec-blender}, for each $1\leq u\leq k-1$, we construct an open subset $\Omega_u$ of $\mathrm{End}_d^k$ (or of $\mathrm{Poly}_d^k$) such that each $f\in\Omega_u$ has such a cycle and with the following additional property.

\begin{theorem}\label{th-ouvert-int}
Let $(Y,f) \in \mathcal{Y}$ with $f\in \Omega_u$. If there exists $n\geq 0$ such that
\[f^n(Y) \text{ intersects } W^s_{p(f),loc} \text{ transversely},\]
then $(Y,f)$ belongs to the interior of the support of $\mu_{\mathcal Y}$.
\end{theorem}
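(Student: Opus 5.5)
Since the hypothesis ``$f^n(Y)$ meets $W^s_{p(f),loc}$ transversely'' is an open condition on $(Y,f)$, it suffices to prove that every pair in an open neighbourhood of $(Y,f)$ lies in the support of $\mu_{\mathcal Y}$. We will in fact prove the stronger statement that a dense set of pairs in that neighbourhood satisfies the hypotheses of the measure version of Theorem~\ref{th-misu-intro}, stated in Section~\ref{sec-misiu}. Because supports are closed, density then gives the whole neighbourhood. To start, we fix a small neighbourhood $N$ of $(Y,f)$ in $\mathcal Y$. By transversality and the continuous dependence of $W^s_{p(f'),loc}$ on $f'$, every $(Y',f')\in N$ has $f'^n(Y')$ meeting $W^s_{p(f'),loc}$ transversely at a point $y'$.

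The first step uses the inclination lemma ($\lambda$-lemma) at the saddle point $p(f')$ of index $(s+1,u)$. Near $y'$, the $u$-dimensional piece of $f'^n(Y')$ is transverse to $W^s_{p(f'),loc}$. Its forward iterates $f'^{n+m}(Y')$ therefore contain discs that are $C^1$-close to $W^u_{p(f'),loc}$. Since $W^u_{p(f'),loc}$ lies in the open family $\mathcal F$, these discs also lie in $\mathcal F$ for $m$ large. The blender property of $\Lambda(f')$ then gives a point of $W^s_{\Lambda(f'),loc}$ on $f'^{n+m}(Y')$. Using the density of periodic points of $\Lambda(f')$ and the continuity of local stable manifolds, we perturb $(Y',f')$ within $N$ so that $f'^{n+m}(Y')$ meets $W^s_{r(f'),loc}$ for a saddle periodic point $r(f')\in\Lambda(f')$ of index $(s,u+1)$. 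This gives condition (2) of Theorem~\ref{th-misu-intro}.

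Condition (3) comes from dimension counting. We have $\dim f'^{n_0}(Y')+\dim W^s_{r,loc}=u+s=k-1<k$, so the intersection is not persistent. To break it, it is enough that the family $\mathcal Y$ moves $Y'$, or $f'$, in a direction transverse to $W^s_r$ at the intersection point. The perturbation of $Y'$ is free in $\mathcal X_D^u$, and the perturbation of $f'$ is available because $\mathcal Y$ projects dominantly, so generic nearby pairs avoid $W^s_{r,loc}$. If $\mathcal Y$ is a proper subvariety, we use the freedom in $f'$ through the parametrized family, and possibly iterate once more, to obtain a pair $(Y'',f'')$ that removes the intersection.

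Condition (1), namely $T_{f'}^{\wedge(u+1)}\wedge[W^u_{r(f'),loc}]\ne0$, should be part of the construction of $\Omega_u$ in Section~\ref{sec-blender}: the unstable manifolds of the blender have dimension $u+1$ and are dynamically generated by the affine IFS. We would verify this by showing that $W^u_{r,loc}$ meets $J_{u+1}$ in a set of positive slice mass, for instance via expansion in the unstable directions and the equidistribution $d^{-(u+1)m}(f^m)^*$ of forms along unstable discs.

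The main obstacle is passing from the current statement to the measure $\mu_{\mathcal Y}$. This requires running the above argument simultaneously for $\dim\mathcal Y$ independent conditions, that is, in the fibered product, so that the intersections are lost in $\dim\mathcal Y$ independent parameter directions. Our first attempt would be to use distinct blender periodic points $r_1,\dots,r_{\dim \mathcal Y}$ and transversality of the parameter motions, obtained by perturbing within the dominant family.
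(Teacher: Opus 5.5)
\textbf{There is a genuine gap: the independence of $\dim\mathcal Y$ relations, which is the entire content of the theorem, is not established, and the single-relation argument you do give is incorrect.} Your overall frame is right. You reduce to the measure criterion (Proposition~\ref{prop-positivevolume} with $m=\dim\mathcal Y$), and condition~(1) needs no new work, since it is Assumption~\ref{misu} together with Lemma~\ref{le-cond1}. But the step you postpone as ``the main obstacle'' is the whole proof.

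\textbf{Non-persistence.} It does not follow from $u+s=k-1<k$. In a holomorphic family, a $u$-graph and an $s$-graph meet along a hypersurface of parameters \emph{or} along the whole parameter space (Lemma~\ref{le-dim-relation}), and the second case really occurs. For product-type maps, $f^n(Y)$ can slide along a fixed leaf $W^s_{x,loc}$ as $Y$ varies. Also, the points of $Y$ sent to $W^s_{\Lambda,loc}$ need not be Zariski dense in $Y$, so $Y$ can move while keeping them fixed. You also cannot perturb $Y'$ ``freely in $\mathcal X_D^u$'': $\mathcal Y$ is a fixed, possibly very thin, subvariety. Dominance gives freedom in the map directions, but it does not make a given relation non-persistent. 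The paper has to impose this even for $W^u_{p}$ against $W^s_{\Lambda}$, as Assumption~\ref{free}. Finally, ``transversality of the parameter motions'' of distinct periodic points is exactly what must be proved to get conditions~\ref{it:cond21} and~\ref{it:cond22}; it cannot be obtained by perturbing the family. You also do not use that $Y$ is reduced and irreducible, without which $\mu_{\mathcal Y}$ can vanish.

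\textbf{What is missing: a mechanism that exhausts the degrees of freedom in both factors.}
\begin{itemize}
\item \emph{Local normalization.} The paper first identifies $\mathcal Y$ locally with a submanifold submersing onto a slice $\tilde\Omega$ of dimension $\dim\mathscr M_d^k$ (Proposition~\ref{prop:local-normalization}). This uses dynamical fullness and maximal variation.
\item \emph{Map directions.} It starts from the heterodimensional-cycle relation of Assumption~\ref{init} and proves Theorem~\ref{th-point}. Along a disc on which no new relation appears, the blender forces normality of $\chi_p^{j}\chi_{r_0}^{l}$ (Lemma~\ref{le-normal}), and then of $(\chi_p^{j}\chi_{r_0}^{l}-t_i)\chi_{r_i}^{m}$. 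Density of $\langle\chi_p,\chi_{r_0}\rangle$ then forces $\chi_{r_i}$ to be constant, contradicting the Gorbovickis--Taflin independence of multipliers. Controlling the graph transforms requires the linearizations of Section~\ref{sec-dir}, whose differentials are $\Cb$-linear along the invariant manifolds.
\item \emph{Fiber directions, with $f$ fixed.} This is Theorem~\ref{th-subvar}. Assumptions~\ref{decalage} and~\ref{homoclinic} make the transverse-intersection points Zariski dense in $Y$ (Lemma~\ref{le-zariski}). Proposition~\ref{prop-subvar}, Lemma~\ref{le-grille} and Assumption~\ref{grille} rule out sliding along strong stable leaves.
\item \emph{Conclusion.} Only after a proper, zero-dimensional intersection of $\dim\mathcal Y$ relations is obtained are the $x_i$ replaced by periodic points, by persistence of proper intersections. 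Lemma~\ref{le-cond2} then yields condition~\ref{it:cond2}.
\end{itemize}
None of these ingredients appears in your proposal, so it does not prove the statement.
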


We refer to Section~\ref{sec-blender} for discussions about the proof strategy, which has strong similarities with \cite[Section~3]{gtv}. Here, we emphasize the main differences and additional difficulties. The most immediate one is that we now work in arbitrary codimension. In this setting, the periodic points in $\Lambda(f)$ are no longer repelling, but saddle points. As a consequence, one cannot rely on holomorphic linearization in the same way, and this creates serious difficulties in controlling the images, under large iterates, of subvarieties transverse to $W^s_{p(f),loc}$.

We overcome these difficulties by developing a notion of \emph{dynamical direction}, extending in arbitrary dimension and codimension the \emph{dynamical slope} introduced by Dujardin \cite{dujardin-homoclinic} in dimension $2$, itself inspired by ideas of Takens \cite{takens}.

Two further difficulties already appear in codimension $1$ in this new framework. The first one is that, in \cite{gtv}, we only considered the critical hypersurface. By the work of Berteloot, Bianchi and Dupont \cite{BBD}, the absence of bifurcations could then be related to a strong form of dynamical stability, implying in particular the existence of an \emph{equilibrium lamination}. This lamination played a crucial role in \cite{gtv}. In the present setting, however, no such stability statement is available for an arbitrary hypersurface. The higher-codimension case is even more problematic since, in that case, the criterion coming from Theorem~\ref{th-misu-intro} no longer involves the small Julia set, but rather an intermediate Julia set, for which no bifurcation theory has yet been established. We instead rely on the recent result of the second author with Gorbovickis \cite{t-gorbovickis} on the algebraic independence of multipliers.

A second difficulty comes from the fact that maximal bifurcation in the product space $\mathcal X_D^u\times\mathrm{End}_d^k$ requires a mechanism rich enough to produce bifurcations not only when $f$ varies, but also when $f$ is fixed and the subvariety $Z$ varies in $\mathcal X_D^u$. This becomes particularly delicate in higher codimension where the stable directions of the hyperbolic set $\Lambda(f)$ leave additional degrees of freedom, which have to be controlled.

\subsection{Coarse control of the exceptional locus}\label{sec-coarse}
To prove Theorem~\ref{th-special-hypersurface}, we show that
the subvarieties parametrized by a general fiber of the
minimal exceptional locus do not cover $\Pb^k$.
More precisely, let $\mathcal Z_D^u\subset\mathcal X_D^u\times M$
be the smallest proper closed subvariety satisfying the
conclusion of Theorem~\ref{th-sparsity}. We prove that, for
general $f\in M$, the set
\[
H_f:=
\bigcup_{\substack{Y\in\mathcal X_D^u\\
                  (Y,f)\in\mathcal Z_D^u}}Y
\]
is a proper algebraic subvariety of $\Pb^k$, possibly empty.
This comes from the following more general
transverse-intersection statement.

\begin{theorem}\label{th-mono-gorbo}
Let $D\geq1$, let $1\leq u\leq k-1$, and let $M$ be either
$\mathrm{End}_d^k$ or $\mathrm{Poly}_d^k$.
Let $\mathcal Z\subset\mathcal X_D^u\times M$ be a closed
algebraic subvariety such that the projection
\[
\rho\colon
\{(x,Y,f)\in\Pb^k\times\mathcal Z:x\in Y\}
\longrightarrow\Pb^k\times M,
\qquad
(x,Y,f)\longmapsto(x,f),
\]
is dominant.
Then there exist $(Y,f)\in\mathcal Z$ with $f\in\Omega_u$
and an integer $n\geq0$ such that
\[
f^n(Y)\text{ intersects }W^s_{p(f),loc}
\text{ transversely}.
\]
\end{theorem}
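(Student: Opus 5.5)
We reduce the statement to a local transversality problem near a fixed point of a single map in $\Omega_u$, using the dominance of $\rho$ to move points freely and then a monodromy/genericity argument to move tangent directions. Fix $f_0\in\Omega_u$ generic, in the sense that the fibre $\mathcal Z_{f_0}:=\{Y:(Y,f_0)\in\mathcal Z\}$ has the generic behaviour of $\mathcal Z$ over $M$. Since $\Omega_u$ is a nonempty Euclidean open set and $\rho$ is dominant, the image of $\rho$ contains a dense Zariski open subset of $\Pb^k\times M$. Hence we may pick $f_0$ such that the subvarieties in $\mathcal Z_{f_0}$ cover a dense Zariski open subset of $\Pb^k$. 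In particular, points of $W^s_{p(f_0),loc}$, or of preimages $f_0^{-n}(W^s_{p(f_0),loc})$, lie on some $Y\in\mathcal Z_{f_0}$. Transversality is an open condition, and the conclusion allows varying $f$. So it suffices to find $f$ near $f_0$, some $Y\in\mathcal Z_f$, a point $x\in Y\cap f^{-n}(W^s_{p(f),loc})$ at which $f^n$ is a local biholomorphism, and the transversality $T_xY+(Df^n_x)^{-1}T W^s_{p(f)}=T_x\Pb^k$. This is a condition on the $u$-plane $T_xY$, namely that it be transverse to an $(s+1)$-dimensional subspace with $(s+1)+u=k$.

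The key step is to show that the family of tangent planes $\{T_xY: Y\in\mathcal Z_f,\ x\in Y\}$ is not entirely contained in the non-transverse locus along the global stable manifold $W^s_{p(f)}=\bigcup_n f^{-n}(W^s_{p(f),loc})$. We argue by contradiction. Suppose that for all $f$ near $f_0$ and every $Y\in\mathcal Z_f$ and $x\in Y\cap W^s_{p(f)}$, the plane $T_xY$ meets $T_xW^s_{p(f)}$ nontrivially. Incidence and tangency are algebraic conditions on $(x,Y,f)$, and $W^s_{p(f)}$ is a leaf accumulating on $J_{s+1}$-type sets. By analytic continuation along $W^s_{p(f)}$, which is Zariski dense in $\Pb^k$ by the Zariski density criterion of Proposition~\ref{prop-zariski-dense-unstable} applied to $f^{-1}$-type stable manifolds, this degeneracy would propagate. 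We would obtain an algebraic distribution of "bad" $u$-planes that contains the tangent planes to $W^s_{p(f)}$ in a nontrivial way, and then a relation constraining the eigenvalues of $Df$ at $p(f)$ and at the periodic points of the blender, which are dense in $W^s_{p(f)}$ by the heterodimensional cycle. Varying $f$ within $\Omega_u$ and invoking the algebraic independence of multipliers of Gorbovickis--Taflin \cite{t-gorbovickis}, the tangent directions of $W^s$ at these periodic points move independently of the algebraic data of $\mathcal Z$. A monodromy argument over the finitely many branches of $\mathcal Z_f$ through a given point then gives the contradiction.

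More concretely, at a periodic saddle point $q$ whose stable manifold accumulates on $W^s_{p(f),loc}$ (via the cycle), linearize the stable directions. The plane $T_q W^s_q$ depends on $f$ through the eigenvectors of $Df^{per}_q$, which can be moved freely by perturbing $f$ near the orbit of $q$. Meanwhile, the $u$-planes $T_qY$ for $Y\in\mathcal Z_f$ through $q$ form an algebraic family varying algebraically with $(q,f)$. Genericity of eigenvectors gives transversality at $q$, and the inclination (lambda) lemma together with the dynamical directions of Section~\ref{sec-blender} transports it to transversality of some $f^n(Y)$ with $W^s_{p(f),loc}$.

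The main obstacle is this last transfer. One must ensure that perturbing $f$ to make eigen-directions generic is compatible with staying in $\Omega_u$, and that the $Y$ through $q$ is not forced, by the structure of $\mathcal Z$, to be tangent for all $f$. I would first try to handle it by choosing $q=p(f)$ itself, pulled back by a point of a homoclinic-type orbit, and then using independence of the $k$ multipliers at $p(f)$ to rotate $T W^s_{p(f)}$.
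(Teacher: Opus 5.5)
There is a genuine gap: the step that should produce transversality is never supplied, and the tools you invoke for it do not apply.

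\begin{itemize}
\item Proposition~\ref{prop-zariski-dense-unstable} concerns unstable manifolds of an endomorphism of $\Pb^k$. It cannot be applied to $f^{-1}$ to get Zariski density of $W^s_{p(f)}$.
\item The periodic points of $\Lambda(f)$ do not even lie on $W^s_{p(f)}$, since their orbits do not converge to $p(f)$. They also have index $(s,u+1)$, so a $u$-dimensional $Y$ cannot meet their $s$-dimensional stable manifolds transversely, because $s+u=k-1$.
\item Most importantly, ``genericity of eigenvectors gives transversality at $q$'' treats $Y$ as fixed while $f$ is perturbed. In fact the fibre $\mathcal Z_f$ moves with $f$. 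Since $\mathcal Z$ is an arbitrary algebraic family over $M$, it could a priori be correlated with the eigenstructure of $f$.
\item The independence of multipliers from \cite{t-gorbovickis} concerns eigenvalues. It says nothing about the position of $D_xf^m(T_xY)$ relative to the stable eigenspace.
\end{itemize}

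This is exactly the obstacle you name in your last paragraph, and you leave it open.

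The paper resolves it by working exactly at $p(f)$ and using monodromy of eigendirections.
\begin{itemize}
\item Take $f_0\in\Omega_u$ very general, so that $D_{p(f_0)}f_0$ has simple eigenvalues and $p(f_0)$ is not postcritical. Then the backward orbit of $p(f_0)$ is Zariski dense. It therefore meets the open set over which a general linear section of the incidence map is a finite covering with $x\in Y_{\rm reg}$.
\item This gives $(Y_0,x_0)$ with $f_0^m(x_0)=p(f_0)$, and a $u$-plane $T_0=D_{x_0}f_0^m(T_{x_0}Y_0)$ at $p(f_0)$.
\item By basis exchange, $T_0$ admits a complement $E_0$ spanned by $s+1$ eigenvectors of $D_{p(f_0)}f_0$.
\item The key input is \cite[Theorem~1.7]{t-gorbovickis}: any two eigendirections can be exchanged by continuation along a loop based at $f_0$ along which $p$ returns to itself. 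Combining such loops gives a loop carrying $E_0$ to $T_{p(f_0)}W^s_{p(f_0),loc}$.
\item On the finite cover recording $(Y,x,f,E)$, the condition $D_xf^m(T_xY)\oplus E=T_p\Pb^k$ is Zariski open and nonempty on the relevant component. It is therefore preserved along a generic lift of the loop.
\item At the endpoint, a possibly different sheet $Y_1$ has $f_0^m(Y_1)$ transverse to $W^s_{p(f_0),loc}$ at $p(f_0)$.
\end{itemize}

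It is this topological monodromy, not genericity or independence of eigenvalues, that rules out an algebraic correlation between $\mathcal Z$ and the stable directions of $f$.
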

The proof relies on the transitivity of the monodromy obtained by Gorbovickis and the second author \cite[Theorem~1.7]{t-gorbovickis}, not on the set of periodic points but on their eigendirections.

The next step is to show that $H_f$ is generically empty. This is much more delicate, and we achieve it only in dimension $2$ in the next section.
\subsection{Degenerations at infinity in $\mathcal X_D^{1}\times\mathrm{End}_d^2$}\label{sec-thb}
In this section, we only work in dimension $2.$ Therefore, $k=2$ and $u=1.$ 

To pass from a statement for general pairs in $\mathcal X_D^1\times\mathrm{End}_d^2$, as in Theorem~\ref{th-sparsity}, to a statement for general maps
in $\mathrm{End}_d^2$, as in Theorem~\ref{th-demarco2}, we need finer control of the exceptional locus $\mathcal Z_D^1$ than that obtained in Section~\ref{sec-coarse}.
More precisely, we show that, when this locus is chosen minimal, its projection to $\mathrm{End}_d^2$ is not dominant. To this end, we combine Theorem~\ref{th-ouvert-int}
with the following result.
\begin{theorem}\label{th-chasse-int}
Let $\mathcal Z \subset \mathcal X_D^{1} \times \mathrm{End}_d^2$ be an irreducible algebraic subvariety which projects dominantly onto $\mathrm{End}_d^2$. Then there exist $(Z,f)\in\mathcal Z$ and $n \geq 0$ such that $f \in \Omega_{1}$ and $f^n(Z)$ intersects $W_{p(f),loc}^s$ transversely.
\end{theorem}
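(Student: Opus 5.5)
We argue by contradiction, using degenerations at the boundary of $\mathrm{End}_d^2$. Suppose $\mathcal Z$ is irreducible and dominates $\mathrm{End}_d^2$, but for every $(Z,f)\in\mathcal Z$ with $f\in\Omega_1$ and every $n\geq0$, the curve $f^n(Z)$ never meets $W^s_{p(f),loc}$ transversely. Since $\Omega_1$ is open and $\mathcal Z$ dominates, the set $\mathcal Z\cap(\mathcal X_D^1\times\Omega_1)$ is a nonempty open subset of $\mathcal Z$. By Theorem~\ref{th-mono-gorbo}, the incidence map $\rho$ cannot be dominant on $\mathcal Z$. Hence, for general $f$, the curves in the fiber $\mathcal Z_f$ sweep out a proper subvariety of $\Pb^2$. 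Since $\mathcal Z$ dominates, this fiber is nonempty, so it consists of finitely many curves whose union is a curve $C_f$. After passing to a finite cover $\mathcal Z'\to\mathrm{End}_d^2$ of a Zariski open set, we may therefore assume $\mathcal Z$ is given by a section $f\mapsto Z_f$, with $Z_f$ an irreducible curve of degree at most $D$ depending algebraically on $f$. The assumption then says that, for $f\in\Omega_1$, every iterate $f^n(Z_f)$ is either disjoint from $W^s_{p(f),loc}$ or tangent to it. By analytic continuation along the dynamics, the same should hold at every saddle point of index $(1,1)$ in the robust cycle.

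The main step is to let $f$ degenerate to the boundary of $\mathrm{End}_d^2$, so that $\Omega_1$ and the blender become explicit. Recall that the blender in $\Omega_1$ is a perturbation of an affine iterated function system with a covering property. We choose a one-parameter algebraic family $f_t\in\mathrm{End}_d^2$, with $t\to0$, which after an affine rescaling $A_t$ converges locally to an affine or split model: a skew-product $(z,w)\mapsto(p(z),\,q_z(w))$ whose fibre maps are affine contractions and expansions realising the covering IFS. We arrange that $f_t\in\Omega_1$ for all small $t\neq0$, with the saddle $p(f_t)$ and its local stable manifold converging to explicit objects. Along this curve the section $Z_{f_t}$ has a limit in the Chow variety $\mathcal X_D^1$ after rescaling, since the Chow variety is proper. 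Working over the Puiseux field $\C((t))$ or with a valuative argument, we obtain a limiting curve $Z_0$ of degree at most $D$, together with a limiting rescaled dynamics.

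The contradiction should then come from the rigidity of the limit. Tangency or disjointness from $W^s_{p,loc}$ for all iterates and all saddles of the blender passes to the limit: the stable manifolds become explicit lines, namely fibres or graphs of affine maps. In the limit, $Z_0$ would therefore have to avoid, or be tangent to, a dense family of such lines, and Bézout's theorem with degree bounded by $D$ forces $Z_0$ to be a union of special lines, for instance invariant fibres. We then exploit the remaining freedom in choosing the degeneration. We use several degenerating directions, or a two-parameter family related by the monodromy of Theorem~\ref{th-mono-gorbo}, so that different degenerations force incompatible shapes on $Z_0$. Alternatively, we perturb the subleading term of $f_t$ so that the invariant fibre structure is destroyed while $Z_f$ still varies algebraically.

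The expected main obstacle is this last step: controlling how $Z_{f_t}$ degenerates relative to the rescaling $A_t$. The limit curve could collapse into the indeterminacy locus or into the line at infinity of the rescaled chart, where the blender gives no information. I would first try to choose the rescaling adapted to $Z_{f_t}$ itself, taking the valuation of its Chow coordinates into account. Using the affine nature of the blender, I would then show that some iterate $f_t^n(Z_{f_t})$, with $n$ bounded in terms of $D$, enters the blender region with a nondegenerate limit. Transversality with $W^s_{p(f_t),loc}$ for small $t$ would then follow by openness of transverse intersections.
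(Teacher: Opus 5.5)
Your reduction through Theorem~\ref{th-mono-gorbo} is correct: if the conclusion fails, the incidence map is not dominant, so a general fibre $\mathcal Z_f$ contains finitely many curves. Your B\'ezout heuristic for a pencil-preserving degeneration is also essentially Lemma~\ref{le-pinceau}. But the proof stops exactly where the difficulty begins.

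In such a limit, the preimages of $W^s_p$ are lines of the pencil through the indeterminacy point $I$. What passes to the limit is only the absence of transverse intersections at points where the maps and these preimages converge, and this excludes $I$. There genuinely are curves meeting all these lines only at $I$, namely unions of preperiodic lines of the pencil, starting with $L_\infty$. A limit $Z_0$ of this shape is not contradictory, and nothing in your sketch prevents $Z_f$ from degenerating to it. ``Several degenerations force incompatible shapes'' and ``a rescaling adapted to $Z_{f_t}$'' restate the problem rather than solve it.

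A useful test: nowhere do you use $\mathrm{End}_d^2$ rather than $\mathrm{Poly}_d^2$, yet over $\mathrm{Poly}_d^2$ the statement is false. Take $D=1$ and $\mathcal Z=\{H_\infty\}\times\mathrm{Poly}_d^2$: $H_\infty$ is totally invariant and disjoint from $W^s_{p(f),loc}\subset\Cb^2$. Any proof must therefore exploit a deformation destroying the invariance of $L_\infty$, together with a mechanism turning this into a contradiction.

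The paper supplies this mechanism, and none of its steps appears in your proposal.
\begin{itemize}
\item \emph{Rigidity.} Work in the explicit family \eqref{eq-family}, at a parameter $\lambda_*=(a,A_*,0)$ that breaks the pencil but keeps $L_\infty$ totally invariant. The strict images of a curve never meeting $W^s_p$ avoid $L_a\cap\Cb^2$, hence three lines through $I$. Projecting from $I$ gives maps from a fixed punctured curve to a thrice-punctured $\Pb^1$. Their finiteness, together with the absence of curves of periodic points, forces $Z=L_\infty$ (Proposition~\ref{prop-rigidity}).
\item \emph{Single-valuedness.} Proposition~\ref{prop-DMM} shows that, unless $L_a$ is persistently contained in some image, an iterate $f_\lambda^k(Z)$ is an invariant curve independent of $Z$. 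This makes the image family single-valued over the parameter plane. Your passage to a finite cover would lose exactly this, since a cover may have monodromy permuting its sheets.
\item \emph{Monodromy in $B$.} Along $\{A=0\}$, a loop around $B_*=-ac_a/4$ exchanges the fixed lines $L_\infty$ and $D_{-c_a}$ of the base map $g_B$. This contradicts single-valuedness (Theorem~\ref{th-inter}).
\item \emph{The persistent case.} When $L_a\subset f^m(Z)$, the paper uses a polynomial perturbation and Dujardin--Favre--Ruggiero (Proposition~\ref{prop-DFR}).
\end{itemize}

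Two further ingredients are also absent.
\begin{itemize}
\item You must upgrade a proper intersection to a transverse one. Openness of transversality only helps if the limit intersection is already transverse. The paper uses Fakhruddin's theorem (a very general endomorphism has no invariant curve, so $f^n(Z)$ cannot contain $W^s_{p(f),loc}$) and the Bedford--Lyubich--Smillie tangency lemma (Proposition~\ref{prop-trans}).
\item You must check that the perturbed maps satisfy all the assumptions of Section~\ref{sec-ass}, in particular $\Lambda(f)\subset J_2(f)$. This is no longer automatic once the fibration is broken, and requires the rescaling to correspondences of Theorem~\ref{th-chasse-julia}. Your ``arrange $f_t\in\Omega_1$'' skips this entirely.
\end{itemize}

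Finally, for $k=2$ one has $s=0$, so $\Lambda(f)$ is a repeller. The only relevant line-like stable manifolds are the preimages of $W^s_p$, not those of ``saddles of the blender''.
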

Applying the strategy explained in Section~\ref{sec-strat} to each irreducible component of $\mathcal Z_D^{1}$ which dominates $\mathrm{End}_d^2$, this theorem allows us to replace $\mathcal Z_D^{1}$ by a proper algebraic subvariety. We can then repeat the argument as long as the resulting set still dominates $\mathrm{End}_d^2$. By Noetherianity, this process must stop, and we eventually obtain Theorem~\ref{th-demarco2}.

The proof is much more involved than that of Theorem~\ref{th-mono-gorbo}. The main difficulty is that the open parameter set $\Omega_1$ is a priori localized
within $\mathrm{End}_d^2$, and, for a given pair $(Z,f)$ with $f\in\Omega_1$, the stable manifold of $p(f)$ occupies only a limited region of $\Pb^2$, possibly far from $Z$.

To overcome this difficulty, we exploit the flexibility of the blender construction to extend $\Omega_1$ towards the boundary of $\mathrm{End}_d^2$. Under a suitable
degeneration, the stable manifold $W^s_{p(f)}$ becomes a Zariski open subset of a line $L$. B\'ezout's theorem then forces every curve $Z$ to meet $L$.

The failure of proper intersection between $Z$ and the preimages of $W^s_{p(f)}$ can then be recast as a Manin--Mumford type problem, which can be solved in this restricted setting. Using results of Bedford--Lyubich--Smillie~\cite{BedfordLyubichSmillie2} and Fakhruddin~\cite{fakhruddin-generic}, one can then upgrade this intersection to a transverse one.

A further difficulty arises in this approach. In the present setting, the positivity criterion of Proposition~\ref{prop-positivevolume} requires the repelling
point under consideration to belong to the small Julia set $J_2(f)$. This is not automatic, and we have so far relied on skew-product structures to ensure this property. The perturbations considered here, however, no longer preserve a fibration, making this membership substantially more delicate to establish. Doing so is an important step in the proof of Theorem~\ref{th-chasse-int}, see Section~\ref{sec-chasse-julia}.

To this end, we combine an entropy criterion due to de Th\'elin~\cite{deThelin05} and Dinh~\cite{Dinh07} with a rescaling argument in the spirit of rescaling limits. The mechanism is similar to that of the iterated function system used to construct the blender in Section~\ref{sec-existence}. Here, however, the limiting object is no longer locally invertible
and can instead be interpreted as a correspondence on $\Pb^1$. By analyzing its inverse branches, we obtain sufficiently many such branches for the approximating maps before passing to the limit.

\subsection{From subvariety control to uniform bounds in the affine case}
\label{sec-thc}

A first observation about Theorem~\ref{th-unif} is that
the most important case is that of hypersurfaces, since
every proper subvariety of degree $D$ is contained in
a hypersurface of the same degree. Nevertheless, our proof
requires induction and uses the result in every codimension.
Indeed, the arithmetic result~\cite[Theorem~7.2]{gtv}
does not directly bound the number of preperiodic points.
Instead, following~\cite{GGK}, it places them in a proper
subvariety of uniformly bounded degree. The important point
is that there is no a priori control on the position of
this subvariety, nor on its dependence on the parameter.

Let us briefly explain the arithmetic argument.
For a family $\mathcal Y$ satisfying our positivity
criterion, set $m:=\dim\mathcal Y$ and consider the fibered
power $\hat{\mathcal Y}^{[m+1]}$, endowed with the sum
of the canonical heights of its coordinates.
A generic sequence of affine tuples whose heights tend
to zero would equidistribute, through its Galois orbits,
towards the normalization of
\[
\eta:=
\hat T_{\hat f^{[m+1]}}^{\wedge(u(m+1)+m)}
\wedge[\hat{\mathcal Y}^{[m+1]}].
\]
The non-negativity of the local Green functions forces
the Galois averages of the complex Green function to tend
to zero. However, our positivity criterion gives a strictly
positive integral of this function against $\eta$,
yielding a contradiction.

Thus, sufficiently small tuples lie in a proper closed
subset $\mathcal V\subsetneq\hat{\mathcal Y}^{[m+1]}$.
For a general pair $(Y,f)\in\mathcal Y$, the set $\Sigma_{Y,f}$
of closed points in $Y$ of sufficiently small height therefore satisfies
\[
\Sigma_{Y,f}^{m+1}\subseteq\mathcal V_{Y,f}\subsetneq Y^{m+1}.
\]
By~\cite[Lemma~4.3]{GGK}, this places $\Sigma_{Y,f}$ in a proper
subvariety $W_{Y,f}\subsetneq Y$ of uniformly bounded degree.
In particular, $W_{Y,f}$ contains all preperiodic points
lying in $Y$. An important point here is that the dependence
of $W_{Y,f}$ on $(Y,f)$ can be very wild. However, we would like
to apply induction to its irreducible components.

By Theorem~\ref{th-mono-gorbo}, we can repeat the arithmetic argument on any exceptional family whose members cover
$\Pb^k$ for general $f$. Continuing in this way, the remaining exceptional subvarieties are all contained, for general $f$, in a proper algebraic subvariety of $\Pb^k$, which we may assume to be a hypersurface $\mathcal H_D(f)$.

For each positive-dimensional component of $W_{Y,f}$, we thus either continue the induction or include it in $\mathcal H_D(f)$. Only finitely many degree bounds arise, so a single algebraically varying hypersurface of uniformly bounded degree, also containing $H_\infty$, suffices.
Outside it, the induction ends in dimension zero and gives the required uniform bound. A density argument extends
the conclusion to complex parameters (see Section~\ref{sec-unif}).

\subsection{Further developments and conjectures}

As we have seen, finer control of the exceptional locus $\mathcal Z_D^u$ in Theorem~\ref{th-sparsity} leads to stronger conclusions, and we believe there is plenty of work to be done in this direction. A natural way to obtain such control is to extend the transverse-intersection statement of Theorem~\ref{th-chasse-int} to arbitrary dimensions and codimensions, both in the projective and polynomial settings.

\begin{conjecture}\label{conj-dominant}
Let $D\geq1$, let $1\leq u\leq k-1$, and let $M$ be either
$\mathrm{End}_d^k$ or $\mathrm{Poly}_d^k$.
Let $\mathcal Z\subset\mathcal X_D^u\times M$ be an
irreducible algebraic subvariety whose projection to $M$
is dominant. If $M=\mathrm{Poly}_d^k$, assume in addition
that $Y\not\subset H_\infty$ for a general
$(Y,f)\in\mathcal Z$.
Then there exist $(Y,f)\in\mathcal Z$ with $f\in\Omega_u$
and an integer $n\geq0$ such that
\[
f^n(Y)\text{ intersects }W^s_{p(f),loc}
\text{ transversely}.
\]
\end{conjecture}
Although the transversality result of Bedford--Lyubich--Smillie~\cite[Lemma~6.4]{BedfordLyubichSmillie2}
is not available in higher dimensions, a proper intersection is actually sufficient for this statement. However, upgrading a non-proper intersection to a proper one is a more serious difficulty in this problem.

For $M=\mathrm{End}_d^k$, this statement would imply that the minimal
exceptional locus $\mathcal Z_D^u$ does not project
dominantly onto $M$, thereby establishing the
DeMarco--Mavraki sparsity conjecture in all dimensions.

In the polynomial setting, it would imply the following uniform dynamical Manin-Mumford result.
\begin{conjecture}\label{conj-unif}
For every $D\geq1$, there exist an integer $B\geq0$
and a dense Zariski open subset
$\mathcal U_D\subset\mathrm{Poly}_d^k$ such that, for every
$f\in\mathcal U_D$, every $1\leq u\leq k-1$, and every
$Y\in\mathcal X_D^u$, we have
\[
\#\bigl(\mathrm{Preper}(f)\cap Y\cap\Cb^k\bigr)\leq B.
\]
\end{conjecture}
These statements leave open the geometry of the projected
exceptional loci. In particular, the Zariski open sets
in these conclusions are allowed to depend on $D$.
We expect that they could be chosen independently of $D$,
with the bound $B$ still depending on $D$, but such a
strengthening lies beyond the reach of our present methods.

Finally, these questions should be distinguished from
the much broader conjecture of DeMarco and
Mavraki~\cite[Conjecture~1.1]{udmm}, which characterizes
the Zariski density of preperiodic points in arbitrary
algebraic families in terms of dynamical rank and
relative special dimension.
Our approach relies crucially on the freedom to vary
the endomorphism over the full parameter space.
The study of arbitrary families, including the dynamical
Manin--Mumford conjecture for a fixed map, lies beyond
the scope of the methods developed here.

\subsection{Organization of the paper}
The paper is organized as follows. Section~\ref{sec-dir} is dedicated to studying (non-holomorphic) linearization for saddle periodic points and their dependence to a parameter. In Section~\ref{sec-graph}, we state basic properties about the geometric framework of horizontal/vertical relations and families of horizontal-like maps. In Section~\ref{sec-courant}, we recall the construction and the properties of families of maps endowed with a family of subvarieties, as well as their arithmetic properties and their bifurcation currents and measure, and we show a general positivity criterion for bifurcation currents based on intersections with stable manifolds.

In Section~\ref{sec-blender}, we show that suitable blender configurations yield a transverse-intersection criterion
for maximal bifurcations. Section~\ref{sec-existence} constructs such configurations in every dimension and codimension.

Section~\ref{sec-chasse} treats the case of curves in $\Pb^2$. We prove Theorem~\ref{th-chasse-int} by combining degeneration, rigidity, and monodromy arguments to obtain the required transverse intersections. A rescaling toward correspondences, developed in Section~\ref{sec-chasse-julia}, establishes the small Julia set condition needed to apply our positivity criterion.

Section~\ref{sec-size} studies the size of the exceptional set in arbitrary dimension and codimension. Finally, Section~\ref{sec-mainresult} brings these ingredients
together to prove the main results.

\subsection{Conventions}

Unless otherwise specified, we work over $\C$ and identify
algebraic varieties with their sets of complex points.
Dimensions and codimensions are complex. Topological notions
such as openness, closure and density refer to the usual
complex topology, unless the Zariski topology is explicitly
specified.

On an irreducible algebraic parameter space, a property holds
for a \emph{general} or \emph{generic} parameter if it holds
on a nonempty Zariski open subset. It holds for a
\emph{very general} parameter if it holds outside a countable
union of proper Zariski closed subsets.
Other notions of genericity are explicitly specified.

\subsection*{Acknowledgement}
We heartily thank Gabriel Vigny for very fruitful discussions at an early stage of this project. The first author's research is partially supported by the Institut Universitaire de France. This work was partially funded by the French National Research Agency under the project DynAtrois (ANR-24-CE40-1163).

\subsection*{AI disclosure} This project began at the \'Etats de la recherche "Arithmetic, Algebraic and Analytic Dynamics", held at CIRM in January 2025. The overall strategy behind Theorems A-D was already in place by the summer of 2025. Since then, we have used OpenAI's ChatGPT to assist in polishing the presentation of the paper and checking the proofs. Notably, in spring 2026, GPT-5.5 identified gaps in our arguments, since filled by the authors.

\section{Dynamical directions}\label{sec-dir}
We emphasize that, in this section, $s$ and $u$ denote integers satisfying $s+u=k$. This differs from the convention used in the rest of the article, in particular in
Section~\ref{sec-main-blender}, where $s+u+1=k$. The results of
this section will be applied to the periodic points $p(f)$ and
$r_0(f)$, of indices $(s+1,u)$ and $(s,u+1)$, respectively.

\bigskip

Since holomorphic linearizations are not always available for saddle fixed points, the purpose of this section is to construct linearization maps whose differentials are $\Cb$-linear along the stable and unstable manifolds, under suitable bunching and non-resonance assumptions. A key intermediate step will be the construction of distinguished invariant directions along these manifolds, which we call \emph{dynamical directions}. This construction is inspired by Dujardin's notion of \emph{zero dynamical slope} \cite[Proposition~4.5]{dujardin-homoclinic}, which itself is based on ideas of Takens \cite{takens}.

Along the stable manifold, these directions coincide with the unstable eigendirections coming from the linearization. The crucial point, however, is that they are complex directions depending holomorphically on both the base point and the parameter. They will play an important role in the sequel, in particular for producing bifurcations when the submanifold varies while the dynamics is kept fixed (see Section~\ref{sec-subvar}).

In order to state the result, we first need to introduce the main assumptions. Throughout this section, we consider a complex manifold $M$, that we assume to be connected and simply connected, in order to avoid monodromy issues. We view $M$ as a parameter space for a holomorphic family of germs $(f)_{f \in M}$
of holomorphic maps on $\mathbb{C}^k$, each preserving a family of fixed points $(p(f))_{f \in M}$. We assume that these fixed points are all of saddle type, of index $(s,u)$ with $s+u=k$. That is, we assume that the multipliers, i.e., the eigenvalues of $D_{p(f)}f$, satisfy
\begin{equation*}\label{eq-multi}
0<|\lambda_1(f)|\leq\cdots\leq|\lambda_s(f)|<1<|\rho_1(f)|\leq\cdots\leq|\rho_u(f)|.
\end{equation*}

The main assumptions for this section depend on an integer $\ell\geq4$ and are the following. The first one is a bunching-type condition.
\begin{equation}\tag{$\mathcal B_\ell$}\label{eq-bun}
\begin{minipage}{0.9\linewidth}
There exist constants $0<a<b<1$ such that $b^{\ell-1}<a^2$, and such that for all
$1\leq i\leq s-1$, $1\leq j\leq u-1$, and $f\in M,$
\[|\lambda_1(f)|,\ |\lambda_s(f)|,\
\left|\frac{\lambda_i(f)}{\lambda_{i+1}(f)}\right|,\
|\rho_1(f)|^{-1},\  |\rho_u(f)|^{-1},\
\left|\frac{\rho_{j}(f)}{\rho_{j+1}(f)}\right|
\in [a,b].\]
\end{minipage}
\end{equation}
The second assumption concerns resonance relations among the multipliers.
\begin{equation}\tag{$\mathcal R_\ell$}\label{eq-res}
\begin{minipage}{0.9\linewidth}
For any multi-index
$N=(m_1,\ldots,m_s,n_1,\ldots,n_u)\in\Zb^{s+u}$ with
\[2\leq |N|:=|m_1|+\cdots+|m_s|+|n_1|+\cdots+|n_u| \leq 2\ell^2+1,\]
we have for all $f\in M$
\[|\lambda_1(f)^{m_1}\cdots\lambda_s(f)^{m_s}\,
\rho_1(f)^{n_1}\cdots\rho_u(f)^{n_u}|\neq1.\]
\end{minipage}
\end{equation}
Notice that we allow negative coefficients in the multi-indices $N$. There are two reasons for this.
First, in Proposition \ref{prop-dynaslope}, we will consider fixed points in the projectivized tangent bundle over
$W^s_{p(f),loc}$ (or $W^u_{p(f),loc}$), where the multipliers are quotients of the form
$\lambda_i/\rho_j$ or $\rho_j/\rho_i$.
Second, the proof of Proposition~\ref{prop-bunching} relies on the fact that
\[|\lambda^m\rho_i|\neq|\lambda_j|
\quad\text{and}\quad
|\lambda^m|\neq|\lambda^{m'}|\]
for all multi-indices $m,m'\in\Nb^{s}$ with $|m|,|m'|\leq\ell$ and $m\neq m'$.

However, condition~\eqref{eq-res} ensures that the standard \emph{strong Sternberg condition of order}
$2\ell^2$ (see \cite[Section~VII]{sell}) is satisfied. Moreover, condition~\eqref{eq-bun}
implies that the \emph{$2\ell^2$-smoothness} of $D_{p(f)}f$ in the sense of Sell is greater than $\ell$ (see the proof of Proposition \ref{prop-cderiv} for the details).
As a consequence, by Sell's results \cite[Theorems~5 and~9]{sell}, the fixed point $p(f)$
is $C^\ell$-linearizable, with a linearization depending continuously on $f$.

\begin{proposition}\label{prop-cderiv}
Let $(p(f))_{f\in M}$ be a family of saddle fixed points satisfying conditions \eqref{eq-bun} and \eqref{eq-res}. There exists a family of $C^2$-linearization maps $(\theta_f)_{f\in M}$, depending continuously on $f$ in the $C^2$ topology, such that for every $y\in W^u_{p(f),loc}\cup W^s_{p(f),loc},$ the differential $D_y\theta_f$ is $\Cb$-linear and depends holomorphically on both $y$ and $f$.
\end{proposition}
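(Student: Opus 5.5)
The plan is to start from Sell's $C^\ell$ linearization, which is available under \eqref{eq-bun} and \eqref{eq-res} as explained above. We then correct it so that its first-order jet along $W^s_{p(f),loc}\cup W^u_{p(f),loc}$ equals a jet built entirely from holomorphic data. Write $A_f=\diag(\lambda_1(f),\dots,\lambda_s(f),\rho_1(f),\dots,\rho_u(f))$, assuming (after a holomorphic change of frame, since $M$ is simply connected) that $D_{p(f)}f$ is diagonalizable with holomorphically varying eigenvectors; the nondegeneracy needed for this is ensured by \eqref{eq-res}.

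\textbf{Step 1 (holomorphic linearization on the invariant manifolds).} The restriction $f|_{W^s_{p(f),loc}}$ is a holomorphic contraction with multipliers $\lambda_1,\dots,\lambda_s$, and the stable manifolds form a holomorphic family in $f$. A resonance $\lambda_j=\lambda^m$ with $m\in\Nb^s$ forces $|m|\leq \log a/\log b$. By \eqref{eq-bun} this bound is at most $\ell$, so such resonances are excluded by \eqref{eq-res}. The Poincar\'e--Koenigs theorem then gives a holomorphic linearization $\varphi^s_f\colon W^s_{p(f),loc}\to\Cb^s$, tangent to the identity at $p(f)$. The usual fixed-point proof (a contraction on a space of holomorphic maps) shows that it depends holomorphically on $f$. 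The same argument applied to $f^{-1}|_{W^u_{p(f),loc}}$ gives $\varphi^u_f$.

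\textbf{Step 2 (dynamical directions).} Over $W^s_{p(f),loc}$ we look for a field $y\mapsto E^u_f(y)$ of $u$-planes transverse to $T_yW^s$. We require it to be $Df$-invariant, so that $D_yf(E^u_f(y))=E^u_f(f(y))$, and to agree with the unstable eigenspace at $p(f)$. Following Takens and Dujardin's zero dynamical slope, we view it as an invariant section of the bundle map induced by $(Df)^{-1}$ on the Grassmannian bundle over the inverse dynamics. Equivalently, we write $E^u_f(y)$ as the graph of a linear map $\Cb^u\to T_yW^s$ and obtain it by pulling back along the orbit $y, f(y),\dots\to p(f)$. The fibre multipliers are the ratios $\lambda_i/\rho_j$, of modulus at most $b^2<1$, so the graph transform contracts. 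The section is then a uniform limit of sections that are holomorphic in $(y,f)$, hence holomorphic.

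Next we linearize the induced linear cocycle $D f|_{E^u}$ over $f|_{W^s}$, working in $\varphi^s_f$-coordinates. We look for $L_{f,y}\colon\Cb^u\to E^u_f(y)$ with $D_yf\circ L_{f,y}=L_{f,f(y)}\circ\diag(\rho)$. Expanding in powers of the stable coordinate, the obstructions are the relations $\rho_j/\rho_i=\lambda^m$, and \eqref{eq-res} excludes these in the relevant range of $|m|$. Indeed, \eqref{eq-res} allows negative exponents precisely so that it covers such quotients. We solve finitely many homological equations formally and then use a contraction for the tail. The same construction over $W^u$ produces stable directions $E^s_f(y)$ and frames $L'_{f,y}$.

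\textbf{Step 3 (prescribed $1$-jet).} We define a holomorphic target differential along the axes:
\[
\Theta_{f}(y):=D_y\varphi^s_f\oplus L_{f,y}^{-1}\quad\text{on }T_yW^s\oplus E^u_f(y),
\]
and analogously on $W^u$. By construction $\Theta_f(f(y))\circ D_yf=A_f\circ\Theta_f(y)$, and $\Theta_f$ is $\Cb$-linear and holomorphic in $(y,f)$.

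Now take Sell's linearization $\theta^0_f$, which is $C^\ell$ and continuous in $f$, and normalize it so that it already equals $\varphi^s_f$ on $W^s$ and $\varphi^u_f$ on $W^u$. This is possible because on each axis both maps linearize the same map, and by non-resonance the transition between them is a linear map commuting with the restricted linear part. The remaining discrepancy lies in the normal derivatives. Along the stable axis it is a $C^{\ell-1}$ map $g_f\colon\Cb^s\to \mathrm{Hom}(\Cb^u,\Cb^s)$ satisfying the twisted equivariance $g_f(\lambda x)=\diag(\lambda)\,g_f(x)\,\diag(\rho)^{-1}$. We set $h_f(x,v):=(x+g_f(x)v,\;v)$ near the stable axis, add a symmetric term near the unstable axis, and glue the two with a cutoff in the ``hyperbolic time'' variable $\log|x|/\log|v|$, which is $A_f$-invariant. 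The resulting $h_f$ commutes with $A_f$, and $\theta_f:=h_f^{-1}\circ\theta^0_f$ has the prescribed differential along both axes.

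The main obstacle is this last gluing step: checking that $h_f$ is $C^2$ at the origin and near the axes, with continuous dependence on $f$. The cutoff is only smooth away from the axes, so one needs estimates of the form $|D^jg_f(x)|\lesssim|x|^{\alpha-j}$ coming from the twisted equivariance. The bunching inequality $b^{\ell-1}<a^2$ should be exactly what guarantees that the loss of derivatives (from $\ell$ down to $2$) is affordable. If this direct gluing fails, the fallback is to redo Sell's construction with the jet $\Theta_f$ imposed as a boundary condition in his invariant-manifold approach, at the cost of a heavier argument.
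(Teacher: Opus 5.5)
\textbf{The gap is in Step 3.} The whole argument rests on the gluing $h_f$, which you leave unproved, and as set up it would not work even if the regularity held. First, $\log|x|/\log|v|$ is not $A_f$-invariant: both logarithms shift by additive constants under $A_f$, so the ratio changes. Second, $h_f(x,v)=(x+g_f(x)v,v)$ only removes the $\mathrm{Hom}(\Cb^u,\Cb^s)$-tilt of $D\theta^0_f(E^u_f(y))$. It does not correct the real-linear discrepancy inside $\Cb^u$ between $\pi_u\circ D\theta^0_f|_{E^u_f(y)}$ and $L_{f,y}^{-1}$. So even a successful gluing would not give differential $\Theta_f$.

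The missing idea is that no correction is needed: the twisted equivariance you wrote down already forces $g_f\equiv0$. Iterating it gives $g_f(x)=\diag(\lambda)^{-n}g_f(\diag(\lambda)^nx)\diag(\rho)^n$. Consider a Taylor term $x^\alpha\bar x^\beta$ of the $C^{\ell-1}$ map $g_f$ linking the $j$-th unstable to the $k$-th stable coordinate. It can only survive if $|\lambda^{\alpha+\beta}|\,|\rho_j|=|\lambda_k|$, which \eqref{eq-res} and hyperbolicity exclude for $|\alpha|+|\beta|\leq\ell-2$. Together with $g_f(0)=0$, this gives $\|g_f(\diag(\lambda)^nx)\|=O(b^{(\ell-1)n})$. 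Hence
\[
\|g_f(x)\|\leq\|\diag(\lambda)^{-n}\|\,\|g_f(\diag(\lambda)^nx)\|\,\|\diag(\rho)^n\|\leq C\,(a^{-2}b^{\ell-1})^n\longrightarrow0
\]
by \eqref{eq-bun}. The same argument kills the deviation from $\id$ of the $\Cb^u$-block. So your normalized Sell linearization already has differential $\Theta_f$ along both axes, and $h_f$ is unnecessary. The bunching inequality is not a budget for losing derivatives in a gluing; it is exactly what makes the invariant discrepancy vanish.

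This rigidity is the paper's Proposition~\ref{prop-bunching}. Once $f$ is put in standard form via the dynamical directions, any $C^\ell$ linearization with $D_{p(f)}\theta_f=\id$ automatically sends these directions to the coordinate axes. It is the identity on stable directions, and on the $i$-th unstable direction it is multiplication by a holomorphic infinite product $\delta_i(f,x)$. The paper thus never needs your cocycle frames $L_{f,y}$.

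\textbf{A secondary, repairable error is in Step 2.} The graph transform does not contract. Pulling back along $y,f(y),\ldots\to p(f)$ acts on slopes by $L\mapsto\diag(\lambda)^{-1}L\,\diag(\rho)$, which expands. The contracting map $L\mapsto\diag(\lambda)L\,\diag(\rho)^{-1}$ requires the section on the larger set $f^{-1}(W^s_{p(f),loc})$. In fact, continuous invariant sections through $E^u_{p(f)}$ are not unique: in the linearized scalar picture, $L(\lambda x)=(\lambda/\rho)L(x)$ has many continuous solutions of the form $|x|^\gamma\psi$ with $|\lambda|^\gamma=|\lambda/\rho|$. So no $C^0$ contraction can produce your field. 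Holomorphy must enter, either through the formal-solution-plus-tail argument you already use for the frames, now applied to the plane field, or through Poincar\'e--Dulac linearization at the attracting fixed point of the induced map on the projectivized bundle, as in Proposition~\ref{prop-dynaslope}.
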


There are two proofs of this proposition, following parallel but conceptually different approaches. The first one uses the non-resonance assumptions to eliminate low-order nonlinear terms of $f$ by a holomorphic change of coordinates, and then applies a $C^\ell$ linearization for $\ell\geq1$ sufficiently large. Arguing as in \cite[Lemma~3.10]{gtv}, the bunching condition then implies that the differential is $\Cb$-linear along the invariant manifolds, with the required dependence properties.

The second proof adopts a more geometric point of view. It first constructs invariant complex directions along the stable manifold by adapting Dujardin's zero dynamical slope argument to higher dimension, using the non-resonance assumptions. This step directly yields the correct holomorphic dependence on $y$ and $f$. One then shows that, under a suitable bunching condition, these dynamical directions must coincide with those arising from a $C^\ell$ linearization.

In this section, we follow this second approach, which is geometrically more natural.
We first restrict the problem to $W^s_{p(f),loc}$. Following Dujardin~\cite[Proposition~4.5]{dujardin-homoclinic}, we consider the dynamics on the projectivized tangent bundle $\Pb T\Cb^k$ over this manifold. This bundle is canonically isomorphic to
\[E:=W^s_{p(f),loc}\times\Pb^{k-1},\]
and we denote by $f_*$ the induced action of $f$ on $E$, defined by
\[f_*(x,v)=(f(x),D_xf(v)).\]
The action on the fiber above $p(f)$ is given by $D_{p(f)}f$, and therefore has exactly $k$ fixed points, corresponding to the $k$ eigendirections of $D_{p(f)}f$. Observe that,  condition \eqref{eq-bun} implies that the eigenvalues satisfy
\[0<|\lambda_1(f)|<\cdots<|\lambda_s(f)|<1<|\rho_1(f)|<\cdots<|\rho_u(f)|,\]
and we denote by $e_1^s(f),\ldots,e_s^s(f)$ and $e_1^u(f),\ldots,e_u^u(f)$ the corresponding eigendirections in $\Pb^{k-1}$.
\begin{proposition}\label{prop-dynaslope}
Under conditions \eqref{eq-bun} and \eqref{eq-res}, for each $1\leq i\leq u$, there exists a holomorphic section $\mathcal E_f^{u,i}$ of the bundle $E$, invariant under $f_*$, depending holomorphically on $f$, and satisfying
\[\mathcal E_f^{u,i}(p(f))=e_i^u(f).\]
\end{proposition}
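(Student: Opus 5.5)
Following Dujardin's zero dynamical slope argument, I would obtain $\mathcal E_f^{u,i}$ as the unique invariant section of $f_*$ that is attracting in the fiber direction, built by graph transform along the contracting base $W^s_{p(f),loc}$, and then check holomorphy in $(x,f)$.

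\textbf{Step 1: dynamics on the fiber over $p(f)$.} Near the point $e_i^u(f)\in\Pb^{k-1}$, take affine coordinates in which $D_{p(f)}f$ is diagonal. The projective action then has multipliers $\mu/\rho_i$, where $\mu$ runs over the other eigenvalues $\lambda_1,\dots,\lambda_s,\rho_j$ for $j\neq i$. For $j>i$ these multipliers are expanding, while for $\lambda_m$ and for $j<i$ they are contracting. So $e_i^u(f)$ is a fixed point of saddle type in the fiber, and a naive graph transform toward an attracting fixed point does not apply.

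\textbf{Step 2: reduce to a fixed point with a normally attracting fiber.} I would restrict to an $f_*$-invariant sub-bundle. Take the flag $F_i$ spanned by $e^s_1,\dots,e^s_s,e^u_1,\dots,e^u_i$. Its natural extension along $W^s_{p(f),loc}$ is the family of tangent spaces to the $f$-invariant strong unstable-type foliation, or equivalently the pseudo-unstable manifolds $W^{s\oplus u_{\le i}}$.

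Holomorphic invariant manifolds tangent to such dominated splittings exist, possibly only $C^\ell$ in general. To avoid this regularity issue, I would instead work directly in the Grassmannian or flag bundle. This is the natural higher-dimensional analogue of Dujardin's slope argument.

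Concretely, I would consider the bundle over $W^s_{p(f),loc}$ whose fiber is the space of complete flags, or alternatively the pair of Grassmannians (span of the first $i$ unstable directions, the $i$-th line inside it). Over $p(f)$, the flag determined by the ordering of the multipliers
\[|\rho_u|>\dots>|\rho_1|>|\lambda_s|>\dots\]
is the unique attracting fixed point for the inverse, and it is repelling for forward dynamics. The cleanest version is to use the inverse of $f$ restricted to the fibers while moving along $W^s$ forward: the fiber maps $(D_xf)^{-1}$ act on flags, and the flag opposite to the unstable ordering is attracting.

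Since the base dynamics $f|_{W^s}$ contracts toward $p(f)$, I would define the section as a limit of pullbacks:
\[
\mathcal E(x)=\lim_n (D_xf^n)^{-1}\bigl(\text{ref flag at } f^n(x)\bigr).
\]
The fiber contraction rates are the ratios of consecutive multipliers, which lie in $[a,b]$ by \eqref{eq-bun}, and the base contraction is at most $b$. I expect the fiber-contraction theorem (Hirsch--Pugh--Shub), applied in the holomorphic category, to give a continuous invariant section. It is holomorphic, since it is a uniform limit of holomorphic sections, and the same holds jointly in $f$. The direction $e_i^u$ is then read off from the flag as an invariant line: for instance, $\mathcal E^{u,i}$ is the intersection of the $i$-th unstable-ordered subspace with the invariant complement.

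\textbf{Main obstacle.} The fiber contraction must dominate the base contraction, or the graph transform must be run in jets. The base is contracted by rates $|\lambda_m|$, which can be smaller than fiber ratios like $|\rho_j/\rho_{j+1}|$. Uniform convergence of holomorphic sections still works without domination, because the base contracts. The real issue is ensuring that the limit is a section rather than degenerating, which happens when fiber-expansion eigenvalues interact with base contraction through resonances.

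This is where I would use Dujardin's approach. First, use \eqref{eq-res} and Poincaré--Dulac to put the germ of $f_*$ at $(p(f),e_i^u)$ into a polynomial normal form, holomorphically in $f$. Non-resonance for quotients $\lambda_m/\rho_j$ and $\rho_j/\rho_i$ up to order $2\ell^2+1$ removes all terms obstructing an invariant graph up to order $\ell$. Then solve the invariance equation for the section as a power series in $x\in W^s$. At each order the homological equation is solvable because the relevant products of multipliers are $\neq1$, and \eqref{eq-bun} with $b^{\ell-1}<a^2$ makes the remaining tail a contraction on high-order sections. Finally, push the local section forward and backward by $f_*$ to all of $W^s_{p(f),loc}$.
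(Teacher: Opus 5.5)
Drop Step~2: it cannot produce $\mathcal E_f^{u,i}$. For $(D_{p(f)}f)^{-1}$ acting on complete flags, the attracting flag is $\langle e_1^s\rangle\subset\langle e_1^s,e_2^s\rangle\subset\cdots$. It is built from the multipliers of $D_{p(f)}f$ of \emph{smallest} modulus first. The flag starting with $e_u^u$ is attracting for $D_{p(f)}f$, not for its inverse, contrary to what you write.

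Consequently the inverse graph transform $x\mapsto(D_xf)^{-1}\mathcal V(f(x))$ only yields fiberwise attracting objects. An example is the continuation of $\langle e_1^s,\dots,e_s^s,e_1^u,\dots,e_i^u\rangle$ along $W^s_{p(f),loc}$, and such objects do not single out the line $e_i^u$. For that you would need an invariant complement, e.g.\ the continuation of $\langle e_i^u,\dots,e_u^u\rangle$, which is a repelling point of the inverse fiber maps. In $\Pb^{k-1}$ itself, $e_i^u$ is a saddle for $(D_{p(f)}f)^{-1}$ (repelling if $i=u$). So $\lim_n(D_xf^n)^{-1}(\text{ref})$ converges, for a generic reference, to the continuation of $e_1^s$, not to $\mathcal E_f^{u,i}$. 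Constructing that complement is exactly the non-dominated invariant-manifold problem where \eqref{eq-res} is indispensable, so the fiber contraction theorem does no work here.

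Your last paragraph, by contrast, is a correct proof, by a route different from the paper's. In an affine chart centred at $e_i^u(f)$, you solve the graph equation $\sigma=\Delta^{-1}\bigl[\sigma\circ f-c-N(\cdot,\sigma)\bigr]$, where $\Delta=\diag(\delta_j)$ collects the fiber multipliers. The key estimate, which you should state explicitly since it is where \eqref{eq-bun} enters, is $|\delta_j|\geq a^2$ for the contracting fiber multipliers:
\[
|\lambda_j/\rho_i|\geq|\lambda_1||\rho_u|^{-1}\geq a^2,
\qquad
|\rho_j/\rho_i|\geq|\rho_u|^{-1}\geq a \quad (j<i).
\]
The homological equations $(\lambda^\alpha-\delta_j)\sigma_{\alpha,j}=\cdots$ are then solvable for $|\alpha|\leq\ell-2$ by \eqref{eq-res}, since these are relations of order at most $\ell$. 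They are solvable automatically for $|\alpha|\geq\ell-1$, since $|\lambda^\alpha|\leq b^{\ell-1}<a^2\leq|\delta_j|$. The tail is then a contraction with factor close to $b^{\ell-1}/a^2$, uniformly in $f$.

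The paper instead argues on the stable manifold of the saddle point $(p(f),e_i^u(f))$ of $f_*$, which has index $(2s+i-1,u-i)$. All multipliers there lie in $[a^2,b]$, so any resonance would have order at most $\ell-2$ and is excluded by \eqref{eq-res}. It then linearizes holomorphically on that stable manifold by Poincar\'e--Dulac, holomorphically in $f$. The linear subspace of the eigenvalues $\lambda_1,\dots,\lambda_s$ is a graph over $W^s_{p(f),loc}$, extended by invariance.

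The paper's argument is shorter because it quotes the stable manifold and linearization theorems. Yours is self-contained and only needs the base-versus-fiber relations $\lambda^\alpha\neq\delta_j$, not relations among the $\delta_j$. It also gives the uniqueness of Remark~\ref{rk-dynaslope} directly. The preliminary Poincar\'e--Dulac normalization in your plan is superfluous; at a saddle only a finite-order normalization is available anyway.
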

We shall refer to these sections as the \emph{unstable dynamical directions} along $W^s_{p(f),loc}$. Since there also exist natural stable directions along $W^s_{p(f),loc}$, the full set of \emph{dynamical directions} consists of the collection of these stable and unstable sections. In what follows, we view them either as sections of $E$ or as rank-$1$ subbundles of the tangent bundle restricted to $W^s_{p(f),loc}.$

\begin{proof}
Let us fix an unstable eigenvalue $\rho_i(f)$ for some $1\leq i\leq u$. We choose homogeneous coordinates on $\Pb^{k-1},$ depending holomorphically on $f,$ so that
\[e_i^u(f)=[1:0:\cdots:0],\]
while the remaining eigendirections are represented by the other standard coordinate points, indexed according to the ordering of the remaining eigenvalues. It follows that the differential of $f_*$ at the fixed point
\[\hat e_i^u(f):=(p(f),e_i^u(f))\]
is conjugate, after a suitable linear change of coordinates, to the diagonal linear map
\[(x_1,\ldots,x_s,v_1,\ldots,v_{k-1})
\mapsto
(\lambda_1(f)x_1,\ldots,\lambda_s(f)x_s,\delta_1(f)v_1,\ldots,\delta_{k-1}(f)v_{k-1}),\]
where the coefficients $\delta_j(f)$ are given by
\begin{equation}\label{eq-valeur}
\delta_j(f)= \begin{cases}
\dfrac{\lambda_j(f)}{\rho_i(f)} & \text{for } 1\leq j\leq s,\\[0.4em]
\dfrac{\rho_{j-s}(f)}{\rho_i(f)} & \text{for } s+1\leq j< s+i,\\[0.4em]
\dfrac{\rho_{j-s+1}(f)}{\rho_i(f)} & \text{for } s+i\leq j\leq k-1.
\end{cases}
\end{equation}
Hence, $\hat e_i^u(f)$ is a fixed point of index $(2s+i-1,u-i).$ Its local stable manifold $W^s_{\hat e_i^u(f),loc}$ has thus dimension $2s+i-1.$ A resonance among the corresponding multipliers would imply a relation of the form
\[\lambda_1^{m_1}\cdots\lambda_s^{m_s}\,
\delta_1^{m_{s+1}}\cdots\delta_{s+i-1}^{m_{2s+i-1}}
=\lambda_{j_1}^{n_1}\delta_{j_2}^{n_2},\]
where $m=(m_1,\ldots,m_{2s+i-1})$ and $n=(n_1,n_2)$ are tuples of nonnegative integers satisfying $|m|\geq2$ and $|n|=1$.

Observe that, by \eqref{eq-bun} and \eqref{eq-valeur}, each multiplier appearing in this expression has a modulus in $[a^2,b].$ Since $b^{\ell-1}<a^2,$ this implies that if such an equality holds then $|m|\leq\ell-2.$  Hence, \eqref{eq-res} ensures that such relations are not possible.

Therefore, by the Poincar\'e-Dulac theorem, the dynamics on $W^s_{\hat e_i^u(f),loc}$ can be linearized. In particular, there exists an invariant manifold $W_{\hat e_i^u(f),loc},$ corresponding to the eigenvalues $(\lambda_1(f),\ldots,\lambda_s(f))$. This manifold has dimension $s$ and is transverse to all the remaining eigendirections, associated with the multipliers $\delta_1(f),\ldots,\delta_{k-1}(f)$, which generate the tangent space to the fiber $\{p(f)\}\times\Pb^{k-1}$ at $\hat e_i^u(f)$.

As a consequence, $W_{\hat e_i^u(f),loc}$ is transverse to the fibers in a neighborhood of $\hat e_i^u(f)$, and the natural projection onto $W^s_{p(f),loc}$ restricts to a local biholomorphism. Invariance under the dynamics then ensures that $W_{\hat e_i^u(f),loc}$ defines a holomorphic section over $W^s_{p(f),loc}$. The holomorphic dependence on $f$ follows directly from the fact that the Poincar\'e-Dulac linearization depends holomorphically on the map.
\end{proof}
It then follows that, after a suitable holomorphic change of coordinates which depends holomorphically on $f$, we may assume that $f$ is defined and
injective on $B_p:=B^s\times B^u$ with
\begin{itemize}
\item the fixed point $p(f)$ is constantly identified with $0$;
\item $B^s\times\{0\}$ is the local stable manifold of $f$ in $B_p$,
on which $f$ acts as an attracting diagonal matrix;
\item $\{0\}\times B^u$ is the local unstable manifold of $f$ in $B_p$,
on which $f$ acts as an expanding diagonal matrix;
\item along the stable and unstable manifolds, the dynamical directions
defined in Proposition~\ref{prop-dynaslope} coincide with the standard
coordinate directions in $\Pb^{k-1}$.
\end{itemize}
We then say that $p(f)$ is in \emph{standard form} on $B_p$.
This terminology will be used again in Section~\ref{sec-standard},
in a slightly adapted setting.
\begin{proposition}\label{prop-bunching}
Under conditions \eqref{eq-bun} and \eqref{eq-res}, assume that $p(f)$
is in standard form on $B_p$. If $\theta_f$ is a $C^\ell$-linearization normalized by
$D_{p(f)}\theta_f=\id$, then its differential is complex diagonal along the local stable and
unstable manifolds, and depends holomorphically on both the base point and $f$.
\end{proposition}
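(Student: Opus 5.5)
Our plan is to show that, along $W^s_{p(f),loc}=B^s\times\{0\}$, the differential $D_y\theta_f$ has to send each dynamical direction of Proposition~\ref{prop-dynaslope} to the matching coordinate axis, and to act on each such line as multiplication by a complex scalar. The same argument, applied to $f^{-1}$, handles $W^u_{p(f),loc}$. Since $\theta_f$ conjugates $f$ to its linear part $L_f=D_{p(f)}f$, which is diagonal, we have
\[
\theta_f\circ f=L_f\circ\theta_f .
\]
Differentiating at $y\in W^s_{p(f),loc}$ gives the cocycle relation
\[
D_{f(y)}\theta_f\circ D_yf = L_f\circ D_y\theta_f .
\]
The map $\theta_f$ also preserves the invariant manifolds: it sends $W^s_{p(f),loc}$ to $\Cb^s\times\{0\}$. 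Hence $A(y):=D_y\theta_f$ preserves $T_yW^s=\Cb^s\times\{0\}$. We first treat this stable block. On $W^s$, $f$ is the attracting diagonal map $\Lambda$ in standard form. So $\theta_f|_{W^s}$ is a $C^\ell$ conjugacy between $\Lambda$ and itself tangent to the identity, that is, a $C^\ell$ map commuting with $\Lambda$. We will expand it in its Taylor polynomial of order $\ell$, real-analytic in $(x,\bar x)$. A monomial $x^m\bar x^{m'}$ placed in coordinate $j$ survives the commutation only if $\lambda^m\bar\lambda^{m'}=\lambda_j$. Taking moduli and using \eqref{eq-res} kills all such terms for $|m|+|m'|\le\ell$; this is exactly the role of the relation $|\lambda^m|\neq|\lambda_j|$ noted after \eqref{eq-res}. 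The remainder is $O(|x|^{\ell})$. Iterating the commutation relation and using $b^{\ell-1}<a^2$ (bunching) then forces it to vanish. So $\theta_f$ is the identity on $W^s$, and the stable block of $A(y)$ is the identity.

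Next we treat the unstable-direction block. Fix $i$ and let $v_i(y)$ span $\mathcal E^{u,i}_f(y)$; by standard form this is the $e^u_i$-axis. We project the cocycle relation onto the unstable coordinates and apply it to $v_i(y)$. Since the direction is $f_*$-invariant, $D_yf\,v_i(y)=c(y)v_i(f(y))$ for a holomorphic scalar $c$ with $c(p)=\rho_i$. Write $w(y):=\pi^u A(y)v_i(y)$, normalized by $w(p)=e_i$. The relation becomes
\[
w(f(y))=c(y)^{-1}\rho_i\,\dots
\]
More precisely, $w(y)=c(y)^{-1}L^u_f\,w(f(y))$ after inverting, and we iterate this along the forward orbit $f^n(y)\to p$. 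Write $w(y)-w(p)$ as a component along $e_i$ plus components along $e_j$ with $j\ne i$. A component along $e_j$ is multiplied at each step by a factor comparable to $\rho_j/\rho_i$ (or its inverse) times products $\prod c/\rho_i\to1$. The point $f^n(y)$ approaches $p$ at rate at most $b^n$, and $w$ is $C^{\ell-1}$. The bunching condition $b^{\ell-1}<a^2$ makes the contraction toward $p$ beat the possible growth $a^{-n}$ (or $a^{-2n}$) of the off-diagonal ratio. This forces the components along $e_j$, $j\ne i$, to vanish, so $A(y)$ maps $\mathcal E^{u,i}_f(y)$ onto $\Cb e_i$. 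The scalar along $e_i$ is then determined by
\[
\sigma(y)=\prod_{n\ge0}\frac{\rho_i}{c(f^n(y))},
\]
and the product converges since $c(f^n y)-\rho_i=O(b^n)$. This infinite product is holomorphic in $(y,f)$ because $c$ is.

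We also need $\pi^s A(y)v_i(y)=0$. This off-diagonal block transforms with ratios $\lambda_j/\rho_i$, and the same argument kills it. Hence $A(y)$ is diagonal in the basis given by the dynamical directions, which are the coordinate directions, with entries that are either $1$ or the holomorphic scalars $\sigma(y)$. So $A(y)$ is $\Cb$-linear and depends holomorphically on $(y,f)$. Holomorphy in $f$ follows from the holomorphic dependence of $\mathcal E^{u,i}_f$ and $c$ in Proposition~\ref{prop-dynaslope}, together with uniform convergence.

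The main obstacle is the second step: the uniqueness argument showing that a merely $C^{\ell-1}$ solution of the twisted cocycle equation must be the diagonal one. It requires carefully comparing the exponents $a$, $b$ against the smoothness $\ell$. We expect the precise inequality $b^{\ell-1}<a^2$ to be used there.
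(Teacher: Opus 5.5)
Your treatment of the stable columns is correct. It even gives slightly more than needed. On $W^s_{p(f),loc}$ the map $\theta_f$ commutes with the diagonal contraction, \eqref{eq-res} kills the Taylor terms of order $2\le|m|+|m'|\le\ell$, and $b^\ell<a$ kills the remainder. So $\theta_f$ is the identity there, a fact the paper uses later without proof.

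The gap is in the unstable columns, which carry the whole content of the proposition. A priori $A(y)=D_y\theta_f$ is only $\Rb$-linear. So from $A(f(y))\bigl(c(y)e^u_i\bigr)=L_fA(y)e^u_i$ you cannot derive a recursion for the single vector $w(y)=\pi^uA(y)e^u_i$. Pulling the complex number $c(y)$ out of $A(f(y))$ is exactly the $\Cb$-linearity to be proved, and so is speaking of ``the scalar along $e_i$''. You have to follow the full real-linear map $\zeta\mapsto A(y)(\zeta e^u_i)=P_y\zeta+Q_y\bar\zeta$, with $P_y,Q_y\in\Cb^k$. The cocycle then splits as $P_y=c(y)L_f^{-1}P_{f(y)}$ and $Q_y=\overline{c(y)}\,L_f^{-1}Q_{f(y)}$, with $P_p=e^u_i$ and $Q_p=0$. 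You must show that $Q\equiv0$ and that $P_y$ has no component off $e^u_i$. Your recursion is also inverted: the correct diagonal entry is $\prod_{n\ge0}c(f^n(y))/\rho_i$, the paper's $\delta_i$, not $\prod_{n\ge0}\rho_i/c(f^n(y))$.

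Second, the decay step is not justified by bunching alone. Write $\kappa_j$ for the diagonal entries of $L_f$. The $j$-th components of $P$ and $Q$ are multiplied along the orbit by factors of size about $|\rho_i/\kappa_j|^n$, which can be as large as $a^{-2n}$ for stable $j$. Yet $C^{\ell-1}$ regularity alone only gives $|P^{(j)}_{f^n(y)}|,|Q^{(j)}_{f^n(y)}|=O(b^n)$, and $b<a^2$ is not available.

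The missing ingredient is that all Taylor coefficients of order at most $\ell-2$ of these components at $p$ vanish. A surviving monomial $x^\alpha\bar x^\beta$ would force $|\kappa_j|=|\rho_i|\,|\lambda|^{\alpha+\beta}$, which is excluded by \eqref{eq-res} and hyperbolicity. With $c$ non-constant, this needs an induction on the degree. Only then are the components $O(b^{(\ell-1)n})$, and $b^{\ell-1}<a^2$ concludes. For $j>i$ the factor already decays. For $j=i$ in $Q$ the factor has bounded modulus and $Q_{f^n(y)}\to Q_p=0$. This is exactly the step you call the main obstacle and leave open.

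The paper avoids the $P/Q$ bookkeeping by working in the $\theta_f$-chart, where the tangent dynamics over $W^s_{p(f),loc}$ is exactly linear. It proves, by the same Taylor/resonance/bunching mechanism, that any invariant $C^{\ell-1}$ field of real $2$-planes equal to $\Cb e^u_i$ at $p$ is constant. This gives block-diagonality of $D\theta_f$. It then gets $\Cb$-linearity on each block from $D_x\theta_f(v)=\rho_i^{-n}D_{f^n(x)}\theta_f\bigl(D_xf^n(v)\bigr)$ together with $D_{f^n(x)}\theta_f\to D_p\theta_f=\id$.
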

\begin{rk}\label{rk-dynaslope}
Actually, the proof gives in particular that the section $\mathcal E_f^{u,i}$ defined in Proposition  \ref{prop-dynaslope} is unique.
\end{rk}

\begin{proof}
Since the dependence on $f$ plays no role in the statement, we simplify notation by omitting most references to $f$.

The main step is to show that the bunching condition forces any $C^{\ell-1}$-field of real $2$-planes defined along the stable manifold, invariant under the dynamics and equal to $\Cb\cdot e_i^u$ at $0$, to coincide with the field associated to $e_i^u,$ induced by $\theta_f$.

To that end, we work in the coordinates defined by $\theta_f$ where the dynamics on the tangent bundle above the local stable manifold, which can be identified to $B^s\times\Cb^k$ is simply
\[(x_1,\ldots,x_s,v_1^s,\ldots,v_s^s,v_1^u,\ldots,v_u^u)\mapsto(\lambda_1x_1,\ldots,\lambda_sx_s,\lambda_1v_1^s,\ldots,\lambda_sv_s^s,\rho_1v_1^u,\ldots,\rho_uv_u^u).\]
Let $1\leq i\leq u$ and let $(P_x)$ be a $C^{\ell-1}$-field of real $2$-planes defined for all $x$ in a neighborhood of $0$, which is invariant by the dynamics and with $P_0=\Cb\cdot e_i^u.$ In particular, it is locally transverse to
\begin{equation}\label{eq-decom}
F := E_1^s \oplus \cdots \oplus E_s^s \bigoplus_{j\neq i} E_j^u,
\end{equation}
where $E_j^s := \Cb\cdot e_j^s$ and $E_j^u := \Cb\cdot e_j^u$. Since $F\oplus E_i^u=\Cb^k$, any real $2$-plane transverse to $F$ can be written as the graph of a real linear map
\[L\colon E_i^u\to F.\]
Accordingly, the Grassmannian of such planes can be identified with the space of these linear maps. Using the decomposition~\eqref{eq-decom}, each map $L$ splits as
\[L_j^s\colon E_i^u\to E_j^s
\quad\text{and}\quad
L_j^u\colon E_i^u\to E_j^u.\]
As a consequence, in a neighborhood of $0$, the family $(P_x)$ can be represented by a $C^{\ell-1}$ family of maps $(L_x)$, with $L_0=0$. The invariance under the dynamics of both the family $(P_x)$ and the decomposition~\eqref{eq-decom} implies that
\[L_{j,x}^s(y)=\lambda_j^{-1} L_{j,f(x)}^s(\rho_i y),
\qquad
L_{j,x}^u(y)=\rho_j^{-1} L_{j,f(x)}^u(\rho_i y),\]
where we still denote by $f$ the induced action on the stable manifold. Here, multiplication by a complex number is understood as multiplication by the corresponding $2\times2$ real matrix. Iterating these identities and writing $\rho_i=|\rho_i|\xi$ we have
\[L_{j,x}^s(y)=\lambda_j^{-n}|\rho_i|^nL_{j,f^n(x)}^s(\xi^n y),
\qquad
L_{j,x}^u(y)=\rho_j^{-n}|\rho_i|^nL_{j,f^n(x)}^u(\xi^n y).\]
For $i<j\leq u$, the inequality $|\rho_i|<|\rho_j|$ already implies that $L_{j,x}^u$ vanishes identically. For the remaining factors, we use the bunching condition. Fix $1\leq j\leq s$ and consider $L_{j,x}^s$. The other $u$-components can be treated in the same way.

Since $x\mapsto L_{j,x}^s$ is $C^{\ell-1}$ and real-valued, its Taylor
expansion has the form
\[
L_{j,x}^s(y)=
\sum_{|\alpha|+|\beta|\leq\ell-2}
(A_{\alpha,\beta}y+B_{\alpha,\beta}\bar y)
x^\alpha\bar x^\beta
+O(\|x\|^{\ell-1})\|y\|.
\]
Comparing coefficients in the invariance relation
\[
L_{j,x}^s(y)=\lambda_j^{-1}L_{j,f(x)}^s(\rho_i y),
\]
any nonzero $A_{\alpha,\beta}$ or $B_{\alpha,\beta}$ would give, after
taking moduli,
\[
|\lambda_j|
=
|\rho_i|\prod_{m=1}^s|\lambda_m|^{\alpha_m+\beta_m},
\]
which is excluded by \eqref{eq-res} and hyperbolicity. Hence all Taylor
coefficients of order at most $\ell-2$ vanish, so
\[
\|L_{j,f^n(x)}^s\|=O(b^{(\ell-1)n}).
\]
Therefore, by invariance and \eqref{eq-bun},
\[
\|L_{j,x}^s\|
\leq C(a^{-2}b^{\ell-1})^n\to0.
\]
The same argument applies to $L_{j,x}^u$ for $j<i$, while the case
$j>i$ was already treated above. Thus $L_x=0$ and $(P_x)$ is a constant distribution, as desired.

Returning to the original holomorphic coordinates of $f$, we observe that the images under $\theta_f$ of the dynamical directions along $W^s_{p(f),loc}$ define invariant $C^{\ell-1}$-fields of real $2$-planes. By the previous step, these fields must be constant, hence equal to the standard coordinate axes. Consequently, if $f$ is in standard form, the (real) differential of $\theta_f$ must be block diagonal, with real $2\times2$ blocks corresponding to the restrictions to the dynamical directions.

We first focus on the unstable directions. Since $\theta_f$ conjugates $f$ to its differential, the restriction of $D_x\theta_f$ to $E_i^u$, for $x\in W^s_{0,loc}$, satisfies
\begin{align*}
D_x\theta(v)&=\rho_i^{-n} D_{f^n(x)}\theta\circ D_xf^n(v)=\xi^{-n}D_{f^n(x)}\theta\circ D_xf^n(|\rho_i|^{-n}v)\\
&=\xi^{-n}D_{0}\theta\circ D_xf^n(|\rho_i|^{-n}v)+D_xf^n(|\rho_i|^{-n}v)O(\|f^nx\|)\\
&=\rho_i^{-n}D_xf^n(v)+D_xf^n(|\rho_i|^{-n}v)O(\|f^nx\|),
\end{align*}
where the last equality follows from the fact that $D_0\theta_f=\id$ and $\rho_i=\xi|\rho_i|$.

Since $v$ belongs to the invariant complex line $E_i^u$, the derivative $D_xf(v)$ acts as multiplication by a complex number $\rho_i(x)=\rho_i+O(x).$ Therefore,
\[\rho_i^{-n}D_xf^n=\prod_{j=0}^{n-1}\frac{\rho_i(f^j(x))}{\rho_i}=\prod_{j=0}^{n-1}(1+O(\|f^j(x)\|)),\]
and this product converges to a complex number $\delta_i(f,x)$. This proves that the restriction $D_x\theta_f|_{E_i^u}$ is given by multiplication by $\delta_i(f,x)$. Moreover, the convergence of this infinite product is locally uniform in $f$ and $x$. Hence, $\delta_i$ is holomorphic in both these variables.

The same argument applies to the stable directions, in a simpler form. Indeed, for
$x\in W^s_{0,loc}$ and $v\in E^s_j$, we have
\[
D_x\theta(v)=\lambda_j^{-n}D_{f^n(x)}\theta(\lambda_j^n v)\to v,
\]
since $f^n(x)\to0$ and $D_0\theta=\id$.

This concludes the proof on $W^s_{0,loc}$ and the result on $W^u_{0,loc}$ is obtained by passing to $f^{-1}$.\end{proof}

Propositions~\ref{prop-dynaslope} and~\ref{prop-bunching} together directly imply Proposition~\ref{prop-cderiv}.
\begin{proof}[Proof of Proposition \ref{prop-cderiv}]
First, note that both conditions~\eqref{eq-bun} and~\eqref{eq-res} are symmetric and therefore also hold for $f^{-1}$.
Applying Proposition~\ref{prop-dynaslope} to both $f$ and $f^{-1}$, we obtain dynamical directions along
$W^s_{p(f),loc}\cup W^u_{p(f),loc}$ that depend holomorphically on both the base point and the parameter $f$.
As a consequence, there exist holomorphic coordinates in which $f$ is in standard form, as defined above
Proposition~\ref{prop-bunching}.

Moreover, the bunching condition~\eqref{eq-bun} implies that
\[\frac{\big|\log|\lambda_1|\big|}{\big|\log|\lambda_s|\big|}<\ell\quad\text{and}\quad
\frac{\big|\log|\rho_u|\big|}{\big|\log|\rho_1|\big|}<\ell.\]
Setting $Q:=2\ell^2$, it follows from the trivial inequality $\ell^2-\ell\times\ell\geq0$ that the
$Q$-smoothness of $D_{p(f)}f$ in the sense of Sell \cite[Section VII]{sell} is at least $\ell$.
On the other hand, the non-resonance condition~\eqref{eq-res} ensures that $D_{p(f)}f$ satisfies the
strong Sternberg condition of order $Q$.
Therefore, by \cite[Theorems~5 and~9]{sell}, the map $f$ is $C^\ell$-linearizable, with linearization maps
$(\theta_f)_{f\in M}$ depending continuously on $f$ in the $C^\ell$-topology.
Finally, Proposition~\ref{prop-bunching} guarantees that the differential $D\theta_f$ is $\Cb$-linear
along $W^s_{p(f),loc}\cup W^u_{p(f),loc}$ and holomorphic in the base point and in $f$.
\end{proof}

For almost all the arguments in the rest of the paper, the existence of dynamical directions is sufficient. However, in order to prove that, for certain Euclidean open sets $\Omega$ of endomorphisms $f$ of $\Pb^k$, the unstable manifold $W^u_{p(f)}$ is Zariski dense for every $f\in\Omega$, we are led to consider higher-order jets of the linearization $\theta_f$.

Actually, under a sufficiently strong bunching assumption, Proposition~\ref{prop-cderiv} would already be enough. In the setting considered below, however, the presence of a weakly attracting direction forces us to control higher-order terms. More precisely, we need higher-order jets of $\theta_f$ to be holomorphic along the invariant manifolds. This allows us to work with expansions involving only the complex multipliers, rather than with the real multipliers, for which resonances are inevitable.

For the remainder of this section, we fix $f\in M$ and write
$p:=p(f)$. We assume that $p$ is in standard form on $B_p$.

\begin{proposition}
\label{prop-holomorphic-jets}
Let $1\leq r<\ell$ and assume moreover that, for the constants
$0<a<b<1$ appearing in \eqref{eq-bun},
\begin{equation}\label{eq-higher-bunching}
    b^{\ell-r}<a^{r+1}.
\end{equation}
Let $\theta$ be a $C^\ell$-linearization normalized by $D_{p}\theta=\id.$ Then, the $r$-jet of $\theta$ is holomorphic at every point of
\[
W^s_{p,loc}\cup W^u_{p,loc}.
\]

\end{proposition}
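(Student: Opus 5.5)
We argue on $W^s_{p,loc}$; the statement on $W^u_{p,loc}$ then follows by applying the same argument to $f^{-1}$, since \eqref{eq-bun}, \eqref{eq-res} and \eqref{eq-higher-bunching} are symmetric. The plan is an induction on the order $m\leq r$ of the jet, in the spirit of the proof of Proposition~\ref{prop-bunching}. The case $m=1$ is exactly Proposition~\ref{prop-bunching}: in standard form, $D\theta$ is complex diagonal along $W^s_{p,loc}$. Assume that the $(m-1)$-jet of $\theta$ is holomorphic at every point of $W^s_{p,loc}$, that is, all derivatives of order at most $m-1$ are $\Cb$-multilinear.

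Fix $x\in W^s_{p,loc}$ and decompose $D^m_x\theta$ into its $(\alpha,\beta)$-parts, $\alpha+\beta=m$, with respect to $v$ and $\bar v$. We must show that every part with $\beta\geq1$ vanishes. Differentiating the conjugacy $\theta\circ f=A\circ\theta$ $m$ times, with $A=D_pf$ diagonal, gives
\[
A\,D^m_x\theta=D^m_{f(x)}\theta\bigl(D_xf,\ldots,D_xf\bigr)+R_m(x),
\]
where $R_m$ involves only $D^j\theta$ with $j<m$ and holomorphic derivatives of $f$. By the induction hypothesis $R_m$ is $\Cb$-multilinear, so the antiholomorphic part $N_x:=(D^m_x\theta)^{(\beta\geq1)}$ satisfies the homogeneous relation
\[
N_x=A^{-1}N_{f(x)}\circ(D_xf)^{\otimes m}.
\]
We then work componentwise in the eigenbasis given by the dynamical directions. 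Since $D_xf$ is only approximately diagonal off $p$, we first conjugate by the holomorphic frame produced by $D\theta$ (Proposition~\ref{prop-bunching}), so that the cocycle is exactly diagonal up to the holomorphic correction factors $\delta_i(f,x)$. Iterating $n$ times, a component mapping into the direction with multiplier $\mu$ (some $\lambda_j$ or $\rho_j$) is multiplied by at most $|\mu|^{-n}$ times a product of $m$ multipliers whose moduli lie between $a^n$ and $b^n$ raised to the appropriate powers.

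At $x=p$, the vanishing of $N_p$ follows from the non-resonance \eqref{eq-res}: mixed monomials with $|N|\leq m+1\leq 2\ell^2+1$ are never unimodular, so the linear invariance equation has only the trivial solution. Away from $p$, we use the $C^\ell$ regularity of $\theta$. Taylor expanding $N$ along $W^s_{p,loc}$ to order $\ell-m-1$, the invariance equation together with \eqref{eq-res} kills all Taylor coefficients, as in the proof of Proposition~\ref{prop-bunching}. This yields $\|N_{f^n(x)}\|=O(b^{(\ell-m)n})$. Plugging this into the iterated relation gives the bound
\[
\|N_x\|\leq C\,b^{(\ell-m)n}a^{-(m+1)n}.
\]
This bound tends to $0$ because $b^{\ell-m}\leq b^{\ell-r}<a^{r+1}\leq a^{m+1}$, by \eqref{eq-higher-bunching}. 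Hence $N_x=0$. The holomorphic dependence of the resulting complex-multilinear jets on $x$ then follows from the recursive formula, which expresses $D^m\theta$ as a convergent series, locally uniform in $x$, of holomorphic terms, exactly as for $\delta_i(f,x)$.

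The main obstacle is the bookkeeping in the worst-case exponent. The unstable components carry factors of the form $\rho_j^{-1}$ applied to products involving $\rho_i$, as in the $L^s_j$ case of Proposition~\ref{prop-bunching}. One must check that the worst growth factor is indeed $a^{-(m+1)}$ rather than something larger, and that the non-resonance range in \eqref{eq-res} covers all monomials appearing in the Taylor-coefficient argument. These total degree at most $\ell-1+m+1\leq 2\ell$, which is well within $2\ell^2+1$.
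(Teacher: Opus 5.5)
Your proof is correct, but it is organized differently from the paper's.

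\textbf{The paper's route.} The paper passes to $\psi=\theta^{-1}$ and its Beltrami-type coefficient $\mu=(\partial\psi)^{-1}\bar\partial\psi$. Because $\psi\circ K=f\circ\psi$ with $f$ holomorphic, the factors $D_{\psi(z)}f$ cancel, and one gets the exact constant-coefficient relation $\mu_z=K^{-1}\mu_{Kz}\bar K$. Each $T_q=D^q\mu|_{W^s_{p,loc}}$, $1\leq q\leq r-1$, then satisfies its own homogeneous linear relation. So each $T_q$ is killed independently: first its Taylor coefficients via \eqref{eq-res}, then by comparing $a^{-(q+2)n}$ with $b^{(\ell-1-q)n}$. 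There is no induction on the jet order and there are no remainder terms. The $r$-jet is recovered at the end from $\bar\partial\psi=\partial\psi\,\mu$ by Leibniz.

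\textbf{Your route.} You differentiate $\theta\circ f=A\circ\theta$ directly. As a result, the antiholomorphic part $N_x$ of $D^m_x\theta$ satisfies a homogeneous relation only once the Fa\`a di Bruno remainder $R_m$ is known to be $\Cb$-multilinear; this is what forces your induction on $m$. The coefficient $D_xf$ also varies with $x$. Your device for handling this is sound. In standard form, $D_xf$ is exactly diagonal along $W^s_{p,loc}$. Replacing $N_x$ by $N_x\circ((D_x\theta)^{-1})^{\otimes m}$ is legitimate because $D_x\theta$ is $\Cb$-linear there, and it gives the exact relation $\tilde N_x=A^{-1}\tilde N_{\Lambda x}\circ A^{\otimes m}$. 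This is the counterpart of the paper's identity for $\mu$.

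\textbf{Comparison.} The final estimate is the same in both proofs, with your $m$ playing the role of the paper's $q+1$: $b^{\ell-m}\leq b^{\ell-r}<a^{r+1}\leq a^{m+1}$. The paper's device buys cleaner bookkeeping, since there is a single constant-coefficient cocycle and no lower-order terms. Yours stays closer to the proof of Proposition~\ref{prop-bunching} and works with $\theta$ itself.

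\textbf{Minor points.}
\begin{itemize}
\item In the Taylor-coefficient step, the multiplier involves $1+m+d\leq\ell$ factors, with signed exponent sum $m+d-1\geq1$. So the multi-index is nonzero, and either hyperbolicity (when $|N|=1$) or \eqref{eq-res} applies; you should say this explicitly.
\item Your closing claim about holomorphic dependence of the jets on $x$ is not part of the statement and is not justified as written, so you can drop it.
\end{itemize}
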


\begin{proof}
For $r=1$, the result and the assumptions are exactly as in Proposition~\ref{prop-bunching}. We may therefore assume that $r\geq2$.

Let
\[
K=\diag(\lambda_1,\ldots,\lambda_s,\rho_1,\ldots,\rho_u)
\]
and let $\psi:=\theta^{-1}$. Set
\[
P:=\partial\psi,\qquad Q:=\bar\partial\psi,
\qquad \mu:=P^{-1}Q.
\]
The map $P$ is invertible in a neighborhood of the invariant
manifolds. Since
\[
\psi\circ K=f\circ\psi
\]
and $f$ is holomorphic, the chain rule gives
\[
P_{Kz}K=D_{\psi(z)}f\,P_z,
\qquad
Q_{Kz}\bar K=D_{\psi(z)}f\,Q_z.
\]
Consequently,
\begin{equation}
\label{eq-mu-invariance}
        \mu_z=K^{-1}\mu_{Kz}\bar K.
\end{equation}

By Proposition~\ref{prop-bunching}, $D\psi$ is complex linear
along the local stable and unstable manifolds. Hence
\begin{equation}
\label{eq-mu-zero}
        \mu=0
        \quad\text{on}\quad
        W^s_{0,loc}\cup W^u_{0,loc}.
\end{equation}

We treat the stable manifold; the unstable case follows by applying
the same argument to $K^{-1}$. Write
\[
\Lambda:=K|_{E^s}.
\]
For $1\leq q\leq r-1$, set
\[
T_q(x):=D^q\mu_{(x,0)},
\qquad x\in W^s_{0,loc}.
\]
Differentiating \eqref{eq-mu-invariance}, and using that $K$ is
linear, gives
\begin{equation}
\label{eq-Tq-mu}
T_q(x)(v_1,\ldots,v_q)
 =
K^{-1}T_q(\Lambda x)
        (Kv_1,\ldots,Kv_q)\bar K .
\end{equation}

We claim that
\[
D^dT_q(0)=0,
\qquad 0\leq d\leq\ell-2-q.
\]
Indeed, after differentiating \eqref{eq-Tq-mu} $d$ times at $0$
and complexifying the relevant tensor space, each component of
$D^dT_q(0)$ is fixed by multiplication by a quotient of products
of multipliers involving at most
\[
q+d+2\leq\ell
\]
factors. Such a multiplier cannot be equal to $1$ by
\eqref{eq-res} and hyperbolicity. Hence every component vanishes.

Since $T_q$ is of class $C^{\ell-1-q}$, Taylor's formula now gives
\[
\|T_q(\Lambda^n x)\|
 =
O\bigl(\|\Lambda^n x\|^{\ell-1-q}\bigr)
 =
O\bigl(b^{(\ell-1-q)n}\bigr).
\]
On the other hand, iterating \eqref{eq-Tq-mu} in an adapted norm
yields
\[
\|T_q(x)\|
 \leq
C a^{-(q+2)n}\|T_q(\Lambda^n x)\|.
\]
Therefore
\[
\|T_q(x)\|
 \leq
C\left(
a^{-(q+2)}b^{\ell-1-q}
\right)^n.
\]
For $q\leq r-1$, we have
\[
b^{\ell-1-q}\leq b^{\ell-r}
<a^{r+1}\leq a^{q+2},
\]
by \eqref{eq-higher-bunching}. Letting $n$ tend to infinity gives
\[
T_q(x)=0.
\]
Together with \eqref{eq-mu-zero}, this proves
\[
D^q\mu=0
\quad\text{on }W^s_{0,loc},
\qquad 0\leq q\leq r-1.
\]

Since
\[
Q=P\mu,
\]
Leibniz's formula gives
\[
D^qQ=0
\quad\text{on }W^s_{0,loc},
\qquad 0\leq q\leq r-1.
\]
Thus the $r$-jet of $\psi$ is holomorphic along the stable manifold.
Its linear part being invertible, the same holds for the $r$-jet
of its inverse $\theta$.

Applying the argument to $K^{-1}$ proves the assertion along the
unstable manifold.
\end{proof}

We now record a consequence that will be useful later. Roughly speaking,
the higher-order holomorphicity obtained above prevents a proper analytic
subset transverse to the stable manifold from containing backward orbit
segments accumulating on a sufficiently generic point of the unstable
manifold.

\begin{proposition}
\label{prop-asymptotic-rigidity}
Under the assumptions and notation of
Proposition~\ref{prop-holomorphic-jets}, write the unstable
multipliers of $p$ as
\[
1<|\rho_1|<\cdots<|\rho_u|
\]
and assume in addition that
\[
|\rho_u|<|\rho_1|^{r+1}.
\]

Let $\Gamma\subset B_p$ be a complex submanifold of dimension $u$
intersecting $W^s_{p,loc}$ transversely at $w_0$, and let
$H\subset\Gamma$ be an analytic subset which is smooth at
$w_0$.

Assume that there exist $n_k\to+\infty$ and $w_k\in H$, with
\[
w_k\to w_0,
\qquad
y_k:=f^{n_k}(w_k)\to y_\infty\in W^u_{p,loc},
\]
such that the orbit segment
\[
w_k,f(w_k),\ldots,f^{n_k}(w_k)
\]
is contained in $B_p$, and all the unstable coordinates of
$\theta(y_\infty)$ are nonzero.

Then
\[
(H,w_0)=(\Gamma,w_0).
\]
In particular, $H$ contains a neighborhood of $w_0$ in $\Gamma$.
\end{proposition}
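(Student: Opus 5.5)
The plan is to pass to the linearizing coordinates $\theta$, where $f$ becomes the linear map $K$. There the orbit condition pins down the unstable coordinates of $\theta(w_k)$ exactly. A holomorphic equation cutting out $H$ then yields an asymptotic expansion whose leading term cannot vanish, by the non-resonance condition \eqref{eq-res}.

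\textbf{Setup.} Suppose by contradiction that $(H,w_0)\neq(\Gamma,w_0)$. Since $H$ is smooth at $w_0$ and proper there, there is a holomorphic function $h$ on a neighborhood of $w_0$ in $\Gamma$ with $h|_H\equiv0$ and $d_{w_0}h\neq0$; a local defining function of a smooth hypersurface of $\Gamma$ containing $H$ will do. Write $\theta(z)=(x,v)\in E^s\times E^u$. Because $\Gamma$ is transverse to $W^s_{p,loc}$ at $w_0$ and $D_{w_0}\theta$ preserves the splitting, the projection $\Gamma\ni z\mapsto v(\theta(z))$ is a local $C^\ell$ diffeomorphism onto a neighborhood of $0\in\Cb^u$. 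Set $F:=h\circ(v\circ\theta|_\Gamma)^{-1}$. Since $w_0\in W^s_{p,loc}$, Proposition~\ref{prop-holomorphic-jets} says the $r$-jet of $\theta$ at $w_0$ is holomorphic. As $\Gamma$ and $h$ are holomorphic, the $r$-jet of $F$ at $0$ is a holomorphic polynomial, so
\[
F(v)=\sum_{1\leq|\alpha|\leq r}c_\alpha v^\alpha+O(\|v\|^{r+1}),
\]
and some linear coefficient $c_{e_j}$ is nonzero because $d_{w_0}h\neq0$.

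\textbf{Using the orbits.} In $\theta$-coordinates the dynamics is exactly $K$ on $B_p$, and the whole orbit segment stays in $B_p$. Hence the unstable part of $\theta(w_k)$ is
\[
v_k=\diag(\rho_1^{-n_k},\ldots,\rho_u^{-n_k})\,(\eta+\varepsilon_k),
\]
where $\eta$ is the unstable part of $\theta(y_\infty)$ and $\varepsilon_k\to0$. By hypothesis every coordinate $\eta_j$ is nonzero. Since $w_k\in H$, we have $F(v_k)=0$ for all $k$. Each monomial contributes
\[
c_\alpha\,\rho^{-n_k\alpha}(\eta+\varepsilon_k)^\alpha, \qquad\text{of modulus } |c_\alpha|\,|\rho^{\alpha}|^{-n_k}|\eta^\alpha|(1+o(1)).
\]
The remainder is $O(|\rho_1|^{-(r+1)n_k})$.

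\textbf{Dominant term.} Among the finitely many $\alpha$ with $c_\alpha\neq0$, choose $\alpha_0$ minimizing $|\rho^\alpha|$. This minimizer is unique: $|\rho^\alpha|=|\rho^\beta|$ with $\alpha\neq\beta$ and $|\alpha|,|\beta|\leq r$ would give a relation $|\rho^{\alpha-\beta}|=1$ with $|\alpha-\beta|\leq2r$. That relation is excluded by \eqref{eq-res}, which allows negative exponents; the case $|\alpha-\beta|=1$ is excluded by hyperbolicity. Moreover $|\rho^{\alpha_0}|\leq|\rho_j|\leq|\rho_u|<|\rho_1|^{r+1}$, using the linear term with $c_{e_j}\neq0$. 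So the $\alpha_0$-term dominates both every other polynomial term and the remainder, by a factor tending to infinity exponentially in $n_k$. Therefore
\[
F(v_k)=c_{\alpha_0}\rho^{-n_k\alpha_0}\eta^{\alpha_0}(1+o(1)),
\]
which is nonzero for $k$ large because $\eta^{\alpha_0}\neq0$. This contradicts $F(v_k)=0$, so $(H,w_0)=(\Gamma,w_0)$.

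\textbf{Main obstacle.} The delicate step is justifying the expansion of $F$ with holomorphic coefficients up to order $r$ and a remainder genuinely $O(\|v\|^{r+1})$. This needs the holomorphicity of the full $r$-jet of $\theta$ at $w_0$, not just of its differential, and it has to survive composing with the $C^\ell$ inverse of $v\circ\theta|_\Gamma$. I would handle this by expanding $\theta|_\Gamma$ in holomorphic coordinates on $\Gamma$, whose $r$-jet is holomorphic. Inverting a map with holomorphic $r$-jet and invertible complex-linear differential gives a map whose $r$-jet is again holomorphic, and composing with the holomorphic $h$ preserves this.
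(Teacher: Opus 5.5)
Your proposal is correct and follows essentially the same argument as the paper: linearize, note that the holomorphic $r$-jet of $\theta$ at $w_0$ passes to the local inverse of $\pi_u\circ\theta|_\Gamma$, take the Taylor polynomial of a holomorphic function vanishing on $H$ with nonzero differential, and use the unique multi-index minimizing $|\rho^\alpha|$ (via \eqref{eq-res} and hyperbolicity) together with $|\rho_u|<|\rho_1|^{r+1}$ to contradict vanishing along $z_k=R^{-n_k}\eta_k$. The differences are cosmetic. You take a single defining function where the paper composes a submersion with a linear form. You bound the remainder directly by $O(|\rho_1|^{-(r+1)n_k})$, while the paper compares $P(z_k)$ with $\|z_k\|^{r+1}$ using a two-sided size estimate.
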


\begin{proof}
By Proposition~\ref{prop-holomorphic-jets}, the $r$-jet of $\theta$
is holomorphic at $w_0$.

Let
\[
R:=\diag(\rho_1,\ldots,\rho_u)
\]
be the unstable part of $D_pf$, and let $\pi_u$ denote the projection
onto the unstable coordinates in the linearizing space. Set
\[
z_k:=\pi_u\theta(w_k),
\qquad
\eta_k:=\pi_u\theta(y_k).
\]
Since the whole orbit segment from $w_k$ to $y_k$ is contained in
$B_p$, the conjugacy relation gives
\[
z_k=R^{-n_k}\eta_k.
\]
Moreover,
\[
\eta_k\to
\eta_\infty:=\pi_u\theta(y_\infty)\in(\Cb^*)^u.
\]
Since the first coordinate of $\eta_\infty$ is nonzero and
$|\rho_1|<\cdots<|\rho_u|$, there exist constants $c,C>0$ such that
\begin{equation}
\label{eq-asymptotic-size}
c|\rho_1|^{-n_k}
\leq \|z_k\|
\leq C|\rho_1|^{-n_k}
\end{equation}
for all sufficiently large $k$.

Consider
\[
g:=\pi_u\circ\theta|_\Gamma:
(\Gamma,w_0)\to(\Cb^u,0).
\]
Since $\theta$ sends the local stable manifold onto the stable
linear subspace,
\[
\ker D_{w_0}(\pi_u\circ\theta)
=
T_{w_0}W^s_{p,loc}.
\]
As $\Gamma$ is transverse to $W^s_{p,loc}$ and has dimension $u$,
the differential $D_{w_0}g$ is an isomorphism. Thus $g$ is a local
$C^\ell$-diffeomorphism. By Proposition~\ref{prop-holomorphic-jets},
its $r$-jet is holomorphic, and therefore so is the $r$-jet of its
local inverse.

Let
\[
d:=\dim_{w_0}H
\]
and assume by contradiction that $d<u$. Since $H$ is smooth at
$w_0$, there exists a holomorphic submersion
\[
h:(\Gamma,w_0)\to(\Cb^{u-d},0)
\]
whose zero set is $(H,w_0)$. Choose a complex linear form $L$ on
$\Cb^{u-d}$ such that
\[
d(L\circ h\circ g^{-1})_0\neq0,
\]
and set
\[
\varphi:=L\circ h\circ g^{-1}.
\]
The map $\varphi$ is of class $C^\ell$, vanishes on $g(H)$, and has
a holomorphic $r$-jet at $0$. Let $P$ be its Taylor polynomial of
order $r$. Then $P$ is a holomorphic polynomial satisfying
\[
P(0)=0,
\qquad
dP_0\neq0,
\]
and
\[
\varphi(z)=P(z)+O(\|z\|^{r+1}).
\]
Since $z_k=g(w_k)$ and $w_k\in H$, we obtain
\begin{equation}
\label{eq-polynomial-small}
        P(z_k)=O(\|z_k\|^{r+1}).
\end{equation}

Write
\[
P(z)=\sum_{1\leq|\alpha|\leq r}c_\alpha z^\alpha,
\qquad
\rho^\alpha:=\rho_1^{\alpha_1}\cdots\rho_u^{\alpha_u}.
\]
For two distinct multi-indices $\alpha$ and $\beta$ in the support
of $P$, the numbers $|\rho^\alpha|$ and $|\rho^\beta|$ are distinct.
Indeed, otherwise
\[
|\rho^{\alpha-\beta}|=1.
\]
This is impossible by hyperbolicity if $|\alpha-\beta|=1$, and by
condition~\eqref{eq-res} otherwise, since
\[
2\leq|\alpha-\beta|
\leq|\alpha|+|\beta|
\leq2r
<2\ell^2+1.
\]

Let $\alpha_0$ be the unique multi-index in the support of $P$ for
which $|\rho^{\alpha_0}|$ is minimal. Using
$z_k=R^{-n_k}\eta_k$, we obtain
\[
P(z_k)
=
(\rho^{-\alpha_0})^{n_k}
\left(
c_{\alpha_0}\eta_k^{\alpha_0}
+
\sum_{\alpha\neq\alpha_0}
c_\alpha
\left(
\frac{\rho^{\alpha_0}}{\rho^\alpha}
\right)^{n_k}
\eta_k^\alpha
\right).
\]
By the choice of $\alpha_0$, every term in the sum tends to zero,
whereas
\[
c_{\alpha_0}\eta_k^{\alpha_0}
\to
c_{\alpha_0}\eta_\infty^{\alpha_0}\neq0.
\]
Hence
\begin{equation}
\label{eq-polynomial-main-term}
        |P(z_k)|\asymp|\rho^{\alpha_0}|^{-n_k}.
\end{equation}

Finally, since $dP_0\neq0$, the support of $P$ contains a multi-index
of degree one. By the minimality of $|\rho^{\alpha_0}|$,
\[
|\rho^{\alpha_0}|
\leq|\rho_u|
<|\rho_1|^{r+1}.
\]
Combining this with \eqref{eq-asymptotic-size} and
\eqref{eq-polynomial-main-term}, we obtain
\[
\frac{|P(z_k)|}{\|z_k\|^{r+1}}
\geq
c'
\left(
\frac{|\rho_1|^{r+1}}{|\rho^{\alpha_0}|}
\right)^{n_k}
\to+\infty,
\]
for some $c'>0$, contradicting \eqref{eq-polynomial-small}.

Therefore $d=u$, and since $H$ is smooth at $w_0$, the germ
$(H,w_0)$ is open in $(\Gamma,w_0)$. Hence
\[
(H,w_0)=(\Gamma,w_0).
\]
\end{proof}

We now apply this result to $f^{-1}$ to obtain a simple criterion ensuring
that an unstable manifold is Zariski dense in $\Pb^k.$ A very similar argument will
be applied later to $f$ itself in Lemma~\ref{le-zariski}, where the
assumptions of Section~\ref{sec-ass} allow us to take $r=1$.

\begin{proposition}
\label{prop-zariski-dense-unstable}
Under the assumptions of Proposition~\ref{prop-holomorphic-jets}, assume
that $f$ is an endomorphism of $\Pb^k$. Write
\[
0<|\lambda_1|<\cdots<|\lambda_s|<1
\]
for the stable multipliers of $p$, and assume that
\[
|\lambda_s|^{r+1}<|\lambda_1|.
\]

Assume moreover that there exist $N\geq1$ and
$\tilde q\in W^u_{p,loc}$ such that $D_{\tilde q}f^N$ is
invertible and, setting $q:=f^N(\tilde q)$,
\[
q\in W^s_{p,loc}\setminus\{p\},
\]
the germ $(f^N(W^u_{p,loc}),q)$ is transverse to $W^s_{p,loc}$ at $q$. Assume finally that all the
stable coordinates of $\theta(q)$ are nonzero.

Then $W^u_p$ is Zariski dense in $\Pb^k$.
\end{proposition}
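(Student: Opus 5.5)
We argue by contradiction, applying Proposition~\ref{prop-asymptotic-rigidity} to a local inverse branch of $f$ near $p$, as the remark before the statement suggests. Suppose $W^u_p$ is contained in a proper algebraic subvariety of $\Pb^k$. Since $W^u_p=\bigcup_{n}f^n(W^u_{p,loc})$ is an increasing union of connected immersed complex manifolds, its Zariski closure $V$ is irreducible. It has dimension $v$ with $u\leq v<k$, and it satisfies $f(V)\subseteq V$; since $V$ is irreducible and $\dim f(V)=\dim V$, we in fact get $f(V)=V$. We then work with the analytic germ of $V$ at the transverse homoclinic point $q$.

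Set $\Gamma:=f^N(W^u_{p,loc})$ near $q$. This is a smooth $u$-dimensional germ, because $D_{\tilde q}f^N$ is invertible, and it is transverse to $W^s_{p,loc}$ at $q$. To apply Proposition~\ref{prop-asymptotic-rigidity} to $g:=f^{-1}$, the local inverse near $p$, we exchange the roles of stable and unstable directions. The unstable manifold of $g$ is $W^s_{p,loc}$, of dimension $s$, with multipliers $\lambda_j^{-1}$, so the hypothesis $|\rho_u|<|\rho_1|^{r+1}$ becomes exactly $|\lambda_1|^{-1}<|\lambda_s|^{-(r+1)}$, which is the assumption $|\lambda_s|^{r+1}<|\lambda_1|$. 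The stable manifold of $g$ is $W^u_{p,loc}$. Hypotheses \eqref{eq-bun} and \eqref{eq-res} are symmetric, and Proposition~\ref{prop-holomorphic-jets} applies to $K^{-1}$. We therefore need an $s$-dimensional submanifold $\Gamma'$ transverse to $W^u_{p,loc}$ at some point $w_0$, together with an analytic subset $H\subset\Gamma'$ smooth at $w_0$, such that backward orbit segments of points of $H$ near $w_0$ accumulate at $y_\infty=q\in W^s_{p,loc}$. The coordinates of $\theta(q)$ are nonzero by assumption, which gives the required genericity of the limit point.

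The choice is as follows. Take $w_0:=\tilde q\in W^u_{p,loc}$ and let $\Gamma'$ be a germ of an $s$-dimensional complex submanifold through $\tilde q$ transverse to $W^u_{p,loc}$, chosen inside $f^{-N}$ of a transversal at $q$. Take $H:=\Gamma'\cap V'$, where $V'$ is a suitable component of $f^{-M}(V)$ near $\tilde q$. To produce the points $w_k$, use the inclination (lambda) lemma in the linearized chart $\theta$. Points of $\Gamma\subset V$ near $q$ lying close to $W^s_{p,loc}$ have forward orbits that stay in $B_p$ for a long time and exit close to $W^u_{p,loc}$. Running backwards, these give points of $V$ arbitrarily close to $\tilde q$ whose orbit under $f^{-1}$ passes through $B_p$ and returns near $q$. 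Arranging that these points lie on $\Gamma'$ and are smooth points of $H$ is the technical part; if needed we shrink and replace $V'$ by a stratum to get smoothness at $w_0$. Proposition~\ref{prop-asymptotic-rigidity} then yields $(H,w_0)=(\Gamma',w_0)$, so $\Gamma'\subset V'$.

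This holds for every choice of transversal $\Gamma'$ through nearby points, so $V'$ contains an open set: near $\tilde q$ it contains both $W^u_{p,loc}$ (of dimension $u$) and all these $s$-dimensional transversals. Hence $\dim V'=k$. Because $f$ is finite, $f^{-M}(V)$ has the same dimension as $V$, so $\dim V=k$, contradicting $v<k$. The main obstacle is the construction of the sequence $(w_k)$: we must make sure the accumulation really takes place inside a fixed analytic piece $H$ with the correct limit $y_\infty=q$. This is where transversality of $\Gamma$ at $q$ and the inclination lemma in the $C^\ell$ linearizing coordinates are essential. We also have to handle the fact that $V$ is algebraic but possibly singular, which we do by working at generic points of $V$ near $\tilde q$ and passing to the limit.
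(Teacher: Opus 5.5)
\textbf{Gap: smoothness of $H$ at the base point.} Proposition~\ref{prop-asymptotic-rigidity} requires the analytic set $H$ to be smooth at the base point $w_0$. You take $w_0=\tilde q$ and $H=\Gamma'\cap V'$, with $V'$ built from the Zariski closure $V$. Nothing forces $V$, or $V\cap\Gamma'$, to be smooth at the particular point $\tilde q$.

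Your remedy, replacing $V'$ by a stratum through $w_0$, does not work. The accumulating points $w_k$ need not lie in the stratum containing $\tilde q$. The proposition is also a statement about one fixed base point, so ``working at generic points of $V$ and passing to the limit'' is no substitute. Two smaller remarks: what is needed is smoothness of $H$ at $w_0$, not at the $w_k$; and the detour through $f^{-M}(V)$ is unnecessary, since the points you construct already lie in $W^u_p\subset V$.

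\textbf{The missing idea.} You need a way to move the base point to a smooth point while keeping a sequence whose limit is still $q$.

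The paper never uses $V$. It considers the set $\mathcal P$ of points of a piece $Y\subset W^s_{p,loc}$ around $q$ that are transverse intersections with forward images of $W^u_{p,loc}$. It takes an analytic subset $H\subsetneq Y$ of minimal dimension containing $\mathcal P$, so that $\mathcal P\cap H_{\mathrm{reg}}\neq\varnothing$. It then pulls back to $W^u_{p,loc}$ from a point of $\mathcal P\cap H_{\mathrm{reg}}$, and uses that $\mathcal P$ reproduces itself near each of its points. Zariski density then follows from a Taylor expansion transverse to $Y$.

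Your algebraic framework can also be repaired directly.
\begin{enumerate}
\item $V$ is the Zariski closure of $W^u_{p,loc}$ itself. Indeed, the varieties $f^n(\overline{W^u_{p,loc}}^{\mathrm{Zar}})$ form an increasing chain of irreducible varieties of equal dimension. Hence a generic $z\in W^u_{p,loc}$ is a regular point of $V$.
\item For an $s$-dimensional transversal $\Gamma'_z$ at such a $z$, the intersection $V\cap\Gamma'_z$ is transverse. It is therefore smooth at $z$, of dimension $v-u$.
\item Apply the inclination lemma to $\Gamma'_z$ under $g=f^{-1}$, rather than to $\Gamma$ under $f$; the latter is why landing on $\Gamma'$ looked delicate to you. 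Then $g^m(\Gamma'_z)$ meets $f^N(W^u_{p,loc})$ transversely at points $y_m\to q$, and the points $w_m:=f^m(y_m)\in V\cap\Gamma'_z$ tend to $z$.
\item Proposition~\ref{prop-asymptotic-rigidity} then gives $v-u=s$, i.e.\ $v=k$, a contradiction.
\end{enumerate}
With this change your route is valid and avoids the paper's final Taylor-expansion step. The price is that it uses the algebraic structure of the Zariski closure from the outset.
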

\begin{rk}
A natural strategy to prove the Zariski density of $W^u_p$ would be to
consider a H\'enon-like map of main dynamical degree $2$, obtained by
 the homoclinic transition through $q$ for a sufficiently large
iterate of $f$. The assumption that $q$ does not lie on any coordinate
axis of $W^s_{p,loc}$ suggests, possibly with additional assumptions, that the main dynamical degree should
strictly dominate all the other dynamical degrees, so that the Zariski density would follow from the results
of Bianchi-Dinh-Rakhimov \cite{BDR-continuity-super-pot}. However, we did not find a simple
proof of this domination.\end{rk}

\begin{proof}
Let $Y$ be a sufficiently small neighborhood of $q$ in
$W^s_{p,loc}$. In particular, the germ
$f^N(W^u_{p,loc})$ is transverse to $Y$ at $q$. Let $\mathcal P$
be the set of points $x\in Y$ for which there exist $n\geq1$ and
$\tilde x\in W^u_{p,loc}$ such that
\[
x=f^n(\tilde x),\qquad D_{\tilde x}f^n\ \text{is invertible},
\]
and the germ $f^n(W^u_{p,loc})$ is transverse to $Y$ at $x$.

Assume that $\mathcal P$ is contained in a proper analytic subset of
$Y$, and let $H\subsetneq Y$ be an analytic subset of minimal
dimension containing $\mathcal P$. By minimality,
\[
\mathcal P\cap H_{\rm reg}\neq\varnothing,
\]
and let $x\in\mathcal P\cap H_{\rm reg}.$ Choose $n$ and $\tilde x$ as above, and let $\phi$ be the local
inverse of $f^n$ sending $x$ to $\tilde x$. Set
\[
\Gamma:=\phi(Y),\qquad H':=\phi(H),\qquad w_0:=\tilde x.
\]
Then $H'$ is smooth at $w_0$ and $\Gamma$ is transverse to
$W^u_{p,loc}$ there.

We now apply Proposition~\ref{prop-asymptotic-rigidity} to the local
inverse $g:=f^{-1}$. By the inclination lemma, $g^m(\Gamma)$ converges
to $W^s_{p,loc}$. Hence, for $m$ large, it intersects
$f^N(W^u_{p,loc})$ transversely at a point $y_m\to q$. Writing
\[
w_m:=g^{-m}(y_m)\in\Gamma,
\]
we have $w_m\to w_0$, and, by construction,
$\phi^{-1}(w_m)\in\mathcal P$, hence $w_m\in H'$.

The unstable multipliers of $g$ are
\[
|\lambda_s|^{-1}<\cdots<|\lambda_1|^{-1},
\]
and the assumed inequality
\[
|\lambda_s|^{r+1}<|\lambda_1|
\]
is precisely the assumption required in
Proposition~\ref{prop-asymptotic-rigidity}. Moreover, the unstable
coordinates of $q$ for $g$ are the stable coordinates of $q$ for $f$,
which are all nonzero. Proposition~\ref{prop-asymptotic-rigidity}
therefore gives
\[
(H',w_0)=(\Gamma,w_0),
\]
a contradiction. Thus $H=Y$ and  $\mathcal P$ is not contained in any proper analytic subset of
$Y$.

This implies that $W^u_p$ is Zariski dense in $\Pb^k.$ Indeed, let $F$ be a
homogeneous polynomial vanishing on $W^u_p$. Since
$\mathcal P\subset W^u_p$, the restriction of $F$ to $Y$ vanishes
identically.

Choose local holomorphic coordinates $(x,z)$ near $Y$ such that
$Y=\{z=0\}$, and write
\[
F(x,z)=\sum_{\beta} F_\beta(x)z^\beta.
\]
We claim by induction on $|\beta|$ that all the functions $F_\beta$
vanish identically. This is clear for $|\beta|=0$. Now, for
$q\in\mathcal P$, let $\Gamma_q\subset W^u_p$ be the corresponding
local branch transverse to $Y$. It can be written near $q$ as
\[
x=q+\varphi_q(z).
\]
Since $F$ vanishes on $\Gamma_q$, we have
\[
0=F(q+\varphi_q(z),z)
=\sum_{\beta}F_\beta(q+\varphi_q(z))z^\beta.
\]
Assuming that $F_\beta\equiv0$ for $|\beta|<m$, and using
$\varphi_q(z)=O(\|z\|)$, the homogeneous part of degree $m$ is
\[
\sum_{|\beta|=m}F_\beta(q)z^\beta.
\]
It therefore vanishes identically, so
\[
F_\beta(q)=0
\qquad\text{for all }|\beta|=m.
\]
Since $\mathcal P$ is not
contained in any proper analytic subset of $Y$, we get
$F_\beta\equiv0$ on $Y$.

Thus $F$ vanishes on a neighborhood of $Y$, hence identically.
Therefore no nonzero homogeneous polynomial vanishes on $W^u_p$, so
$W^u_p$ is Zariski dense in $\Pb^k$.
\end{proof}

\section{Horizontal/vertical relations and horizontal-like maps}\label{sec-graph}

In Section~\ref{sec-blender}, we consider holomorphic horizontal-like dynamical systems.
Such systems have been extensively studied in dimension~$2$ (see~\cite{dujardin-hl}) and in higher dimension (see, for instance,~\cite{dns}).
We will only use basic properties of these maps, which are classical. The purpose of the present section is simply to provide a geometric framework for our semi-local dynamics.

A simple but important feature in our setting is the following.
We will work in a partially hyperbolic situation where (strong) stable manifolds appear as \emph{horizontal} submanifolds of dimension~$s$, while (strong) unstable manifolds are \emph{vertical} submanifolds of dimension~$u$, inside an ambient space of dimension~$k=u+s+1$.
In particular, these manifolds do not intersect generically.
However, when the dynamics depends on parameters, relations between stable and unstable manifolds give rise to hypersurfaces in parameter space, which will play a crucial role in what follows.

\subsection{Special graphs and relations}\label{sec-graph-spe}

Let $k\geq2$ and $1\leq u\leq k-1$, and set $s:=k-u-1$.
We consider three bounded polydiscs $N_c$, $N_s$, and $N_u$ of dimensions $1$, $s$, and $u$, respectively, and define
\[\mathcal N := N_c \times N_s \times N_u \ \text{ and }\ N_{cs}:=N_c\times N_s,\ N_{cu}:=N_c\times N_u.\]
These decompositions induce natural projections
\[\pi_u:\mathcal N\to N_u,\quad
\pi_s:\mathcal N\to N_s,\quad
\pi_{cs}:\mathcal N\to N_{cs},\quad
\pi_{cu}:\mathcal N\to N_{cu},\]
and a natural decomposition of the boundary $\partial\mathcal N$ as
\[\partial_c\mathcal N := (\partial N_c)\times N_s\times N_u,\quad
\partial_s\mathcal N :={N_c}\times(\partial N_s)\times{N_u},\quad
\partial_u\mathcal N :={N_c\times N_s}\times(\partial N_u).\]
Observe that these sets are not closed in $\overline{\mathcal N}.$ We also set
\[\partial_{cs}\mathcal N := \partial_c\mathcal N \cup \partial_s\mathcal N,
\qquad
\partial_{cu}\mathcal N := \partial_c\mathcal N \cup \partial_u\mathcal N.\]
A closed complex submanifold $X\subset\mathcal N$ is called a \emph{$u$-graph} if
$\pi_{u|X}:X\to N_u$ is a biholomorphism and $\pi_{cs}(X)\Subset N_{cs}$.
The notions of $s$-graphs, $cs$-graphs, and $cu$-graphs are defined analogously.

It is immediate that a $u$-graph intersects (transversely) any $cs$-graph in exactly one point.
\begin{lemma}\label{le-inter}
Let $X$ be a $u$-graph (resp.\ $s$-graph) and $Y$ a $cs$-graph (resp.\ $cu$-graph) in $\mathcal N$.
Then $X\cap Y$ consists of a single point, and the intersection is transverse.
\end{lemma}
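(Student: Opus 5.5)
We treat the $u$-graph versus $cs$-graph case; the $s$-graph versus $cu$-graph case is the same argument with the roles of the factors exchanged. The idea is to rewrite the intersection as a fixed-point problem on the polydisc $N_u$. Since $\pi_{u|X}$ is a biholomorphism onto $N_u$, we can write $X=\{(\phi(y),y)\,;\,y\in N_u\}$ for a holomorphic map $\phi\colon N_u\to N_{cs}$ whose image is relatively compact in $N_{cs}$. In the same way, $Y=\{(x,\psi(x))\,;\,x\in N_{cs}\}$ for a holomorphic map $\psi\colon N_{cs}\to N_u$ whose image is relatively compact in $N_u$. A point $(x,y)$ lies in $X\cap Y$ exactly when $x=\phi(y)$ and $y=\psi(x)$, that is, when $y$ is a fixed point of
\[
F:=\psi\circ\phi\colon N_u\to N_u,
\]
in which case $x=\phi(y)$ is determined by $y$. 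The map $F$ is holomorphic and $F(N_u)\Subset N_u$.

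The key step is that such a self-map of a bounded polydisc has exactly one fixed point, and that fixed point is attracting. We use the Kobayashi metric of $N_u$, which is a product of Poincaré metrics and so is complete. Because $F(N_u)$ is contained in a compact subset $K\subset N_u$, the map $F$ factors through an inclusion $N_u\hookrightarrow N_u'$, where $N_u'$ is a slightly larger polydisc. A strictly larger polydisc strictly shrinks Kobayashi distances on $K$, so $F$ is a strict contraction of the Kobayashi distance by some factor $c<1$. The Banach fixed point theorem then gives a unique fixed point $y_0$.

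Transversality comes from the same contraction. The contraction of the infinitesimal Kobayashi metric at $y_0$ gives $\|D_{y_0}F\|\le c<1$ in that metric. Hence $1$ is not an eigenvalue of $D_{y_0}F=D\psi\circ D\phi$. Suppose transversality failed. Both tangent spaces are graphs, $T X=\{(D\phi(v),v)\}$ and $TY=\{(w,D\psi(w))\}$, and the dimensions satisfy $\dim X+\dim Y=u+(s+1)=k$, so failure means a nontrivial common vector. That vector would have the form $(D\phi\, v,v)$ with $v=D\psi\, D\phi\, v$ and $v\neq 0$ (if $v=0$ the vector is zero). This would make $1$ an eigenvalue of $D_{y_0}F$, a contradiction. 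So the intersection is transverse.

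The only point needing care is the strict contraction. The cleanest route is a Schwarz-lemma argument applied coordinatewise: each coordinate of $F$ maps the polydisc into a disc compactly contained in the corresponding factor. Alternatively, we can apply the Earle–Hamilton fixed point theorem directly.
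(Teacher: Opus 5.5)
Your proof is correct and follows the paper's argument: you write both graphs over their bases, reduce $X\cap Y$ to fixed points of the composed self-map of $N_u$ with relatively compact image, and use its strict Kobayashi contraction both for uniqueness of the fixed point and to exclude the eigenvalue $1$ of its derivative, which gives transversality. You supply more detail than the paper, which simply asserts the contraction; one small correction is that the accurate formulation is that $F$ lands in a slightly smaller polydisc $P\Subset N_u$ and it is the inclusion $P\hookrightarrow N_u$ that strictly contracts, which is exactly your coordinatewise Schwarz-lemma remark.
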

\begin{proof}
There exist two holomorphic maps $h\colon N_u\to N_{cs}$ and $g\colon N_{cs}\to N_u$ such that
\[X=\{(h(z),z)\ ;\ z\in N_u\},\ Y=\{(y,g(y))\ ;\ y\in N_{cs}\},\]
with $h(N_u)\Subset N_{cs}$ and $g(N_{cs})\Subset N_u$. In particular, $G:=g\circ h$ is a holomorphic self-map of $N_u$ with $G(N_u)\Subset N_u$. Therefore, it must be contracting, with a unique attracting fixed point $z_0\in N_u$. Since the points in $X\cap Y$ correspond exactly to fixed points of $G$, this intersection is equal to $x_0:=(h(z_0),z_0)$. Moreover, an intersection between the tangent spaces of $X$ and $Y$ corresponds to a fixed point of $D_{z_0}G$. As $z_0$ is attracting for $G$, this intersection is reduced to $\{0\}$.
\end{proof}

In what follows, $u$-graphs will be regarded as \emph{vertical} and $s$-graphs as \emph{horizontal}.
Although such graphs do not generically intersect, parameterized families of them naturally define hypersurfaces in parameter space.

More precisely, let $M$ be a connected complex manifold of dimension $m$ which is relatively compact in a larger manifold.
Let $(V_\lambda)_{\lambda\in M}$ and $(H_\lambda)_{\lambda\in M}$ be holomorphic families of $u$-graphs and $s$-graphs, respectively. Their \emph{total spaces} are defined by
\[V := \{(\lambda,z)\in M\times\mathcal N \; ; \; z\in V_\lambda\}\ \text{ and }\ H := \{(\lambda,z)\in M\times\mathcal N \; ; \; z\in H_\lambda\}.\]
A simple dimension count yields the following result.

\begin{lemma}\label{le-dim-relation}
Let $(V_\lambda)_{\lambda\in M}$ and $(H_\lambda)_{\lambda\in M}$ be as above and assume there exist compact sets $K_{cu}\subset N_c\times N_u$ and $K_{cs}\subset N_c\times N_s$ such that for all $\lambda\in M$,
\[\pi_{cs}(V_\lambda)\subset K_{cs}, \qquad \pi_{cu}(H_\lambda)\subset K_{cu}.\]
Then
\[M':=\{\lambda\in M\ ;\ V_\lambda\cap H_\lambda\neq\varnothing\}\]
is a closed analytic subset of $M,$ whose irreducible components have dimension $m$ or $m-1$.
\end{lemma}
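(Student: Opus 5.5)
The plan is to reduce the intersection condition $V_\lambda\cap H_\lambda\neq\varnothing$ to the vanishing of a single holomorphic function on $M$. This is the same fixed-point mechanism as in the proof of Lemma~\ref{le-inter}, except that here one complex coordinate (the central one) is left free. Write
\[V_\lambda=\{(h_\lambda(z),z)\ ;\ z\in N_u\},\qquad H_\lambda=\{(g_\lambda(w),w)\ ;\ w\in N_s\},\]
where $h_\lambda=(h^c_\lambda,h^s_\lambda)\colon N_u\to N_c\times N_s$ and $g_\lambda=(g^c_\lambda,g^u_\lambda)\colon N_s\to N_c\times N_u$. Both depend holomorphically on $(\lambda,\cdot)$, since the total spaces $V$ and $H$ are analytic and the projections $\pi_u$, $\pi_s$ restrict to biholomorphisms on each fibre. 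A point lies in $V_\lambda\cap H_\lambda$ exactly when there are $z\in N_u$ and $w\in N_s$ satisfying
\[h^s_\lambda(z)=w,\qquad g^u_\lambda(w)=z,\qquad h^c_\lambda(z)=g^c_\lambda(w).\]

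First, I would solve the first two equations uniformly in $\lambda$. Set $G_\lambda:=g^u_\lambda\circ h^s_\lambda\colon N_u\to N_u$. By hypothesis, $h^s_\lambda(N_u)\subset\pi_s(K_{cs})$ and $g^u_\lambda(N_s)\subset\pi_u(K_{cu})$, and these sets are compact in $N_s$ and $N_u$ respectively. Hence $G_\lambda(N_u)$ lies in a fixed compact subset $K\subset N_u$ that does not depend on $\lambda$. By the Earle--Hamilton theorem (equivalently, strict contraction of the Kobayashi metric of the polydisc $N_u$ on $K$), each $G_\lambda$ is a strict contraction. It therefore has a unique fixed point $z(\lambda)\in K$, and $D_{z(\lambda)}G_\lambda$ has all its eigenvalues of modulus less than $1$. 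In particular $\mathrm{id}-D_{z(\lambda)}G_\lambda$ is invertible, so the holomorphic implicit function theorem applied to $(\lambda,z)\mapsto G_\lambda(z)-z$ shows that $\lambda\mapsto z(\lambda)$ is holomorphic. Then $w(\lambda):=h^s_\lambda(z(\lambda))$ is holomorphic as well, and by uniqueness $(z(\lambda),w(\lambda))$ is the only solution of the first two equations.

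Second, I would define
\[\varphi(\lambda):=h^c_\lambda(z(\lambda))-g^c_\lambda(w(\lambda)).\]
This is a holomorphic function on $M$ with values in $\Cb$, because $N_c$ has dimension $1$. By the first step, $M'=\varphi^{-1}(0)$, so $M'$ is a closed analytic subset of $M$. Since $M$ is connected, there are two cases. Either $\varphi\equiv0$, in which case $M'=M$ is irreducible of dimension $m$. Or $\varphi\not\equiv0$, in which case $\varphi^{-1}(0)$ is empty or a hypersurface whose irreducible components all have dimension $m-1$. This gives the dimension claim.

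The only delicate point is the uniformity in the first step. I need the contraction of $G_\lambda$ to give a fixed point that varies holomorphically in $\lambda$ across all of $M$, and this is exactly where the $\lambda$-independent compact sets $K_{cs}$ and $K_{cu}$ are used. Without them, $z(\lambda)$ could escape to $\partial N_u$ and the intersection could escape through $\partial_c\mathcal N$. Apart from that, the argument is a direct dimension count: one scalar equation in $m$ parameters.
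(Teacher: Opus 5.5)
Your proof is correct, and it takes a different route from the paper's.

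\textbf{The paper's route.} The paper argues by general analytic geometry. The uniform compact sets $K_{cs},K_{cu}$ guarantee that $\overline{V\cap H}$ avoids $\overline M\times\partial\mathcal N$. Hence the projection $V\cap H\to M$ is proper, and its image $M'$ is a closed analytic subset of $M$. For every irreducible component $Z$ of $V\cap H$, the lower bound $\dim Z\geq m-1$ is the codimension count $(u+m)+(s+m)-(u+s+1+m)$ in the manifold $M\times\mathcal N$. The upper bound $\dim Z\leq m$ again comes from the compactness, hence finiteness, of the fibres.

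\textbf{Your route.} You solve the $s$- and $u$-equations explicitly through the fixed point of $G_\lambda=g^u_\lambda\circ h^s_\lambda$, exactly as in Lemma~\ref{le-inter}. This leaves a single scalar equation $\varphi=0$ in the one-dimensional central coordinate. It proves more than the statement asks:
\begin{itemize}
\item $M'$ is globally the zero set of one holomorphic function;
\item the intersection point is unique and moves holomorphically, which the paper only asserts in the remark after the lemma;
\item the persistence statement of Lemma~\ref{le-proper} reduces to a Hurwitz-type argument for $\varphi$.
\end{itemize}
The paper's version is shorter. It is also phrased in the language of proper intersections, which is what is reused later for fibred products.

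\textbf{A correction to your last paragraph.} Your argument does not actually need the $\lambda$-independent sets $K_{cs},K_{cu}$, and the claim that $z(\lambda)$ could otherwise escape to $\partial N_u$ is not right. For each fixed $\lambda$, the definition of $u$- and $s$-graphs already gives $g^u_\lambda(N_s)\Subset N_u$, hence $G_\lambda(N_u)\Subset N_u$. So $G_\lambda$ has a unique fixed point in the interior, and $\mathrm{id}-D_{z(\lambda)}G_\lambda$ is invertible. The implicit function theorem then gives a local holomorphic branch near any $\lambda_0$, and this branch coincides with $z(\lambda)$ by uniqueness of the fixed point. Thus $z$ is holomorphic on all of $M$ with no uniform bound. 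The uniform compact sets are what make the projection proper in the paper's route, but they play no role in yours.
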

\begin{proof}
Since $\pi_{cs}(V_\lambda)\subset K_{cs}$ and $\pi_{cu}(H_\lambda)\subset K_{cu}$, the closure of $V\cap H$ is disjoint from
$\overline{M}\times\partial\mathcal N$. Thus, its projection on $M$ has to be a closed analytic subset of $M$ and it corresponds exactly to $M'.$

Moreover, let $Z$ be an irreducible component of $V\cap H$. Then
\[\dim(Z)\geq \dim(V)+\dim(H)-\dim(M\times\mathcal N)
= (u+m)+(s+m)-(u+s+1+m)=m-1.\]
On the other hand, using again that $\overline{V\cap H}$ is disjoint from
$\overline{M}\times\partial\mathcal N$, we have that $\dim(Z)\leq m$.
\end{proof}

Notice that when $\dim(Z)=m$, we have $M'=M$, and therefore $Z$ is a graph over $M$, since for each $\lambda$ there exists at most one point in $V_\lambda\cap H_\lambda$. In this case, we say that the intersection between $(V_\lambda)_{\lambda\in M}$ and $(H_\lambda)_{\lambda\in M}$ can be \emph{followed holomorphically} over $M$.

On the other hand, if $\dim(Z)=m-1$, then $Z$ is a graph over an irreducible component of $M'$, and the intersection between $(V_\lambda)_{\lambda\in M}$ and $(H_\lambda)_{\lambda\in M}$ defines a \emph{nontrivial horizontal/vertical relation}.

Observe that it may also happen that $V\cap H=\varnothing$.

The stability of proper intersections yields the following result, fundamental for what follows.
\begin{lemma}\label{le-proper}
Let $(V_\lambda)_{\lambda\in M},$ $(H_\lambda)_{\lambda\in M}$ and $M'$ be as in Lemma \ref{le-dim-relation} and assume that $M'$ has dimension $m-1.$ Then, if $(\tilde V_\lambda)_{\lambda\in M}$ and $(\tilde H_\lambda)_{\lambda\in M}$ are other holomorphic families of $u$-graphs and $s$-graphs sufficiently close to $(V_\lambda)_{\lambda\in M}$ and  $(H_\lambda)_{\lambda\in M}$ respectively, then their total spaces
\[\tilde V := \{(\lambda,z)\in M\times\mathcal N \; ; \; z\in \tilde V_\lambda\}\ \text{ and }\ \tilde H := \{(\lambda,z)\in M\times\mathcal N \; ; \; z\in \tilde H_\lambda\}\]
satisfy $\dim(\tilde V\cap\tilde H)=m-1$ and this intersection is arbitrarily close to $V\cap H$.
\end{lemma}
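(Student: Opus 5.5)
The plan is to use the stability of proper intersections of analytic sets, phrased in terms of the total spaces $V$ and $H$. Since $M'$ has dimension $m-1$, Lemma~\ref{le-dim-relation} shows that every irreducible component of $V\cap H$ has dimension exactly $m-1$. Indeed, no component has dimension $m$: such a component would be a graph over all of $M$, which would force $M'=M$. So $V$ and $H$ intersect properly in $M\times\mathcal N$, in the sense that the codimensions add up: $\operatorname{codim}V=s+1$, $\operatorname{codim}H=u+1$, and the intersection has codimension $k+1$.

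First, I will make the perturbation concrete. Write $V_\lambda=\{(h_\lambda(z),z)\}$ and $H_\lambda=\{(x_c,g_\lambda(x_c,w),w)\}$, where $h_\lambda\colon N_u\to N_{cs}$ and $g_\lambda\colon N_c\times N_u\to N_s$ depend holomorphically on $\lambda$. Similarly write $\tilde V_\lambda,\tilde H_\lambda$ with $\tilde h_\lambda,\tilde g_\lambda$. Here "sufficiently close" means uniformly close on a neighborhood of $\overline M$. Then $V\cap H$ is the zero set of the holomorphic map
\[
\Phi(\lambda,x_c,z):=\bigl(\pi_c h_\lambda(z)-x_c,\ \pi_s h_\lambda(z)-g_\lambda(x_c,z)\bigr)\in\Cb^{1+s}
\]
on $M\times N_c\times N_u$, a space of dimension $m+1+u$. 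The set $\tilde V\cap\tilde H$ is the zero set of the analogous map $\tilde\Phi$, which is uniformly close to $\Phi$. Now $\Phi$ has $s+1$ components on a space of dimension $m+u+1$, and its zero set has pure dimension $m-1=(m+u+1)-(s+1)-u$. This does not look like a complete intersection, so the count needs care. The correct count is that the ambient dimension of the reduced system is $m+u+1$ with $s+1$ equations, and I expect the generic dimension to be $m+u-s$. I therefore reduce further by noting that $x_c$ is determined by $z$. Substituting $x_c=\pi_c h_\lambda(z)$ leaves the $s$ equations $\pi_s h_\lambda(z)=g_\lambda(\pi_ch_\lambda(z),z)$ in the $(m+u)$ variables $(\lambda,z)$.

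On reflection, the equation should instead be set up via the intersection of $V_\lambda$ with $H_\lambda$ directly. Points of $V_\lambda$ are parametrized by $z\in N_u$, and points of $H_\lambda$ by $(x_c,w)$. Equating the two gives $w=z$ together with $1+s$ conditions on $(\lambda,z)$. That is $1+s$ equations in $m+u$ unknowns, and the expected dimension is $m+u-1-s$, which is consistent with the count in Lemma~\ref{le-dim-relation} once the product dimensions are matched. In any case, $V\cap H$ is the zero set $Z(\Phi)$ of a holomorphic map $\Phi$ whose zero set has the \emph{expected} dimension $m-1$ (the count done in Lemma~\ref{le-dim-relation}). Locally it is therefore a complete intersection with no excess.

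The key step is then the following. Cover the compact set $\overline{V\cap H}$, which lies in the interior of the domain by the compactness hypotheses $\pi_{cs}(V_\lambda)\subset K_{cs}$ and $\pi_{cu}(H_\lambda)\subset K_{cu}$, by finitely many small polydiscs. Choose these so that $Z(\Phi)$ is proper over an $(m-1)$-dimensional coordinate projection, with no zeros on the "vertical" boundary. I will use the local theory of proper projections, or equivalently Rouché's principle / the multiplicity theory of proper intersections (e.g.\ the continuity of intersection cycles, as in Chirka's \emph{Complex Analytic Sets}). For $\tilde\Phi$ close enough to $\Phi$, its zero set in each polydisc still avoids the vertical boundary and is proper over the same base, with finite fibers. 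Hence every component of $Z(\tilde\Phi)$ has dimension at most $m-1$. Moreover, the zero set in each chart is nonempty with total multiplicity preserved, so the dimension is exactly $m-1$, since $\dim\geq m-1$ by the same count as in Lemma~\ref{le-dim-relation}. Away from the polydiscs $\Phi$ is bounded below, so $\tilde\Phi$ has no zeros there. This gives the Hausdorff closeness of $\tilde V\cap\tilde H$ to $V\cap H$.

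The main obstacle is preventing new $m$-dimensional components of $\tilde V\cap\tilde H$ from appearing near $\partial M$. I will handle this by using that $M$ is relatively compact in a larger manifold. The families are defined, and close, on a neighborhood of $\overline M$, and the uniform lower bound on $|\Phi|$ outside the chosen polydiscs applies there as well. An $m$-dimensional component would then be a graph over an open subset of $M$ whose projection would have to be contained in $\tilde M'$. But $\tilde M'$ lies in an arbitrarily small neighborhood of $M'$, which is a proper analytic subset. This is impossible.
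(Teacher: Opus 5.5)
Your strategy is the paper's. The paper observes that $\dim M'=m-1$ means exactly that $V$ and $H$, of codimensions $s+1$ and $u+1$ in $M\times\mathcal N$, intersect properly, and then invokes the stability of proper intersections (Chirka, \emph{Complex analytic sets}, \S 12.3). Your first paragraph, ruling out $m$-dimensional components via Lemma~\ref{le-dim-relation}, is a correct unpacking of that citation. So is your key step: proper projections onto $(m-1)$-dimensional polydiscs, persistence of intersection multiplicities, and working on a neighborhood of $\overline M$.

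The explicit equations in your second and third paragraphs, however, are wrong, and ``in any case'' does not repair them. $H_\lambda$ is an $s$-graph, so it is parametrized by $N_s$, not by $(x_c,w)\in N_c\times N_u$. Your parametrization describes a $cu$-graph, of dimension $u+1$. That is why you find expected dimensions $m+u-s$ or $m+u-1-s$. Your assertion that these agree with Lemma~\ref{le-dim-relation} is false unless $u=s$. As written, your $\Phi$ does not cut out $V\cap H$, so the claim ``complete intersection with no excess'' is unsupported.

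The fix is easy. Write $V_\lambda=\{(h^c_\lambda(z),h^s_\lambda(z),z)\,;\,z\in N_u\}$ and $H_\lambda=\{(g^c_\lambda(x),x,g^u_\lambda(x))\,;\,x\in N_s\}$. Then $V\cap H$ is cut out in the $m+u+s$ variables $(\lambda,z,x)$ by the $k=1+s+u$ equations $x=h^s_\lambda(z)$, $z=g^u_\lambda(x)$ and $h^c_\lambda(z)=g^c_\lambda(x)$. The expected dimension is therefore genuinely $m-1$, and the rest of your argument goes through.

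These correct equations even give a more elementary proof than intersection theory. The map $G_\lambda:=g^u_\lambda\circ h^s_\lambda$ sends $N_u$ into a compact subset of $N_u$. Hence it has a unique attracting fixed point $z_0(\lambda)$, holomorphic in $\lambda$, exactly as in the proof of Lemma~\ref{le-inter}. Consequently $M'$ is the zero set of the single holomorphic function $\phi(\lambda):=h^c_\lambda(z_0(\lambda))-g^c_\lambda(h^s_\lambda(z_0(\lambda)))$. Both Lemma~\ref{le-dim-relation} and the present statement then reduce to Hurwitz's theorem for $\tilde\phi$ close to $\phi\not\equiv0$, applied on discs transverse to $M'$. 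In this description, $\tilde V\cap\tilde H$ is the graph of $\lambda\mapsto\bigl(\tilde h^c_\lambda(\tilde z_0(\lambda)),\tilde h^s_\lambda(\tilde z_0(\lambda)),\tilde z_0(\lambda)\bigr)$ over $\tilde\phi^{-1}(0)$.
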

\begin{proof}
The assumption $\dim(M')=m-1$ exactly says that the intersection $V\cap H$ is proper. Thus, the result is a direct consequence of the stability of proper intersections (see e.g.\cite[\textsection 12.3]{Chirka}).
\end{proof}

\subsection{Degree~$1$ and injective horizontal-like maps}

In our partially hyperbolic setting, we introduce horizontal-like maps adapted to
the decomposition of the boundary of $\mathcal N$.
We refer to~\cite[Chapter 1]{dujardin-thesis} for a more detailed discussion on this concept.

Let $N_u'\subset\Cb^u$ be a larger unstable polydisc and set
\[\mathcal N' := N_c\times N_s\times N_u',
\qquad
\mathcal N\subset\mathcal N'.\]
This enlargement implies that $u$-graphs cross a region larger than the one where
the horizontal-like map is defined.

\begin{definition}\label{def-hl}
An injective holomorphic map defined on a neighborhood of $\overline{\mathcal N}$
is called a \emph{horizontal-like map of index $(s+1,u)$} from $\mathcal N$
to $\mathcal N'$ if
\begin{itemize}
\item $f(\mathcal N)\cap\mathcal N\neq\varnothing$,
\item $f(\partial_u\mathcal N)\cap\overline{\mathcal N'}=\varnothing$,
\item $f(\overline{\mathcal N})\cap\partial\mathcal N'\subset\partial_u\mathcal N'$.
\end{itemize}
\end{definition}

Since $\partial_{cs}\mathcal N\subset\partial_{cs}\mathcal N'$, such a map is also
horizontal-like from $\mathcal N$ to itself.

The \emph{degree} of a horizontal-like map is the cardinality of
$f(X)\cap Y$, where $X$ and $Y$ are generic $u$-graph and $cs$-graph in
$\mathcal N$ and $\mathcal N'$, respectively.
Here, we only consider degree~$1$ maps, which is a particularly simple setting.

For such maps, $f(X)\cap\mathcal N$ remains a $u$-graph in $\mathcal N$,
called the \emph{graph transform} of $X$ by $f$.
On the other hand, since $f$ is injective, if $Y$ is a $cs$-graph of $\mathcal N$ then the image of $f(\mathcal N)\cap Y$ by $f^{-1}$ is again a $cs$-graph in $\mathcal N,$ which corresponds to the \emph{graph transform} of $Y$ by $f^{-1}.$

%

It is easy to see that maximal invariant set of a degree $1$ horizontal-like map of index $(s+1,u)$ is reduced to a single point, which has to be a saddle fixed point $p(f)$ of index $(s+1,u).$ Its local unstable manifold $W^u_{p(f),loc}$ is a $u$-graph whereas its local stable manifold $W^s_{p(f),loc}$ is a $cs$-graph. They are obtained by
\begin{equation}\label{eq-manifolds}
W^u_{p(f),loc}=\bigcap_{n\geq0} f^n(\mathcal N) \text{ and } W^s_{p(f),loc}=\bigcap_{n\geq0}f^{-n}(\mathcal N).
\end{equation}

In Section~\ref{sec-blender}, the saddle point appearing in
Assumption~\ref{saddle} will be obtained in this way.
The second main dynamical object, namely a hyperbolic saddle set
$\Lambda(f)$ of index $(s,u+1)$ in Assumption~\ref{hyperbolic},
arises from a similar construction using two horizontal-like maps with
the same target polydisc.

More precisely, a \emph{horizontal-like map of index $(s,u+1)$} is defined
as above, replacing $\partial_u$ with $\partial_{cu}$ in
Definition~\ref{def-hl}.
Let $N_{u,+},N_{u,-}\subset N_u'$ be two disjoint polydiscs and define
\[\mathcal N_\pm := N_c\times N_s\times N_{u,\pm}\ \text{ and } \tilde{\mathcal N}:=\mathcal N_+\cup\mathcal N_-.\]

\begin{definition}\label{def-hl2}
A holomorphic map $f$ defined on a neighborhood of
$\tilde{\mathcal N}$ is called a \emph{degree~$2$ horizontal-like
map of index $(s,u+1)$} from $\tilde{\mathcal N}$ to $\mathcal N'$
if the restrictions $f|_{\mathcal N_+}$ and $f|_{\mathcal N_-}$ are
degree~$1$ horizontal-like maps of index $(s,u+1)$ into $\mathcal N'$.
\end{definition}
%
In Section~\ref{sec-ass}, we consider a degree~$2$ horizontal-like
map $f$ of index $(s,u+1)$ from $\tilde{\mathcal N}$ to $\mathcal N'$
such that its maximal invariant set
\[\Lambda := \bigcap_{n \in \Zb} f^n(\tilde{\mathcal N})\]
is hyperbolic of index $(s,u+1)$. Note that, using contractions of the Kobayashi metrics, one can prove that this always holds.

In this setting, for $x \in \Lambda$, we define its local stable manifold as follows. For each $y \in \tilde{\mathcal N}$, let $L_y^s$ denote the horizontal
linear subspace passing through $y$, namely
\[L_y^s := \{\pi_c(y)\} \times N_s \times \{\pi_u(y)\}.\]
Then, for $x \in \Lambda$, we set
\[W^s_{x,loc}
:= \lim_{n \to \infty} f^{-n}\big(L^s_{f^n(x)}\big).\]
Observe that here (and throughout the paper in the same context),
$f^{-n}\big(L^s_{f^n(x)}\big)$ denotes the successive graph transforms
under $f^{-1}$. As there are two inverse branches of $f,$ at each step, we choose the one passing from $f^{i+1}(x)$ to $f^{i}(x).$

On the other hand, since the maps considered here are not invertible, unstable objects are
naturally defined on the natural extension
\[
\hat\Lambda
:=
\left\{(x_j)_{j\leq0}\in\Lambda^{\mathbb Z_{\leq0}}
\ ;\ f(x_j)=x_{j+1}\right\}.
\]
Then, the local unstable manifold of a prehistory $\hat x=(x_j)_{j\leq0}\in\hat\Lambda$ is
\[W^u_{\hat x,loc}
:= \lim_{n \to+\infty} f^{n}\big(L^{cu}_{x_{-n}}\big),\]
where $L_y^{cu}:=N_c\times\{\pi_s(y)\}\times N_u'.$ Note that, whereas the local stable manifold only depends on the
coordinate \(x_0\), the local unstable manifold may depend on the whole
prehistory \(\hat x=(x_j)_{j\leq0}\). We refer to Przytycki~\cite{przytycki-anosov} for the study of natural extensions of hyperbolic saddle sets for non-invertible maps, and to Jonsson~\cite{jonsson-phd} for an exposition particularly suited to our purposes.

In a holomorphic family, Jonsson's theorem
\cite[Theorem~B]{jonsson-motion_hyperbolic} shows that prehistories in
the natural extension admit holomorphic continuations. After projection
onto the zeroth coordinate, this gives in general only a branched
holomorphic motion of \(\Lambda\).

To lighten notation, in Section~\ref{sec-blender} we shall often suppress the natural extension whenever no confusion can arise.
Thus, when a point
\(x(f)\in\Lambda(f)\) is followed holomorphically, a prehistory is
implicitly chosen. Similarly, an unstable object such as
\(W^u_{x(f),loc}\) is understood to be associated with this chosen
prehistory. Stable objects, which only depend on the zeroth coordinate,
are unaffected by this convention. 

\begin{proposition}\label{prop-hyp-graphs}
In the situation above, we have that
\begin{itemize}
\item the local stable manifolds $W^s_{x,loc}$, for $x\in\Lambda$, are $s$-graphs in $\tilde{\mathcal N}$,
\item for each $\hat x$ in the natural extension $\hat\Lambda$ of $\Lambda$, the local unstable manifold $W^u_{\hat x,loc}$ is a $cu$-graph in $\mathcal N'$.
\end{itemize}
Moreover, these invariants manifolds are contained in the standard stable and unstable manifolds respectively and they depends holomorphically on $f.$
\end{proposition}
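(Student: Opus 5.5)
The plan is to run the classical graph transform argument, using the two basic properties of Definition~\ref{def-hl2}. Each branch $f_\pm:=f|_{\mathcal N_\pm}$ sends $\partial_{cu}\mathcal N_\pm$ outside $\overline{\mathcal N'}$ and meets $\partial\mathcal N'$ only along $\partial_u\mathcal N'$. As a consequence:
\begin{itemize}
\item the image by $f_\pm$ of a $cu$-graph of $\mathcal N'$, intersected with $\mathcal N'$, after restricting to the part lying over $\mathcal N_\pm$, is again a $cu$-graph of $\mathcal N'$;
\item the preimage by $f_\pm$ of an $s$-graph of $\tilde{\mathcal N}$ lying in the appropriate component is an $s$-graph in $\mathcal N_\pm$.
\end{itemize}
I will check this first, by the same degree argument used for degree~$1$ maps. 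The projection $\pi_{cu}$ (resp.\ $\pi_s$) restricted to the transformed manifold is proper, with the correct boundary behaviour. Its degree is therefore constant, and it equals $1$ because the map has degree~$1$ on each branch.

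Next I show convergence. For $x\in\Lambda$, the sequence $f^{-n}(L^s_{f^n(x)})$ (successive graph transforms along the branches prescribed by the orbit of $x$) consists of $s$-graphs. I represent them as graphs of holomorphic maps $N_s\to N_{cu}$ with values in a fixed compact subset. Two such graphs, obtained at steps $n$ and $n+m$, are both images under the same composition of $n$ inverse graph transforms. The inverse graph transform is a strict contraction for the Kobayashi (or Carathéodory) metric of the space of graphs, since the image lies relatively compactly in the polydisc. This gives uniform exponential contraction, so the sequence is Cauchy, and the limit $W^s_{x,loc}$ is an $s$-graph: it is a holomorphic graph as a uniform limit of holomorphic maps, and relative compactness persists. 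The same argument with forward graph transforms applied to $L^{cu}_{x_{-n}}$ along a prehistory $\hat x$ yields the $cu$-graph $W^u_{\hat x,loc}$.

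To identify these graphs with the genuine stable and unstable manifolds, I argue as follows. Every point of $W^s_{x,loc}$ has its forward orbit staying in $\tilde{\mathcal N}$ along the same itinerary as $x$. By the hyperbolicity of $\Lambda$ (uniform contraction of $cs$-directions versus expansion in $u$), two points with the same forward itinerary converge exponentially. Hence $W^s_{x,loc}\subset W^s_x$, and symmetrically $W^u_{\hat x,loc}\subset W^u_{\hat x}$.

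For holomorphic dependence on $f$, I work over a parameter ball. The graph transforms then act on holomorphic families of graphs, that is, on holomorphic maps of $(\lambda,z)$. The contraction estimates are uniform in $\lambda$, so the convergence is locally uniform in $(\lambda,z)$ and the limits are holomorphic in $\lambda$. The points $x(f)$ and the prehistories $\hat x(f)$ are followed using Jonsson's continuation. For stable manifolds, the itinerary determines $W^s$ directly, which avoids the branching issue.

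The main obstacle I expect is the uniform contraction in the index $(s,u+1)$ setting. The central direction is only weakly hyperbolic, so I would rely on the relative compactness $f(\overline{\mathcal N_\pm})\cap\partial\mathcal N'\subset\partial_u\mathcal N'$ to obtain the Kobayashi contraction, rather than on cone fields.
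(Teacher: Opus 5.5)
Your scheme matches the paper's on the invariance of $s$- and $cu$-graphs under graph transform and on using Jonsson's theorem for the parameter dependence. Your contraction argument on the space of graphs also supplies the convergence of $f^{-n}(L^s_{f^n(x)})$, which the paper does not spell out.

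The gap is the identification with the genuine stable and unstable manifolds. Hyperbolicity of $\Lambda$ only controls the derivative on a small neighborhood of $\Lambda$. It produces stable manifolds of small size, characterized by $\epsilon$-shadowing of the orbit of $x$. The graph $W^s_{x,loc}$, however, lies over the whole polydisc $N_s$. A point $z$ on it shares with $x$ only its itinerary among the macroscopic pieces $\mathcal N_\pm$, so nothing you invoke forces $f^n(z)$ to approach $f^n(x)$. Your sentence ``two points with the same forward itinerary converge exponentially'' is essentially the statement to be proved. In addition, the splitting you quote belongs to index $(s+1,u)$. Here only the $s$-directions are contracted, and the central direction is part of the unstable bundle. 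The boundary condition reads $f(\overline{\mathcal N_\pm})\cap\partial\mathcal N'\subset\partial_{cu}\mathcal N'$, not $\partial_u\mathcal N'$.

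What you need is a contraction along the $s$-graphs that is global on $N_s$. The paper obtains it from the Kobayashi metric of $N_s$ rather than from hyperbolicity.
\begin{itemize}
\item The boundary condition gives a compact $K_s\Subset N_s$ with $\pi_s\bigl(\mathcal N'\cap f(\tilde{\mathcal N})\bigr)\subset K_s$.
\item Since $f(W^s_{y,loc})\subset W^s_{f(y),loc}$, the maps $g_n:=\pi_s\circ f\circ\bigl(\pi_{s}|_{W^s_{f^n(x),loc}}\bigr)^{-1}$ send $N_s$ into $K_s$. Hence they contract the Kobayashi metric of $N_s$ by a uniform factor.
\item Consequently $\pi_s\bigl(f^n(W^s_{x,loc})\bigr)\subset g_{n-1}\circ\cdots\circ g_0(N_s)$ has diameter tending to $0$.
\item The graphs $W^s_{f^n(x),loc}$ have $cu$-projections in a fixed compact set, again by the horizontal-like condition, so they form a normal family. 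This upgrades the previous point to $d\bigl(f^n(x),f^n(z)\bigr)\to0$ for every $z\in W^s_{x,loc}$.
\end{itemize}
The unstable case is the same, using the inverse branches along the prehistory $\hat x$ acting on $N_c\times N_u'$. With this step replaced, the rest of your argument goes through.
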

\begin{proof}
The first part follows directly from the fact that the graph transform
of an $s$-graph (resp. a $cu$-graph) by $f^{-1}$ (resp. by $f$)
is again an $s$-graph (resp. a $cu$-graph).

For the second point, observe first that, by definition of a
horizontal-like map, there exist compact sets
$K_s \Subset N_s$ and $K_{cu,\pm} \Subset N_{cu,\pm}$ such that
\[\pi_{cu}\big(\tilde{\mathcal N} \cap f^{-1}(\mathcal N')\big)
\subset K_{cu,+} \cup K_{cu,-}
\quad \text{and} \quad
\pi_s\big(\mathcal N' \cap f(\tilde{\mathcal N})\big)
\subset K_s.\]
In particular, for every $x \in \Lambda$, $\pi_{cu}(W^s_{x,loc})\subset K_{cu,+} \cup K_{cu,-},$
so the family of such $s$-graphs is normal. Moreover, by definition of the graph transform, if $x \in \Lambda$ then $f^{-1}(W^s_{f(x),loc}) = W^s_{x,loc},$
and hence $f(W^s_{x,loc}) \subset W^s_{f(x),loc}.$ By the second inclusion above, for each $y \in \Lambda$ we have $\pi_s(f(W^s_{y,loc})) \subset K_s.$
For $n \ge 0$, define
\[g_n := \pi_s \circ f \circ
\big(\pi_{s|W^s_{f^n(x),loc}}\big)^{-1}.\]
Then $g_n(N_s) \subset K_s$, and therefore the family $(g_n)_{n \ge 0}$
uniformly contracts the Kobayashi metric on $N_s$.
It follows that the diameter of the image of $G_n := g_{n-1} \circ \cdots \circ g_0$
converges to $0$.

Since
\[\pi_s\big(f^n(W^s_{x,loc})\big)
\subset G_n(N_s),\]
and since $f^n(W^s_{x,loc}) \subset W^s_{f^n(x),loc}$,
which belong to a normal family of $s$-graphs,
we conclude that for every $z \in W^s_{x,loc}$, $d(f^n(x),f^n(z))$ converges to $0.$
Hence $W^s_{x,loc}$ coincides with the standard local stable
manifold of $x$. The unstable case is proved in the same way.

If we have a holomorphic family of degree $2$ horizontal-like maps $(f_\lambda)_{\lambda\in M}$ parametrized by $M,$ then by \cite[Theorem~B]{jonsson-motion_hyperbolic} the natural extension $\hat\Lambda(f_\lambda)$ (resp. $\Lambda(f_\lambda)$) can be followed by a holomorphic motion (resp. branched holomorphic motion). The construction above directly implies that if $\hat x(\lambda)\in\hat\Lambda(\lambda)$ (resp. $x(\lambda)\in\Lambda(\lambda)$) then $W^u_{\hat x(\lambda),loc}$ and $W^s_{x(\lambda),loc}$ both depend holomorphically on $\lambda.$
\end{proof}

\subsection{Case of fibered products with parameters}\label{sec-hl-renor}

Observe that all the results above (except
Lemma~\ref{le-dim-relation} and Lemma~\ref{le-proper}) are
independent of the dimension of the central direction, which
was equal to $1$ in the previous setting. Throughout the paper,
we mainly work in this situation, except when dealing with
fibered products in Section~\ref{sec-courant}, where the central dimension increases. However, in that section, we will only deal with the dynamics near a saddle point where the central direction plays no special role.

Hence, we keep the notation of Section~\ref{sec-graph-spe}, except that we ignore the central direction. That is
\[\mathcal N := N_s \times N_u,\quad \partial_s\mathcal N :=(\partial N_s)\times{N_u},\quad \text{ and }\quad \partial_u\mathcal N :=N_s\times(\partial N_u).\]
We also fix a parameter space $M$, which is a complex manifold
of dimension $m$. The integers $s$ and $u$ here will correspond,
in the proof of Proposition~\ref{prop-positivevolume}, to $ms$ and $m(u+1)$, respectively.

We consider a holomorphic family of maps
$(f_\lambda)_{\lambda\in M}$, parametrized by $M$, consisting of
horizontal-like maps of index $(s,u)$ from $\mathcal N$ to
itself. More precisely, since there is no central direction this means that for each $\lambda\in M$, the map
$f_\lambda$ satisfies Definition~\ref{def-hl}.
Moreover, we assume that these maps have degree~$1$, so that
if $X$ is a $u$-graph in $\mathcal N$ (resp.\ an
$s$-graph), then $f_\lambda(X)\cap\mathcal N$ (resp.\
$f_\lambda^{-1}(X)\cap\mathcal N$) is again a $u$-graph
(resp.\ an $s$-graph).

The maximal invariant set of $f_\lambda$ in $\mathcal N$ reduces
to a single fixed point $r(f_\lambda)$, which is a saddle of
index $(s,u)$ and whose local stable and unstable manifolds
are defined as in~\eqref{eq-manifolds}. Moreover, denoting by
\[F\colon\mathcal N\times M\to\Cb^{s+u}\times M,
\qquad
F(x,\lambda)=(f_\lambda(x),\lambda),\]
we have
\[\mathcal W^u_{r,loc}=
\bigcap_{n\geq0} F^n(\mathcal N\times M),
\qquad
\mathcal W^s_{r,loc}=
\bigcap_{n\geq0} F^{-n}(\mathcal N\times M),\]
which assemble the invariant manifolds in the parameter space.

In the proof of Proposition~\ref{prop-positivevolume}, we will
consider the graph transform of certain subvarieties in
$\mathcal N\times M$. More precisely, let
$\Gamma_0\subset\mathcal N\times M$ be a closed irreducible
subvariety of dimension $u$, intersecting
$\mathcal W^s_{r,loc}$ at a single point and satisfying
\begin{equation}\label{eq-renor}
\Big(\overline{\partial_s\mathcal N\times M
\cup \mathcal N\times\partial M}\Big)\cap\overline{\Gamma_0}=\varnothing.
\end{equation}
\begin{proposition}\label{prop-renor}
Let $(f_\lambda)_{\lambda\in M}$ and $\Gamma_0$ as above. Define by induction, for $n\geq0$
\[\Gamma_{n+1}:=F(\Gamma_n)\cap(\mathcal N\times M).\]
Then, $(\Gamma_n)_{n\geq0}$ is a sequence of closed analytic subsets of $\mathcal N\times M$ converging (possibly with some multiplicity) to
\[W^u_{r(f_\lambda),loc}\times\{\lambda\},\]
for some $\lambda\in M.$
\end{proposition}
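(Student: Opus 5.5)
The plan is to reduce the statement to the classical graph-transform mechanism for the skew map $F$, and then to use the single intersection point of $\Gamma_0$ with $\mathcal W^s_{r,loc}$ to select the parameter $\lambda$.

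First I show that each $\Gamma_n$ is a closed analytic subset of $\mathcal N\times M$ whose boundary avoids $\partial_s\mathcal N\times M\cup\mathcal N\times\partial M$. Since $F$ preserves the $\lambda$-coordinate, the condition \eqref{eq-renor} controls the $\partial M$ part of the boundary for every $n$. For the $\partial_s$ part I use the horizontal-like property $f_\lambda(\overline{\mathcal N})\cap\partial\mathcal N\subset\partial_u\mathcal N$: the image $F(\Gamma_n)$ can only exit $\mathcal N\times M$ through $\partial_u\mathcal N\times M$. Moreover $f_\lambda(\mathcal N)\cap\mathcal N$ has $\pi_s$-projection contained in a fixed compact set $K_s\Subset N_s$, uniformly for $\lambda$ in compact subsets. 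Hence $\Gamma_{n+1}$ is closed in $\mathcal N\times M$ and $\pi_s(\Gamma_{n+1})\subset K_s$. Since $F$ is injective, $\dim\Gamma_n=u$.

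Second, I locate the parameter. Let $(x_0,\lambda_0)$ be the unique point of $\Gamma_0\cap\mathcal W^s_{r,loc}$. A point of $\Gamma_n$ has a backward orbit $F^{-j}$, $0\le j\le n$, staying in $\mathcal N\times M$ and ending in $\Gamma_0$. By compactness, any limit point of $\Gamma_n$, or at least any limit point lying over a compact set of $\mathcal N$, comes from points of $\Gamma_0$ whose forward orbits stay in $\mathcal N\times M$ for longer and longer times. Such points of $\Gamma_0$ accumulate on $\Gamma_0\cap\mathcal W^s_{r,loc}=\{(x_0,\lambda_0)\}$, because $\mathcal W^s_{r,loc}=\bigcap_n F^{-n}(\mathcal N\times M)$ and $\overline{\Gamma_0}$ avoids the relevant boundary. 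So the points of $\Gamma_n$ lie over parameters $\lambda\to\lambda_0$. In the same way, the $s$-coordinate is uniformly contracted along orbits, via the Kobayashi argument of Proposition~\ref{prop-hyp-graphs} applied to the fibered map. Together these give that the Hausdorff limit of $\Gamma_n$ is contained in $\bigcap_n F^n(\mathcal N\times M)\cap\{\lambda=\lambda_0\}=W^u_{r(f_{\lambda_0}),loc}\times\{\lambda_0\}$.

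Third, I upgrade Hausdorff convergence to convergence of analytic sets, or of currents of integration, with multiplicity. The sets $\Gamma_n$ are $u$-dimensional and have no boundary over $N_u$ away from $\partial_u$. Their projections $\pi_u\colon\Gamma_n\to N_u$ are proper, hence branched coverings of some degree $d_n$. This degree equals the intersection number of $\Gamma_n$ with a fiber $\pi_u^{-1}(z)$, which is invariant under graph transform because $f_\lambda$ has degree $1$, so $d_n=d_0$. I then apply Bishop's theorem, using the uniformly bounded volume given by degree times the area of a $u$-graph, to extract limits of $[\Gamma_n]$. Any limit is a positive closed current of bidimension $(u,u)$ supported on $W^u_{r(f_{\lambda_0}),loc}\times\{\lambda_0\}$, which is irreducible of dimension $u$. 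By the support theorem it is therefore $d_0[W^u\times\{\lambda_0\}]$, so the whole sequence converges.

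The main obstacle is the second step: the contraction has to be made quantitative in the parameter direction. Points of $\Gamma_0$ near $(x_0,\lambda_0)$ whose orbits stay $n$ steps in $\mathcal N\times M$ must have parameters converging to $\lambda_0$. I would argue this by compactness and contradiction: a limit of such points lies in $\overline{\Gamma_0}\cap\mathcal W^s_{r,loc}$, and \eqref{eq-renor} excludes boundary limits.
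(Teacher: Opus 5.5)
Your proposal is correct and follows the paper's route. You propagate \eqref{eq-renor}, show that the covering degree of $\pi_u|_{\Gamma_n}$ is constant, extract limits by Bishop's theorem, and locate the parameter through the unique point of $\Gamma_0\cap\mathcal W^s_{r,loc}$; your compactness argument also makes explicit the uniformity in $\lambda$ that the paper states only pointwise.

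Two small precisions. First, the degree is invariant because it can be computed against any family of $s$-graphs $H$, and $F^{-1}(H)\cap(\mathcal N\times M)$ is again such a family; this is how the paper argues, whereas the pullback of a fiber $\pi_u^{-1}(z)$ is no longer a fiber. Second, the Kobayashi contraction step is unnecessary, since $\Gamma_n\subset\bigcap_{i\le n}F^i(\mathcal N\times M)$ already forces all limits into $\mathcal W^u_{r,loc}$.
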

\begin{proof}
Observe first that, since for each $\lambda\in M$,
\[f_\lambda(\overline{\mathcal N})\cap\partial\mathcal N
\subset \partial_{u}\mathcal N,\]
each $\Gamma_n$ also satisfies~\eqref{eq-renor}.

On the other hand, again by~\eqref{eq-renor}, the restriction of
the projection
\[\pi\colon\mathcal N\times M\to N_u\]
to $\Gamma_n$ is a ramified covering. Its degree coincides
with the number of intersection points (counted with multiplicity)
between $\Gamma_n$ and
\[H := \{(z,\lambda)\in \mathcal N\times M \; ; \; z\in H_\lambda\},\]
where $(H_\lambda)_{\lambda\in M}$ is a family of $s$-graphs.
Since each $f_\lambda$ has degree~$1$, the sets $H'_\lambda:=f_\lambda^{-1}(H_\lambda)\cap \mathcal N$
also form a family of $s$-graphs, whose total space is denoted by $H'$. Then the number of intersection points (with multiplicity)
of $\Gamma_n\cap H'$ coincides with that of
$\Gamma_{n+1}\cap H$. It follows that the degree of
$\Gamma_n$ is independent of $n$.

Hence $(\Gamma_n)_{n\geq0}$ is a sequence of analytic sets of
dimension $u$ in $\mathcal N\times M$ with uniformly bounded
mass. By Bishop's theorem, it converges (up to extraction) to an
analytic set $\Gamma_\infty$ of dimension $u$. Moreover,
by construction, $\Gamma_n\subset
\bigcap_{i=0}^n F^i(\mathcal N\times M)$, hence
\[\Gamma_\infty\subset \mathcal W^u_{r,loc}.\]

Finally, by assumption, $\Gamma_0$ intersects
$\mathcal W^s_{r,loc}$ at a unique point
$(x_0,\lambda_0)$. If $\lambda\neq\lambda_0$, then
\[\Gamma_0\cap (\mathcal N\times\{\lambda\})=
\Gamma_0(\lambda)\times\{\lambda\},\]
where $\Gamma_0(\lambda)$ is a closed subset of $\mathcal N$ disjoint
from $W^s_{r(\lambda),loc}$. It follows that there exists
$n\geq0$ such that
\[\bigcap_{i=0}^n f_\lambda^{-i}(\mathcal N)\cap\Gamma_0(\lambda)=\varnothing,\]
and therefore
\[\Gamma_\infty\cap (\mathcal N\times\{\lambda\})=\varnothing.\]
Hence,
\[\Gamma_\infty\subset
W^u_{r(\lambda_0),loc}\times\{\lambda_0\}.\]
By dimensional considerations, this inclusion is in fact an equality.
Since the limit does not depend on the subsequence, the sequence
itself converges.
\end{proof}

\section{Dynamical pairs, bifurcations and parameter spaces}\label{sec-courant}
\subsection{Dynamical pairs}
For the material of this paragraph, we refer to \cite{gtv} and \cite{GV_Northcott}. Note that we do not follow the notations from \cite{gtv}. When studying algebraic families of objects, if $S$ is the parameter space, we use the notation $\hat{S}$ for the total space of this family in what follows.

 Fix an integer $k\geq2$ and let $\hat{f}:\mathbb{P}^k\times \mathcal{Y}\to\mathbb{P}^k\times \mathcal{Y}$ be a family of endomorphisms of degree $d$ parametrized by a quasi-projective complex manifold $\mathcal Y$, i.e. $\hat{f}$ is a morphism and $\hat{f}(x,t)=(f_t(x),t)$ for all $(x,t)\in \mathbb{P}^k\times \mathcal{Y}$, where $f_t:\mathbb{P}^k\to\mathbb{P}^k$ is an endomorphism whose degree $d\geq2$ is independent of $t$.
Let $\pi_\mathcal{Y}:\mathbb{P}^k\times \mathcal{Y}\to \mathcal{Y}$ and $\pi_{\mathbb{P}^k}:\mathbb{P}^k\times \mathcal{Y}\to \mathbb{P}^k$ be the two canonical projections.

When $\hat{f}$ and $\mathcal{Y}$ are defined over $\C$, the \emph{fibered Green current} of the family $\hat{f}$ is
\[\hat{T}_{\hat{f}}:=\lim_{n\to\infty}d^{-n}(\hat{f}^n)^*(\pi_{\mathbb{P}^k}^*(\omega)),\]
where $\omega$ is the Fubini-Study form of $\mathbb{P}^k$ of mass $1$, i.e. $\omega\in c_1(\mathcal{O}(1))$.
Let now $p:\hat{\mathcal{Y}}\to \mathcal{Y}$ be a family of subvarieties, i.e. $\hat{\mathcal{Y}}\subset\mathbb{P}^k\times \mathcal{Y}$ is a closed subvariety and $p:=\pi_\mathcal{Y}|_{\hat{\mathcal{Y}}}$ is projective and flat. Assume $\hat{\mathcal{Y}}$ has relative dimension $u$ with $1\leq u\leq k-1$.

\begin{definition}
The \emph{bifurcation current} of the pair $(\hat{f},\hat{\mathcal{Y}})$ is the closed positive current $T_{\mathcal{Y}}$ on $\mathcal{Y}(\C)$ defined as $T_{\mathcal{Y}}:=(\pi_\mathcal{Y})_*(\hat{T}_{\hat{f}}^{\wedge(u+1)}\wedge[\hat{\mathcal{Y}}])$.
\end{definition}

\par\noindent For any $\ell\geq1$, let $\hat{f}^{[\ell]}:(\mathbb{P}^k)^\ell\times \mathcal{Y}\to (\mathbb{P}^k)^\ell\times \mathcal{Y}$ be the fibered action of $\hat{f}$ given by
\[\hat{f}^{[\ell]}(z_1,\ldots,z_\ell,t):=(f_t(z_1),\ldots,f_t(z_\ell),t), \quad (z_1,\ldots,z_\ell,t)\in (\mathbb{P}^k)^\ell\times \mathcal{Y}.\]
When everything is defined over $\C$, the fibered Green current $\hat{T}_{\hat{f}^{[\ell]}}$ of $\hat{f}^{[\ell]}$ is given by $\hat{T}_{\hat{f}^{[\ell]}}=\sum_{i=1}^\ell p_i^*(\hat{T}_{\hat{f}})$, where $p_i:(\mathbb{P}^k)^\ell\times \mathcal{Y}\to \mathbb{P}^k\times \mathcal{Y}$ is the projection onto the $i$-th factor and let $\pi_{\mathcal{Y},\ell}:(\mathbb{P}^k)^\ell\times \mathcal{Y}\to \mathcal{Y}$ be the canonical projection. For any $\ell\geq1$, denote by
\[\hat{\mathcal{Y}}^{[\ell]}:=\hat{\mathcal{Y}}\times_\mathcal{Y}\cdots\times_\mathcal{Y}\hat{\mathcal{Y}}\]
the $\ell$-th fibered power of $p:\hat{\mathcal{Y}}\to \mathcal{Y}$. This defines a family $p_\ell:\hat{\mathcal{Y}}^{[\ell]}\to \mathcal{Y}$ of varieties of relative dimension $u\cdot \ell$.
\begin{definition}
The \emph{bifurcation measure} of the pair $(\hat{f},\hat{\mathcal{Y}})$ is the positive measure $\mu_{\mathcal{Y}}$ on $\mathcal{Y}(\C)$ defined as $\mu_{\mathcal{Y}}=(\pi_{\mathcal{Y},\dim \mathcal{Y}})_*\left(\hat{T}_{\hat{f}^{[\dim \mathcal{Y}]}}^{\wedge(u+1)\dim \mathcal{Y}}\wedge[\hat{\mathcal{Y}}^{[\dim \mathcal{Y}]}]\right)$.
\end{definition}
By \cite[Proposition 1.4]{gtv}, we have the following.
\begin{lemma}\label{lm:formulafibered}
For any $m\geq\dim \mathcal{Y}$, there is $C(m)\geq1$ depending only on $m$ such that
\[(\pi_{\mathcal{Y},m})_*\left(\hat{T}_{\hat{f}^{[m]}}^{\wedge( um+\dim\mathcal Y)}\wedge[\hat{\mathcal{Y}}^{[m]}]\right)\geq C(m)\cdot\mu_{\mathcal{Y}}\geq T_{\mathcal{Y}}^{\wedge \dim \mathcal{Y}}.\]
Moreover, if $\hat{\mathcal{Y}}$ is a family of hypersurfaces, then $\mu_{\mathcal{Y}}$ and $ T_{\mathcal{Y}}^{\wedge \dim \mathcal{Y}}$ are proportional.
\end{lemma}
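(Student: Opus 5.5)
The plan is to compare all three measures with a single multilinear expansion on the fibered power, working locally on $\mathcal Y$ with potentials. Set $n:=\dim\mathcal Y$ and write $\hat T:=\hat T_{\hat f}$. Since $\hat T_{\hat f^{[m]}}=\sum_{i=1}^m p_i^*\hat T$, the multinomial formula gives
\[
\hat T_{\hat f^{[m]}}^{\wedge(um+n)}\wedge[\hat{\mathcal Y}^{[m]}]
=\sum_{j_1+\cdots+j_m=um+n}\binom{um+n}{j_1,\ldots,j_m}\bigwedge_{i} p_i^*\hat T^{\wedge j_i}\wedge[\hat{\mathcal Y}^{[m]}].
\]
Each summand is positive, because $\hat T$ has continuous local potentials and so wedge products of its pullbacks are well defined on the subvariety $\hat{\mathcal Y}^{[m]}$.

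Next I push each term forward by $\pi_{\mathcal Y,m}$, fiber by fiber, using the fiber-product structure. The fibers of $p_i^*$ pulled back to the $i$-th factor have relative dimension $u$. The factor $p_i^*\hat T^{\wedge j_i}\wedge[\hat{\mathcal Y}]$ therefore pushes forward to a current of bidegree $(j_i-u,j_i-u)$ on $\mathcal Y$. This current vanishes if $j_i<u$, since $(\pi_{\mathcal Y})_*$ of a current of too low degree is zero. It equals $T_{\mathcal Y}$ when $j_i=u+1$, and gives the higher-degree analogues $(\pi_{\mathcal Y})_*(\hat T^{\wedge (u+q)}\wedge[\hat{\mathcal Y}])$ in general.

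The key tool is a fibered projection formula: pushforward from a fibered product of products of pulled-back currents equals the wedge product of the individual pushforwards. This holds by induction on $m$, approximating $\hat T$ by the smooth forms $d^{-n}(\hat f^n)^*\hat\omega$ and passing to the limit by continuity of potentials. It yields
\[
(\pi_{\mathcal Y,m})_*(\cdots)=\sum_{j}\binom{um+n}{j}\bigwedge_{i:\,j_i\geq u}(\pi_{\mathcal Y})_*\bigl(\hat T^{\wedge j_i}\wedge[\hat{\mathcal Y}]\bigr),
\]
where the constraint $\sum_i(j_i-u)=n$ makes the total degree $n$ on $\mathcal Y$.

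For the first inequality, take $m=n$ and isolate the terms with all $j_i=u+1$, which give exactly $\mu_{\mathcal Y}$ up to a multinomial constant. For general $m\geq n$, every configuration with $n$ indices equal to $u+1$ and the rest equal to $u$ reproduces the integrand of $\mu_{\mathcal Y}$. The factors with $j_i=u$ push forward to the constant $\deg Y$, a positive number absorbed into $C(m)$. Dropping the remaining positive terms gives the lower bound.

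For the second inequality, $\mu_{\mathcal Y}$ is the same pushforward computed with $m=n$. Expanding $\mu_{\mathcal Y}$ itself by the formula above, the term with all $j_i=u+1$ is $T_{\mathcal Y}^{\wedge n}$ times a multinomial coefficient at least $1$, and all other terms are positive. Hence $\mu_{\mathcal Y}\geq T_{\mathcal Y}^{\wedge n}$, after normalizing constants as in the statement.

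For the hypersurface case $u=k-1$, the point is that $\hat T^{\wedge(u+q)}\wedge[\hat{\mathcal Y}]$ vanishes for $q\geq 2$. Indeed, $\hat T^{\wedge(k+1)}$ restricted to a fiber is zero, and in the fibered setting one uses the relation $\hat T^{\wedge(k+1)}=0$ on $\mathbb P^k\times\mathcal Y$. This relation holds because the smooth approximants are pullbacks from a $k$-dimensional space, and it passes to the limit by continuity of potentials. Then $\hat T^{\wedge(k+1)}\wedge[\hat{\mathcal Y}]$ may be nonzero only for the degree-count reason $k+1=u+2$, so I would instead argue via $[\hat{\mathcal Y}]=\hat T^{\wedge 1}$-cohomologous data. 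The more precise claim is that only indices with $j_i\in\{u,u+1\}$ survive because $j_i\leq k$ on a hypersurface of the $k$-dimensional fiber direction: the factor $\hat T^{\wedge j_i}\wedge[\hat{\mathcal Y}]$ has bidegree exceeding the dimension $k-1+n$ when $j_i\geq k+1$. Only in the hypersurface case does this force $j_i\leq u+1$. The expansion then collapses to a single multinomial multiple of $T_{\mathcal Y}^{\wedge n}$, which gives proportionality.

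The main obstacle is justifying the fibered projection formula for positive currents with continuous potentials on possibly singular fibered products, including flatness issues. I would first prove it for smooth approximants, where it is Fubini, and then use continuity of potentials and monotone convergence.
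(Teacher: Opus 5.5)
Your framework is the right one, and your proof of $\mu_{\mathcal Y}\geq T_{\mathcal Y}^{\wedge n}$ (with $n=\dim\mathcal Y$) is fine. By framework I mean: expand $\hat T_{\hat f^{[m]}}=\sum_i p_i^*\hat T$ multinomially, use positivity of every term, and identify pushforwards via the fibered projection formula. The first inequality, however, is not established.

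For $m=n$ the left-hand side is by definition $\mu_{\mathcal Y}$. The terms with all $j_i=u+1$ give $\frac{((u+1)n)!}{((u+1)!)^n}T_{\mathcal Y}^{\wedge n}$, not $\mu_{\mathcal Y}$. In your next paragraph you correctly identify that term with $T_{\mathcal Y}^{\wedge n}$, so the two steps contradict each other.

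Likewise, for $m>n$ the configurations with $n$ indices equal to $u+1$ and the rest equal to $u$ only give $D^{m-n}$ times a multiple of $T_{\mathcal Y}^{\wedge n}$, where $D=\deg Y_t$. That is a lower bound by $T_{\mathcal Y}^{\wedge n}$, which is weaker than the claim, since $\mu_{\mathcal Y}\geq T_{\mathcal Y}^{\wedge n}$ goes the wrong way. In codimension $k-u\geq2$, $\mu_{\mathcal Y}$ also contains configurations with some $j_i=u+q$, $q\geq2$. These involve the higher currents $(\pi_{\mathcal Y})_*(\hat T^{\wedge(u+q)}\wedge[\hat{\mathcal Y}])$, which $T_{\mathcal Y}^{\wedge n}$ does not control.

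You must keep all configurations $(u+q_1,\ldots,u+q_n,u,\ldots,u)$ with $q_1+\cdots+q_n=n$. Concretely, set $\alpha:=\sum_{i\leq n}p_i^*\hat T$ and drop all other positive terms:
\[
\hat T_{\hat f^{[m]}}^{\wedge(um+n)}\wedge[\hat{\mathcal Y}^{[m]}]\geq\frac{(um+n)!}{((u+1)n)!\,(u!)^{m-n}}\;\alpha^{\wedge(u+1)n}\wedge\bigwedge_{i>n}p_i^*\hat T^{\wedge u}\wedge[\hat{\mathcal Y}^{[m]}].
\]
Write $\hat{\mathcal Y}^{[m]}=\hat{\mathcal Y}^{[n]}\times_{\mathcal Y}\hat{\mathcal Y}^{[m-n]}$. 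The factors with $i>n$ push forward to the constant $D^{m-n}$, and $\alpha^{\wedge(u+1)n}\wedge[\hat{\mathcal Y}^{[n]}]$ pushes forward to $\mu_{\mathcal Y}$. This gives $C(m)=\frac{(um+n)!}{((u+1)n)!\,(u!)^{m-n}}D^{m-n}\geq1$. The ratio of multinomial coefficients is the same for every configuration, which is why the whole of $\mu_{\mathcal Y}$ is recovered.

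In the hypersurface case, the argument you settle on is wrong. The current $\hat T^{\wedge j}\wedge[\hat{\mathcal Y}]$ has bidegree $(j,j)$ on $\hat{\mathcal Y}$, which has dimension $k-1+n$. So $j=k+1$ is not excluded by a degree count once $n\geq2$.

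The reason you first gave and then abandoned is the correct one. The approximants satisfy
\[
\bigl(d^{-N}(\hat f^N)^*\hat\omega\bigr)^{\wedge(k+1)}=d^{-N(k+1)}(\pi_{\mathbb P^k}\circ\hat f^N)^*\bigl(\omega^{\wedge(k+1)}\bigr)=0,
\]
and their local potentials converge locally uniformly. Bedford--Taylor continuity then gives $\hat T^{\wedge(k+1)}=0$ on $\mathbb P^k\times\mathcal Y$.

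Hence, for $u=k-1$, $\hat T^{\wedge(u+q)}\wedge[\hat{\mathcal Y}]=0$ whenever $q\geq2$. In the expansion of $\mu_{\mathcal Y}$, the constraints $q_i\leq1$ and $q_1+\cdots+q_n=n$ force $q_i=1$ for all $i$. Therefore $\mu_{\mathcal Y}=\frac{(kn)!}{(k!)^n}T_{\mathcal Y}^{\wedge n}$, and no cohomological argument is needed.
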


When $\hat{f}$ is a family of regular polynomial endomorphisms of $\mathbb{A}^k$, i.e. when for all $t\in \mathcal Y(\C)$, $f_t:\mathbb{A}^k\to\mathbb{A}^k$ is a polynomial mapping which extends to an endomorphism of $\mathbb{P}^k$, we can define the Green function of $f_t$ as
\[G_{f_t}:=\lim_{n\to\infty}\frac{1}{d^n}\log^+\|f_t^n\|,\]
where $\|\cdot\|$ is the euclidean norm on $\mathbb{C}^k$. Note that it satisfies
\[dd^cG_{f_t}=T_{f_t}-[H_\infty] \quad \text{as currents on} \ \mathbb{P}^k.\]
We also can define a parametric Green function associated to $(\hat{f},\hat{\mathcal{Y}})$ on $\mathcal{Y}(\C)$ by letting
\[G_{\mathcal{Y}}(t):=\int_{\C^k}G_{f_t}(dd^c G_{f_t}(z))^{\wedge u}\wedge[Y_t], \qquad t\in \mathcal{Y}(\C).\]
\begin{definition}
The \emph{polynomial bifurcation measure} of the pair $(\hat{f},\hat{\mathcal{Y}})$ is the positive measure on $\mathcal{Y}(\C)$ defined by $\mu_{\mathcal{Y}}^{\mathrm{pol}}:=(dd^cG_{\mathcal{Y}})^{\wedge\dim \mathcal{Y}}$.
\end{definition}

\subsection{Canonical heights, equidistribution and a height gap criterion}
For a number field $\mathbb{K}$, denote by $M_\mathbb{K}$ the set of places of $\mathbb{K}$, i.e. the set of equivalence classes of absolute values on $\mathbb{K}$. The naive height function $h_{\mathrm{nv}}:\mathbb{P}^k(\bar{\mathbb{Q}})\to\mathbb{R}$ is defined as
\[h_{\mathrm{nv}}([x_0:\cdots:x_k]):=\frac{1}{[\mathbb{K}:\mathbb{Q}]}\sum_{v\in M_{\mathbb{K}}}N_v\log\max_{0\leq j\leq k}|x_j|_v,\]
for any number field $\mathbb{K}$ such that $x_0,\ldots,x_k\in \mathbb{K}$ and $N_v=[\mathbb{K}_v:\mathbb{Q}_v]$. This quantity is well-defined and independent of $\mathbb{K}$ and of $(x_0,\ldots,x_k)$, by the product formula (see, e.g.,~\cite{Silvermandiophantine}).

~

Let $\mathbb{K}$ be a number field and let $f:\mathbb{A}^k\to\mathbb{A}^k$ be a regular polynomial endomorphism of the affine space, i.e. a polynomial map which extends to a map $f:\mathbb{P}^k\to\mathbb{P}^k$. Following Call and Silverman \cite{CS-height}, one can define the \emph{canonical height} $\hat{h}_f:\mathbb{P}^k(\bar{\mathbb{Q}})\to\R_+$ of the endomorphism $f$ as
 \[\hat{h}_f=\lim_{n\to\infty}\frac{1}{d^n}h_{\mathrm{nv}}\circ f^n,\]
 where $h_{\mathrm{nv}}$ is the naive logarithmic Weil height function on $\mathbb{P}^k(\bar{\mathbb{Q}})$. This height function is characterized by the following properties on $\mathbb{P}^k(\bar{\mathbb{Q}})$:
 \[\left\{\begin{array}{ll}
 \hat{h}_f=h_{\mathrm{nv}}+O(1),& \\
 \hat{h}_f\circ f = d \hat{h}_f. &
 \end{array}\right.\]
By construction, $\hat{h}_f\geq0$ and by a famous result of Northcott~\cite{northcott}, the set $\{x\in \mathbb{P}^k(\bar{\mathbb{Q}})\,; \ \hat{h}_f(x)=0\}$ of points of height zero coincides with the set $\mathrm{Preper}(f)$ of preperiodic points of $f$.

We will rely in the sequel on the next result extracted from~\cite{gtv}:

\begin{theorem}[{\cite[Theorem~7.2]{gtv}}]\label{prop-empty-interior}
Let $\hat{f}:\mathbb{P}^k\times \mathcal{Y}\to \mathbb{P}^k\times \mathcal{Y}$ be a family of regular polynomial endomorphisms of degree $d$ of the affine space parametrized by a quasiprojective variety $\mathcal{Y}$ and let $\hat{\mathcal{Y}}\subset\mathbb{P}^k\times \mathcal{Y}$ be an irreducible closed subvariety,  all defined over a number field. Assume that $p:\hat{\mathcal{Y}}\to \mathcal{Y}$ is surjective and that $\hat{\mathcal{Y}}$ intersects properly $H_\infty\times \mathcal{Y}$.
Assume in addition that
\[\int_{\mathcal{Y}(\mathbb{C})}G_{\mathcal{Y}}\cdot\mu^{\mathrm{pol}}_{\mathcal{Y}}>0.\]
Then there exists $Z\subsetneq \mathcal Y$ Zariski-closed, $\varepsilon>0$ and an integer $N\geq1$ such that for any $t\in (\mathcal{Y}\setminus Z)(\bar{\mathbb{Q}})$, there is a proper subvariety $W_t\subsetneq Y_t$ with $\deg(W_t)\leq N$ such that
\[\{z\in Y_t(\bar{\mathbb{Q}})\, : \ \hat{h}_{f_t}(z)\leq \varepsilon\}\subseteq W_t.\]
\end{theorem}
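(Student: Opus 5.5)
This result is quoted from \cite[Theorem~7.2]{gtv}, so I would follow the arithmetic route sketched in Section~\ref{sec-thc}, arguing by contradiction on fibered powers.

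\textbf{Setup.} Set $m:=\dim\mathcal Y$. On the fibered power $\hat{\mathcal Y}^{[m+1]}$ I would use the height
\[
\hat h^{[m+1]}(z_1,\dots,z_{m+1},t):=\sum_i \hat h_{f_t}(z_i).
\]
By Call--Silverman and Yuan--Zhang, this height is induced by an adelic line bundle whose curvature at the archimedean place is $\hat T_{\hat f^{[m+1]}}$. The assumption that $\hat{\mathcal Y}$ meets $H_\infty\times\mathcal Y$ properly lets me decompose, at every place $v$, the local Green function of $f_t$ as $G_{f_t,v}=\log^+\|f_t^n\|_v/d^n$ in the limit. Each such local Green function is a \emph{non-negative} function on affine points. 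This is the extra positivity specific to the polynomial case.

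\textbf{Step 1 (small tuples lie in a proper subset).} I claim there are $\varepsilon>0$ and a proper Zariski closed $\mathcal V\subsetneq\hat{\mathcal Y}^{[m+1]}$ containing every $\bar{\mathbb Q}$-point of height $\le\varepsilon$ lying above $\mathcal Y\setminus Z$.

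Suppose not. Then there is a generic sequence $x_n$ of points in $\hat{\mathcal Y}^{[m+1]}(\bar{\mathbb Q})$ with height tending to $0$, and I may take them affine. The height of the whole variety $\hat{\mathcal Y}^{[m+1]}$ is zero in the relevant normalization, because the top self-intersection against a fibered Green current over a base has an arithmetic part vanishing by construction. I would verify this using the formula for $\eta$ and the degree of $\hat{\mathcal Y}^{[m+1]}$ of relative dimension $u(m+1)$. Yuan--Zhang equidistribution then makes the Galois orbits of $x_n$ equidistribute towards the normalized measure
\[
\eta=\hat T_{\hat f^{[m+1]}}^{\wedge(u(m+1)+m)}\wedge[\hat{\mathcal Y}^{[m+1]}].
\]

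Next, the height is a sum over places of Galois averages of local Green functions. All of these are non-negative, and their sum tends to $0$, so the archimedean average $\frac1{\#O(x_n)}\sum_{y\in O(x_n)}\sum_i G_{f_t}(y_i)$ tends to $0$. The function $\sum_i G_{f_t}(z_i)$ is continuous but unbounded near infinity. I would handle this by truncation: $\min(G,A)$ is continuous with compact support after adding $H_\infty$, and positivity gives a lower bound. The conclusion is
\[
\int\sum_i G_{f_t}(z_i)\,d\eta=0.
\]

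It remains to contradict this. Fiber integration and symmetry of $\eta$ express the integral as a positive multiple of $\int_{\mathcal Y} G_{\mathcal Y}\,(\ldots)$. Using Lemma~\ref{lm:formulafibered}-type comparisons, together with $dd^cG_{f_t}=T_{f_t}-[H_\infty]$ and proper intersection with $H_\infty$, I expect this to bound $\int G_{\mathcal Y}\,\mu^{\mathrm{pol}}_{\mathcal Y}$ from below up to a constant. That is positive by hypothesis, giving the contradiction. I expect this identification of the pushforward of $G\cdot\eta$ with (a multiple of, or something dominating) $G_{\mathcal Y}\mu^{\mathrm{pol}}_{\mathcal Y}$ to be the main technical obstacle. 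The honest route is to expand $\hat T^{[m+1]}$ as a sum of the $p_i^*\hat T$ and keep the terms where exactly $u+1$ powers fall on all but one factor.

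\textbf{Step 2 (fibers).} Let $Z$ contain the locus where $\mathcal V_t=\mathcal V\cap Y_t^{m+1}$ equals $Y_t^{m+1}$. This locus is constructible and proper, since $\mathcal V$ is proper and $\hat{\mathcal Y}^{[m+1]}$ is irreducible over a general base point. For $t\notin Z$, set $\Sigma_t=\{\hat h_{f_t}\le\varepsilon/(m+1)\}\cap Y_t$. Then $\Sigma_t^{m+1}\subseteq\mathcal V_t\subsetneq Y_t^{m+1}$, and $\deg\mathcal V_t$ is uniformly bounded. \cite[Lemma~4.3]{GGK} then places $\Sigma_t$ inside a proper subvariety $W_t$ of degree at most $N$, with $N$ depending only on $\deg\mathcal V$ and $m$. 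Finally I would replace $\varepsilon$ by $\varepsilon/(m+1)$.
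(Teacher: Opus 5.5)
Your argument follows the route the paper takes for this quoted result (see the sketch in Section~\ref{sec-thc}). The ingredients are the same:
\begin{itemize}
\item equidistribution on $X:=\hat{\mathcal Y}^{[m+1]}$, argued by contradiction;
\item nonnegativity of the local Green functions, which forces the archimedean Galois averages of $\phi:=\sum_iG_{f_t}(z_i)$ to tend to $0$;
\item strict positivity of $\int\phi\,d\eta$;
\item the Gao--Ge--K\"uhne lemma, applied fiberwise.
\end{itemize}
Three of your steps are fine as described: the truncation by $\min(\phi,A)$, Step~2, and the positivity step. For the last one, only exponents $u+b_i$ with $b_i\geq0$ and $\sum_ib_i=m$ survive in $\bigl(\sum_ip_i^*\hat T_{\hat f}\bigr)^{\wedge(u(m+1)+m)}\wedge[X]$, and all terms are positive. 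The terms with $b_j=0$ and $b_i=1$ for $i\neq j$ turn $G_{f_t}(z_j)$ into a positive multiple of $\int G_{\mathcal Y}\,T_{\mathcal Y}^{\wedge m}$. This dominates $\int G_{\mathcal Y}\,\mu^{\mathrm{pol}}_{\mathcal Y}$, because $T_{\mathcal Y}=dd^cG_{\mathcal Y}+(\pi_{\mathcal Y})_*\bigl(\hat T_{\hat f}^{\wedge u}\wedge[\hat{\mathcal Y}\cap(H_\infty\times\mathcal Y)]\bigr)$ with both summands positive.

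\textbf{The error.} What is wrong is your justification of the smallness needed for Yuan--Zhang equidistribution. The height of $X$ with respect to $\bar L:=\sum_ip_i^*\bar L_{\hat f}$ does not vanish ``by construction''. The usual vanishing argument combines $\hat f^*\bar L_{\hat f}\simeq d\,\bar L_{\hat f}$ with invariance of the subvariety. It therefore applies to $\Pb^k\times\mathcal Y$, but not to the non-invariant variety $X$.

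In fact, under the hypothesis of the theorem one expects $h_{\bar L}(X)>0$. Compute $\bar L^{\dim X+1}\cdot X$ by the induction formula, using the section cutting out $H_\infty$ on the first factor; at each place its $-\log$-norm is the nonnegative local Green function. This gives three contributions:
\begin{itemize}
\item the height of $X\cap\{z_1\in H_\infty\}$, which is nonnegative by nefness;
\item nonnegative non-archimedean terms;
\item an archimedean term proportional to $\int G_{f_t}(z_1)\,d\eta$, which is positive.
\end{itemize}
So the verification you planned would fail.

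\textbf{The fix.} The correct mechanism goes through your contradiction hypothesis. A generic sequence with $h_{\bar L}(x_n)\to0$ makes the essential minimum of $\bar L$ on $X$ equal to $0$. Yuan--Zhang's inequality (essential minimum $\geq h_{\bar L}(X)$), together with $h_{\bar L}(X)\geq0$ by nefness, then forces $h_{\bar L}(X)=0$. This is exactly the condition $h(x_n)\to h_{\bar L}(X)$ under which Galois orbits equidistribute towards $\eta/\eta(X)$; here $\eta\neq0$ follows from the hypothesis. With this replacement your proof goes through.

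Incidentally, the positivity of $h_{\bar L}(X)$ sketched above, combined with the same inequality, would give Step~1 directly, without equidistribution.

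\textbf{A caveat on irreducibility.} Equidistribution is applied on $X$, so $X$ must be irreducible. This requires the general fiber $Y_t$ to be geometrically irreducible, and it does not follow from irreducibility of $\hat{\mathcal Y}$ alone. That condition holds in the paper's application (Lemma~\ref{le-arithmetic-transverse}).
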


Note that this result is stated only when $\hat{\mathcal{Y}}$ is a \emph{hypersurface} of $\mathbb{P}^k\times \mathcal{Y}$ in \cite{gtv}. However, the proof does not rely on this restriction and can be applied verbatim to the more general setting we use here.

\bigskip

Recall that a sequence $(F_i)_i$ of finite Galois-invariant subsets of
$\mathcal{Y}(\overline{\mathbb{Q}})$ is \emph{generic} if, for every proper
closed subvariety $W\subsetneq\mathcal{Y}$ defined over $\mathbb{K}$, there
exists $i_0\geq1$ such that $F_i\cap W=\varnothing$ for all $i\geq i_0$.
We have the fundamental equidistribution theorem for such sequences which can be deduced from \cite[Theorem~5.4.3]{YZ-adelic} (see \cite[Corollary~6.3]{gtv} in our setting).

\begin{theorem}\label{tm:small-distrib}
Let $\hat{f}:\mathbb{P}^k\times \mathcal{Y}\to \mathbb{P}^k\times\mathcal{Y}$ be a family of endomorphisms of degree $d$ parametrized by a quasiprojective variety $\mathcal{Y}$, and let $\hat{\mathcal{Y}}\subset\mathbb{P}^k\times \mathcal{Y}$ be an irreducible closed proper subvariety, all defined over a number field $\mathbb{K}$. Assume $\mu_{\mathcal{Y}}$ is non-zero. Assume also there is a generic sequence $\{F_i\}_i$ of finite Galois invariant subsets of $\mathcal{Y}(\bar{\mathbb{Q}})$ such that for all $t\in \bigcup_iF_i$, the subvariety $Y_t$ of $\mathbb{P}^k$ is $f_t$-special. Then for any $\varphi\in\mathscr{C}^0_c(\mathcal{Y}(\mathbb{C}),\mathbb{R})$, we have
\[\lim_{i\to\infty}\frac{1}{\# F_i}\sum_{t\in F_i}\varphi(t)=\int_{\mathcal{Y}(\mathbb{C})}\varphi \, \frac{\mu_{\mathcal{Y}}}{\mu_{\mathcal{Y}}(\mathcal{Y}(\C))}.\]
\end{theorem}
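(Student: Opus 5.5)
The plan is to deduce this from the Yuan--Zhang equidistribution theorem for small points of adelic line bundles \cite[Theorem~5.4.3]{YZ-adelic}, applied to the height $\hat h_{\mathcal Y}(t)=\hat h_{f_t}(Y_t)$ on $\mathcal Y$. This is the route taken in \cite[Corollary~6.3]{gtv}. There are three steps.

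\textbf{Step 1: the height is induced by an adelic line bundle.} First, I will realize $\hat h_{\mathcal Y}$ as the height attached to an adelic line bundle $\bar{\mathcal L}$ on $\mathcal Y$, in the sense of Yuan--Zhang for quasi-projective varieties. Take the canonical invariant adelic metric on $\pi_{\mathbb P^k}^*\mathcal O(1)$ over $\mathbb P^k\times\mathcal Y$, obtained by the Tate limit $d^{-n}(\hat f^n)^*$. Push forward its $(u+1)$-fold intersection restricted to $\hat{\mathcal Y}$ through the Deligne pairing along the flat projective map $p$. The resulting $\bar{\mathcal L}$ satisfies:
\begin{itemize}
\item $h_{\bar{\mathcal L}}(t)=(u+1)\deg(Y_t)\,\hat h_{f_t}(Y_t)$, up to normalization;
\item its archimedean curvature form is $T_{\mathcal Y}$;
\item by the fibered-power trick, the top self-intersection measure computed on $\hat{\mathcal Y}^{[\dim\mathcal Y]}$ is $\mu_{\mathcal Y}$.
\end{itemize}
Since $\mu_{\mathcal Y}\neq0$, $\bar{\mathcal L}$ is nef with positive volume, so $\hat h_{\mathcal L}(\mathcal Y)$ is well defined.

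\textbf{Step 2: the hypotheses of the equidistribution theorem.} Next I check that the given sequence consists of small points and that the whole variety has height zero.
\begin{itemize}
\item \emph{The points are small.} For $t\in F_i$, $Y_t$ is $f_t$-special, so preperiodic points are Zariski dense in $Y_t$ (directly in the preperiodic case, and through the commuting polarized map $g$ in the general case). Zhang's successive-minima inequality then gives $\hat h_{f_t}(Y_t)=0$, hence $h_{\bar{\mathcal L}}(t)=0$.
\item \emph{The whole variety has height zero.} Since $(F_i)$ is generic and $h_{\bar{\mathcal L}}\geq0$ (the metric is semipositive everywhere), the successive-minima inequality forces $\hat h_{\bar{\mathcal L}}(\mathcal Y)\leq 0$. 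Nonnegativity then forces $\hat h_{\bar{\mathcal L}}(\mathcal Y)=0$.
\end{itemize}

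\textbf{Step 3: apply Yuan--Zhang.} The equidistribution theorem then gives convergence of the Galois-orbit averages toward $c_1(\bar{\mathcal L})^{\dim\mathcal Y}$, normalized. This is $\mu_{\mathcal Y}/\mu_{\mathcal Y}(\mathcal Y(\C))$ by the identification in Step 1, and it yields the stated limit for every $\varphi$ with compact support.

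The main obstacle is Step 1: proving that the limit metric defines a genuine adelic line bundle on the quasi-projective base, and that its Monge--Amp\`ere measure is exactly $\mu_{\mathcal Y}$ rather than $T_{\mathcal Y}^{\wedge\dim\mathcal Y}$. For the second point I would use fibered powers and Lemma~\ref{lm:formulafibered} to compare the two. Everything else follows by quoting \cite{YZ-adelic} and \cite{gtv}.
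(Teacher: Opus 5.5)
\textbf{Gap: Step 3 identifies two different measures.} The bundle $\bar{\mathcal L}$ you obtain by Deligne pairing lives on $\mathcal Y$. Yuan--Zhang applied to it yields equidistribution towards the normalization of $c_1(\bar{\mathcal L})^{\wedge\dim\mathcal Y}$, which is a multiple of $T_{\mathcal Y}^{\wedge\dim\mathcal Y}$.

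Your third bullet in Step~1 concerns a \emph{different} bundle on a \emph{different} variety: $\sum_j p_j^*\bar L_{\hat f}$ on $\hat{\mathcal Y}^{[m]}$, with $m=\dim\mathcal Y$ and $\bar L_{\hat f}$ the $\hat f$-invariant adelic extension of $\pi_{\Pb^k}^*\mathcal O(1)$. It says nothing about $c_1(\bar{\mathcal L})^{\wedge m}$, and for $u<k-1$ the two measures are genuinely different. Lemma~\ref{lm:formulafibered} only gives $T_{\mathcal Y}^{\wedge m}\leq C(m)\mu_{\mathcal Y}$, with proportionality only for hypersurfaces. Already for $u=1$ and $m=2$, expanding $(p_1^*\hat T_{\hat f}+p_2^*\hat T_{\hat f})^{\wedge4}$ on $\hat{\mathcal Y}^{[2]}$ gives
\[
\mu_{\mathcal Y}=6\,T_{\mathcal Y}^{\wedge2}+8\deg(Y_t)\,(\pi_{\mathcal Y})_*\bigl(\hat T_{\hat f}^{\wedge3}\wedge[\hat{\mathcal Y}]\bigr),
\]
and the second term is not a multiple of the first in general. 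So no comparison lemma can turn $c_1(\bar{\mathcal L})^{\wedge m}$ into $\mu_{\mathcal Y}$.

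The same inequality also points the wrong way for your claim that $\mu_{\mathcal Y}\neq0$ makes $\mathcal L$ big. Bigness needs $T_{\mathcal Y}^{\wedge m}\neq0$, which does not follow from the hypothesis. Under the stated assumptions you therefore cannot guarantee that Yuan--Zhang applies on $\mathcal Y$, nor identify its limit with $\mu_{\mathcal Y}/\mu_{\mathcal Y}(\mathcal Y(\C))$.

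\textbf{How to repair it.} Run the equidistribution theorem upstairs, on $X:=\hat{\mathcal Y}^{[m]}$, which has dimension $(u+1)m$, with $\bar M:=\sum_{j=1}^m p_j^*\bar L_{\hat f}$.
\begin{itemize}
\item Its height is $(z_1,\ldots,z_m,t)\mapsto\sum_j\hat h_{f_t}(z_j)\geq0$.
\item Its top Monge--Amp\`ere measure is $\hat T_{\hat f^{[m]}}^{\wedge(u+1)m}\wedge[X]$, whose pushforward to $\mathcal Y$ is $\mu_{\mathcal Y}$ by definition. So $\mu_{\mathcal Y}\neq0$ is exactly the bigness of $M$.
\item By a convex-combination argument, it suffices to treat the case where each $F_i$ is a single Galois orbit.
\item For $t\in F_i$, preperiodic points are Zariski dense in $Y_t$, so height-zero points are Zariski dense in $X_t=Y_t^m$.
\item For every proper closed $W\subsetneq X$ defined over $\mathbb K$, the set $\{t:X_t\subseteq W\}$ is a proper closed subset of $\mathcal Y$, since $X\to\mathcal Y$ is flat. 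Genericity of $(F_i)$ and a diagonal argument over the countably many such $W$ then let you lift $F_i$ to Galois orbits $\tilde F_i$ of height-zero points of $X$ forming a generic sequence.
\item Projection maps a Galois orbit onto a Galois orbit with fibers of equal cardinality. Hence the uniform measure on $\tilde F_i$ pushes forward to the uniform measure on $F_i$.
\end{itemize}
Your Step~2 now goes through on $X$ and gives $h_{\bar M}(X)=0$. Yuan--Zhang on $X$ gives equidistribution towards the normalized $\hat T_{\hat f^{[m]}}^{\wedge(u+1)m}\wedge[X]$. Testing against $\varphi\circ\pi_{\mathcal Y,m}$, which is compactly supported because the projection is proper, yields the statement. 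This fibered-power step is exactly what makes the limit $\mu_{\mathcal Y}$ rather than $T_{\mathcal Y}^{\wedge m}$.
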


\subsection{A positivity sufficient criterion}\label{sec-misiu}
Let $(f_t)_{t\in M}$ be a holomorphic family of endomorphisms of $\Pb^k$ of degree $d$ parametrized by a complex manifold $M$, and let $\tilde m:=\dim(M)$. We do not assume that the family is algebraic, although it is
only used in the algebraic setting in the proof of Theorem~\ref{th-main-blender}. We expect that Proposition~\ref{prop-positivevolume} below can apply in many other situations, including non-algebraic ones.

Let $\hat{\mathcal Y}\subset\p^k\times M$ be an irreducible closed analytic subset of dimension $\tilde m+u$ which surjects onto $M$. We denote by $Y_t$ the fiber of $\hat{\mathcal Y}$ above $t$ and assume that  the canonical projection $\hat{\mathcal Y}\to M$ is proper, projective, flat and holomorphic, so that $Y_t$ has dimension $u$ for all $t\in M$.

\medskip

Fix an integer $1\leq m\leq\tilde m$. As above, let $\hat{\mathcal Y}^{[m]}:=\hat{\mathcal Y}\times_M\cdots\times_M\hat{\mathcal Y}$ be the $m$-fiber power of $\hat{\mathcal Y}$ and let $\pi_{[m]}:\hat{\mathcal{Y}}^{[m]}\to M$ be the projection induced by $\pi$. Again, we define the $m$-fiber power of the family $\hat f$ as the family of maps $\hat f^{[m]}:(\p^k)^m\times M\to(\p^k)^m\times M$ given by
\[\hat f^{[m]}(x,t)=\big(f_t^{[m]}(x),t\big), \quad \text{ with }\quad f_t^{[m]}(x):=(f_t(x_1),\ldots,f_t(x_m)),\]
where $x=(x_1,\ldots,x_m).$ We also denote by $p_{j}: (\p^k)^m\times M\to \p^k\times M$ the projection given by
\[p_{j}(x_1,\ldots,x_m,t)=(x_j,t),\]
for $(x_1,\ldots,x_m,t)\in (\p^k)^m\times M$ and by $\tau_j:(\p^k)^m\to\p^k$ the projection onto the $j$th coordinate.

\bigskip

We aim here to generalize the now classical positivity criteria for repelling points (see, e.g.,~\cite{BBD,AGMV,DeMarco-Mavraki,gtv})
to the saddle case. In the repelling setting, corresponding to families of hypersurfaces $\hat{\mathcal Y}$, a parameter $t_0$ is said to be \emph{Misiurewicz} if there exists a repelling periodic point $r(f_{t_0})$ in the small Julia set of $f_{t_0}$ such that $r(f_{t_0})\in f_{t_0}^n(Y_{t_0})$ for some $n\geq0$, in a
\emph{non-persistent} way, meaning that there exist small perturbations $t$ of $t_0$ such that $r(f_t)\notin f_t^n(Y_t)$. Such parameters belong to the support
of the bifurcation current of bidegree $(1,1)$ associated with the family. By passing to fibered products, one can obtain analogous criteria in higher bidegrees.

To adapt this approach to the saddle case, we make two simple observations:
\begin{itemize}
\item Since the unstable manifold of a repelling point is a neighborhood of that point, the fact that it belongs to the small Julia set is equivalent to
\[T_{f_{t_0}}^{\wedge k}\wedge[W^u_{r(f_{t_0}),loc}]\neq0.\]
\item The local stable manifold of a repelling point reduces to that point. Thus, the second condition is equivalent to a proper intersection between the graph of this point and the family
$\hat{\mathcal Y}$.
\end{itemize}

These two properties admit natural generalizations to saddle points, which turn out to be sufficient to obtain positivity. However, since the stable manifold now has positive dimension, we work with a local version of the proper intersection criterion.

\bigskip

Let $(r_j(f_t))_{t\in M}$, $1\leq j\leq m$, be holomorphic families of saddle points of index $(s,u+1)$, where $s:=k-u-1$. The point
\[r(f_t):=(r_1(f_t),\ldots,r_m(f_t))\]
then defines a holomorphic family of saddle periodic points of $f_t^{[m]}$ of index $(m s,m(u+1))$. Their local stable and unstable manifolds $,W^s_{r(f_t),loc}$ and $W^u_{r(f_t),loc},$ form submanifolds in the product space $(\Pb^k)^m\times M$ which can be defined by
\begin{align*}
\mathcal W_{r,loc}^s:=\left\{(z_1,\ldots,z_m,t)\in(\Pb^k)^m\times M\, : \ \forall 1\leq j\leq m, \  z_j\in W_{r_j(f_t),loc}^s\right\},
\end{align*}
and
\begin{align*}
\mathcal W_{r,loc}^u:=\left\{(z_1,\ldots,z_m,t)\in(\Pb^k)^m\times M\, : \ \forall 1\leq j\leq m, \  z_j\in W_{r_j(f_t),loc}^u\right\}.
\end{align*}

We now consider a parameter $(z_0,t_0)\in \hat{\mathcal Y}^{[m]}$ such that
\begin{enumerate}[label=\textbf{(\arabic*)}]
\item \label{it:cond1}
$\bigwedge_{j=1}^m \tau_j^*\left({T}_{f_{t_0}}^{\wedge(u+1)}\right)\wedge[W^u_{r(f_{t_0}),loc}]\neq 0$,
\item \label{it:cond2}
there exist an open neighborhood $U$ of $(z_0,t_0)$ in $\hat{\mathcal Y}^{[m]}$ and an integer $n_0\geq0$ such that $(\hat f^{[m]})^{n_0}(U)$ intersects properly $\mathcal W^s_{r,loc}$ in a neighborhood of $(\hat f^{[m]})^{n_0}(z_0,t_0).$

\end{enumerate}

A first observation is that these conditions are independent of the choice of the local manifolds, provided $n_0$ is taken large enough and $U$ is reduced accordingly when the local stable manifold $\mathcal W^s_{r,loc}$ is shrunk. A second observation is that both conditions can be checked outside the fibered product framework, using the individual
saddle points $r_j$ (see Lemma~\ref{le-cond1} and Lemma~\ref{le-cond2}).  This is actually the approach we will use, although in this setting condition~\ref{it:cond2} becomes more difficult to formulate.

Adapting the classical phase-parameter transfer argument, we have the following.
\begin{proposition}\label{prop-positivevolume}
Under the assumptions \textup{\ref{it:cond1}} and \textup{\ref{it:cond2}} above, we have
\[(z_0,t_0)\in \mathrm{supp}\left(\bigwedge_{j=1}^m p_j^*\left(\hat{T}_{\hat f}^{\wedge(u+1)}\right)\wedge[\hat{\mathcal Y}^{[m]}]\right).\]
In particular, if $m=\dim M$ and if $(f_t)_{t\in M}$ is a family of regular polynomial endomorphisms of $\mathbb{C}^k$, and if $r_j(f_{t_0})\in \C^k$ for all $j$, then we have
\[t_0\in \mathrm{supp}\left(\mu_{\mathcal{Y}}^{\mathrm{pol}}\right).\]
\end{proposition}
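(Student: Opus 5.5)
Assume by contradiction that the current $S:=\bigwedge_{j=1}^m p_j^*(\hat T_{\hat f}^{\wedge(u+1)})\wedge[\hat{\mathcal Y}^{[m]}]$ vanishes on a neighborhood $U$ of $(z_0,t_0)$ in $\hat{\mathcal Y}^{[m]}$. The plan is a phase--parameter transfer argument, run entirely in the fibered product. The key fact is the invariance $(\hat f^{[m]})^*\hat T_{\hat f^{[m]}}=d\,\hat T_{\hat f^{[m]}}$, which holds factor by factor for $p_j^*\hat T_{\hat f}$. It shows that $S$ vanishes near $(z_0,t_0)$ if and only if the push-forward $(\hat f^{[m]})^{n}_*S$ vanishes near the image point, up to a positive multiplicity factor. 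We may therefore replace $U$ by $\Gamma_0:=(\hat f^{[m]})^{n_0}(U)$, which carries the current $\bigwedge_j p_j^*(\hat T_{\hat f}^{\wedge(u+1)})\wedge[\Gamma_0]$. This current vanishes near $(\hat f^{[m]})^{n_0}(z_0,t_0)$, and by \ref{it:cond2} $\Gamma_0$ meets $\mathcal W^s_{r,loc}$ properly there.

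Next we localize near the saddle $r(f_{t_0})$. We work in a chart near $r$ in which $\hat f^{[m]}$ (or an iterate, if $r$ is only periodic) is a holomorphic family of degree~$1$ horizontal-like maps of index $(ms,m(u+1))$ parametrized by a small ball around $t_0$, as in Section~\ref{sec-hl-renor}. We cut $\Gamma_0$ down to a piece satisfying \eqref{eq-renor}. To do this, we intersect $\Gamma_0$ with a generic subvariety of parameters/phase coordinates of the right codimension passing through the intersection point, so that the resulting $\Gamma_0'$ has dimension $m(u+1)$ and meets $\mathcal W^s_{r,loc}$ at an isolated point. Properness of the intersection in \ref{it:cond2} is exactly what allows this slicing. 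Since the currents are positive and we slice by generic analytic sets, vanishing of $S$ on $\Gamma_0$ gives vanishing of $\bigwedge_j p_j^*(\hat T^{\wedge(u+1)})\wedge[\Gamma_0']$; this follows from a slicing/Fubini argument for positive currents with continuous potentials. The fibered Green current has continuous local potentials, so the wedge products are well defined and continuous under convergence of analytic sets.

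Now we apply Proposition~\ref{prop-renor}. The graph transforms $\Gamma_n'$ converge, with some multiplicity, to $W^u_{r(f_{t_0}),loc}\times\{t_0\}$. By invariance, the currents $\bigwedge_j p_j^*\hat T^{\wedge(u+1)}\wedge[\Gamma_n']$ are $d^{n\cdot m(u+1)}$ times push-forwards of the one on $\Gamma_0'$, restricted to the box, so they all vanish. Continuity of the potentials and Bishop convergence then give
\[\bigwedge_{j}p_j^*\big(\hat T_{\hat f}^{\wedge(u+1)}\big)\wedge[W^u_{r(f_{t_0}),loc}\times\{t_0\}]=0,\]
and restricted to the fiber over $t_0$ this is the current in \ref{it:cond1}, which is a contradiction. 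The main obstacle is justifying this passage to the limit: the vertical (parameter) directions of $\Gamma_n'$ collapse, and we need the restriction of the fibered current to the limit fiber to equal $\bigwedge\tau_j^*T_{f_{t_0}}^{\wedge(u+1)}$. I would handle it through uniform convergence of local potentials $d^{-n}\log\|F^n\|$, jointly in $(z,t)$, together with the fact that masses of wedge products of currents with continuous potentials are continuous under convergence of integration currents.

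For the polynomial statement, take $m=\dim M$. Since $r_j(f_{t_0})\in\C^k$, we can use the local identity $\hat T_{\hat f}=dd^c_{z,t}G_{f_t}(z)$ near the relevant points. Pushing forward to $M$ yields $\mu^{\mathrm{pol}}_{\mathcal Y}=(\pi_{[m]})_*(\ldots)$ locally, up to a positive constant, via the standard formula $(dd^cG_{\mathcal Y})^{\wedge m}=(\pi_{[m]})_*\big(\bigwedge_j p_j^*(dd^c G)^{\wedge(u+1)}\wedge[\hat{\mathcal Y}^{[m]}]\big)$, valid away from $H_\infty$ as in Lemma~\ref{lm:formulafibered}. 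Hence $t_0\in\mathrm{supp}\,\mu^{\mathrm{pol}}_{\mathcal Y}$.
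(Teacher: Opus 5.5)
Your argument for the fibered assertion is essentially the paper's. You push $U$ forward by $(\hat f^{[m]})^{n_0}$ using $F^*T=d^{m(u+1)}T$, reduce to an $m(u+1)$-dimensional piece meeting $\mathcal W^s_{r,loc}$ in one point, run Proposition~\ref{prop-renor}, and pass to the limit using continuity of the potentials. The only difference is the order of operations. The paper slices first, restricting to an $m$-dimensional parameter submanifold $M'\ni t_0$ via \cite[Lemma~6.3]{Article1}, and then pushes forward. You push forward first and slice the image. Both orders work.

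Your last paragraph, however, has a genuine gap. The identity $(dd^cG_{\mathcal Y})^{\wedge m}=(\pi_{[m]})_*\bigl(\bigwedge_j p_j^*(dd^cG)^{\wedge(u+1)}\wedge[\hat{\mathcal Y}^{[m]}]\bigr)$ is not in Lemma~\ref{lm:formulafibered}, which only compares $\mu_{\mathcal Y}$ with $T_{\mathcal Y}$, and it is false in general. A push-forward integrates over whole fibres, so it cannot be localized away from $H_\infty$. Moreover, since the fibres $Y_t\cap\Cb^k$ are non-compact, $dd^c_t$ does not commute with integration over them. Write $\hat G(z,t)=G_{f_t}(z)$, so that $dd^c\hat G=\hat T_{\hat f}-[H_\infty\times M]$ on $\Pb^k\times M$. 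Integrating by parts there gives, for $m=1$,
\[dd^cG_{\mathcal Y}=T_{\mathcal Y}-\pi_*\bigl(\hat T_{\hat f}^{\wedge u}\wedge[\hat{\mathcal Y}\cap(H_\infty\times M)]\bigr).\]
By contrast, $\hat T_{\hat f}^{\wedge(u+1)}\wedge[\hat{\mathcal Y}]$ does not charge $H_\infty\times M$, so its affine part pushes forward to all of $T_{\mathcal Y}$.

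Here is a concrete case. Take $k=2$, $u=1$, $M=\Cb$, the constant family $f_t(z,w)=(z^2,w^2)$, $Y_t=\{w=tz\}$, the repelling fixed point $r=(1,1)$, and $t_0=1$.
\begin{itemize}
\item Condition \textup{\ref{it:cond1}} holds, since $T_f^{\wedge2}$ is the Haar measure of $\{|z|=|w|=1\}$.
\item Condition \textup{\ref{it:cond2}} holds with $n_0=0$, since $r\in Y_t$ only for $t=1$.
\item Accordingly, in the coordinates $(z,t)\mapsto(z,tz)$, the measure $\hat T_{\hat f}^{\wedge2}\wedge[\hat{\mathcal Y}]$ is the Haar measure of $\{|z|=|t|=1\}$, so the fibered conclusion holds.
\end{itemize}
However, $G_f(z,tz)=\log^+\bigl(|z|\max(1,|t|)\bigr)$ has its Laplacian carried by a circle on which it vanishes. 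Hence $G_{\mathcal Y}\equiv0$ and $\mu^{\mathrm{pol}}_{\mathcal Y}=0$, so $t_0\notin\mathrm{supp}(\mu^{\mathrm{pol}}_{\mathcal Y})$.

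So the polynomial assertion cannot be deduced from the fibered one by any such formula. Note that the paper's own proof also stops after the fibered statement and gives no argument for the ``in particular''. With $\mu^{\mathrm{pol}}_{\mathcal Y}=(dd^cG_{\mathcal Y})^{\wedge\dim M}$ as defined, that step needs additional hypotheses or a different target measure.
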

\begin{proof}
For simplicity, we set $F:=\hat f^{[m]}$ and
\[T:=\bigwedge_{j=1}^m p_j^*\left(\hat{T}_{\hat f}^{\wedge(u+1)}\right).\]
Since $\hat{T}_{\hat f}$ has continuous potentials, for any analytic submanifold $M'\subset M$ of dimension $m$ with ${t_0}\in M'$, we have
\[\mathrm{supp}\left(T|_{\pi_{[m]}^{-1}(M')}\wedge [\hat{\mathcal Y}^{[m]}\cap\pi_{[m]}^{-1}(M')]\right)\subset \mathrm{supp}\left(T\wedge [\hat{\mathcal Y}^{[m]}]\right)\]
by e.g.~\cite[Lemma~6.3]{Article1}. Hence, we may replace $M$ by such a submanifold $M'$ of dimension $m$, and $\hat{\mathcal Y}^{[m]}$ by
$\hat{\mathcal Y}^{[m]}\cap\pi_{[m]}^{-1}(M')$, so as to assume that $\mathcal W_{r,loc}^s\cap F^n(U)$ reduces to a single point, provided $U$ is chosen sufficiently small in $\hat{\mathcal Y}^{[m]}.$

On the other hand, recall that $r(f_{t_0}):=(r_1(f_{t_0}),\ldots,r_m(f_{t_0}))$ is a saddle periodic point of index $(sm,(u+1)m)$ for $f_{t_0}^{[m]}.$ Its period is the lcm of the periods of the saddle points $r_j(f_{t_0}).$ As the Green current of $F$ is the same that for its iterates, we can actually assume that this period is $1.$ On the other hand, as the index of $r(f_{t_0})$ is $(m s,m(u+1))$, there exist two polydiscs $N_s\subset\Cb^{m s}$ and $N_u\subset\Cb^{m(u+1)}$ and a neighborhood $\mathcal N$ of $r(f_{t_0})$ biholomorphic to $N_s\times N_u$ such that $f_{t_0}^{[m]}$ is a horizontal-like map on $\mathcal N$ of index $(m s,m(u+1))$ and of degree $1$ in these coordinates. As this property is stable under perturbations and that the statement is purely local, possibly by shrinking $M$, we could assume that the map $f^{[m]}_t$ is also horizontal-like on $\mathcal N$ of index $(m s,m(u+1))$ and of degree $1$ for all $t\in M.$

We now restrict the local stable and unstable manifolds of $r(f_t)$ to $\mathcal N$, and adjust the notation
\[\mathcal W_{r,loc}^s:=\left\{(x,t)\in \mathcal N\times M\, : \ x\in W_{r(f_t),loc}^s\right\}\]
and
\[\mathcal W_{r,loc}^u:=\left\{(x,t)\in \mathcal N\times M\, : \ x\in W_{r(f_t),loc}^u\right\}.\]
By the discussion above, after possibly increasing $n_0$ and
shrinking $U$, we may assume that
$\Gamma_0:=F^{n_0}(U)$ intersects
$\mathcal W_{r,loc}^s$ at a unique point. Moreover, since points outside $\mathcal W^s_{r,loc}$ escape from $\mathcal N$ under the dynamics, we may, after possibly
adjusting $n_0$ and $U$ again, assume that $\Gamma_0$ is closed in $\mathcal N\times M$ and
\[\Big(\overline{\partial_s\mathcal N\times M \cup \mathcal N\times\partial M}\Big) \cap\overline{\Gamma_0} = \varnothing.\]
Here, the boundary $\partial_s\mathcal N$ is given as in Section~\ref{sec-hl-renor} via the biholomorphism $\mathcal N\simeq N_s\times N_u.$ From this, by Proposition \ref{prop-renor}, the sequence $(\Gamma_n)_{n\geq0}$ of closed analytic subsets of $\mathcal N\times M$ defined by
\[\Gamma_{n+1}:=F(\Gamma_n)\cap(\mathcal N\times M),\]
converges to $W^u_{r(f_{t_0}),loc}\times\{t_0\},$ with a multiplicity $c,$ corresponding to the intersection number between $\Gamma_0$ and $\mathcal W^s_{r,loc}.$

On the other hand, since $p_j\circ F=\hat f\circ p_j,$ we have $F^*(p_j^*(\hat T_{\hat f}^{\wedge(u+1)}))=d^{u+1}p_j^*(\hat T_{\hat f}^{\wedge(u+1)})$ thus
\[F^*T=F^*\left(\bigwedge_{j=1}^m p_j^*\left(\hat{T}_{\hat f}^{\wedge(u+1)}\right)\right)=\bigwedge_{j=1}^m (p_j\circ F)^*\left(\hat{T}_{\hat f}^{\wedge(u+1)}\right)=d^{m(u+1)}T.\]
Hence,
\begin{align*}
I_n:=\int T\wedge[\Gamma_n]\leq d^{nm(u+1)}\int T\wedge\left[\Gamma_0\right]\leq d^{(n+n_0)m(u+1)}\int T\wedge [\hat{\mathcal Y}^{[m]}].
\end{align*}
Since $[\Gamma_n]$ converges weakly to $c[W^u_{r(f_{t_0}),loc}\times\{t_0\}]$ as $n\to\infty$ and, since the current $T=\bigwedge_{j=1}^m p_j^*\left(\hat{T}_{\hat f}^{\wedge(u+1)}\right)$ is the wedge product of $m(u+1)$ closed positive $(1,1)$-currents with continuous potential, we have
\[T\wedge\left[\Gamma_n\right]\to c\left(T\wedge\left[W^u_{r(f_{t_0}),loc}\times\{t_0\}\right]\right)\]
as $n\to+\infty$. To conclude, it suffices to show that this last measure is non-zero. This is precisely condition~\ref{it:cond1}, since, by the continuity of the potential again, the slice of $T$ by $(\Pb^k)^{m}\times\{t_0\}$ coincides with $\bigwedge_{j=1}^m \tau_j^*\left({T}_{f_{t_0}}^{\wedge(u+1)}\right)$.
\end{proof}
In what follows, we verify conditions~\ref{it:cond1} and \ref{it:cond2} outside the fibered product framework. The second
condition is harder to formulate in this setting, and we will use the framework of horizontal-like maps together with
dominated splittings in Section~\ref{sec-blender} for this purpose (see Lemma~\ref{le-cond2}). It is also much more delicate to verify and actually it occupies the whole of that section.

By contrast, the first condition can be easily reformulated. Moreover, it is stronger than a possible more natural condition.
\begin{lemma}\label{le-cond1}
The condition \textup{\ref{it:cond1}} is equivalent to
\begin{enumerate}[label=\textbf{(\arabic*')}]
\item\label{it:cond1'} for each $1\leq j\leq m,$ we have $T_{f_{t_0}}^{\wedge(u+1)}\wedge[W^u_{r_j(f_{t_0}),loc}]\neq 0$.
\end{enumerate}
Furthermore, it implies the condition
\begin{enumerate}[label=\textbf{(\arabic*'')}]
\item\label{it:cond1''} $(T_{f^{[m]}_{t_0}})^{\wedge m(u+1)}\wedge[W^u_{r(f_{t_0}),loc}]\neq 0$.
\end{enumerate}
\end{lemma}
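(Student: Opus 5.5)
The plan is to exploit the product structure of both the current and the manifold. Since $r(f_{t_0})=(r_1(f_{t_0}),\ldots,r_m(f_{t_0}))$ is a saddle point of $f_{t_0}^{[m]}$, its local unstable manifold in $(\Pb^k)^m$ is the product
\[
W^u_{r(f_{t_0}),loc}=W^u_{r_1(f_{t_0}),loc}\times\cdots\times W^u_{r_m(f_{t_0}),loc},
\]
as is clear from the definition of $\mathcal W^u_{r,loc}$. Set $W_j:=W^u_{r_j(f_{t_0}),loc}$, a complex manifold of dimension $u+1$. On the product $W_1\times\cdots\times W_m$, the current $\tau_j^*(T_{f_{t_0}}^{\wedge(u+1)})$ only sees the $j$-th factor. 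Because $T_{f_{t_0}}$ has continuous local potentials, each $T_{f_{t_0}}^{\wedge(u+1)}\wedge[W_j]$ is a well-defined positive measure $\nu_j$ on $W_j$ (a top-degree Monge--Amp\`ere measure of a continuous psh function restricted to $W_j$).

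The key identity I would establish is
\[
\bigwedge_{j=1}^m \tau_j^*\bigl(T_{f_{t_0}}^{\wedge(u+1)}\bigr)\wedge[W_1\times\cdots\times W_m]=\nu_1\otimes\cdots\otimes\nu_m .
\]
I would prove it by regularization. Approximate the continuous local potential $g$ of $T_{f_{t_0}}$ uniformly by smooth psh functions $g_\varepsilon$ on a neighbourhood of each $W_j$. For smooth potentials the identity is a pointwise computation of forms on a product, since pullbacks from different factors wedge to the product form. Bedford--Taylor continuity of Monge--Amp\`ere operators under uniform convergence, applied on the product manifold with pulled-back potentials $g_\varepsilon\circ\tau_j$, which also converge uniformly, lets one pass to the limit on both sides. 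The tensor product of measures is continuous under weak convergence of each factor.

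Given this identity, \ref{it:cond1} holds if and only if $\nu_1\otimes\cdots\otimes\nu_m\neq0$, that is, if and only if every $\nu_j\neq0$, which is exactly \ref{it:cond1'}. The main technical care lies in justifying the limit, which is routine Bedford--Taylor theory. The only point to watch is that the unstable manifolds are open submanifolds, so everything should be done locally with compactly supported test functions of product type $\chi_1\otimes\cdots\otimes\chi_m$. These suffice to identify the measures by Stone--Weierstrass.

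For the implication \ref{it:cond1'}$\Rightarrow$\ref{it:cond1''}, I would use that the Green current of $f^{[m]}_{t_0}$ is $T_{f^{[m]}_{t_0}}=\sum_{j}\tau_j^*T_{f_{t_0}}$. Expanding $\bigl(\sum_j\tau_j^*T_{f_{t_0}}\bigr)^{\wedge m(u+1)}$ multinomially gives a sum of positive currents with nonnegative coefficients. Among these terms is $\binom{m(u+1)}{u+1,\ldots,u+1}\bigwedge_j\tau_j^*(T_{f_{t_0}}^{\wedge(u+1)})$. By positivity of every term, the restriction to $[W^u_{r(f_{t_0}),loc}]$ dominates a positive multiple of the measure in \ref{it:cond1}, which is nonzero. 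All other terms contain some $\tau_j^*T^{\wedge a}$ with $a>u+1=\dim W_j$, so they in fact vanish on the product, but we do not need this.
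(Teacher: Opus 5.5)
Your proposal is correct and follows the paper's route: you write $W^u_{r(f_{t_0}),loc}$ as the product of the $W^u_{r_j(f_{t_0}),loc}$, and you identify the measure in \textbf{(1)} with the tensor product of the measures $T_{f_{t_0}}^{\wedge(u+1)}\wedge[W^u_{r_j(f_{t_0}),loc}]$ (the paper records this as a product formula for the masses). You then deduce $\mathbf{(1'')}$ from the multinomial expansion of $\bigl(\sum_j\tau_j^*T_{f_{t_0}}\bigr)^{\wedge m(u+1)}$ and positivity, exactly as the paper does; the only difference is that you justify the tensor-product identity by Bedford--Taylor regularization tested on product test functions, which the paper takes for granted.
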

\begin{proof}
Recall that the saddle periodic point $r(f_{t_0})$ for $f^{[m]}_{t_0}$ is defined by
\[r(f_{t_0})=(r_1(f_{t_0}),\ldots,r_m(f_{t_0})).\]
Its local unstable manifold can be given by
\[W^u_{r(f_{t_0}),loc}:=W^u_{r_1(f_{t_0}),loc}\times\cdots\times W^u_{r_m(f_{t_0}),loc}.\]

Thus, if we denote by $\tau_j:(\p^k)^m\to\p^k$  the projection onto the $j$th coordinate, we have
\[[W^u_{r(f_{t_0}),loc}]=\tau_1^*[W^u_{r_1(f_{t_0}),loc}]\wedge\cdots\wedge\tau_m^*[W^u_{r_m(f_{t_0}),loc}].\]
Hence, each $T_{f_{t_0}}^{\wedge(u+1)}\wedge[W^u_{r_j(f_{t_0}),loc}]$ is non-zero if and only if
\begin{align*}\bigwedge_{j=1}^m \tau_j^*\left({T}_{f_{t_0}}^{\wedge(u+1)}\right)\wedge & [W^u_{r(f_{t_0}),loc}]\\
& =\tau_1^*\left({T}^{\wedge(u+1)}_{f_{t_0}}\wedge [W^u_{r_1(f_{t_0}),loc}]\right)\wedge\cdots\wedge\tau_m^*\left({T}^{\wedge(u+1)}_{f_{t_0}}\wedge[W^u_{r_m(f_{t_0}),loc}]\right)
\end{align*}
is non-zero. More precisely, the mass of this measure is given by
\[\left\|\bigwedge_{j=1}^m \tau_j^*\left({T}_{f_{t_0}}^{\wedge(u+1)}\right)\wedge[W^u_{r(f_{t_0}),loc}]\right\|=\prod_{j=1}^m\left\|T_{f_{t_0}}^{\wedge(u+1)}\wedge[W_{r_j(f_{t_0}),loc}^u]\right\|.\]
On the other hand, we have
\begin{align}
(T_{f^{[m]}_{t_0}})^{\wedge m(u+1)} =
& \left(\tau_1^*\left({T}_{f_{t_0}}\right)+\cdots+\tau_m^*\left({T}_{f_{t_0}}\right)\right)^{\wedge m(u+1)}\nonumber\\
& \geq\tau_1^*\left({T}^{\wedge(u+1)}_{f_{t_0}}\right)\wedge\cdots\wedge\tau_m^*\left({T}^{\wedge(u+1)}_{f_{t_0}}\right).\label{eq-fibered}
\end{align}
This gives
\begin{align*}
(T_{f^{[m]}_{t_0}})^{\wedge m(u+1)}\wedge & [W^u_{r(f_{t_0}),loc}]\\
&\geq\tau_1^*\left({T}^{\wedge(u+1)}_{f_{t_0}}\wedge [W^u_{r_1(f_{t_0}),loc}]\right)\wedge\cdots\wedge\tau_m^*\left({T}^{\wedge(u+1)}_{f_{t_0}}\wedge[W^u_{r_m(f_{t_0}),loc}]\right).
\end{align*}
Hence, condition \ref{it:cond1'} implies \ref{it:cond1''}.
\end{proof}

Observe that the inequality \eqref{eq-fibered}, together with Proposition~\ref{prop-positivevolume}, shows that conditions~\ref{it:cond1} and~\ref{it:cond2} imply that the
parameter $t_0$ lies in the support of the $m$-bifurcation current associated with $\hat{\mathcal Y}$. The same conclusion holds under the weaker condition~\ref{it:cond1''}, by the same argument. Although we do not use this refinement, it may be of independent interest.
\begin{proposition}\label{prop-positivevolume2}
Under the assumptions \textup{\ref{it:cond1''}} and \textup{\ref{it:cond2}} above, we have
\[(z_0,t_0)\in \mathrm{supp}\left(\hat{T}_{\hat f^{[m]}}^{\wedge m(u+1)}\wedge[\hat{\mathcal Y}^{[m]}]\right).\]
In particular, $t_0$ belongs to the support of the $m$-bifurcation current associated to $(\hat{f},\hat{\mathcal Y})$:
\[t_0\in \mathrm{supp}\left((\pi_{[m]})_*\left(\hat{T}_{\hat f^{[m]}}^{\wedge m(u+1)}\wedge[\hat{\mathcal Y}^{[m]}]\right)\right).\]
\end{proposition}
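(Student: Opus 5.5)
The plan is to rerun the proof of Proposition~\ref{prop-positivevolume} with the current
\[T':=\hat{T}_{\hat f^{[m]}}^{\wedge m(u+1)}=\Big(\sum_{j=1}^m p_j^*\hat T_{\hat f}\Big)^{\wedge m(u+1)}\]
in place of $T=\bigwedge_j p_j^*(\hat T_{\hat f}^{\wedge(u+1)})$. Only two properties of $T$ were used there: it is a wedge product of $m(u+1)$ closed positive $(1,1)$-currents with continuous potentials, and it satisfies the invariance $F^*T=d^{m(u+1)}T$. Both hold for $T'$. Continuity of potentials is immediate, since $\hat T_{\hat f^{[m]}}$ is a single $(1,1)$-current with continuous potential. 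Invariance follows from $F^*\hat T_{\hat f^{[m]}}=d\,\hat T_{\hat f^{[m]}}$, because $p_j\circ F=\hat f\circ p_j$.

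First, as before, I will restrict to an $m$-dimensional submanifold $M'\ni t_0$. This uses \cite[Lemma~6.3]{Article1}, which applies because of the continuity of potentials. I will then replace $F$ by an iterate so that $r(f_t)$ is fixed; the Green current is unchanged by this. Next I will choose horizontal-like coordinates $\mathcal N\simeq N_s\times N_u$ near $r(f_{t_0})$ that are stable over a shrunken $M$. Using condition~\ref{it:cond2}, I will arrange that $\Gamma_0:=F^{n_0}(U)$ is closed in $\mathcal N\times M$, meets $\mathcal W^s_{r,loc}$ in a single point, and satisfies~\eqref{eq-renor}. Proposition~\ref{prop-renor} then gives $[\Gamma_n]\to c\,[W^u_{r(f_{t_0}),loc}\times\{t_0\}]$ with $c\geq1$.

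Second, I will argue by contradiction. Suppose $(z_0,t_0)\notin\mathrm{supp}(T'\wedge[\hat{\mathcal Y}^{[m]}])$ and take $U$ inside the complement of the support. Since $F^{n_0}$ is finite on $U$ and $T'$ is invariant, $\int T'\wedge[\Gamma_0]\leq d^{n_0m(u+1)}\int_U T'\wedge[\hat{\mathcal Y}^{[m]}]=0$. Iterating gives $\int T'\wedge[\Gamma_n]=0$ for all $n$. Continuity of the potentials lets me pass to the limit along $[\Gamma_n]$, so $T'\wedge[W^u_{r(f_{t_0}),loc}\times\{t_0\}]=0$. Slicing at $t_0$, again by continuity of potentials, the slice of $T'$ is $(T_{f^{[m]}_{t_0}})^{\wedge m(u+1)}$. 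This contradicts~\ref{it:cond1''}.

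For the second assertion, I will note that $(\pi_{[m]})_*$ of a positive measure charges every neighborhood of $t_0$ whenever the measure charges every neighborhood of a point $(z_0,t_0)$ in its support. Properness of $\pi_{[m]}$ ensures the push-forward is well defined, so $t_0$ lies in the support of the pushed-forward current.

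The main obstacle is the limit step. One needs weak convergence of $T'\wedge[\Gamma_n]$ to $c\,T'\wedge[W^u\times\{t_0\}]$ when the limit set lies in a single fiber, and one needs the identification of this limit with the fiberwise current. Both rest on the continuity of the potentials of the fibered Green current on $(\Pb^k)^m\times M$ and on the bounded mass of the $\Gamma_n$ given by Proposition~\ref{prop-renor}. As in the proof of Proposition~\ref{prop-positivevolume}, I will use the standard continuity of Monge--Amp\`ere operators under convergence of currents of integration when the potentials are continuous.
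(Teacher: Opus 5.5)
Your proof of the first assertion is the paper's proof. The paper proves this proposition by rerunning the proof of Proposition~\ref{prop-positivevolume} with $T'=\hat T_{\hat f^{[m]}}^{\wedge m(u+1)}$ in place of $\bigwedge_j p_j^*(\hat T_{\hat f}^{\wedge(u+1)})$. The three facts you isolate are exactly what that argument needs: continuous potentials, $F^*T'=d^{m(u+1)}T'$, and the slice of $T'$ at $t_0$ being $(T_{f^{[m]}_{t_0}})^{\wedge m(u+1)}$. At the last step, \textup{\ref{it:cond1''}} replaces \textup{\ref{it:cond1}}.

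The step that fails as written is the ``in particular''. The current $S:=T'\wedge[\hat{\mathcal Y}^{[m]}]$ has bidimension $(q,q)$ with $q=\dim M-m$, so it is a measure only when $m=\dim M$. In that case your argument is fine. For $q>0$, knowing $(z_0,t_0)\in\mathrm{supp}\,S$ does not give $t_0\in\mathrm{supp}\,(\pi_{[m]})_*S$, because vertical pieces are killed by the push-forward. For example, $[\Pb^1\times\{0\}]$ in $\Pb^1\times\Cb$ pushes forward to $0$. Pushing forward the sliced measure over $M'$ does not settle this either: it only gives a measure on $M'$, and relating that measure to $(\pi_{[m]})_*S$ is exactly the missing step.

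To close the gap, use the following slicing argument.
\begin{enumerate}
\item Take local coordinates $t=(t',t'')$ near $t_0$ with $t''\in\Cb^q$ and $M'=\{t''=0\}$. Set $\beta:=\chi\bigwedge_{j=1}^q i\,dt''_j\wedge d\bar t''_j$, where $\chi\geq0$ is a cut-off near $t_0$.
\item By continuity of the potentials of $T'$, the measure $S\wedge\pi_{[m]}^*\beta$ is the $\chi$-weighted average over $c\in\Cb^q$ of the sliced measures over $M'_c=\{t''=c\}$.
\item The intersection $F^{n_0}(U)\cap\mathcal W^s_{r,loc}$ is proper, hence of dimension $q$. As arranged in the reduction, it meets $\pi_{[m]}^{-1}(M'_0)$ at the isolated point $F^{n_0}(z_0,t_0)$.
\item Therefore it meets $\pi_{[m]}^{-1}(M'_c)$ at isolated points $F^{n_0}(z_c,t_c)$ for all small $c$, with $(z_c,t_c)\in U$ and $(z_c,t_c)\to(z_0,t_0)$.
\item Condition \textup{\ref{it:cond2}} holds at each $(z_c,t_c)$ with the same $U$. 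Condition \textup{\ref{it:cond1''}} holds at $t_c$ because it is open in $t$, by continuity of the Green current and of $W^u_{r(f_t),loc}$.
\item Your first argument, applied on each slice, puts $(z_c,t_c)$ in the support of the sliced measure. Hence $S\wedge\pi_{[m]}^*\beta$ charges every neighborhood of $(z_0,t_0)$.
\item It follows that $(\pi_{[m]})_*S\wedge\beta\neq0$ for every such cut-off, so $t_0\in\mathrm{supp}\,(\pi_{[m]})_*S$.
\end{enumerate}
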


\subsection{Moduli spaces}
We now turn to the parameter spaces used in this paper. Before discussing dynamical moduli spaces, we briefly recall some facts about Chow varieties.
\subsubsection{Chow varieties and families of components}\label{sec-chow}
Let $\mathcal X_D^u$ denote the Chow variety of effective cycles of dimension $u$ and degree $D$ in $\Pb^k$.
We use it to parametrize the subvarieties considered in our main results. For dynamical purposes, we identify cycles with their supports.
The fact that $\mathcal X_D^u$ is defined over $\Qb$ and projective (see e.g. \cite[Theorem~I.3.21]{kollar-rational}) is nevertheless
important since it gives proper projections and allows us to consider degenerations.

The geometry of Chow varieties is complicated in general and not fully understood. In particular, they need not be irreducible,
and some components parametrize generically reducible cycles. A simple example is the Chow variety of degree-two curves
in $\Pb^3$, where one component parametrizes pairs of lines. When these lines are disjoint, such cycles cannot be approximated by irreducible conics and thus form a component.

More generally, for positive integers satisfying $D=\sum_{j=1}^r m_je_j$, we have a map
\begin{equation}\label{eq-chow}
 \prod_{j=1}^r\mathcal X_{e_j}^u 
 \longrightarrow\mathcal X_D^u,\qquad
 (C_1,\ldots,C_r)\longmapsto\sum_{j=1}^r m_jC_j.
\end{equation}
This map is proper with finite fibers. In the example above, with $e_1=e_2=1$ and $D=2,$ an irreducible component of $\mathcal X_2^1$ is the image of such a map.

We do not need a detailed description of these spaces. However, the families $\mathcal Y\subset\mathcal X_D^u\times M$ we consider, where $M=\mathrm{End}_d^k$ or $\mathrm{Poly}_d^k$, are only assumed to be irreducible and dominant over $M$. Thus their general members need not be irreducible, even in the hypersurface case $u=k-1$.

We call such a family \emph{generically irreducible} if its general member is reduced and irreducible. The following argument allows us to reduce to this case
by passing to a finite ramified cover and selecting a family of irreducible components, possibly of smaller degree.

First, for an irreducible family $\mathcal Y\subset\mathcal X_D^u\times M$, write $(Y_t,f_t)_{t\in \mathcal Y}$. On a dense Zariski open subset of $\mathcal Y$, the number, degrees and multiplicities of the irreducible components are constant. Pulling back by the corresponding map in \eqref{eq-chow} gives a finite, possibly ramified covering $\pi:\mathcal Y'\to \mathcal Y$ on which these
components can be followed algebraically, i.e.,
\[
 Y_{\pi(t')}=\sum_{j=1}^r m_j[Y_{j,t'}],
 \qquad
 \deg Y_{j,t'}=e_j,\qquad
 \sum_{j=1}^r m_je_j=D.
\]
The $Y_{j,t'}$ are generically distinct irreducible subvarieties. For a fixed irreducible component of $\mathcal Y'$ dominating $\mathcal Y$ and for each $j$, we then consider the closure of the image of
\[
 t'\longmapsto(Y_{j,t'},f_{\pi(t')})
 \quad\text{in}\quad\mathcal X_{e_j}^u\times M,
\]
which also dominates $M$.

We will systematically use this reduction to pass from a family $\mathcal Y\subset\mathcal X_D^u\times M$ to a family $\mathcal Y_{e_j}\subset\mathcal X_{e_j}^u\times M$
with irreducible general members. Remark~\ref{rk-irreducible} explains why this is important in what follows.


\subsubsection{Transverse families}
\label{sec:moduli-space}

We recall here the moduli spaces used below and a few elementary facts about
transverse families.

As an endomorphism $f$ of $\Pb^k$ of degree $d$ is given by $k+1$ homogeneous polynomials of degree $d$, the coefficients of these polynomials allow us to see $f$ as a point in $\Pb^{N}$ where $N:= (k+1)\binom{k+d}{d}-1$. The condition on the coefficients to ensure that the associated map is an endomorphism of $\Pb^k$ is algebraic so there exists a Zariski open set $\mathrm{End}_d^k\subset\Pb^{N}$ corresponding to degree $d$ endomorphisms. See, e.g.,~\cite[Proposition~1.1]{BB1} for more details, see also \cite[\S~3]{szpiro}.

In particular, the variety $\mathrm{End}_d^k$ is an irreducible smooth quasi-projective variety defined over $\mathbb{Q}$.
We thus have a family
\[f\colon\mathbb{P}^k\times \mathrm{End}_d^k\to\mathbb{P}^k\times\mathrm{End}_d^k\]
 of degree $d$ endomorphisms of $\mathbb{P}^k$ parametrized by $\mathrm{End}_d^k$ which is defined over $\mathbb{Q}$.

Since the dynamics is unchanged under holomorphic conjugacy, the space naturally adapted to the study of the family of dynamical systems is the moduli space $\mathscr M_d^k$, obtained as the quotient space of the space $\mathrm{End}_d^k$ by the action by conjugacy of $\mathrm{PGL}(k+1)$: 
\[\Pi\colon\mathrm{End}_d^k\to \mathscr{M}_d^k.\]
It is an irreducible affine variety defined over $\mathbb{Q}$, of dimension
$(k+1)\binom{k+d}{d} - (k+1)^2$, and the stabilizer of every element of
$\End_d^k$ is finite, see~\cite{Silverman} for $k=1$
and~\cite[\S~3.2-3.3]{szpiro} for $k>1$. In particular, the critical set $V$ of $\Pi$ is defined over $\mathbb{Q}$ and $\Pi$ is a submersion on $\mathrm{End}_d^k\setminus V$.

 As $\mathscr{M}_d^k$ is merely a coarse moduli space, there is no universal family. However, we can cook up a good family which can play a sufficiently similar role. The following is Lemma~2.1 from \cite{gtv}.
 \begin{lemma}\label{good-family-end}
There exists an irreducible subvariety
$\mathcal S_d^k\subset\End_d^k$, defined over $\Q$, such that
\[
 \dim\mathcal S_d^k=\dim\mathscr M_d^k,
\]
$\Pi(\mathcal S_d^k)$ contains a dense Zariski open subset of
$\mathscr M_d^k$, and $\Pi|_{\mathcal S_d^k}$ is finite onto its image.
\end{lemma}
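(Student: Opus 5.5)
Lemma~\ref{good-family-end} is Lemma~2.1 of \cite{gtv}, so one could simply cite it. Here is how I would reprove it directly, by the classical construction of a local slice to the conjugacy action of $G:=\mathrm{PGL}(k+1)$ on $\mathrm{End}_d^k$, arranged to be algebraic and defined over $\Q$.

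\textbf{Step 1: a rational point with a generic orbit.} Set $n:=\dim\mathrm{End}_d^k$ and $g:=\dim G=(k+1)^2-1$, so that $\dim\mathscr M_d^k=n-g$. All stabilizers are finite, so every orbit has dimension $g$ and the orbit map $G\to\mathrm{End}_d^k$, $\phi\mapsto \phi\circ f\circ\phi^{-1}$, has injective differential at the identity for every $f$. Choose $f_0\in\mathrm{End}_d^k(\Q)$ outside the critical set $V$ of $\Pi$; this is possible because $V$ is a proper closed subset defined over $\Q$ and rational points are Zariski dense in $\mathrm{End}_d^k$. At $f_0$ the map $\Pi$ is then a submersion, and its fiber is the orbit $G\cdot f_0$, of dimension $g$.

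\textbf{Step 2: a transverse linear slice.} Choose a linear subspace $L\subset\Pb^N$ of codimension $g$, defined over $\Q$ and passing through $f_0$, such that
\[
T_{f_0}L\oplus T_{f_0}(G\cdot f_0)=T_{f_0}\mathrm{End}_d^k .
\]
Rational such $L$ exist because transversality is a Zariski open condition on the Grassmannian, and that Grassmannian has dense rational points. Let $\mathcal S_d^k$ be the irreducible component of $L\cap\mathrm{End}_d^k$ through $f_0$; it is defined over $\Q$, since $f_0$ is a smooth rational point of $L\cap\mathrm{End}_d^k$ and so lies on a unique component, which is then Galois invariant. Near $f_0$, $\mathcal S_d^k$ is smooth of dimension $n-g=\dim\mathscr M_d^k$, and $d\Pi|_{\mathcal S_d^k}$ is an isomorphism at $f_0$. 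Hence $\Pi|_{\mathcal S_d^k}$ is dominant, and by Chevalley's theorem its image contains a dense Zariski open subset of $\mathscr M_d^k$.

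\textbf{Step 3: finiteness.} The restriction $\Pi|_{\mathcal S_d^k}$ is generically finite, because the dimensions of source and target agree, but finiteness must hold onto the image itself. I would first shrink $\mathcal S_d^k$ to the dense open set where the fibers of $\Pi|_{\mathcal S_d^k}$ are finite; this set is nonempty and defined over $\Q$ by upper semicontinuity of fiber dimension. I would then further restrict, via generic flatness and Zariski's main theorem, to the preimage $\Pi|_{\mathcal S_d^k}^{-1}(U)$ of a dense open subset $U$ of the image over which the map is finite. This is legitimate because the statement allows $\mathcal S_d^k$ to be any irreducible subvariety, including a locally closed one. I expect this step to be the main obstacle: the bookkeeping needed to ensure the result is still irreducible, defined over $\Q$, and has image containing a dense open set. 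All of these properties survive passage to a nonempty open subset of an irreducible $\Q$-variety, so the argument should close.
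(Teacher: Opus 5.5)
Your argument is correct. The paper gives no proof of this lemma; it simply imports it as \cite[Lemma~2.1]{gtv}. Your construction is therefore a valid self-contained substitute for that citation rather than a competing route: take a transverse linear slice through a rational point $f_0\notin V$, then shrink to the preimage of a dense open set over which the map is finite.

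Two simplifications are worth noting.
\begin{itemize}
\item $L\cap\End_d^k$ is a nonempty Zariski open subset of the linear space $L\simeq\Pb^{N-g}$. It is therefore already irreducible, smooth and defined over $\Q$, so no choice of component is needed.
\item Step~3 is not a real obstacle, and the preliminary restriction to the quasi-finite locus is redundant. Since $\Pi|_{\mathcal S}$ is a dominant morphism between irreducible $\Q$-varieties of the same dimension, generic finiteness directly gives a dense open $U\subset\mathscr M_d^k$, defined over $\Q$, with $\mathcal S\cap\Pi^{-1}(U)\to U$ finite. This map is onto $U$, because finite morphisms are closed and this one is dominant.
\end{itemize}

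The resulting $\mathcal S_d^k$ is only locally closed in $\End_d^k$. That is consistent with how the paper uses it later: it only needs a quasi-projective family whose map to the moduli space is finite-to-one, which is what yields maximal variation in the proof of Lemma~\ref{le-arithmetic-transverse}.
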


We next consider regular polynomial endomorphisms. Let $\Poly_d^k$ be the
space of polynomial maps $f:\Ab^k\to\Ab^k$ of degree $d$ which extend to
endomorphisms of $\Pb^k$ of degree $d$. It is a smooth irreducible
quasi-projective variety defined over $\Q$, of dimension
\[
 \dim\Poly_d^k=k\binom{k+d}{d}.
\]
Since $\Aff(\Ab^k)$ is not reductive, we do not use the quotient by affine
conjugacy. Following \cite{t-gorbovickis}, we instead set
\[
 \mathscr P_d^k:=\Pi(\Poly_d^k)\subset\mathscr M_d^k.
\]
This is the closed irreducible subvariety of $\mathscr M_d^k$ formed by
classes admitting a totally invariant hyperplane. The natural map from
affine conjugacy classes in $\Poly_d^k$ to $\mathscr P_d^k$ is generically
injective, and
\[
 \dim\mathscr P_d^k
 =k\binom{k+d}{d}-(k^2+k).
\]

The construction in \cite{t-gorbovickis} also gives the polynomial analogue
of Lemma~\ref{good-family-end}.

\begin{lemma}\label{good-family-poly}
There exists an irreducible subvariety
$\mathcal T_d^k\subset\Poly_d^k$, defined over $\Q$, such that
\[
 \dim\mathcal T_d^k=\dim\mathscr P_d^k,
\]
$\Pi(\mathcal T_d^k)$ contains a dense Zariski open subset of
$\mathscr P_d^k$, and $\Pi|_{\mathcal T_d^k}$ is finite onto its image.
\end{lemma}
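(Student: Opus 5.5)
The plan is to build $\mathcal T_d^k$ as a \emph{linear slice} of $\Poly_d^k$ that is transverse to the orbits of the affine group $\Aff(\Ab^k)$, acting by conjugacy. This mirrors the construction of Lemma~\ref{good-family-end} in~\cite{gtv}, with $\PGL(k+1)$ replaced by $\Aff(\Ab^k)$, and follows~\cite{t-gorbovickis}.

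\emph{Fiber structure of $\Pi$ on $\Poly_d^k$.} First I would describe the fibers of $\Pi|_{\Poly_d^k}$. The conjugate of $f\in\Poly_d^k$ by an element of $\Aff(\Ab^k)$ lies again in $\Poly_d^k$. Conversely, suppose $g=\phi f\phi^{-1}$ with $\phi\in\PGL(k+1)$ and $f,g\in\Poly_d^k$. Then $\phi^{-1}(H_\infty)$ is a totally invariant hyperplane of $f$. For $f$ outside a proper Zariski closed subset, $H_\infty$ is the only totally invariant hyperplane: this holds for instance for a map whose restriction to $H_\infty$ has no totally invariant hyperplane, so it suffices to exhibit one such example. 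For such $f$ we get $\phi\in\Aff(\Ab^k)$. Stabilizers are finite, since they sit inside $\PGL(k+1)$-stabilizers. Hence the general fiber of $\Pi|_{\Poly_d^k}$ is a single $\Aff(\Ab^k)$-orbit of dimension $k^2+k$. This is consistent with the stated value of $\dim\mathscr P_d^k$.

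\emph{The slice.} I would choose an explicit affine-linear subspace $L$ of the coefficient space of $\Poly_d^k$, defined over $\Q$, of codimension $k^2+k$. It is cut out by:
\begin{itemize}
\item the $k$ conditions $f(0)=0$, which kill translations;
\item $k^2$ conditions prescribing suitable coefficients of the top homogeneous part $f_d$, which kill the linear part $GL_k$.
\end{itemize}
Concretely, I would impose $k^2$ coordinates of $f_d$ on which the infinitesimal $\mathfrak{gl}_k$-action $A\mapsto A f_d-Df_d\cdot A$ is injective at some chosen rational point $f_0$.

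I would then set $\mathcal T_d^k:=L\cap\Poly_d^k$. This is a nonempty Zariski open subset of an affine space, so it is irreducible, smooth and defined over $\Q$, of dimension $\dim\Poly_d^k-(k^2+k)=\dim\mathscr P_d^k$.

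\emph{Main step: transversality at one point.} The crucial point is to check that at $f_0$ the tangent space to the orbit meets $T_{f_0}L$ trivially. This means verifying that the map
\[
\mathfrak{aff}_k\to T_{f_0}\Poly_d^k/T_{f_0}L
\]
is an isomorphism. The translation part is detected by $f(0)$, and the linear part by the chosen coefficients of $f_d$, so this reduces to choosing $f_0$ with a nondegenerate top part. For example, $f_0=(z_1^d+\varepsilon\,\cdot,\dots)$ with generic small perturbations, and the check is a finite linear-algebra computation. I expect this to be the main obstacle: one must find a single explicit rational point where the $k^2\times k^2$ determinant is nonzero.

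Granting this, $\Pi|_{\mathcal T_d^k}$ has injective differential modulo orbits at $f_0$. Therefore $\Pi|_{\mathcal T_d^k}\colon\mathcal T_d^k\to\mathscr P_d^k$ is dominant, hence generically finite since the dimensions are equal.

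\emph{Conclusion.} By generic finiteness (Zariski's main theorem), there is a dense Zariski open $U\subset\mathscr P_d^k$, defined over $\Q$, over which $\Pi|_{\mathcal T_d^k}$ is finite. Replacing $\mathcal T_d^k$ by $\Pi^{-1}(U)\cap\mathcal T_d^k$ keeps it irreducible and defined over $\Q$, makes $\Pi|_{\mathcal T_d^k}$ finite onto its image, and gives $\Pi(\mathcal T_d^k)\supset$ a dense Zariski open subset of $\mathscr P_d^k$ because the map is dominant.
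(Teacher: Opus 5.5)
Your argument is correct. The paper gives no proof of this lemma and only defers to the construction of \cite{t-gorbovickis}, as the polynomial counterpart of Lemma~\ref{good-family-end}, so your transversal linear slice is a genuine self-contained route.

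The step you phrase as ``injective differential modulo orbits'' is better stated through the conjugation map $\Aff(\Ab^k)\times\mathcal T_d^k\to\Poly_d^k$, $(\varphi,f)\mapsto\varphi^{-1}\circ f\circ\varphi$. Transversality at $f_0$ and the dimension count make this map a local biholomorphism at $(\id,f_0)$. Hence $\Aff(\Ab^k)\cdot\mathcal T_d^k$ is Zariski dense and $\Pi|_{\mathcal T_d^k}$ is dominant. This avoids differentiating on $\mathscr P_d^k$, which may be singular at $\Pi(f_0)$.

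Two small corrections to the transversality check:
\begin{itemize}
\item For $X(z)=Az+b$ and $f_0(0)=0$, the constant term of $Df_0\cdot X-X\circ f_0$ is $(D_0f_0-\id)b$. So the translation block is invertible only if $1$ is not a multiplier of the fixed point $0$. Taking $f_0$ homogeneous with rational coefficients handles this, since then $D_0f_0=0$.
\item What you call the main obstacle is automatic. If $f_{0,d}$ has nonzero resultant, its stabilizer in $GL_k$ is finite: its image in $PGL_k$ lies in the finite automorphism group of the induced endomorphism of $\Pb^{k-1}$, and $\lambda\,\id$ acts by $\lambda^{d-1}$. Hence the infinitesimal $\mathfrak{gl}_k$-action is injective, and suitable $k^2$ coefficients always exist.
\end{itemize}

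For comparison, another natural normalization sends $k+1$ affinely independent fixed points to the standard basis points $0,e_1,\ldots,e_k$, again imposing $k(k+1)$ linear conditions. There, dominance and quasi-finiteness of $\Pi$ on the slice come directly from counting fixed points. Your slice trades that dynamical count for a linear-algebra computation at a single point, followed by generic finiteness.
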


For the local arguments in Section~\ref{sec-blender}, we use
analytic slices. The guiding idea is to work with a parameter
space $\mathcal Y$ that maps generically finite-to-one to the
``moduli space'' of pairs, which we do not formally introduce,
while retaining enough dynamical variation.

The first condition is necessary for the bifurcation measure $\mu_{\mathcal Y}$ to be nonzero. Indeed, if a positive-dimensional local family of conjugate pairs
$(\phi^{-1}(Y),\phi^{-1}\circ f\circ\phi)$ passes through a general point of $\mathcal Y$, the wedge product defining $\mu_{\mathcal Y}$ has too many factors and vanishes.
On the other hand, a family whose projection to the moduli space of maps is too small may miss the blender region of Section~\ref{sec-blender}, or meet it without providing
enough variation to satisfy the required hypotheses.

Let $\mathcal E$ denote
either $\End_d^k$ or $\Poly_d^k$, and let $\mathscr{M}$ and $G$ denote respectively
$\mathscr M_d^k$ and $\PGL(k+1)$ or $\mathscr P_d^k$ and $\Aff(\Ab^k)$. Recall that $V$ is the ramification locus of the projection $\Pi\colon\End_d^k\to\mathscr M_d^k$ and we keep the same notation for its counterpart in $\Poly_d^k.$ Since $\Pi$ is a submersion outside $V$ and the stabilizers are finite the holomorphic submersion theorem yields the following
local product decomposition on $\mathcal E\setminus V$.
\begin{lemma}
\label{lem:local-slice}
Let $f_0\in\mathcal E\setminus V.$ Then, there exists a local complex submanifold
$\tilde\Omega\subset\mathcal E\setminus V$ containing $f_0$ such that the conjugacy map
\[
 G\times\tilde\Omega\to\mathcal E,
 \qquad
 (\varphi,f)\mapsto\varphi^{-1}\circ f\circ\varphi,
\]
is a local biholomorphism between neighborhoods of $(\id,f_0)$ and $f_0$,
respectively. In particular, $\Pi_{|\tilde\Omega}\colon\tilde\Omega\to\mathscr M$ is a local biholomorphism.
\end{lemma}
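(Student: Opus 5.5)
The plan is to apply the holomorphic submersion theorem to the conjugacy map $\Phi\colon G\times\mathcal E\to\mathcal E$, $(\varphi,f)\mapsto\varphi^{-1}\circ f\circ\varphi$, restricted to a suitable transversal through $f_0$.

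First, I would identify the tangent space of the orbit. Since $\Pi$ is a submersion at $f_0\notin V$ and its fibres are the $G$-orbits, we have
\[
T_{f_0}(G\cdot f_0)=\ker D_{f_0}\Pi.
\]
This holds with equality: the orbit is contained in the fibre, and on $\mathcal E\setminus V$ the fibres of the submersion $\Pi$ are smooth of dimension $\dim\mathcal E-\dim\mathscr M=\dim G$. This dimension count uses that stabilizers are finite, so orbits have dimension $\dim G$. In the polynomial case, I would use the analogous dimension count $\dim\Poly_d^k-\dim\mathscr P_d^k=k^2+k=\dim\Aff(\Ab^k)$.

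Next, I would choose a complex linear subspace $W\subset T_{f_0}\mathcal E$ complementary to $\ker D_{f_0}\Pi$. In local coordinates on $\mathcal E$ near $f_0$, let $\tilde\Omega$ be the germ of an affine slice through $f_0$ tangent to $W$, shrunk so that it stays in $\mathcal E\setminus V$.

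The differential of the conjugacy map at $(\id,f_0)$ is
\[
(\xi,v)\mapsto -\mathrm{ad}_{f_0}(\xi)+v.
\]
The first term is the infinitesimal orbit map, whose image is $T_{f_0}(G\cdot f_0)$. That map is injective because the stabilizer of $f_0$ is finite, so its Lie algebra is trivial. Hence $D_{(\id,f_0)}\Phi|_{T G\times T\tilde\Omega}$ is an isomorphism onto
\[
\ker D_{f_0}\Pi\oplus W=T_{f_0}\mathcal E.
\]
The inverse function theorem then gives the local biholomorphism between neighborhoods of $(\id,f_0)$ and $f_0$.

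Finally, I would deduce the statement about $\Pi|_{\tilde\Omega}$. The restriction $D_{f_0}\Pi|_W$ is an isomorphism onto $T_{\Pi(f_0)}\mathscr M$, since $W$ is complementary to the kernel and $\dim W=\dim\mathscr M$. So $\Pi|_{\tilde\Omega}$ is a local biholomorphism near $f_0$, after shrinking. Here I interpret $\mathscr M$ as smooth at $\Pi(f_0)$, which is part of the meaning of "$\Pi$ is a submersion outside $V$".

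The main point needing care is the identification of $T_{f_0}(G\cdot f_0)$ with $\ker D_{f_0}\Pi$. This is exactly where the hypotheses enter: finite stabilizers give injectivity of the infinitesimal action, and $f_0\notin V$ gives smoothness of the quotient and the submersion property.
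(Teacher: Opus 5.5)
Your proposal is correct and follows the route the paper indicates. The paper gives no detailed proof, only invoking the holomorphic submersion theorem together with finiteness of stabilizers, and your argument is the detailed version of that justification: you identify $T_{f_0}(G\cdot f_0)$ with $\ker D_{f_0}\Pi$ by the dimension count, choose a complementary slice, and apply the inverse function theorem to the conjugacy map. One minor point: in the polynomial case a fibre of $\Pi$ may be a finite union of $\Aff(\Ab^k)$-orbits rather than a single orbit, but your argument only uses that the orbit lies in the fibre, so nothing changes.
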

Such local submanifold $\tilde\Omega$ is called a \emph{local slice at $f_0$}.

\medskip

Let now
\[
 \mathcal Y\subset\mathcal X_D^u\times\mathcal E
\]
be irreducible.
\begin{definition}
We say that $\mathcal Y$ is \emph{dynamically full} if
\[
 \Pi\circ\pi_2:\mathcal Y\to\mathscr M
\]
is dominant. We say that $\mathcal Y$ is \emph{maximally varying} if the restriction to $G\times\mathcal Y$ of the conjugacy map
\begin{align*}
 \mathrm{Conj}\colon G\times\mathcal X_D^u\times\mathcal E
 &\to \mathcal X_D^u\times\mathcal E,\\
 (\varphi,(Y,f))&\mapsto
 \bigl(\varphi^{-1}(Y),\varphi^{-1}\circ f\circ\varphi\bigr)
\end{align*}
is generically finite onto its image.
\end{definition}
As already mentioned, this latter condition rules out redundancies in the family and is necessary for the bifurcation
measure $\mu_{\mathcal Y}$ to be nonzero. On the other hand, this can always be achieved by taking an appropriate slice.

The two conditions above allow us to put $\mathcal Y$ into a convenient
local normal form. Roughly speaking, the following proposition shows
that, near a generic point and after simultaneous conjugacy, $\mathcal Y$
may be viewed locally as a complex submanifold of $\mathcal X_D^u\times\tilde\Omega$ whose projection onto $\tilde\Omega$ is a surjective submersion.
\begin{proposition}
\label{prop:local-normalization}
Assume that $\mathcal Y$ is dynamically full and maximally varying.
Then there exists a dense Zariski open subset
$\mathcal Y^\circ\subset\mathcal Y$ such that the following holds.

For every $(Y_0,f_0)\in\mathcal Y^\circ$ and every local slice
$\tilde\Omega$ through $f_0$, after possibly shrinking
$\tilde\Omega$ around $f_0$, there exist a complex submanifold containing $(Y_0,f_0)$
\[ \tilde{\mathcal Y}
 \subset
 \mathcal X_D^u\times\tilde\Omega
\]
and a holomorphic map
\[
 \Psi\colon\tilde{\mathcal Y}\to G,\text{ with }
 \Psi(Y_0,f_0)=\id,
\]
such that the projection $ \pi_{\tilde\Omega}\colon \tilde{\mathcal Y}\to\tilde\Omega$
is a surjective submersion and the map
\[
 \tilde{\mathcal Y}\to
 \mathcal X_D^u\times\mathcal E,
\]
\[
 z=(Z,g)\mapsto
 \bigl(\psi_z^{-1}(Z),
       \psi_z^{-1}\circ g\circ\psi_z\bigr),
 \qquad
 \psi_z:=\Psi(z),
\]
is a biholomorphism from $\tilde{\mathcal Y}$ onto a neighborhood
of $(Y_0,f_0)$ in $\mathcal Y$.
\end{proposition}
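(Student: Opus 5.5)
Fix a local slice $\tilde\Omega$ at $f_0$ given by Lemma~\ref{lem:local-slice}, so that $\Phi\colon (\varphi,f)\mapsto \varphi^{-1}\circ f\circ\varphi$ is a biholomorphism from a neighborhood of $(\id,f_0)$ in $G\times\tilde\Omega$ onto a neighborhood of $f_0$ in $\mathcal E$. Composing the inverse of $\Phi$ with the projection of $\mathcal Y$ to $\mathcal E$ gives holomorphic maps $f\mapsto(\varphi(f),\sigma(f))\in G\times\tilde\Omega$ near $f_0$, with $f=\varphi(f)^{-1}\circ\sigma(f)\circ\varphi(f)$. The plan is to define, for $(Y,f)\in\mathcal Y$ near $(Y_0,f_0)$,
\[
 \mathcal N(Y,f):=\bigl(\varphi(f)(Y),\sigma(f)\bigr)\in\mathcal X_D^u\times\tilde\Omega,
\]
which is the conjugate of $(Y,f)$ landing in the slice. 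Set $\tilde{\mathcal Y}:=\mathcal N(\mathcal Y\cap U)$ and $\Psi(Z,g):=\varphi(f)$ where $(Z,g)=\mathcal N(Y,f)$; then $\psi_z^{-1}(Z)=Y$ and $\psi_z^{-1}\circ g\circ\psi_z=f$, so the map in the statement is the inverse of $\mathcal N$. It remains to choose $\mathcal Y^\circ$ so that $\mathcal N$ is injective and immersive with submanifold image, and so that $\pi_{\tilde\Omega}$ is a surjective submersion on it.

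First I would take $\mathcal Y^\circ$ inside the smooth locus of $\mathcal Y$, and above points $f\notin V$ and with $(Y,f)$ a smooth point of $\mathcal X_D^u$ where the relevant Chow structure is manifold-like (or work with the normalization; smoothness of $\mathcal Y$ at the point is what is actually used). Since $\mathcal N$ is holomorphic with holomorphic left inverse $(Z,g)\mapsto(\psi_z^{-1}(Z),\psi_z^{-1} g\psi_z)$ provided $\Psi$ is well defined on the image, $\mathcal N$ is a biholomorphism onto its image, and the image is a submanifold once we know it is a graph-like immersed piece; injectivity near $(Y_0,f_0)$ is where maximal variation enters. Indeed if $\mathcal N(Y,f)=\mathcal N(Y',f')$ then $(Y',f')$ is a conjugate of $(Y,f)$ by an element near $\id$; since $\mathrm{Conj}|_{G\times\mathcal Y}$ is generically finite onto its image, on a dense Zariski open set of $\mathcal Y$ its fibers are finite and discrete, so the only conjugate near $(Y,f)$ inside $\mathcal Y$ obtained by a small $\varphi$ is $(Y,f)$ itself (shrinking neighborhoods uniformly using that the fiber dimension is $0$ on an open set and the map is étale there, after removing the branch locus). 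Equivalently, $\mathrm{Conj}$ is a local biholomorphism $G\times\mathcal Y\to\mathcal X^u_D\times\mathcal E$ near $(\id,(Y_0,f_0))$ onto a smooth piece $\mathcal C$ of dimension $\dim G+\dim\mathcal Y$, and $\tilde{\mathcal Y}=\mathcal C\cap(\mathcal X_D^u\times\tilde\Omega)$, a transverse intersection because the $G$-orbit directions are complementary to $\tilde\Omega$ by Lemma~\ref{lem:local-slice}. This gives the submanifold structure directly.

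For the submersion property, $\pi_{\tilde\Omega}\circ\mathcal N=\sigma\circ\pi_2$, and $\sigma$ is, near $f_0$, just $\Pi$ read in the chart $\Pi|_{\tilde\Omega}$. Dynamical fullness says $\Pi\circ\pi_2\colon\mathcal Y\to\mathscr M$ is dominant, so by generic smoothness its differential is surjective on a dense Zariski open set; I include this in $\mathcal Y^\circ$. Then $\sigma\circ\pi_2$ is a submersion at $(Y_0,f_0)$, hence open, and after shrinking $\tilde\Omega$ to the image of a small neighborhood (and $\tilde{\mathcal Y}$ to the preimage) the projection is a surjective submersion.

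The main obstacle I expect is the injectivity/submanifold step: making precise that generic finiteness of $\mathrm{Conj}|_{G\times\mathcal Y}$ yields a local biholomorphism at $(\id,(Y_0,f_0))$ and not merely finite fibers, given possible singularities of $\mathcal X_D^u$ and of the image. I would handle it by taking $\mathcal Y^\circ$ to avoid the ramification and non-smooth loci of $\mathrm{Conj}|_{G\times\mathcal Y}$ (which are $G$-invariant closed sets pulled back to a proper Zariski closed subset of $\mathcal Y$ by equivariance), so that the differential of $\mathrm{Conj}$ is injective at $(\id,(Y_0,f_0))$, and then apply the inverse function theorem on the image.
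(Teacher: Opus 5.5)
Your proposal is correct and follows essentially the paper's route. Your map $\mathcal N$ is exactly the paper's $\pi\circ F^{-1}$, i.e.\ conjugating into the slice. Maximal variation enters through immersivity of $\mathrm{Conj}$ at $(\id,y)$ on a Zariski dense open set, propagated by equivariance, and this is equivalent to immersivity of your $\mathcal N$ (the paper's $\pi|_{\check{\mathcal Y}}$). Dynamical fullness then gives the submersion via $p=(\Pi|_{\tilde\Omega})\circ\pi_{\tilde\Omega}$.

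Two adjustments. The finite-fibre injectivity argument is superfluous once immersivity is established. You should also drop the requirement that $Y_0$ be a smooth point of $\mathcal X_D^u$: it could fail on all of $\mathcal Y$, and it is not needed, since only smoothness of $\mathcal Y$ and the immersion are used.
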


\begin{proof}
Since $\mathcal Y$ is
dynamically full, the map
\[
 p:=\Pi\circ\pi_2|_{\mathcal Y}
 \colon\mathcal Y\to\mathscr M
\]
is dominant. Hence, there exists a dense Zariski open subset
\[
 \mathcal Y_{\mathrm{full}}
 \subset\mathcal Y_{\mathrm{reg}}
\]
on which $p$ is a submersion. In particular, if
$y=(Y,f)\in\mathcal Y_{\mathrm{full}}$, then $f\notin V$, since
$\Pi$ does not have maximal rank on $V$.

On the other hand, maximal variation means that the conjugacy map
\[
 \mathrm{Conj}\colon
 G\times\mathcal Y
 \to
 \mathcal X_D^u\times\mathcal E
\]
is generically finite onto its image, and hence generically immersive.
Therefore, by equivariance of this map, there exists a dense Zariski
open subset
\[
 \mathcal Y_{\mathrm{mv}}
 \subset\mathcal Y_{\mathrm{reg}}
\]
such that $\mathrm{Conj}$ is immersive at $(\id,y)$ for every
$y\in\mathcal Y_{\mathrm{mv}}$. We then define
\[
 \mathcal Y^\circ
 :=
 \mathcal Y_{\mathrm{full}}
 \cap
 \mathcal Y_{\mathrm{mv}}.
\]

Let now $(Y_0,f_0)\in\mathcal Y^\circ$, and let
$\tilde\Omega$ be a local slice at $f_0$. By
Lemma~\ref{lem:local-slice}, there exist neighborhoods
$\tilde G$ of $\id$ in $G$ and $\Omega$ of $f_0$ in
$\mathcal E$ such that
\[
 (\varphi,f)\mapsto\varphi^{-1}\circ f\circ\varphi
\]
is a biholomorphism from $\tilde G\times\tilde\Omega$ onto
$\Omega$. Therefore,
\begin{align*}
 F\colon
 \mathcal X_D^u\times\tilde G\times\tilde\Omega
 &\to
 \mathcal X_D^u\times\Omega,\\
 (Y,\varphi,f)
 &\mapsto
 \bigl(\varphi^{-1}(Y),
       \varphi^{-1}\circ f\circ\varphi\bigr)
\end{align*}
is a biholomorphism.

After shrinking around $(Y_0,\id,f_0)$, set
\[
 \check{\mathcal Y}
 :=
 F^{-1}\bigl(
 \mathcal Y\cap(\mathcal X_D^u\times\Omega)
 \bigr),
\]
and let
\[
 \pi\colon
 \mathcal X_D^u\times\tilde G\times\tilde\Omega
 \to
 \mathcal X_D^u\times\tilde\Omega
\]
be the natural projection. In the coordinates given by $F$, the
conjugacy map acts only on the $G$-coordinate:
\[
 F^{-1}\bigl(
 \mathrm{Conj}(\eta,F(Y,\varphi,f))
 \bigr)
 =
 (Y,\varphi\eta,f).
\]
Consequently, the immersivity of $\mathrm{Conj}$ at
$(\id,(Y_0,f_0))$ exactly implies that
$\pi|_{\check{\mathcal Y}}$ is immersive at
$(Y_0,\id,f_0)$. Hence, after shrinking the neighborhoods,
$\pi|_{\check{\mathcal Y}}$ is a biholomorphism onto the complex
submanifold
\[
 \tilde{\mathcal Y}
 :=
 \pi(\check{\mathcal Y})
 \subset
 \mathcal X_D^u\times\tilde\Omega.
\]

We may thus write
\[
 \bigl(\pi|_{\check{\mathcal Y}}\bigr)^{-1}(Y,f)
 =
 \bigl(Y,\Psi(Y,f),f\bigr),
\]
where
\[
 \Psi\colon\tilde{\mathcal Y}\to G
\]
is holomorphic and satisfies $\Psi(Y_0,f_0)=\id$. Setting
$\psi_y:=\Psi(y)$, the map
\[
 \tilde{\mathcal Y}\to\mathcal Y,
 \qquad
 y=(Y,f)\mapsto
 \bigl(\psi_y^{-1}(Y),
       \psi_y^{-1}\circ f\circ\psi_y\bigr)
\]
is a biholomorphism onto a neighborhood of $(Y_0,f_0)$ in
$\mathcal Y$.

Finally, under this biholomorphism, one has
\[
 p
 =
 \bigl(\Pi|_{\tilde\Omega}\bigr)
 \circ
 \pi_{\tilde\Omega},
\]
where $\pi_{\tilde\Omega}\colon \tilde{\mathcal Y} \to \tilde\Omega$
 is the projection. Since $p$ is a submersion and
$\Pi|_{\tilde\Omega}$ is a local biholomorphism, the projection $\pi_{\tilde\Omega}$ is a submersion. After shrinking $\tilde\Omega$ if necessary, it
is surjective.
\end{proof}

\section{Dynamical consequences of blenders in arbitrary codimension}\label{sec-blender}

In this section we introduce a framework leading to maximal
bifurcations. Section~\ref{sec-ass} presents the assumptions
under which we will work. In Section~\ref{sec-main-blender} we
state that, within this framework, the support of the bifurcation
measure $\mu_{\mathcal Y}$ has nonempty interior, and we outline
the strategy of the proof together with the organization of the
rest of the section.

\subsection{Assumptions}\label{sec-ass}
To obtain maximal bifurcations, our approach relies on a robust heterodimensional cycle between a saddle fixed point and a hyperbolic set possessing the blender property.
Within the geometric framework provided by Assumption~\ref{verti}, these dynamical objects are introduced in Assumptions~\ref{hyperbolic} and~\ref{saddle}, while the blender property itself is formulated in Assumption~\ref{blender}.

We further impose partially hyperbolic conditions ensuring the existence of a one-dimensional central direction, which will play a crucial role in the analysis of bifurcations.

Among the remaining hypotheses, Assumption~\ref{misu} is essential for applying Proposition~\ref{prop-positivevolume}, whereas Assumptions~\ref{decalage} and~\ref{grille} exclude certain product-type configurations that would otherwise prevent maximal bifurcations (see Section~\ref{sec-subvar}). Finally, Assumption~\ref{init} provides the initialization step for the induction on the dimension.

Accordingly, we fix a constant $\eta>10$ controlling the size of the cone fields, a constant $\tilde\chi>1$ controlling the multipliers, and a constant $R>2$ controlling the size of the center direction.

\bigskip

\noindent\emph{\textbf{Setting.}} In what follows, let $\Omega_u$ be a complex analytic open subset of $\mathrm{End}_d^k$ or $\mathrm{Poly}_d^k$ such that every $f\in\Omega_u$ satisfies all the assumptions described below.

Observe that most of the objects introduced here are assumed to depend holomorphically on $f\in\Omega_u$, and our notation reflects this dependence. For instance, $p\colon f\mapsto p(f)$ denotes the holomorphic motion of the saddle point given in Assumption~\ref{saddle}.

An important exception is the saddle hyperbolic set $\Lambda(f)$. As explained in Section~\ref{sec-graph}, its holomorphic continuation is naturally formulated on the natural extension. Throughout this section, we use the notational convention introduced there and often suppress the natural extension whenever no confusion can arise.

Note that, in the unstable direction, we use the polydisc $\Bb_{3/2}^u$ of radius $3/2$, since the construction in Section~\ref{sec-existence} requires allowing points whose norm may be slightly larger than $1$.

\bigskip

We now list the assumptions for this section.
\begin{enumerate}
\item\label{verti}
There exist two disjoint holomorphic polydiscs $U_+^u,U_-^u\subset\Bb_{3/2}^u$, such that if
\[\mathcal U_\pm:=\Db_{2R}\times \Bb^s\times U_\pm^u\]
then $f|_{\mathcal U_-}$ (resp. $f|_{\mathcal U_+}$) is an injective degree $1$ horizontal-like map of index $(s+1,u)$ (resp. $(s,u+1)$) from $\mathcal U_-$ (resp. $\mathcal U_+$) to $\Db_{2R}\times \Bb^s\times \Bb_{3/2}^u.$ Moreover, these two maps contract the cone field $C^{uu}_{\eta}$ and expand the cone field $C^{ss}_{\eta}.$\item\label{hyperbolic} There exist two holomorphic polydiscs $V_+^u\Subset U_+^u$ and $V_-^u\Subset U_-^u$ such that, if we set
\[\mathcal V_+:=\Db_R\times \Bb^s\times V_+^u,\ \ \mathcal V_-:=\Db_R\times \Bb^s\times V_-^u\ \text{ and }\ \mathcal V:=\mathcal V_+\cup\mathcal V_-\]
then  $f(\mathcal V_-)\Subset \mathcal U_+,$ $f(\mathcal V_+)\Subset \mathcal U_-,$ and $f^2|_{\mathcal V}$ is a degree $2$ horizontal-like map of index $(s,u+1)$ from $\mathcal V$ to $\Db_{R}\times \Bb^s\times \Bb_{3/2}^u$.
Moreover,
\[\Lambda(f):=\bigcap_{n\in\Zb}f^{-2n}(\mathcal V)\]
is a basic set of index $(s,u+1)$ such that the unstable bundle admits a dominated splitting. More precisely, for each $\hat x(f)$ in the natural extension $\hat\Lambda(f)$ of $\Lambda(f)$ we have
\[E^u_{\hat x(f)}=E^{uu}_{\hat x(f)}\oplus E^c_{\hat x(f)},\]
where $E^{uu}_{\hat x(f)}\subset C^{uu}_{\eta},$ and for each $v^s\in E^s_{\hat x(f)},$ $v^u\in E^{uu}_{\hat x(f)},$ $v^c\in E^c_{\hat x(f)}$ and $n$ sufficiently large, we have
\[\|D_{\hat x(f)}f^{2n}(v^s)\|\leq\tilde\chi^{-4n}\|v^s\|,\quad\tilde\chi^{4n}\|v^u\|\leq\|D_{\hat x(f)}f^{2n}(v^u)\|\]
and
\[\|D_{\hat x(f)}f^{2n}(v^c)\|\leq\tilde\chi^n\|v^c\|.\]
\item\label{saddle} $f$ has a saddle fixed point $p(f)$ of index $(s+1,u)$ in $\mathcal U_-$ that satisfies the conditions \eqref{eq-bun} and \eqref{eq-res} for some $\ell\geq4.$ Moreover, its multipliers, $\lambda_1(f),\ldots,\lambda_s(f)$, $\chi_{p(f)}$ and $\rho_1(f),\ldots,\rho_u(f)$, satisfy
\begin{gather*}
\tilde\chi^{-3}<|\lambda_1(f)|<\cdots<|\lambda_s(f)|<\tilde\chi^{-2},\\
\tilde\chi^{-1}<|\chi_{p(f)}|<1,\\
\tilde\chi^3<|\rho_1(f)|<\cdots<|\rho_u(f)|<\tilde\chi^{4}.
\end{gather*}
We also ask that its local unstable manifold $W^u_{p(f),loc}$ is a $u$-graph in $\Db\times \Bb^s\times \Bb_{3/2}^u$ tangent to $C^{uu}_{\eta}$.
\item\label{blender} Each $u$-graph in $\Db\times \Bb^s\times V_+^u$ (resp. $\Db\times \Bb^s\times V_-^u$)  tangent to $C^{uu}_{\eta}$ intersects the stable set of $\Lambda(f)$
\[W^s_{\Lambda(f),loc}:=\bigcup_{x\in\Lambda}W^s_{x(f),loc}.\]
\item\label{free} The intersections between $W^u_{p,loc}$ and $W^s_{\Lambda(f),loc}$ are not persistent in $\Omega_u$ (i.e. if $x\in\hat\Lambda$ and $f\in\Omega_u$ satisfy $W^s_{x(f),loc}\cap W^u_{p(f),loc}\neq\varnothing$ then there exists $f'\in\Omega_u$ close to $f$ such that $W^s_{x(f'),loc}\cap W^u_{p(f'),loc}=\varnothing$).
\item\label{repelling} There exists a saddle $2$-periodic point $r_0(f)\in\Lambda(f)\cap\Db_{1/2}\times \Bb^s\times \Bb_{3/2}^u.$ We denote by $\chi_{r_0(f)}$ its multiplier in the central direction.
\item\label{misu} For each $\hat x(f)\in\hat\Lambda(f),$ we have
\begin{equation*}\label{eq-misiu}
[W^u_{\hat x(f),loc}]\wedge T_f^{\wedge(u+1)}\neq0.
\end{equation*}
\item\label{homoclinic}
There exist $N\geq1$ and
$\tilde q(f)\in W^u_{p(f),loc}$ such that
$q(f):=f^N(\tilde q(f))$, the point is a transverse homoclinic intersection in $W^s_{p(f),loc}\setminus\{p(f)\}$, $D_{\tilde q(f)}f^N$ is invertible,
and all its stable coordinates in the linearizing chart of $p(f)$ are nonzero.
Moreover, if $a$ and $b$ are the constants used in \eqref{eq-bun} in Assumption~\ref{saddle} then there is  $1\leq r<\ell$ with
\[
b^{\ell-r}<a^{r+1}
\qquad\text{and}\qquad
|\chi_{p(f)}|^{r+1}<|\lambda_1(f)|.
\]
\item\label{decalage}
If $y(f)\in W^s_{\Lambda(f),loc}\cap W^u_{p(f),loc}$ lies in $\mathcal U_+$, then all its coordinates in the linearizing chart of $W^u_{p(f),loc}$ are nonzero.
\item\label{grille} For each $1 \le i \le s$, let $\mathcal E^{s,i}_{f}$ denote the dynamical direction
given by Proposition~\ref{prop-cderiv} along $W^u_{p(f),loc}$
associated with the eigenvalue $\lambda_i(f)$.
Let $G_+ \colon f(\mathcal U_+) \to \mathcal U_+$ be the inverse branch of $f$.
Then, for every $x \in f(\mathcal U_+)$ and every
$y \in W^u_{p(f),loc}$,
\[D_x G_+\big(C^{ss}_\eta(x)\big) \cap \big(T_y W^u_{p(f),loc} \oplus\mathcal E^{s,i}_{f,y}\big) = \varnothing.\]
\item\label{init} For every non-empty open subset $\Omega'\subset\Omega_u$, there exists $f\in\Omega'$, $m_0\in\Nb$ and $\tilde r_0(f_0)\in W^s_{\Lambda(f_0),loc}$ such that
\begin{itemize}
\item[i)] $f^{m_0}(\tilde r_0(f))\in W^s_{r_0(f),loc},$
\item[ii)] $\tilde r_0(f)\in W^u_{p(f),loc},$
\item[iii)] the subgroup $\langle\chi_{p(f)},\chi_{r_0(f)}\rangle$ of $\Cb^*$ generated by $\chi_{p(f)}$ and $\chi_{r_0(f)}$ is dense.
\end{itemize}
\end{enumerate}

\bigskip

\noindent\emph{\textbf{Comments.}} This set of hypotheses is close to the one used in~\cite{gtv}.
Nevertheless, working in codimension possibly different from $1$ introduces new technical difficulties and additional delicate cases that must be handled.

Roughly speaking, the global strategy is the following. Consider a pair $(Y,f)$ in  $\mathcal X_D^u \times \mathrm{End}_d^k$ with $f \in \Omega_u$ and such that
$f^{n_0}(Y)$ intersects $W^s_{p(f),loc}$ transversely for some
$n_0\ge0$. Then, by the inclination lemma and the blender property, the iterates $f^n(Y)$ have arbitrarily many intersection points with the stable set of $\Lambda(f)$. It is \emph{very likely} that these intersections are sufficiently independent to allow one to apply, via Assumption~\ref{misu}, Proposition~\ref{prop-positivevolume} and thus conclude that $(Y,f)$ belongs to the support of the bifurcation
measure $\mu_{\mathcal Y}$ relative to any dynamically full and maximally
varying family
$(Y,f)\in\mathcal Y\subset\mathcal X_D^u\times\End_d^k$. Since the above condition on $(Y,f)$ is robust, this yields an open subset in this support.

A substantial part of the assumptions is devoted to turning this ``very likely" into a rigorous statement. To achieve this, we need a fine control of the dynamics, provided by the partially hyperbolic structure and the different bunching assumptions, together with genericity hypothesis, namely Assumptions~\ref{decalage}, \ref{grille}, and \ref{init}. On the other hand, Assumption~\ref{homoclinic} is mainly designed to ensure, via Proposition~\ref{prop-zariski-dense-unstable}, that $W^u_{p(f)}$ is Zariski dense in $\Pb^k$. This will be crucial for extending our main results from preperiodic subvarieties to $f$-special subvarieties.

Section~\ref{sec-existence} will be devoted to constructing examples satisfying all these hypotheses. In these examples, several assumptions will be verified thanks to the discrepancy between the dynamics on $\mathcal U_-$ and on $\mathcal U_+$. In particular, this will allow us to obtain hyperbolic sets, $p(f)$ and $\Lambda(f)$, with different indices, as well as the shifts between the strongly stable cone fields in Assumption~\ref{grille} and the coordinate shift in Assumption~\ref{decalage}.

Observe that the first two points of Assumption~\ref{init} are automatic under Assumption~\ref{free} together with the hyperbolicity properties of $\Lambda(f)$ (see Lemma~\ref{le-shadowing}).
The delicate issue is to obtain the three points simultaneously.
However, this difficulty already appeared in~\cite{gtv}, and the solution presented in Section~\ref{sec-existence} is an adaptation of that argument.

An important technical point is to control graph transforms of $u$-graphs in the polydiscs $\mathcal U_\pm$ and $\mathcal V_\pm$ (see Section~\ref{sec-control}). Observe that in $\mathcal U_-$ the graph transform is always well defined, since $f|_{\mathcal U_-}$ is horizontal-like of index $(s+1,u)$. In the other cases, there is an additional central repelling direction. Nevertheless, contraction of the cone field guarantees that a suitable restriction of the image of a $u$-graph tangent to $C^{uu}_\eta$ remains such a graph, provided it does not approach too closely the boundary coming from the central direction.
In particular, the strongly unstable manifolds arising from the dominated splitting in Assumption~\ref{hyperbolic} are $u$-graphs tangent to $C^{uu}_\eta$.

\subsection{Main results and general strategy}
\label{sec-main-blender}

We now state the main results of this section, which imply Theorem~\ref{th-ouvert-int} and also provide the version that we need in the polynomial setting.
\begin{theorem}
\label{th-main-blender}
Let
\[
\mathcal{Y}\subset \mathcal{X}_D^u\times \End_d^k
\]
be an irreducible, dynamically full and maximally varying family.
If $(Y,f)\in\mathcal{Y}$ with $f\in\Omega_u$ and $Y$ reduced and irreducible satisfies
\begin{equation}\tag{$\pitchfork$}\label{eq-pitch}
f^n(Y)\text{ intersects }W^s_{p(f),loc}\text{ transversely
for some }n\geq0,
\end{equation}
then $(Y,f)$ belongs to the support of $\mu_{\mathcal Y}$.
\end{theorem}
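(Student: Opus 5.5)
The plan is to apply Proposition~\ref{prop-positivevolume} with $m=\dim\mathcal Y$ to the fibered power $\hat{\mathcal Y}^{[m]}$, working near $(Y,f)$ in the local normal form of Proposition~\ref{prop:local-normalization}. After a small perturbation inside $\mathcal Y$ (the support is closed and \eqref{eq-pitch} is open), we may assume $(Y,f)\in\mathcal Y^\circ$. We then replace $\mathcal Y$ locally by $\tilde{\mathcal Y}\subset\mathcal X_D^u\times\tilde\Omega$. Conjugacy by $\psi_z$ does not change Green currents, so $\mu_{\mathcal Y}$ corresponds to the analogous measure on $\tilde{\mathcal Y}$. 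We then have to produce, arbitrarily close to $(Y,f)$, a point $(z_0,t_0)\in\hat{\mathcal Y}^{[m]}$ with $z_0=(z_{0,1},\dots,z_{0,m})$, $z_{0,j}\in Y_{t_0}$, together with saddle points $r_j(f_{t_0})$ of index $(s,u+1)$ satisfying \ref{it:cond1} and \ref{it:cond2}.

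\textbf{Construction of the points.} By the inclination lemma, some $f^{n}(Y)$ contains a $u$-graph $C^1$-close to $W^u_{p(f),loc}$. By Assumption~\ref{blender}, such a graph meets $W^s_{\Lambda(f),loc}$. Iterating through the heterodimensional cycle (the blender, then $W^u_{\Lambda}$ crossing $W^s_{p}$ as in Assumption~\ref{free}, then back near $W^u_{p}$) gives arbitrarily many distinct intersection points of iterates of $Y$ with stable manifolds of points of $\Lambda(f)$. By shadowing, and after a small perturbation, these become intersections with $W^s_{r_j,loc}$ for periodic saddles $r_j\in\Lambda$. Assumption~\ref{init} is the seed: the first such point $\tilde r_0$ sits on $W^u_{p}$, and $r_0$ is periodic. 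Condition \ref{it:cond1} then follows from Assumption~\ref{misu} and Lemma~\ref{le-cond1}, since $r_j\in\Lambda(f)$.

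\textbf{Properness, condition \ref{it:cond2}.} This is the main obstacle. We need the map from $(z_j,t)\in\hat{\mathcal Y}^{[m]}$ to the $m$ central/transverse defects $\big(\mathrm{dist}$ of $f_t^{n_j}(z_j)$ to $W^s_{r_j(f_t),loc}\big)$ to have full rank $m$ in the parameter directions. Equivalently, the $m$ hypersurfaces of $\mathcal Y$ obtained by Lemma~\ref{le-dim-relation} (each a nontrivial horizontal/vertical relation) must intersect properly.

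I would argue by induction on the number of relations imposed, following \cite{gtv}. Suppose $j$ relations already cut out a germ $\mathcal Y_j$ of codimension $j$. I would then show that a new relation, built from a further passage through the cycle, is not identically satisfied on $\mathcal Y_j$.

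Variations of $f$ are handled through the central multipliers: the relation's defect is expanded along the central direction with a factor involving $\chi_{p}^{a}\chi_{r_0}^{b}$. The density of $\langle\chi_p,\chi_{r_0}\rangle$ in Assumption~\ref{init} lets us choose the itinerary so that the new defect has a phase independent of the previous ones, as in \cite{gtv}.

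Variations of $Y$ with $f$ fixed are the new difficulty. Here the dynamical directions of Proposition~\ref{prop-cderiv} along $W^u_{p}$ and $W^s_p$ are used. Moving $Y$ moves the intersection point $f^n(Y)\cap W^s_{p,loc}$, and the $C^1$-linearization transports this motion to a motion of the image graph near $W^u_p$ along the coordinate directions. Assumptions~\ref{decalage} (all coordinates nonzero) and \ref{grille} (strong stable cones avoid $TW^u_p\oplus\mathcal E^{s,i}$) should ensure that these motions produce nonzero derivatives of the defect in directions independent from those already used. Lemma~\ref{le-proper} then makes each proper relation robust, so we can perturb to the generic situation.

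Finally, irreducibility of $Y$ and maximal variation guarantee that no direction of $\tilde{\mathcal Y}$ is invisible, meaning that no conjugacy direction remains. Proposition~\ref{prop-positivevolume} then gives $(z_0,t_0)$ in the support of $\bigwedge_j p_j^*\hat T^{\wedge(u+1)}\wedge[\hat{\mathcal Y}^{[m]}]$, so $t_0\in\supp\mu_{\mathcal Y}$. Letting $t_0\to(Y,f)$ concludes.
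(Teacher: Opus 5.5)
Your overall architecture is right: Proposition~\ref{prop-positivevolume} with $m=\dim\mathcal Y$, condition~\ref{it:cond1} from Assumption~\ref{misu} and Lemma~\ref{le-cond1}, and relations imposed one at a time. The gap is in the step that carries the whole proof, namely that a new non-persistent relation exists as long as the current relation locus is positive-dimensional. You assert this, and the mechanism you give for the dynamical directions does not prove it. Density of $\langle\chi_p,\chi_{r_0}\rangle$ only lets you place rescaled graphs at prescribed positions at a single parameter. Choosing an itinerary with an ``independent phase'' does not exclude a positive-dimensional germ of $\tilde\Omega$ along which every relation persists, and excluding such a germ is a rigidity statement.

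The paper proves it by contradiction on a disc $M$ along which all relations between iterates of $\Gamma(f)\subset W^u_{p(f)}$ and $W^s_{\Lambda,loc}$ persist:
\begin{itemize}
\item The blender and persistence make $\chi_p^{j_n}\chi_{r_0}^{l_n}$ a normal family (Lemma~\ref{le-normal}), which forces $\chi_{r_0}=\zeta\chi_p^{-\omega}$ (Proposition~\ref{prop-relation}).
\item Density of the subgroup, used to approach a shadowing point $\tilde r_i\notin W^{uu}_{r_0}$, then makes $(\Xi_{j_n,l_n}-t_i)\chi_{r_i}^{m_n}$ normal. Hence every $\chi_{r_i}$ is constant on $M$ (Theorem~\ref{th-point}).
\item Concluding that $M$ is a point needs \eqref{star}: by \cite{t-gorbovickis}, $(\chi_{r_1},\dots,\chi_{r_N})$ are local coordinates on $\tilde\Omega$.
\end{itemize}
Your ``as in \cite{gtv}'' would instead invoke the endgame of \cite{gtv}, which uses constancy of $\chi_p,\chi_{r_0}$ followed by Berteloot--Bianchi--Dupont stability and equilibrium laminations. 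That route is unavailable here, since $\Lambda(f)$ is far from the small Julia set and there is no stability theory for arbitrary subvarieties.

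The $Y$-part has the same kind of hole. You assume that moving $Y$ usefully moves the points of $f^n(Y)\cap W^s_{p,loc}$. The product-type obstruction is exactly that $Y$ may vary while these points stay fixed, or while their motion only slides inside stable leaves of $\Lambda$. The paper rules this out in three steps:
\begin{itemize}
\item Lemma~\ref{le-zariski}: the points of $Y$ sent transversely onto $W^s_{p,loc}$ are Zariski dense, so some of them genuinely move. The proof uses Assumption~\ref{decalage}, the homoclinic point of Assumption~\ref{homoclinic} and the linearization.
\item Proposition~\ref{prop-subvar}: via Lemma~\ref{le-centrer-unstable}, absence of new relations confines the motion to a strong stable leaf.
\item Lemma~\ref{le-grille}, confronted with Assumption~\ref{grille}, excludes that.
\end{itemize}

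The order of the relations also matters. The paper first uses $\Gamma(f)$, which is independent of $Y$, to cut $\tilde\Omega$ down to a point. Since $\pi_{\tilde\Omega}$ is a submersion, this lands in a fibre, where the remaining $\dim\mathcal Y-N$ relations come from Theorem~\ref{th-subvar}. Only at the end are all relations transferred to iterates of $Y$ and to periodic points, by the inclination lemma and persistence of proper intersections. An induction directly on germs of $\mathcal Y$ that mixes both kinds of variation gives no control over which degrees of freedom remain.

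A minor point: the robust crossing of $W^s_p$ with $W^u_\Lambda$ is a dimension count. Assumption~\ref{free} is the non-persistence of $W^u_{p,loc}\cap W^s_{\Lambda,loc}$, which is what makes the first relation a hypersurface.
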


\begin{theorem}
\label{th-blender-poly}
The same statement holds for
\[
\mathcal{Y}\subset \mathcal{X}_D^u\times \Poly_d^k,
\]
with $\mu_{\mathcal{Y}}$ replaced by $\mu_{\mathcal{Y}}^{\rm pol}$,
provided that the transverse intersection in~\eqref{eq-pitch} lies in
$\Ab^k$.
\end{theorem}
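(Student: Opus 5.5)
The plan is to reduce Theorem~\ref{th-blender-poly} to the argument for Theorem~\ref{th-main-blender}. The dynamical input is identical in both settings. The only place where the projective structure entered is the final passage from the positivity criterion to the support of $\mu_{\mathcal Y}$. For the polynomial case one uses instead the second assertion of Proposition~\ref{prop-positivevolume}, which concerns $\mu^{\rm pol}_{\mathcal Y}$ and requires the saddle points $r_j(f_{t_0})$ to lie in $\Cb^k$.

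\textbf{Step 1: reduction to a local slice.} By Proposition~\ref{prop:local-normalization}, near a point of $\mathcal Y^\circ$ one replaces $\mathcal Y$ by a submanifold $\tilde{\mathcal Y}\subset\mathcal X_D^u\times\tilde\Omega$ that submerses onto a slice $\tilde\Omega\subset\Omega_u$. Conjugation by affine maps preserves $\Ab^k$ and $G_f$. Hence the measure $\mu^{\rm pol}$ is transported, and the condition that the transverse point lies in $\Ab^k$ is preserved. Since the support is closed and \eqref{eq-pitch} is an open condition, it suffices to treat pairs $(Y,f)$ in $\mathcal Y^\circ$.

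\textbf{Step 2: production of saddle points.} One repeats verbatim the blender argument of the projective case. Starting from a transverse point of $f^{n}(Y)\cap W^s_{p(f),loc}$, the inclination lemma and Assumption~\ref{blender} produce, after iteration, $m=\dim\mathcal Y$ intersections of $f^{n_j}(Y)$ with local stable manifolds of saddle periodic points $r_j(f)$ of index $(s,u+1)$ near $\Lambda(f)$. These are obtained by shadowing as in Lemma~\ref{le-shadowing}, and they satisfy condition~\ref{it:cond2} jointly in the fibered power, by Lemma~\ref{le-cond2} together with Assumptions~\ref{decalage}, \ref{grille} and~\ref{init}. Condition~\ref{it:cond1} follows from Assumption~\ref{misu} and Lemma~\ref{le-cond1}, by semicontinuity as the $r_j$ approach $\Lambda(f)$.

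\textbf{Step 3: checking the affine hypotheses.} What must be checked additionally is that everything lies in $\Cb^k$. Both $W^s_{p(f),loc}$ and the polydiscs $\mathcal U_\pm,\mathcal V_\pm$ are compact pieces of the construction. Since the transverse point is assumed to lie in $\Ab^k$, the relevant forward orbit stays in a compact region of $\Cb^k$, which one arranges to be disjoint from $H_\infty$ in Section~\ref{sec-existence}. Consequently the points $r_j(f_{t_0})$, and the nearby pieces of $\hat{\mathcal Y}^{[m]}$, avoid $H_\infty$. One also needs $\hat{\mathcal Y}$ to intersect $H_\infty\times\mathcal Y$ properly, which holds for $Y\not\subset H_\infty$. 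Proposition~\ref{prop-positivevolume} then gives $t_0\in\operatorname{supp}\mu^{\rm pol}_{\mathcal Y}$, where $dd^cG_{f_t}=T_{f_t}$ on $\Cb^k$.

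\textbf{Main obstacle.} The main obstacle is the delicate part already present in Theorem~\ref{th-main-blender}: proving the joint proper intersection~\ref{it:cond2} in the fibered product, that is, that the $m$ stable intersections are independent. The polynomial case adds nothing there. The only genuinely new verification is that the saddle points and the intersections stay in $\Ab^k$. I would handle this by taking the chart of Section~\ref{sec-ass} inside $\Cb^k$.
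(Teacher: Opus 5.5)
Your proposal is correct and follows the paper's route. The paper reruns the proof of Theorem~\ref{th-main-blender} on a polynomial slice, replacing $\mathscr M_d^k$ by $\mathscr P_d^k$ in the local normalization and in the multiplier-independence step, observes that all the saddle points and intersections lie in the affine part, and concludes with the polynomial assertion of Proposition~\ref{prop-positivevolume}. Two small corrections: the points $r_j$ are periodic points of $\Lambda(f)$ itself, so Assumption~\ref{misu} applies to them directly without any semicontinuity argument, and the proper intersection of $\hat{\mathcal Y}$ with $H_\infty\times\mathcal Y$ is not needed here; it is only a hypothesis of the later arithmetic input, Theorem~\ref{prop-empty-interior}.
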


The proof consists in showing that the assumptions of
Section~\ref{sec-ass}, together with~\eqref{eq-pitch}, imply the criterion
of Proposition~\ref{prop-positivevolume}. A key ingredient is the
independence of multipliers established by the second author with Igors
Gorbovickis~\cite{t-gorbovickis}, both on the moduli space
$\mathscr M_d^k$ and on its polynomial counterpart $\mathscr P_d^k$.
The arguments in both settings are very similar, so we focus on the
projective case.

\begin{rk}\label{rk-irreducible}
Observe that, since the member $Y$ in Theorem~\ref{th-main-blender} is reduced and irreducible, the family $\mathcal Y$ is generically irreducible. Without this assumption, a general member $(Y,f)$ could have a decomposition $Y=\sum_{j=1}^r m_j[Y_j]$, with some components $Y_j$ failing to satisfy \eqref{eq-pitch}.
Variations affecting only these components would then not be detected by our transversality argument, and the measure $\mu_{\mathcal Y}$ could therefore vanish.
\end{rk}

\bigskip

\noindent\emph{\textbf{Proof strategy for Theorem~\ref{th-main-blender}.}}
Let $\mathcal Y$ satisfy the assumptions of the theorem. Since \eqref{eq-pitch}, the condition $f\in\Omega_u$, and the condition that $Y$ is reduced and irreducible are stable under perturbation, we fix a small open subset $\mathcal Y_0\subset\mathcal Y$ on which all three hold.
Our goal is to prove
that
\begin{equation}\label{eq-inter-non-vide}
\supp(\mu_{\mathcal Y})\cap\mathcal Y_0\neq\varnothing.
\end{equation}

By Proposition~\ref{prop:local-normalization}, we may choose
\[
(Y_0,f_0)\in\mathcal Y_0\cap\mathcal Y^\circ
\]
and a local slice
\[
f_0\in\tilde\Omega\subset\Omega_u
\]
such that, after a local simultaneous conjugacy and possibly shrinking
$\mathcal Y_0$, a neighborhood of $(Y_0,f_0)$ in $\mathcal Y$ is
identified with a submanifold
\[
\tilde{\mathcal Y}\subset\mathcal X^u_d\times\tilde\Omega,
\]
whose projection on $\tilde\Omega$ is surjective. In particular, by dynamical fullness, Assumptions~\ref{free}
and~\ref{init} remain valid on $\tilde\Omega$. Observe also that we will
freely shrink $\mathcal Y_0$ and $\tilde\Omega$ in what follows.

We now let $m\geq0$ be maximal among the integers for which there exist
periodic points
\[
x_1(f_0),\ldots,x_m(f_0)\in\Lambda(f_0),
\]
in the hyperbolic set given by Assumption~\ref{hyperbolic}, satisfying
the two properties below. For any such choice of periodic points,
condition~\ref{it:cond1} of Proposition~\ref{prop-positivevolume}
follows directly from Lemma~\ref{le-cond1} and Assumption~\ref{misu}.
It remains to establish condition~\ref{it:cond2}.

\begin{enumerate}[label=\textbf{(2.\arabic*)}]
\item \label{it:cond21}
For each $1\leq j\leq m$, there are an open subset $U_j$ of
$\hat{\mathcal Y_0}$ and an integer $n_j$ such that
$\hat f^{n_j}(U_j)$ defines a family of $u$-graphs in
$\Db_{2R}\times\Bb^s\times\Bb_{3/2}^u$ tangent to $C^{uu}_\eta$.
More precisely, for each $t=(Y_t,f_t)\in\mathcal Y_0$, the fiber
$U_j(t)$ of $U_j$ above $t$ is an open subset of $Y_t$ such that
$f_t^{n_j}(U_j(t))$ is a $u$-graph in
$\Db_{2R}\times\Bb^s\times\Bb_{3/2}^u$ tangent to $C^{uu}_\eta$.

\item \label{it:cond22}
If we set
\[
M_j:=
\left\{
t=(Y_t,f_t)\in\mathcal Y_0
\,:\,
f_t^{n_j}(U_j(t))\cap W^s_{x_j(f_t),loc}\neq\varnothing
\right\},
\]
then, possibly after reducing $\mathcal Y_0$,
\[
M^{(m)}:=M_1\cap\cdots\cap M_m
\]
is an irreducible analytic subvariety of $\mathcal Y_0$ of codimension
$m$, containing $(Y_0,f_0)$.
\end{enumerate}

\begin{lemma}\label{le-cond2}
Possibly after shrinking $\mathcal Y_0$, conditions
\textup{\ref{it:cond21}} and \textup{\ref{it:cond22}} imply
condition~\textup{\ref{it:cond2}} from Section~\ref{sec-misiu}.
\end{lemma}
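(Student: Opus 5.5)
We need to show: there is an open neighborhood $U$ of a point $(z_0,t_0)\in\hat{\mathcal Y}^{[m]}$ over $t_0=(Y_0,f_0)$, and an $n_0$, such that $(\hat f^{[m]})^{n_0}(U)$ meets $\mathcal W^s_{r,loc}$ properly near the image point. The plan is to take $r_j:=x_j$, so that $r(f_t)=(x_1(f_t),\ldots,x_m(f_t))$. Since periodic points of $\Lambda$ only have periodic stable manifolds, we first replace the $x_j$ by their orbits and the $n_j$ by suitable multiples of the common period. Then $\mathcal W^s_{r,loc}$ is the fibered product of the local stable manifolds of $f_t^{[m]}$, and we set $n_0:=\max_j n_j$ after adjusting each $U_j$ by pushing forward $n_0-n_j$ extra steps. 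Pushing forward preserves condition (2.1) in $\mathcal U_\pm$ thanks to cone contraction, possibly after shrinking each fiber $U_j(t)$.

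Next we choose $z_0=(z_{0,1},\ldots,z_{0,m})$ with $z_{0,j}\in U_j(t_0)$ the unique point satisfying $f_{t_0}^{n_j}(z_{0,j})\in W^s_{x_j(f_{t_0}),loc}$. Such a point exists because $t_0\in M^{(m)}$, and it is unique because the image $f_{t_0}^{n_j}(U_j(t_0))$ is a $u$-graph and $W^s_{x_j,loc}$ is an $s$-graph, in the central-augmented box. We then take $U:=U_1\times_{\mathcal Y_0}\cdots\times_{\mathcal Y_0}U_m$, which is an open subset of $\hat{\mathcal Y}_0^{[m]}$ of dimension $\dim\mathcal Y_0+mu$.

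The intersection $(\hat f^{[m]})^{n_0}(U)\cap\mathcal W^s_{r,loc}$ consists of the points $(w_1,\ldots,w_m,t)$ with $w_j\in f_t^{n_j}(U_j(t))\cap W^s_{x_j(f_t),loc}$ for every $j$. By Lemma~\ref{le-dim-relation}, applied to the families of $u$-graphs and $s$-graphs parametrized by $\mathcal Y_0$, we get the following: for each $t$ and each $j$ this intersection contains at most one point, and it is nonempty exactly when $t\in M_j$. The relevant compactness hypotheses come from the horizontal-like structure of Assumptions~\ref{verti} and~\ref{hyperbolic}, after shrinking $\mathcal Y_0$. Consequently the intersection projects injectively onto $M^{(m)}$ and is a graph over it.

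By condition (2.2), $M^{(m)}$ has codimension $m$, so the intersection has dimension $\dim\mathcal Y_0-m$. Proper intersection requires the expected dimension $(\dim\mathcal Y_0+mu)+(\dim\mathcal Y_0+ms)-(\dim\mathcal Y_0+mk)=\dim\mathcal Y_0-m$, using $u+s+1=k$. The two agree, so the intersection is proper in a neighborhood of the image of $(z_0,t_0)$, which is condition~\ref{it:cond2}.

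The main obstacle is that $W^s_{x_j}$ depends only on the branched holomorphic motion of $\Lambda$ together with the chosen prehistory. We must ensure that, after shrinking $\mathcal Y_0$, the family $t\mapsto W^s_{x_j(f_t),loc}$ is a genuine holomorphic family of $s$-graphs, so that the intersection above is an analytic set of the stated dimension. Because the $x_j$ are periodic saddles, they move holomorphically without branching, and this settles the issue via Proposition~\ref{prop-hyp-graphs}.
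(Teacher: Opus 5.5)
Your overall architecture matches the paper's. You equalize the $n_j$ at $n_0=\max_j n_j$ by pushing forward, take $U$ to be the fibered product of the $U_j$ over $\mathcal Y_0$, and use that each pushed-forward $u$-graph meets $W^s_{x_j(f_t),loc}$ in at most one point. This makes $(\hat f^{[m]})^{n_0}(U)\cap\mathcal W^s_{x,loc}$ a graph over $M^{(m)}$, and the dimension count finishes the argument.

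\textbf{The gap is in the equalization step.} You claim that pushing forward preserves \ref{it:cond21} ``thanks to cone contraction, possibly after shrinking each fiber''. On $\mathcal U_-$ this is correct, since $f$ is horizontal-like of index $(s+1,u)$ there. On $\mathcal U_+$ it fails: $f$ is horizontal-like of index $(s,u+1)$, so the central direction is expanded. Hence $f(\Gamma)\cap\mathcal D$, with $\mathcal D=\Db_{2R}\times\Bb^s\times\Bb^u_{3/2}$, can leave through the central boundary $(\partial\Db_{2R})\times\Bb^s\times\Bb^u_{3/2}$. Cone contraction only controls tangent directions, not the position of the graph in the central coordinate. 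Shrinking the fiber cannot repair this, because a $u$-graph in $\mathcal D$ must project onto all of $\Bb^u_{3/2}$.

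\textbf{The missing idea} is to invoke \ref{it:cond22} at exactly this point, as the paper's proof does. At $t_0$, the graph $f_{t_0}^{n_j}(U_j(t_0))$ meets $W^s_{x_j(f_{t_0}),loc}$. That manifold is tangent to $C^{ss}_\eta$ and passes through a point of $\Lambda\subset\Db_R\times\Bb^s\times\Bb^u_{3/2}$. Together with the $C^{uu}_\eta$-tangency of the graph, this prevents the image from reaching $\partial\Db_{2R}$, so the image cut by $\mathcal D$ is again a $u$-graph. Shrinking $\mathcal Y_0$ around $t_0$ extends this to nearby parameters. You then repeat one step at a time, relabelling $x_j$ along its orbit as you suggest. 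This same central control is also what yields the uniform relative compactness of the $cs$-projections you invoke.

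By contrast, the issue you single out as the main obstacle is harmless. The $x_j$ are periodic, so Proposition~\ref{prop-hyp-graphs} directly gives holomorphic dependence of their local stable manifolds.

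Three minor points:
\begin{itemize}
\item The ``at most one point'' statement comes from placing the $u$-graph inside a $cu$-graph and applying Lemma~\ref{le-inter}, not from Lemma~\ref{le-dim-relation}.
\item Lemma~\ref{le-dim-relation} is not needed at all.
\item There is no need to make the $n_j$ multiples of a common period, since Proposition~\ref{prop-positivevolume} already reduces to period one.
\end{itemize}
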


Roughly speaking, the set $U$ appearing in~\ref{it:cond2} corresponds
to the fibered product of the sets $U_j$ given by~\ref{it:cond21}.
However, a few technical issues arise: the integers $n_j$ need not
coincide, and $\Lambda(f)$ is only invariant under $f^2$. We therefore
postpone the proof of Lemma~\ref{le-cond2} to Section~\ref{sec-standard}
and continue with the proof strategy.

By condition~\ref{it:cond22}, necessarily
\[
m\leq\dim\mathcal Y,
\]
since $M^{(m)}$ is a non-empty analytic subvariety of $\mathcal Y_0$ of
codimension $m$. It remains to show that
\[
m=\dim\mathcal Y.
\]
Assume that $m<\dim\mathcal Y$. Then $M^{(m)}$ has positive dimension.
By maximality of $m$, no new independent stable/unstable relation
satisfying \ref{it:cond21} and \ref{it:cond22} can be produced along
$M^{(m)}$.

The argument has two main steps.
\begin{itemize}
\item First, we show that the projection of $M^{(m)}$ onto
$\tilde\Omega$ is constant. In other words, the $m$ relations defining
$M^{(m)}$ exhaust all degrees of freedom coming from the dynamics
(see Theorem~\ref{th-point}).

\item Now that the dynamics is fixed along $M^{(m)}$, we show, using
the genericity assumptions~\ref{decalage} and~\ref{grille}, that the
intersection between $f^n(Y)$ and $W^s_{\Lambda(f),loc}$ is rich enough
to also exclude variations of $Y$ (see Theorem~\ref{th-subvar}).
\end{itemize}

Our proof follows in fact a slightly different order, and this variation is
crucial. The first relation we impose is between the local unstable
manifold of the saddle point $p(f)$ defined in Assumption~\ref{saddle},
and a preimage of the local stable manifold of $r_0(f)$ defined in
Assumption~\ref{repelling}. This allows us to fully exploit the
heterodimensional cycle between $p(f)$ and $\Lambda(f)$.

This initial step relies on Assumption~\ref{init}, which also ensures that
the two central multipliers $\chi_{p(f)}$ and $\chi_{r_0(f)}$ generate a
dense subgroup of $\Cb^*$, a crucial ingredient for the first step above
(see Section~\ref{sec-relation} for details).

\bigskip

\noindent\emph{\textbf{Organisation of this section.}}
In the following subsection, possibly after shrinking $\tilde\Omega$
several times, we derive several consequences of the assumptions in
Section~\ref{sec-ass} and set up the geometric framework needed for the
rest of the proof.

Next, in Section~\ref{sec-control}, we establish two graph transform
control results, respectively near the saddle point $p(f)$ and near
saddle points in $\Lambda(f)$.

At that stage, Section~\ref{sec-relation} establishes the first step
of the proof of Theorem~\ref{th-main-blender} mentioned above, while
Section~\ref{sec-subvar} deals with the second step of the proof.

Finally, Section~\ref{sec-preuve-main} completes the proof of
Theorem~\ref{th-main-blender}.

\subsection{First consequences of the assumptions and standard forms for saddle fixed points}\label{sec-standard}
We use here the open set $\tilde\Omega$ introduced above. The assumptions of Section~\ref{sec-ass} have several consequences. The first one is the
absence of $f$-special subvarieties among the pairs $(Y,f)\in\mathcal Y_0$ satisfying~\eqref{eq-pitch}. This relies on a recent result of Y.~Zhang
\cite{zhang-iterated-centralizer}, according to which there exists a dense Zariski open subset $\mathcal U_{\mathrm{cent}}$ in $\End_d^k$ (or $\Poly_d^k$) such that, for every
$f\in\mathcal U_{\mathrm{cent}}$, any endomorphism of $\Pb^k$
commuting with an iterate of $f$ is itself an iterate of $f$. Since the image of $\tilde\Omega$ in $\mathscr M_d^k$ (or $\mathscr P_d^k$) is open, after shrinking
$\tilde\Omega$ if necessary, we may assume that it is contained in $\mathcal U_{\mathrm{cent}}$.
\begin{proposition}\label{prop-special}
If $f$ is in $\tilde\Omega$ and if $Y\in\mathcal X_D^u$ satisfies \eqref{eq-pitch} then $Y$ is not $f$-special.
\end{proposition}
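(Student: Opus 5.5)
We argue by contradiction: assume $Y$ is $f$-special with $f\in\tilde\Omega$ and $Y$ satisfies \eqref{eq-pitch}. Two cases arise from the definition of specialness. In both, the aim is to find a subvariety $Z$ with $Y\subseteq Z\subsetneq\Pb^k$ that is invariant under an iterate $f^n$ and contains $f^{n_0}(Y)$ for some $n_0$. The transverse intersection then forces $Z$ to contain the unstable manifold $W^u_{p(f)}$, which is Zariski dense by Proposition~\ref{prop-zariski-dense-unstable}. This gives the contradiction.

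\textbf{Case 1: $Y$ is preperiodic.} Then $Z:=\bigcup_{j\geq n_0} f^j(Y)$ is a finite union of $u$-dimensional subvarieties and is forward invariant. Choose $n$ with $f^n(Y)$ meeting $W^s_{p(f),loc}$ transversely at a point $y$. Take a small $u$-dimensional disc $\Delta\subset f^n(Y)$ through $y$, transverse to $W^s_{p(f),loc}$; here $\dim W^s_{p(f),loc}=s+1=k-u$, so the dimensions are complementary. By the inclination lemma, $f^j(\Delta)$ contains discs converging in $C^1$ to $W^u_{p(f),loc}$. These discs lie in the closed set $Z$, so $W^u_{p(f),loc}\subset Z$. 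Since $Z$ is forward invariant, the global unstable manifold $W^u_{p(f)}=\bigcup_j f^j(W^u_{p(f),loc})$ also lies in $Z$. Assumption~\ref{homoclinic} gives exactly the hypotheses of Proposition~\ref{prop-zariski-dense-unstable}, namely the bunching inequality with $|\chi_{p(f)}|$ as the largest stable multiplier, and nonzero coordinates of $q(f)$. Hence $W^u_{p(f)}$ is Zariski dense, whereas $Z\subsetneq\Pb^k$. This is a contradiction.

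\textbf{Case 2: general special $Y$.} Here there are $Y\subseteq X$, $g$ polarized on $X$, and $n\geq1$ with $f^n(X)=X$, $g$ commuting with $f^n|_X$, and $Y$ $g$-preperiodic. If $X\neq\Pb^k$, apply the Case 1 argument to $Z$ equal to the $f$-orbit of $X$. This orbit is a finite union of proper $f$-periodic subvarieties containing $Y$, and the same inclination argument puts $W^u_{p(f)}$ inside it, a contradiction. If $X=\Pb^k$, then $g$ is an endomorphism of $\Pb^k$ commuting with $f^n$. Since $\tilde\Omega\subset\mathcal U_{\mathrm{cent}}$, Zhang's theorem \cite{zhang-iterated-centralizer} says $g$ is an iterate of $f$. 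Then $Y$ is $f$-preperiodic and we are back in Case 1. The degenerate possibility that $g$ is an automorphism does not occur, because $g$ is polarized of degree $\geq2$.

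\textbf{Main obstacle.} The delicate step is the inclination-lemma claim that a subvariety containing a transverse disc through $W^s_{p(f),loc}$ and invariant under an iterate must contain $W^u_{p(f),loc}$. The unstable discs produced are limits, so we need two things.

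First, $p(f)$ must be periodic for the relevant iterate $f^m$ with $f^m(Z)=Z$. This holds because $p(f)$ is fixed, so we may simply replace $f$ by $f^m$.

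Second, the pieces of $Z$ accumulating on $W^u_{p(f),loc}$ must actually lie in a single component. Closedness of $Z$ and finiteness of its components settle this: pass to a subsequence staying in one component $Z_i$, which is closed and of dimension $u$. The limit disc of dimension $u$ lies in $Z_i$, so $W^u_{p(f),loc}$ is contained in $Z_i$ by analytic continuation.
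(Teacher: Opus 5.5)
Your proof is correct and follows the paper's argument: the inclination lemma together with Proposition~\ref{prop-zariski-dense-unstable} (via Assumption~\ref{homoclinic}) shows that no proper subvariety invariant under an iterate of $f$ can contain the forward orbit of $Y$, and Zhang's centralizer theorem reduces the ambient case $X=\Pb^k$ to preperiodicity. The paper merges your cases by first proving that the $f^m$-orbit of $Y$ is Zariski dense for every $m\geq1$, which forces the ambient variety to be $\Pb^k$ at once; your discussion of components is unnecessary, since closedness of $Z$ already gives $W^u_{p(f),loc}\subset Z$.
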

\begin{proof}
By~\eqref{eq-pitch}, there exists $n\geq0$ such that $f^n(Y)$
intersects $W^s_{p(f),loc}$ transversely. Fix $m\geq1$. Applying the
inclination lemma to $f^m$, we obtain
\[
W^u_{p(f)}
\subset
\overline{\bigcup_{j\geq0}f^{mj}(Y)}^{\,\mathrm{Zar}}.
\]
On the other hand, Assumption~\ref{homoclinic} and
Proposition~\ref{prop-zariski-dense-unstable} give
\[
\overline{W^u_{p(f)}}^{\,\mathrm{Zar}}=\Pb^k.
\]
Hence
\begin{equation}\label{eq-dense-shifted-orbit}
\overline{\bigcup_{j\geq0}f^{mj}(Y)}^{\,\mathrm{Zar}}=\Pb^k.
\end{equation}
In other words, for every $m\geq1$, the $f^m$-orbit of $Y$ is Zariski dense.

In particular, $Y$ is not preperiodic under $f$. Suppose now, by
contradiction, that $Y$ is $f$-special. By the definition of
$f$-specialness, there exist a closed subvariety $Z\subset\Pb^k$, an
integer $m\geq1$, and a polarized endomorphism $g:Z\to Z$ such that
\[
Y\subset Z,\qquad f^m(Z)=Z,
\]
the maps $f^m$ and $g$ commute on $Z$, and $Y$ is preperiodic under
$g$. Since $Z$ contains the whole $f^m$-orbit of $Y$, the density
proved above implies
\[
Z=\Pb^k.
\]
Thus $g$ is an endomorphism of $\Pb^k$ commuting with $f^m$.
By the choice of $\tilde\Omega$ coming from \cite{zhang-iterated-centralizer}, there exists $j\geq1$ such that
\[
g=f^j.
\]
But then the preperiodicity of $Y$ under $g$ implies its
preperiodicity under $f$, a contradiction.
\end{proof}

We now temporarily fix an element $f' \in \tilde\Omega$. As the periodic points are dense in $\Lambda(f')$ there exist $N := \dim \mathscr{M}_d^k$ periodic points $r_1(f'),\ldots,r_N(f')$ in $\Lambda(f')\cap\Db_{1/2}\times\Bb^s\times \Bb_{3/2}^u$, corresponding to distinct cycles of period greater than $4$. Possibly after shrinking $\tilde\Omega,$ we can assume that the holomorphic continuation of these points stay in $\Db_{1/2}\times\Bb^s\times \Bb_{3/2}^u$ for all $f\in\tilde\Omega.$

The dominated splitting in Assumption~\ref{hyperbolic} ensures that each of
these points possesses a central (unstable) direction.
We denote by $\chi_{r_1(f)},\ldots,\chi_{r_N(f)}$ the associated multipliers.

A crucial fact is that, by~\cite[Theorem~1.2]{t-gorbovickis}, these
multipliers are independent. Consequently, after shrinking  once again
$\tilde\Omega$, we may assume that the map
\[
f \mapsto \big(\chi_{r_1(f)},\ldots,\chi_{r_N(f)}\big)
\]
is a biholomorphism from $\tilde\Omega$ onto its image.
In particular, its Jacobian does not vanish on $\tilde\Omega$.

From now on, in order to simplify notation, we replace the family
$(f)_{f\in\tilde\Omega}$ by $(f^n)_{f\in\tilde\Omega}$, where
$n$ is the least common multiple of the periods of the cycles
$r_0(f),\ldots,r_N(f)$.
This allows us to assume that each of the points
$p(f), r_0(f), \ldots, r_N(f)$ is fixed and that $\Lambda(f)$ is $f$-invariant. Notice that with this change,
the constant $\tilde\chi$ in Section~\ref{sec-ass} has to be replaced
by $\tilde\chi^n$. Once this is done, Assumption~\ref{hyperbolic}
implies that the central multiplier $\chi_{r_i(f)}$ of $r_i(f)$ satisfies
\[1 <|\chi_{r_i(f)}| < \tilde\chi,\]
whereas the stable and strongly unstable multipliers have modulus
respectively smaller than $\tilde\chi^{-2}$ and larger than $\tilde\chi^{2}$.

Moreover, since the $k$ multipliers of a periodic point are free
parameters in $\mathcal M_d^k$, there exists an integer $\ell \ge 4$
such that the multipliers of the fixed points
$r_0(f), \ldots, r_N(f)$ satisfy
conditions~\eqref{eq-bun} and~\eqref{eq-res} on a dense open subset of
$\tilde\Omega$. Observe that by Assumption~\ref{saddle} $p(f)$ also satisfies such conditions and, after possibly shrinking $\tilde\Omega$ again, we may assume
\begin{equation}\tag{$\star$}\label{star} \begin{minipage}{0.9\linewidth} For every $f \in \tilde\Omega$, the points $p(f), r_0(f), \ldots, r_N(f)$ are fixed points of $f$ satisfying conditions~\eqref{eq-bun} and~\eqref{eq-res}. Moreover, the Jacobian of the map \[ f \mapsto \big(\chi_{r_1(f)},\ldots,\chi_{r_N(f)}\big) \] does not vanish on $\tilde\Omega$. \end{minipage} \end{equation}
We emphasize that the pair of integers $(s,u)$ has a different meaning here than in Section~\ref{sec-dir}, where it refers to either $(s+1,u)$ or $(s,u+1)$.

Another important feature of the dynamics on $\Lambda(f)$ is that it is topologically transitive and satisfies the shadowing lemma. In particular, this yields the following lemma which, in the repelling case, essentially asserts that the preimages of any point in $\Lambda(f)$ are dense in $\Lambda(f)$.
For our purposes, however, we will need to obtain a preimage outside the strong unstable manifold.
In a naïve product situation, this could be not possible, but the blender property easily excludes this obstruction.
\begin{lemma}\label{le-shadowing}
Let $f\in\tilde\Omega.$ Let $x,y\in\hat\Lambda(f)$ and $\epsilon>0.$ Then, there exists $n\geq0$ and $z\in\Lambda(f)$ such that
\[z\in W^u_{x,loc}\cap B(x,\epsilon)\ \text{ and }\ f^n(z)\in W^s_{y,loc}\cap B(y,\epsilon).\]
Moreover, if $x$ corresponds to the periodic point $r_0(f)$, then we can assume that $z\notin W^{uu}_{x,loc}.$
\end{lemma}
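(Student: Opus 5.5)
The plan is to combine the local product structure of the basic set $\Lambda(f)$ with topological transitivity, and then use the blender property of Assumption~\ref{blender} to leave the strong unstable manifold in the periodic case.

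\emph{First part.} Since $\Lambda(f)$ is a hyperbolic basic set for the degree~$2$ horizontal-like map $f$ (recall that after the renormalisation of Section~\ref{sec-standard}, $\Lambda(f)$ is $f$-invariant), it carries a local product structure: for prehistories $\hat a,\hat b$ close enough, $W^u_{\hat a,loc}\cap W^s_{b,loc}$ is a single point of $\Lambda(f)$. This follows from Lemma~\ref{le-inter}, since local unstable manifolds are $cu$-graphs and local stable manifolds are $s$-graphs, by Proposition~\ref{prop-hyp-graphs}.

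The dynamics on $\Lambda(f)$ is conjugate to the full one-sided shift on two symbols given by the branches $\mathcal V_\pm$, and the natural extension $\hat\Lambda$ is conjugate to the two-sided shift. Transitivity then provides a point $w\in\Lambda(f)$ and $n\geq0$ such that a prehistory of $w$ shadows $\hat x$ up to time $0$ with $w$ $\delta$-close to $x$, and $f^n(w)$ is $\delta$-close to $y$. Concretely, concatenate the past symbolic itinerary of $\hat x$, then any finite word, then the future itinerary of $y$.

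Next set $z:=W^u_{\hat x,loc}\cap W^s_{w,loc}$. This point lies in $\Lambda(f)$ and in $B(x,\epsilon)$ for $\delta$ small, and its forward itinerary agrees with that of $w$. Hence $f^n(z)$ has the future of $y$ in its symbolic coding. It follows that $f^n(z)\in W^s_{y,loc}\cap B(y,\epsilon)$, after enlarging $n$ if necessary so that the common future is long enough. In symbolic terms this is just the existence of a two-sided sequence with prescribed past and future, so no real difficulty arises.

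\emph{Second part: avoiding $W^{uu}_{x,loc}$ when $x=r_0(f)$.} This is the main obstacle. The strong unstable manifold $W^{uu}_{r_0,loc}$ is a $u$-graph tangent to $C^{uu}_\eta$ inside the $(u+1)$-dimensional $W^u_{r_0,loc}$. The points $z$ constructed above form a Cantor set in $W^u_{r_0,loc}$, and we must show that arbitrarily close to $r_0$ there are such points off $W^{uu}$.

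We argue by contradiction. Suppose that every point of $W^u_{r_0,loc}\cap\Lambda\cap B(r_0,\epsilon')$ lies in $W^{uu}_{r_0,loc}$. Take the nearby strong unstable leaves $f^{-j}$-pulled back and translated along the central direction, namely the $u$-graphs $W^{uu}_{\hat a,loc}$ for $\hat a$ on the central curve through $r_0$ but off $\Lambda$. These are tangent to $C^{uu}_\eta$. By Assumption~\ref{blender}, each such $u$-graph in $\Db\times\Bb^s\times V^u_\pm$ meets $W^s_{\Lambda,loc}$. Pushing forward by $f$ and using local product structure, one then obtains a point of $\Lambda\cap W^u_{r_0,loc}$ on that graph, hence off $W^{uu}_{r_0,loc}$. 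Equivalently, the projection of $\Lambda\cap W^u_{r_0,loc}$ to the central coordinate cannot be a single point: the blender property forces its central projection to contain a whole neighbourhood image under iteration by the expanding central multiplier $\chi_{r_0}$.

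Iterating backwards by $f^{-1}$ along $r_0$, which contracts the central direction by $\chi_{r_0}^{-1}$, we bring such points into $B(r_0,\epsilon')$ while keeping them off $W^{uu}$, since $W^{uu}$ is invariant. Finally we take $w$ close to such a point. Note that the stable intersection $z$ then stays off $W^{uu}$, because membership in $W^{uu}_{r_0,loc}$ within $W^u_{r_0,loc}$ is determined by the past behaviour of the central coordinate, which is unchanged along stable leaves in local product structure. The remaining step is the same as in the first part.
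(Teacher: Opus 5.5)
Your proposal is correct and follows essentially the paper's route. For the first part you combine transitivity with local product structure on $\hat\Lambda(f)$, phrased through its two-sided symbolic coding where the paper invokes the shadowing lemma, which amounts to the same thing. For $x=r_0(f)$ you argue exactly as the paper does: apply the blender property to a strong unstable leaf of $W^u_{r_0,loc}$ distinct from $W^{uu}_{r_0,loc}$, get a point of $\Lambda(f)$ on it by local maximality, and pull back along the inverse branch fixing $r_0$.

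Two small corrections. First, when $s\geq1$ the set $\Lambda(f)$ itself need not be conjugate to the one-sided shift, since distinct points of $\Lambda(f)$ can share a local stable manifold (as noted in the proof of Lemma~\ref{le-init}); this is harmless because you only use the two-sided coding of $\hat\Lambda(f)$. Second, at the end, $z\notin W^{uu}_{r_0,loc}$ follows simply from continuity: call $x''\in\Lambda(f)\cap W^u_{r_0,loc}\setminus W^{uu}_{r_0,loc}$ the point you produced near $r_0$, and choose $w$ close enough to $x''$ that $z$ lies within distance $d(x'',W^{uu}_{r_0,loc})>0$ of $x''$. This replaces your remark about past central coordinates being unchanged along stable leaves, which does not hold as stated.
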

\begin{proof}
Write $x=(x_j)_{j\leq0}$ and $y=(y_j)_{j\leq0}$.
Since $\Lambda(f)$ is locally maximal, its natural extension $\hat\Lambda(f)$ has local product structure, and therefore the shadowing lemma holds on $\hat\Lambda(f)$ (see, for instance, \cite[Paper I, Theorem~2.4]{jonsson-phd}).
Hence there exists $\eta>0$ such that any $\eta$-pseudo-orbit in $\hat\Lambda(f)$ stays within distance $\epsilon$ of a true orbit in $\hat\Lambda(f)$.

On the other hand, by topological transitivity there exist $\tilde z\in\Lambda(f)\cap B(x,\eta)$ and $n\geq0$ such that
\[f^n(\tilde z)\in B(y,\eta).\]
Applying the shadowing lemma yields a point $z=(z_j)_{j\leq0}\in\hat\Lambda(f)$ such that for all $j\leq0$ and $i\geq0$,
\[d(z_j,x_j)<\epsilon
\quad\text{and}\quad
d\big(f^{n+i}(z_0),f^i(y_0)\big)<\epsilon.\]
Possibly after reducing $\epsilon$, this proves the first part of the statement.

Assume now that $x=r_0(f)$.
By Assumption~\ref{repelling}, the strong unstable manifold $W^{uu}_{x,loc}$ is a $u$-graph tangent to the cone field $C^{uu}_\eta$ inside the blender region $\Db\times\Bb^s\times\Bb_{3/2}^u$.
Let $L$ be a leaf of the strong unstable foliation inside $W^u_{x,loc}$ that is close to, but distinct from, $W^{uu}_{x,loc}$.
Then, Assumption~\ref{blender} implies that $L$ intersects $W^s_{\Lambda(f),loc}$ at some point $x'$.
By local maximality of $\Lambda(f)$, we have $x'\in\Lambda(f)$.

Applying the first part of the lemma to $x'$ (viewed as a point of $\hat\Lambda(f)$ whose backward orbit converges to $x$) and choosing $\epsilon<d(L,W^{uu}_{x,loc})$, we obtain $n'\geq0$ and $z'\in\Lambda(f)$ such that
\[z'\in W^u_{x',loc}\cap B(x',\epsilon)
\quad\text{and}\quad
f^{n'}(z')\in W^s_{y,loc}\cap B(y,\epsilon).\]
Since $W^u_{x',loc}\cap B(x',\epsilon)$ is an open subset of $W^u_{x,loc}$ for $\epsilon$ small enough, the conclusion follows by taking preimages of $z'$ inside $W^u_{x,loc}$.
\end{proof}

We finish this part with the proof of Lemma~\ref{le-cond2}.
\begin{proof}[Proof of Lemma~\ref{le-cond2}]
In this proof, set $\mathcal D:=\Db_{2R}\times\Bb^s\times\Bb_{3/2}^u$.

Observe that if $f\in\tilde\Omega$ and $\Gamma$ is a $u$-graph in
$\mathcal U_-$ tangent to $C^{uu}_\eta$, then
$f(\Gamma)\cap\mathcal D$ still has the same properties, since
$f|_{\mathcal U_-}$ is a degree~$1$ horizontal-like map of index
$(s+1,u)$ from $\mathcal U_-$ to $\mathcal D$. This may fail if
$\mathcal U_-$ is replaced by $\mathcal U_+$, since
$f(\Gamma)\cap\mathcal D$ could a priori intersect the central
boundary $(\partial\Db_{2R})\times\Bb^s\times\Bb_{3/2}^u$. However,
this cannot occur if $\Gamma$ intersects $W^s_{x,loc}$ for some
$x\in\Lambda(f)$, since
$\Lambda(f)\subset\Db_R\times\Bb^s\times\Bb_{3/2}^u$ and
$W^s_{x,loc}$ is tangent to $C^{ss}_\eta$.

Therefore, using~\ref{it:cond22} and possibly replacing
$\mathcal Y_0$ by a smaller neighborhood of $(Y_0,f_0)$, we may
replace $n_j$ by $n_j+1$ and $U_j$ by a smaller set $U'_j$ such
that
\[\hat f^{n_j+1}(U'_j)=
\hat f^{n_j+1}(U_j)\cap(\mathcal D\times\mathcal Y_0)\]
defines a family of $u$-graph in $\mathcal D.$

Iterating this procedure, we may replace each $n_j$ by
$n_0:=\max_j n_j$ and assume that all these integers coincide.
We then define $U$ as the fibered product of
$U_1,\ldots,U_m$ over $\mathcal Y_0$.

We claim that $(\hat f^{[m]})^{n_0}(U)$ intersects properly
$\mathcal W^s_{x,loc}$, where
$x(f)=(x_1(f),\ldots,x_m(f))$. Fix a parameter
$t=(Y_t,f_t)\in\mathcal Y_0$. Then each
$f_t^{n_0}(U_j(t))$ is a $u$-graph in $\mathcal D$, hence is
contained in a $cu$-graph. By Lemma~\ref{le-inter}, it
intersects the $s$-graph $W^s_{x_j(f_t),loc}$ in at most one
point. Therefore, the set
\[\left\{(y_j,t)\in\Pb^k\times\mathcal Y_0 \; ; \;
y_j\in f_t^{n_0}(U_j(t))\cap W^s_{x_j(f_t),loc}\right\}\]
projects biholomorphically onto
\[M_j:=\left\{t=(Y_t,f_t)\in\mathcal Y_0 \, ; \,
f_t^{n_0}(U_j(t))\cap W^s_{x_j(f_t),loc}\neq\varnothing\right\},\]
defined in~\ref{it:cond22}. It follows that
$(\hat f^{[m]})^{n_0}(U)\cap\mathcal W^s_{x,loc}$ is
biholomorphic to
\[M^{(m)}:=M_1\cap\cdots\cap M_m,\]
and therefore has dimension $\tilde m - m$, where
$\tilde m:=\dim(\mathcal Y_0)$.

On the other hand, $(\hat f^{[m]})^{n_0}(U)$ has dimension
$\tilde m+um$, while $\mathcal W^s_{x,loc}$ has dimension
$\tilde m+sm$. Their codimensions in
$(\Pb^k)^m\times\mathcal Y_0$ add up to
\[(s+1)m+(u+1)m=(k+1)m,\]
which coincides with the codimension
\[\tilde m+km-(\tilde m-m)\]
of $(\hat f^{[m]})^{n_0}(U)\cap\mathcal W^s_{x,loc}$.
\end{proof}

 \noindent\emph{\textbf{Standard forms.}} Since the saddle fixed points $p(f), r_0(f), \ldots, r_N(f)$ satisfy
conditions~\eqref{eq-bun} and~\eqref{eq-res}, Proposition~\ref{prop-cderiv} gives special local coordinates near each of these points.

Actually, these coordinates will be adapted to our partially hyperbolic setting.
More precisely, we denote by $\Cb^c$ a copy of $\Cb$, and by $B^c$, $B^s$, and $B^u$ the unit polydiscs of
$\Cb^c$, $\Cb^s$, and $\Cb^u$, respectively. Observe that these domains coincide with
$\Db$, $\Bb^s$, and $\Bb^u$, respectively. However, we use the notation $B^\ast,$ $\ast\in\{c,s,u\},$ 
only in the local setting, while the notation $\Bb^\ast$ is reserved for the
original dynamical space.

Moreover, we also set
\[B^{cs} := B^c \times B^s \subset \Cb^{cs} := \Cb^c \times \Cb^s\quad \text{and} \quad
B^{cu} := B^c \times B^u \subset \Cb^{cu} := \Cb^c \times \Cb^u.\]
We will always use notation that illustrates this decomposition for objects in
$B^c \times B^s \times B^u$.
For example, a point $x \in B^c \times B^s \times B^u$ will be decomposed as
\[x = (x^c, x^s, x^u),\]
and similarly for tangent vectors or constants appearing in this context.

From this, combining Proposition~\ref{prop-cderiv} and Proposition~\ref{prop-bunching}, applied to the point $p(f)$, we choose a neighborhood $B_p$
of $W^u_{p(f),loc}$ contained in $\Db \times \Bb^s \times \Bb_{3/2}^u$
(the neighborhood may depend slightly on $f$, although we do not
make this dependence explicit),
together with a holomorphic chart, depending holomorphically on $f$,
from $B_p$ to $B^c \times B^s \times B^u$,
in which $p(f)$ is in \emph{standard form}. That is,
\begin{itemize}
\item the fixed point $p(f)$ is constantly $0$;
\item $B^{cs} \times \{0\}$ is the local stable manifold of $f$, on which $f$ acts as the attracting diagonal matrix
\[(x^c, x^s) \mapsto \big(\chi_{p(f)} x^c, A_f x^s\big)
\quad \text{where} \quad
A_f := \mathrm{diag}\big(\lambda_1(f), \ldots, \lambda_s(f)\big);\]
\item $\{0\} \times B^u$ is the local unstable manifold of $f$, on which $f$ acts as an expanding diagonal matrix
\[x^u \mapsto E_f x^u,
\quad \text{where} \quad
E_f := \mathrm{diag}\big(\rho_1(f), \ldots, \rho_u(f)\big);\]
\item along the stable and unstable manifolds, the dynamical directions defined in Proposition~\ref{prop-dynaslope}
coincide with the standard coordinate points in $\Pb^{k-1}$;
\item there exists a family of $C^2$ linearization maps $(\theta_f)_{f \in\tilde\Omega}$ on $B_p$, depending continuously on $f$
in the $C^2$ topology, such that for every
$y \in W^u_{p(f),loc} \cup W^s_{p(f),loc}$, the differential $D_y \theta_f$ is diagonal and $\theta_f$ conjugates $f$ to
\[K_f \colon (x^c, x^s, x^u) \mapsto \left(\chi_{p(f)} x^c, A_f x^s, E_f x^u\right).\]
\end{itemize}
When $p(f)$ is in standard form in $B_p$, we identify $B_p$ with
$B^c \times B^s \times B^u$, where the notions of $u$-, $s$-, $cs$- and
$cu$-graphs are defined. We still denote by $f$ the germ of the
dynamics on this set. Moreover, by definition, such a standard form
comes together with the linearization $\theta_f$ and the linear map $K_f$.

If $(r(f))_{f \in \tilde{\Omega}}$ corresponds to one of the families
$(r_i(f))_{f \in \tilde{\Omega}}$, $i = 0, \ldots, N$, we define the
\emph{standard form} for $r(f)$ on a polydisc $B_r$ in the same way,
except that $B^{cs}$ and $B^u$ are replaced by $B^s$ and $B^{cu}$, respectively,
and that $B_r$ is a small neighborhood of $W^{uu}_{r(f),loc}$ in
$\Db \times \Bb^s \times \Bb_{3/2}^u$, which we identify with
$B^c \times B^s \times B^u$ via appropriate coordinates.

This is possible since we have assumed that $r(f)$ lies in
$\Db_{1/2} \times \Bb^s \times \Bb_{3/2}^u$, and since
$W^{uu}_{r(f),loc}$ is a $u$-graph tangent to $C^{uu}_\eta$ in that set.
This property is important, as it ensures that if $B_r$ is chosen
to be a sufficiently small neighborhood of $W^{uu}_{r(f),loc}$, then
any $u$-graph in $B_r$ must intersect the stable set of $\Lambda(f)$
by Assumption~\ref{blender}.

We still use the notation $\theta_f$ for the $C^2$ linearization of $r(f)$ in $B_r$.
No ambiguity will arise, since we always use these different coordinate systems
separately. The same convention applies to the linear map $K_f$, knowing that in $B_r$
\[K_f(x^c, x^s, x^u) = \big(\chi_{r(f)} x^c, A_f x^s, E_f x^u\big),\]
where $A_f$ and $E_f$ are diagonal, attracting and expanding respectively,
and $|\chi_{r(f)}| > 1$.

Observe that, by definition of these standard forms, the invariant manifolds
$W^u_{p(f),loc}$ and $W^{uu}_{r(f),loc}$ correspond to the same objects,
whether they are viewed in $\Db \times \Bb^s \times \Bb_{3/2}^u$ or in $B_p$ and $B_r$,
respectively. This is not a priori the case for the other invariant manifolds,
in particular for $W^s_{p(f),loc}$ (resp. $W^u_{r(f),loc}$), which may be much
smaller in $B_p$ (resp. $B_r$).

On the other hand, in order to control the graph transform near $r(f)$ in
Lemma~\ref{le-control2}, we consider two different ``local center-stable manifolds'' in $B_r$.
More precisely, we set
\[\mathsf W_{r(f)}^{cs} := B^{cs} \times \{0\} \subset B_r,\]
which is a complex manifold but \emph{not invariant} under the dynamics.
By contrast, the linearization $\theta_f$ provides a $C^2$ local center-stable
manifold $W^{cs}_{r(f),loc} \subset B_r$ that is invariant but \emph{not holomorphic}. Notice that, possibly after shrinking $B_r$ in these directions, we may assume
that $W^{cs}_{r(f),loc}$ intersects every $u$-graph in $B_r$.

An important observation here is that, thanks to the dynamical directions, these two
center-stable manifolds are tangent along $W^s_{r(f),loc}$.
If this were not the case - that is, if the tangent space to
$\mathsf W_{r(f)}^{cs}$ along $W^s_{r(f),loc}$ had a component in the unstable direction -
then the estimates obtained in Lemma~\ref{le-control2} could break down.

\bigskip

We conclude this part with an elementary result that will be used
only for fixed $f \in \tilde{\Omega}$ in Section \ref{sec-subvar}. Observe that it may be viewed
as a counterpart of Assumption~\ref{grille}, but involving the central
multiplier of $p(f)$. We do not include this statement among the
assumptions in order to emphasize the contrast between this simple
consequence of cone contraction and Assumption~\ref{grille}, which requires a more delicate control.
\begin{lemma}\label{le-centrer-unstable}
For each $y\in W^{u}_{p(f),loc}$ let denote by $\mathcal E^c_y$ the dynamical direction associated to the central eigenvalue $\chi_{p(f)}.$ Then, the center-unstable bundle along $W^u_{p(f),loc}$ defined by
\[E^{cu}:=TW^u_{p(f),loc}\oplus\mathcal E^c_y\]
is disjoint from the stable cone field $C^{ss}_\eta.$
\end{lemma}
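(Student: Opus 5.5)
We must show that $E^{cu}_y=T_yW^u_{p(f),loc}\oplus\mathcal E^c_y$ contains no nonzero vector of $C^{ss}_\eta$, for every $y\in W^u_{p(f),loc}$. The plan is to combine three facts: invariance of $E^{cu}$ under $f^{-1}$, backward contraction of $C^{ss}_\eta$-vectors, and the fact that backward iterates of $y$ converge to $p(f)$.

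\textbf{Invariance.} By construction of the dynamical directions (Proposition~\ref{prop-dynaslope}, applied to $f^{-1}$ along $W^u_{p(f),loc}$ via Proposition~\ref{prop-cderiv}), the field $y\mapsto\mathcal E^c_y$ is invariant under $Df$. Since $TW^u_{p(f),loc}$ is invariant as well, the bundle $E^{cu}$ satisfies
\[
D_yf^{-1}\bigl(E^{cu}_y\bigr)=E^{cu}_{f^{-1}(y)},
\]
where $f^{-1}$ denotes the inverse branch fixing $p(f)$; this branch exists because $f|_{\mathcal U_-}$ is injective. Moreover $f^{-n}(y)\to p(f)$.

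\textbf{Argue by contradiction.} Suppose $0\neq v\in E^{cu}_y\cap C^{ss}_\eta(y)$. The idea is that the cone $C^{ss}_\eta$ is expanded by $f$ (Assumption~\ref{verti}), and this expansion should be usable in the dual form: a vector $v$ whose preimage $D_yf^{-1}(v)$ lies in $C^{ss}_\eta$ is strongly contracted by $f^{-1}$. It is cleaner, however, to push forward rather than pull back. Write $v=D_{y'}f^n(w)$ with $y'=f^{-n}(y)$ and $w\in E^{cu}_{y'}$. Cone expansion gives $D_yf^{-1}(C^{ss}_\eta)\subset C^{ss}_\eta$ only if cones are invariant under the inverse, which is the direction we want. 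So I would first check from Assumption~\ref{verti} that $f$ expanding $C^{ss}_\eta$ means $D f^{-1}$ maps $C^{ss}_\eta$ into itself, strictly contracting lengths by roughly $\tilde\chi^{-2}$.

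\textbf{Limit at $p(f)$.} Granting that, $v_n:=D_yf^{-n}(v)/\|D_yf^{-n}(v)\|$ lies in $C^{ss}_\eta(f^{-n}(y))\cap E^{cu}_{f^{-n}(y)}$. Passing to a limit gives a unit vector in $C^{ss}_\eta(p)\cap E^{cu}_p$. Here $E^{cu}_p$ is the span of the center eigendirection and the unstable eigendirections, by $\mathcal E^c_p=e^c$. The norms, however, tell a different story. On $E^{cu}$ near $p$, the map $Df^{-1}$ contracts by at most a factor $|\chi_{p(f)}|^{-1}$ expansion, roughly in $[\tilde\chi^{-4},\tilde\chi]$, so at worst by $\tilde\chi^{-4}$. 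By contrast, vectors in $C^{ss}_\eta$ are contracted by $f^{-1}$ at rate at least $|\lambda_s|^{-1}$-inverse, i.e. more strongly than $\tilde\chi^{-2}$ from the stable side. This comparison needs care, so I will simplify with a direct argument at $p$.

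\textbf{Direct argument at $p$ and continuity.} At $p$ itself, $C^{ss}_\eta(p)$ is a thin cone around the strong stable eigenspace $E^{ss}=\mathrm{span}(e^s_1,\dots,e^s_s)$. This holds because $C^{ss}_\eta$ is $Df^{-1}$-invariant, so it contains the attracting invariant subspace for backward-normalized dynamics, namely $E^{ss}$, and its width is $1/\eta$ with $\eta>10$. Meanwhile $E^{cu}_p=E^c\oplus E^u$ is complementary to $E^{ss}$, so the two are disjoint at $p$. Along the orbit, $D f^{-n}$ maps $E^{cu}$ into itself and maps $C^{ss}_\eta$ into cones shrinking toward $E^{ss}$ (dominated splitting: stable rates below $\tilde\chi^{-2}$ versus center and unstable rates above $\tilde\chi^{-1}$). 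The normalized $v_n$ therefore converge to $E^{ss}_p$, while also lying in $E^{cu}_{f^{-n}y}\to E^{cu}_p$ by continuity of dynamical directions. This gives a vector in $E^{ss}_p\cap E^{cu}_p=\{0\}$, a contradiction.

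\textbf{Main obstacle.} The delicate step is justifying that the backward images of $C^{ss}_\eta$ accumulate on $E^{ss}_p$. I would derive this from the cone expansion in Assumption~\ref{verti} together with the gap $|\lambda_s|<\tilde\chi^{-2}<\tilde\chi^{-1}<|\chi_{p(f)}|$ from Assumption~\ref{saddle}, using the standard domination argument.
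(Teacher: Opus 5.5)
Your skeleton is exactly the paper's. $E^{cu}$ is $Df^{-1}$-invariant along the backward orbit $f^{-n}(y)\to p(f)$, so the normalized vectors $v_n=D_yf^{-n}(v)/\|D_yf^{-n}(v)\|$ stay in $E^{cu}$. Since $Df^{-1}$ maps the closed cone $C^{ss}_\eta$ into itself, any limit $v_\infty$ is a unit vector of $E^{cu}_{p(f)}\cap C^{ss}_\eta(p(f))$.

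The paper stops at this point, your ``Limit at $p(f)$'' step. It invokes the fixed-point fact $E^{cu}_{p(f)}\cap C^{ss}_\eta=\varnothing$ and treats it as evident. In the construction of Section~\ref{sec-existence} it is indeed clear: $E^{cu}_p$ is (close to) the plane $\{x=0\}$, while vectors of $C^{ss}_\eta$ are dominated by their $x$-components. Your remarks on norms are reversed, since $Df^{-1}$ expands vectors of $C^{ss}_\eta$ rather than contracting them. They are also unnecessary, because only the projective invariance of the cone is used.

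The real issue is how you justify that fixed-point fact; neither of your two arguments works as written.

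\textbf{First argument.} ``$C^{ss}_\eta(p)$ contains $E^{ss}_p$ and $E^{cu}_p$ is complementary to $E^{ss}_p$, hence they are disjoint'' is a non sequitur. A subspace complementary to $E^{ss}_p$ can make a small angle with it and still meet a cone of width $1/\eta$ that contains $E^{ss}_p$. Moreover, the cone is defined in the fixed product coordinates around the $\Bb^s$-factor, not around $E^{ss}_p$.

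\textbf{Second argument.} You claim that the cones $D_yf^{-n}(C^{ss}_\eta)$ collapse onto $E^{ss}_p$, justified by the domination gap $|\lambda_s|<\tilde\chi^{-2}<\tilde\chi^{-1}<|\chi_{p(f)}|$. This is circular. Domination only pushes toward $E^{ss}_p$ the vectors \emph{not} lying in $E^{cu}$. The vector $v$ you are following lies in $E^{cu}$ and stays there forever. Applying domination to the whole cone therefore presupposes $C^{ss}_\eta\cap E^{cu}=\varnothing$, which is the statement you are proving.

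\textbf{What closes it.} The collapse must come from the \emph{strict}, uniform invariance of the cone field in Assumption~\ref{verti}. That forces the nested images of the cone to shrink onto a single $s$-plane. Only then does domination identify this plane as $E^{ss}_p$, for instance by following an $s$-plane inside the cone that is transverse to $E^{cu}_y$. With that input, or simply with the explicit form of the cones at $p$, your argument is complete and coincides with the paper's.
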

\begin{proof}
Let $y \in W^{u}_{p(f),loc}$ and let $v \in E^{cu}_y \setminus \{0\}$.
By invariance of the dynamical directions,
\[D_y f^{-n}(v) \in E^{cu}_{f^{-n}(y)} \quad \text{for all } n \ge 0.\]
Since $f^{-n}(y)$ converges to $p(f),$ any limit value $v_\infty$ of
\[v_n := \frac{D_y f^{-n}(v)}{\|D_y f^{-n}(v)\|}\]
belongs to $E^{cu}_{p(f)}$.

By contradiction, if $v \in C^{ss}_\eta$, then, using that this cone field is contracted by $f^{-1}$, we should have $v_\infty \in C^{ss}_\eta$.
This is impossible since,
\[E^{cu}_{p(f)}\cap C^{ss}_\eta=\varnothing,\]
completing the proof.
\end{proof}

%
%
%
%
%
%

\subsection{Control Lemmas}\label{sec-control}
This section is devoted to controlling graph transforms near the
fixed points $p(f)$ and $r_i(f)$, $i=0,\ldots,N$. Observe that
Lemma~\ref{le-control1} closely follows the proof of
\cite[Lemma~3.10]{gtv}, although the $\Cb$-linearity used there
does not follow as directly in our setting. Instead, we use
Proposition~\ref{prop-cderiv}, which relies on the conditions
\eqref{eq-bun} and \eqref{eq-res}. On the other hand,
Lemma~\ref{le-control2} is similar to
\cite[Lemma~3.11]{gtv}, but its proof in the saddle case is more
delicate.

We will apply these
results to parameter spaces $M$ which are submanifolds of $\mathcal X_D^u \times \tilde\Omega$. To simplify notation,
if $f=(Y,\tilde f)$ is a point of $M$, we view $f$ as the map $\tilde f$
from the dynamical point of view. In particular, we write
$p(f)=p(\tilde f)$ and $r_i(f)=r_i(\tilde f)$. Note that even if $\tilde f$ is constant, the sets $(\Gamma(f))_{f\in M}$ may vary.
\begin{lemma}\label{le-control1}
Let $(p(f))_{f\in M}$ be in standard form in $B_p$ as in Section~\ref{sec-standard}.
Let $\Delta=(\Delta_f)_{f\in M}$ be a family of $cs$-graphs given by injective holomorphic maps $h_f\colon B^{cs}\to B_p$ preserving the central direction, i.e., such that $D_0h_f(1,0)$ belongs to $E^{cu}$ as defined in Lemma \ref{le-centrer-unstable}.
Then, there exists a holomorphic map $\alpha\colon M\to\Cb^*$ with the following property.

Let $\Gamma=(\Gamma(f))_{f\in M}$ be a family of $u$-graphs in $B_p$ intersecting $W^s_{p(f),loc}$ at $(b^c(f),b^s(f))$, with $b^c(f)\neq0$ on $M.$ Then the graph transform $f^n(\Gamma(f))$ satisfies
\[f^n(\Gamma(f))\cap\Delta_f=\{h_f(a_n(f))\}\text{ where } a_n(f)=\chi_{p(f)}^n\Big[b^c(f)\alpha(f)(1,0)+R_n(f)\Big],\]
 where $R_n\colon M\to\Cb^{cs}$ are holomorphic maps converging to $0,$ locally uniformly with respect to $f,$ $\Gamma$ and $\Delta$.
 \end{lemma}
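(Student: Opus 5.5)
We work throughout in the linearizing chart $\theta_f$ of $p(f)$ given by the standard form, where $f$ becomes $K_f(x^c,x^s,x^u)=(\chi_{p(f)}x^c,A_fx^s,E_fx^u)$. The strategy is to compute the intersection $f^n(\Gamma(f))\cap\Delta_f$ first in linear coordinates, and then transport it back to the holomorphic chart. The holomorphicity of the result will come from the $\Cb$-linearity of $D\theta_f$ along $W^s_{p(f),loc}$ (Proposition~\ref{prop-cderiv}).

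\emph{Step 1: the graph transform in linear coordinates.} Since $\Gamma(f)$ is a $u$-graph meeting $W^s_{p(f),loc}$ at $b(f)=(b^c(f),b^s(f),0)$, its image $\theta_f(\Gamma(f))$ is a $C^2$ graph over $B^u$ passing through $\theta_f(b(f))$. On the stable manifold, $\theta_f$ is tangent to the identity at $0$ and has a diagonal $\Cb$-linear differential along $W^s$, holomorphic in the base point. Hence the central coordinate of $\theta_f(b(f))$ is a holomorphic function of $b(f)$ and $f$, of the form $\beta(f,b)=b^c\cdot(1+O(\|b\|))$.

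Applying $K_f^n$ multiplies the central coordinate by $\chi_{p(f)}^n$ and the stable coordinates by $A_f^n$. By the bunching in Assumption~\ref{saddle}, $|\lambda_i|<\tilde\chi^{-2}<|\chi_{p(f)}|$, so $\|A_f^n\|/|\chi_{p(f)}|^n\to0$. Moreover, $K_f^n(\theta_f\Gamma)\cap B_p$ converges in $C^1$ to the unstable axis shifted by the point $K_f^n(\theta_f(b))$. So, after dividing by $\chi_{p(f)}^n$, the rescaled point converges to $(\beta,0,\ast)$.

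\emph{Step 2: intersecting with $\Delta_f$ and defining $\alpha$.} The graph $\theta_f(\Delta_f)$ is a $C^2$ $cs$-graph whose tangent at $\theta_f(h_f(0))$ has central direction $D\theta_f(D_0h_f(1,0))$. By hypothesis, $D_0h_f(1,0)$ lies in $E^{cu}$, and $\theta_f$ sends the dynamical central direction $\mathcal E^c$ to the coordinate axis with a $\Cb$-linear differential along $W^u$.

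Solving $\theta_f h_f(a)=$ (a point of $K_f^n\theta_f\Gamma$) by the implicit function theorem gives $a$ of size $O(|\chi|^n)$. Expanding $\theta_f\circ h_f$ to first order at $0$ yields $a_n=\chi^n[\beta\cdot c(f)^{-1}(1,0)+o(1)]$, where $c(f)$ is the complex scalar by which $D(\theta_f h_f)$ acts on $(1,0)$ modulo $E^u$. This scalar is complex rather than merely real-linear because of Proposition~\ref{prop-bunching}. We then set $\alpha(f):=c(f)^{-1}$ times the first-order factor of $\beta$ in $b^c$; both are holomorphic and nonvanishing in $f$. Any residual dependence of $\beta/b^c$ on $b$ is an $O(\|b\|)$-type correction. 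To remove it, we either apply a preliminary iteration (replacing $b$ by its image, which tends to $0$) or absorb it into $R_n$, using $b^c\neq0$ on $M$.

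\emph{Step 3: holomorphy and uniform convergence of $R_n$.} The actual intersection point $f^n(\Gamma(f))\cap\Delta_f$ is unique by Lemma~\ref{le-inter}. It depends holomorphically on $f$, since $\Gamma$, $\Delta$ and $f^n$ are holomorphic. So $a_n$ is holomorphic, and so is $R_n:=\chi^{-n}a_n-b^c\alpha(1,0)$. Its convergence to $0$ pointwise follows from Step 2, and the estimates are uniform on compact sets because $\theta_f$ depends continuously on $f$ in $C^2$. Since the $R_n$ are holomorphic, locally uniform boundedness upgrades to locally uniform convergence.

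\emph{Main obstacle.} The delicate point is Step 2: $\theta_f$ is only $C^2$ off the invariant manifolds. We must show that the non-holomorphic error terms of $\theta_f$ near $W^u$ contribute only $o(|\chi|^n)$ in the central coordinate. I would handle this with Taylor expansion at points of $W^u_{p(f),loc}$, using that $D\theta_f$ is exactly complex diagonal there and that the second-order remainder is $O(|\chi|^{2n}+\|A^n\|)$, which is $o(|\chi|^n)$.
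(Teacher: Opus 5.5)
Your proposal follows the paper's proof. In standard form $\theta_f$ restricts to the identity on $W^s_{p(f),loc}$, so the paper's projection $G_f=\pi\circ\theta_f^{-1}\circ\tilde\pi\circ\theta_f$ along the linearized unstable foliation is just $\pi_{cs}\circ\theta_f$. Inverting $G_f\circ h_f$ near $0$ is exactly your implicit-function step, with $\alpha=c(f)^{-1}$. Its differential is $\Cb$-linear and sends $(1,0)$ to $c(f)(1,0)$, because $D_0h_f(1,0)\in E^{cu}$ and $D\theta_f$ is complex diagonal along $W^u_{p(f),loc}$. Your direct proof that $\alpha$ is holomorphic via Proposition~\ref{prop-cderiv} is a valid substitute for the paper's argument, which obtains $\alpha$ as the uniform limit of $a_n(f)/(\chi_{p(f)}^nb^c(f))$.

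One step needs correcting: the treatment of the residual $b^c\,O(\|b\|)$ term. It could not be absorbed into $R_n$, since it does not depend on $n$ whereas $R_n$ must tend to $0$; moreover $\alpha$ is not allowed to depend on $\Gamma$. In fact there is no such term.
\begin{itemize}
\item Along $W^s_{p(f),loc}$ the differential of $\theta_f$ is diagonal and $\Cb$-linear. So the central coordinate of $\theta_f|_{W^s_{p(f),loc}}$ is a holomorphic function $\phi(x^c)$ of $x^c$ alone.
\item The conjugacy $\theta_f\circ f=K_f\circ\theta_f$, with $f$ already linear on $W^s_{p(f),loc}$, gives $\phi(\chi_{p(f)}x^c)=\chi_{p(f)}\phi(x^c)$.
\item This forces $\phi$ to be linear, hence the identity under the normalization $D_{p(f)}\theta_f=\id$.
\end{itemize}
Thus $\beta=b^c$ exactly. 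The paper uses this implicitly here and states it explicitly in the proof of Lemma~\ref{le-control2}. Your ``preliminary iteration'' remedy, made precise, reduces to the same observation: $\beta/b^c$ is invariant under $b\mapsto(\chi_{p(f)}b^c,A_fb^s)$ and tends to $\phi'(0)$ along this orbit, so it is constant.
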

 \begin{rk}\label{rk-control1}
 By taking $h_f\colon (x^c,x^s)\mapsto(x^c,x^s,z)$ for a fixed $z\in B^u,$ we obtain that the graph transform $f^n(\Gamma(f))$ is given by
\[f^n(\Gamma(f))=\left\{\left(\chi_{p(f)}^{n}b^c(f)\Big[\alpha(f)(1,0)+R_{n}(f,z)\Big],z\right)\,;\,z\in B^u\right\}.\]
 \end{rk}

\begin{proof}
Let $\pi\colon \Cb^{cs}\times \Cb^u \to \Cb^{cs}$ denote the natural projection, and define $\tilde\pi$ by 
\[\tilde\pi(y,z) = (y,0), \quad (y,z)\in \Cb^{cs}\times \Cb^u.\]
Our goal is to compare,  near the unstable manifold, the map $\tilde\pi$ with the projection
\[G_f = \pi \circ \theta_f^{-1} \circ \tilde\pi \circ \theta_f\]
along the unstable foliation of the $C^2$-linearization. 

A key fact is that, by Proposition \ref{prop-cderiv} the map $G_f$ is $\Cb$-differentiable on $W_{p(f),loc}^u$. In particular, the map $G_f\circ h_f,$ which is a local diffeomorphism at $0$, is $\Cb$-differentiable at this point. We denote its inverse by $H_{f}:=(G_f\circ h_f)^{-1}$.

For $n\geq 0$, the graph transform $f^n(\Gamma(f))$ is a $u$-graph 
which intersects $\Delta_f$ in a single point. We consider $(x_n(f))_{f\in M}$ the family of points on $\Gamma(f)$ and $(a_n(f))_{f\in M}$ in $B^{cs}$ such that $f^n(x_n(f))=h_f(a_n(f))$ corresponds to this intersection point. Then
\[G_f(f^n(x_n(f))) 
= \pi\circ\theta_f^{-1}\circ\tilde\pi\circ\theta_f(f^n(x_n(f))) 
= \pi\circ\theta_f^{-1}\circ\tilde\pi\circ K_f^n\circ\theta_f(x_n(f)).\]
Recall that if $(x^c,x^s,x^u)\in\Cb^{cs}\times\Cb^u$ then $K_f(x^c,x^s,x^u)=(\chi_{p(f)}x^c,A_fx^s,E_fx^u),$ where $A_f$ is contracting, $E_f$ is expanding, and 
\[\|A_f\|,\ \|E_f^{-1}\|<\tilde\chi^{-2}<\tilde\chi^{-1}<|\chi_{p(f)}|.\] 
Writing $\theta_f(x_n(f))=(x^c_n(f),x^s_n(f),x^u_n(f))$, we obtain
\[K_f^n\circ\theta_f(x_n(f))
= (\chi^n_{p(f)}x^c_n(f),A_f^n x^s_n(f),E_f^n x^u_n(f)).\]
In particular, $x_n^u(f)=o(\tilde\chi^{-2n})$. Since $\Gamma(f)$ intersects $W^s_{p(f),loc}$ at $(b^c(f),b^s(f))$, it follows that
\[x_n^c(f)=b^c(f)+o(\tilde\chi^{-2n}), 
\qquad 
x^s_n(f)=b^s(f)+o(\tilde\chi^{-2n}).\]
Combining this with $\|A_f^n\|=o(\tilde\chi^{-2n})$ yields
\begin{equation}\label{eq-erreur1}
G_f(f^n(x_n(f)))=\chi_{p(f)}^n b^c(f)(1,0)+o(\tilde\chi^{-2n}).
\end{equation}
On the other hand, since $f^n(x_n(f))=h_f(a_n(f))$, we have
\begin{align*}
a_{n}(f) 
&= H_{f}(G_f(f^n(x_n(f)))) = H_{f}\!\left(\chi_{p(f)}^n b^c(f)(1,0)+o(\tilde\chi^{-2n})\right) \\
&= 0+\chi_{p(f)}^n b^c(f)\, D_0H_{f}(1,0)+o(\chi^n_{p(f)}).
\end{align*}
By assumption, $D_0H_{f}(1,0)$ is equal to $\alpha(f)(1,0)$ for some normalization constant $\alpha(f)\in\Cb^*.$ This function $f\mapsto\alpha(f)$ is holomorphic, since it is the uniform limit of 
\[f\mapsto \frac{a_{n}(f)}{\chi_{p(f)}^n b^c(f)}.\]
It follows that the remainder
\[R_n(f):=\chi_{p(f)}^{-n}a_n(f)-b^c(f)\alpha(f)(1,0)\]
is holomorphic as well. Moreover, $R_n(f)=o(1)$ as $n\to\infty$. In fact, this remainder splits into two contributions. The first one comes from the error term in~\eqref{eq-erreur1}, which, after factoring out $\chi_{p(f)}^n$, is $o(\tilde\chi^{-n})$ uniformly in $f$ and $\Gamma$. The second one arises from the Taylor expansion of $H_f$, which is uniform in $f$ and $\Delta$, since $\theta_f$ is of class $C^2$ and depends continuously on $f$ in the $C^2$ topology.
\end{proof}


We now turn to the dynamics near a point
\[(r(f))_{f\in M}:=(r_i(f))_{f\in M}\]
for some $i=0,\ldots,N$. Recall that $W^{cs}_{r(f),loc}$ denotes the local center-stable
manifold of $r(f)$, and that $\mathsf W_{r(f)}^{cs}$ is simply the
plane $\{z^u = 0\}$ in standard coordinates $B_r$ near $r(f)$. We emphasize
again that $\mathsf W_{r(f)}^{cs}$ is not invariant in general.


\begin{lemma}\label{le-control2}
Let $(\Gamma_l)_{l\geq0}$ be a sequence of families parametrized by $M$ of $u$-graph in $B_r$ intersecting $\mathsf W_{r(f)}^{cs}$ at $(b^c_l(f),b^s_l(f)).$ Assume that this sequence converges to $\Gamma_\infty:=(\Gamma_\infty(f))_{f\in M}$ satisfying
\begin{equation}\label{eq-gammainfty}
\forall f\in M,\ \ \Gamma_\infty(f)\cap W^s_{r(f),loc}\neq\varnothing.
\end{equation}
Then, for each sequence $(n_l)_{l\geq0}$ and for all $f\in M$ such that
\begin{equation}\label{eq-hypcontrol2}
\tilde\chi^{-2n_l}=o(b_l^c(f))\ \text{ and }\ |\chi_{r(f)}^{n_l}b^c_l(f)\beta(f)|<1/4,
\end{equation}
the graph transform $f^{n_l}(\Gamma_l(f))$ is a subset of the form
\begin{equation}\label{eq-controle2}
\left\{\left(\chi_{r(f)}^{n_l}b^c_l(f)\Big[\beta(f)(1,0)+S_{n_l}(f,z)\Big],z\right)\,;\,z\in B^u\right\},
\end{equation}
where the error term $S_{n_l}$ is holomorphic and converges to $0$ locally uniform with respect to $f \in M$ and $z\in B^u$.
\end{lemma}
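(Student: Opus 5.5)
The plan is to mimic the proof of Lemma~\ref{le-control1}, working in the $C^2$-linearizing chart $\theta_f$ of $r(f)$ on $B_r$. In these coordinates $f$ becomes $K_f(x^c,x^s,x^u)=(\chi_{r(f)}x^c,A_fx^s,E_fx^u)$, with $|\chi_{r(f)}|>1$ weakly expanding, $A_f$ strongly contracting and $E_f$ strongly expanding ($\|A_f\|,\|E_f^{-1}\|<\tilde\chi^{-2}$). The point is to write $f^{n_l}(\Gamma_l(f))$ as a graph over $B^u$ and to compute its $cs$-coordinates.

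\textbf{Step 1: locating the graph in the linearizing chart.} For fixed $z\in B^u$, let $x_{l,n}(z)\in\Gamma_l(f)$ be the point whose image under $f^{n}$ has unstable coordinate $z$; such a point exists and the orbit stays in $B_r$ as long as the central coordinate remains of size $<1/2$, which is what the second part of \eqref{eq-hypcontrol2} guarantees. Since $E_f$ is strongly expanding, $\theta_f(x_{l,n}(z))^u=O(\tilde\chi^{-2n})$. So $x_{l,n}(z)$ lies $O(\tilde\chi^{-2n})$-close to the point where $\Gamma_l(f)$ meets the \emph{invariant} manifold $W^{cs}_{r(f),loc}$, and not the holomorphic plane $\mathsf W^{cs}_{r(f)}$.

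\textbf{Step 2: comparing the two center-stable manifolds.} This is the main obstacle. The intersection of $\Gamma_l(f)$ with $W^{cs}_{r(f),loc}$ must be compared with the given point $(b^c_l,b^s_l)\in\mathsf W^{cs}_{r(f)}$. I will use that the two manifolds are tangent along $W^s_{r(f),loc}$, which is a consequence of the dynamical directions (Proposition~\ref{prop-dynaslope}). By \eqref{eq-gammainfty}, $\Gamma_l(f)$ passes near $W^s_{r(f),loc}$, at distance $O(|b^c_l|)$ once $l$ is large. Tangency then gives a discrepancy between the two intersection points of order $o(|b^c_l|)$ in the central coordinate; the $C^2$ bound makes it in fact $O(|b^c_l|^2)$ plus a term vanishing as $\Gamma_l\to\Gamma_\infty$. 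Through $\theta_f$ I obtain
\[
\theta_f(x_{l,n})^c=\gamma(f)\,b^c_l\,(1+o(1))+O(\tilde\chi^{-2n}),
\]
where $\gamma(f)\neq 0$ is the $\Cb$-linear factor of $D\theta_f$ along $W^s_{r(f),loc}$ in the central direction. Proposition~\ref{prop-cderiv} guarantees that $\gamma$ depends holomorphically on $f$. The first part of \eqref{eq-hypcontrol2}, $\tilde\chi^{-2n_l}=o(b^c_l)$, absorbs the error term. The stable coordinate is then multiplied by $A_f^{n_l}$ and becomes negligible.

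\textbf{Step 3: returning to standard coordinates.} After iteration, the central coordinate is $\chi_{r(f)}^{n_l}\gamma b^c_l(1+o(1))$, of size less than $1/4$. To come back, I apply $\theta_f^{-1}$ near $W^{uu}_{r(f),loc}=\{0\}\times B^u$, along which $D\theta_f^{-1}$ is $\Cb$-linear, diagonal and holomorphic in $(z,f)$. Its central factor at $(0,0,z)$ is some $\delta(f,z)$, and Taylor expansion to first order gives the $cs$-part
\[
\chi^{n_l}_{r(f)}b^c_l\bigl[\gamma\,\delta(f,z)(1,0)+o(1)\bigr].
\]
I expect the $z$-dependence of $\delta$ to disappear: by invariance of the dynamical directions along $W^{uu}$ and the limit argument used for the stable directions in Proposition~\ref{prop-bunching}, the central factor should be constant. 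If it is not constant, I would absorb its variation into $S_{n_l}$, using that $\theta_f$ is normalized at $r(f)$. I then set $\beta:=\gamma\delta$. Holomorphy of $S_{n_l}$ follows as in Lemma~\ref{le-control1}: the graph transform is holomorphic, and $\beta$ is holomorphic as a locally uniform limit. Uniformity follows from the $C^2$-continuous dependence of $\theta_f$ on $f$ and from the convergence $\Gamma_l\to\Gamma_\infty$.
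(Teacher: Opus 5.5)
Your Steps 1 and 2 match the paper's argument. As in the paper, you use the intersection of $\Gamma_l(f)$ with the invariant manifold $W^{cs}_{r(f),loc}$, the $\Cb$-linearity of $D\theta_f$ along $W^s_{r(f),loc}$ and the fact that it preserves the central dynamical direction there (so no strong unstable component appears), and then compare with $b^c_l(f)$ through the graph parametrization. One small point: the central factor of $D\theta_f$ depends on the base point of $W^s_{r(f),loc}$, so it must be evaluated at $\Gamma_\infty(f)\cap W^s_{r(f),loc}$. Your ``term vanishing as $\Gamma_l\to\Gamma_\infty$'' must also be $o(|b^c_l|)$, not merely $o(1)$.

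The genuine gap is in Step 3. After $n_l$ iterations, the central linearized coordinate $c_l:=\chi_{r(f)}^{n_l}\check x^c_l$ is \emph{not} small. Hypothesis \eqref{eq-hypcontrol2} only bounds it by roughly $1/4$, and in the applications (Lemmas~\ref{le-doublecontrole}, \ref{le-normal}, \ref{le-triplecontrole}) it converges to a nonzero limit. A first-order Taylor expansion of $\theta_f^{-1}$ at $(0,0,z)\in W^{uu}_{r(f),loc}$ in the central variable therefore leaves a remainder $O(|c_l|^2)$, which is of the same order as the main term. The error inside your bracket is then $O(|c_l|)$, not $o(1)$. Concretely, the limit graph would be the image under $\theta_f^{-1}$ of $\{(c_\infty,0,\check u)\}$, which is in general a curved $u$-dimensional submanifold of $W^u_{r(f),loc}$ rather than a vertical plane $\{z^c=\mathrm{cst},\,z^s=0\}$. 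For the same reason your fallback fails: a $z$-dependence of $\delta(f,z)$ does not tend to $0$, so it cannot be absorbed into $S_{n_l}$.

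What you need is that $\theta_f$ is the \emph{identity} on the whole $(u+1)$-dimensional center-unstable manifold $W^u_{r(f),loc}=\{x^s=0\}$, not merely $\Cb$-linear along $W^{uu}_{r(f),loc}$. This holds because, in standard form, $f$ is already diagonal on $W^u_{r(f),loc}$. The stable-direction part of the proof of Proposition~\ref{prop-bunching}, applied to $f^{-1}$, then gives $D_y\theta_f(v)=v$ for all $y\in W^u_{r(f),loc}$ and all $v$ tangent to it.

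With this, $\theta_f^{-1}(c,\sigma,z)=(c,0,z)+O(\|\sigma\|)$ uniformly for $|c|<1/2$. Since $\sigma=A_f^{n_l}\check x^s_l=O(\tilde\chi^{-2n_l})$, you get $y^c_l=\chi_{r(f)}^{n_l}\check x^c_l+O(\tilde\chi^{-2n_l})$ with no smallness needed on the central coordinate. The first condition in \eqref{eq-hypcontrol2} then absorbs the error. It also follows that $\delta\equiv1$, so your $\gamma$ is already the constant $\beta$ of the statement. This is how the paper proceeds: it Taylor-expands in the central variable only \emph{before} iterating, where $b^c_l\to0$, and only in the stable variable \emph{after} iterating.
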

The purpose of the second assumption in~\eqref{eq-hypcontrol2} is to ensure that the graph transform $f^{n_l}(\Gamma_l(f))$ is well defined, that is, does not escape from $B_r$, while the first assumption guarantees that $n_l$ is sufficiently large with respect to $b^c_l.$ These two constraints will also appear in the applications of this result, namely in Lemmas~\ref{le-doublecontrole} and~\ref{le-triplecontrole}.
\begin{proof}
We first obtain estimates on the point $\Gamma_l(f)\cap W^{cs}_{r(f),loc}$, which provide control on the integers $n$ for which $f^n(\Gamma_l(f))$ is well defined. In a second step, we obtain control of the graph $f^n(\Gamma_l(f))$, which could in fact be achieved by applying Lemma~\ref{le-control1} to $f^{-1}$.

Actually, as we have seen in Section~\ref{sec-standard}, there exists a unique intersection point
\[z_l(f)=(z_l^c(f),z_l^s(f),z_l^u(f))\]
between $W^{cs}_{r(f),loc}$ and $\Gamma_l(f)$. We use the same notations with $l=\infty,$ with the remark that $z_\infty^c(f)$ and $z_\infty^u(f)$ both vanish by \eqref{eq-gammainfty}.

If $\check{z}_l(f):=\theta_f(z_l(f))=(\check{z}^c_l(f),\check{z}^s_l(f),0),$ then using that $\theta_f$ is the identity on $W^{s}_{r(f),loc}$, we have
\begin{align*}
z_l(f)&=(0,\check{z}^s_l(f),0)+D_{(0,\check{z}^s_l(f),0)}\theta_f^{-1}(\check{z}^c_l(f),0,0)+o(\check{z}^c_l(f))\\
&=(0,\check{z}^s_l(f),0)+D_{(0,\check{z}^s_\infty(f),0)}\theta_f^{-1}(\check{z}^c_l(f),0,0)+o(\check{z}^c_l(f)),
\end{align*}
where the second equality follows from the fact that $\check{z}^s_l(f)$ converges to $\check{z}^s_\infty(f)$. On the other hand, since $D\theta_f$ preserves the dynamical directions and is $\Cb$-linear along $W^s_{r(f),loc}$, we have
\[\check{z}^c_l(f)(\gamma(f),0,0):=D_{(0,\check{z}^s_\infty(f),0)}\theta_f^{-1}(\check{z}^c_l(f),0,0),\]
where $\gamma(f)\neq0$. Here we make two observations. First, the dependence of these objects
(such as $z_l(f)$, $\gamma(f)$, etc.) on $f$ is not a priori holomorphic.
Second, if
$D_{(0,\check{z}^s_\infty(f),0)} \theta_f^{-1}(1,0,0)$
had a nonzero component in the strong unstable direction, then this could
break all the estimates that follow. Thus, Proposition~\ref{prop-cderiv}
plays an essential role here.

To summarize, so far we have
\[z^u_l(f)=o(\check{z}^c_l(f))\ \text{ and } \ z^c_l(f)=\check{z}^c_l(f)(\gamma(f)+o(1)).\]
Thus
\[\check{z}^c_l(f)=z^c_l(f)(\beta(f)+o(1))\ \text{ and }\ z^u_l(f)=o({z}^c_l(f)),\]
where $\beta(f):=\gamma(f)^{-1}.$

The last observation for this part is that, writing $\Gamma_l(f)$ as a $u$-graph
\[\Gamma_l(f)=\left\{(h^c_{l,f}(z^u),h^s_{l,f}(z^u),z^u)\ ; \ z^u\in B^u\right\},\]
gives that
\[z^c_l(f)=h^c_{l,f}(z^u_l(f))=h^c_{l,f}(0)+O(z^u_l(f))=h^c_{l,f}(0)+o(z^c_l(f)).\]
Since by definition $h^c_{l,f}(0)=b^c_l(f)$, we obtain $z^c_l(f)=b^c_l(f)(1+o(1))$ and thus
\[\check{z}^c_l(f)=b^c_l(f)\beta(f)(1+o(1)).\]

As we have $f^n(z_l(f))=\theta_f^{-1}(K^n_f(\check z_l(f)))=\theta_f^{-1}(\chi_{r(f)}^n\check{z}^c_l(f),A^n_f\check{z}^s_l(f),0),$ this point stays in $1/2B_{r}$ as long as
\[|\chi_{r(f)}^n\check{z}^c_l(f)|<1/2\ \text{ and thus if } |\chi_{r(f)}^nb^c_l(f)\beta(f)|<1/4,\]
for $l$ large enough. Thus, using that the strong unstable eigenvalues dominated $\chi_{r(f)}$, for such pair $(n,l),$ the graph transform $f^n(\Gamma_l(f))$ is a well-defined $u$-graph in $B_{r}.$

In this situation, observe that all this discussion above also holds when replacing $z_l(f)$ by any point $x_l(f)\in\Gamma_l(f)$ such that $y_l(f):=f^n(x_l(f))$ stays on the graph transform $f^n(\Gamma_l(f)).$ That is, $\check{x}^c_l(f)=b^c_l(f)\beta(f)(1+o(1)).$ In particular, if we fix $z\in B^u$ and $y_l(f)$ corresponds to the point of intersection of $f^n(\Gamma_l(f))$ with the plane $\{z^u=z\}$ then
\begin{align*}
y_l(f)&=(y_l^c(f),y_l^s(f),z)=\theta^{-1}_f\circ K^n_f\circ\theta_f(x_l(f))\\
&=\theta_f^{-1}\Big(\chi_{r(f)}^n\check{x}^c_l(f),A^n_f\check{x}^s_l(f),E^n_f\check{x}^u_l(f)\Big).
\end{align*}

Thus, using that $\theta_f$ is the identity on $W^u_{r(f),loc}$, we obtain $y_l^s(f)=o(\tilde\chi^{-2n})$ and
\[y_l^c(f)
=\chi_{r(f)}^n\check{x}^c_l(f)+o(\tilde\chi^{-2n})
=\chi_{r(f)}^nb^c_l(f)\beta(f)(1+o(1))+o(\tilde\chi^{-2n}),\]
where $o(1)$ is with respect to $l$ and $o(\tilde\chi^{-2n})$ with respect to $n$. The formulation~\eqref{eq-controle2} then follows from the assumption that $n_l$ is chosen so that
\[\tilde\chi^{-2n_l}=o(b_l^c(f)).\]
It remains to check that $\beta$ and $S_l$ are holomorphic. This follows, using arguments similar to those at the end of the proof of Lemma~\ref{le-control1}, from the fact that, for $n=n_l$, the intersection point $y_l(f)$ between $f^{n_l}(\Gamma_l(f))$ and the plane $\{z^u=z\}$ depends holomorphically on $f$.
\end{proof}

\subsection{Families without new stable/unstable relations}\label{sec-relation}
In this section, we focus on relations between the stable set of $\Lambda$ and the images of a piece $\Gamma$ of unstable manifold $W^u_p$ transverse to $W^s_{p,loc}$, which we call \emph{stable/unstable relations}. By themselves, such relations cannot yield maximal bifurcation in  $\mathcal{Y}$ (recall that $\mathcal{Y}\subset\mathcal X^u_D\times\mathrm{End}_d^k$ is introduced in Section~\ref{sec-main-blender}), since they do not involve the degrees of freedom coming from $\mathcal X^u_D$. The purpose of this section is to show that they nevertheless suffice to eliminate all degrees of freedom arising from $\mathrm{End}_d^k$.

Observe that if $M \subset \mathcal X^u_D \times \mathrm{End}_d^k$
is a connected family containing a point $(Y_0, f_0)$ with
$f_0 \in \Omega_u$, and if $Y_0$ intersects
$W^s_{p(f_0),loc}$ transversely, then the inclination lemma implies
that some iterate $f_0^n(Y_0)$ has a subset arbitrarily close to
$\Gamma(f_0)$.
By persistence of proper intersections, the results of this section
therefore also apply to the relations between $f^n(Y)$ and
$W^s_{\Lambda(f),loc}$. In particular, if no new such relation
appears along $M$, then $M$ must project to a single point in
$\mathscr{M}_d^k$, by Theorem \ref{th-point} below.

\bigskip

\noindent\emph{\textbf{Setting and main result of this section.}}
As no subvariety $Y\in\mathcal X_D^u$ is involved here, let $M$
be a connected analytic submanifold of $\tilde\Omega$. We assume
that $M$ contains a map $f_0$ satisfying the following
properties.
\begin{itemize}
\item The unstable manifold $W^u_{p(f_0),loc}$ contains a point
$\tilde r_0(f_0)\in W^s_{\Lambda(f_0),loc}$ such that
$f_0^{m_0}(\tilde r_0(f_0))\in W^s_{r_0(f_0),loc}$ for some
$m_0\geq0$.
\item The subgroup
$\langle\chi_{p(f_0)},\chi_{r_0(f_0)}\rangle$ is dense in $\Cb^*$.
\end{itemize}
On the other hand, by Assumption~\ref{homoclinic}, there exists a family of local
manifolds $(\Gamma(f))_{f \in M}$, of dimension $u$, contained in
$W^u_{p(f)}$ and intersecting transversely $W^s_{p(f),loc}$.
By the inclination lemma, possibly after replacing $\Gamma(f)$ by
$f^n(\Gamma(f))$ for some $n \ge 0$, we may assume that this family
consists of $u$-graphs in $B_p$.

From this, the central assumption of this section is that
there exist no $n \ge 0$, $x \in \hat\Lambda$ and sub-family $(\Gamma'(f))_{f\in M'}$ where $\Gamma'(f)$ is an open subset of $\Gamma(f)$ and $M'$ a neighborhood of $f_0$, such that
\[\{ f \in M' \,;\,
W^s_{x(f),loc} \cap f^n(\Gamma'(f)) \neq \varnothing \}\]
defines a (possibly singular) germ of a hypersurface at $f_0$.
In other words, for every such $n$ and $x$, one of the following holds:
\begin{itemize}
\item the map $f_0$ does not belong to this set, i.e. $W^s_{x(f_0),loc}\cap f_0^n(\Gamma(f_0))=\varnothing$,
\item the relation persists on a neighborhood of $f_0$ in $M'.$
\end{itemize}
Observe first that, since $W^s_{x(f),loc}$ is only a local object,
an intersection with $f^n(\Gamma(f))$ may persist on a proper
neighborhood of $f_0$ without extending to the whole parameter set.

However, in what follows we restrict to situations where
$f^n(\Gamma(f))$ is a $u$-graph in the polydisc $B_{r_i}$
associated with some saddle point $r_i(f)$,
for some $0 \le i \le N$, defined in Section \ref{sec-standard}.
In that case, such intersections correspond to
horizontal/vertical relations in the sense of
Section~\ref{sec-graph}.

Therefore, by Lemma~\ref{le-proper}, if such a relation
persists locally near $f_0$, then it must in fact persist
on the whole defining set $M'$.

The goal of this part is to show that in this situation, $M$ must reduce to a single point.
\begin{theorem}\label{th-point}
If $M$ is as above, then for each $1\leq i\leq N$ the differential $D_{f_0}\chi_{r_i}$ vanishes on $T_{f_0}M$. In particular, by \eqref{star}, $M$ reduces to a point.
\end{theorem}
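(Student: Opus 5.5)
We argue by contradiction: we assume that for some $1\leq i\leq N$ there is a tangent vector $v\in T_{f_0}M$ with $D_{f_0}\chi_{r_i}(v)\neq0$. We then produce a germ of hypersurface of stable/unstable relations at $f_0$, which contradicts the central assumption. It suffices to work on a one-dimensional holomorphic disc $\Delta\subset M$ through $f_0$ tangent to $v$. Along $\Delta$ the map $f\mapsto\chi_{r_i(f)}$ is then open near $f_0$. The plan has three steps, carried out in this order.

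\textbf{Step 1: a persistent relation with $r_0$.} By the first assumption on $f_0$, the point $\tilde r_0(f_0)\in W^u_{p(f_0),loc}$ satisfies $f_0^{m_0}(\tilde r_0(f_0))\in W^s_{r_0(f_0),loc}$. By the central assumption this relation is not a germ of hypersurface, so it persists near $f_0$. We therefore get a holomorphic family $\tilde r_0(f)\in W^u_{p(f),loc}\cap f^{-m_0}(W^s_{r_0(f),loc})$ over a neighbourhood of $f_0$. The same holds for $\Gamma(f)$ after the inclination lemma: $\Gamma(f)$ meets $W^s_{p(f),loc}$ transversely, so its iterates accumulate on $W^u_{p(f),loc}$. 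We take the points $b_n(f)\in f^n(\Gamma(f))$ near $\tilde r_0(f)$. By Remark~\ref{rk-control1}, the $u$-graph $f^n(\Gamma(f))$ is displaced from $W^u_{p(f),loc}$ in the central direction by $\chi_{p(f)}^n b^c(f)[\alpha(f)+o(1)]$. Transporting this by $f^{m_0}$ to $B_{r_0}$, we obtain a sequence of $u$-graphs $\Gamma_l$ converging to a graph meeting $W^s_{r_0(f),loc}$, as required by \eqref{eq-gammainfty}. Their central offsets are
\[
b^c_l(f)=\chi_{p(f)}^{l}\,c(f)\,(1+o(1)),\qquad c\ \text{holomorphic and nonvanishing}.
\]

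\textbf{Step 2: a second passage near $r_0$, using density of the group.} We apply Lemma~\ref{le-control2} at $r_0$ with exponents $n_l$ chosen so that $\chi_{r_0}^{n_l}\chi_{p}^{l}$ approximates any prescribed target $w\in\Cb^*$ with $|w|$ small. This is possible because $\langle\chi_{p(f_0)},\chi_{r_0(f_0)}\rangle$ is dense, and the condition $\tilde\chi^{-2n_l}=o(b^c_l)$ holds since $|\chi_p|>\tilde\chi^{-1}$. Then $f^{n_l}(\Gamma_l(f))$ is a $u$-graph in $B_{r_0}$ with central coordinate $\approx w\,c(f)\beta(f)$, up to an $o(1)$ error, uniformly on $\Delta$. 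Choosing $w$ appropriately, we position this graph near the strong unstable leaf of any prescribed point. By Lemma~\ref{le-shadowing} (point outside $W^{uu}_{r_0}$) and transitivity, we follow it further with a fixed number of iterates into $B_{r_i}$. This yields $u$-graphs $\Gamma'_l(f)\subset B_{r_i}$ converging, uniformly on $\Delta$, to a graph through a point of $W^s_{r_i(f),loc}$ for all $f$. Such persistence again holds by the central assumption, otherwise we are already done.

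\textbf{Step 3: exploiting $\chi_{r_i}$.} We apply Lemma~\ref{le-control2} now at $r_i$. The graph $f^{n}(\Gamma'_l(f))$ has central coordinate
\[
\chi_{r_i(f)}^{n}\,b'^c_l(f)\,[\beta_i(f)+S_n(f,z)].
\]
Along $\Delta$, the factor $\chi_{r_i(f)}^{n}$ has argument and modulus varying at rate $n\,D\chi_{r_i}(v)/\chi_{r_i}$, which dominates the bounded variation of the other holomorphic factors. Hence for $n$ large, under the constraint $|\chi^{n}b'^c\beta|<1/4$, the central coordinate sweeps a fixed disc as $f$ ranges over a shrinking disc in $\Delta$. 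By Assumption~\ref{blender}, each $u$-graph in $B_{r_i}$ meets $W^s_{\Lambda(f),loc}$. We pick a point $x\in\Lambda(f_0)$ such that $W^s_{x(f_0),loc}$ meets the graph for one parameter $f_*$ and fails to meet it for a nearby one. The tool here is Lemma~\ref{le-dim-relation}/Lemma~\ref{le-proper} on horizontal/vertical relations together with Assumption~\ref{free}, using that stable manifolds of distinct points of $\Lambda$ are disjoint $s$-graphs displaced in the central direction. This produces a non-persistent relation, hence a germ of hypersurface in $M$ near $f_*$, arbitrarily close to $f_0$. Moving the base point to $f_*$, or running the construction at $f_0$ itself with $w$ tuned so that the intersection occurs exactly at $f_0$, contradicts the central assumption.

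The main obstacle is the uniformity in Steps 2 and 3. All the $o(1)$ errors in Lemmas~\ref{le-control1} and~\ref{le-control2} are only locally uniform and not holomorphic-controlled in derivatives. One must check that the parameter-induced rotation $\chi_{r_i(f)}^n$ genuinely beats them, which requires Cauchy estimates on the holomorphic remainders $S_n$ on a slightly smaller disc. One must also arrange the relation exactly at $f_0$ rather than merely nearby. For the second point, the first approach to try is the density of $\langle\chi_p,\chi_{r_0}\rangle$: it lets us choose $(l,n_l)$ so that the relation is satisfied at $f_0$ itself, with $\chi_{r_i}$-variation making it non-persistent.
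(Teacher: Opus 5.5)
Your Step~2 has a genuine gap, and it is the heart of the matter. The density of $\langle\chi_{p(f_0)},\chi_{r_0(f_0)}\rangle$ lets you choose $(l,n_l)$ with $\chi_{p(f_0)}^{l}\chi_{r_0(f_0)}^{n_l}\to w$ only at the single parameter $f_0$. It gives no control of $\chi_{p(f)}^{l}\chi_{r_0(f)}^{n_l}$ for $f\neq f_0$. Along $\Delta$ its logarithmic derivative is $l\,\chi_p'/\chi_p+n_l\,\chi_{r_0}'/\chi_{r_0}$, a priori of order $l+n_l\to\infty$, so this family need not even be normal near $f_0$. Hence two of your assertions are unfounded: that the central coordinate is $\approx w\,c(f)\beta(f)$ uniformly on $\Delta$, and that the limit graph meets $W^s_{r_i(f),loc}$ for every $f$. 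The second is not a direct instance of the central assumption, which concerns fixed iterates $f^n(\Gamma'(f))$ rather than limits. In any case it requires knowing the limit of these products at $f\neq f_0$.

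What is missing is the paper's intermediate rigidity step. At $f_0$, the blender gives an intersection of $\Gamma_{j_n,l_n}(f_0)$ with some $W^s_{x_n(f_0),loc}$, and the central assumption forces it to persist along $M$. The points of $W^u_{r_0(f),loc}\cap W^s_{x_n(f),loc}$ move in a normal family, so this forces $\{\Xi_{j_n,l_n}\}$ to be normal (Lemma~\ref{le-normal}). From this one gets the relation $\chi_{r_0}=\zeta\chi_p^{-\omega}$ on $M$ (Proposition~\ref{prop-relation}). Hence $\Xi_{j_n,l_n}(f)\to\Xi_\infty(f)$ locally uniformly, with a non-constant limit, and only then does $t_i=\Xi_\infty$ hold on $M$ (Lemma~\ref{le-Xi}).

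Without this relation your Step~3 fails as well. The central offset entering $B_{r_i}$ is the tiny difference $\Xi_{j_n,l_n}(f)-t_i(f)$. What must be compared with the rotation of $\chi_{r_i(f)}^{m_n}$ is the \emph{relative} variation of this difference, not the bounded variation of the factors. A priori this relative variation is not bounded: the factor $\chi_p^{j_n}\chi_{r_0}^{l_n}$ alone varies at rate $j_n+l_n$, and $t_i$ need not track it. So nothing guarantees that $\chi_{r_i}^{m_n}$ dominates.

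The paper uses the relation to write $\Xi_{j_n,l_n}/t_i-1=e^{a_nP(f)+i\theta_n}-1$, with real $a_n,\theta_n$ and $P=\mathrm{Log}\,\chi_p$. Normality of $\bigl((\Xi_{j_n,l_n}-t_i)\chi_{r_i}^{m_n}\bigr)$, together with $\mathrm{Re}\,P(f_0)\neq0$, then gives $a_n,\theta_n=O(|\chi_{r_i(f_0)}|^{-m_n})$. Evaluating at a nearby $f_1$ with $|\chi_{r_i(f_1)}|<|\chi_{r_i(f_0)}|$ contradicts the lower bound coming from normality.

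Note also that a non-persistent relation through some $f_*\neq f_0$ does not contradict the central assumption, which is localized at $f_0$. The contradiction must come, as in the paper, from a relation that exists at $f_0$ (supplied by the blender) and whose forced persistence is incompatible with the sweeping. In other words, it must come from a normality argument.
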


\bigskip

\noindent\emph{\textbf{Outline of the proof of Theorem \ref{th-point}.}} The proof proceeds by contradiction, assuming that $M$ is not reduced to a point. We can then choose a holomorphic disc in $M$ centered at $f_0$, and, by \eqref{star}, it suffices to show that $D_{f_0}\chi_{r_i}$ vanishes in the tangent direction of this disc. In other words, we may assume that $M$ itself is a disc and prove that $\chi_{r_i}'(f_0)=0$ for all $1\leq i\leq N$. 

The first step is to show that $\frac{\log|\chi_{r_0(f)}|}{\log|\chi_{p(f)}|}$ is independent of $f$. This is equivalent to saying that there exist $\zeta\in\Sb^1$ and $\omega\in\Rb$ such that, for all $f\in M$,
\begin{equation}\label{eq-relation}
\chi_{r_0(f)}=\zeta\chi_{p(f)}^{-\omega}.
\end{equation}
This is precisely the counterpart of Lemma~3.12 and Proposition~3.13 in \cite{gtv}. The former establishes the normality of a family $(\Xi_{j_n,l_n})_{n\geq0}$, where for $j,l\geq 0$ we denote by $\Xi_{j,l}$ the holomorphic function on $M$ defined by
\[\Xi_{j,l}(f)=\chi_{p(f)}^j \chi_{r_0(f)}^l.\]
We will provide a proof of this result in the present setting (see Lemma~\ref{le-normal}). The key ingredient is the blender property -- forcing intersections between $f^n(\Gamma(f))$ and $W^s_{\Lambda(f),loc}$ -- together with the persistence of these intersections over $M$. The relation~\eqref{eq-relation} then follows exactly as in \cite[Proposition~3.13]{gtv}.

In the second step, the fact that the subgroup $\langle \chi_{p(f_0)}, \chi_{r_0(f_0)} \rangle$ is dense in $\Cb^*$, is crucial. It implies that, if we fix a preimage $\tilde r_i(f_0)\in\Lambda(f_0)$ sufficiently close to $r_0(f_0)$ of a point on $W^s_{r_i(f_0),loc}$, then we can find arbitrarily large $n$ such that some portion of $f_0^n(\Gamma(f_0))$ passes arbitrarily close to $W^s_{r_i(f_0),loc}$. This in turn implies that a family of maps of the form
\[\Big(\Xi_{j_n,l_n}(f)-t_i(f)\Big)\chi_{r_i(f)}^{m_n},\]
where $t_i$ is a non-vanishing holomorphic function, is normal. As a consequence, the derivative of $\chi_{r_i(f)}$ must vanish at $f_0$.

\bigskip

\noindent\emph{\textbf{Steps towards Theorem \ref{th-point}.}} As explained above, we assume that $M$ is a disc in $\tilde\Omega$, containing $f_0$ which satisfies $\overline{\langle \chi_{p(f_0)}, \chi_{r_0(f_0)} \rangle}=\Cb^*$. Moreover, we also assume that there exists an intersection $\tilde r_0(f_0)$ between ${W}^u_{p(f_0),loc}$ and $W^s_{\Lambda(f_0),loc}$ such that $f_0^{m_0}(\tilde r_0(f_0))\in W^s_{r_0(f_0),loc}.$ As all these intersersections persist along $M,$ there is a holomorphic map $\tilde r_0\colon M\to\Pb^k$ such that $\tilde r_0(f)$ is in $W^u_{p(f),loc}\cap W^s_{\Lambda(f),loc}$ and $f^{m_0}(\tilde r_0(f))\in W^s_{r_0(f),loc}.$ Moreover, since $f$ contracts to cone fields $C^{uu}_\eta$ on $\mathcal U_+\cup\mathcal U_-,$ possibly by increasing $m_0$ and shrinking $M,$ we can assume that $f^{m_0}(W^u_{p(f),loc})$ is arbitrarily close to $W^{uu}_{r_0(f),loc}$ and thus, is a $u$-graph in $B_{r_0}.$

On the other hand, there exist $n_0\geq m_0$ and a $cs$-graph $\Delta_{f_0}$ in $B_p$ such that $f_0^{n_0}(\Delta_{f_0})$ is an open subset of $\mathsf W_{r_0(f_0)}^{cs}$. Possibly after shrinking again $M$, it extends to a family of $cs$-graphs $(\Delta_f)_{f\in M}$. Roughly speaking, using this $n_0$, we have that
\begin{itemize}
\item for $j\geq0$ large enough, the graph transform $f^j(\Gamma(f))$ in $B_p$ is close to $W^u_{p(f),loc}$;
\item thus, some part of $f^{j+n_0}(\Gamma(f))$ is close to a $u$-graph in $B_{r_0}$ intersecting $W^s_{r_0(f),loc}$;
\item applying an appropriated large iterate $f^l$ gives a $u$-graph (in $f^{j+n_0+l}(\Gamma(f))$) close to a leaf of the strong unstable foliation of $W^{u}_{r_0(f),loc},$ whose first coordinates is essentially
\[\Xi_{j,l}(f)=\chi_{p(f)}^j \chi_{r_0(f)}^l.\]
\end{itemize}
The following result makes this final claim more precise, using Lemma \ref{le-control1} and Lemma~\ref{le-control2}.
\begin{lemma}\label{le-doublecontrole}
There exists a holomorphic function $\gamma\colon M\to\Cb^*$  such that if $(j_n)$ and $(l_n)$ are sequences then for all $f\in M$ such that
\begin{equation}\label{eq-hypdoublecontrole}
\tilde\chi^{-2l_n}=o(\chi_{p(f)}^{j_n})\ \text{ and }\ |\Xi_{j_n,l_n}(f)\gamma(f)|\leq 1/4,
\end{equation}
the set $f^{j_n+n_0+l_n}(\Gamma(f))$ contains, for all $n$ large enough, a $u$-graph $\Gamma_{j_n,l_n}(f)$ in $B_{r_0}$ of the form
\begin{equation}\label{eq-doublecontrole}
\Gamma_{j_n,l_n}(f)=\left\{\left(\Xi_{j_n,l_n}(f)\gamma(f)\Big[(1,0)+S_{n}(f,z)\Big],z\right)\,;\,z\in B^u\right\},
\end{equation}
where the remainders $S_{n}(f,z)$ converges to $0$ locally uniformly.
\end{lemma}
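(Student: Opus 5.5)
The plan is to compose the two control lemmas in the order in which the orbit visits the neighbourhoods. First, in $B_p$, Lemma~\ref{le-control1} (in the form of Remark~\ref{rk-control1}) controls $f^{j}(\Gamma(f))$. Second, the fixed transition $f^{n_0}$ carries a $cs$-graph $\Delta_f$ onto a piece of $\mathsf W^{cs}_{r_0(f)}$. Third, in $B_{r_0}$, Lemma~\ref{le-control2} controls the further iterate $f^{l}$.

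\textbf{Step 1.} Since $f^{n_0}(\Delta_f)$ is an open subset of $\mathsf W^{cs}_{r_0(f)}$, which is the center-stable plane of $r_0(f)$, I will check that $\Delta_f$ can be parametrized by an injective holomorphic map $h_f\colon B^{cs}\to B_p$ whose derivative at $0$ preserves the central direction, i.e. $D_0h_f(1,0)\in E^{cu}$. This should follow from invariance of the dynamical directions along $W^u_{p(f),loc}$ combined with Lemma~\ref{le-centrer-unstable}; possibly one reparametrizes $h_f$ so that the central direction goes to the central direction. Applying Lemma~\ref{le-control1} to $\Gamma$ and $\Delta$, with $b^c(f)\neq0$ coming from the transverse homoclinic intersection, I get
\[f^{j}(\Gamma(f))\cap\Delta_f=\{h_f(a_j(f))\},\qquad a_j(f)=\chi_{p(f)}^{j}\bigl[b^c(f)\alpha(f)(1,0)+R_j(f)\bigr].\]

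\textbf{Step 2.} Since $f^{m_0}(W^u_{p(f),loc})$ is a $u$-graph in $B_{r_0}$ meeting $W^s_{r_0(f),loc}$, inclination and cone contraction show that a piece $\Gamma_j(f)$ of $f^{j+n_0}(\Gamma(f))$ is a $u$-graph in $B_{r_0}$. These graphs converge, as $j\to\infty$, to $\Gamma_\infty(f):=f^{n_0}(W^u_{p(f),loc})$, restricted suitably. So hypothesis \eqref{eq-gammainfty} holds. The intersection of $\Gamma_j(f)$ with $\mathsf W^{cs}_{r_0(f)}$ is $f^{n_0}(h_f(a_j(f)))$. Writing $f^{n_0}\circ h_f$ in the coordinates of $B_{r_0}$, as a holomorphic map onto $\mathsf W^{cs}$, and Taylor expanding at $0$, I expect
\[b^c_j(f)=\chi_{p(f)}^{j}\bigl[\kappa(f)+o(1)\bigr]\]
for a nonvanishing holomorphic function $\kappa$. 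Here $\kappa(f)$ is the product of $b^c(f)\alpha(f)$ with the central component of $D_0(f^{n_0}\circ h_f)(1,0)$.

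\textbf{Step 3.} I apply Lemma~\ref{le-control2} along the subsequence $l=j_n$ with $n_l=l_n$. The condition $\tilde\chi^{-2l_n}=o(\chi_{p(f)}^{j_n})$ is exactly the first condition of \eqref{eq-hypcontrol2}, because $b^c_{j_n}\asymp\chi_{p(f)}^{j_n}$. The bound $|\Xi\gamma|\le1/4$, possibly with a slightly smaller constant absorbed by the $o(1)$ terms, gives the second condition. I set $\gamma:=\kappa\beta$. The lemma then yields the form \eqref{eq-doublecontrole}, with the error $\Xi(f)\gamma(f)\,o(1)$ absorbed into $S_n$. Holomorphy of $\gamma$ follows from holomorphy of $\alpha$, $\beta$, $b^c$ and $h_f$.

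The main obstacle will be Step 1: verifying that $D_0h_f(1,0)$ really lies in $E^{cu}$. If $h_f$ sent the central direction partly into the strong stable direction, the leading term would not simply be proportional to $(1,0)$. In that case I would need to choose $\Delta_f$ more carefully, for instance as the $f^{n_0}$-preimage of $\mathsf W^{cs}$ through the point tangent to the dynamical central direction, and use the tangency of $\mathsf W^{cs}$ and $W^{cs}$ along $W^s_{r_0}$ noted in Section~\ref{sec-standard}. I will also have to check the uniformity of the remainders in $j$ and $l$ simultaneously.
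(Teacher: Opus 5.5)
Your proposal follows the paper's proof: Lemma~\ref{le-control1} applied with a $cs$-graph $\Delta_f$ parametrized so that $D_0h_f(1,0)\in E^{cu}$, then transfer by $f^{n_0}$ giving $b^c_j(f)=\chi_{p(f)}^j\bigl(b^c(f)\alpha(f)v^c(f)+\tilde R_j(f)\bigr)$, then Lemma~\ref{le-control2} along $(j_n,l_n)$, with $\gamma=\alpha\beta v^c b^c$, which is your $\kappa\beta$. Two minor corrections. First, the condition $D_0h_f(1,0)\in E^{cu}$ that you flag as the main obstacle is only a choice of parametrization: by a dimension count, $T\Delta_f\cap E^{cu}$ is a line transverse to $TW^u_{p(f),loc}$, and no invariance argument is needed. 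Second, Lemma~\ref{le-centrer-unstable} is needed elsewhere: together with the invariance of $C^{ss}_\eta$ and the tangency of $W^s_{r_0(f),loc}$ to that cone field, it gives $v^c(f)\neq0$, which is exactly the nonvanishing of $\kappa$ that you assert without justification.
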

In other words, the $u$-graph $\Gamma_{j_n,l_n}(f)$ is arbitrarily close to the vertical space 
\[\{z^c=\Xi_{j_n,l_n}(f)\gamma(f),z^s=0\}\subset W^u_{r_0(f),loc}.\]

\begin{proof}
Let $h_f\colon B^{cs}\to B_p$ be the family of injective holomorphic maps defining the $cs$-graphs $(\Delta_f)_{f\in M}$ such that $D_0h_f(1,0)$ belongs to $E^{cu}$ defined along $W^u_{p(f),loc}$.

By Lemma~\ref{le-control1}, there exists a holomorphic function $\alpha\colon M\to\Cb^*$ such that, for every $j\geq0$, the graph transform $f^j(\Gamma_0(f))$ intersects $\Delta_f$ at the point $h_f(a_j(f))$, where
\[a_j(f)=\chi_{p(f)}^j\Big(b^c(f)\alpha(f)(1,0)+R_j(f)\Big),\]
with $R_j(f)$ converging locally uniformly to zero.

By definition of $\Delta_f$, we have $f^{n_0}(h_f(0))\in W^s_{r_0(f),loc}$. Moreover, since $D_0h_f(1,0)\in E_{h_f(0)}^{cu}$ does not belong to the strong stable cone field, which is expanding, it follows that $D_0(f^{n_0}\circ h_f)(1,0)$ is not tangent to $W^s_{r_0(f),loc}$. In other words, in the coordinates of $B_{r_0}$, we have
\begin{equation}\label{eq-v}
D_0(f^{n_0}\circ h_f)(1,0)=(v^c(f),v^s(f),0),
\end{equation}
with $v^c(f)\neq 0$.

Hence, $f^{j+n_0}(\Gamma_0(f))$ contains a vertical graph $\Gamma_j(f)$ in $B_{r_0}$ intersecting $\mathsf W_{r_0(f)}^{cs}$ at $(b^c_j(f),b^s_j(f))$, where
\begin{equation}\label{eq-control-b}
b^c_j(f)=\chi_{p(f)}^j\big(b^c(f)\alpha(f)v^c(f)+\tilde R_j(f)\big),
\end{equation}
and where $\tilde R_j(f)$ converges locally uniformly to zero.

Therefore, we can apply Lemma~\ref{le-control2}, which in particular provides a function $\beta\colon M\to\Cb^*$ such that, if $l(j)$ is chosen so that
\begin{equation}\label{eq-control3}
\tilde\chi^{-2l(j)}=o(\chi_{p(f)}^{j})\ \text{ and }\ \big|\chi_{r_0(f)}^{l(j)}b^c_j(f)\beta(f)\big|<1/4,
\end{equation}
then the graph transform $f^{l(j)}(\Gamma_j(f))$ in $B_{r_0}$ is a subset of the form
\begin{equation*}
\left\{\left(\chi_{r_0(f)}^{l(j)}b^c_j(f)\Big[\beta(f)(1,0)+S_{l(j)}(f,z)\Big],z\right)\,;\,z\in B^u\right\}.
\end{equation*}
This applies in particular to the two sequences, $(j_n)$ and $(l_n),$ in the statement since \eqref{eq-control3} is satisfied by \eqref{eq-hypdoublecontrole}. Hence, combining \eqref{eq-control-b} and \eqref{eq-control3}, we obtain~\eqref{eq-doublecontrole} by setting
\[\gamma(f):=\alpha(f)\beta(f)v^c(f)b^c(f),\]
and using the fact that
\[\Xi_{j_n,l_n}(f)\tilde R_{j_n}(f)\]
converges locally uniformly to $0$ as a consequence of~\eqref{eq-control-b}.
\end{proof}

\begin{lemma}\label{le-normal}
Let $(j_n)_{n\geq0}$ and $(l_n)_{n\geq0}$ be two increasing sequences such that $\Xi_{j_n,l_n}(f_0)$ converges in $\Db^*$. Then the family $\{f\mapsto\Xi_{j_n,l_n}(f)\}_{n\geq 0}$ is normal in a neighborhood $M_0\subset M$ of $f_0$.
\end{lemma}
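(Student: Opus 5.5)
The plan is to follow the strategy of \cite[Lemma~3.12]{gtv}. The idea is to express $\Xi_{j_n,l_n}\gamma$, up to a small error, as the central coordinate of a persistent intersection between a $u$-graph and a local stable manifold of $\Lambda(f)$. Such coordinates form an equicontinuous family.

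\emph{Normalization.} Let $c:=\lim_n\Xi_{j_n,l_n}(f_0)\in\Db^*$. Replacing $l_n$ by $l_n+l'$ for a fixed $l'\geq0$ multiplies $\Xi_{j_n,l_n}$ by the non-vanishing holomorphic function $\chi_{r_0(f)}^{l'}$, which does not affect normality. We may therefore assume that $|c\gamma(f_0)|$ is small, say less than $1/100$. Since $j_n,l_n\to\infty$ and $c\neq0$, we have $|\chi_{p(f_0)}|^{j_n}\asymp|\chi_{r_0(f_0)}|^{-l_n}$. Together with $|\chi_{r_0}|<\tilde\chi$, this gives the first condition of \eqref{eq-hypdoublecontrole} near $f_0$. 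For each $n$, let $M_n$ be the connected component containing $f_0$ of $\{f\in M: |\Xi_{j_n,l_n}(f)\gamma(f)|<1/4\}$. On $M_n$, Lemma~\ref{le-doublecontrole} provides the $u$-graphs $\Gamma_{j_n,l_n}(f)$ in $B_{r_0}$ of the form \eqref{eq-doublecontrole}.

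\emph{Persistent intersections.} Since $B_{r_0}$ is a small neighborhood of $W^{uu}_{r_0(f),loc}$, Assumption~\ref{blender} shows that $\Gamma_{j_n,l_n}(f_0)$ meets $W^s_{x_n(f_0),loc}$ for some $x_n\in\hat\Lambda$. This is a horizontal/vertical relation in $B_{r_0}$. By the central assumption of this section (no new relations), combined with Lemma~\ref{le-proper}, the relation persists on the whole connected set $M_n$. We therefore obtain a holomorphic point $y_n(f)\in\Gamma_{j_n,l_n}(f)\cap W^s_{x_n(f),loc}$ defined on $M_n$. Reading off its central coordinate from \eqref{eq-doublecontrole} gives
\[\Xi_{j_n,l_n}(f)\gamma(f)\bigl(1+S_n(f)\bigr)=\varphi_n(f),\]
where $\varphi_n(f)$ is the central coordinate of $W^s_{x_n(f),loc}$ at the height $z=\pi_u(y_n(f))$.

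\emph{Equicontinuity.} The local stable manifolds $W^s_{x(f),loc}$, $x\in\hat\Lambda$, form a normal family of $s$-graphs that depends holomorphically on $f$, uniformly in $x$, by Proposition~\ref{prop-hyp-graphs} and Jonsson's theorem. Hence the family $\{\varphi_n\}$ is uniformly bounded and equicontinuous on any fixed neighborhood of $f_0$, with a modulus independent of $n$. In addition, $\varphi_n(f_0)\to c\gamma(f_0)$.

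\emph{Bootstrap: the main obstacle.} The hard step is to show that $M_n$ contains a fixed disc $M_0$ around $f_0$, independent of $n$. The persistence above a priori holds only where the graphs are defined, and those domains might shrink with $n$. I would use a continuity argument. Fix $M_0$ so small that, by equicontinuity, $|\varphi_n(f)-\varphi_n(f_0)|<1/100$ on $M_0$ for all $n$. Then on $M_n\cap M_0$ we have $|\Xi_{j_n,l_n}\gamma|\leq(1+o(1))\,(|c\gamma(f_0)|+2/100)<1/8$. This strict bound cannot be compatible with reaching the value $1/4$ on $\partial M_n$ inside $M_0$, so $M_0\subset M_n$ for $n$ large. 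The subtle point is that the equicontinuity must be applied before knowing that $y_n$ is defined on all of $M_0$. To handle this, I would argue on the maximal subdisc where the relation holds, and use Lemma~\ref{le-proper}, or the graph structure of $V\cap H$ from Lemma~\ref{le-dim-relation}, to extend $y_n$ to the whole component. Once $M_0\subset M_n$ for all large $n$, the family $\Xi_{j_n,l_n}$ is bounded by $1/(4|\gamma|)$ on $M_0$, hence normal by Montel's theorem.
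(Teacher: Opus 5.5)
Your architecture matches the paper's: blender intersection, persistence from the no-new-relation assumption, comparison of $\Xi_{j_n,l_n}\gamma$ with a coordinate on $W^s_{x_n(f),loc}$, a continuity argument, then Montel. However, the step carrying all the weight, the equicontinuity of $\varphi_n$, is not justified as written.

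First, $W^s_{x_n(f),loc}$ is an $s$-graph. Its central coordinate is therefore a function of the stable coordinate, $\varphi_n(f)=h^c_{x_n(f),f}\bigl(\pi_s(y_n(f))\bigr)$, not of the height $\pi_u(y_n(f))$. More importantly, $\varphi_n$ is only defined where $\Gamma_{j_n,l_n}(f)$ exists. It depends on $f$ through the intersection point $y_n(f)$, hence through $\Gamma_{j_n,l_n}(f)$ itself, whose variation is governed by $\Xi_{j_n,l_n}$, the very quantity you want to control. Normality of the family of stable manifolds alone therefore gives no modulus of continuity independent of $n$; as you suspect, the argument is circular. Your proposed remedy does not resolve this. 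Extending $y_n$ to the whole component $M_n$ is already granted by the no-new-relation assumption (the paper follows the intersection point ``as long as $\Gamma_{j_n,l_n}(f)$ is well defined''). What is missing is a comparison function that is defined, and equicontinuous, on a fixed neighborhood of $f_0$ independently of $n$.

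The paper anchors the comparison on the heteroclinic point $W^u_{r_0(f),loc}\cap W^s_{x_n(f),loc}$. This point lies in $\Lambda(f)$, corresponds to a point of $\hat\Lambda$, and is followed holomorphically on a fixed neighborhood of $f_0$ via the holomorphic motion of the natural extension. These points therefore form a normal family, and on a fixed $M_0$ their central coordinates lie in a fixed compact $D\Subset\Db^*_{1/4}$ for $n$ large. In $B_{r_0}$ one has $W^u_{r_0(f),loc}=\{z^s=0\}$, and by \eqref{eq-doublecontrole} every point of $\Gamma_{j_n,l_n}(f)$ has $s$-coordinate $\Xi_{j_n,l_n}\gamma\,S_n^s=o(1)$. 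The uniform Lipschitz bound on the holomorphically moving $s$-graphs then places your intersection point within $o(1)$ of the heteroclinic point, uniformly wherever the graph is defined. With this replacement, your bootstrap goes through. It is equivalent to the paper's contradiction argument, which produces $f_n\in M_0$ with $\Xi_{j_n,l_n}(f_n)\gamma(f_n)\in\Db_{1/4}\setminus D'$.

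A minor slip in your normalization: replacing $l_n$ by $l_n+l'$ with $l'\geq0$ multiplies by $\chi_{r_0}^{l'}$, whose modulus is greater than $1$, so it enlarges $|c\gamma(f_0)|$. To make it small you should shift $j_n$ instead, multiplying by a power of $\chi_p$, as the paper does.
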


\begin{proof}
Let $\gamma$ be the holomorphic function given by Lemma~\ref{le-doublecontrole}. By multiplying by a suitable power of $\chi_{p(f_0)}$, we may assume that $\Xi_{j_n,l_n}(f_0)$ converges to some $\Xi_\infty$ satisfying
\[0<|\Xi_\infty\gamma(f_0)|<1/4.\]
On the other hand, since $\Xi_\infty\neq0$ we have that for all $f\in M,$ $\tilde\chi^{-2l_n}=o(\chi_{p(f)}^{j_n}).$

By Lemma~\ref{le-doublecontrole}, for all sufficiently large $n$, the iterate $f_0^{j_n+n_0+l_n}(\Gamma(f_0))$ contains a $u$-graph $\Gamma_{j_n,l_n}(f_0)$ in $B_{r_0}$ arbitrarily close to the vertical space
\[\{z^c=\Xi_\infty\gamma(f_0),\,z^s=0\}\subset W^u_{r_0(f_0),loc}.\]
Moreover, by the blender property given by Assumption~\ref{blender}, the graph $\Gamma_{j_n,l_n}(f_0)$ intersects $W^s_{x_n(f_0),loc}$ for some point $x_n(f_0)\in\hat\Lambda(f_0)$. We denote by $z_n(f_0)$ this intersection point, and by $y_n(f_0)$ the intersection between $W^u_{r_0(f_0),loc}$ and $W^s_{x_n(f_0),loc}$. Since $\hat\Lambda(f_0)$ is a basic set, the point $y_n(f_0)$ corresponds to a point $\hat y_n(f_0)$ in $\hat\Lambda(f_0)$ which can be followed holomorphically in $\hat\Lambda(f).$ In particular, the same holds for  $y_n(f_0)$ which can be followed in $\Lambda(f)$, and the family $\{f\mapsto y_n(f)\}_{n\geq0}$ is normal.

On the other hand, the distance between $z_n(f_0)$ and $y_n(f_0)$ tends to zero as $n\to\infty$, so the first coordinate of $y_n(f_0)$ converges to $\Xi_\infty\gamma(f_0)$. Hence, on a small neighborhood $M_0$ of $f_0$, the first coordinate of $y_n(f)$ belongs to a fixed compact set $D\Subset\Db^*_{1/4}$ for all sufficiently large $n$ and all $f\in M_0$.

We now conclude with three further observations. First, since the intersection between $W^s_{\Lambda(f),loc}$ and $W^u_{p(f)}$ persists along $M$, the point $z_n(f_0)$ can be followed holomorphically as the intersection point $\Gamma_{j_n,l_n}(f)\cap W^s_{x_n(f),loc}$, as long as $\Gamma_{j_n,l_n}(f)$ is well defined. Moreover, since $\Gamma_{j_n,l_n}(f)$ converges to a subset of $W^u_{r_0(f),loc}$, we have $z_n(f)\to y_n(f)$ locally uniformly.

Finally, assume by contradiction that the family $\{f\mapsto\Xi_{j_n,l_n}(f)\}_{n\geq 0}$ is not normal on $M_0$. Then the same holds for $\{f\mapsto\Xi_{j_n,l_n}(f)\gamma(f)\}_{n\geq 0}$, which implies that for arbitrarily large $n$ there exists $f_n\in M_0$ such that
\[\Xi_{j_n,l_n}(f_n)\gamma(f_n)\in\Db_{1/4}\setminus D',\]
where $D'\Subset\Db_{1/4}$ is an open disc containing $D$ in its interior. This is impossible, since $\Xi_{j_n,l_n}(f_n)\gamma(f_n)$ is arbitrarily close to the first coordinate of $z_n(f_n)$, whose distance to $y_n(f_n)$ tends to zero.
\end{proof}

It follows, exactly as in \cite[Proposition~3.13]{gtv}, that $\chi_p$ and $\chi_{r_0}$ satisfy a fixed relation throughout $M$.
\begin{proposition}\label{prop-relation}
 There exists $\zeta\in\Sb^1$ and $\omega\in\Rb^*_+$ such that for all $f\in M$
\[\chi_{r_0(f)}=\zeta\chi_{p(f)}^{-\omega}.\]
\end{proposition}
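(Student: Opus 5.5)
The plan is to deduce the relation from the normality statement of Lemma~\ref{le-normal}, following the scheme of \cite[Proposition~3.13]{gtv}. Recall that $M$ is a disc centered at $f_0$ and that $|\chi_{p(f)}|<1<|\chi_{r_0(f)}|$ on $M$. We may therefore write $\chi_{p}=e^{\phi}$ and $\chi_{r_0}=e^{\psi}$ with $\phi,\psi$ holomorphic on $M$, $\Re\phi<0<\Re\psi$. The target is to show that the holomorphic function $\omega:=-\psi/\phi$ is constant on a neighborhood of $f_0$ and takes a positive real value there. Then $\chi_{r_0}\chi_p^{\omega}=e^{\psi+\omega\phi}$ is constant, call it $\zeta$, and analytic continuation extends the identity to all of $M$. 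Finally, taking moduli and using $\log|\chi_{r_0}|=-\omega\log|\chi_p|$ should force $|\zeta|=1$ once $\omega$ is shown to be real.

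The first step uses the density of $\langle\chi_{p(f_0)},\chi_{r_0(f_0)}\rangle$ in $\Cb^*$. From it we extract, for every target $w\in\Db^*$, increasing sequences $j_n,l_n\to\infty$ with $\Xi_{j_n,l_n}(f_0)\to w$. (If density only gives integers of arbitrary sign, we still obtain positive $j,l$, since $|\chi_p|<1<|\chi_{r_0}|$ lets us trade large $j$ against large $l$.) Lemma~\ref{le-normal} then says that $g_n:=j_n\phi+l_n\psi$ satisfies: $e^{g_n}$ is a normal family near $f_0$, and $\Re g_n(f_0)$ stays bounded. After normalising, $g_n-g_n(f_0)$ is normal and takes values in a bounded set.

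The second step converts this into constancy of the ratio. Write $g_n=l_n(\psi+(j_n/l_n)\phi)$. Boundedness of $\Re g_n(f_0)$ forces $j_n/l_n\to\omega_0:=-\Re\psi(f_0)/\Re\phi(f_0)>0$. Dividing by $l_n\to\infty$ gives $(g_n-g_n(f_0))/l_n\to0$, and this has to be upgraded to a statement on the derivative. By normality the derivatives $g_n'(f_0)=j_n\phi'(f_0)+l_n\psi'(f_0)$ stay bounded, so
\[
\psi'(f_0)+\tfrac{j_n}{l_n}\phi'(f_0)=O(1/l_n),
\]
hence $\psi'+\omega_0\phi'=0$ at $f_0$.

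Third, we repeat this at every parameter near $f_0$. Here is the main obstacle: Lemma~\ref{le-normal} is stated at $f_0$, where the density hypothesis holds. Density is not open, but the argument only needs that the moduli $\log|\chi_p|,\log|\chi_{r_0}|$ admit approximations of the kind above, and this holds wherever $\log|\chi_{r_0}|/\log|\chi_p|$ is irrational or the arguments are suitably independent. Alternatively, one can exploit the normality on the neighborhood $M_0$ more fully. Letting $w$ vary shows that, up to bounded errors, the whole ray of functions $j\phi+l\psi$ with $j/l\to\omega_0$ is normal. That yields $|\psi'+\omega_0\phi'|\le C/l$ uniformly on $M_0$, hence $\psi+\omega_0\phi$ is constant on $M_0$. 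This is the route I would try first: it gives directly that $\omega\equiv\omega_0\in\Rb_+^*$ is constant. Then $\zeta:=e^{\psi+\omega_0\phi}$ has modulus $1$ by the choice of $\omega_0$ at $f_0$, which is the required relation on $M_0$ and, by analytic continuation, on $M$.
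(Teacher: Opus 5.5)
Your argument is correct and follows essentially the route the paper takes by deferring to \cite[Proposition~3.13]{gtv}. Lemma~\ref{le-normal} makes $e^{j_n\phi+l_n\psi}$ normal on the whole neighborhood $M_0$, boundedness at $f_0$ gives $j_n/l_n\to\omega_0=-\Re\psi(f_0)/\Re\phi(f_0)>0$, and dividing $\Re(j_n\phi+l_n\psi)$ (or, as you do, its derivative via Cauchy estimates) by $l_n$ shows that $\psi+\omega_0\phi$ is a purely imaginary constant, so that $\zeta=e^{\psi+\omega_0\phi}\in\Sb^1$.

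Two corrections. First, drop the opening target that $-\psi/\phi$ is constant: it is stronger than the statement and would force $\zeta=1$. The correct target is that $\psi+\omega_0\phi$ is constant. Second, your ``main obstacle'' is not one. The single sequence $(j_n,l_n)$ is already normal on all of $M_0$, so $|\psi'+(j_n/l_n)\phi'|\le C_K/l_n$ holds on every compact $K\subset M_0$. No variation of $w$ and no density at other parameters is needed.
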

At this stage, the strategy in \cite{gtv} was to prove that both $\chi_p$ and $\chi_{r_0}$ are constant on $M$. The argument then concludes through a rather involved step, relying on stability in families in the sense of Berteloot-Bianchi-Dupont and on the equilibrium lamination constructed in \cite{BBD}, to show that all maps $f\in M$ are holomorphically conjugate in a neighborhood of their small Julia sets.

In the present setting, however, we cannot rely on stability in the sense of \cite{BBD} for the family $(f)_{f\in M}$, nor on equilibrium laminations, since the set $\Lambda(f)$ is far from the small Julia set. Fortunately, the results of \cite{t-gorbovickis} considerably simplify the final part of the argument, as it suffices to prove that the derivatives of the multipliers $\chi_{r_i}$ vanish at $f_0$ for all $1\leq i\leq N$.

To this end, we fix $1\leq i\leq N$ and we use Lemma \ref{le-shadowing} with $x=r_0(f_0)$ and $y=r_i(f_0)$ in order to obtain $n_i\geq0$ and $\tilde r_i(f_0)$ in $B_{r_0}\cap W^u_{r_0(f_0),loc},$ but outside $W^{uu}_{r_0(f_0),loc},$ such that $f_0^{n_i}(\tilde r_i(f_0))\in W^s_{r_i,loc}.$ Moreover, possibly after shrinking $M$, we require that for all $f\in M$ the point $\tilde r_i(f)$ is in $B_{r_0},$ and we denote its coordinates in this polydisc by  
\[(R_i^c(f),0,R^u_i(f))\in B_{r_0}.\]
The important data here is its first coordinate $R^c_i(f),$ which is not zero since $\tilde r_i(f)\notin W^{uu}_{r_0(f),loc},$ while the other unstable coordinates $R^u_i(f)$ will not be use.

Since the subgroup $\langle \chi_{p(f_0)}, \chi_{r_0(f_0)} \rangle$ is dense on $\Cb^*$ there exist two increasing sequence $(j_n)$ and $(l_n)$ such that $\Xi_{j_n,l_n}(f_0)$ converges to $t_i(f_0)$ with
$$t_i(f):=\frac{R_i^c(f)}{\gamma(f)},$$
where $\gamma$ is defined in Lemma \ref{le-doublecontrole}.

This has two consequences. The first one is that, if we write, as in Proposition \ref{prop-relation}, $\chi_{r_0(f)}=\zeta\chi_{p(f)}^{-\omega}$ then
\[\Xi_{j_n,l_n}(f_0)=\chi_{p(f_0)}^{j_n-\omega l_n}\zeta^{l_n},\]
thus, the convergence of  $\Xi_{j_n,l_n}(f_0)$  implies
\begin{equation}\label{eq-omegainfty}
j_n-\omega l_n \xrightarrow[n\to\infty]{} \omega_\infty := \frac{\log|t_i(f_0)|}{\log|\chi_{p(f_0)}|}\ \text{ and }\ \zeta^{l_n}\xrightarrow[n\to\infty]{}\frac{t_i(f_0)}{\chi_{p(f_0)}^{\omega_\infty}}.
\end{equation}
Hence, we obtain the convergence on $M$
\begin{equation}\label{eq-Xi}
\Xi_{j_n,l_n}(f)\xrightarrow[n\to\infty]{}\Xi_\infty(f)
:=\frac{\chi_{p(f)}^{\omega_\infty}}{\chi_{p(f_0)}^{\omega_\infty}}t_i(f_0).
\end{equation}
Since no new relation between $W^u_{p(f)}$ and $W^s_{\Lambda(f),loc}$ arises along $M$, this imposes the following constraint on $t_i(f)$.
\begin{lemma}\label{le-Xi}
For all $f\in M$, we have $t_i(f)=\Xi_\infty(f).$ That is, the point $\tilde r_i(f)$ belongs to 
the vertical space
\[\{z^c=\Xi_\infty(f)\gamma(f),\,z^s=0\}.\]
Moreover, possibly by changing the sequence $(j_n,l_n),$ we can assume that $\Xi_{j_n,l_n}(f)\neq t_i(f),$ for all $n$ and all $f\in M.$
\end{lemma}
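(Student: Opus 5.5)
We argue by contradiction, using the standing hypothesis of this section: no new stable/unstable relation appears along $M$. Suppose $t_i(f_1)\neq\Xi_\infty(f_1)$ for some $f_1\in M$. Then $t_i-\Xi_\infty$ is a holomorphic function on the disc $M$ that is not identically zero. By \eqref{eq-Xi}, $\Xi_{j_n,l_n}\to\Xi_\infty$ locally uniformly on $M$. Since $\Xi_\infty(f_0)=t_i(f_0)$, Hurwitz's theorem applied to $\Xi_{j_n,l_n}-t_i$ gives, for $n$ large, a zero $f_n\in M$ of $\Xi_{j_n,l_n}-t_i$ with $f_n\to f_0$. Each such zero is isolated, because $\Xi_{j_n,l_n}-t_i\not\equiv0$. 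This last claim also needs an argument: if $\Xi_{j_n,l_n}\equiv t_i$ for infinitely many $n$, then $t_i=\Xi_\infty$, contrary to our assumption.

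Next we turn each zero into a relation. At $f=f_n$, Lemma~\ref{le-doublecontrole} applies: its hypothesis \eqref{eq-hypdoublecontrole} holds near $f_0$ because $\Xi_\infty(f_0)\neq0$ and $|t_i(f_0)\gamma(f_0)|=|R_i^c(f_0)|$ is small. So $f^{j_n+n_0+l_n}(\Gamma(f))$ contains the $u$-graph $\Gamma_{j_n,l_n}(f)$, which is close to the vertical $\{z^c=\Xi_{j_n,l_n}(f)\gamma(f),\ z^s=0\}$. Meanwhile $\tilde r_i(f)$ lies on the vertical $\{z^c=t_i(f)\gamma(f),\ z^s=0\}$.

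The graph $\Gamma_{j_n,l_n}(f)$ is only approximately vertical, so we replace exact coincidence of first coordinates by an intersection count. Iterate the $u$-graph $\Gamma_{j_n,l_n}(f)$ by $f^{n_i}$, which takes it into $B_{r_i}$, near $W^{uu}_{r_i(f),loc}$ by the cone contraction. Consider the horizontal/vertical relation between $f^{n_i}(\Gamma_{j_n,l_n}(f))$ and $W^s_{r_i(f),loc}$, equivalently between $\Gamma_{j_n,l_n}(f)$ and the $cs$-type piece $f^{-n_i}(W^s_{r_i(f),loc})$ through $\tilde r_i(f)$. By Lemma~\ref{le-dim-relation}, the parameters where this relation holds form an analytic subset of $M$. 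By Rouché's theorem, applied to the first coordinate of the intersection point of $\Gamma_{j_n,l_n}(f)$ with $f^{-n_i}(W^s_{r_i(f),loc})$ minus $t_i(f)\gamma(f)$, this set has a point near $f_n$, because the main term has an isolated zero there. The remainder $S_n$ is small compared with the variation of $(\Xi_{j_n,l_n}-t_i)\gamma$ on a small circle around $f_n$, which stays bounded below by the Hurwitz argument.

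It follows that for large $n$ the relation holds at some isolated parameter near $f_0$ and not identically, so it defines a hypersurface germ (points) in the disc $M$. Its base point tends to $f_0$, so after shrinking to a neighbourhood $M'$ of a point of this relation, we get a nontrivial relation. We would need it at $f_0$ itself. We expect this to be the main subtlety: the hypothesis forbids new relations germinating at $f_0$, but the zeros $f_n$ are only near $f_0$. We would handle this by the same argument as in the source paper. Since all the constructions, including shrinking $M$, are valid at any base point of a neighbourhood where the hypotheses of the section hold (density of $\langle\chi_p,\chi_{r_0}\rangle$ and persistence of $\tilde r_0$), the section's hypothesis holds along $M$. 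Alternatively, observe that the relation at $f_0$ is persistent or absent, and its persistence near $f_0$ forces persistence on all of $M'$ by Lemma~\ref{le-proper}, contradicting isolatedness. Either way this contradiction gives $t_i\equiv\Xi_\infty$.

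For the last claim, the zero set of $\Xi_{j_n,l_n}-t_i=\Xi_{j_n,l_n}-\Xi_\infty$ is a discrete subset of $M$ for each $n$ with $\Xi_{j_n,l_n}\not\equiv\Xi_\infty$. Hence $\Xi_{j_n,l_n}\equiv\Xi_\infty$ can happen only when $\chi_p^{j_n-\omega l_n-\omega_\infty}$ is constant, by Proposition~\ref{prop-relation}. If $\chi_p$ is nonconstant, this forces $j_n-\omega l_n=\omega_\infty$. We avoid this by perturbing the sequence: replace $(j_n,l_n)$ with nearby pairs, still converging at $f_0$ by density of the subgroup, for which $j_n-\omega l_n\neq\omega_\infty$. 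Then we shrink $M$ around $f_0$ and pass to a subsequence so that no zero lies in $M$. If instead $\chi_p$ is constant on $M$, we would treat that degenerate case separately.
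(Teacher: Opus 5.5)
There is a genuine gap, exactly at the step you flag, and neither of your fixes closes it. Your Hurwitz--Rouch\'e argument only produces zeros $f_n\to f_0$ of the relation between $\Gamma_{j_n,l_n}(f)$ and $W^s_{\tilde r_i(f),loc}$, i.e.\ nontrivial relations \emph{near} $f_0$. The central assumption of the section concerns only relations through $f_0$: either $f_0$ does not lie in the relation set, or the relation persists near $f_0$.

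In (a) you conflate this assumption with the standing setting, namely density of $\langle\chi_{p(f_0)},\chi_{r_0(f_0)}\rangle$ and persistence of $\tilde r_0$. The central assumption is an extra hypothesis on the base point $f_0$, and nothing you invoke transfers it to the points $f_n$.

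In (b), the relation with $W^s_{\tilde r_i(f),loc}$ is generically \emph{absent} at $f_0$, since $\Xi_{j_n,l_n}(f_0)\neq t_i(f_0)$; you even want this for the second claim. Absence at $f_0$ is perfectly compatible with isolated zeros at $f_n\neq f_0$, so Lemma~\ref{le-proper} gives no contradiction.

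Your argument would be a clean alternative under a version of the assumption stated on a fixed neighbourhood of $f_0$. That version seems to be all the induction of Section~\ref{sec-preuve-main} uses, since one moves to a new smooth point there anyway. But it is not the hypothesis here, and it would have to be imposed rather than derived.

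The missing idea is to use intersections that exist at $f_0$ itself. The paper argues directly, not by contradiction:
\begin{itemize}
\item For $f$ near $f_0$, Lemma~\ref{le-doublecontrole} shows that $\Gamma_{j_n,l_n}(f)$ converges to the vertical space $\{z^c=\Xi_\infty(f)\gamma(f),\,z^s=0\}$.
\item At $f=f_0$, this space meets $W^s_{\tilde r_i(f_0),loc}$ at $\tilde r_i(f_0)$.
\item Intersections between iterates of $\Gamma(f_0)$ and $W^s_{\Lambda(f_0),loc}$ that are present at $f_0$ persist along $M$ by the assumption.
\item Passing to the limit forces $R_i^c(f)=\Xi_\infty(f)\gamma(f)$ near $f_0$, and analytic continuation extends this to all of $M$.
\end{itemize}
Spelled out, this needs genuine intersections, at $f_0$, of $\Gamma_{j_n,l_n}(f_0)$ with local stable manifolds $W^s_{x_n(f_0),loc}$ converging to $W^s_{\tilde r_i(f_0),loc}$. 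The blender property supplies these when applied to small pieces of $\Gamma_{j_n,l_n}(f_0)$ pushed forward along the orbit of $\tilde r_i(f_0)\in\Lambda(f_0)$, in the spirit of Lemma~\ref{le-normal}. Since these relations hold at $f_0$, they persist on a neighbourhood independent of $n$, and the limit can be taken.

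For the second claim your conclusion is right but the route is roundabout. Once $j_n-\omega l_n\neq\omega_\infty$, one has $|\Xi_{j_n,l_n}(f)|=|\chi_{p(f)}|^{j_n-\omega l_n}\neq|\chi_{p(f)}|^{\omega_\infty}=|t_i(f)|$ for every $f\in M$, because $0<|\chi_{p(f)}|<1$. So no shrinking of $M$, no subsequence, and no separate treatment of constant $\chi_p$ is needed. Your shrinking step would in any case not be justified as written if zeros accumulated at $f_0$.
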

\begin{proof}
By analytic continuation, it is sufficient to prove the result in a neighborhood of $f_0$. A key observation is that, since the assumption~\eqref{eq-hypdoublecontrole} of Lemma~\ref{le-doublecontrole} is satisfied for $(\Gamma_{j_n,l_n}(f_0))$ for all sufficiently large $n$, and since the family $(\Xi_{j_n,l_n}(f))$ converges, the same assumption~\eqref{eq-hypdoublecontrole} also holds in a neighborhood of $f_0$ for all large $n$. Hence, Lemma~\ref{le-doublecontrole} implies that, for such $f$, the graphs $\Gamma_{j_n,l_n}(f)$ converge to the vertical space
\[\{z^c=\Xi_\infty(f)\gamma(f),\,z^s=0\},\]
which intersects $W^s_{\tilde r_i(f),loc}$, for $f=f_0$, exactly at the point $\tilde r_i(f_0)$. Since these intersections persist along $M$, the first coordinate $R_i^c(f)$ of $\tilde r_i(f)$ in $B_{r_0}$ must equal $\Xi_\infty(f)\gamma(f)$, that is,
\[t_i(f)=\Xi_\infty(f).\]
For the last point in the statement, as $\overline{\langle \chi_{p(f_0)}, \chi_{r_0(f_0)} \rangle}=\Cb^*,$ we can assume that $(\Xi_{j_n,l_n}(f_0))$ converges toward $t_i(f_0)$ while
\[|\Xi_{j_n,l_n}(f_0)|\neq |t_i(f_0)|,\ \text{ i.e., }\ j_n-\omega l_n\neq\omega_\infty,\]
by \eqref{eq-omegainfty}. This implies that, for all $f\in M,$ we have
\[|t_i(f)|=|\Xi_\infty(f)|=|\chi_{p(f)}^{\omega_\infty}|\neq|\chi_{p(f)}^{j_n-\omega l_n}|=|\Xi_{j_n,l_n}(f)|.\]
\end{proof}

This allows us to apply a second time Lemma \ref{le-control2}, from $r_0$ to $r_i.$
\begin{lemma}\label{le-triplecontrole}
There exists a holomorphic function $\delta\colon M\to\Cb^*$  such that, if $(m_n)$ is a sequence then, for all $f\in M$ such that
\begin{equation}\label{eq-hyptriplecontrole}
\tilde\chi^{-2m_n}=o(\Xi_{j_n,l_n}(f)-t_i(f))\ \text{ and }\ \left|\delta(f)\big(\Xi_{j_n,l_n}(f)-t_i(f)\big)\chi_{r_i(f)}^{m_n}\right|\leq 1/4,
\end{equation}
the set $f^{j_n+n_0+l_n+n_i+m_n}(\Gamma(f))$ contains a $u$-graph $\tilde\Gamma_{n}(f)$ in $B_{r_i}$ of the form
\begin{equation}\label{eq-triplecontrole}
\tilde\Gamma_{n}(f)=\left\{\left(\delta(f)\big(\Xi_{j_n,l_n}(f)-t_i(f)\big)\chi_{r_i(f)}^{m_n}\Big[(1,0)+\tilde S_{n}(f,z)\Big],z\right)\,;\,z\in B^u\right\},
\end{equation}
where the remainders $\tilde S_{n}(f,z)$ converges to $0$ locally uniformly.
\end{lemma}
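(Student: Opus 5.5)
The plan is to run the argument of Lemma~\ref{le-doublecontrole} once more, with $r_0$ playing the role of $p$ and $r_i$ the role of $r_0$. The transfer from $B_{r_0}$ to $B_{r_i}$ is made through the point $\tilde r_i(f)$ and the iterate $f^{n_i}$. The analysis near $r_i$ is then handled by Lemma~\ref{le-control2}.

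First, I build a transversal in $B_{r_0}$. Choose a family of $cs$-type holomorphic discs $\Delta'_f\subset B_{r_0}$, passing through $\tilde r_i(f)$ and parametrized by injective holomorphic maps $h'_f$ with $h'_f(0)=\tilde r_i(f)$. Concretely, take the plane $\{z^u=R^u_i(f)\}$. Since $f^{n_i}(\tilde r_i(f))\in W^s_{r_i(f),loc}$, we may pick $\Delta'_f$ so that $f^{n_i}(\Delta'_f)$ contains an open subset of $\mathsf W^{cs}_{r_i(f)}$, as was done with $\Delta_f$ and $n_0$, after shrinking $M$.

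The direction $(1,0)$ is the central direction of $B_{r_0}$, and it lies in the center-unstable cone. By cone invariance it is not mapped into the strong stable cone, exactly as in \eqref{eq-v}. Hence $D_0(f^{n_i}\circ h'_f)(1,0)=(w^c(f),w^s(f),0)$ in $B_{r_i}$ coordinates, with $w^c(f)\neq0$. Here I use that the center-unstable direction at $r_0$ is carried away from $W^s_{r_i,loc}$. This is the cone-field argument behind Lemma~\ref{le-centrer-unstable}.

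Next, I intersect the graphs from Lemma~\ref{le-doublecontrole} with this transversal. Lemma~\ref{le-doublecontrole} says that $\Gamma_{j_n,l_n}(f)$ is a $u$-graph in $B_{r_0}$, close to the vertical space $\{z^c=\Xi_{j_n,l_n}(f)\gamma(f),z^s=0\}$. It therefore meets $\Delta'_f$ at a point whose central coordinate relative to $\tilde r_i(f)$ is
\[\gamma(f)\bigl(\Xi_{j_n,l_n}(f)-t_i(f)\bigr)\]
up to a small error, since $R_i^c(f)=t_i(f)\gamma(f)$.

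The main obstacle is making this error relative to the small quantity $\Xi_{j_n,l_n}(f)-t_i(f)$, and not merely $o(1)$. Lemma~\ref{le-doublecontrole} only gives $\Xi\gamma S_n$ with $S_n\to0$, and this may not be $o(\Xi-t_i)$. I would handle it in one of two ways. The first is to pass to a subsequence along which $S_n$ at the relevant point is $o(\Xi_{j_n,l_n}(f_0)-t_i(f_0))$. The rates in Lemma~\ref{le-control1} and Lemma~\ref{le-control2} are exponential in $j_n$ and $l_n$, while the density of $\langle\chi_{p(f_0)},\chi_{r_0(f_0)}\rangle$ lets us choose $\Xi_{j_n,l_n}(f_0)-t_i(f_0)$ decaying more slowly. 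The second option, which I would try first, is to exploit holomorphy. The difference is a holomorphic function on $M$ that does not vanish by Lemma~\ref{le-Xi}, so the error can be absorbed into a normalized remainder. Either way, applying $f^{n_i}$ gives a $u$-graph in $B_{r_i}$ crossing $\mathsf W^{cs}_{r_i(f)}$ with central coordinate
\[b_n^c(f)=w^c(f)\gamma(f)\bigl(\Xi_{j_n,l_n}(f)-t_i(f)\bigr)(1+o(1)).\]
This graph converges, as $n\to\infty$, to a graph through $W^s_{r_i(f),loc}$, so hypothesis \eqref{eq-gammainfty} holds.

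Finally, I apply Lemma~\ref{le-control2} at $r_i$ with $n_l=m_n$. It produces a function $\beta_i$, and the two conditions in \eqref{eq-hypcontrol2} become exactly \eqref{eq-hyptriplecontrole} once we set $\delta(f):=\beta_i(f)w^c(f)\gamma(f)$. Holomorphy and non-vanishing of $\delta$ follow from those of its three factors. The resulting graph sits inside $f^{j_n+n_0+l_n+n_i+m_n}(\Gamma(f))$ and has the form \eqref{eq-triplecontrole}, with the $(1+o(1))$ factor absorbed into $\tilde S_n$.
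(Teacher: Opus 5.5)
Your argument is the paper's. There too one applies Lemma~\ref{le-control2} at $r_i$ to $f^{n_i}(\Gamma_{j_n,l_n}(f))$ and obtains \eqref{eq-gammainfty} from Lemma~\ref{le-Xi}. The central displacement $\gamma(f)(\Xi_{j_n,l_n}(f)-t_i(f))$ is transferred from $B_{r_0}$ to $B_{r_i}$ by a nonzero coefficient $\tilde v^c(f)$, which is your $w^c(f)$, and one sets $\delta:=\beta\tilde v^c\gamma$. Note that the transversal must be the local manifold $f^{-n_i}(\mathsf W^{cs}_{r_i(f)})$ through $\tilde r_i(f)$, as in the paper and in your second description, not the plane $\{z^u=R^u_i(f)\}$ you first mention.

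The paper simply asserts $b_n^c=\tilde v^c\gamma(\Xi_{j_n,l_n}-t_i)+o(\Xi_{j_n,l_n}-t_i)$, so you are right that this step needs an argument. Your holomorphy option does not supply one: nonvanishing of each $\Xi_{j_n,l_n}-t_i$ says nothing, as $n\to\infty$, about the error $\Xi_{j_n,l_n}\gamma S_n$ relative to it. Your other option does work, and it is the one to use:
\begin{itemize}
\item Lemma~\ref{le-Xi} already allows re-choosing $(j_n,l_n)$. One may therefore also require $|\Xi_{j_n,l_n}(f_0)-t_i(f_0)|$ to decay more slowly than the remainder $S_n$ of Lemma~\ref{le-doublecontrole}, which the proofs of Lemmas~\ref{le-control1} and~\ref{le-control2} show to be exponentially small in $j_n$, locally uniformly in $f$.
\item This comparison persists uniformly for $f$ near $f_0$. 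Indeed, in the notation of the proof of Theorem~\ref{th-point}, $\Xi_{j_n,l_n}(f)/t_i(f)-1=e^{a_nP(f)+i\theta_n}-1$, with $a_n,\theta_n\to0$ and $\mathrm{Re}\,P(f)=\log|\chi_{p(f)}|<0$. Hence $|\Xi_{j_n,l_n}(f)-t_i(f)|$ is comparable to $|a_n|+|\theta_n|$, and so to its value at $f_0$.
\end{itemize}
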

\begin{proof}
The proof is very similar to that of Lemma~\ref{le-doublecontrole}. More precisely, we apply Lemma~\ref{le-control2} to the graphs
\[\Gamma_n(f):=f^{n_i}\big(\Gamma_{j_n,l_n}(f)\big).\]
Observe that the sequence $(\Gamma_n(f))$ converges to the image $\Gamma_\infty(f)$, under $f^{n_i}$, of the vertical space $\{z^c=\Xi_\infty(f)\gamma(f),\,z^s=0\}$ in $B_{r_0}$. By Lemma~\ref{le-Xi}, this limit graph satisfies
\[\Gamma_\infty(f)\cap W^s_{r_i(f),loc}\neq\varnothing\]
for all $f\in M.$ Moreover, for each $n$, the graph $\Gamma_n(f)$ intersects $\mathsf W^{cs}_{r_i(f)}$ at $(b_n^c(f),b_n^s(f))$, where
\[b_n^c(f)=\tilde v^c(f)\gamma(f)\big(\Xi_{j_n,l_n}(f)-t_i(f)\big)+o\big(\Xi_{j_n,l_n}(f)-t_i(f)\big).\]
Here, the coefficient $\tilde v^c(f)$ arises from the transfer from $B_{r_0}$ to $B_{r_i}$ by $f^{n_i}$ and plays the same role as $v^c(f)$ in~\eqref{eq-v}. More precisely, the set $f^{-n_i}(\mathsf W^{cs}_{r_i(f)})$ is locally a manifold passing through $\tilde r_i(f)$, which projects injectively onto a neighborhood $V$ in $\mathsf W^{cs}_{r_0(f)}$ of $(R_i^c(f),0)$. The quantity $\tilde v^c(f)$ is then defined as the first coordinate of
\[D_{(R_i^c(f),0)}\phi(1,0),\]
where $\phi\colon V\to\mathsf W^{cs}_{r_i(f)}$ denotes the composition of the inverse of this projection with $f^{n_i}$.

Applying Lemma~\ref{le-control2} now yields the existence of a function $\beta$, and the desired conclusion follows from this lemma,  by setting
\[\delta(f):=\beta(f)\tilde v^c(f)\gamma(f).\]
\end{proof}
We can now conclude the proof of Theorem \ref{th-point}.
\begin{proof}[Proof of Theorem \ref{th-point}]
By~\eqref{star}, there exists $1\leq i\leq N$ such that $\chi_{r_i(f_0)}'\neq 0$. The key ingredient to contradict this fact is to prove that the family
\[\left((\Xi_{j_n,l_n}(f)-t_i(f))\chi_{r_i(f)}^{m_n}\right)_{n\geq0},\]
is normal for a suitable sequence $(m_n)$. This argument is very similar to that of Lemma~\ref{le-normal}.

More precisely, the graphs $\Gamma_{j_n,l_n}(f)$ converge to the vertical space
\[\{z^c=\Xi_\infty(f)\gamma(f),\,z^s=0\},\]
which intersects $W^s_{\tilde r_i(f),loc}$. Thus, since by Lemma \ref{le-Xi} $\Xi_{j_n,l_n}(f_0)\neq t_i(f_0),$ there exists a sequence $(m_n)_{n\geq0}$ tending to $+\infty$ such that
\begin{equation}\label{eq-controletriple}
\left(\delta(f_0)(\Xi_{j_n,l_n}(f_0)-t_i(f_0))\chi_{r_i(f_0)}^{m_n}\right)_{n\geq0}
\end{equation}
converges in $\Db^*_{1/4}$.

Therefore, by Lemma~\ref{le-triplecontrole}, the iterate $f_0^{j_n+n_0+l_n+n_i+m_n}(\Gamma(f_0))$ contains a $u$-graph $\tilde\Gamma_n(f_0)$ in $B_{r_i}$ that is arbitrarily close to a vertical space in $W^u_{r_i(f_0),loc}$, whose first coordinate is exactly given by \eqref{eq-controletriple}. By the blender property in Assumption~\ref{blender}, the graph $\tilde\Gamma_n(f_0)$ must intersect a local stable manifold $W^s_{x_n(f_0),loc}$. Hence, exactly as in Lemma~\ref{le-normal}, the persistence of the intersections between $W^u_{p(f)}$ and $W^s_{\Lambda(f),loc}$ along $M$ forces the family in~\eqref{eq-controletriple} to be normal.

Since $\delta$ does not vanish on $M$, this implies the normality of the family
\[\left((\Xi_{j_n,l_n}(f)-t_i(f))\chi_{r_i(f)}^{m_n}\right)_{n\geq0},\]
as desired.

To conclude the proof, we compute the derivative of these functions. Recall that, by Proposition~\ref{prop-relation} and Lemma~\ref{le-Xi},
\[\Xi_{j_n,l_n}(f)=\chi_{p(f)}^{j_n-\omega l_n}\zeta^{l_n}
\quad\text{and}\quad
t_i(f)=\Xi_\infty(f)
:=\frac{\chi_{p(f)}^{\omega_\infty}}{\chi_{p(f_0)}^{\omega_\infty}}\,t_i(f_0),\]
where $j_n-\omega l_n\to\omega_\infty$ and $\zeta^{l_n}\to t_i(f_0)/\chi_{p(f_0)}^{\omega_\infty}$ (see~\eqref{eq-omegainfty} and~\eqref{eq-Xi}). Dividing by $t_i(f)$, we obtain the normality of the family
\[\left(e^{(j_n-\omega l_n-\omega_\infty)P(f)+i\theta_n}-1\right)\chi_{r_i(f)}^{m_n},\]
where $P(f):=\mathrm{Log}(\chi_{p(f)})$ and
\[e^{i\theta_n}:=\zeta^{l_n}\chi_{p(f_0)}^{\omega_\infty}/t_i(f_0),\]
which converges to $1$.

Using the Taylor expansion of the exponential function and setting $a_n:=j_n-\omega l_n-\omega_\infty$, there exist constants $0<A<B$ such that, for all $f$ in a sufficiently small neighborhood $V$ of $f_0$ and all $n$,
\begin{equation}\label{eq-borne}
A<\left|(a_nP(f)+i\theta_n)\chi_{r_i(f)}^{m_n}\right|<B.
\end{equation}
Since $P(f_0)$ is not purely imaginary, taking first the real part at $f=f_0$ and then the imaginary part yields
\[a_n=O(\chi_{r_i(f_0)}^{-m_n})
\quad\text{and}\quad
\theta_n=O(\chi_{r_i(f_0)}^{-m_n}).\]
On the other hand, if $\chi_{r_i}$ is not constant on $M$, there exists $f_1\in V$ such that
$|\chi_{r_i(f_1)}|<|\chi_{r_i(f_0)}|$, and therefore
\[\left|(a_nP(f_1)+i\theta_n)\chi_{r_i(f_1)}^{m_n}\right|
=O\left(\frac{|\chi_{r_i(f_1)}|^{m_n}}{|\chi_{r_i(f_0)}|^{m_n}}\right)\]
tends to $0$, contradicting the lower bound in~\eqref{eq-borne}. Hence, $\chi_{r_i}$ must be constant on $M$, which contradicts our initial assumption that $\chi_{r_i(f_0)}'\neq0$.
\end{proof}

\subsection{Relations from subvariety variations}\label{sec-subvar}
So far, we have only used variations coming from the space $\mathrm{End}_d^k$ (or the moduli space $\mathscr{M}_d^k$), working with a fixed family of submanifolds $\Gamma$ contained in the (global) unstable manifold $W^u_p$. The inclination lemma ensures that the results obtained for $\Gamma$ also apply to any family admitting a transverse intersection with the local stable manifold $W^s_{p,loc}$.

We now turn to a different situation. Our goal is to produce new relations in the case where the families we consider, $M\subset \mathcal X_D^u\times\mathrm{End}_d^k$,
are of the form $\mathcal Z\times\{f\}$. In that setting, $W^u_{p(f)}$ is constant in the family. Thus, the relations we seek cannot arise from it, but must instead result from genuine variations of the submanifold $Y\in\mathcal Z$ itself.

If $Y$ has a transverse intersection with $W^s_{p(f),loc},$ by the inclination lemma, we already know that the iterates $f^n(Y)$ have many intersections with $W^s_{\Lambda(f),loc}$. However, this does not guarantee that these intersections are not persistent when $Y$ varies inside the family $\mathcal Z$. In fact, such a degeneracy may occur when $f$ is a product map, obtained as the product of a two-dimensional example with a saddle fixed point. In this situation:
\begin{itemize}
\item the intersection points $Y\cap f^{-n}(W^s_{\Lambda(f),loc})$ are not Zariski dense in $Y$, thus $Y$ may vary while preserving exactly the same intersection points;
\item the image $f^n(Y)$ may ``slide'' along a local stable manifold $W^s_{x(f),loc}$ of $\Lambda(f)$, and therefore does not produce any new relations.
\end{itemize}
Assumptions~\ref{decalage} and~\ref{grille} are precisely designed to break these product features, in the unstable and in the stable directions respectively. Moreover, the homoclinic intersection provided by Assumption~\ref{homoclinic} plays a crucial role in ruling out all possible singularities arising in these problems.

From now on, in order to simplify notation, we denote by $\mathcal Z\subset\mathcal{X}_D^u$ the irreducible family we consider, which we view as lying over a fixed map $f\in\tilde\Omega$. Observe that, by our choice of $\mathcal Y_0$ after Theorem~\ref{th-main-blender}, every $Y\in\mathcal Z$ is reduced and irreducible.

Most objects previously defined as depending on $f$ (such as $\Lambda$, $p$, etc.), will henceforth be written without reference to $f$. In addition to the dynamical assumptions satisfied by $f$, we assume throughout this section that
\begin{equation}\tag{$\dag$}\label{eq-dag}
\text{every } Y\in\mathcal Z \text{ has a transverse intersection with } W^s_{p,loc}.
\end{equation}
Observe that this is an open condition. Thus, after possibly shrinking $\mathcal Z$, it holds for all
$Y \in \mathcal Z$ as soon as it holds for one of them.
Hence, in a sense, condition~\eqref{eq-dag} can be viewed as a reformulation
of assumption~\eqref{eq-pitch} in Theorem~\ref{th-main-blender}
within this more restricted setting.

Another remark is that, under~\eqref{eq-dag}, there are many intersections between $f^n(Y)$ and $W^s_{\Lambda,loc}$, which do not necessarily all occur in regions where $f^n(Y)$ is locally given by a vertical graph. For this reason, we restrict our attention to small connected open subsets $U\subset\Pb^k$ and $\mathcal Z'\subset\mathcal Z$ such that, for every $Y\in\mathcal Z'$, the set
\[U_Y:=U\cap Y\]
is nonempty, smooth, and connected. We then consider the problem restricted to local families of the form $(U_Y)_{Y\in\mathcal Z'}.$

\begin{theorem}\label{th-subvar}
Assume that $\mathcal Z$ satisfies \eqref{eq-dag}. If $\mathcal Z$ is not reduced to a point, then there exist $n\geq0,$ a local family $(U_Y)_{Y\in\mathcal Z'}$ and $x\in\Lambda$ such that $f^n(U_Y)$ is a $u$-graph tangent to $C^{uu}_\eta$ in $\mathcal U_+$ for all $Y\in\mathcal Z'$ and
\[\{\,Y\in\mathcal Z' \; ; \; f^n(U_Y)\cap W^s_{x,loc}\neq\varnothing\,\}\]
is a hypersurface of $\mathcal Z'$. 
\end{theorem}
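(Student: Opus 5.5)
We argue by contradiction, assuming that $\mathcal Z$ has positive dimension but that every relation of the above form is persistent on the corresponding $\mathcal Z'$. The intersection points of $f^n(U_Y)$ with local stable manifolds of $\Lambda$ then move holomorphically with $Y$, and we show that this is incompatible with the geometry near $p$. By \eqref{eq-dag} and the inclination lemma, for every $Y\in\mathcal Z$ and large $j$, some piece of $f^j(Y)$ is a $u$-graph in $B_p$ that is $C^1$-close to $W^u_{p,loc}$. In the standard form of $p$, Remark~\ref{rk-control1} describes this piece as
\[
\bigl\{\bigl(\chi_p^{j}b^c(Y)[\alpha(1,0)+R_j(Y,z)],z\bigr)\,;\,z\in B^u\bigr\},
\]
where $(b^c(Y),b^s(Y))$ is the point $Y\cap W^s_{p,loc}$ (transported to $B_p$). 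This gives two variational parameters along $\mathcal Z$. The first is the position $b(Y)=(b^c,b^s)$ on $W^s_{p,loc}$. The second is the transverse structure of $U_Y$ near that point, which after iteration becomes the $s$-components of the graph.

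\textbf{Step 1: $b(Y)$ is locally constant on $\mathcal Z$.} Assumption~\ref{blender} gives intersections of these graphs, after one more iterate into $\mathcal U_+$, with some $W^s_{x,loc}$. Assumption~\ref{decalage} ensures that the corresponding points on $W^u_{p,loc}$ have all linearizing coordinates nonzero. We then rerun the normality argument of Lemma~\ref{le-normal}. Suppose the relation persists while $b^c(Y)$ varies nontrivially. Along a disc in $\mathcal Z$ one can choose $j_n$ with $\chi_p^{j_n}b^c(Y_0)$ converging in $\Db^*$, and the family $Y\mapsto\chi_p^{j_n}b^c(Y)$ cannot be normal. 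This contradicts the persistence of the intersections, exactly as in Lemma~\ref{le-normal}. Since the central coordinate is weak ($|\chi_p|$ is close to $1$) while $|\lambda_i|<\tilde\chi^{-2}$, the central direction dominates the stable one. We therefore expect $b^c$ to be constant first, and the $b^s$ components to be handled next using the dynamical directions $\mathcal E^{s,i}$ along $W^u_{p,loc}$, with the sharper expansion of Lemma~\ref{le-control1} applied to each eigendirection.

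\textbf{Step 2: exclude sliding of $Y$ through the fixed point $b$.} With $b(Y)$ constant, $Y$ can still vary through a fixed point of $W^s_{p,loc}$ by changing its tangent plane or higher jets there. We use Assumption~\ref{grille}: preimages under $G_+$ of the strong stable cones are never contained in $T W^u_{p,loc}\oplus\mathcal E^{s,i}$. This means that, once $f^n(U_Y)$ has been pushed into $\mathcal U_+$, a first-order variation of $U_Y$ in a stable direction produces a nonzero displacement transverse to $W^s_{x,loc}$. A varying one-jet of $Y$ at $b$ would therefore create a nonpersistent relation. Lemma~\ref{le-centrer-unstable} plays the analogous role for the central direction.

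If all jets of $Y$ at $b$ are then forced constant, every $Y\in\mathcal Z'$ contains the same germ. Since each $Y$ is reduced and irreducible, $\mathcal Z$ reduces to a point, a contradiction. Once a single nonpersistent relation is found, Lemma~\ref{le-dim-relation} shows that it defines a hypersurface in $\mathcal Z'$.

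\textbf{Expected obstacle.} Step 2 is the main difficulty. Variations of higher-order jets are only weakly visible after iteration, and we will need the homoclinic point of Assumption~\ref{homoclinic}, through Proposition~\ref{prop-asymptotic-rigidity}, to conclude. The argument would run as follows. If all relations persist, the set of points of $U_Y$ that land on $W^s_{\Lambda,loc}$ is independent of $Y$ and is contained in a proper analytic subset $H$. These points accumulate, along backward orbit segments, on generic points of $W^u_{p,loc}$. Proposition~\ref{prop-asymptotic-rigidity} then forces $H$ to be open, so $\mathcal Z$ is locally constant.
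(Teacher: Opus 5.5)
Your proposal has a genuine gap, and it lies in how you choose the point whose motion you track. You fix ``the'' point $b(Y)=Y\cap W^s_{p,loc}$. After showing it is constant, you try to exclude variations of $Y$ through this fixed point by controlling its jets (Step~2). That step is not carried out, and the tools you cite do not give it. Assumption~\ref{grille} only becomes usable through Lemma~\ref{le-grille}. That lemma analyses iterated tangent vectors of the total space of a family of pairwise disjoint graphs whose base point slides along the strong stable leaf $\{b^c\}\times B^s$. It says nothing about varying tangent planes or higher jets at a fixed base point. Moreover, the linearizations of Section~\ref{sec-dir} control only finitely many jets, whereas your conclusion needs all of them.

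Your fallback in the ``expected obstacle'' paragraph rests on an unjustified premise. Persistence of the relations only says that the intersection points move holomorphically with $Y$, possibly sliding inside $W^s_{x,loc}$, which is exactly what must be excluded. It does not say that the set of landing points is independent of $Y$, nor that it lies in a proper analytic subset.

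The missing idea is Lemma~\ref{le-zariski}. For a single $Y$, the set $P$ of regular points sent by some iterate, with invertible differential, to a transverse intersection with $W^s_{p,loc}$ is Zariski dense in $Y$. This is where Assumption~\ref{decalage}, the homoclinic point of Assumption~\ref{homoclinic}, and the asymptotics $\check w_n=K_f^{-n}\check y_n$ in the linearizing chart enter, in the spirit of Proposition~\ref{prop-asymptotic-rigidity} with $r=1$. Every $Y$ is reduced and irreducible and $\mathcal Z$ is not a point, so some point of $P$ has a nonconstant continuation. Tracking it gives graphs $\Gamma_Y\subset f^{n_0}(Y)$ whose base point $(b^c(Y),b^s(Y))$ on $W^s_{p,loc}$ genuinely moves. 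There is then no fixed-base-point sliding to exclude, and Step~2 disappears.

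Step~1 also fails as written. With $f$ fixed and $|\chi_p|<1$, the functions $Y\mapsto\chi_p^{j}b^c(Y)$ converge uniformly to $0$. So no $j_n$ makes $\chi_p^{j_n}b^c(Y_0)$ converge in $\Db^*$, and there is no non-normal family to exploit. In Lemma~\ref{le-normal}, non-normality comes from varying $f$, hence the multipliers, and from the expanding multiplier $\chi_{r_0}$.

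Under the contradiction hypothesis, the paper gets the constancy of $b^c$ (Proposition~\ref{prop-subvar}) from a cone argument instead. The $Y$-derivative of the persistent point $q_n(Y)=f^n(\Gamma_Y)\cap W^s_{x_n,loc}$ must lie in $C^{ss}_\eta$. But if $b^c$ varies, Remark~\ref{rk-control1} makes it asymptotically of the form $(v^c,0,v^u)$, which Lemma~\ref{le-centrer-unstable} forbids. Hence $b^s(Y)$ must vary, and this is exactly where Assumption~\ref{grille} is needed. Lemma~\ref{le-grille} shows that normalized limits of the derivatives of the intersection points $y_{m_n}(t)\in W^s_{x_{m_n},loc}$ lie in $T_{y_\infty}W^u_{p,loc}\oplus\mathcal E^{s,i}_{y_\infty}$ for some $i$. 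Assumption~\ref{grille} forbids this, since such a limit is tangent to $W^s_{x_\infty,loc}$ with $x_\infty\in\Lambda\cap\mathcal U_+$. Your ``sharper expansion of Lemma~\ref{le-control1} applied to each eigendirection'' does not identify this mechanism, and you spend Assumption~\ref{grille} on the jet problem instead.
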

Note that we restrict ourselves to $\mathcal U_+$,
as Assumption~\ref{grille} only applies in this region.

In order to exclude product-type behavior in the unstable directions, we use Assumption~\ref{decalage} to show that the set of points in $Y$ send to transverse intersections between $f^n(Y)$ and $W^s_{p,loc}$ for some $n\geq0$ is Zariski dense in $Y$.
\begin{lemma}\label{le-zariski}
For all $Y\in\mathcal Z,$ the set of points in $Y$ defined by
\[P:=\left\{w\in Y_{reg}\ ;\ \begin{array}{c}\exists n\geq0,\ f^n(w)\in W^s_{p,loc},\ D_wf^n\ \text{is invertible, and }\\ T_{f^n(w)}W^s_{p,loc}\cap D_wf^n(T_w Y)=\{0\}\end{array}\right\}\]
is Zariski dense in $Y.$
\end{lemma}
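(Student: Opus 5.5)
The plan is to run, for $f$ itself, the argument of Proposition~\ref{prop-zariski-dense-unstable}, with the roles of the stable and unstable manifolds swapped as announced just before that proposition. Fix $Y\in\mathcal Z$. By~\eqref{eq-dag}, $Y$ meets $W^s_{p,loc}$ transversely at some regular point $w_0$, so $w_0\in P$ with $n=0$. Since $Y$ is irreducible, it suffices to show that $P$ is not contained in any proper analytic subset of $Y$ near such points. I argue by contradiction: let $H\subsetneq Y$ be an analytic subset of minimal dimension containing $P$ locally. By minimality, $P\cap H_{reg}\neq\varnothing$; pick $w\in P\cap H_{reg}$ with $f^n(w)\in W^s_{p,loc}$ and $D_wf^n$ invertible. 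Pushing forward by the local biholomorphism $f^n$ near $w$, set $\Gamma:=f^n(Y)$ near $w_0':=f^n(w)$ and $H':=f^n(H)$. Then $\Gamma$ is a $u$-dimensional germ transverse to $W^s_{p,loc}$ at $w_0'$, $H'$ is smooth there, and $f^n(P)\subset H'$ locally.

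The next step is to produce the sequences required by Proposition~\ref{prop-asymptotic-rigidity}, applied to $f$ with $r=1$. This choice of $r=1$ is allowed because the multipliers of Assumption~\ref{saddle} satisfy $|\rho_u|<\tilde\chi^4<\tilde\chi^6<|\rho_1|^2$, and because for $r=1$ the condition \eqref{eq-higher-bunching} is exactly \eqref{eq-bun}. By the inclination lemma, $f^m(\Gamma)$ converges to $W^u_{p,loc}$. Assumption~\ref{blender} combined with Assumption~\ref{homoclinic} should give a point $y_\infty\in W^u_{p,loc}$ whose forward orbit comes back transversely to $W^s_{p,loc}$. The natural candidate is the point $\tilde q$ of Assumption~\ref{homoclinic}, since $f^N(W^u_{p,loc})$ is transverse to $W^s_{p,loc}$ at $q=f^N(\tilde q)$. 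Then, for $m$ large, $f^m(\Gamma)$ contains points $y_m\to\tilde q$ with $f^N(y_m)\in W^s_{p,loc}$ transversely and $D f^{m+N}$ invertible along the orbit (after a small perturbation). Setting $w_m:=f^{-m}(y_m)\in\Gamma$, we get $w_m\to w_0'$, the orbit segment stays in $B_p$, and the pulled-back points lie in $P$, so $w_m\in H'$.

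The hypothesis that must still be checked is that all unstable coordinates of $\theta(y_\infty)$ are nonzero. This is where Assumption~\ref{decalage} enters. Assumption~\ref{homoclinic} only controls the stable coordinates of $q$, so I would instead take $y_\infty$ to be an intersection point of $W^u_{p,loc}$ with $W^s_{\Lambda,loc}$ lying in $\mathcal U_+$, which exists by Assumption~\ref{blender}. Assumption~\ref{decalage} says exactly that its coordinates are nonzero. From $y_\infty$, Lemma~\ref{le-shadowing} together with the heterodimensional cycle (Assumption~\ref{free}) should give an orbit returning transversely to $W^s_{p,loc}$. Any transverse return suffices, and transversality is open, so perturbing $y_m$ slightly within $f^m(\Gamma)$ keeps the nonzero coordinates. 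Proposition~\ref{prop-asymptotic-rigidity} then gives $(H',w_0')=(\Gamma,w_0')$, contradicting $\dim H<u$. Hence $P$ is not contained in any proper analytic subset near $w_0$, and therefore it is Zariski dense in the irreducible $Y$.

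I expect the main obstacle to be producing the returning transverse intersections while keeping the limit point $y_\infty$ off the unstable coordinate hyperplanes. Concretely, this means guaranteeing that points of $W^u_{p,loc}$ near $y_\infty$ have forward orbits that hit $W^s_{p,loc}$ transversely with invertible derivative. The first thing I would try is to use the blender/cycle structure, via Assumption~\ref{blender} and Lemma~\ref{le-shadowing}, to connect $y_\infty$ back to $p$. If that fails, I would fall back on the homoclinic point of Assumption~\ref{homoclinic} and perturb it.
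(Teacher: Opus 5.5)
Your architecture is the paper's. The paper argues by contradiction with $H$ the Zariski closure of $P$, moves to a point of $P\cap H_{\rm reg}$, and derives a contradiction from the first-order asymptotics at $p$. It does this inline, via a line $F_{i_0}=T_{w_0}Y\cap(T_{w_0}W^s_{p,loc}\oplus\mathcal E^{u,i_0}_{w_0})$ transverse to $T_{w_0}H$, which is exactly Proposition~\ref{prop-asymptotic-rigidity} with $r=1$. Your check that $r=1$ is admissible is correct: $|\rho_u|<\tilde\chi^4<\tilde\chi^6<|\rho_1|^2$, and \eqref{eq-higher-bunching} reduces to \eqref{eq-bun}. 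The paper obtains $y_\infty$ as a limit of points $y_n\in f^n(Y)\cap W^s_{x_n,loc}$, $x_n\in\Lambda\cap\mathcal U_+$, given by the blender, and applies Assumption~\ref{decalage} to that limit. Fixing $y_\infty\in W^u_{p,loc}\cap W^s_{\Lambda,loc}\cap\mathcal U_+$ in advance, as you do, is also fine.

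The genuine gap is the step you flag: producing points near $y_\infty$ whose orbits return transversely to $W^s_{p,loc}$. The tools you name do not give this. Lemma~\ref{le-shadowing} only connects points of $\hat\Lambda$ to one another and never involves $p$. Assumption~\ref{free} is a non-persistence condition in parameter space, so it is irrelevant for the fixed map $f$ of this section, and it is not the heterodimensional cycle. The missing ingredient is the index-$(s+1,u)$ horizontal-like structure of $f|_{\mathcal U_-}$ from Assumption~\ref{verti}. It makes $W^s_{p,loc}$ a $cs$-graph in $\mathcal U_-$, so by Lemma~\ref{le-inter} every $u$-graph over $U^u_-$ in $\mathcal U_-$ meets $W^s_{p,loc}$ in exactly one point, transversely.

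To use this, take a small $C^{uu}_\eta$-disc $\delta\subset W^u_{p,loc}$ around $y_\infty\in W^s_{x,loc}$. The orbit of $y_\infty$ stays in $\mathcal V$, so the pieces of $f^j(\delta)$ through $f^j(y_\infty)$ expand at the strong unstable rate. Their central coordinate stays controlled, as in the proof of Lemma~\ref{le-cond2}. Since $f^2|_{\mathcal V}$ is horizontal-like onto $\Db_R\times\Bb^s\times\Bb^u_{3/2}\supset\Db_R\times\Bb^s\times U^u_-$, some iterate contains a $u$-graph over $U^u_-$ in $\mathcal U_-$. Injectivity of $f$ on $\mathcal U_\pm$ makes the derivative invertible along the way. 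Hence transverse homoclinic points of $p$ accumulate on $y_\infty$ inside $W^u_{p,loc}$. Your openness argument then transfers them to $f^m(\Gamma)$, and closedness of $H'$ gives $w_m\in H'$ with $f^{m}(w_m)\to y_\infty$. Without this step the hypotheses of Proposition~\ref{prop-asymptotic-rigidity} are not verified. As you noted, the fallback to $\tilde q$ cannot work, since Assumption~\ref{homoclinic} controls only stable coordinates.
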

\begin{proof}
Fix $Y\in\mathcal Z.$ Let $w_0$ denote a transverse intersection point between $Y$ and $W^s_{p,loc}$ given by~\eqref{eq-dag}. Let $H$ be the Zariski closure of $P$ in $Y$, and assume by contradiction that $H\neq Y$. 

A first observation is that there exists a point $w\in P$ belonging to the regular part of $H$. Therefore, by replacing $(Y,w_0)$ with $(f^n(Y),f^n(w))$ for a suitable $n\geq0$, we may assume that $H$ is smooth at $w_0$.

We now choose coordinates around $p$ such that $p$ is in standard from in $B_p.$ Moreover, by the inclination lemma, for all sufficiently large $n\geq0$, the graph transforms $f^n(Y)$ in $B_p$ are arbitrarily close to $W^u_{p,loc}$. By Assumption~\ref{saddle} and Assumption~\ref{blender}, there exist sequences $(x_n)\subset\Lambda\cap\mathcal U_+$ and $(y_n)\subset B_p$ such that
\[y_n\in f^n(Y)\cap W^s_{x_n,loc},\]
and such that, up to a subsequence, $(y_n)$ converges to a point $y_\infty \in W^u_{p,loc}$ and $(x_n)$ converges to a point $x_\infty \in \Lambda \cap \mathcal U_+$. By continuity of the local stable manifolds, $y_\infty$ also belongs to $W^s_{x_\infty,loc}$. Notice that we do not relabel the subsequence, as this plays no role in what follows.

A key point for what follows is that, by Assumption~\ref{decalage}, none of the coordinates of $y_\infty$ in $W^u_{p,loc}$ vanishes. Finally, Assumption~\ref{homoclinic} combined with the inclination lemma ensures that each $y_n$ is accumulated by points giving transverse intersections between $f^{n+m}(Y)$ and $W^s_{p,loc}.$ In particular, $w_n:=f^{-n}(y_n)$ is in $H.$

In what follows, we consider the image $(\check y_n)$ and $(\check w_n)$ of these sequences under the linearization map $\theta_f$, in order to derive a contradiction.

Observe that the standard form of $p$ allows us, for each $1\leq i\leq u$, to consider the trace of the $i$-th dynamical direction in $T_{w_0}Y$, namely
\[F_i:=T_{w_0}Y\cap\left(T_{w_0}W^s_{p,loc}\oplus\mathcal E^{u,i}_{w_0}\right).\]
Their direct sum $F_1\oplus\cdots\oplus F_u$ equals the full tangent space $T_{w_0}Y$. Therefore, since $H\neq Y$ and since $H$ is smooth at $w_0,$ there exists at least one index $i_0$ such that
\begin{equation}\label{eq-trans}
F_{i_0}\cap T_{w_0}H=\{0\}.
\end{equation}
This allows us to obtain precise local information on the image $\check H$ of $H$ under the linearization $\theta_f$ at $p$. More precisely, since $Y$ is transverse to $W^s_{p,loc}$, there exists a $C^2$ germ $g\colon(\Cb^u,0)\to B^s$ such that locally
\[\check Y:=\theta_f(Y)=\{(g(z^u),z^u)\}.\]
Thus, the transversality condition~\eqref{eq-trans}, together with the fact
that $D_{w_0}\theta_f(\mathcal E_{w_0}^{u,i_0})$ coincides with the
$i_0$-th canonical unstable direction, implies the existence of a
$C^2$ map $h \colon (\Cb^{u-1},0) \to \Cb$ such that, locally near $\check w_0:=\theta_f(w_0)$,
\[\check H\subset
\bigl\{(g(z^u),z^u)\ ;\ z^u_{i_0}=h(z^u_{I_0})\bigr\},\]
where $z^u=(z^u_1,\ldots,z^u_u)$ and $z^u_{I_0}=(z^u_1,\ldots,z^u_{i_0-1},z^u_{i_0+1},\ldots,z^u_u).$

Returning to the sequence $(\check y_n)$ defined above, which converges to a point $y_\infty\in W^u_{p,loc}$ whose coordinates in the unstable manifold are all nonvanishing, we can choose constants $0<a<b$ and an integer $N>0$ such that, for all $n\geq N$,
\begin{equation}\label{eq-y}
a<|\check y^u_{j,n}|<b,
\end{equation}

As observed above, the point $\check w_n$, defined by $\check w_n:=\theta_f\bigl(f^{-n}(y_n)\bigr),$ and hence equal to
\[\check w_n=K_f^{-n}(\check y_n),\]
belongs to $\check H$. Consequently, using that $h$ is of class $C^2$ and satisfies $h(0)=0,$ we must have
\[\check w^u_{i_0,n}=h\bigl(\check w^u_{I_0,n}\bigr)=D_0h(\check w^u_{I_0,n})+O(\|\check w^u_{I_0,n}\|^2).\]
Therefore, since $\check w^u_{i,n}=\rho_i^{-n}\check y^u_{i,n},$ where $\rho_1,\ldots,\rho_u$ are the unstable multipliers of $p$, this gives
$\rho_{i_0}^{-n}\check y^u_{i_0,n}=\sum_{j\neq i_0}c_j\rho_{j}^{-n}\check y^u_{j,n}+O(\tilde\chi^{-6n}),$ and thus
\[\check y^u_{i_0,n}
=\sum_{j\neq i_0}c_j(\rho_{i_0}\rho_{j}^{-1})^n\check y^u_{j,n}+O(\tilde\chi^{-2n}),\]
by the bunching condition in Assumption \ref{saddle}. On the other hand, by \eqref{star}, the moduli $|\rho_{i_0}\rho_j^{-1}|$ are pairwise distinct and different from $1$. Using~\eqref{eq-y} and the bounds $|\check y^u_{i_0,n}|<b$ and $|\check y^u_{j,n}|>a$, we obtain that $c_j=0$ whenever $|\rho_{i_0}\rho_j^{-1}|>1$. Hence, $\check y^u_{i_0,n}$ tends to $0$, which contradicts the lower bound in~\eqref{eq-y}.
\end{proof}
Using the inclination lemma, each point in $P$ gives rise, for sufficiently large $n$, to a $u$-graph contained in $f^n(Y)$ and intersecting $W^s_{x,loc}$ for some $x\in\Lambda$. By the previous result, these intersection points cannot all remain constant as $Y$ varies.

However, when the dimension of $W^s_{x,loc}$ is positive, that is, when $u<k-1$, the intersection $f^n(Y)\cap W^s_{x,loc}$ may persist, moving inside $W^s_{x,loc}$. As already mentioned, such a situation can occur when the stable direction simply comes from a product.

The following two results will allow us to rule out this possibility.

\begin{proposition}\label{prop-subvar}
Fix $Y_0\in\mathcal Z$ and assume that Theorem~\ref{th-subvar} does not hold. That is, $\mathcal Z$ is not reduced to a single point and, for every $n\geq0$, every $x\in\Lambda$, and every local family $(U_Y)_{Y\in\mathcal Z'}$ such that $f^n(U_Y)$ is a $u$-graph in $\mathcal U_+$ tangent to $C^{uu}_\eta$, the set
\[\{\,Y\in\mathcal Z' \; ; \; f^n(U_Y)\cap W^s_{x,loc}\neq\varnothing\,\}\]
is either empty or equal to $\mathcal Z'$.

Then there exist $n_0\geq0$, an open neighborhood $\mathcal Z_0$ of $Y_0$, and a family $(\Gamma_Y)_{Y\in\mathcal Z_0}$ of $u$-graphs in $B_p$ such that $\Gamma_Y\subset f^{n_0}(Y)$ for all $Y\in\mathcal Z_0$, and which intersects $W^s_{p,loc}$ in
\[(b^c,b^s(Y)),\]
where $b^s(Y)$ is not constant on $\mathcal Z_0$.
\end{proposition}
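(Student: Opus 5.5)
The plan is to argue by contradiction in two stages. First, the absence of new stable/unstable relations forces the central coordinate of every transverse intersection point with $W^s_{p,loc}$ to stay constant as $Y$ varies. Second, Zariski density (Lemma~\ref{le-zariski}) forbids all these intersection points from being constant, so some stable coordinate $b^s$ must move.

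\emph{Setup.} Fix $Y_0$ and a transverse intersection point $w_0\in Y_0\cap f^{-n}(W^s_{p,loc})$ coming from the set $P$ of Lemma~\ref{le-zariski}. Since $D_{w_0}f^n$ is invertible, this point can be followed holomorphically as $w(Y)$ for $Y$ in a neighborhood $\mathcal Z_0$ of $Y_0$. By the inclination lemma, after replacing $n$ by a larger $n_0$ we obtain $u$-graphs $\Gamma_Y\subset f^{n_0}(Y)$ in $B_p$ that meet $W^s_{p,loc}$ at a point with standard coordinates $(b^c(Y),b^s(Y))$. In the linearizing chart $f$ acts diagonally on $W^s_{p,loc}$ by $(x^c,x^s)\mapsto(\chi_p x^c,A x^s)$. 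Hence applying further iterates only rescales the coordinates, and neither the constancy of $b^c$ nor the non-constancy of $b^s$ is affected.

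\emph{Step 1: $b^c$ is constant.} Suppose $b^c$ is not constant on $\mathcal Z_0$; then $b^c$ can be chosen nonzero at $Y_0$ and we reproduce the mechanism of Section~\ref{sec-relation} with $f$ fixed and $Y$ varying. Take the $cs$-graph $\Delta$ from the heterodimensional cycle, mapped by $f^{n_0'}$ into $\mathsf W^{cs}_{r_0}$. Lemma~\ref{le-control1} (with $M=\mathcal Z_0$) and Lemma~\ref{le-control2} show that $f^{j+n_0'+l}(\Gamma_Y)$ contains a $u$-graph in $B_{r_0}\subset\mathcal U_+$ that is uniformly close to the vertical space $\{z^c=\chi_p^j\chi_{r_0}^l\gamma(Y),\,z^s=0\}$, where $\gamma(Y)$ is proportional to $b^c(Y)$. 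Choose $(j_n,l_n)$ so that $\chi_p^{j_n}\chi_{r_0}^{l_n}\gamma(Y_0)$ converges in $\Db^*_{1/4}$. By Assumption~\ref{blender} each such graph meets some $W^s_{x_n,loc}$ near a point $y_n\in W^u_{r_0,loc}\cap W^s_{x_n,loc}$. By hypothesis this relation is persistent on $\mathcal Z'$. Since $f$ is fixed, $y_n$ does not move, so the central coordinate $\chi_p^{j_n}\chi_{r_0}^{l_n}\gamma(Y)$ would have to stay within $o(1)$ of a constant. Letting $n\to\infty$ then forces $\gamma$, and hence $b^c$, to be constant, a contradiction. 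This is the exact analogue of the normality argument of Lemma~\ref{le-normal}, but simpler, because the multipliers no longer depend on the parameter.

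\emph{Step 2: some $b^s$ is not constant.} Suppose that for every point of $P$ the corresponding intersection point $f^n(w(Y))\in W^s_{p,loc}$ is constant in $Y$. Since $w(Y)$ is obtained from $f^n(w(Y))$ through a fixed local inverse branch of $f^n$, the point $w(Y)$ is itself constant. Then $P(Y_0)\subset Y$ for all $Y$ near $Y_0$. By Lemma~\ref{le-zariski}, $P(Y_0)$ is Zariski dense in $Y_0$, so $Y_0\subset Y$; since both are reduced and irreducible of the same dimension and degree, $Y=Y_0$. This contradicts the assumption that $\mathcal Z$ is not a point. Hence some intersection point has a moving $(b^c,b^s)$, and by Step~1 only $b^s$ moves. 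Taking the associated $\Gamma_Y$ gives the proposition.

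\emph{Main obstacle.} I expect the difficulty to lie in Step~1. One must ensure that the graphs produced by Lemmas~\ref{le-control1} and~\ref{le-control2} stay $u$-graphs tangent to $C^{uu}_\eta$ in $\mathcal U_+$, as the hypothesis requires. One must also handle the fact that $\gamma$ could vanish identically or that $b^c(Y_0)=0$; the latter I would treat by perturbing $Y_0$ within $\mathcal Z_0$.
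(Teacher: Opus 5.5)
\textbf{Gap in Step~1.} Your Step~2 is essentially the paper's first paragraph. The gap is in Step~1.

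The renormalization through $r_0$ needs two things:
\begin{itemize}
\item a $cs$-graph $\Delta$ through a point of $W^u_{p,loc}$ that some $f^{m}$ maps into $\mathsf W^{cs}_{r_0}$;
\item for Lemma~\ref{le-control2}, a limit graph (here a piece of $f^{m}(W^u_{p,loc})$) meeting $W^s_{r_0,loc}$.
\end{itemize}
Both amount to $f^{m}(W^u_{p,loc})\cap W^s_{r_0,loc}\neq\varnothing$ for the \emph{fixed} map $f$. The heterodimensional cycle does not give this. The blender only yields $W^u_{p,loc}\cap W^s_{x,loc}\neq\varnothing$ for some $x\in\Lambda$, not for a point whose orbit lands in $W^s_{r_0,loc}$. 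Nor is it among the standing hypotheses on $f$ in Section~\ref{sec-subvar}. By Assumption~\ref{free} it is a non-persistent, codimension-one condition on $f$. So for a very general $f\in\tilde\Omega$, neither $r_0$ nor any other periodic point of $\Lambda$ is connected to $W^u_{p,loc}$ in this way, and your mechanism has nothing to start from.

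The condition does hold for $f=f_N\in A_N\subset A_1$, the map to which the proposition is applied in the proof of Theorem~\ref{th-main-blender}. There the rest of your Step~1 goes through:
\begin{itemize}
\item $\alpha$, $\beta$ and $v^c$ do not depend on $Y$;
\item $y_\infty=W^u_{r_0,loc}\cap W^s_{x_\infty,loc}$ is fixed, which forces $\gamma$, hence $b^c$, to be constant.
\end{itemize}
Note also that $r_0\in\mathcal V_-$, so $B_{r_0}\not\subset\mathcal U_+$. You must restrict your graphs over $V^u_+$ before invoking the hypothesis. As written, however, you prove the proposition only under this extra connection assumption.

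\textbf{What the paper does instead.} It needs no renormalization and no connection to $r_0$; everything happens in $B_p$.
\begin{itemize}
\item By Remark~\ref{rk-control1}, $f^n(\Gamma_Y)$ is a graph over $B^u$ with $cs$-coordinate $\chi_p^n b^c(Y)\bigl[\alpha(1,0)+R_n(Y,z)\bigr]$.
\item The blender gives $q_n(Y_0)\in f^n(\Gamma_{Y_0})\cap W^s_{x_n,loc}$. By hypothesis this intersection persists, giving a holomorphic map $Y\mapsto q_n(Y)$ into the fixed manifold $W^s_{x_n,loc}$.
\item Differentiate in $Y$. If $b^c$ were not constant, the normalized velocities $\partial_Y q_n$ would be asymptotically of the form $(v^c,0,v^u)$, that is, close to $E^{cu}$ along $W^u_{p,loc}$.
\item But these velocities are tangent to $W^s_{x_n,loc}$, hence lie in $C^{ss}_\eta$. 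This is impossible by Lemma~\ref{le-centrer-unstable}.
\end{itemize}
This first-order cone argument works for every fixed $f\in\tilde\Omega$ and avoids any control of $\chi_p^{j}\chi_{r_0}^{l}$.

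\textbf{Minor point in Step~2.} The neighborhood on which $w(Y)$ stays constant depends on $w\in P(Y_0)$. To get $P(Y_0)\subset Y$ for a single $Y\neq Y_0$, use that $\{Y:\ w\in Y\}$ is analytic and contains an open subset of the connected family.
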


Observe that the key feature is that $b^c$ does not depend on $Y$, while $b^s(Y)$ does, i.e., $\Gamma_Y\cap W^s_{p,loc}$ moves nontrivially in a strong stable leaf.

\begin{proof}
By Lemma~\ref{le-zariski}, the set of points $P$ associated with $Y_0$ is Zariski dense. Since $\mathcal Z$ is not reduced to $\{Y_0\}$, there exists a point $w_{Y_0}\in P$ whose continuation is not constant as $Y$ varies in $\mathcal Z$. By definition of $P$, there exists $n_0\geq0$ such that $f^{n_0}(w_{Y_0})$ is a transverse intersection point between $f^{n_0}(Y_0)$ and $W^s_{p,loc}$.

This intersection point can therefore be followed holomorphically in a small connected open neighborhood $\mathcal Z_0\subset\mathcal Z$ of $Y_0$, yielding a map
\[y_0\colon \mathcal Z_0 \to W^s_{p,loc}\]
which is not constant. Since $f^{n_0}(Y)$ is transverse to $W^s_{p,loc}$ at $y_0(Y)$, possibly after increasing $n_0$ we may assume that $y_0(Y)$ belongs to a submanifold $\Gamma_Y\subset f^{n_0}(Y)$ arbitrarily close to $W^u_{p,loc}$  for every $Y\in\mathcal Z_0$. In particular, it contains a $u$-graph in $\mathcal U_+$ tangent to the cone field $C^{uu}_\eta$.

Let $(b^c(Y),b^s(Y))$ denote the coordinates of $y_0(Y)$ in $W^s_{p,loc}$. Since $y_0$ is not constant, it remains to show that $b^c(Y)$ is constant on $\mathcal Z_0$.

To this end, by Lemma~\ref{le-control1} and Remark~\ref{rk-control1}, we have
\begin{equation}\label{eq-controle1surY}
f^n(\Gamma_Y)=\left\{\left(
\chi_p^{n} b^c(Y)\big[\alpha(f)(1,0)+R_n(Y,z)\big],
\, z
\right)\ ;\ z\in B^u\right\},
\end{equation}
where $R_n(Y,z)$ converges locally uniformly to $0$ with respect to both $Y$ and $z$.

On the other hand, by Assumption~\ref{blender}, there exists $x_n\in\Lambda\cap\mathcal U_+$ such that $f^n(\Gamma_{Y_0})$ intersects $W^s_{x_n,loc}$ at a point $q_n(Y_0)$. By the assumption of the proposition, all such intersections persist, and thus define a map $q_n(Y)$ for $Y\in\mathcal Z_0$.

If $b^c(Y)$ were not constant on $\mathcal Z_0$, then~\eqref{eq-controle1surY} would imply the existence of unit tangent vectors in $T_{q_n(Y_0)}W^s_{x_n,loc}$ arbitrarily close to vectors of the form $(v_n^c,0,v_n^u)$ in $B_p$. This is impossible, since $W^s_{x_n,loc}$ is tangent to the strong stable cone field $C_\eta^{ss}$, whereas vectors of the form $(v_n^c,0,v_n^u)$ in $B_p$ do not belong to these cones by Lemma \ref{le-centrer-unstable}.
\end{proof}
Proposition~\ref{prop-subvar} rules out variations of the
intersections $f^n(Y) \cap W^s_{p,loc}$ that occur transversely to the
strong stable directions. The next result, combined with
Assumption~\ref{grille}, excludes those that are tangent to them.

In order to simplify the notation, we restrict to the case of a holomorphic disc $(Y_t)_{t\in\Db}$, which explains why the family is parametrized by $\Db$ here.

\begin{lemma}\label{le-grille}
Assume that $p$ is in standard form in $B_p$. Let $\Gamma=(\Gamma_t)_{t\in\Db}$ be a family of pairwise disjoint $u$-graphs in $B_p$ intersecting $W^s_{p,loc}$ at $(b^c,b^s(t))$, where the map $t\mapsto b^s(t)$ is not constant. Let $V$ denote the total space
\[V:=\bigcup_{t\in\Db}\Gamma_t.\]
If $(x_n,v_n)$ is a sequence in the tangent space $TV$ with $x_n\in\Gamma_0$ such that for an increasing sequence $(m_n)$, $(f^{m_n}(x_n),D_{x_n}f^{m_n}(v_n))$ converges in $TB_p$ to a pair $(y,w)$, then there exists $1\leq i\leq s$ such that
$$w\in T_yW^u_{p,loc}\oplus\mathcal E^{s,i}_{y},$$
where $\mathcal E^{s,i}$ is the dynamical direction associated to the $i$-th strong-stable multiplier $\lambda_i.$
\end{lemma}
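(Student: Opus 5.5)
Work in the $C^2$ linearizing coordinates $\theta=\theta_f$ of $p$ given by the standard form, where $f$ becomes $K(x^c,x^s,x^u)=(\chi_p x^c,Ax^s,Ex^u)$ and, by Proposition~\ref{prop-cderiv}, $D\theta$ is diagonal along $W^s_{p,loc}\cup W^u_{p,loc}$ (it sends dynamical directions to coordinate directions). The plan is to decompose $v_n$ into a component tangent to $\Gamma_0$ and a component transverse to the leaves, push both forward by $K^{m_n}$, and observe that only two kinds of directions survive in the limit: unstable ones and one strong stable one.

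\emph{Step 1 (splitting $v_n$).} Since the graphs $\Gamma_t$ are disjoint and $V$ is a smooth total space (after shrinking $\Db$), write $v_n=\tau_n+c_n\nu_n$, with $\tau_n\in T_{x_n}\Gamma_0$ and $\nu_n$ the image of $\partial_t$ under a transverse section. Because $f^{m_n}(x_n)\to y$ stays in $B_p$, the inclination lemma shows that $D f^{m_n}(T_{x_n}\Gamma_0)$ converges to $T_yW^u_{p,loc}$. The $\tau_n$ part therefore contributes only to $T_yW^u_{p,loc}$ in the limit. Since $w$ is finite and $D f^{m_n}$ is uniformly expanding on $T\Gamma_0$, the $\tau_n$ contribution stays bounded.

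\emph{Step 2 (the transverse part).} At $t=0$, the vector $\partial_t(\Gamma_t\cap W^s_{p,loc})=(0,\partial_t b^s(0))$ is nonzero in the strong stable coordinates, after reparametrizing $t$ so that the first nonvanishing derivative is used. The point is that $b^c$ is constant, so this vector has no central component. Choose $\nu_n$ to be, modulo $T\Gamma_0$, the parallel transport of this stable vector. In $\theta$-coordinates it is $(0,\beta,0)+O(|x_n^u|)$, and by Proposition~\ref{prop-cderiv} the $O(|x_n^u|)$ error can be taken in directions whose central component is $o(1)$ as $x_n\to W^s_{p,loc}$. Applying $K^{m_n}$ gives $(\chi_p^{m_n}\epsilon_n, A^{m_n}\beta_n, \ast)$, where the $\ast$ part can be absorbed into $T W^u$. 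Write $\beta=\sum_i\beta_i e_i^s$ and let $i$ be the largest index with $\beta_i\ne0$. Then $A^{m_n}\beta$ normalizes to $e_i^s$, because $|\lambda_s|>\dots$ are distinct by~\eqref{star}. The central component $\chi_p^{m_n}\epsilon_n$ is negligible relative to $\lambda_i^{m_n}$ only if $\epsilon_n$ decays fast enough, and this needs care. The remedy is to choose the transverse section inside $\theta^{-1}(\{x^c=b^c\})$. This is possible since $b^c$ is constant and the graphs meet this $C^2$ hypersurface transversally along $W^s$, so that $\epsilon_n\equiv0$ in linearized coordinates.

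\emph{Step 3 (conclusion).} Any limit $w$ is therefore a combination of a vector in $T_yW^u_{p,loc}$ and a multiple of $D\theta^{-1}(e_i^s)$ at $y$, which by Proposition~\ref{prop-bunching} is $\mathcal E^{s,i}_y$. The same conclusion holds if $c_n\lambda_i^{m_n}\to0$, since then $w\in T_yW^u$. The main obstacle is Step 2, namely controlling the central drift of the transverse vector. Without the constancy of $b^c$, the central component $\chi_p^{m_n}$ would dominate $\lambda_i^{m_n}$ (since $|\chi_p|>|\lambda_s|$). I also expect the non-holomorphic $C^2$ error terms of $\theta$ away from $W^s\cup W^u$ to need the bunching estimate $o(\tilde\chi^{-2n})$ used in Lemma~\ref{le-control1} to be dominated by $|\lambda_i|^{m_n}$. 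If this fails for small $i$, I would instead normalize by the largest surviving rate and use Proposition~\ref{prop-holomorphic-jets} to get higher-order holomorphic jets.
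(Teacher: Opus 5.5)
Your outline matches the paper's proof: work in the chart $\theta$ where $f$ is exactly $K$, split $v_n$ into a part tangent to $\Gamma_0$ and a part along $\partial_t$, use constancy of $b^c$ to make the $\partial_t$-direction at $\Gamma_0\cap W^s_{p,loc}$ purely strong stable, and finish with the distinct moduli of the $\lambda_j$. The proposal does not close, however, exactly at the step you call the main obstacle.

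First, Step~1 is circular. Finiteness of $w$ bounds only the sum $Df^{m_n}(\tau_n)+c_nDf^{m_n}(\nu_n)$, not each term. Since $c_n$ may be as large as $|\lambda_i|^{-m_n}$, the second term can have a huge unstable component unless $\nu_n$ has none.

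Second, the remedy in Step~2 does not remove the central drift $\chi_p^{m_n}c_n\nu_n^c$. Because $v_n$ is fixed, forcing $\nu_n^c=0$ only subtracts a vector $\sigma_n\in T\Gamma_0$ with $\sigma_n^c=\nu_n^c$. The term $c_n\nu_n^c$ then reappears in the central part of the new $T\Gamma_0$-component, and it is multiplied by the same $\chi_p^{m_n}$. The new $\nu_n$ also acquires an unstable component, which breaks Step~1. Finally, the claimed transversality of $\Gamma_t$ to $\theta^{-1}(\{x^c=b^c\})$ is unjustified, since a graph may be tangent to it. Your fallback to Proposition~\ref{prop-holomorphic-jets} is also unnecessary. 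In the $\theta$-chart the iteration is exactly linear, so no non-holomorphic error terms of $\theta$ ever enter.

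What is missing is the quantitative bootstrap, which is what the paper does.
\begin{itemize}
\item Take $\nu_n:=\partial_t(h_t(z),z)$ at fixed unstable coordinate $z$. Then $\nu_n$ has zero unstable component, and $\nu_n=\xi_ne_0+O(|x_n^u|)$ with $e_0=(0,\partial_tb^s(0),0)$.
\item Boundedness of $E^{m_n}\check v_n^u=E^{m_n}\tau_n^u$ gives $|\tau_n|=O(|\rho_1|^{-m_n})$.
\item Convergence of $K^{m_n}\check x_n$ gives $|x_n^u|=O(|\rho_1|^{-m_n})$.
\item Boundedness of the stable part then gives $|c_n|=O(|\lambda_i|^{-m_n})$, with $i$ the largest index where $e_0$ is nonzero. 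The paper uses the cruder bound $o(\tilde\chi^{3m_n})$.
\end{itemize}
With these bounds, the drift is $O\bigl((|\chi_p|/|\lambda_i\rho_1|)^{m_n}\bigr)$. The stable error terms $c_nA^{m_n}O(|x_n^u|)$ are $O\bigl((|\lambda_s|/|\lambda_i\rho_1|)^{m_n}\bigr)$. Both tend to $0$ for every $i$, because Assumption~\ref{saddle} gives $|\lambda_i||\rho_1|>\tilde\chi^{-3}\tilde\chi^{3}=1$, while $|\chi_p|<1$ and $|\lambda_s|<\tilde\chi^{-2}$. The $\tau_n$-contributions to the central and stable parts vanish trivially.

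Hence $\check w$ has zero central component, and its stable part is along $e^s_i$: components $j<i$ decay since $|\lambda_j|<|\lambda_i|$, and components $j>i$ are pure error. The only regularity of $\theta$ used is the $C^1$ dependence of $\partial_th_t$ on $z$ and the diagonal form of $D\theta$ along $W^u_{p,loc}$, which converts $\check w$ back to $w$.
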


\begin{proof}
A first remark is that, since the $\Gamma_t$ are pairwise disjoint, the set $V$ is a (non-closed) submanifold. Second, the argument relies entirely on the fact that the expansion in the unstable direction near $p$ dominates the contraction in the stable direction by Assumption~\ref{saddle}. Therefore, it is independent of the complex setting, and we will work in smooth linearization coordinates.

Let $(x_n,v_n)$ be a sequence with $x_n\in V$ and $v_n\in T_{x_n}V$ as in the statement. We set
\[\check x_n:=(\check x_n^c,\check x_n^s,\check x_n^u):={\theta_f}(x_n)
\quad\text{and}\quad
\check v_n:=(\check v_n^c,\check v_n^s,\check v_n^u):=D_{x_n}{\theta_f}\,v_n,\]
where ${\theta_f}$ denotes the linearization map associated to $p$.

Since $(\Gamma_t)_{t\in\Db}$ is a family of $u$-graphs in $B_p$, there exists a family of $C^1$ maps $h_t=(h_t^c,h_t^s)$ such that
\[\check\Gamma_t:={\theta_f}(\Gamma_t)
=\{(h_t^c(z),h_t^s(z),z)\ ;\ z\in B^u\}.\]
We denote by $\check V:={\theta_f}(V)$. In particular, for $\check x\in\check\Gamma_0$, the tangent space of $\check V$ at $\check x$ decomposes as
\[T_{\check x}\check V
= T_{\check x}\check\Gamma_0 \oplus \check E_{\check x},\]
where $\check E_{\check x}$ is generated by the derivatives with respect to $t$ (viewed as two real variables) of the map $(t,z)\mapsto h_t(z)$ at $t=0$.

Accordingly, $\check v_n$ can be written as
\[\check v_n=\alpha_n u_n+\beta_n e_n,\]
where $\alpha_n,\beta_n\in\Rb$, $u_n\in T_{\check x_n}\check\Gamma_0$, and $e_n\in\check E_{\check x_n}$, with $\|u_n\|=\|e_n\|=1$.

The key point is that, at $z=0$, we have by assumption
\[h_t(0)=(b^c,b^s(t)).\]
Hence, at the point $\check x_0:=(b^c,b^s(0),0)$, the space $\check E$ is generated (over $\Cb$) by a unit vector $e_0$ in the strong stable direction. As a consequence,
\[e_n=\xi_n e_0+O(\check x_n^u)\]
for some $\xi_n\in\Sb^1$.

Now let
\[\check y:={\theta_f}(y)
\quad\text{and}\quad
\check w:=(\check w^c,\check w^s,\check w^u):=D_y{\theta_f}\,w.\]
Since $y\in W^u_{p,loc}$ and $D{\theta_f}$ preserves the center direction along this manifold, we have that $\check w^c=0$ if and only if the first coordinate of $w$ vanishes.

To prove that $\check w^c=0$, we consider the image of $\check v_n$ under the linearized dynamics $K^{m_n}=(\chi_p^{m_n},A^{m_n},E^{m_n}),$ and set
\[\check w_n:=(\check w_n^c,\check w_n^s,\check w_n^u):=K^{m_n}\check v_n.\]
By assumption, $\check w_n$ converges to $\check w$ as $n\to\infty$.

Let $\tilde\chi$ be the constant defined in Section~\ref{sec-ass}. Since the sequences $(\check w_n^s)$ and $(\check w_n^u)$ are convergent, we obtain by Assumption~\ref{saddle}
\begin{equation*}\label{eq-s}
\check v_n^s=o(\tilde\chi^{3{m_n}})
\quad\text{and}\quad
\check v_n^u=o(\tilde\chi^{-3{m_n}}).
\end{equation*}
Moreover, since $K^{m_n}(\check x_n)$ converges to $\check y$, we have $\check x_n^u=o(\tilde\chi^{-3{m_n}})$, and therefore
\begin{equation*}\label{eq-e}
e_n=\xi_n e_0+o(\tilde\chi^{-3{m_n}}).
\end{equation*}

Projecting $\check v_n$ onto the unstable direction yields
\[\alpha_n=\beta_n\,o(\tilde\chi^{-3{m_n}})+O(\|\check v_n^u\|)=\beta_n\,o(\tilde\chi^{-3{m_n}})+o(\tilde\chi^{-3{m_n}}),\]
while projection onto the strong stable direction gives
\[\beta_n(1+o(\tilde\chi^{-3{m_n}}))+O(\alpha_n)
=\|\check v_n^s\|
=o(\tilde\chi^{3{m_n}}).\]
It follows that
\[\beta_n=o(\tilde\chi^{3{m_n}})
\quad\text{and}\quad
\alpha_n=o(\tilde\chi^{3{m_n}})o(\tilde\chi^{-3{m_n}})=o(1).\]
Finally, returning to $\check w_n$, we obtain
\[\check w_n^c=\chi_p^n\check v_n^c
=\chi_p^n\big(O(\alpha_n)+\beta_n\,o(\tilde\chi^{-3{m_n}})\big)=o(1).\]
Since $\check w_n^c\to\check w^c$, we conclude that $\check w^c=0$, as claimed. It remains to prove that at most one coordinate of $\check w^s$ can be nonzero.

First, recall that
\[\check w_n^s = A^{m_n} \check v_n^s
= \alpha_n A^{m_n} u_n^s + \beta_n A^{m_n} e_n^s.\]
Using the asymptotics of $\alpha_n=o(1)$, $\beta_n=o(\tilde\chi^{3{m_n}})$, and
$e_n^s = \xi_n e_0^s + o(\tilde\chi^{-3{m_n}})$, we obtain
\[\check w_n^s = \beta_n \xi_n A^{m_n} e_0^s + o(1).\]
Suppose that there exists $1 \le i_0 \le s$ such that the
$i_0$-th coordinate $\check w^s(i_0) \neq 0$. Then
\[0\neq\check w^s(i_0)= \lim_{n \to \infty} \check w_n^s(i_0)
= \lim_{n \to \infty}\beta_n \xi_n \lambda_{i_0}^{m_n} e_0^s(i_0),\]
where $\lambda_{i_0}$ denotes the $i_0$-th strong stable multiplier of $p$ given in Assumption~\ref{saddle}.
Since these multipliers have distinct moduli, such a nonzero and finite limit can occur for at most one index $i_0$.
\end{proof}
We can now conclude the proof of Theorem~\ref{th-subvar}.

\begin{proof}[Proof of Theorem~\ref{th-subvar}]
We argue by contradiction and assume that the statement of the theorem does not hold. Fix $Y_0\in\mathcal Z$. By Proposition~\ref{prop-subvar}, there exist an integer $n_0\geq0$, an open neighborhood $\mathcal Z_0$ of $Y_0$ in $\mathcal Z$, and a family $(\Gamma_Y)_{Y\in\mathcal Z_0}$ of $u$-graphs in $B_p$ such that $\Gamma_Y\subset f^{n_0}(Y)$ for all $Y\in\mathcal Z_0$, and such that
\[\Gamma_Y\cap W^s_{p,loc}=(b^c,b^s(Y)),\]
where the map $Y\mapsto b^s(Y)$ is not constant on $\mathcal Z_0$.

We may therefore choose a holomorphic disc $(Y_t)_{t\in\Db}$ contained in $\mathcal Z_0$ and centered at $Y_0$, such that the map $t\mapsto b^s(t)$ is nonconstant. In particular, near any noncritical point of $b^s$, the germs of the polydiscs $\Gamma_t$ near $W^s_{p,loc}$ are pairwise disjoint. Possibly after restricting to a smaller holomorphic disc and taking an iterate, we may assume that the polydiscs $\Gamma_t$ are pairwise disjoint.

On the other hand, by Assumption~\ref{blender}, for every $n\geq0$ the graph transform $f^n(\Gamma_0)$ intersects the local stable manifold $W^s_{x_n,loc}$ of some point $x_n\in\Lambda\cap\mathcal U_+$ at a unique point, which we denote by $y_n$. By our contradiction assumption, all these intersections persist along the family, so that for each $n$ the intersection
\[f^n(\Gamma_t)\cap W^s_{x_n,loc}\]
is given by a holomorphic map $t\mapsto y_n(t)$ with $y_n(0)=y_n$. Moreover, there are subsequences such that $(x_{m_n})$ converges to some $x_\infty\in\Lambda\cap\mathcal U_+$ and $(y_{m_n})$ converges to
\[y_\infty\in W^u_{p,loc}\cap W^s_{x_\infty,loc}.\]
Since the graphs $\Gamma_t$ are pairwise disjoint, these maps are injective. Therefore, differentiating at $t=0$ yields a nonzero tangent vector
\[w_n \in T_{y_{m_n}}W^s_{x_{m_n},loc}.\]
Up to a subsequence, we have that $w_n/\|w_n\|$ converges to $w_\infty\in T_{y_\infty}W^s_{x_\infty,loc}.$
Since $x_\infty\in\Lambda\cap\mathcal U_+,$ $w_\infty$ is the image of some vector in $C_\eta^{ss}$ by the differential of the inverse branche $G_+\colon f(\mathcal U_+)\to\mathcal U_+.$ Hence, Assumption~\ref{grille} implies that $w_\infty$ does not belong to
\[T_{y_\infty} W^u_{p(f),loc} \oplus\mathcal E^{s,i}_{y_\infty}\]
for any $1 \le i \le s$, where $\mathcal E^{s,i}_{y_\infty}$ denotes the dynamical
direction at $y_\infty$ associated with the eigenvalue $\lambda_i$.

We now apply Lemma~\ref{le-grille} to the sequence of points
\[\hat x_n := f^{-m_n}(y_{m_n})\in \Gamma_0\]
and tangent vectors
\[v_n := D_{y_{m_n}}f^{-{m_n}}\left(\frac{w_n}{\|w_n\|}\right) \in T_{\hat x_n}V,\]
where
\[V:=\bigcup_{t\in\Db}\Gamma_t.\]
Lemma~\ref{le-grille} therefore forces
\[w_\infty \in T_{y_\infty} W^u_{p(f),loc} \oplus\mathcal E^{s,i}_{y_\infty}\]
for some $1 \le i \le s$, contradicting the above consequence of Assumption~\ref{grille}.
\end{proof}

\subsection{Proofs of Theorem~\ref{th-main-blender} and
Theorem~\ref{th-blender-poly}}
\label{sec-preuve-main}

Now that Theorem~\ref{th-point} and Theorem~\ref{th-subvar} are
established, the proofs of the two main results of this section are
essentially the same as that of \cite[Theorem~C]{gtv}. We treat only
the case of endomorphisms, as the polynomial case is identical.

\begin{proof}[Proof of Theorem~\ref{th-main-blender}]
Let $(Y',f')\in\mathcal Y_{\rm reg}$ satisfy~\eqref{eq-pitch}.
We fix an arbitrarily small neighborhood $\mathcal Y_0$ of $(Y',f')$
in $\mathcal Y$ on which \eqref{eq-pitch} and the condition
$f\in\Omega_u$ hold. As explained in~\eqref{eq-inter-non-vide}, it
suffices to prove that
\[
\mathcal Y_0\cap\supp(\mu_{\mathcal Y})\neq\varnothing.
\]

As in Section~\ref{sec-main-blender}, after possibly shrinking
$\mathcal Y_0$, Proposition~\ref{prop:local-normalization} allows us to
choose
\[
 (Y_0,f_0)\in\mathcal Y_0\cap\mathcal Y^\circ
 \qquad\text{and}\qquad
 f_0\in\tilde\Omega\subset\Omega_u
\]
such that simultaneous conjugacy identifies $\mathcal Y_0$ with a
complex submanifold of
\[
 \mathcal X_D^u\times\tilde\Omega
\]
whose projection
\[
 \pi_{\tilde\Omega}:\mathcal Y_0\to\tilde\Omega
\]
is a surjective submersion. Recall that
\[
 N:=\dim\tilde\Omega=\dim\mathscr M_d^k,
\]
so the fibers of $\pi_{\tilde\Omega}$ have dimension $\dim\mathcal Y-N$.

We freely shrink $\mathcal Y_0$ and $\tilde\Omega$ in what follows.

We also use all the reductions introduced in
Section~\ref{sec-standard}. In particular, the periodic points
$r_1(f),\ldots,r_N(f)$ are defined on $\tilde\Omega$ and the multiplier
map
\[
f\mapsto
\bigl(\chi_{r_1(f)},\ldots,\chi_{r_N(f)}\bigr)
\]
is a local biholomorphism onto its image.

Roughly speaking, the strategy is the following.
\begin{itemize}
\item Using Assumption~\ref{init}, choose a map $f_0\in\tilde\Omega$
such that $W^u_{p(f_0),loc}$ intersects a preimage of
$W^s_{r_0(f_0),loc}$ and
\[
\overline{\langle\chi_{p(f_0)},\chi_{r_0(f_0)}\rangle}=\Cb^*.
\]

\item This stable/unstable relation defines a hypersurface
$A_1\subset\tilde\Omega$.

\item Using the homoclinic intersection from
Assumption~\ref{homoclinic}, we successively impose new
stable/unstable relations, defining a decreasing sequence of analytic
subsets
\[
A_1\supset A_2\supset\cdots\supset A_N
\]
of $\tilde\Omega$. Theorem~\ref{th-point} ensures that this process can
be continued until $A_N$ reduces to a point.

\item Taking inverse images by the submersion $\pi_{\tilde\Omega},$ these relations lift to corresponding relations in $\mathcal Y_0.$ Once the dynamical parameter is fixed,
Theorem~\ref{th-subvar}, together with~\eqref{eq-pitch}, allows us to
successively impose new relations coming from variations of $Y$ until
we obtain a zero-dimensional intersection.

\item Finally, using the density of periodic points in $\Lambda(f)$,
the inclination lemma, and the persistence of proper intersections, we
replace all these relations by relations satisfying
\ref{it:cond21} and~\ref{it:cond22}.
\end{itemize}

We now give the details.

\bigskip

\noindent\emph{\textbf{Initialization.}}
Using Assumption~\ref{init}, we fix $f_0\in\tilde\Omega$, $m_0\geq0$
and $\tilde r_0(f_0)\in W^s_{\Lambda(f_0),loc}$ such that
\[
\overline{\langle\chi_{p(f_0)},\chi_{r_0(f_0)}\rangle}=\Cb^*,
\qquad
\tilde r_0(f_0)\in W^u_{p(f_0),loc},
\]
and
\[
f_0^{m_0}(\tilde r_0(f_0))
\in W^s_{r_0(f_0),loc}.
\]
By Assumption~\ref{free}, this relation is not persistent. Hence,
possibly after shrinking $\tilde\Omega$,
\[
A_1:=
\left\{
f\in\tilde\Omega\ ;\
f^{m_0}(W^u_{p(f),loc})\cap W^s_{r_0(f),loc}
\neq\varnothing
\right\}
\]
defines a hypersurface of $\tilde\Omega$ containing $f_0$.

On the other hand, Assumption~\ref{homoclinic} provides a local
submanifold contained in the global unstable manifold $W^u_{p(f)}$
and transverse to $W^s_{p(f),loc}$. Using the inclination lemma and
possibly replacing it by an iterate, we obtain a holomorphic family
$(\Gamma(f))_{f\in\tilde\Omega}$ of $u$-graphs contained in
$W^u_{p(f)}$. The iterates of these graphs will be used to produce the
successive stable/unstable relations.

The only technical difficulty is that $A_1$ need not be irreducible
at $f_0$. A non-persistent relation may therefore decrease the
dimension of only one local component and fail to give the proper
intersection needed below.

\bigskip

\noindent\emph{\textbf{Move to a smooth point.}}
To overcome this difficulty, we proceed as in
\textbf{Step~2} of the proof of~\cite[Theorem~C]{gtv}.
The condition
\[
\overline{\langle\chi_{p(f)},\chi_{r_0(f)}\rangle}=\Cb^*
\]
is very generic in the real analytic sense. We may therefore replace
$f_0$ by a smooth point $f_1$ of $A_1$ satisfying the same condition,
and choose a neighborhood $\tilde\Omega_1$ of $f_1$ such that
$A_1\cap\tilde\Omega_1$ is smooth, connected and closed in
$\tilde\Omega_1$.

We now argue inductively. Assume that, for some $1\leq i<N$, we have
constructed a smooth connected analytic submanifold
$A_i\subset\tilde\Omega_i$ of codimension $i$, containing a point
$f_i$ such that
\[
\overline{\langle\chi_{p(f_i)},\chi_{r_0(f_i)}\rangle}=\Cb^*.
\]
If no new stable/unstable relation defines a hypersurface of $A_i$
near $f_i$, Theorem~\ref{th-point} implies that $A_i$ is reduced to a
point. Since $i<N=\dim\tilde\Omega$, this is impossible.

Hence there exist $m_i\geq0$ and $x_i\in\Lambda$ such that
\[
A_{i+1}:=
\left\{
f\in A_i\cap\tilde\Omega_i\ ;\
f^{m_i}(\Gamma(f))\cap W^s_{x_i(f),loc}
\neq\varnothing
\right\}
\]
defines locally a hypersurface of $A_i$. As above, we may move to a
smooth point $f_{i+1}$ of a suitable component of $A_{i+1}$ satisfying
the dense subgroup condition and choose a neighborhood
$\tilde\Omega_{i+1}$ such that $A_{i+1}$ is smooth, connected and
closed there.

Iterating this procedure, we obtain
\[
A_1\supset A_2\supset\cdots\supset A_N,
\qquad
\codim_{\tilde\Omega}A_i=i,
\]
and, after the last reduction,
\[
A_N=\{f_N\}.
\]

\bigskip

\noindent\emph{\textbf{Variations coming from subvarieties.}}
We now use the fact that $\pi_{\tilde\Omega}\colon\mathcal Y_0\to\tilde\Omega$ is a submersion and, for $1\leq i\leq N$, set
\[
\mathcal A_i:=\pi_{\tilde\Omega}^{-1}(A_i)\cap\mathcal Y_0.
\]
After shrinking around the final parameter, we may assume that
$\mathcal A_N$ is a connected open subset of the fiber
\[
\pi_{\tilde\Omega}^{-1}(f_N).
\]
In particular,
\[
\dim\mathcal A_N=\dim\mathcal Y-N.
\]

If $\mathcal A_N$ is not reduced to a point, condition~\eqref{eq-pitch}
and the inclination lemma put us in the setting of
Theorem~\ref{th-subvar}. Hence we can impose a new relation between an
iterate of the varying subvariety $Y$ and the local stable manifold of
a point of $\Lambda(f_N)$, defining a hypersurface of $\mathcal A_N$.

Repeating the argument, we obtain a decreasing sequence
\[
\mathcal A_N\supset\mathcal A_{N+1}\supset\cdots
\supset\mathcal A_{N'},
\qquad
N':=\dim\mathcal Y,
\]
where, for every $N<i\leq N'$, the set $\mathcal A_i$ is locally of
the form
\[
\left\{
(Y,f_N)\in\mathcal A_{i-1}\ ;\
f_N^{m_{i-1}}(U_Y)\cap W^s_{x_{i-1},loc}
\neq\varnothing
\right\}
\]
for some local family $(U_Y)$, some $m_{i-1}\geq0$ and
$x_{i-1}\in\Lambda(f_N)$. Theorem~\ref{th-subvar} ensures that the
procedure can be continued as long as the current family has positive
dimension. Therefore
\[
\mathcal A_{N'}=\{(Y_*,f_N)\}
\]
is reduced to a single point.

\bigskip

\noindent\emph{\textbf{Genuine Misiurewicz relations.}}
The construction above gives $N'=\dim\mathcal Y$ independent
stable/unstable relations. We now replace them by relations involving
periodic points in $\Lambda$ and iterates of $Y$, as required in
\ref{it:cond21} and~\ref{it:cond22}.

Indeed, consider the corresponding intersection in
\[
\mathcal Y_0\times(\Pb^k)^{N'}.
\]
For the first $N$ coordinates, the local $u$-graphs involved are
respectively contained in iterates of
\[
W^u_{p(f),loc},\ \Gamma(f),\ldots,\Gamma(f),
\]
whereas for the remaining coordinates they are contained in iterates
of the varying subvariety $Y$. Let $Z$ denote the analytic set obtained
by putting these $N'$ local $u$-graphs together, and let
\[
\mathcal W^s_{x,loc}:=
\left\{
(Y,f,z_1,\ldots,z_{N'})\ ;\
z_i\in W^s_{x_i(f),loc},\ 1\leq i\leq N'
\right\}.
\]
By construction,
\[
\dim\bigl(Z\cap\mathcal W^s_{x,loc}\bigr)=0.
\]
Moreover,
\[
\dim Z=(u+1)N',
\qquad
\dim\mathcal W^s_{x,loc}=(s+1)N',
\]
whereas
\[
\dim\bigl(\mathcal Y_0\times(\Pb^k)^{N'}\bigr)
=(k+1)N'.
\]
Since $k=s+u+1$, the intersection is proper.

Periodic points are dense in $\Lambda(f)$. Hence, by persistence of
proper intersections, we may replace each $x_i$ by a periodic point
$x_i'$ arbitrarily close to it. Furthermore, by the inclination lemma
and~\eqref{eq-pitch}, sufficiently large iterates of suitable local
pieces of $Y$ can be chosen arbitrarily close to all the $u$-graphs
appearing in the first $N$ relations. After shrinking the corresponding
local pieces, we therefore obtain periodic points
\[
x_1'(f),\ldots,x_{N'}'(f)\in\Lambda(f)
\]
and local families satisfying conditions~\ref{it:cond21}
and~\ref{it:cond22} with
\[
m=N'=\dim\mathcal Y.
\]

In particular, this proves that the maximal integer $m$ introduced in
Section~\ref{sec-main-blender} is equal to $\dim\mathcal Y$.

By Lemma~\ref{le-cond2}, condition~\ref{it:cond2} of
Proposition~\ref{prop-positivevolume} follows. On the other hand,
Assumption~\ref{misu} and Lemma~\ref{le-cond1} give
condition~\ref{it:cond1}. Proposition~\ref{prop-positivevolume}
therefore applies and yields
\[
\mathcal Y_0\cap\supp(\mu_{\mathcal Y})\neq\varnothing.
\]

Since the original neighborhood $\mathcal Y_0$ was arbitrarily small
around $(Y',f')$, and $\supp(\mu_{\mathcal Y})$ is closed, we conclude
that
\[
(Y',f')\in\supp(\mu_{\mathcal Y}).
\]
\end{proof}

The proof of Theorem~\ref{th-blender-poly} is identical, replacing $\mathscr M_d^k$ by $\mathscr P_d^k$ and applying Proposition~\ref{prop:local-normalization} in the polynomial setting. The assumption that the transverse intersection in~\eqref{eq-pitch} lies in $\Ab^k$ allows the construction to be carried out in the affine part, and the polynomial conclusion of Proposition~\ref{prop-positivevolume} gives the result.

\section{Main construction}\label{sec-existence}
In this section, we establish the existence of regions in $\mathrm{End}_d^k$ and $\mathrm{Poly}_d^k$ where the assumptions of Section~\ref{sec-ass} are fulfilled.
\begin{theorem}\label{th-existence}
For each codimension $1\leq u\leq k-1,$ there exist $a,\alpha,\epsilon,\beta,\tilde w(a)\in\Cb,$ $\tau,\gamma\in\Cb^s,$ $\sigma\in\Cb^{u-1}$ and a small perturbation $f\in\mathrm{Poly}_d^k$ of the map $f_0:\mathbb{C}^k\to\mathbb{C}^k$ defined, for $(z,x,y,w)\in\Cb\times\Cb^s\times\Cb^{u-1}\times\Cb$ by
\[f_0(z,x,y,w)=\left(\alpha z+\epsilon w+\beta zw+\tau\cdot x(w-\tilde w(a)),\gamma x,\sigma\left(y+\frac{w-\tilde w(a)}{6}\right),a(w^2-1)\right)\]
such that $f$ admits a neighborhood $\Omega_u$ in $\mathrm{End}_d^k$ (or $\mathrm{Poly}_d^k$) which satisfies all the assumptions described in Section~\ref{sec-ass}. 
\end{theorem}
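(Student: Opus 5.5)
The idea is to treat $f_0$ as a partially hyperbolic skew product over the one-dimensional base map $w\mapsto a(w^2-1)$, and to read off each assumption of Section~\ref{sec-ass} from an explicit affine model. We then use robustness to pass to a perturbation $f$ and to a neighborhood $\Omega_u$.

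First, choose $a$ real and large. Then $w\mapsto a(w^2-1)$ has a hyperbolic Cantor repeller near $\{\pm1\}$, with two inverse branches defined on discs $U^u_\pm\ni\pm1$ and derivative of modulus about $2a$. Its fixed point near $-1$, call it $\tilde w(a)$, will carry $p$. The $y$-coordinates are driven by $\sigma$, and the $x$-coordinates are contracted by $\gamma$.

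Over $U^u_-$, take $|\alpha+\beta w|<1$ slightly. The map is then horizontal-like of index $(s+1,u)$, and the fixed point $p(f_0)=(0,0,y_*,\tilde w(a))$ has central multiplier $\alpha+\beta\tilde w(a)$, stable multipliers $\gamma_i$ and unstable multipliers $\sigma_j$, together with $2a\tilde w(a)$. Over $U^u_+$, choose $|\alpha+\beta w|>1$ slightly, so the centre becomes weakly repelling and we get index $(s,u+1)$. The term $\epsilon w$ translates the central fibres differently over the two branches.

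The second iterate over $\mathcal V$ then restricts on the centre to an affine IFS $z\mapsto \kappa_\pm z+c_\pm$ with $|\kappa_\pm|$ slightly larger than $1$. Choosing $\epsilon$ so that the images of the two inverse contractions cover $\Db_R$ gives the covering property. This is the standard affine blender, and it yields Assumption~\ref{blender}: any $u$-graph tangent to $C^{uu}_\eta$ projects onto the base disc, picks up a centre coordinate, and by the covering property meets a stable leaf.

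The multiplier windows in Assumptions~\ref{saddle} and~\ref{hyperbolic} are arranged by taking $\tilde\chi$ close to $1$ relative to the scales, and by choosing $|\gamma_i|\in(\tilde\chi^{-3},\tilde\chi^{-2})$ and $|\sigma_j|,|2a\tilde w|$ in $(\tilde\chi^3,\tilde\chi^4)$, all with distinct moduli. Bunching (\ref{eq-bun}) and nonresonance (\ref{eq-res}) are then open, dense conditions on these parameters.

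The remaining assumptions are handled as follows.

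\begin{itemize}
\item \textbf{Assumption~\ref{verti} and the cone conditions} follow from domination of the $w,y$ expansion over the central derivative.
\item \textbf{Assumption~\ref{decalage}} is why the shift $(w-\tilde w(a))/6$ appears in the $y$-coordinate. Over $U^u_+$ the $y$-coordinates of $W^u_{p,loc}$ at the blender points become nonzero, since $w-\tilde w\approx 2$. The $w$-coordinate is nonzero automatically.
\item \textbf{Assumption~\ref{grille}}: the term $\tau\cdot x(w-\tilde w(a))$ vanishes over $p$ but tilts the strong stable directions over $U^u_+$. For generic $\tau$, $DG_+(C^{ss}_\eta)$ then avoids every $T W^u_p\oplus\mathcal E^{s,i}$, because at $p$ the $\mathcal E^{s,i}$ are the coordinate axes.
\item \textbf{Assumptions~\ref{free} and~\ref{repelling}} are non-persistence statements. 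Varying $\epsilon$ moves the blender's centre coordinates relative to $W^u_p$.
\item \textbf{Assumption~\ref{homoclinic}}: the base map has homoclinic orbits to $\tilde w$, and these lift to a transverse homoclinic point with nonzero stable coordinates after a generic perturbation, since $x$ and $z$ are perturbed independently. The inequality with $r$ is achieved by taking $|\alpha+\beta\tilde w|$ close enough to $1$ and $\ell$ large.
\item \textbf{Assumption~\ref{init}} follows \cite{gtv}. First get a relation as in Lemma~\ref{le-shadowing}. Along the resulting hypersurface, the pair $(\chi_p,\chi_{r_0})$ still varies with two real-independent directions (through $\alpha$ and $\beta$), so density of $\langle\chi_p,\chi_{r_0}\rangle$ holds on a dense set of that hypersurface.
\end{itemize}

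We then pass from $f_0$ to $f\in\mathrm{Poly}_d^k$, and to $\mathrm{End}_d^k$ by homogenizing. The map $f_0$ is a polynomial of degree $2$, and possibly not regular. We fix this by adding $\delta(z^d,x_i^d,y_j^d,w^d)$ with $\delta$ tiny, after conjugating by a large scaling so that the added terms are $C^1$-small on the compact region. Every assumption except~\ref{misu} is open.

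\textbf{The main obstacle is Assumption~\ref{misu}.} It requires $T_f^{\wedge(u+1)}\wedge[W^u_{\hat x,loc}]\neq0$ for every $\hat x$. The plan is to show that the $(u+1)$-dimensional unstable manifolds of $\Lambda$ lie in $J_{u+1}$ with positive mass by a mass-growth argument: the iterates $f^n(W^u_{\hat x,loc})$ have volume growing like $d^{(u+1)n}$, because they contain disjoint copies of graphs over the $(w,y,z)$-box by the degree-$2$ covering, and the growth is matched on the affine pieces. The most concrete route is to use the product structure before perturbation. For the split model, $T^{\wedge(u+1)}$ restricted to a $cu$-disc contains the product of the base equilibrium measure with the Green currents of the fibre directions, and the base Cantor set lies in the Julia set. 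Positivity is then upgraded by continuity of the potentials, since this wedge condition is lower semicontinuous under perturbation for a holomorphically moving family of discs. I expect this lower-semicontinuity and uniformity over all $\hat x$, after the degree-$d$ perturbation breaks the product structure, to be the delicate step.
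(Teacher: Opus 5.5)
Your overall architecture (read the assumptions off the skew product $f_0$, then perturb into $\mathrm{Poly}_d^k$) is the paper's, but several steps you treat as routine fail or are missing.

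\emph{Blender covering.} Your covering step cannot work as stated. Two affine inverse branches with central contraction ratios $1/|\kappa_\pm|<1$ send $\Db_R$ onto two discs of radius less than $R$, and such discs never cover $\Db_R$, whatever the translations. To see this, take $R'<R$ larger than both radii; each disc contains no pair of antipodal points of the circle $\{|z|=R'\}$, so it meets that circle in an arc of angle less than $\pi$. Moreover $\epsilon$ is inessential: the map $(z,x,y,w)\mapsto(\epsilon z,x,y,w)$ conjugates $f_0$ to the same map with $\epsilon=1$ and $\tau/\epsilon$ in place of $\tau$. This is why the paper works with the four central maps $\phi_j$ of $f_0^4$ over the $V^u_j$ and takes $\alpha=\zeta(1+A)$, $\beta=2A\zeta$ with $\zeta=e^{i\pi/4}$. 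Then $\alpha^2-\beta^2$ is close to $i$, the four translation parts sit approximately at the vertices of a square, and this gives the covering of Lemma~\ref{le-ifs}.

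\emph{Assumptions~\ref{free} and~\ref{init}.} The same conjugacy kills your mechanism for Assumption~\ref{free}. For $f_0$, every point of $W^u_{p,loc}\cap W^s_{\Lambda,loc}$ lies in $\Lambda\subset\{x=0\}$, so within this family the relations depend only on $(a,\alpha,\beta)$, and varying $\epsilon$ moves nothing. For item iii) of Assumption~\ref{init}, you assert that $(\chi_p,\chi_{r_0})$ varies in two real directions along a relation hypersurface. What is actually needed is that this hypersurface is not contained in a locus $\{\chi_{r_0}=\xi\chi_p^{t}\}$, with $\xi\in\Sb^1$ and $t\in\mathbb{R}$, and a holomorphic family can perfectly well lie in such a locus. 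This is the genuinely hard point. The paper gets it from the $|a|\to\infty$ asymptotics of \cite[Lemmas~4.8--4.11]{gtv}, summarized in the third item of Lemma~\ref{le-init-dim2}: two distinct words cannot produce a common component inside such a locus. Then, in Lemma~\ref{le-init}, it uses that $W^u_{p(f),loc}$ meets the stable set of $\Lambda(f)$ in both regions $V^u_\pm$. This gives two distinct words at every regular point of the locus, so one of the two crossings is proper.

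\emph{Assumption~\ref{misu}.} Your split-model argument only uses that the base Cantor set lies in $J(q_a)$. But the restriction $g$ of the map to the invariant plane $\{x=0\}$ is triangular, so $\mu_g$ is fibred over the base. A point of $\Lambda$ lies in $\supp(\mu_g)$ only if its $y$- and $z$-coordinates also lie in the corresponding fibre Julia sets. The mass-growth heuristic does not supply this either: the degree-$2$ horizontal-like structure only yields $2^m$ graphs in $f^{2m}(W^u_{\hat x,loc})$ over a fixed box, far from the required order $d^{2m(u+1)}$. The missing ingredient is that every repelling periodic point of such a fibred map lies in $\supp(\mu_g)$ (\cite{jonsson-skew}, \cite[Corollary~1.2]{dupont-t-int}). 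Your regularization $\delta(z^d,x_i^d,y_j^d,w^d)$ does not destroy this structure: like the map $f_c$ of Lemma~\ref{le-misu}, it keeps $\{x=0\}$ invariant with triangular restriction. Since $\Lambda$ is a repeller for $g$, each $W^u_{\hat x,loc}$ is a neighbourhood of the base point of $\hat x$ in $\{x=0\}$. The identity $G_f|_{\{x=0\}}=G_g$ then gives $[W^u_{\hat x,loc}]\wedge T_f^{\wedge(u+1)}=\mu_g|_{W^u_{\hat x,loc}}$. Density of periodic points in $\Lambda$ yields positivity for every $\hat x$, and openness (continuity of potentials, compactness of $\hat\Lambda$) is the easy part.

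\emph{Assumption~\ref{grille}.} It does not suffice that $\mathcal E^{s,i}$ is a coordinate axis at $p$. At $y\in W^u_{p,loc}$ with $w\neq\tilde w(a)$, the twisting term $\tau_i x_i(w-\tilde w(a))$ also tilts $\mathcal E^{s,i}_y$ towards $e^c$, by an amount proportional to $\tau_i$. So no genericity argument in $\tau$ separates it from $DG_+(C^{ss}_\eta)$. The paper solves the invariance equation $\gamma_iA(\hat q_a(\hat w))=(\alpha+\beta w)A(\hat w)+\tau_i\hat w$, with $\hat w=w-\tilde w(a)$, for the central component $A$ of $\mathcal E^{s,i}$. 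This bounds the tilt by $6|\tau_i|/|a|^{1/4}$, which beats the tilt $|\tau_1|/4$ of $DG_+(C^{ss}_\eta)$ precisely because $|\gamma_i|^{-1}<|a|^{3/4}$.
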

Here, $\tau\cdot x$ denotes the dot product in $\Cb^s$, while $\gamma x := (\gamma_i x_i)_{1\le i\le s}$ and $\sigma(y+(w-\tilde w(a))/6)=(\sigma_j(y_j+(w-\tilde w(a))/ 6))_{1\le j\le u-1}$. The factor $1/6$ is chosen so that the resulting hyperbolic set $\Lambda(f_0)$ lies in the prescribed polydisc.

In order to explain the structure of this section, observe that the map $f_0$ is a skew-product preserving the
pencil of hyperplanes
\[\{w=\mathrm{cst}\}\]
and acting affinely on each of them. The induced action on this pencil is given by the one-dimensional map
\[q_a(w):=a(w^2-1).\]
The parameter $a$ will be chosen very large. In this case,
$q_a$ has two fixed points, one close to $1$ and the other
close to $-1$, and the point $\tilde w(a)$ corresponds to the latter.

The point $p(f)$ appearing in Section~\ref{sec-ass} will be obtained
as the continuation of the unique fixed point $p(f_0)$ lying on the
hyperplane $\{w=\tilde w(a)\}$. The hyperbolic set $\Lambda(f)$ will
in turn arise from a hyperbolic set of $q_a^2$ located near the two
points $\pm1$.

To construct these objects progressively, we increase the dimension
of the dynamics step by step:
\begin{itemize}
\item we quickly recall some results about the maps $q_a$ in Section~\ref{sec-qa},

\item in Section~\ref{sec-dim2}, we establish the blender property for
\[(z,w)\mapsto\big(\alpha z+\epsilon w+\beta zw,q_a(w)\big),\]
and establish the analogue of Assumption~\ref{init} in this setting,

\item finally, in Section~\ref{sec-dimk}, we add the remaining $s$ stable
directions and the $u-1$ unstable directions, with a shift
depending on whether $w$ is close to $\tilde w(a)$ or not.
\end{itemize}

Observe that the first two steps coincide exactly with
\cite[Sections~4.1 and~4.2]{gtv}. We therefore only recall the
essential results. We follow the notation of~\cite{gtv} as closely as possible, while
choosing to place the variable $w$, which defines the fibration,
in the last position. This explains the somewhat unusual ordering
of the variables $z,x,y,w$.

\subsection{One-dimensional map}\label{sec-qa}
For $|a|$ large enough, $q_a(w)=a(w^2-1)$ is hyperbolic with a Cantor set as Julia set, where the dynamics is conjugate to the full shift via the coding given by the following two inverse branches, which will play a key role in what follows.
\begin{lemma}[{\cite[Lemma~4.3]{gtv}}]\label{le-U}
If $a\in\Cb^*$ satisfies $|a|>10$ then there exist neighborhoods $U_+$ of $1$ and $U_-$ of $-1$ such that $q_{a|U_\pm}$ is a biholomorphism from $U_\pm$ onto $\Db_3$, and
\[ D(\pm1,|a|^{-1})\subset U_\pm\subset D(\pm1,3|a|^{-1}). \]
In particular, $q_a$ admits a unique fixed point $\tilde w(a)\in U_-$.
\end{lemma}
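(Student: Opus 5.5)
The plan is to argue directly with the explicit quadratic $q_a(w)=a(w^2-1)$ and the inverse function theorem, viewing $q_a$ near $\pm1$ as a small perturbation of its linearization. Since $q_a(\pm1)=0$ and $q_a'(\pm1)=\pm2a$, the map $q_a$ sends a disc of radius of order $|a|^{-1}$ around $\pm1$ onto a disc of radius of order $1$ around $0$, with distortion controlled by the ratio between the quadratic term and the linear term.

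\textbf{Step 1: the two preimages.} For $|v|<3$, the equation $q_a(w)=v$ reads $w^2=1+v/a$. Because $|v/a|<3/10<1$, the principal branch of $\sqrt{1+v/a}$ is holomorphic on $\Db_3$. The two preimages of $v$ are therefore $w=\pm\sqrt{1+v/a}$. I would define $U_\pm$ as the images of $\Db_3$ under the two inverse branches
\[
g_\pm(v)=\pm\sqrt{1+v/a}.
\]
These branches are injective, since they are inverses of $q_a$, and they have disjoint images: $g_+$ takes values with positive real part, while $g_-$ takes values with negative real part. It follows that $q_a|_{U_\pm}\colon U_\pm\to\Db_3$ is a biholomorphism, and that $U_\pm$ is an open neighborhood of $\pm1=g_\pm(0)$.

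\textbf{Step 2: the inclusions.} For the outer inclusion, I would use the elementary bound $|\sqrt{1+\xi}-1|\le |\xi|$ for $|\xi|<1$. This follows from
\[
\sqrt{1+\xi}-1=\frac{\xi}{\sqrt{1+\xi}+1},
\qquad
\bigl|\sqrt{1+\xi}+1\bigr|\ge 1,
\]
where the lower bound holds because $\mathrm{Re}\sqrt{1+\xi}>0$. Applying this with $\xi=v/a$ gives $|g_\pm(v)\mp1|\le |v|/|a|<3|a|^{-1}$, hence $U_\pm\subset D(\pm1,3|a|^{-1})$.

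For the inner inclusion, take $w$ with $|w\mp1|<|a|^{-1}$. Writing
\[
q_a(w)=a(w-1)(w+1),
\]
we get $|q_a(w)|\le |a|\cdot|a|^{-1}\cdot(2+|a|^{-1})<3$. The point $w$ also lies in the half-plane where the branch $g_\pm$ takes its values. Since $q_a(w)\in\Db_3$ and $w$ is the preimage of $q_a(w)$ lying in the correct half-plane, we have $w=g_\pm(q_a(w))\in U_\pm$.

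\textbf{Step 3: the fixed point.} On $U_-$, fixed points of $q_a$ correspond to fixed points of $g_-$ that lie in $U_-$. The map $g_-$ sends $\Db_3$ into $D(-1,3|a|^{-1})$, which is relatively compact in $\Db_3$. Hence $g_-$ is a strict contraction for the Poincaré metric, and by Schwarz–Pick (or the Denjoy–Wolff/Earle–Hamilton theorem) it has a unique fixed point $\tilde w(a)$. This fixed point lies in $g_-(\Db_3)=U_-$.

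Alternatively, a direct computation shows that $|g_-'|=\frac{1}{2|a|}|1+v/a|^{-1/2}<1$, which gives the same conclusion. The fixed points of $q_a$ are the roots of $aw^2-w-a=0$, namely $w=\frac{1\pm\sqrt{1+4a^2}}{2a}$, which are close to $\pm1$; this confirms that exactly one of them lies in $U_-$.

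\textbf{Main obstacle.} The only step requiring care is the choice of branch and the verification that the inclusions hold with the precise constants $1$ and $3$. This reduces to the square-root estimate in Step 2 together with the bound $|a|>10$. The rest is routine.
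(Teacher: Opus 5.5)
Your proof is correct and complete. The explicit branches $g_\pm(v)=\pm\sqrt{1+v/a}$ on $\Db_3$, the estimate $|\sqrt{1+\xi}-1|\le|\xi|$, the bound $|q_a(w)|<2+|a|^{-1}$ on $D(\pm1,|a|^{-1})$, and the contraction of $g_-$ into $D(-1,3|a|^{-1})\Subset\Db_3$ give every assertion with the stated constants. The paper gives no argument of its own here and only cites the earlier work, so there is nothing to compare against. The one point you leave implicit, that $U_\pm$ is open, follows from $U_\pm=q_a^{-1}(\Db_3)\cap\{\pm\mathrm{Re}\,w>0\}$.
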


Let $g_+\colon \Db_3\to U_+$ and $g_-\colon \Db_3\to U_-$ be the two inverse branches of $q_a$ associated to Lemma \ref{le-U}. We define
\begin{itemize}
\item $V_-:=g_-(U_+)$ and $V_+:=g_+(U_-)$,
\item $V_1:=q_a^{-2}(V_+)\cap V_+$, $V_2:=q_a^{-2}(V_-)\cap V_+$, $V_3:=q_a^{-2}(V_+)\cap V_-$, and $V_4:=q_a^{-2}(V_-)\cap V_-$.
\end{itemize}
These sets satisfy
\[ q_a^2(V_-)=q_a^2(V_+)=\Db_3
\quad\text{and}\quad
q_a^4(V_i)=\Db_3 \ \text{ for every } \ i\in\{1,2,3,4\}. \]
By construction, the corresponding maximal invariant sets coincide:
\begin{equation}\label{eq-cantor}
E:=\bigcap_{n\geq0}q_a^{-2n}(V_+\cup V_-)
=
\bigcap_{n\geq0}q_a^{-4n}\Bigl(\bigcup_{i=1}^4V_i\Bigr).
\end{equation}
The set $E$ is a Cantor set contained in the Julia set of $q_a$, on which $q_a^2$ is conjugate to the full shift on two symbols.

All the sets introduced in this section depend on $a$. When needed, we write $U_\pm(a)$, $V_\pm(a)$, $V_i(a)$, and $E(a)$ to emphasize this dependence.

\subsection{Two-dimensional map}\label{sec-dim2}

As in \cite[Section~4.2]{gtv}, we consider the skew product
\[ {\mathcal F}_\lambda(z,w)=(\alpha z+\epsilon w+\beta zw,q_a(w)), \]
where $\lambda=(a,\alpha,\beta,\epsilon)$. Several objects denoted with stylized uppercase letters in this section, such as $\mathcal F_\lambda$, ${\mathcal P}(\lambda)$, and ${\mathcal R}(\lambda)$, will correspond to their lowercase analogues in Section~\ref{sec-dimk}, namely $f_\lambda$, $p(\lambda)$, and $r(\lambda)$.

If $\tilde w(a)$ denotes the unique fixed point of $q_a$ in $U_-(a)$, then ${\mathcal F}_\lambda$ has a fixed point
\[
{\mathcal P}(\lambda)=\left(\frac{-\epsilon\tilde w(a)}{\alpha+\beta\tilde w(a)-1},\tilde w(a)\right).
\]
This point is repelling in the vertical direction, while its horizontal multiplier is $\alpha+\beta\tilde w(a)$, which is close to $\alpha-\beta$ when $|a|$ is large. On the other hand, by the choice of the sets $V_+(a)$ and $V_-(a)$, the horizontal component of the second iterate $\mathcal F_\lambda^2$ is essentially a dilation of ratio $\alpha^2-\beta^2$ on $\Cb\times (V_+(a)\cup V_-(a))$. Accordingly, we shall choose $\alpha$ and $\beta$ so that
\begin{itemize}
\item $|\alpha-\beta|<1$, which implies that ${\mathcal P}(\lambda)$ is a saddle point,
\item $|\alpha^2-\beta^2|>1$, which ensures the existence of a repelling hyperbolic set $\Lambda(\lambda)$ for $\mathcal F_\lambda^2$.
\end{itemize}
For suitable choices of $\alpha$, $\beta$, and $\epsilon$, this hyperbolic set will moreover satisfy a blender property, and it will project onto $E(a)$. In order to obtain the transversality properties needed later on, we let $a$ tend to infinity. In this regime, the set $E(a)$ degenerates to $\{-1,1\}$, and the dynamics on $\Lambda(\lambda)$ degenerates to the inverse limit of an iterated function system with two generators.

More precisely, if $\hat\lambda:=(\alpha,\beta,\epsilon)\in\Cb^3$ and $R>2$ are fixed, then for $|a|>10$ large enough the map $\mathcal F_\lambda^2(z,w)$ is arbitrarily close to $(\phi^+_{\hat\lambda}(z),q_a^2(w))$ on $\Db_R\times V_+(a)$ and to $(\phi^-_{\hat\lambda}(z),q_a^2(w))$ on $\Db_R\times V_-(a)$, where $\phi^+_{\hat\lambda}$ and $\phi^-_{\hat\lambda}$ are affine maps defined by
\[
\phi^+_{\hat\lambda}(z):=(\alpha^2-\beta^2)z-\epsilon(\beta+1-\alpha)\ \text{ and }\
\phi^-_{\hat\lambda}(z):=(\alpha^2-\beta^2)z-\epsilon(\beta+\alpha-1).
\]
We then define
\[
\phi_{1,\hat\lambda}:=\phi^+_{\hat\lambda}\circ\phi^+_{\hat\lambda},
\quad
\phi_{2,\hat\lambda}:=\phi^-_{\hat\lambda}\circ\phi^+_{\hat\lambda},
\quad
\phi_{3,\hat\lambda}:=\phi^+_{\hat\lambda}\circ\phi^-_{\hat\lambda},
\quad\text{and}\quad
\phi_{4,\hat\lambda}:=\phi^-_{\hat\lambda}\circ\phi^-_{\hat\lambda},
\]
so that $\mathcal F_\lambda^4(z,w)\simeq(\phi_{j,\hat\lambda}(z),q_a^4(w))$ on $\Db_R\times V_j(a)$.

We now fix a small real number $A>0$ and set $\alpha_0:=\zeta(1+A)$ and $\beta_0:=2A\zeta$, where $\zeta\in\Sb^1$. Then
\[
\alpha_0-\beta_0=\zeta(1-A),
\]
so that the fixed point ${\mathcal P}(a,\alpha_0,\beta_0,\epsilon)$ is a saddle for $|a|$ sufficiently large. Moreover,
\[
\alpha_0^2-\beta_0^2=\zeta^2(1+2A-3A^2),
\]
which has modulus strictly larger than $1$ as soon as $A<2/3$. Following \cite[Lemma~4.4]{dujardin-bif}, we choose
\[
\zeta=e^{i\pi/4}
\]
in order to obtain the blender property for the limiting iterated function system, and hence for $\Lambda(\lambda)$.

The next result provides the building blocks for establishing the blender property.
\begin{lemma}[{\cite[Lemma~4.5]{gtv}}]\label{le-ifs}
Let $\zeta:=e^{i\pi/4}$ and $\epsilon_0=(20(\zeta-1))^{-1}$. Let $A\in(0,1/10]$ be small enough and set $\hat\lambda_0:=(\alpha_0,\beta_0,\epsilon_0)$ where
\[\alpha_0=\zeta(1+A)\ \ \text{ and }\ \ \beta_0=2A\zeta.\]
Then, there exist four open sets $H_j$, $j\in\{1,2,3,4\}$ such that
\[\Db_2=\bigcup_{j=1}^4H_j,\ \overline{\phi_{j,\hat\lambda_0}(H_j)}\subset\Db_2\ \text{ and } \overline\Db\subset H_j.\]
\end{lemma}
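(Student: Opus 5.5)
The approach is an explicit computation: each $\phi_{j,\hat\lambda_0}$ is an affine map $z\mapsto\mu(z-c_j)+c_j$ with common linear part
\[
\mu:=(\alpha_0^2-\beta_0^2)^2=\zeta^4(1+2A-3A^2)^2=-(1+2A-3A^2)^2,
\]
so $|\mu|>1$ and $\mu\to-1$ as $A\to0$. I would take $H_j$ to be $\Db_2$ intersected with a slightly shrunken preimage disc of $\Db_2$ under $\phi_{j,\hat\lambda_0}$. The first step computes the fixed points $c_j=c_j(A)$.

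\textbf{Step 1: fixed points at $A=0$.} Since $\epsilon_0(\zeta-1)=1/20$ and $\zeta^2=i$, the formulas defining $\phi^\pm_{\hat\lambda}$ give
\[
\phi^+(z)=iz+\tfrac1{20},\qquad \phi^-(z)=iz-\tfrac1{20}.
\]
The four compositions are then
\[
\phi_j(z)=-z+\tfrac{\pm1\pm i}{20},
\]
with the four sign choices, so $c_j=(\pm1\pm i)/40$.

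\textbf{Step 2: the preimage discs.} Since $\phi_j(z)=\mu(z-c_j)+c_j$, a direct computation gives
\[
\phi_j^{-1}(\Db_\rho)=D\bigl(c_j',\rho/|\mu|\bigr),\qquad c_j':=c_j(1-\mu^{-1}).
\]
At $A=0$ one has $c_j'=2c_j=(\pm1\pm i)/20$. These four centres sit in the four diagonal directions, each of modulus $\sqrt2/20$. By continuity, for $A$ small the $c_j'$ stay within $O(A)$ of these values. Also $|\mu|=1+O(A)$.

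\textbf{Step 3: definition of $H_j$.} Fix a small $\eta>0$ with $A\ll\eta\ll\sqrt2/20$, and set
\[
H_j:=\Db_2\cap D\bigl(c_j',\,2/|\mu|-\eta\bigr).
\]
This set is open. Its image satisfies $\phi_j(H_j)\subset D(c_j,2-|\mu|\eta)$. Since $|c_j|$ is tiny, this gives $\overline{\phi_j(H_j)}\subset\overline{D}(c_j,2-|\mu|\eta)\subset\Db_2$. Moreover $\overline\Db\subset H_j$, because every $z\in\overline{\Db}$ satisfies
\[
|z-c_j'|\le 1+|c_j'|<2/|\mu|-\eta.
\]

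\textbf{Step 4: covering, the main obstacle.} The only delicate point is showing $\Db_2=\bigcup_j H_j$, because the radius $2/|\mu|-\eta$ is strictly smaller than $2$. The gain must therefore come from the offset of the centres. Given $z\in\Db_2$, choose $j$ so that the angle between $z$ and $c_j'$ is at most roughly $45^\circ$. Then $\operatorname{Re}(z\overline{c_j'})\ge(\tfrac{\sqrt2}{2}-O(A))|z||c_j'|$, and hence
\[
|z-c_j'|^2\le|z|^2-\bigl(\sqrt2-O(A)\bigr)|z||c_j'|+|c_j'|^2.
\]
For $|z|\ge1$ the right-hand side is at most about $4-\sqrt2\cdot\tfrac{\sqrt2}{20}+\tfrac{2}{400}$. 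This is below $(2/|\mu|-\eta)^2=4-O(A+\eta)$ once $A$ and $\eta$ are small compared to $1/20$. For $|z|<1$ the inclusion $z\in H_j$ was already shown in Step 3.

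The constraint to track is the order of choices: first fix $\eta$ small against the offset $\sqrt2/20$, then take $A\le1/10$ small enough that the $O(A)$ perturbations of $\mu$ and of the $c_j'$ are dominated by $\eta$. With these choices all three required properties hold.
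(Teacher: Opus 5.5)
Your argument is correct. The paper gives no proof of its own here and only quotes \cite[Lemma~4.5]{gtv}; your explicit computation through the model maps $z\mapsto -z+(\pm1\pm i)/20$, with discs $D(c_j',2/|\mu|-\eta)$ covering $\Db_2$ thanks to the diagonal offset of their centres, is the natural way to establish it.

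There is one slip to fix in Step 3. Since $c_j'=\phi_j^{-1}(0)$, your own Step 2 gives $\phi_j\bigl(D(c_j',2/|\mu|-\eta)\bigr)=\Db_{2-|\mu|\eta}$, a disc centred at $0$ rather than at $c_j$, and that is the inclusion you should use. The inclusion you wrote, $\overline D(c_j,2-|\mu|\eta)\subset\Db_2$, would require $|c_j|<|\mu|\eta$. That fails for the small $\eta$ your Step 4 needs, since $|c_j|\approx\sqrt2/40$ while Step 4 forces $\eta\lesssim 0.024$.
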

Actually, this result allows us to fix to size of the cone fields we will use later. More precisely, since $\overline{\phi_{j,\hat\lambda_0}(H_j)}\subset\Db_2,$ there exists $r>0$ such that
\begin{equation}\label{eq-cover}
\bigcup_{j=1}^4\overline{\phi_{j,\hat\lambda_0}(H_j)}\subset\Db_{2-r}.
\end{equation}
On the other hand, we have 
\[\overline{\Db_{2-r}}\subset\Db_2=\bigcup_{j=1}^4H_j.\]
Let $\rho>0$ be the Lebesgue number of this cover. We choose
$\eta>10$ so that $\eta>4\rho^{-1}$. With this choice, the
vertical projection of any $u$-graph parametrized by a set of
diameter less than $2$ and tangent to the cone field $C_\eta$
has diameter less than $\rho/2$.

Since the conclusion in \eqref{eq-cover} is stable under perturbations of the maps $\phi_{j,\hat\lambda_0}$, they remain valid on the closure of sufficiently small connected neighborhoods of
\[
\hat\lambda_0=(\alpha_0,\beta_0,\epsilon_0).
\]
From now on, we denote by $\hat M$ such a neighborhood, chosen so that for every $\hat\lambda=(\alpha,\beta,\epsilon)\in\hat M$ one has
\begin{equation}\label{def-M}
1/20<|\epsilon|<1/10,
\qquad
|\alpha-\beta|<1,
\ |\alpha-\beta-1|>1/2,\ \text{ and }\
|\alpha^2|-|\beta^2|>1+A.
\end{equation}
In particular, if we set $R:=A^{-1}$, then the maps $\phi^\pm_{\hat\lambda}$ satisfy
\[
\overline{\Db_R}\subset \phi^\pm_{\hat\lambda}(\Db_R).
\]

The next step is to define the hyperbolic set $\Lambda(\lambda)$ with the blender property appearing in Assumptions \ref{hyperbolic} and \ref{blender}. We note that a complete proof of the second statement will be given in the higher-dimensional setting in Lemma \ref{le-blender-selle}.

\begin{lemma}[{\cite[Lemma~4.6]{gtv}}]\label{le-blender}
There exist constants $\eta>100$ and $\delta>100$ such that, if $|a|>\delta$, then for every $(\alpha,\beta,\epsilon)\in\hat M$, the map ${\mathcal F}_{\lambda}$, with $\lambda:=(a,\alpha,\beta,\epsilon)$, satisfies the following properties:
\begin{itemize}
\item On each of the sets $\Db_R\times V_-(a)$ and $\Db_R\times V_+(a)$, the map $\mathcal F^2_\lambda$ is injective, expanding, and contracts the cone field $C_\eta$. Moreover,
\[
\overline{\Db_R\times (U_+(a)\cup U_-(a))}\subset \overline{\Db_R\times \Db_2}
\subset
\mathcal F^2_\lambda(\Db_R\times V_\pm(a)).
\]
In particular, the set
\[
\Lambda(\lambda):=\bigcap_{n\geq0}\mathcal F^{-2n}_\lambda\bigl(\Db_R\times (V_-(a)\cup V_+(a))\bigr)
\]
is a hyperbolic repelling invariant set for $\mathcal F^2_\lambda$.
\item For every $i\in\{1,2,3,4\}$, any vertical graph in $\Db\times V_i(a)$ tangent to the cone $C_\eta$ intersects $\Lambda(\lambda)$.
\end{itemize}
Moreover, both conclusions are stable under small $C^1$-perturbations of ${\mathcal F}_\lambda$.
\end{lemma}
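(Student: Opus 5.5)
The plan is to reduce everything to the limiting affine iterated function system of Lemma~\ref{le-ifs}, as $|a|\to\infty$, and then pass to the finite-$a$ skew product by perturbation.

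\emph{Step 1: uniform localization.} By Lemma~\ref{le-U}, on $\Db_R\times V_\pm(a)$ the second coordinate of $\mathcal F^2_\lambda$ is $q_a^2$. Its derivative is of order $|a|^2$, and it maps $V_\pm(a)$ biholomorphically onto $\Db_3$. Moreover, $w$ stays within $O(|a|^{-1})$ of $\pm1$ on $V_\pm(a)$ and on $U_\mp(a)$. Writing $\mathcal F_\lambda^2=(\phi^\pm_{\hat\lambda}(z)+O(|a|^{-1}),q_a^2(w))$ on $\Db_R\times V_\pm(a)$, uniformly for $\hat\lambda\in\hat M$, I check the following. The horizontal derivative is $\alpha^2-\beta^2+O(|a|^{-1})$, of modulus $>1+A/2$. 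The off-diagonal term $\partial_w$ of the first coordinate is bounded by a constant times $|a|$, while the vertical derivative is $\gtrsim|a|^2$. For the cone $C_\eta=\{|dz|\le\eta^{-1}|dw|\}$, the image of a vector in $C_\eta$ therefore satisfies $|dz'|\lesssim(|a|+\eta^{-1})|dw|$ and $|dw'|\gtrsim|a|^2|dw|$. Once $|a|>\delta$ with $\delta$ large relative to $\eta$, the image lies in $C_{\eta}$, and in fact in a strictly smaller cone. Injectivity comes from injectivity of $q_a^2$ on $V_\pm$. Both coordinates expand, so $\mathcal F^2_\lambda$ is expanding. The covering $\overline{\Db_R\times\Db_2}\subset\mathcal F^2_\lambda(\Db_R\times V_\pm(a))$ follows from $\overline{\Db_R}\subset\phi^\pm_{\hat\lambda}(\Db_R)$ (a compact inclusion, stable under the $O(|a|^{-1})$ perturbation) together with $q_a^2(V_\pm)=\Db_3\supset\overline{\Db_2}$. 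Hence $\Lambda(\lambda)$ is a locally maximal repelling hyperbolic set. This part is routine.

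\emph{Step 2: blender property, the main point.} Fix $i$ and a vertical graph $\Gamma_0\subset\Db\times V_i(a)$ tangent to $C_\eta$. I will inductively build vertical graphs $\Gamma_n$ tangent to $C_\eta$ with $\Gamma_n\subset \Db_2\times V_{i_n}(a)$, each a piece of $\mathcal F^4_\lambda(\Gamma_{n-1})$. The $z$-projection of $\Gamma_{n-1}$ is a set $D_{n-1}\subset\Db_{2}$ of diameter $<\rho/2$, by the choice $\eta>4\rho^{-1}$ and since $V_j$ has tiny diameter. The Lebesgue number property then gives some $j$ with $D_{n-1}\subset H_j$. Because $\mathcal F^4_\lambda\simeq(\phi_{j,\hat\lambda},q_a^4)$ on $\Db_R\times V_j(a)$, I use the branch through $V_j$. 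That is, I intersect $\Gamma_{n-1}$ with $q_a^{-4}$-preimages inside $V_{i_{n-1}}$ landing in $V_j$; this is possible because $q_a^4(V_{i})=\Db_3$ and $V_i\subset V_\pm$ in the correct order. Its image is a vertical graph over $V_j$ whose $z$-projection is contained in $\phi_{j,\hat\lambda}(H_j)+O(|a|^{-1})\subset\Db_{2-r/2}$, by \eqref{eq-cover}. Cone invariance from Step 1 keeps it tangent to $C_\eta$. The nested preimages $\mathcal F^{-4n}_\lambda(\Gamma_n)\subset\Gamma_0$ form a decreasing sequence of nonempty compacts, so their intersection gives a point of $\Gamma_0$ whose forward orbit stays in $\Db_R\times(V_+\cup V_-)$. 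By \eqref{eq-cantor}, this point lies in $\Lambda(\lambda)$. The delicate part is bookkeeping: the symbol chosen at each step must be compatible with the current $V_{i_{n-1}}$, and the error $O(|a|^{-1})$ must be absorbed by the margin $r$. I would choose $\delta$ so that both of these hold uniformly on $\overline{\hat M}$.

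\emph{Step 3: stability.} Every ingredient is an open condition on $C^1$ data on compact sets: strict cone contraction, the expansion bounds, the compact covering inclusions, and the margin $r$ in \eqref{eq-cover}. Hence the same argument applies verbatim to any $C^1$-small perturbation. The only change is that $\Lambda$ is then defined through the perturbed inverse branches, which still exist by the covering property.
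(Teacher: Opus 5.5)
Step~1 and Step~3 are fine. Step~2, however, has the symbols wired incorrectly, and as written the induction does not close. On $\Gamma_{n-1}\subset\Db_2\times V_{i_{n-1}}(a)$, the horizontal action of $\mathcal F^4_\lambda$ is $\approx\phi_{i_{n-1},\hat\lambda}$. It is determined by the piece $V_{i_{n-1}}(a)$ containing the \emph{source}, not by the piece $V_j(a)$ into which you restrict the \emph{image}. So after you choose $j$ with $D_{n-1}\subset H_j$ and keep the part of $\Gamma_{n-1}$ landing over $V_j(a)$, the new graph has $z$-projection $\approx\phi_{i_{n-1},\hat\lambda}(D'_{n-1})$. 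It is not contained in $\phi_{j,\hat\lambda}(H_j)$. To keep it inside $\Db_{2-r}$ you need $D_{n-1}\subset H_{i_{n-1}}$, and your construction does not provide this. The index $i_{n-1}$ was fixed at the previous step, before $D_{n-1}$ was known. Relatedly, you apply the Lebesgue-number step to a graph over the tiny disc $V_{i_{n-1}}(a)$. Its horizontal spread is $O(\eta^{-1}|a|^{-1})$ whatever $\eta$ is, so the hypothesis $\eta>4\rho^{-1}$ plays no role in your version, which signals that the step is misplaced.

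The correct bookkeeping is the argument given in Lemma~\ref{le-blender-selle}, specialized to $s=0$, $u=1$. Maintain the invariant $\Gamma_n\subset H_{j_n}\times V_{j_n}(a)$, and choose the next symbol \emph{after} applying the map.
\begin{enumerate}
\item The initial graph satisfies the invariant with $j_0=i$, because $\overline\Db\subset H_j$ for every $j$. This is exactly why Lemma~\ref{le-ifs} records that property.
\item Given the invariant, $\mathcal F^4_\lambda(\Gamma_{n-1})$ is a vertical graph tangent to $C_\eta$ over the whole disc $\Db_2$ in the $w$-variable, since $q_a^4(V_j)=\Db_3$. Its $z$-projection lies in $\Db_{2-r}$ by \eqref{eq-cover}, after absorbing the $O(|a|^{-1})$ error into the margin.
\item Over a base of diameter $4$, the cone condition bounds the diameter of this projection by $4/\eta<\rho$. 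This is where the choice of $\eta$ is used. Hence the projection lies in some $H_{j_n}$, and you then restrict the graph to $w\in V_{j_n}(a)$.
\end{enumerate}
Since every $V_j(a)$ is mapped onto $\Db_3\supset\overline{V_{j'}(a)}$, any symbol may follow any other. So the compatibility issue you single out as the delicate bookkeeping does not arise. The genuine constraint is matching the horizontal cover element with the vertical symbol, and your ordering misses it. With this reordering, the rest of your argument goes through: the nested compact preimages give a point of $\Lambda(\lambda)$, and the same steps give $C^1$-stability.
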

By construction, each point of $\Lambda(\lambda)$ is associated to a unique word $\omega$ in $\Sigma:=\{-1,1\}^{\Nb}$. More precisely, let
\[
g_{\lambda,+}\colon \Db_R\times \Db_2\to \Db_R\times V_+(a)
\quad\text{and}\quad
g_{\lambda,-}\colon \Db_R\times \Db_2\to \Db_R\times V_-(a)
\]
be the two inverse branches of $\mathcal F^2_\lambda$. Identifying the symbol $+$ with $1$ and the symbol $-$ with $-1$, any word $\omega=(\omega_n)_{n\geq0}\in\Sigma$ determines a sequence of contractions
\[
g^n_{\lambda,\omega}:=g_{\lambda,\omega_0}\circ\cdots\circ g_{\lambda,\omega_{n-1}}.
\]
Since the maps $g_{\lambda,\pm}$ are contracting, the sequence $\bigl(g^n_{\lambda,\omega}(y)\bigr)_{n\geq0}$ converges, for every $y\in \Db_R\times \Db_2$, to a limit denoted by $x_\omega(\lambda)$, and this limit is independent of $y$. The hyperbolic set $\Lambda(\lambda)$ is therefore exactly
\[
\Lambda(\lambda)=\{x_\omega(\lambda)\,;\,\omega\in\Sigma\}.
\]
In particular, the repelling point ${\mathcal R}(\lambda)$ appearing in Assumption \ref{repelling} is given by $x_\omega(\lambda)$ for the constant sequence $\omega=(-1)_{n\geq0}$. Observe that $x_\omega(\lambda)$ depends holomorphically on $\lambda$ and continuously on $\omega$ for the product topology on $\Sigma$.

In order to verify Assumption \ref{init}, we also introduce the two central multipliers, $\chi_{{\mathcal P}(\lambda)}$ and $\chi_{{\mathcal R}(\lambda)},$ associated to the fixed saddle point ${\mathcal P}(\lambda)$ and to the $2$-periodic repelling point ${\mathcal R}(\lambda)$ respectively.

The transversality properties required in Assumption \ref{init} then follow by combining the four results \cite[Lemmas~4.8--4.11]{gtv}. We define, for a word $\omega\in\Sigma$ and two constants $\xi\in\Sb^1$, $t\in\Rb$
\[
X_\omega:=\left\{\lambda\in M\,;\,x_\omega(\lambda)\in W^u_{{\mathcal P}(\lambda),loc}\right\}
\]
and
\[
Y_{\xi,t}:=\left\{\lambda\in M\,;\,\chi_{{\mathcal R}(\lambda)}=\xi\,\chi_{{\mathcal P}(\lambda)}^t\right\}.
\]
\begin{lemma}\label{le-init-dim2}
Possibly after shrinking $\hat M$ and increasing the constant $\delta>100$ given by Lemma \ref{le-blender}, we have that, if
\[
D_\infty:=\Cb\setminus\Db_\delta
\quad\text{and}\quad
M:=D_\infty\times \hat M,
\]
then the following hold.
\begin{itemize}
\item For all $\omega\in\Sigma,$ the set $X_\omega$ is a (possibly empty) hypersurface of $M.$
\item For all $\lambda\in M,$ the local unstable manifold $W^u_{{\mathcal P}(\lambda),loc}$ is a vertical graph in $\Db_{1/4}\times \Db_2$, tangent to the cone field $C_\eta$.
\item If $\omega,\omega'\in\Sigma$ and $(\xi,t)\in\Sb^1\times\Rb$ are such that $X_\omega$ and $X_{\omega'}$ have a commun irreducible component $Z_\omega=Z_{\omega'}$ contained in $Y_{\xi,t}$, then $\omega=\omega'.$
\end{itemize}
\end{lemma}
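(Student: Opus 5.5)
The plan follows the approach of \cite[Lemmas~4.8--4.11]{gtv}. The idea is to let $a\to\infty$ and to work with the limiting affine IFS $(\phi^\pm_{\hat\lambda})$. In this limit every object becomes explicit and algebraic in $\hat\lambda=(\alpha,\beta,\epsilon)$. We then transfer the conclusions to $|a|>\delta$ by holomorphic dependence and by the stability of proper intersections (Lemma~\ref{le-proper}).

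\textbf{Second point.} The unstable manifold $W^u_{\mathcal P(\lambda),loc}$ is the graph transform limit through the fixed point $\mathcal P(\lambda)$. We have $\mathcal P(\lambda)=(-\epsilon\tilde w(a)/(\alpha+\beta\tilde w(a)-1),\tilde w(a))$ with $\tilde w(a)\to-1$. By \eqref{def-M}, $|z|\le |\epsilon|/|\alpha-\beta-1|+o(1)<1/5$. Since the map is expanding in $w$ and contracts $C_\eta$ on $\Db_R\times U_-(a)$, the graph transforms of vertical lines stay tangent to $C_\eta$. Their horizontal displacement over $\Db_2$ is $O(|\beta|\cdot|a|^{-1}+|\epsilon||a|^{-1})$, so they remain in $\Db_{1/4}\times\Db_2$ for $\delta$ large.

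\textbf{First point.} The point $x_\omega(\lambda)$ depends holomorphically on $\lambda$. The intersection condition $x_\omega(\lambda)\in W^u_{\mathcal P(\lambda),loc}$ can be written as $\Phi_\omega(\lambda)=0$, where $\Phi_\omega:=z(x_\omega(\lambda))-h_\lambda(w(x_\omega(\lambda)))$ and $W^u$ is the graph of $h_\lambda$. This is a single holomorphic equation, hence defines a hypersurface unless it vanishes identically. To exclude $\Phi_\omega\equiv0$, we look at the limit $a\to\infty$, where $h_\lambda\to z_{\mathcal P}:=\epsilon/(\alpha-\beta-1)$. In this limit $z(x_\omega)$ converges to the fixed point of the contraction sequence given by the inverse branches of $\phi^{\omega_i}_{\hat\lambda}$, namely
\[\textstyle\sum_n (\alpha^2-\beta^2)^{-n-1}\epsilon c_{\omega_n},\]
where $c_\pm$ are linear in $\alpha,\beta$. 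Both limits are explicit functions of $\hat\lambda$, and for fixed $\epsilon$ they differ as functions of $(\alpha,\beta)$. For example, they have different behaviour under $\beta\mapsto$ perturbations, and $\partial_\epsilon$ scales both linearly, so one compares the normalized ratios. Hence $\Phi_\omega\not\equiv0$ near $a=\infty$. Because $\hat M$ is compact and $\omega\mapsto x_\omega$ is continuous on compact $\Sigma$, this gives a uniform $\delta$.

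\textbf{Third point, the main obstacle.} Suppose a common component $Z\subset X_\omega\cap X_{\omega'}\cap Y_{\xi,t}$ exists with $\omega\ne\omega'$. Let $n$ be the first index with $\omega_n\ne\omega'_n$. The strategy is a degeneration argument. We compactify in $a$, using the coordinate $1/a$ near $\infty$, and pass to the limit hypersurfaces $Z_\infty\subset\{1/a=0\}\times\hat M$. These are defined by the explicit equations $\Phi^\infty_\omega=0$ and $\Phi^\infty_{\omega'}=0$, together with the limit multiplier relation. In the limit, $\chi_{\mathcal R}=\alpha^2-\beta^2$ up to the iterate normalization (more precisely $(\alpha-\beta)^2$ for the IFS point with constant word), and $\chi_{\mathcal P}=\alpha-\beta$. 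The relation $\chi_{\mathcal R}=\xi\chi_{\mathcal P}^t$ with real $t$ cuts out a real-analytic hypersurface. Containment of a complex hypersurface in it forces strong constraints: by open mapping, the holomorphic function $\log\chi_{\mathcal R}-t\log\chi_{\mathcal P}$ must have constant real part, hence it is constant on $Z$.

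Combining this with the two equations $\Phi^\infty_\omega=\Phi^\infty_{\omega'}=0$, we subtract them. The difference is $\epsilon(\alpha^2-\beta^2)^{-n-1}(c_{\omega_n}-c_{\omega'_n})+O(|\alpha^2-\beta^2|^{-n-2})$, with $c_+-c_-=2(\alpha-1)\ne0$. Its vanishing on a hypersurface on which $\alpha^2-\beta^2$ and $\alpha-\beta$ are tied by the multiplier relation should force $\epsilon\equiv0$ or $\alpha\equiv1$, which is excluded by \eqref{def-M}. The delicate part is twofold. First, one must ensure that the limit of $Z$ is a genuine component rather than something escaping, which is handled by properness of the total spaces and Lemma~\ref{le-proper}. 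Second, one must control the tail terms uniformly in $n$. I would try to first fix $t,\xi$ from the dominant behaviour, and then use the leading-order term in $(\alpha^2-\beta^2)^{-1}$ of the difference, possibly after shrinking $\hat M$ so that $|\alpha^2-\beta^2|$ is large relative to the tail constants.
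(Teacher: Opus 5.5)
Your plan (degenerate to $a=\infty$) is reasonable, but the third point, which is the one that matters, has a genuine gap in two places.

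\textbf{The limit computation.} On $\hat M$ one has $|\alpha^2-\beta^2|\approx|1+2A-3A^2|$. This is close to $1$ by construction, as the covering of Lemma~\ref{le-ifs} requires, and shrinking $\hat M$ around $\hat\lambda_0$ cannot make it large. Consider
\[
\Phi^\infty_\omega-\Phi^\infty_{\omega'}=\epsilon\sum_{j\geq n}\mu^{-j-1}\bigl(c_{\omega_j}-c_{\omega'_j}\bigr),\qquad \mu:=\alpha^2-\beta^2 .
\]
Here the tail beyond the first differing index can be as large as $2|1-\alpha|\,|\mu|^{-n-1}/(|\mu|-1)$. That is several times the ``leading'' term, so no leading-order argument is available. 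Moreover, ``$\epsilon\equiv0$ or $\alpha\equiv1$'' is not the mechanism.

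What you are missing is the following. Since $c_+-c_-=\pm2(1-\alpha)$, the difference is exactly $\pm2\epsilon(1-\alpha)P(\mu)$, where $P(\mu)=\sum_j\sigma_j\mu^{-j-1}$ with $\sigma_j\in\{0,\pm1\}$ not all zero. It depends on $(\alpha,\beta)$ only through $\mu$. Moreover $P$ is a nonzero holomorphic function of $\mu^{-1}$ on the unit disc, so its zeros are isolated, and $\mu$ is constant on any common limit component.

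Now $\chi_{\mathcal R}\to(\alpha-\beta)(\alpha+\beta)=\mu$ and $\chi_{\mathcal P}\to\alpha-\beta$, and $t\neq0$ because $|\chi_{\mathcal R}|>1$. Hence the limit relation $\log|\mu|=t\log|\alpha-\beta|$ makes $|\alpha-\beta|$, and therefore $\alpha-\beta$, constant. So $(\alpha,\beta)$ is constant on a $2$-dimensional subset of $(\alpha,\beta,\epsilon)$-space, which is absurd.

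Alternatively, for fixed $\mu$, $\Phi^\infty_\omega/\epsilon$ is a rational function of $\nu=\alpha-\beta$ with a pole at $\nu=1$, coming from $z_{\mathcal P}=\epsilon/(\nu-1)$. So it cannot vanish on a piece of $\{\mu=\mu_0\}$. The same remark gives $\Phi^\infty_\omega\not\equiv0$, which your first point only asserts.

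Your parenthetical $\chi_{\mathcal R}\to(\alpha-\beta)^2$ is wrong, because the $w$-orbit of $\mathcal R$ alternates between $U_-$ and $U_+$. If the limit were $(\alpha-\beta)^2=\chi_{\mathcal P}^2$, then $Y_{1,2}$ would contain the whole boundary $\{1/a=0\}$, and the multiplier hypothesis would say nothing there.

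\textbf{The passage to the limit.} Lemma~\ref{le-proper} concerns perturbations of a proper horizontal/vertical relation. It does not show that a common component $Z$ at finite $a$ reaches $\{1/a=0\}$: $Z$ could leave through $D_\infty\times\partial\hat M$. This is where ``shrinking $\hat M$ and increasing $\delta$'' is really used.

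\begin{enumerate}
\item In the coordinate $\tau=1/a$, $\Phi_\omega$ extends holomorphically across $\tau=0$ on $\Db_{r_0}\times\hat M_0$, with $|\Phi_\omega-\Phi^\infty_\omega|\leq C|\tau|$ uniformly in $\omega$.
\item Take $\hat M\Subset\hat M_0$. By compactness of $\Sigma\times\overline{\hat M}$ and continuity in $\omega$, there are finitely many complex discs through the points of $\overline{\hat M}$, contained in $\hat M_0$, with $|\Phi^\infty_\omega|\geq c>0$ on their boundary circles.
\item For $C/\delta<c$, Rouch\'e shows that over each such disc the zero set of $\Phi_\omega$ is a branched cover of $\{|\tau|<1/\delta\}$.
\item Apply this at a point of $Z$ lying on no other component. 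The component of $\{\Phi_\omega=0\}$ containing $Z$ then meets $\{\tau=0\}$ in a pure $2$-dimensional set.
\item By the identity theorem, this component also lies in $\{\Phi_{\omega'}=0\}$ and satisfies $\log|\chi_{\mathcal R}|=t\log|\chi_{\mathcal P}|$. The argument of the previous paragraph then applies.
\end{enumerate}
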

\begin{rk}\label{rk-S}
In Section~\ref{sec-chasse} below, we need the same local construction directly in higher degrees. But all the results of this subsection also hold for
\[
\mathcal F_{a,S}(z,w)=\bigl((\alpha+\beta wS(w))z+\epsilon w,
 \tilde w(a)+a(w^2-\tilde w(a)^2)S(w)\bigr),
\]
where $S$ is a fixed polynomial of degree $D$ sufficiently close to $1$ on $\overline{\Db_3}$, provided $|a|$ is sufficiently large.
Indeed, the second coordinates has $D+2$ inverse branches when $|a|$ is large, $D$ corresponding to the roots of $S$ and $2$ others, $g_{\pm,a}$ close to $w=\pm1.$
They satisfy $g_{\pm,a}(w)=\pm1+O(1/a)$ and $g_{\pm,a}'=O(1/a)$ on fixed large discs. Set
\[
\alpha_S=\alpha+\frac\beta2\bigl(S(1)-S(-1)\bigr),\qquad
\beta_S=\frac\beta2\bigl(S(1)+S(-1)\bigr).
\]
The limiting affine maps are exactly those above with
$(\alpha,\beta)$ replaced by $(\alpha_S,\beta_S)$, and both the
covering argument and the multiplier-transversality argument apply
in the same way, provided $\alpha_S$ and $\beta_S$ satisfy \eqref{def-M}. Thus $S$ may be fixed close to $1$ on $\overline\Db_3$ first and $|a|$ then taken
large.
\end{rk}

\subsection{Higher dimension and degree}\label{sec-dimk}
We now add the remaining $k-2$ dimensions, consisting of $s$
stable directions and $u-1$ unstable ones. The construction is
designed to ensure, in particular,
\begin{itemize}
\item contraction of the cone field $C^{uu}_\eta$ and expansion of
the cone field $C^{ss}_\eta$;
\item the blender property for the associated hyperbolic set
$\Lambda(f)$;
\item the condition that for every prehistory
$\hat x(f)\in\hat\Lambda(f)$,
\[
[W^u_{\hat x(f),loc}]\wedge T_f^{\wedge(u+1)}\neq0;
\]
\item a suitable twisting of the dynamics, different in the $s$
stable directions and the $u-1$ unstable ones, in order to obtain
Assumptions~\ref{decalage} and~\ref{grille}.
\end{itemize}

To this end, we again start from a skew-product acting affinely
on the leaves of a fibration. For $\tau,\gamma\in\Cb^s$ and
$\sigma\in\Cb^{u-1}$, consider the endomorphism of
$\Cb^k=\Cb\times\Cb^s\times\Cb^{u-1}\times\Cb$ defined, as in
Theorem~\ref{th-existence}, by
\begin{equation}\label{eq-f_0}
f_0(z,x,y,w)=\left(\alpha z+\epsilon w+\beta zw+\tau\cdot x(w-\tilde w(a)),\gamma x,\sigma\left(y+\frac{w-\tilde w(a)}{6}\right),q_a(w)\right).
\end{equation}
Observe that his map does not extend to an endomorphism of $\Pb^k$ but is quite easy to study.

At the level of parameters, in order to start verifying the
assumptions of Section~\ref{sec-ass}, we take
\begin{itemize}
\item $(a,\alpha,\beta,\epsilon)$ in the set $M$ defined in
Lemma~\ref{le-init-dim2}, where $(\alpha,\beta,\epsilon)$ is fixed,
and $|a|$ is allowed to increase if needed (and consequently so do
$|\gamma_i|^{-1}$ and $|\sigma_j|$, according to the next conditions);
\item $\gamma=(\gamma_i)_{1\le i\le s}\in\Cb^s$ and
$\sigma=(\sigma_j)_{1\le j\le u-1}\in\Cb^{u-1}$ such that 
\[4|a|^{1/2}<|\gamma_s|^{-1}<\cdots<|\gamma_1|^{-1}<2|\gamma_s|^{-1}<|a|^{3/4}\]
and
\[8|a|^{3/4}<|\sigma_1|<\cdots<|\sigma_{u-1}|<|a|^{4/5};\]
\item there exist integers $1\leq r<\ell$ such that the saddle fixed point
of index $(s+1,u)$
\[p(f_0)=\left(\frac{-\epsilon\tilde w(a)}{\alpha+\beta\tilde w(a)-1},0,0,\tilde w(a)\right)\]
 satisfies conditions~\eqref{eq-bun} and~\eqref{eq-res}, and,
if $a_0,b_0$ are the constants appearing in~\eqref{eq-bun},
\[
b_0^{\ell-r}<a_0^{r+1},
\qquad
|\alpha+\tilde w(a)\beta|^{r+1}<|\gamma_1|.
\]
\item $\tau=(\tau_i)_{1\le i\le s}$ with $\tau_1=\cdots=\tau_s,$ and $0<|\tau_i|<\delta^{-1}$ where $\delta$ is given in Lemma \ref{le-init-dim2}.
\end{itemize}
Observe that $\tau$ twists the stable directions. Since we need
the contraction of the cone field $C^{uu}_\eta$ where $\eta$ is defined after Lemma~\ref{le-ifs},
and we also want to use the same $\eta$ for the stable cone field
$C^{ss}_\eta$, $\tau$ must not be too large. For the sequel, the
important point in verifying Assumption~\ref{grille} is that the
components of $\tau$ do not vanish and do not depend on $a$,
$\gamma$, or $\sigma$.

Since $\tilde w(a)$ is a fixed point of $q_a$, the hyperplane
$\{w=\tilde w(a)\}$ is invariant under $f_0$ and contains a unique
fixed point $p(f_0)$ described above, whose multipliers are $\alpha+\tilde w(a)\beta$, $2a\tilde w(a)$,
and the components of $\gamma$ and $\sigma$. The above choices
ensure that $p(f_0)$ is a saddle fixed point of index $(s+1,u)$ whose dynamics on the stable and unstable manifold are both linearizables, and, by Proposition \ref{prop-dynaslope} (applied to $f_0^{-1}$), there exists stable dynamical directions along $W^u_{p(f_0),loc}$, $\mathcal E^{s,0}_{f_0}$ corresponding to $\alpha+\tilde w(a)\beta$ and  $\mathcal E^{s,i}_{f_0}$ corresponding to $\gamma_i,$ $1\leq i\leq s.$ Moreover, $p(f_0)$ also satisfies the bunching condition in
Assumption~\ref{homoclinic}, although at this stage all its homoclinic
intersections are degenerate.

On the other hand, the plane $\{x=0\}$, of dimension $u+1$, is also invariant
and globally attracting. The induced dynamics there is essentially
the one described in Section~\ref{sec-dim2}, with an additional
factor $\sigma(y+(w-\tilde w(a))/6)$ that is uniformly expanding. More
precisely, set
\[U_\pm^u:=\Bb^{u-1}\times U_\pm(a).\]
Since for each $1\le i\le u-1$ we have $|\sigma_i|>8|a|^{3/4}>100$, the
image of $U_+^u$ (resp.\ $U_-^u$) under the map
\[G\colon(y,w)\mapsto\left(\sigma\left(y+\frac{w-\tilde w(a)}{6}\right),q_a(w)\right)\]
compactly contains $\Bb_2^u$. Observe that these sets $U_\pm^u$ depend on $a$. However, the last property is stable under small
perturbations, so that the sets $U_\pm^u$ appearing in
Assumption~\ref{verti} will correspond to the above ones for a suitable choice of $a$.

Since $U_\pm^u\Subset\Bb_2^u\Subset G(U_\pm^u),$ there exist $V_\pm^u\Subset U_\pm^u$ such that
\[G(V^u_\pm)\subset U^u_\mp\ \text{ and }\ G^2(V^u_\pm)=\Bb_2^u.\]
Observe that by definition of the sets $V_\pm(a)$ in Section~\ref{sec-qa}, we must have that the $w$-coordinate of a point in $V^u_\pm$ belongs to $V_\pm(a).$ In the same way, there exist $V_1^u,V_2^u\subset V_+^u$ and $V_3^u,V_4^u\subset V_-^u$ such that
\[G(V_1^u)=G(V_3^u)=V_+^u\ \text{ and }\ G(V_2^u)=G(V_4^u)=V_-^u,\]
and the $w$-coordinate of a point in $V^u_j$ belongs to $V_j(a).$ In particular, if the constant $\delta>0$ is large enough, then writing
\[f_0^4(z,x,y,w)=\big(F_1(z,w,x),\gamma^4 x,F_2(y,w),q^4_a(w)\big),\]
we have that $F_1(z,w,x)$ is arbitrarily close to $\phi_{j,\hat\lambda}$ (defined in Section~\ref{sec-dim2}) on $\Db_R\times\Bb^s\times V^u_j,$ where $\hat\lambda=(\alpha,\beta,\epsilon).$ Using \eqref{eq-cover}, this gives that
\begin{equation}\label{eq-cover2}
\overline{f_0^4(H_j\times\Bb^s\times V^u_j)}\subset\Db_{2-r}\times\Bb^s_{1/2}\times\overline{\Bb^u_2},
\end{equation}
which will be essential to establish the blender property in Lemma~\ref{le-blender-selle}.

Let $R:=A^{-1}$ as before Lemma~\ref{le-blender}. We can now check Assumptions~\ref{verti} and \ref{hyperbolic} on the sets 
\[
\mathcal U_\pm:=\Db_{2R}\times \Bb^s\times U_\pm^u
\]
and
\[\mathcal V_+:=\Db_R\times \Bb^s\times V_+^u,\ \ \mathcal V_-:=\Db_R\times \Bb^s\times V_-^u\ \text{ and }\ \mathcal V:=\mathcal V_+\cup\mathcal V_-.\]

\begin{lemma}\label{le-ass1et2}
Possibly by increasing the constant $\delta>0$ then, for the choice of parameters as above, the map $f_0$ satisfies Assumptions~\ref{verti} and \ref{hyperbolic} on these sets. In particular,
\[\Lambda(f_0):=\bigcap_{n\in\Zb}f_0^{-2n}(\mathcal V)\]
is a basic set of index $(s,u+1).$ Moreover, $\bigcap_{n\in\Zb}f_0^{-2n}(\mathcal V_-)$ is reduced to a single point $r_0(f_0)$ which is fixed by $f_0^2$ and whose first coordinate belongs to $\Db_{1/2}.$
\end{lemma}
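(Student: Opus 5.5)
The plan is to exploit the triangular (skew-product) structure of $f_0$. Each factor is treated separately, and the estimates are then assembled into cone-field and horizontal-like statements.

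Write $f_0=(F^z,\gamma x,F^y,q_a(w))$. On $\mathcal U_\pm$ the last two coordinates $(y,w)\mapsto G(y,w)$ are independent of $(z,x)$. Lemma~\ref{le-U} and the choice $|\sigma_j|>8|a|^{3/4}$ then give the covering $G(U^u_\pm)\Supset\Bb^u_2\supset\overline{\Bb^u_{3/2}}$ and injectivity of $G$ on each $U^u_\pm$. The $x$-coordinate is contracted by $\gamma$, with $|\gamma_i|<|a|^{-1/2}/4$, so $\gamma\Bb^s\Subset\Bb^s$. The $z$-coordinate is the affine-in-$z$ map $\alpha z+\epsilon w+\beta zw+\tau\cdot x(w-\tilde w(a))$. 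Its behaviour depends on the region:
\begin{itemize}
\item On $\mathcal U_-$, where $w\simeq\tilde w(a)$, the multiplier is $\alpha+\beta w\simeq\alpha-\beta$, which has modulus $<1$ by \eqref{def-M}, and the $\tau$-term is $O(|a|^{-1})$.
\item On $\mathcal U_+$, where $w\simeq 1$, the multiplier is $\alpha+\beta$. On second iterates through $\mathcal V_\pm$ the $z$-dynamics is $C^1$-close to the expanding affine maps $\phi^\pm_{\hat\lambda}$, by the discussion before \eqref{eq-cover2}.
\end{itemize}
These facts give the three boundary conditions of Definition~\ref{def-hl}. For $\mathcal U_-$ the map is horizontal-like of index $(s+1,u)$: the $z$-direction is contracted into $\Db_{2R}$, using $|\epsilon w|$ small and $R=A^{-1}$. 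For $\mathcal U_+$ only index $(s,u+1)$ is obtained, with $z$ treated as part of the unstable block. Both maps have degree $1$, since $G|_{U^u_\pm}$ is injective and the $(z,x)$-part is a fibrewise affine map.

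Cone fields come next. The derivative is block triangular, with diagonal blocks $\alpha+\beta w$, $\gamma$, $\sigma$ and $2aw$. The off-diagonal terms are $\epsilon+\beta z+\tau\cdot x$, $\tau(w-\tilde w(a))$ and $\sigma/6$. The rates are separated as follows: unstable rates are at least $8|a|^{3/4}$, stable rates at most $|a|^{-1/2}/4$, and the central rate is $O(1)$. Off-diagonal entries are either $O(1)$ or $O(|\sigma|)$, and the latter occurs only inside the unstable block, where it is dominated by $|2aw|\ge|a|$. A standard estimate then shows that $C^{uu}_\eta$ is mapped into itself with contraction and $C^{ss}_\eta$ is expanded by $f_0^{-1}$, once $\delta$ is large in terms of the fixed $\eta$ and $|\tau|<\delta^{-1}$. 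Since these estimates are uniform in $a$ and $\gamma,\sigma$ enter only through ratios tending to $0$ or $\infty$, this is where enlarging $\delta$ is needed.

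For Assumption~\ref{hyperbolic}, the inclusions $f_0(\mathcal V_\mp)\Subset\mathcal U_\pm$ follow from $G(V^u_\mp)\subset U^u_\pm$ together with $|\alpha z+\epsilon w+\beta zw|<2R$ for $|z|<R$; the latter holds by the choice $R=A^{-1}$ and \eqref{def-M}. For $f_0^2|_{\mathcal V}$ to be degree-$2$ horizontal-like of index $(s,u+1)$ into $\Db_R\times\Bb^s\times\Bb^u_{3/2}$, one uses that the $z$-part is close to $\phi^\pm_{\hat\lambda}$, and that $\overline{\Db_R}\subset\phi^\pm_{\hat\lambda}(\Db_R)$ with expansion factor $|\alpha^2-\beta^2|>1+A$. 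The $(y,w)$-part uses $G^2(V^u_\pm)=\Bb^u_2$.

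The dominated splitting $E^u=E^{uu}\oplus E^c$ with the rates $\tilde\chi^{\pm4n}$ and $\tilde\chi^{n}$ comes from the same block structure. Choose $\tilde\chi$ with $|\alpha^2-\beta^2|^{1/2}<\tilde\chi$ and $\tilde\chi^{4}<|a|^{3/4}$ (up to constants); this is possible for $|a|$ large. The invariant bundles are then obtained by the usual graph-transform argument on the invariant cones.

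Hyperbolicity of $\Lambda(f_0)$ as a basic set then follows. Local maximality is by definition. Transitivity follows from the conjugacy of the $w$-projection to the full shift on $E(a)$ and from uniform contraction along fibres of the inverse branches, as in the description of $x_\omega(\lambda)$.

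For $r_0(f_0)$, observe that $\bigcap_n f_0^{-2n}(\mathcal V_-)$ corresponds to the constant word $(-1)$. Its $w$-coordinate is the fixed point of $g_-\circ g_+$ in $V_-(a)$, and $x=0$. The $y$-coordinate is the unique fixed point of the expanding affine map $F_2$. The $z$-coordinate is close to the fixed point of $\phi^-_{\hat\lambda_0}$, namely
\[\epsilon_0(\alpha_0+\beta_0-1)/(\alpha_0^2-\beta_0^2-1).\]
With $\epsilon_0=(20(\zeta-1))^{-1}$ and $A$ small, this is approximately $\frac{1}{20}\cdot\frac{\zeta-1}{(\zeta^2-1)}\cdot\frac{1}{\zeta-1}\cdot(\zeta-1)$, which has modulus well below $1/2$.

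The main obstacle I expect is the cone-contraction bookkeeping on $\mathcal U_+$ for $C^{uu}_\eta$. There the central direction is also expanding, and the coupling term $\beta z$ or $\epsilon$ feeds the $w$-derivative into the $z$-component. I would handle this by measuring the $C^{uu}$-cone slope relative to the $(y,w)$-block and using $|\epsilon+\beta z|\cdot|2aw|^{-1}=O(R/|a|)$, so that the slope gains a factor $O(|a|^{-3/4})$ per iterate.
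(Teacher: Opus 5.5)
Your proposal is correct and follows essentially the paper's route (triangular structure of $f_0$, the covering properties $U^u_\pm\Subset\Bb^u_2\Subset G(U^u_\pm)$ and $G^2(V^u_\pm)=\Bb^u_2$, closeness of the $z$-part of $f_0^2$ to $\phi^\pm_{\hat\lambda}$, cone estimates forcing $|\tau_i|<\delta^{-1}$, and the fixed point of $\phi^-_{\hat\lambda}$ for $r_0(f_0)$); the one difference is that the paper gets hyperbolicity from the invariant, uniformly attracting plane $\{x=0\}$, on which $f_0$ is semi-conjugate via $(z,y,w)\mapsto(z,w)$ to the expanding model $\mathcal F_\lambda$, rather than from direct cone estimates.

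Minor corrections:
\begin{itemize}
\item The genuinely delicate estimate is the stable cone on $\mathcal U_+$. There the coupling $\tau\cdot x\,(w-\tilde w(a))$, with $|w-\tilde w(a)|\approx 2$, is beaten only by $|\tau_i|\ll(\eta s)^{-1}$ and not by taking $|a|$ large; the $C^{uu}$ estimate you single out is easy.
\item The constant $\tilde\chi$ should be the paper's $2|a|^{1/4}$. The same constant must also satisfy Assumption~\ref{saddle}, in particular $|2a\tilde w(a)|<\tilde\chi^4$, and your constraint $\tilde\chi^4<|a|^{3/4}$ violates this.
\item The central requirement is $|\alpha^2-\beta^2|<\tilde\chi$, not $|\alpha^2-\beta^2|^{1/2}<\tilde\chi$.
\item For small $A$, the $z$-coordinate of $r_0(f_0)$ is approximately $1/(20(\zeta^2-1))$, of modulus about $1/(20\sqrt2)$.
\end{itemize}
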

\begin{proof}
Each point is straightforward, and we leave precise details to the reader.

The injectivity of $f_0$ on these sets follows from that of $q_a$ on $U_\pm(a)$, since $f_0$ acts affinely with
full rank on each hyperplane of the pencil, except on
$\{w=-\alpha/\beta\}$. However, $|\alpha/\beta|>20>2$.

The cone field conditions follow from a direct computation of the
differential of $f_0$. The only delicate point concerns the
expansion of the stable cone field: with our choice
$|\gamma_1|<\cdots<|\gamma_s|<2|\gamma_1|$, we need that each component of
$\tau$ satisfies $|\tau_i|<(10\eta s)^{-1}$, which is ensured by
$|\tau_i|<\delta^{-1}$ if $\delta$ is chosen large enough.

The horizontal-like behavior of $f_0$ and $f_0^2$ on $\mathcal U_\pm$ and $\mathcal V$ respectively come from
\[U_\pm^u\Subset\Bb_2^u\Subset G(U_\pm^u),\quad G^2(V^u_\pm)=\Bb_2^u,\]
and the fact that the $w$-coordinates on $U_\pm^u$ is almost equal to $\pm1.$

The locally maximal set
\[\Lambda(f_0)=\bigcap_{n\in\Zb} f_0^{-2n}(\mathcal V)\]
is contained in the invariant plane $\{x=0\}$. On this plane,
$f_0$ is conjugate, via the projection $(z,y,w)\mapsto(z,w)$,
to the map $\mathcal F_\lambda$ studied in
Section~\ref{sec-dim2}. Since $\mathcal F_\lambda$ is expanding
on the corresponding set by Lemma~\ref{le-blender}, and since the
additional factor $\sigma(y+w-\tilde w(a))$ is also expanding,
it follows that $\Lambda(f_0)$ is a repelling hyperbolic set for
$f^2_{0|\{x=0\}}$.

Moreover, on this plane, the fibration by lines
$\{(y,w)=\mathrm{cst}\}$ is invariant. For $(y,w)\in V^u_\pm$,
the differential of $f_0^2$ along these lines is essentially
$\alpha^2-\beta^2$, which is much smaller than the other expansion
rates (of order $|4a^2|$ and $|\sigma_i|^2$). Hence, this
direction defines a central unstable direction for
$\Lambda(f_0)$.

On the other hand, since the plane $\{x=0\}$ is uniformly
attracting, it follows that $\Lambda(f_0)$ is hyperbolic of
index $(s,u+1)$ and the bunching conditions in Assumption~\ref{hyperbolic} are satisfied with $\tilde\chi:=2|a|^{1/4}.$

Finally, a direct computation of $f^2_0|_{\{x=0\}}$ gives that the first coordinate of the fixed point $r_0(f_0)$ for this map is smaller than
\[|\epsilon(\beta+\alpha-1)|<3|\epsilon|<3/10.\]
\end{proof}
In the same way, we have the following.
\begin{lemma}\label{le-ass-saddle}
The saddle fixed point $p(f_0)$ satisfies Assumption~\ref{saddle}.
\end{lemma}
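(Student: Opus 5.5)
We need to check that $p(f_0)$ satisfies Assumption~\ref{saddle}: it is a saddle of index $(s+1,u)$ in $\mathcal U_-$, conditions \eqref{eq-bun} and \eqref{eq-res} hold, the multipliers obey the stated $\tilde\chi$-bounds, and $W^u_{p(f_0),loc}$ is a $u$-graph in $\Db\times\Bb^s\times\Bb_{3/2}^u$ tangent to $C^{uu}_\eta$. The plan is to read all of this off the explicit skew-product form \eqref{eq-f_0}, taking $\tilde\chi:=2|a|^{1/4}$ as in the proof of Lemma~\ref{le-ass1et2}.

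First, the multipliers. Since $f_0$ preserves the hyperplane $\{w=\tilde w(a)\}$, and since $\{x=0\}$ is invariant, the differential $D_{p(f_0)}f_0$ is block triangular. Its eigenvalues are the central one $\chi_{p(f_0)}=\alpha+\beta\tilde w(a)$, the stable ones $\gamma_1,\dots,\gamma_s$, and the unstable ones $\sigma_1,\dots,\sigma_{u-1}$ and $q_a'(\tilde w(a))=2a\tilde w(a)\approx-2a$.

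We then compare these with powers of $\tilde\chi$. We have $\tilde\chi^{2}=4|a|^{1/2}$ and $\tilde\chi^{3}=8|a|^{3/4}$, while $\tilde\chi^{4}=16|a|$. By the choice of $\gamma$,
\[
|\gamma_i|<(4|a|^{1/2})^{-1}=\tilde\chi^{-2},\qquad |\gamma_i|>|a|^{-3/4}>\tilde\chi^{-3},
\]
so the stable multipliers lie in the required window. For the unstable multipliers, $|\sigma_j|>8|a|^{3/4}=\tilde\chi^3$ and $|\sigma_j|<|a|^{4/5}<\tilde\chi^4$. For $\rho_u=2a\tilde w(a)$, Lemma~\ref{le-U} gives $|2a\tilde w(a)|=2|a|(1+O(|a|^{-1}))$, which sits between $|a|^{4/5}$ and $16|a|$ for $|a|$ large. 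The multipliers are therefore ordered as $\rho_1,\dots,\rho_{u-1}=\sigma$ and $\rho_u=2a\tilde w(a)$. For the central multiplier, $|\alpha-\beta|<1$ by \eqref{def-M} and $\tilde w(a)\to-1$, so $|\chi_{p(f_0)}|<1$ strictly for $|a|$ large, and $\tilde\chi^{-1}\to0$ gives $|\chi_{p(f_0)}|>\tilde\chi^{-1}$. Conditions \eqref{eq-bun} and \eqref{eq-res} were imposed directly in the parameter choices. Strict ordering of the moduli can be arranged by an arbitrarily small perturbation within the open constraints.

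Next, we locate the fixed point and its unstable manifold. The first coordinate of $p(f_0)$ is $-\epsilon\tilde w(a)/(\alpha+\beta\tilde w(a)-1)$. Its modulus is at most $(1/10)(1+o(1))/(1/2)<1/4$, using $|\alpha-\beta-1|>1/2$ from \eqref{def-M}. Since $\tilde w(a)\in U_-(a)$, this places $p(f_0)$ in $\mathcal U_-$.

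For the unstable manifold, in $\{x=0\}$ the map is $(z,y,w)\mapsto(\alpha z+\epsilon w+\beta zw,G(y,w))$ with $G$ strongly expanding. We would obtain $W^u_{p(f_0),loc}$ as $\bigcap_n f_0^n(\mathcal U_-)$, i.e.\ the $u$-graph produced by iterated graph transforms of $f_0|_{\mathcal U_-}$, which is horizontal-like of index $(s+1,u)$ by Lemma~\ref{le-ass1et2}. Cone contraction of $C^{uu}_\eta$ gives tangency to the cone. The $z$-coordinate along the graph stays in $\Db_{1/4}$ by the argument of Lemma~\ref{le-init-dim2}, since the $z$-dynamics is identical to $\mathcal F_\lambda$. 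The $(y,w)$-range covers $\Bb^{u-1}\times U_-(a)\subset\Bb_{3/2}^u$. The dot-product term vanishes on $\{x=0\}$, so it does not affect this.

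The main obstacle is the compatibility of the numerical constraints, especially \eqref{eq-bun} with its ratio bounds $[a_0,b_0]$ and $b_0^{\ell-1}<a_0^2$. The ratios $\sigma_j/\sigma_{j+1}$ are close to $1$, and $|\sigma_{u-1}/(2a\tilde w(a))|\approx|a|^{-1/5}$ is small. We expect to handle this by choosing $a_0$ small, of order $|a|^{-1}$, and $b_0<1$ fixed with $\ell$ large. This needs checking against the spread of the $\gamma_i$ and $\sigma_j$, and against the requirement that $\ell$ in \eqref{eq-res} is fixed before $a$ grows. We would fix the ratios of the $\gamma_i$ and $\sigma_j$ as fixed powers of $|a|$ so that $a_0,b_0$ scale consistently.
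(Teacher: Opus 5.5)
Your multiplier bookkeeping is correct and matches the paper, which regards everything except the position of the unstable manifold as built into the parameter choices. The gap is precisely in that remaining point. In the paper's convention, a $u$-graph in $\Db\times\Bb^s\times\Bb^u_{3/2}$ projects biholomorphically onto the whole polydisc $\Bb^u_{3/2}$. The graph you construct, $\bigcap_n f_0^n(\mathcal U_-)$, lives only over $U^u_-=\Bb^{u-1}\times U_-(a)$, and you explicitly stop at that range.

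This is not cosmetic. Assumption~\ref{saddle} is what makes $W^u_{p(f),loc}$ also cross $\Db\times\Bb^s\times V^u_+$, so that the blender property produces intersections with $W^s_{\Lambda(f),loc}$ inside $\mathcal U_+$. This is how the heterodimensional cycle is used, for instance in the proofs of Lemmas~\ref{le-init} and~\ref{le-zariski} and in Assumption~\ref{decalage}.

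To reach the full polydisc you need one more graph transform. Since $f_0|_{\mathcal U_-}$ is horizontal-like into $\Db_{2R}\times\Bb^s\times\Bb^u_{3/2}$, this gives a $u$-graph over all of $\Bb^u_{3/2}$, tangent to $C^{uu}_\eta$. A priori, however, its central coordinate is only in $\Db_{2R}$. Bringing that coordinate into $\Db$ over the whole polydisc is the actual content of the lemma.

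The paper does this directly. By \eqref{def-M}, $|z(p(f_0))|<1/2$, and a graph through $p(f_0)$ tangent to $C^{uu}_\eta$ with $\eta>10$ cannot leave $\Db$ over $\Bb^u_{3/2}$. Your reduction also works once you apply it on the correct domain. On $\{x=0\}$ the $(z,w)$-dynamics is $\mathcal F_\lambda$ and ignores $y$, so $W^u_{p(f_0),loc}=\{x=0,\ (z,w)\in W^u_{\mathcal P(\lambda),loc}\}$ with $y$ free. Lemma~\ref{le-init-dim2} then gives $|z|<1/4$ for all $w\in\Db_2\supset\Db_{3/2}$. That bound is sharper, but it depends on the exact product structure of $f_0$ and on the cited two-dimensional lemma. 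The cone argument uses only cone tangency and the position of $p(f_0)$.

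Your final paragraph is not an obstacle for this lemma. Conditions \eqref{eq-bun} and \eqref{eq-res}, together with the auxiliary inequalities involving $r$, are hypotheses on the parameters, so the lemma only has to read them off, as you already note. Nor does $\ell$ have to be fixed before $|a|$ grows. The parameter choice only asks for some $1\leq r<\ell$, and these may (indeed must) depend on $a$, since $a_0\leq|2a\tilde w(a)|^{-1}$ while $b_0\geq|\chi_{p(f_0)}|$ stays close to $1$. Once $\ell$ is fixed, \eqref{eq-res} excludes only finitely many resonances, which generic $(a,\gamma,\sigma)$ avoid. Likewise, no perturbation is needed for strict ordering of the moduli. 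It is already encoded in the inequalities on $\gamma$ and $\sigma$, together with $|\sigma_{u-1}|<|a|^{4/5}<|2a\tilde w(a)|$ and $|\gamma_s|<\tilde\chi^{-2}<\tilde\chi^{-1}<|\chi_{p(f_0)}|$.
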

\begin{proof}
The only point which does not directly comes from the choice of the parameters is that $W^u_{p(f_0),loc}$ is a $u$-graph above $\Db$ and not a larger disc. To see this, we use that, since $f_0$ contracts $C^{uu}_\eta,$ this local unstable manifold has to be tangent to it. On the other hand, the first coordinate of $p(f_0)$ is $(\epsilon\tilde w(a))/(\alpha+\beta\tilde w(a)-1)$ whose modulus is smaller that $1/2$ by \eqref{def-M}. Since $\eta>10,$ we have that $W^u_{p(f_0),loc}$ is contained in $\Db\times\Bb^s\times\Bb^u_{3/2}$.
\end{proof}

The next assumption to check is the blender property. It actually follows as in \cite[Lemma~4.6]{gtv} but, as it is important for what follows, we give a full proof of it.
\begin{lemma}\label{le-blender-selle}
Each $u$-graph in $\Db\times \Bb^s\times V_+^u$ (resp. $\Db\times \Bb^s\times V_-^u$)  tangent to $C^{uu}_{\eta}$ intersects the stable set of $\Lambda(f_0)$
\[W^s_{\Lambda(f_0),loc}=\bigcup_{x(f_0)\in\Lambda(f_0)}W^s_{x(f_0),loc}.\]
Moreover, this property is stable under small $C^1$-perturbation of $f_0$ on $\mathcal V.$
\end{lemma}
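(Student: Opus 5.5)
The proof will be an iterated-function-system covering argument, run in the coordinate $z$ through successive graph transforms, following the scheme of \cite[Lemma~4.6]{gtv} but carried out in the saddle setting. Let $\Gamma$ be a $u$-graph in $\Db\times\Bb^s\times V_\pm^u$ tangent to $C^{uu}_\eta$. Since $V_+^u=V_1^u\cup V_2^u$ and $V_-^u=V_3^u\cup V_4^u$, up to restricting $\Gamma$ we may assume its $(y,w)$-projection meets some $V_j^u$. We then look at the piece of $\Gamma$ lying over $V_j^u$, whose image under $f_0^4$ is again a $u$-graph over $\Bb_2^u$; this holds because $G^4$ maps $V_j^u$ onto $\Bb_2^u$ and the map is horizontal-like of degree $1$ in the $cu$ directions.

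The core inductive step runs as follows. We want to show that if $\Gamma$ is a $u$-graph over (a subset of) $\Bb_2^u$, tangent to $C^{uu}_\eta$, whose $z$-projection lies in $\Db_{2-r}$, then for a suitable $j$ the graph transform $f_0^4(\Gamma\cap(\Db_R\times\Bb^s\times V_j^u))$ is again such a graph. The steps are these.
\begin{enumerate}
\item By the choice of $\eta>4\rho^{-1}$, the $z$-projection of $\Gamma$ has diameter less than $\rho/2$. It is therefore contained in some $H_j$, by the Lebesgue number property of the cover $\Db_2=\bigcup H_j$ from Lemma~\ref{le-ifs}.
\item We use that $F_1$ is close to $\phi_{j,\hat\lambda}$ on $\Db_R\times\Bb^s\times V_j^u$. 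Estimate \eqref{eq-cover2} then shows that the image lies in $\Db_{2-r}\times\Bb^s_{1/2}\times\overline{\Bb^u_2}$.
\item Contraction of $C^{uu}_\eta$ keeps the image tangent to the cone. The fact that $G^4$ maps $V_j^u$ onto $\Bb_2^u$ keeps it a graph.
\end{enumerate}
Iterating this step produces a word $(j_0,j_1,\dots)$ and nested compact sets $K_n\subset\Gamma$ of points whose $f_0^{4m}$-orbits, for $m\le n$, stay in $\mathcal V$. Any point of $\bigcap_n K_n$, which is nonempty by compactness, has its forward orbit in $\mathcal V$. Hence it lies in $W^s_{\Lambda(f_0),loc}$, by local maximality together with the description of local stable sets via $\bigcap f_0^{-2n}(\mathcal V)$.

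Some care is needed at the start and for the $x$-coordinate. The initial graph lies over $\Db$ in $z$, hence inside some $H_j$ because $\overline\Db\subset H_j$. The $x$-coordinate is contracted by $\gamma^4$, so it stays in $\Bb^s_{1/2}$. The cross term $\tau\cdot x(w-\tilde w(a))$ is small because $|\tau_i|<\delta^{-1}$, and is absorbed in the closeness of $F_1$ to $\phi_{j,\hat\lambda}$.

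For the stability claim, we note that every ingredient is open under $C^1$-perturbation on $\mathcal V$. These ingredients are the strict inclusion in $\Db_{2-r}$, the cone contraction, and the horizontal-like covering $G^4(V_j^u)\supset\Bb_2^u$. The same induction therefore applies verbatim to any $f$ that is $C^1$-close to $f_0$.

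The main obstacle is step 2, namely justifying uniformly that $F_1$ is $C^0$-close to $\phi_{j,\hat\lambda}$ on the whole region, including the $x$-dependence and the dependence on $w\in V_j(a)$. We plan to handle it by direct expansion of $f_0^4$, using $w=\pm1+O(|a|^{-1})$ on $V_\pm(a)$ and $|x|\le1$ with a small $\tau$, and then choosing $\delta$ large.
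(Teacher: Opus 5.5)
Your proposal is correct and follows essentially the same argument as the paper: the Lebesgue number of the cover by the $H_j$, together with the inclusion \eqref{eq-cover2}, lets you iterate graph transforms under $f_0^4$ along a word $(j_n)$, the nested intersection is identified with the local stable set of $\Lambda(f_0)$, and robustness comes from the openness of each ingredient. One harmless inaccuracy is that $V_1^u,V_2^u$ are disjoint subsets of $V_+^u$ (and $V_3^u,V_4^u$ of $V_-^u$) rather than a cover of it; your argument only needs that a $u$-graph over $V_+^u$ restricts to a full $u$-graph over each $V_j^u\subset V_+^u$, and this is true.
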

\begin{proof}
The key ingredient is that, as we have seen, the first factor of $f_0^4$ is arbitrarily close to $\phi_{j,\hat\lambda}$ on $\Db_R\times\Bb^s\times V^u_j,$ which implies the inclusion \eqref{eq-cover2}.

Recall that $\rho>0$ is the Lebesgue number of the cover $\overline{\Db_{2-r}}\subset \bigcup_{j=1}^4H_j$ and that $\eta>4\rho^{-1}.$ Observe that $\rho$ is also the Lebesgue number of the cover
\[\overline{\Db_{2-r}\times\Bb^s_{1/2}}\subset \bigcup_{j=1}^4H_j\times\Bb^s.\]
 Hence, the vertical projection of $u$-graph $\Gamma\subset\Db_{2-r}\times\Bb^s_{1/2}\times\Bb^u_2$ tangent to $C^{uu}_\eta$ must have diameter smaller than $\rho$ and thus, is compactly contained in on of the $H_j\times\Bb^s.$

Now, let $j_0\in\{1,2,3,4\}$ and let $\Gamma_0\subset H_{j_0}\times\Bb^s\times V^u_{j_0}$ be a $u$-graph tangent to $C^{uu}_\eta.$ Its image by $f_0^4$ is a $u$-graph in $\Db_{2-r}\times\Bb^s_{1/2}\times\Bb^u_2$ and by what we have just said, there exists $j_1\in\{1,2,3,4\}$ such that its vertical projection is compactly contained in $H_{j_1}\times\Bb^s.$  Thus, $f_0^4(\Gamma_0)$ contains a $u$-graph $\Gamma_1\subset H_{j_1}\times\Bb^s\times V^u_{j_1}$ tangent to $C^{uu}_\eta.$ By induction, we obtain a sequence of vertical graph $\Gamma_n\subset f_0^{4n}(\Gamma_0)$ in some $H_{j_n}\times\Bb^s\times V^u_{j_n}$ and thus $\Gamma_0$ intersects 
\[\bigcap_{n\geq0} f_0^{-4n}\left(\Db_R\times\Bb^s\times\left(\bigcup_{j=1}^4V^u_j\right)\right).\]
By definition of the sets $V^u_j$ and by \cite[Paper~I, Corollary~2.6]{jonsson-phd}, this intersection is exactly the stable set of $\Lambda(f_0)$.
Moreover, all these arguments are robust under small $C^1$ perturbations.
\end{proof}
We now verify the two assumptions that will be used in
Section~\ref{sec-subvar}. Both are related to the choice of coordinates in $\Cb^k$. We therefore introduce, for
$1\le i\le s$ and $1\le j\le u$, the coordinate vectors
$e^c,e^s_i,e^u_j\in\Cb\times\Cb^s\times\Cb^{u-1}\times\Cb$ by
\[e^c=(1,0,0,0),\qquad e^u_u=(0,0,0,1),\]
 $e^s_i$ denotes the $i$-th coordinate vector in $\Cb^s$,
and $e^u_j$ denotes the $j$-th coordinate vector in
$\Cb^{u-1}$ for $j<u$.

\begin{lemma}\label{le-decalage}
Possibly after increasing $\delta$, Assumption~\ref{decalage}
holds at $f_0$.
\end{lemma}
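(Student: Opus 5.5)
The plan is to use the explicit skew-product structure of $f_0$. The linearizing chart of $p(f_0)$ restricted to $W^u_{p(f_0),loc}$ should then be computable almost in closed form, and the two kinds of unstable coordinates (the $w$-coordinate and the $y_j$-coordinates) can be handled separately. First I would note that $W^u_{p(f_0),loc}$ lies in the invariant plane $\{x=0\}$ and is a graph over the $(y,w)$-variables. On it the dynamics is conjugate, via the projection, to
\[G(y,w)=\Bigl(\sigma\bigl(y+\tfrac{w-\tilde w(a)}{6}\bigr),q_a(w)\Bigr).\]
By Remark~\ref{rk-dynaslope} (uniqueness), the linearization of $W^u_{p(f_0),loc}$ coincides with the holomorphic one. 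It is obtained by extending a local linearization along the inverse branch $g_-$ of $q_a$ fixing $\tilde w(a)$ and the corresponding inverse branch of $G$.

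For the $w$-coordinate I would take the Koenigs map $\varphi$ of $q_a$ at $\tilde w(a)$, extended by $\varphi(w)=(q_a'(\tilde w(a)))^n\varphi(g_-^n(w))$. Since $\varphi$ is injective near $\tilde w(a)$, this extension vanishes only at $w=\tilde w(a)$. A point of $\mathcal U_+$ has $w\in U_+(a)\subset D(1,3|a|^{-1})$, whereas $\tilde w(a)\in U_-(a)$, so this coordinate never vanishes there.

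For each $y_j$, $1\le j\le u-1$, I would look for a linearizing coordinate of the form $\psi_j(y,w)=y_j+c_j(w)$ with $\psi_j\circ G=\sigma_j\psi_j$. This amounts to the functional equation $c_j(q_a(w))=\sigma_jc_j(w)-\sigma_j\,(w-\tilde w(a))/6$ with $c_j(\tilde w(a))=0$. I would solve it along the branch $g_-$, i.e. $c_j(w)=\sigma_j c_j(g_-(w))-\sigma_j(g_-(w)-\tilde w(a))/6$, iterated; the series converges because $|\sigma_j|<|a|^{4/5}\ll|2a\tilde w(a)|$. Linearizing at $\tilde w(a)$ gives $c_j(w)\approx\kappa_j(w-\tilde w(a))$ with $\kappa_j=\sigma_j/(6(\sigma_j-q_a'(\tilde w(a))))=O(|a|^{-1/5})$. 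Since $g_-$ contracts by a factor $O(|a|^{-1})$, I would aim for the estimate $|c_j(w)|=O(|a|^{-1/5})$ on $U_+(a)$. Controlling this extension of $c_j$ from a neighborhood of $\tilde w(a)$ to $U_+(a)$ is the step I expect to be the main obstacle, and I would do it through the explicit series.

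Next, I would locate the points $y\in W^s_{\Lambda(f_0),loc}\cap W^u_{p(f_0),loc}\cap\mathcal U_+$. Their forward orbits stay in $\mathcal V\subset\mathcal U_+\cup\mathcal U_-$, so $G(y,w)\in U^u_\pm$, which gives $|y_j+(w-\tilde w(a))/6|<|\sigma_j|^{-1}\cdot 3/2$. Since $w-\tilde w(a)=2+O(|a|^{-1})$, this yields $y_j=-1/3+O(|a|^{-3/4})$, hence $|\psi_j(y,w)|\geq 1/3-O(|a|^{-1/5})>0$. Finally, the linearizing coordinates coincide with $(\psi_1,\ldots,\psi_{u-1},\varphi)$ up to the normalization $D_{p}\theta=\id$, i.e. up to nonzero constants and a triangular change that preserves the zero sets because the system is a skew product. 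Hence all coordinates are nonzero once $\delta$ is increased so that the error terms are small.
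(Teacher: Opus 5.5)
Your proposal is correct and follows essentially the paper's route. In both arguments, the skew-product structure shows that the coordinate hypersurfaces of the linearizing chart on $W^u_{p(f_0),loc}$ are $\{w=\tilde w(a)\}$ and graphs $\{y_j=-c_j(w)\}$ close to $\{y_j=0\}$, while the intersection points in $\mathcal U_+$ sit near $w=1$, $y_j=-1/3$. Your explicit series bound $|c_j|=O(|a|^{-1/5})$ is a quantitative version of the paper's cone-field argument that $W^{uu}_{p(f_0),loc}$ is close to $p(f_0)+\Cb e^u_u$, and your one-step localization via $f_0(y)\in\mathcal U_-$ replaces the paper's observation that these intersection points lie in $\Lambda(f_0)$, followed by its second-iterate estimate.
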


\begin{proof}
We need a more precise description of the coordinate hyperplanes in the
linearizing coordinates of $W^u_{p(f_0),loc}$.

A first remark is that $W^u_{p(f_0),loc}$ is contained in the plane
$\{x=0\}$. Moreover, the affine subspace
\[\{x=0,\ w=\tilde w(a)\}\]
is invariant, and the restriction of $f_0$ to it is given by
\[(z,y)\mapsto\big(z(\alpha+\beta\tilde w(a))+\epsilon\tilde w(a),\sigma y\big).\]
It follows that, for $1\le i\le u-1$, the curve corresponding in a
linearization chart to the axis associated to the multiplier $\sigma_i$ is precisely the affine
line
\[
p(f_0)+\Cb\cdot e_i^u.
\]
The last unstable direction, corresponding to the multiplier
$2a\tilde w(a)$, is less explicit. However, by our choice of parameters,
this multiplier dominates a power of the other unstable multipliers.
Hence, if $r>0$ is fixed, the cone field
\[
C_r^w:=\left\{(v_1,\ldots,v_{k-1},v_k)\in\Cb^k\ ;\
r\max_{1\le j\le k-1}|v_j|<|v_k|\right\}
\]
is contracted by $f_0^{-1}$ provided $\delta$ is large enough.
Therefore, the strong unstable manifold $W^{uu}_{p(f_0),loc}$ is a curve
arbitrarily close to the affine line
\[
p(f_0)+\Cb\cdot e_u^u.
\]
On the other hand, the hyperbolic set $\Lambda(f_0)$ is also contained in
the plane $\{x=0\}$, and each of its local stable manifolds is transverse
to that plane, intersecting it in a unique point. Thus, for $f_0$, every
intersection point between $W^u_{p(f_0),loc}$ and
$W^s_{\Lambda(f_0),loc}$ actually belongs to $\Lambda(f_0)$, which is a
highly nongeneric situation.

To understand where these points lie, observe that the dynamics of
$f_0^2$ in the $(y_i,w)$-coordinates is given by
\[
H_i\colon (y_i,w)\mapsto\left(
\sigma_i\Big(\sigma_i(y_i+\frac{w-\tilde w(a)}{6})
+\frac{q_a(w)-\tilde w(a)}{6}\Big),\,q_a^2(w)\right).
\]
If $w\in V_+(a)$, then Lemma~\ref{le-U} gives
\[
|w-\tilde w(a)-2|<6|a|^{-1}
\quad\text{and}\quad
|q_a(w)-\tilde w(a)|<6|a|^{-1},
\]
and similarly, with the obvious sign changes, if $w\in V_-(a)$.
Using moreover
\[
8|a|^{3/4}<|\sigma_1|<\cdots<|\sigma_{u-1}|<|a|^{4/5},
\]
we obtain
\[
\left(D(1/3,|a|/2)\cup D(0,|a|/2)\right)\times \Db_2
\subset
H_i\big(D(-1/3,a/\sigma_i^2)\times V_+(a)\big)
\cap
H_i\big(D(0,a/\sigma_i^2)\times V_-(a)\big).
\]
In particular, if $(z,x,y,w)\in\Lambda(f_0)\cap\mathcal U_+$, then
\[
w\in V_+(a)\subset D(1,3|a|^{-1})
\quad\text{and}\quad
y_i\in D(-1/3,|a/\sigma_i^2|),
\]
where $|a/\sigma_i^2|<|a|^{-1/2}$.

Combining the two previous discussions, we conclude that no point of
$W^s_{\Lambda(f_0),loc}\cap\mathcal U_+$ can lie on one of the coordinate
hyperplanes of $W^u_{p(f_0),loc}$. This is exactly Assumption~\ref{decalage}.
\end{proof}
In general, the dynamical directions obtained in
Section~\ref{sec-dir} are not easy to control. However, the very
simple form of $f_0$ allows us to do so in this case.

\begin{lemma}\label{le-grille-2}
Possibly after increasing $\delta$, Assumption~\ref{grille}
holds at $f_0$.
\end{lemma}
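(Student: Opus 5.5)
To verify Assumption~\ref{grille} at $f_0$, I will compute both sides explicitly: the strong stable cone field pulled back by the inverse branch $G_+$, and the planes $T_yW^u_{p(f_0),loc}\oplus\mathcal E^{s,i}_{f_0,y}$. The skew-product structure of $f_0$ makes each of them close to a fixed linear object. The goal is to show these linear objects are uniformly transverse, with a margin that survives once $\delta$ (hence $|a|$, $|\gamma_i|^{-1}$, $|\sigma_j|$) is large.

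\textbf{Step 1: the dynamical directions along $W^u_{p(f_0),loc}$.} By the proof of Lemma~\ref{le-decalage}, $W^u_{p(f_0),loc}$ lies in $\{x=0\}$. Its tangent spaces are close to $\mathrm{span}(e^u_1,\dots,e^u_u)$ up to a $z$-component. Since $f_0$ contracts $C^{uu}_\eta$, the $z$-component is small but I will not assume it vanishes.

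For the stable directions, I will use the block structure of $Df_0$. Along $\{x=0\}$, the $x$-component of any vector is multiplied by $\gamma$, and its only feedback into the other coordinates is the term $\tau\cdot v^x\,(w-\tilde w(a))$ in the $z$-coordinate. Hence the invariant direction $\mathcal E^{s,i}$ attached to $\gamma_i$ must have the form $e^s_i+c_i(y)e^c$, with vanishing $y$- and $w$-components. This is because the subspace $\{v^y=v^w=0\}$ is $Df_0$-invariant along $\{x=0\}$, and uniqueness (Remark~\ref{rk-dynaslope}) forces $\mathcal E^{s,i}$ to lie in it.

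I will then solve the invariance equation for $c_i$ along the backward orbit converging to $p(f_0)$. On the backward orbit $w\to\tilde w(a)$, so $w-\tilde w(a)$ decays like the inverse of the unstable multiplier $2a\tilde w(a)$. Writing $c_i(y)$ as a series gives $c_i=\tau_i\sum_n(\cdots)(w_n-\tilde w(a))$, with geometric decay. The outcome I expect is an explicit leading term $c_i(y)\approx \tau_i(w-\tilde w(a))/(\gamma_i-\mu)$, where $\mu=\alpha+\beta\tilde w(a)$, plus a higher-order correction. Since $y\in W^u_{p(f_0),loc}$ ranges over a fixed-size neighborhood, $|w-\tilde w(a)|$ can be of order $1$, so $c_i$ is bounded but is not negligible.

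\textbf{Step 2: the pulled-back cone.} On $f_0(\mathcal U_+)$ the inverse branch $G_+$ has $w$-coordinate near $1$, so $w-\tilde w(a)\approx 2$. A vector $v=e^s+\text{(small)}$ in $C^{ss}_\eta$ is mapped by $DG_+$ to something proportional to $\gamma^{-1}e^s$ plus a $z$-component, obtained by inverting the $z$-row. That $z$-component is approximately $-\mu^{-1}\cdot 2\,\tau\cdot(\gamma^{-1}v^s)$ relative to the $s$-part. Its $y$- and $w$-components are $O(|\sigma|^{-1}+|a|^{-1})$, hence negligible.

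So $DG_+(C^{ss}_\eta)$ consists of vectors $(z',x',y',w')$ with $(y',w')$ tiny and $z'$ approximately $-2\mu^{-1}\,\tau\cdot x'$, up to the cone aperture $\eta^{-1}$ multiplied by the $\gamma$-ratio (bounded by $2$).

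\textbf{Step 3: transversality.} A vector in $T_yW^u\oplus\mathcal E^{s,i}$ with small $(y,w)$-part must essentially be a multiple of $e^s_i+c_ie^c$, up to the small $z$-tilt of $TW^u$. Comparing with Step 2, an intersection would require two things:
\begin{itemize}
\item $x'$ nearly parallel to $e^s_i$;
\item $c_i(y)\approx -2\mu^{-1}\tau_i$, together with the nearby-cone errors.
\end{itemize}
The mismatch has to come from the fact that $w$ on $W^u_{p(f_0),loc}$ is near $\tilde w(a)\approx-1$, while the $w$ seen through $G_+$ is near $+1$. This is precisely the shift the construction is designed to exploit, with $\tau_i\neq0$ and independent of $a,\gamma,\sigma$.

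\textbf{Main obstacle.} The hardest step is getting the exact constant in $c_i$ and proving it stays bounded away from $-2\mu^{-1}\tau_i$ uniformly in $y$, with a gap larger than the cone error $O(\eta^{-1}|\tau|)$. I would first try to show that $|c_i|$ is small compared to $|\tau_i|$. The idea is that on $W^u_{p(f_0),loc}$ the backward orbit converges to $w=\tilde w(a)$ faster than $\gamma$ contracts, giving $c_i=O(|\tau_i|\,|\gamma_i|^{-1}|a|^{-1})$, or more generally of lower order after normalization. If that fails, I would instead exploit that $\eta$ is fixed before $\delta$ is increased, and compare the two explicit linear constraints directly.
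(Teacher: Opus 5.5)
\textbf{The main line of your proposal fails at Step 1, and the failure propagates to Step 3.} Your backward series is the right object, but you evaluate its leading term incorrectly. Write $\hat w=w-\tilde w(a)$, $\hat q_a(\hat w)=a\hat w(\hat w+2\tilde w(a))$ and $\phi=\alpha+\beta w$, and let $g$ be the inverse branch of $\hat q_a$ fixing $0$. Invariance then reads
$c_i(\hat w)=\gamma_i^{-1}\bigl(\phi(g(\hat w))\,c_i(g(\hat w))+\tau_i\, g(\hat w)\bigr)$.
So the coupling $\tau_i\hat w$ is felt only one step back, where $w$ is already $O(|a|^{-1})$-close to $\tilde w(a)$. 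The leading term is therefore $\gamma_i^{-1}\tau_i g(\hat w)$, not $\tau_i\hat w/(\gamma_i-\mu)$; the latter is the eigenvector of $Df_0$ at a point where $w$ is frozen.

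Quantitatively, $g(\Db_3)\subset D(0,3|a|^{-1})$, so the Schwarz lemma gives $|g(\hat w)|\leq |a|^{-1}|\hat w|$. Writing $c_i=\hat w B$ yields $\|B\|_\infty\leq (2\|B\|_\infty+|\tau_i|)/|a\gamma_i|$. Using $|\gamma_i|^{-1}<|a|^{3/4}$, this gives $|c_i|\leq 6|\tau_i||a|^{-1/4}$ on all of $W^u_{p(f_0),loc}$, including where $|\hat w|$ is of order $1$. So the central component of $\mathcal E^{s,i}$ is negligible. This is exactly the estimate you list only as an uncertain fallback, and it is the heart of the proof.

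\textbf{Why Step 3 breaks as planned.} $W^u_{p(f_0),loc}$ is a $u$-graph over $\Bb^u_{3/2}$, and it must cross $\mathcal U_+$, so it contains points with $w$ near $1$. Your heuristic that $w$ on $W^u$ stays near $\tilde w(a)$ is therefore false. At such points your formula gives $c_i\approx -2\tau_i/\mu$, which is precisely the central slope you assign to $DG_+(C^{ss}_\eta)$ in Step 2. Your own computations would thus predict a nonempty intersection there. (In Step 2 the factor should be $(\alpha+\beta g_+(w))^{-1}\approx(\alpha+\beta)^{-1}$, since the base point $G_+(t)$ has $w$-coordinate near $1$. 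This correction does not rescue the argument.)

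\textbf{How to conclude once $|c_i|=O(|\tau_1||a|^{-1/4})$ is known.} No exact constant is needed. Vectors of $T_yW^u_{p(f_0),loc}\oplus\mathcal E^{s,i}_y$ have $x$-part in $\Cb e^s_i$, because $T_yW^u\subset\{x=0\}$. Hence an intersection vector lies in $D_tG_+(C^{ss}_\eta)$ with only its $i$-th stable component nonzero.
\begin{itemize}
\item By the cone computation, such a vector has central component at least $|\tau_1|/4$ times that stable component, and a $(y,w)$-part negligible relative to it.
\item On the other side, the $T_yW^u$ summand lies in $C^{uu}_\eta$. Its $z$-part is therefore at most $\eta^{-1}$ times its $(y,w)$-part, which is negligible. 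So the total central component is at most $(6|\tau_1||a|^{-1/4}+o(1))$ times the stable part.
\end{itemize}
Since $\tau$ is fixed independently of $a$, increasing $\delta$ separates the two bounds. The cone-aperture error you worry about is in fact $O(|a|^{-1/2})$ relative to $x'$, because the stable part is amplified by $\gamma^{-1}$. It never competes with $\eta^{-1}$.
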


\begin{proof}
Fix $1\le i\le s$. The proof follows the same scheme as that of
Lemma~\ref{le-decalage} and relies on the following two claims.

\begin{itemize}
\item[\textbf{(1)}] Let $t\in W^u_{p(f_0),loc}$ and
$v\in\mathcal E^{s,i}_{f_0,t}$ for some $1\le i\le s$. Then
\[
v=v^ce^c+v_ie_i^s,
\qquad
\frac{|v^c|}{|v_i|}<\frac{6|\tau_1|}{|a|^{1/4}}.
\]
\item[\textbf{(2)}] Let
$G_+\colon f_0(\mathcal U_+)\to\mathcal U_+$ be the inverse branch
of $f_0$. If $t\in f_0(\mathcal U_+)$ and $v\in C_\eta^{ss}$, then,
setting $v':=D_tG_+(v)$, its components satisfy
\[
|v'^c|\geq\frac{|\tau_1|}{2}\left|\sum_{j=1}^s v_j'^{s}\right|-\frac{1}{|a|^{1/2}}\sum_{j=1}^s|v_j'^s|.
\]
In particular, if the only non-zero stable component is $v_i'^s$, then
\[
|v'^c|\geq\frac{|\tau_1|}{4}|v_i'^s|.
\]
\end{itemize}

Since $T_tW^u_{p(f_0),loc}\subset\{x=0\}$, these two estimates imply
the desired conclusion.

\bigskip
\noindent\textbf{Proof of (1).} By Proposition~\ref{prop-dynaslope} and
Remark~\ref{rk-dynaslope}, there exists a unique holomorphic
section of the projectivized tangent bundle over
$W^u_{p(f_0),loc}$, invariant under the dynamics and equal to
$e^s_i$ at $p(f_0)$.

Parametrizing $W^u_{p(f_0),loc}$ by the projection onto the
$(y,w)$-coordinates (possibly after shrinking the domain), and
choosing an affine chart in which the coordinate corresponding
to $e^s_i$ is identically equal to $1$, this yields a unique
holomorphic map from $W^u_{p(f_0),loc}$ to
$\Cb\times\Cb^{s-1}\times\Cb^{u-1}\times\Cb$, equivariant under
the differential of $f_0$, of the form
\[(y,w)\mapsto\begin{array}{rl}
\big(A^c(y,w),A^s_1(y,w), \ldots,A^s_{i-1}(y,w), &\\
A^s_{i+1}(y,w),\ldots, & A^s_s(y,w),
A^u_1(y,w),\ldots,A^u_u(y,w)\big).
\end{array}\]

Since the $y_i$-component of $y$ only interacts with itself in
$f_0$, and since the differential of $f_0$ is independent of $y$,
the uniqueness of the section implies that it does not depend on
$y$. Moreover, as the differential of $f_0$ preserves the
subspace $E^{c,i}$ spanned by $e^c$ and $e^s_i$, it follows again
by uniqueness that all the above components vanish except for
$A^c(y,w)$.

We are thus left with studying this last component, which does
not depend on $y.$

The action of the differential on $E^{c,i}$ at $(y,w)$ is
\[\begin{pmatrix}
\alpha+w\beta & \tau_i(w-\tilde w(a))\\
0 & \gamma_i
\end{pmatrix}
=
\begin{pmatrix}
\phi(\hat w) & \tau_i\hat w\\
0 & \gamma_i
\end{pmatrix},\]
where we set $\hat w:=w-\tilde w(a)$. Hence, setting
$A(\hat w):=A^c(y,w)$ and
\[\hat q_a(\hat w):=q_a(\hat w+\tilde w(a))-\tilde w(a)
= a\hat w(\hat w+2\tilde w(a)),\]
the invariance of the (projectivized) section implies that
\[\gamma_i A(\hat q_a(\hat w))
=
\phi(\hat w)A(\hat w)+\tau_i\hat w.\]

Using the inverse branch
\[g\colon \Db_3\to \hat U_-(a)\subset D(0,3|a|^{-1}),\]
of $\hat q_a$, sending $\Db_3$ into a neighborhood of $0$, we obtain
\[A(\hat w) =
\gamma_i^{-1}\big(
\phi(g(\hat w))A(g(\hat w))+\tau_i g(\hat w)
\big),\]
which allows us to define $A$ on $\Db_3$ by iteration.

On the other hand, since $A(0)=0$, there exists a holomorphic
function $B$ such that
\[
A(\hat w)=\hat w\,B(\hat w).
\]
Since $g(0)=0$ and $|g(\hat w)|<3|a|^{-1}$ on $\Db_3$,
Schwarz's lemma gives
\[
\left|\frac{g(\hat w)}{\hat w}\right|\leq |a|^{-1}.
\]
The sup-norm $\|B\|_{\infty,\Db_2}$ of $B$ on $\Db_2$
then satisfies
\[
\|B\|_{\infty,\Db_2}
\leq
\frac{2\|B\|_{\infty,\Db_2}+|\tau_i|}{|a\gamma_i|},
\]
where the factor $2$ comes from $|\phi(\hat w)|<2$.
Thus, using $|\gamma_i|^{-1}<|a|^{3/4}$, we have,
for $|a|$ sufficiently large,
\[
\|B\|_{\infty,\Db_2}
\leq \frac{|\tau_i|}{|a\gamma_i|-2}
\leq \frac{2|\tau_i|}{|a|^{1/4}}.
\]
Since $g(\Db_3)\subset\Db_2$, the same bound holds
for $\|B\|_{\infty,\Db_3}$.
Hence, for $|\hat w|<3$, we obtain
\[
|A(\hat w)|
\leq \frac{2|\tau_i|}{|a|^{1/4}}\,|\hat w|
< \frac{6|\tau_i|}{|a|^{1/4}},
\]
which proves that the central component is negligible compared
to the stable one when $|a|$ is large. This establishes
\textbf{(1)}.

\bigskip
\noindent\textbf{Proof of (2).} This follows directly from the computation of the differential
of $G_+$ and from the cone contraction property. More precisely,
let $v\in\Cb^k$, $t\in f_0(\mathcal U_+)$, and set
$v':=D_tG_+(v)$. Writing $w$ for the last coordinate of $t$, and
denoting by $g_+\colon \Db_3\to U_+(a)$ the inverse branch of
$q_a$, the components of $v'$ satisfy
\[
v'^c=\psi(t)\left(
v^c-\tau_1\big(g_+(w)-\tilde w(a)\big)
\sum_{j=1}^s\gamma_j^{-1}v^s_j
+\phi(t)v^u_u
\right),
\qquad
v'^s_j=\gamma_j^{-1}v^s_j,
\]
where $\phi(t)=O(|a|^{-1})$ uniformly, and $\psi(t)=(\alpha+g_+(w)\beta)^{-1}$ has modulus
bounded between $1/2$ and $2$.

Since $g_+(w)-\tilde w(a)\simeq 2$ on $U_+(a)$, and since
$v\in C^{ss}_\eta$ implies that the sum of the stable components
dominates both $v^c$ and $v^u_u$, the desired estimate follows.
We leave the details to the reader.
\end{proof}

It remains to verify Assumptions~\ref{free}, \ref{init},
\ref{misu} and~\ref{homoclinic}. The first two require us to be in $\mathrm{End}_d^k$ or $\mathrm{Poly}_d^k$,
whereas Assumption~\ref{misu} involves the Green current. Assumption~
\ref{homoclinic} will be obtained after a final arbitrarily small
perturbation.

To treat the case of endomorphisms of $\Pb^k$, we now fix the
parameters of $f_0$ once and for all and consider a sufficiently
small open connected neighborhood $V_0$ of $f_0$ in the space of rational maps
of $\Pb^k$ of degree at most $d$. The case of $\mathrm{Poly}_d^k$ is
handled in the same way by considering a neighborhood of $f_0$
in the space of (not necessarily regular) polynomial endomorphisms
of $\Cb^k$ of degree at most $d$.

A first remark is that, even if $V_0$ is small, the global behavior
of a map $f\in V_0$ on $\Pb^k$ may be very different from that of
$f_0$. However, if $K\subset\Cb^k$ is a fixed compact set, then
$f|_{K}$ is arbitrarily close to ${f_0}|_{K}$ provided $V_0$ is
sufficiently small. In particular, since all the properties established so far are
stable under small perturbations and involve only the dynamics
on a large ball of $\Cb^k$, we may assume that every $f\in V_0$
satisfies all the assumptions of Section~\ref{sec-ass}, except
possibly Assumptions~\ref{free}, \ref{misu}, \ref{homoclinic} and \ref{init}. We denote by $p(f)$, $r_0(f)$ and $\Lambda(f)$ the corresponding
objects for $f\in V_0$. Both $p(f)$ and $r_0(f)$ depend
holomorphically on $f$, and so do their local stable and unstable
manifolds. On the other hand, $\Lambda(f)$ admits a branched
holomorphic motion via its natural extension $\hat\Lambda(f)$
(see \cite[Theorem~B]{jonsson-motion_hyperbolic}).

The first result in this setting shows that
Lemma~\ref{le-init-dim2} extends to $V_0$.

\begin{lemma}\label{le-init}
Possibly after shrinking $V_0$, Assumptions~\ref{free} and
\ref{init} hold on $V_0$.
\end{lemma}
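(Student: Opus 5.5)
The plan is to transport Lemma~\ref{le-init-dim2} from the two-dimensional skew products $\mathcal F_\lambda$ to maps $f\in V_0$. The transport goes through the invariant attracting plane $\{x=0\}$ of $f_0$ and the fibration by the $y$-directions. For $f_0$ itself, the relevant objects are already governed by the two-dimensional model. The point $p(f_0)$, the point $r_0(f_0)$, the central multipliers $\chi_{p(f_0)}=\alpha+\beta\tilde w(a)$ and $\chi_{r_0(f_0)}$, and the intersections $W^u_{p(f_0),loc}\cap W^s_{x(f_0),loc}$ are all controlled by $\mathcal F_\lambda$ through the projection $(z,x,y,w)\mapsto(z,w)$.

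The key observation is that $V_0$ contains the whole embedded family $\lambda=(a,\alpha,\beta,\epsilon)\mapsto f_{0,\lambda}$, where $\gamma,\sigma,\tau$ are kept fixed and $\lambda$ ranges over a small neighborhood of the chosen parameter in $M$. The first step is to check, for each word $\omega$, that the relation $W^u_{p(f)}\cap W^s_{x_\omega(f)}\neq\varnothing$ restricted to this slice coincides with the hypersurface $X_\omega$. For the product-like maps, a point of $W^s_{x_\omega,loc}\cap W^u_{p,loc}$ lies in $\{x=0\}$ and projects to $x_\omega(\lambda)\in W^u_{\mathcal P(\lambda),loc}$.

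\textbf{Assumption~\ref{free}.} Each intersection is a horizontal/vertical relation in the sense of Section~\ref{sec-graph}. By Lemma~\ref{le-dim-relation}, the set where it occurs is either all of $V_0$ or a hypersurface. By Lemma~\ref{le-init-dim2}, its trace on the $\lambda$-slice is the proper hypersurface $X_\omega$, so the relation cannot hold on all of $V_0$. Hence it is not persistent; uniformity over $\omega$ follows from compactness of $\Sigma$ together with the continuity of $x_\omega$ in $\omega$.

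\textbf{Assumption~\ref{init}.} Items i) and ii) come from Lemma~\ref{le-shadowing} combined with Assumption~\ref{free}, as remarked after the assumptions. The difficulty is to obtain iii) simultaneously, that is, to find parameters lying on a relation hypersurface where $\langle\chi_p,\chi_{r_0}\rangle$ is dense. Density fails exactly on the countable union of real-analytic sets $Y_{\xi,t}$ and their analogues. I would argue as in \cite{gtv}. Fix a nonempty open $\Omega'\subset V_0$ and a word $\omega$ with eventually constant tail $(-1)$, so that $x_\omega$ is a preimage of $r_0$. By Lemma~\ref{le-shadowing} and the blender property, many distinct such $\omega$ give hypersurfaces $X_\omega(f)$ meeting $\Omega'$.

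If every such hypersurface were contained in the non-density locus, then by Baire a whole irreducible component would lie in a single $Y_{\xi,t}$. Applying this to two different words sharing that component contradicts the third item of Lemma~\ref{le-init-dim2}. Some care is needed to pass from the slice to $\Omega'$: either Lemma~\ref{le-proper} can be used to perturb the slice family, or the maps in $\Omega'$ can be written as small perturbations of slices and the relation and non-density loci continued. This uses the stability of proper intersections and the real-analyticity of $\{\chi_{r_0}=\xi\chi_p^t\}$.

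\textbf{Main obstacle.} The hardest step is this last one: making the slice argument valid near arbitrary $f\in\Omega'$, and not just near the skew-product family. I would handle it by shrinking $V_0$ so that every $f\in V_0$ lies in a small perturbation of some $f_{0,\lambda}$. The transversality statements of Lemma~\ref{le-init-dim2} are open conditions, so they pass to nearby families by persistence of proper intersections.
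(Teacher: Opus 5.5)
Your treatment of Assumption~\ref{free} and of items i)--ii) matches the paper. Item iii), however, has a genuine gap.

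\textbf{The second word is never produced.} After your Baire step you only know that a component of a \emph{single} $\tilde X_\omega$ lies in some $\tilde Y_{\xi,t}$. Lemma~\ref{le-init-dim2} allows this: it only forbids a component shared by two \emph{distinct} words, and nothing in your sketch supplies the second word. The missing mechanism is to apply the blender in both regions at one point.

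The paper reduces iii) to showing that every regular point $f$ of every $\tilde Y_{\xi,t}$ is crossed properly by some $\tilde X_\omega$. One then perturbs the word into $\Sigma_0$, using persistence of proper intersections. Since distinct $\tilde Y_{\xi,t}$ are disjoint (because $|\chi_p|<1$), the resulting component lies in none of them, so it contains points where $\langle\chi_p,\chi_{r_0}\rangle$ is dense.

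For this proper crossing, note that $W^u_{p(f),loc}$ contains $u$-graphs in both $\Db\times\Bb^s\times V^u_+$ and $\Db\times\Bb^s\times V^u_-$. Assumption~\ref{blender} therefore gives $\omega_1\neq\omega_2$ with $f\in\tilde X_{\omega_1}\cap\tilde X_{\omega_2}$. If both crossed $\tilde Y_{\xi,t}$ non-properly at $f$, smoothness of $\tilde Y_{\xi,t}$ at $f$ would force the two components through $f$ to coincide. That is exactly the configuration Lemma~\ref{le-init-dim2} excludes.

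\textbf{The passage from the slice is not handled by openness.} The third item of Lemma~\ref{le-init-dim2} is not an open condition. It quantifies over the uncountable compact set $\Sigma$ and over all $(\xi,t)$, and words witnessing a shared component for nearby maps may converge to a single word. So it does not pass to perturbed families by persistence of proper intersections alone.

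The paper's device is a neighborhood $\tilde V_0$ of the slice $\tilde M_0$ such that every irreducible component of any $\tilde X_\omega$ meeting $\tilde V_0$ already meets $\tilde M_0$. Otherwise, compactness of $\Sigma$ gives components of $\tilde X_{\omega_n}$ accumulating on $\tilde M_0$ without meeting it. Persistence of proper intersections then forces $\tilde M_0\subset\tilde X_{\omega_\infty}$, contradicting \eqref{eq-xomega} and Lemma~\ref{le-init-dim2}.

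A shared component in $\tilde V_0$ then restricts, via \eqref{eq-xomega}, to a shared component of $X_{\omega_1}$ and $X_{\omega_2}$ inside $Y_{\xi,t}$, which is excluded. One finally replaces $V_0$ by $\tilde V_0$.
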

\begin{proof}
The proof is a reduction to Lemma~\ref{le-init-dim2}. More precisely,
if, after shrinking $V_0$, one of the required properties still failed,
then the same failure would persist on a suitable open subset of the
parameter set $M$ appearing in Lemma~\ref{le-init-dim2}, contradicting
that lemma.

We first state two simple facts about $f_0$. The local unstable
manifold of $p(f_0)$ corresponds to the one associated to
$\mathcal P(\lambda)$, where $\lambda=(a,\alpha,\beta,\epsilon)$, for the
map
\[
\mathcal F_\lambda(z,w)=(\alpha z+\epsilon w+\beta zw,q_a(w))
\]
studied in Section~\ref{sec-dim2}, with the additional stable coordinates
$x$ are fixed to $0$, while the unstable coordinates $y$ are free.
Namely,
\[
W^u_{p(f_0),loc}
=
\{(z,x,y,w)\in\Cb\times\Cb^s\times\Cb^{u-1}\times\Cb\ ;\
(z,w)\in W^u_{\mathcal P(\lambda),loc},\ x=0\}.
\]
Moreover, as already observed, $\Lambda(f_0)$ is contained in the plane
$\{x=0\}$, and every intersection between
$W^u_{p(f_0),loc}$ and $W^s_{\Lambda(f_0),loc}$ actually belongs to
$\Lambda(f_0)$.

The same facts hold for the maps $f_{\tilde\lambda}\in V_0$
obtained from $f_0$ by replacing
$\lambda=(a,\alpha,\beta,\epsilon)$ with
$\tilde\lambda=(\tilde a,\tilde\alpha,\tilde\beta,\tilde\epsilon)$. That is
\[
f_{\tilde\lambda}(z,x,y,w)
=
\left(\tilde\alpha z+\tilde\epsilon w+\tilde\beta zw
+\tau\cdot x(w-\tilde w(\tilde a)),
\gamma x,
\sigma\left(y+\frac{w-\tilde w(\tilde a)}{6}\right),
q_{\tilde a}(w)\right).
\]
Let $M_0$ be a connected open subset of the set $M$ of
Lemma~\ref{le-init-dim2} such that $f_{\tilde\lambda}\in V_0$ for all
$\tilde\lambda\in M_0$, and write
\[
\tilde M_0:=\{f_{\tilde\lambda}\ ;\ \tilde\lambda\in M_0\}\subset V_0.
\]

For $f\in V_0$, let $G_\pm\colon f^2(\mathcal V_\pm)\to\mathcal V_\pm$
denote the corresponding inverse branches. Then each word
$\omega\in\Sigma=\{-1,1\}^{\Nb}$ defines a stable manifold of
$\Lambda(f)$ by
\[
W^s_\omega(f):=
\bigcap_{n\geq0}
G_{\omega_0}\circ\cdots\circ G_{\omega_n}
(\Db_R\times\Bb^s\times\Bb^u_{3/2}).
\]
The set $\Db_R\times\Bb^s\times\Bb^u_{3/2}$ could in fact be replaced by
any of its $s$-graphs, and $W^s_\omega(f)$ coincides with the local
stable manifolds constructed in the proof of Proposition~\ref{prop-hyp-graphs}.
Observe that different points of $\Lambda(f)$ may belong to the same
stable manifold.

By Proposition~\ref{prop-hyp-graphs}, the $s$-graph $W^s_\omega(f)$
depend holomorphically on $f\in V_0$ in
$\Db_R\times\Bb^s\times\Bb^u_{3/2}$. The same is true for
$W^u_{p(f),loc}$, viewed as a $u$-graph. Hence, by
Proposition~\ref{prop-relation}, the set
\[
\tilde X_\omega:=\{f\in V_0\ ;\
W^s_\omega(f)\cap W^u_{p(f),loc}\neq\varnothing\}
\]
is either equal to $V_0$ or is a (possibly empty) hypersurface of
$V_0$. The first case is impossible since, for
$\tilde\lambda\in M_0$ the description above gives
\begin{equation}\label{eq-xomega}
\{\tilde\lambda\in M_0\ ;\
W^u_{p(f_{\tilde\lambda}),loc}\cap W^s_\omega(f_{\tilde\lambda})
\neq\varnothing\}
=
M_0\cap X_\omega,
\end{equation}
where $X_\omega$ is the set introduced in Lemma~\ref{le-init-dim2}.
Thus, if $\tilde X_\omega=V_0$, then $M_0\cap X_\omega=M_0$, contradicting
the last point of Lemma~\ref{le-init-dim2}. Therefore each
$\tilde X_\omega$ is a proper analytic subset of $V_0$, which implies Assumption~\ref{free}.

It remains to prove Assumption~\ref{init}, possibly after shrinking
$V_0$. The argument is very close to that of~\cite[Lemma~4.12]{gtv}.

The identity~\eqref{eq-xomega} is a key ingredient, identifying
$\tilde M_0\cap\tilde X_\omega$ with $M_0\cap X_\omega$ via $f_{\tilde\lambda}\mapsto\tilde\lambda.$
Now let $\chi_{p(f)}$ and $\chi_{r_0(f)}$ denote the central
multipliers of the fixed point $p(f)$ and of the $2$-periodic point
$r_0(f)\in\Lambda(f)$, respectively. For $t\in\Rb$ and $\xi\in\Sb^1$,
set
\[
\tilde Y_{\xi,t}:=
\left\{f\in V_0\ ;\ \chi_{r_0(f)}=\xi\chi_{p(f)}^t\right\}.
\]

Let $\Sigma_0\subset\Sigma$ be the set of words eventually equal to $-1$.
A point $x\in W^s_{\Lambda(f),loc}$ is eventually sent to
$W^s_{r_0(f),loc}$ if and only if $x\in W^s_\omega(f)$ for some
$\omega\in\Sigma_0$. Since $\Sigma_0$ is dense in $\Sigma$, and since
the blender property gives
\[
V_0=\bigcup_{\omega\in\Sigma}\tilde X_\omega,
\]
we obtain
\[
V_0\subset
\overline{\bigcup_{\omega\in\Sigma_0}\tilde X_\omega}.
\]
This already gives the first two points of
Assumption~\ref{init}. For the third one, it is enough to show that,
possibly after shrinking $V_0$, every regular point of a hypersurface
$\tilde Y_{\xi,t}$ is crossed by some $\tilde X_\omega$ in a proper way.
A small perturbation of $\omega$ inside $\Sigma_0$ then gives the desired
statement.

We first claim that there exists a neighborhood $\tilde V_0$ of
$\tilde M_0$ such that every irreducible component of some
$\tilde X_\omega$ meeting $\tilde V_0$ also meets $\tilde M_0$.
Otherwise, one could find a sequence $(\omega_n)$, which can be assumed to converge to some $\omega_\infty$ by compactness, and irreducible components
$\tilde Z_{\omega_n}\subset\tilde X_{\omega_n}$, approaching
$\tilde M_0$ without intersecting it. By persistence of proper intersections,
$\tilde X_{\omega_\infty}$ cannot intersect $\tilde M_0$ properly, hence
$\tilde M_0\subset \tilde X_{\omega_\infty}$. Using
\eqref{eq-xomega}, this would imply that $M_0\subset X_{\omega_\infty}$,
again contradicting Lemma~\ref{le-init-dim2}.

Now fix $(\xi,t)\in\Sb^1\times\Rb$ and let
$f\in\tilde V_0\cap\tilde Y_{\xi,t}$ be a regular point of
$\tilde Y_{\xi,t}$. By Lemma~\ref{le-blender-selle},
$W^s_{\Lambda(f),loc}$ intersects every $u$-graph in
$\Db\times\Bb^s\times V_+^u$ and in
$\Db\times\Bb^s\times V_-^u$. Since $W^u_{p(f),loc}$ is a $u$-graph in
$\Db\times\Bb^s\times\Bb^u_{3/2}$, it contains $u$-graphs in both
regions. Therefore there exist at least two different words
$\omega_1,\omega_2\in\Sigma$ such that
\[
f\in \tilde X_{\omega_1}\cap \tilde X_{\omega_2}.
\]

Assume that both intersections
$\tilde X_{\omega_i}\cap\tilde Y_{\xi,t}$ are non-proper at $f$.
Then there exist irreducible components
$\tilde Z_{\omega_i}\subset\tilde X_{\omega_i}$ containing $f$ and
contained in $\tilde Y_{\xi,t}$. Since $f$ is a smooth point of
$\tilde Y_{\xi,t}$, we must have $\tilde Z_{\omega_1}=\tilde Z_{\omega_2}\subset \tilde Y_{\xi,t}.$ By the choice of $\tilde V_0$, this common component meets $\tilde M_0$,
hence, by \eqref{eq-xomega}, the corresponding components in $M_0$
coincide as well. This is impossible by Lemma~\ref{le-init-dim2} since $\omega_1\neq\omega_2$.
Therefore at least one of the two intersections is proper at $f$, which
gives the last point of Assumption~\ref{init}.

This concludes the proof after replacing $V_0$ by $\tilde V_0$.
\end{proof}

The last two assumptions are obtained by a final perturbation. The first
one relies on~\cite{jonsson-skew,dupont-t-int}, while the second one
follows from an explicit deformation of the stable coordinates.

\begin{lemma}\label{le-misu}
There exists a regular polynomial endomorphism $f_1\in V_0$ such that
Assumptions~\ref{misu} and~\ref{homoclinic} hold in a neighborhood of
$f_1$ in $\End_d^k$.
\end{lemma}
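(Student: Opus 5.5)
We will handle the two assumptions separately and then intersect the resulting open sets. Assumption~\ref{homoclinic} is an open condition. Assumption~\ref{misu} is stable along the branched holomorphic motion once it is established in a robust form. So it suffices to find one $f_1\in V_0$, regular polynomial of degree $d$, satisfying both, with a neighborhood on which they persist.

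\emph{Step 1 (Assumption~\ref{misu}).} We first make $f_0$ into a regular polynomial endomorphism $\tilde f_0\in V_0$ by adding small multiples of high-degree terms, for instance $\varepsilon(z^d,x_1^d,\dots,w^d)$ in the appropriate components. This keeps the semi-local picture on a fixed large ball. For a skew-product-like regular map whose restriction to $\{x=0\}$ is close to the model, we plan to use the results of Jonsson \cite{jonsson-skew} and Dupont--Taflin \cite{dupont-t-int}. Their role is to show that the intermediate Julia set $J_{u+1}$ meets every local unstable manifold $W^u_{\hat x,loc}$ of $\Lambda$, in the sense $[W^u_{\hat x,loc}]\wedge T^{\wedge(u+1)}\neq0$.

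Concretely, $W^u_{\hat x,loc}$ is a $(u+1)$-dimensional $cu$-graph. We will argue that $T^{\wedge(u+1)}$ restricted to it has positive mass because its image under large iterates contains $u$-graphs crossing the region where the Green current of the fibered/base dynamics is nondegenerate. Here we use the expansion along $y,w$ (the Cantor set $E(a)$ lies in $J(q_a)$) together with the central expansion near $\alpha^2-\beta^2$. Since these wedge products are lower semicontinuous under holomorphic motion of the graphs and $\hat\Lambda$ is compact, positivity at $f_1$ for all $\hat x$ should give a uniform lower bound on a neighborhood.

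\emph{Step 2 (Assumption~\ref{homoclinic}).} At $f_0$ the homoclinic intersections of $p$ exist: $W^u_{p,loc}$ meets preimages of $W^s_{p,loc}$, using the blender and the cycle. They are degenerate, however, because everything lies in $\{x=0\}$ and the stable coordinates vanish. We will deform the stable coordinates explicitly, for example $x\mapsto\gamma x+\kappa\,\psi(w)$ with $\psi$ vanishing at $\tilde w(a)$ and $\kappa$ small. This keeps $p$ and its multipliers unchanged but pushes a transverse homoclinic point $q$ off every stable coordinate hyperplane in the linearizing chart of $p$. Transversality itself is then obtained by a further generic perturbation, via Kupka--Smale type arguments in the holomorphic family, using that the relation defining tangency is a proper analytic subset. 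The invertibility of $D_{\tilde q}f^N$ is generic as well. The numerical conditions $b^{\ell-r}<a^{r+1}$ and $|\chi_p|^{r+1}<|\lambda_1|$ were already arranged in the choice of parameters, namely $|\alpha+\tilde w\beta|^{r+1}<|\gamma_1|$, and they persist.

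\emph{Main obstacle.} We expect the hard part to be Step 1. Positivity of $T^{\wedge(u+1)}$ on unstable manifolds of a saddle set with a central direction is not automatic, and it must hold uniformly for all prehistories. We would first try to reduce it to an explicit regular perturbation that stays a skew product over a map of $w$ (and $y$), where \cite{jonsson-skew} describes the Green currents fiberwise. We would then pass to the perturbation $f_1$ by semicontinuity.
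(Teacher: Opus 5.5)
Your overall architecture is the paper's. You take a regular perturbation $\varepsilon(z^d,x^d,y^d,w^d)$ that keeps $\{x=0\}$ invariant and keeps the tower of fibrations $\Cb^{u+1}\to\Cb^u\to\cdots\to\Cb$ for $g:=f|_{\{x=0\}}$. You then deform the stable coordinates by a term vanishing on $\{w=\tilde w\}$, and combine the two steps by openness.

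However, Step~1, which you rightly call the crux, is not proved, and the mechanism you sketch cannot close as stated. The current $T^{\wedge(u+1)}$ vanishes identically on $u$-dimensional graphs. So knowing that $f^n(W^u_{\hat x,loc})$ contains $u$-graphs crossing some region says nothing about $[W^u_{\hat x,loc}]\wedge T^{\wedge(u+1)}$ without a slicing or transfer argument, which you do not supply.

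The missing observation is that no iteration is needed. The $x$-directions are contracting and $\{x=0\}$ is invariant, so $\Lambda$ and all its local unstable manifolds lie in this $(u+1)$-dimensional plane. Each $W^u_{\hat x,loc}$ is therefore an \emph{open} subset of it. Since $f^n|_{\{x=0\}}=g^n$, one has $G_f|_{\{x=0\}}=G_g$, hence
\[
[W^u_{\hat x,loc}]\wedge T_f^{\wedge(u+1)}=\mu_g|_{W^u_{\hat x,loc}},\qquad \mu_g:=(dd^cG_g)^{\wedge(u+1)},
\]
and positivity reduces to $\Lambda\subset\supp\mu_g$. This is the precise role of \cite{dupont-t-int}. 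Applying its Corollary~1.2 successively along the tower, every repelling periodic point of $g$ lies in $\supp\mu_g$. Since $\Lambda$ is a repeller for $g^2$ with dense periodic points, it is contained in $\supp\mu_g$. Your semicontinuity-plus-compactness argument then gives the neighborhood.

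Step~2 follows the paper's idea but needs two corrections.

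First, Assumption~\ref{homoclinic} asks for \emph{all} stable coordinates of $q$ in the linearizing chart to be nonzero, including the central one. Because your perturbation vanishes on $\{w=\tilde w\}$, the dynamics on $W^s_{p,loc}=\{w=\tilde w,\,y=0\}$ is an unchanged product of one-dimensional maps. The conditions therefore read $z(q)\neq z_p$ and $x_i(q)\neq0$ for all $i$. Deforming only the $x$-components leaves $z(q)\neq z_p$ unverified. The paper adds $A(w-\tilde w_c)^2$ to the $z$-component as well, precisely to handle the case where $z(q)=z_p$ before perturbation.

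Second, the nonvanishing has to be checked along an explicit orbit, and no blender or cycle is involved. The paper takes $\tilde q\in W^u_{p,loc}$ over $w=m_1$, with $q_c(m_1)=m_2\in U_+$ and $q_c(m_2)=\tilde w_c$. It shows that the contributions accumulated near $\tilde w_c$ are $O(|A|/|a|^2)$. The passage through $m_2$, at distance about $2$ from $\tilde w_c$, contributes about $A(m_2-\tilde w_c)^2$. In your version you would correspondingly need $\psi(m_2)\neq0$.

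Finally, transversality at $q$ is automatic from the cone conditions, so no Kupka--Smale argument is needed. Such an argument would in any case require proving that tangencies are not persistent.
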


\begin{proof}
Fix $a$ and all the parameters chosen above. For $c\in\Cb^*$ sufficiently
small, set
\[
q_c(w):=q_a(w)+cw^d.
\]
The fixed point $\tilde w(a)$ has a holomorphic continuation
$\tilde w_c$ satisfying
\[
q_c(\tilde w_c)=\tilde w_c.
\]
Consider the regular polynomial endomorphism
\[
f_c(z,x,y,w)
=
\left(
\begin{array}{c}
\alpha z+\epsilon w+\beta zw
+\tau\cdot x(w-\tilde w_c)+cz^d\\[0.2em]
(\gamma_i x_i+cx_i^d)_{1\leq i\leq s}\\[0.2em]
\left(\sigma_j\left(y_j+\dfrac{w-\tilde w_c}{6}\right)
+cy_j^d\right)_{1\leq j\leq u-1}\\[0.2em]
q_c(w)
\end{array}
\right).
\]

The plane $\{x=0\}$ is invariant under $f_c$ and contains
$\Lambda(f_c)$ together with all its unstable manifolds. Identifying it
with $\Cb^{u+1}$, the restriction of $f_c$ takes the form
\[
g(z,y,w)
=
\bigl(g_0(z,w),g_1(y_1,w),\ldots,g_{u-1}(y_{u-1},w),g_u(w)\bigr)
\]
and preserves the sequence of fibrations
\[
\Cb^{u+1}\to\Cb^u\to\cdots\to\Cb.
\]
Applying successively~\cite[Corollary~1.2]{dupont-t-int}, all repelling
periodic points of $g$ belong to the support of its equilibrium measure
$\mu_g$. In particular,
\[
[W^u_{\hat t,loc}]\wedge\mu_g\neq0
\qquad
\text{for every }\hat t\in\hat\Lambda(f_c).
\]
Moreover,
\[
(G_{f_c})|_{\{x=0\}}=G_g,
\qquad
\mu_g=(\ddc G_g)^{\wedge(u+1)}.
\]
Thus, writing $T_{f_c}:=\ddc G_{f_c}$,
\[
[W^u_{\hat t,loc}]\wedge T_{f_c}^{\wedge(u+1)}
=
[W^u_{\hat t,loc}]\wedge\mu_g\neq0.
\]
Hence Assumption~\ref{misu} holds in a neighborhood of $f_c$.

For Assumption~\ref{homoclinic}, we now set
\begin{equation}\label{eq-fcA}
f_{c,A}
:=
f_c+A(w-\tilde w_c)^2(1,\mathbf 1_s,0,0),
\qquad
\mathbf 1_s:=(1,\ldots,1),
\end{equation}
where $A\neq0$ will be chosen arbitrarily small. Since this perturbation
vanishes to first order on $\{w=\tilde w_c\}$, the fixed point $p$ and
its multipliers do not depend on $A$. In particular, the bunching conditions in Assumption~\ref{homoclinic}
remain satisfied, and the $z$-axis and the $x_i$-axes remain the coordinate
axes in the linearizing chart of $W^s_{p,loc}$ since the dynamics on it is simply a product dynamics.

Let $g_{c,+}$ and $g_{c,-}$ be the continuations of the inverse branches
$g_+$ and $g_-$ of $q_a$. We set
\[
m_2:=g_{c,+}(\tilde w_c),
\qquad
m_1:=g_{c,-}(m_2).
\]
Hence
\[
q_c(m_1)=m_2,
\qquad
q_c(m_2)=\tilde w_c,
\]
with $m_1,\tilde w_c\in U_-$ and $m_2\in U_+$. By
Lemma~\ref{le-U}, after decreasing $|c|$ if necessary,
\begin{equation}\label{eq-m1m2}
|m_1-\tilde w_c|\leq\frac6{|a|},
\qquad
|m_2-\tilde w_c|\geq2-\frac6{|a|}.
\end{equation}
A direct estimate on $q_c'$ also gives
\begin{equation}\label{eq-gc}
|g_{c,-}(w)-\tilde w_c|
\leq\frac2{|a|}|w-\tilde w_c|
\qquad\text{on }U_-.
\end{equation}

The problem is independent of $y$ so we simply ignore these coordinates in what follows.

The $y$-coordinates play no role in the argument and will be omitted
from now on. Since the $z$-, $x$- and $w$-coordinates of $f_{c,A}$ do
not depend on $y$, the corresponding coordinates of
$W^u_{p,loc}$ depend only on $w$. We denote them by $z_A(w)$ and
$x_A(w)$, and set
\[
\tilde q_A:=\bigl(z_A(m_1),x_A(m_1),m_1\bigr)\in W^u_{p(f_{c,A}),loc}.
\]
Since $q_c^2(m_1)=\tilde w_c$, we write
\[
q_A:=f_{c,A}^2(\tilde q_A)
=
\bigl(z_A^{(2)},x_A^{(2)},\tilde w_c\bigr)\in W^s_{p(f_{c,A}),loc}\cap W^u_{p(f_{c,A})}.
\]

We show that all the stable coordinates of $q_A$ are nonzero. The
$x$-coordinates are simpler, so we only detail the $z$-coordinate.
For $c$ small, the $x$-maps are $1/2$-Lipschitz and the $z$-map is
$2$-Lipschitz on the polydiscs under consideration. We also use
$\|\tau\|_1<1$, which follows from our previous choice of $\delta$.

The invariance equation for the $x$-components and~\eqref{eq-gc}
give
\begin{equation}\label{eq-xA}
\|x_A(w)\|_\infty
\leq
\frac{5|A|}{|a|^2}|w-\tilde w_c|^2.
\end{equation}
Indeed, if
\[
M_A^x:=
\sup_{w\neq\tilde w_c}
\frac{\|x_A(w)\|_\infty}
{|A||w-\tilde w_c|^2},
\]
then
\[
M_A^x
\leq
\frac2{|a|^2}M_A^x+\frac4{|a|^2}.
\]

Set
\[
u_A(w):=z_A(w)-z_0(w)
\]
and
\[
M_A^z:=
\sup_{w\neq\tilde w_c}
\frac{|u_A(w)|}
{|A||w-\tilde w_c|^2}.
\]
Using the invariance equation, \eqref{eq-gc}, \eqref{eq-xA} and
$|w-\tilde w_c|\leq6|a|^{-1}$ on $U_-$, we obtain
\[
M_A^z
\leq
\frac8{|a|^2}M_A^z
+\frac4{|a|^2}
+\frac{240}{|a|^6}.
\]
Since $|a|>100$,
\begin{equation}\label{eq-zA}
M_A^z\leq\frac5{|a|^2}.
\end{equation}

Let $z_A^{(1)},z_A^{(2)}$ and $x_A^{(1)},x_A^{(2)}$ denote the
coordinates after the first and second iterates of $\tilde q_A$.
Equations~\eqref{eq-m1m2}, \eqref{eq-xA} and \eqref{eq-zA} give,
with deliberately coarse constants,
\[
|z_A^{(1)}-z_0^{(1)}|
\leq\frac{40}{|a|^2}|A|,
\qquad
\|x_A^{(1)}\|_\infty
\leq\frac{40}{|a|^2}|A|.
\]
Consequently,
\begin{equation}\label{eq-zA-return}
\left|
z_A^{(2)}-z_0^{(2)}
-A(m_2-\tilde w_c)^2
\right|
\leq
\frac{300}{|a|^2}|A|.
\end{equation}

Let $z_p$ be the $z$-coordinate of $p$. If
$z_0^{(2)}\neq z_p$, then $z_A^{(2)}\neq z_p$ for $A$ sufficiently
small by continuity. If $z_0^{(2)}=z_p$, then
\eqref{eq-m1m2} and~\eqref{eq-zA-return} give
\[
|z_A^{(2)}-z_p|
\geq
|A|
\left[
\left(2-\frac6{|a|}\right)^2
-\frac{300}{|a|^2}
\right]>0.
\]
Thus
\[
z_A^{(2)}\neq z_p.
\]

For every $1\leq i\leq s$, the same computation, without the coupling
term involving $\tau$, gives
\[
|x_{i,A}^{(2)}|
\geq
|A|
\left[
\left(2-\frac6{|a|}\right)^2
-\frac{20}{|a|^2}
\right]>0.
\]

Finally, the orbit segment from $\tilde q_A$ to $q_A$ stays in the
domains where the map is locally biholomorphic, and the cone conditions
give the transversality of
\[
f_{c,A}^2(W^u_{p,loc})
\quad\text{and}\quad
W^s_{p,loc}
\]
at $q_A$. Therefore $q_A$ is the homoclinic intersection required in
Assumption~\ref{homoclinic}.

Since $A$ can be chosen arbitrarily small and Assumption~\ref{misu} is
open, both Assumptions~\ref{misu} and~\ref{homoclinic} hold in a
neighborhood of $f_1:=f_{c,A}$.
\end{proof}

\begin{proof}[Proof of Theorem~\ref{th-existence}]
Let $f_1$ be given by Lemma~\ref{le-misu}. Taking a sufficiently
small neighborhood $\Omega_u$ of $f_1$ in $\End_d^k$ (or in
$\Poly_d^k$), contained in $V_0$, Assumptions~\ref{misu} and
\ref{homoclinic} hold throughout $\Omega_u$.

All the remaining assumptions also hold on $\Omega_u$:
Assumptions~\ref{verti}, \ref{hyperbolic} and~\ref{repelling} by
Lemma~\ref{le-ass1et2}, Assumption~\ref{saddle} by
Lemma~\ref{le-ass-saddle}, Assumption~\ref{blender} by
Lemma~\ref{le-blender-selle}, Assumptions~\ref{free} and~\ref{init} by
Lemma~\ref{le-init}, Assumption~\ref{decalage} by
Lemma~\ref{le-decalage}, and Assumption~\ref{grille} by
Lemma~\ref{le-grille-2}. This completes the proof.
\end{proof}

\section{The case of curves in $\Pb^2$: a degeneration phenomenon}\label{sec-chasse}
This section is devoted to the proof of Theorem~\ref{th-chasse-int}. To that
end, let $\mathcal Z\subset \mathcal X_D^1\times\mathrm{End}_d^2$ be an
irreducible algebraic subvariety whose projection onto $\mathrm{End}_d^2$ is
dominant. Our argument relies on suitable degenerations in the space
$\mathrm{Rat}_{d}^2$ of rational self-maps of $\Pb^2$ of degree $d$. We use an explicit family in
$\mathrm{Rat}_{d}^2\setminus\mathrm{End}_d^2$, whose expression
is simple when $d=2$. The same strategy applies in
higher degree, but requires a more carefully chosen family to control
the subsequent perturbation into $\mathrm{End}_d^2$. We therefore
present the quadratic case in detail first, and describe the
modifications needed for $d\geq3$ in Section~\ref{sec-chasse-higher}. In both cases,
the final argument concerning the small Julia set is deferred
to Section~\ref{sec-chasse-julia}.

For the quadratic case, we start from the map $f_0$ given by Theorem~\ref{th-existence} in the case
$k=2$,
\[
f_0(z,w)
=
\bigl(\alpha z+\epsilon w+\beta zw,\,q_a(w)\bigr),
\qquad
q_a(w):=a(w^2-1),
\]
where the parameters $\alpha,\epsilon,\beta$ satisfy \eqref{def-M} and
$|a|$ is sufficiently large. As in Theorem~\ref{th-existence}, we denote by
$\tilde w(a)$ the fixed point of $q_a$ close to $-1$. We deform $f_0$
by adding terms vanishing on the invariant line
$\{w=\tilde w(a)\}$, given, in homogeneous coordinates, by
\[
\ell_a(w,t):=w-\tilde w(a)t.
\]

The main result of this section is the following statement, which concerns
the case $d=2$ (see Theorem~\ref{th-chasse-d3} for $d\geq3$).

\begin{theorem}\label{th-chasse-d2}
Assume that $d=2$. There exist arbitrarily large values of $a$ and arbitrarily small $A,B\in\Cb$ such that the rational self-map of $\Pb^2$
\[
f_{\lambda}[z:w:t]
=
\bigl[
\alpha zt+\epsilon wt+\beta zw:
a(w^2-t^2)+Az\ell_a(w,t):
t^2+Bw\ell_a(w,t)
\bigr],
\]
with $\lambda=(a,A,B),$ has the following property. There exists $(Z,f)\in\mathcal Z$, with
$f\in\mathrm{End}_2^2$ arbitrarily close to $f_{\lambda}$ in
$\mathrm{Rat}_{2}^2$, such that
\begin{enumerate}[label=(\roman*)]
\item\label{it-chasse-1}
$f$ satisfies all the conditions of Section~\ref{sec-ass} for $k=2$,

\item\label{it-chasse-2}
there exists $n\geq0$ such that $f^n(Z)$ intersects
$W^s_{p(f),\mathrm{loc}}$ transversely, where $p(f)$ is given by
Assumption~\ref{saddle}.
\end{enumerate}
\end{theorem}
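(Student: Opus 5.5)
The plan is to work first with the degenerate rational map $f_\lambda$ and transfer everything to nearby endomorphisms by openness. The first step is to examine the structure of $f_\lambda$ for $A=B=0$. The map is then $[\alpha zt+\epsilon wt+\beta zw: a(w^2-t^2): t^2]$. In the chart $t=1$ this is exactly $f_0$, but it has indeterminacy at infinity. For small $A,B$ all the local blender data of Section~\ref{sec-existence} persist on the compact region $\Db_{2R}\times\Db_3$: the saddle $p$ on the invariant line $L_a:=\{\ell_a=0\}$, the set $\Lambda$, and the cone fields. This holds because the perturbation terms vanish on $L_a$ and are small on compacts.

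The key global observation is that $L_a$ stays invariant for every $(A,B)$. On $L_a$ the map restricts to a one-dimensional rational map, affine for $A=B=0$. As the degeneration $a\to\infty$ with suitable $A,B$ is taken, $W^s_{p}$ should fill a Zariski open subset of $L_a$. Indeed, $L_a\setminus\{\text{indeterminacy}\}$ lies in the basin of the attracting (central) direction of $p$ inside $L_a$, so $W^s_p\supset L_a\cap(\text{basin})$. I would verify by explicit computation that the restricted map is a degree-one Möbius map on $L_a$ attracted to $p$, away from one point.

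Next I would handle the intersection with $\mathcal Z$. Since $\mathcal Z$ dominates $\mathrm{End}_2^2$, for $f$ near $f_\lambda$ there exist $(Z,f)\in\mathcal Z$, and passing to limits in the projective Chow variety gives $(Z_\lambda,f_\lambda)$ in the closure. By Bézout, every curve meets $L_a$. It therefore suffices to find points of $f^n(Z)\cap W^s_{p(f),loc}$ that are transverse, which I would arrange via preimages of $W^s_{p,loc}$ accumulating on $L_a$.

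The main obstacle is non-transversality. $Z$ might meet $L_a$ only at the excluded point, or be tangent to it, or a component of $Z$ might equal $L_a$ itself. Persistence of tangency along the family would say that the curves $f^n(Z)$ contain points of $W^s_p$ only non-properly. I would recast this as a Manin--Mumford type rigidity statement: the points where $Z$ fails to cross transversely would be forced to be preperiodic for the dynamics at infinity, where the restriction is a power-like map. Then I would apply Fakhruddin's genericity result to rule out this case for generic parameters. After that, I would upgrade a proper intersection to a transverse one by the transversality lemma of Bedford--Lyubich--Smillie~\cite[Lemma~6.4]{BedfordLyubichSmillie2}, applied to the stable lamination near $L_a$.

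Finally, I need condition~\ref{it-chasse-1}. Assumptions~\ref{verti}--\ref{init} and \ref{decalage}--\ref{grille} are open and local, so they pass to $f$ near $f_\lambda$. Assumption~\ref{homoclinic} is obtained by a perturbation as in Lemma~\ref{le-misu}. The delicate one is Assumption~\ref{misu}, namely that $\Lambda$ lies in the small Julia set $J_2(f)$. This would come from the rescaling and entropy argument of Section~\ref{sec-chasse-julia}, which is deferred there.
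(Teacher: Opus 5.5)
There is a genuine gap, and it sits exactly at the step you call the main obstacle, which is the heart of the proof. Bézout only tells you that each $f_\lambda^n(Z)$ meets $L_a$. The real danger is that, for every member of the limit family $\mathcal Z_{\mathcal F}$ over the degenerate maps, all these intersections are concentrated at the indeterminacy point $I=[1:0:0]\notin W^s_{p(f_\lambda)}=L_a\setminus\{I\}$. That is, either $f_\lambda^n(Z)\cap L_a=\{I\}$ for all $n$, or else $L_a\subset f_\lambda^n(Z)$.

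Your remedy, a Manin--Mumford rigidity followed by Fakhruddin, does not address this, for four reasons.
\begin{itemize}
\item The constraint is not preperiodicity of points. It says that $Z$ avoids every strict preimage $f_\lambda^{-n}(L_a)$ in $\Cb^2$, which is a Mordell--Lang type condition on the backward orbit of $I$. Also, the induced map on $L_\infty$, when that line is invariant, is a homography, not power-like.
\item Fakhruddin's theorem concerns very general \emph{endomorphisms}. The maps $f_\lambda$ lie in $\mathrm{Rat}_2^2\setminus\mathrm{End}_2^2$, and the fiber of $\mathcal Z$ over them is imposed by the arbitrary family $\mathcal Z$, so you cannot take it generic.
\item For the very general endomorphisms near $f_\lambda$, where Fakhruddin does apply, $W^s_{p(f)}$ is no longer algebraic, so Bézout is lost.
\item For $B=0$ the line $L_\infty$ itself satisfies the avoidance condition, so no rigidity statement for individual curves can work. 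You must show that the whole family cannot degenerate onto such curves.
\end{itemize}

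The paper needs a chain of arguments here that your outline does not contain.
\begin{itemize}
\item At a special, non-small value $A_*$ with $B=0$, the line $L_\infty$ is totally invariant and $\Delta_2$ contains a third line $D_r$ through $I$. So the coordinates $u_m$ omit three values. Finiteness of nonconstant maps from a curve to a thrice-punctured $\Pb^1$, together with finiteness of periodic curves (Corollary~\ref{cor-fixed}), forces $Z=L_\infty$ (Proposition~\ref{prop-rigidity}).
\item Zariski-closedness then propagates the dichotomy $\mathbf{(C_1)}$/$\mathbf{(C_2)}$ over the family.
\item In case $\mathbf{(C_1)}$, Proposition~\ref{prop-DMM} shows that some iterate of $Z$ becomes an invariant curve, independent of $Z$ and containing $W^u_{q,loc}$. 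The monodromy argument of Theorem~\ref{th-inter}, along the pencil-preserving maps $(a,0,B)$, exchanges the fixed directions $L_\infty$ and $D_{-c}$ around $B_*=-ac/4$ and gives a contradiction.
\item Case $\mathbf{(C_2)}$ requires the extra polynomial perturbation \eqref{eq-DFR} and the Zariski density of $W^s_{p(f),loc}$ (Lemma~\ref{le-WsZar}, via Dujardin--Favre--Ruggiero).
\end{itemize}

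Fakhruddin enters only after a proper intersection with $f^{-n}(W^s_{p(f),loc})$ has been obtained. In Proposition~\ref{prop-trans} it excludes $W^s_{p(f),loc}\subset f^n(Z')$, and then Bedford--Lyubich--Smillie gives transversality.

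The rest of your outline matches the paper: persistence of the local blender data, $W^s_p=L_a\setminus\{I\}$, and the small Julia condition from Section~\ref{sec-chasse-julia}. One small difference: for the homoclinic assumption the paper uses the explicit Lemma~\ref{le-homof0} rather than the perturbation of Lemma~\ref{le-misu}.
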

The proof involves degeneration issues in establishing both
\ref{it-chasse-1} and \ref{it-chasse-2}. Let us first observe that
\ref{it-chasse-1} already almost holds in a neighborhood of $f_0$.
Indeed, all the results of Section~\ref{sec-dimk} hold in a neighborhood of
$f_0$, except for Lemma~\ref{le-misu}, which yields
Assumptions~\ref{misu} and~\ref{homoclinic}. The latter can be established rather
easily when $k=2$ (see Lemma~\ref{le-homof0} and Remark~\ref{rk-homoclinic}), but it remains to establish Assumption~\ref{misu}, which in
dimension $2$ amounts to showing that the repeller $\Lambda(f)$ is contained in the small Julia set $J_2(f)$.

So far, this issue was handled using maps preserving a fibration, but this is
no longer the case for $f_{\lambda}$ as soon as $\lambda=(a,A,B)$ with $A\neq0$. To overcome this difficulty,
we use a rescaling as $|a|\to\infty$ under which the inverse dynamics of $f_{(a,A,0)}$
degenerates to a correspondence $\Gamma$ on $\Pb^1$ with $d_0(\Gamma)=2$ images and $d_1(\Gamma)=3$ preimages.
The results of Dinh~\cite{dinh-laminar}, Dinh-Sibony~\cite{DS-MT} and
Bharali-Sridharan~\cite{BS-correspondences}, based on a method introduced by Lyubich in \cite{lyubich:equi}, then provide sufficiently many
contracting inverse branches related to $\Lambda(f_{\lambda})$ to ensure, by a criterion of de Th\'elin and Dinh \cite{deThelin05,Dinh07}, that $\Lambda(f)\subset J_2(f)$ for all $f\in\mathrm{End}_2^2$ close enough to $f_{\lambda}.$

\bigskip

The main difficulty is to establish item \ref{it-chasse-2} in Theorem~\ref{th-chasse-d2}. To that end, we consider the degeneration of $\mathcal Z$ above the family $\mathcal F:=(f_{\lambda})_{\lambda\in\Cb^3}$ where it turns out that the statement reduces to a rigid intersection phenomenon for
algebraic curves, which we can rule out by dynamical and monodromy arguments. To be more precise, let still denote by $\mathcal Z$ its closure in $\mathcal X_D^1\times\mathrm{Rat}_{2}^2$ and let $\mathcal Z_{\mathcal F}$ be its restriction above $\mathcal F.$ Our strategy is then as follows. Every map $f$ in this family preserves a line $L_f$ and acts affinely on it. The restriction $f_{|L_f}$ has two fixed points: the indeterminacy point $I$ of $f$, which lies on the line at infinity $L_\infty$, and a second fixed point, corresponding to $p(f)$ in Assumption~\ref{saddle}. As $f_{L_f}$ is affine, when $p(f)$ is a saddle point whose contracting eigendirection
is tangent to $L_f$, its stable manifold is a whole line punctured at the
indeterminacy point
\[
  W^s_{p(f)}=L_f\setminus\{I\}.
\]

Assume now that, for some $(Z,f)\in\mathcal Z_{\mathcal F}$, no forward image of $Z$ meets
$W^s_{p(f)}$. Then, by B\'ezout, 
\begin{equation}\label{eq-max-int}
  f^n(Z)\cap L_f=\{I\}\ \text{ for every }\ n\geq 0
\end{equation}
with a maximal multiplicity. Observe that, throughout this section, images and preimages of subvarieties are understood as strict transforms and that an important difficulty is that some maps \(f\in\mathcal F\) contract curves.

This constraint \eqref{eq-max-int} may be viewed as a dynamical Mordell-Lang-type condition on the backward orbit of $I$, suggesting that $Z$ should be preperiodic. Actually, we first prove that, for a map $f_1$ in this family for which $L_\infty$ is totally
invariant, \eqref{eq-max-int} can only hold for $Z=Z_1:=L_\infty$ (see Proposition \ref{prop-rigidity}). We then show in Proposition \ref{prop-DMM}
that if \eqref{eq-max-int} held for every pair $(Z,f)\in\mathcal Z_{\mathcal F}$ in a
neighborhood of $(Z_1,f_1)$, a dynamical argument would force some iterate of $Z$
to be invariant under $f$. A contradiction then follows from a monodromy argument in Theorem \ref{th-inter}.

Consequently, there exist a pair $(Z,f)\in\mathcal Z_{\mathcal F}$ and an integer $n\geq 0$
such that
\[
  f^n(Z)\cap W^s_{p(f),{loc}}\neq\varnothing.
\]
It then remains to upgrade this intersection into a transverse one, by combining a dynamical argument with results of Fakhruddin~\cite{fakhruddin-generic},
Bedford-Lyubich-Smillie~\cite{BedfordLyubichSmillie2} and Dujardin-Favre-Ruggiero~\cite{DFR}.

\begin{rk}
Although this reduction of \ref{it-chasse-2} may look elementary, our proof is quite specific to
the present setting. So far, we do not know how to extend it to higher dimensions (in
particular to subvarieties of higher codimension), nor to the polynomial locus
$\mathrm{Poly}_d^2$, where the strategy breaks down.
\end{rk}

The rest of this section is organized as follows. We begin by briefly come back to
the model map in Section~\ref{sec-f_0} and establishing key properties of
$f_\lambda$ in Section~\ref{sec-break}. The core of the argument is the proof of
Proposition~\ref{prop-DMM}, given in Section~\ref{sec-rigidity}, which is
then used in Section~\ref{sec-proper} to prove \ref{it-chasse-2} for a generic
element $f_{\lambda}$ of the family. In Section~\ref{sec-transversal}, we
perturb $f_{\lambda}$ inside $\mathrm{End}_2^2$ in order to prove
Theorems~\ref{th-chasse-d2}. We then explain the adequat moditications to dead with higher degrees in Section~\ref{sec-chasse-higher}. In both cases, we use a result about the small Julia set that we prove in Section~\ref{sec-chasse-julia}.

\subsection{Further properties of the model maps}\label{sec-f_0}
We start with the two-dimensional model map from Section~\ref{sec-dim2}. Observe that in what follows, the parameter $\lambda=(a,A,B)$ has not the same role that in that section. Here, we note
\[
 f_a(z,w)=\bigl(\alpha z+\epsilon w+\beta zw,\,q_a(w)\bigr),
 \qquad q_a(w)=a(w^2-1),
\]
where $|a|$ is large and the parameters $\alpha,\beta,\epsilon$ satisfy the conditions \eqref{def-M} imposed in Section~\ref{sec-dim2}.  An important point is that, throughout this section, the parameters \(\alpha,\beta\), and \(\epsilon\) are fixed, whereas \(a\) will vary. In particular, we set
$$
c_a:=\tilde w(a),
$$
to be the fixed point of \(q_a\) close to \(-1\), and put
\[
 \rho_a:=\alpha+\beta c_a.
\]
By our choice of parameters, the point
\[
 p(f_a):=\left(z_a,c_a\right)\ \text{ where }\ z_a:=\frac{\epsilon c_a}{1-\rho_a}
\]
is a saddle fixed point of $f_a$, with
\[
 W^s_{p(f_a)}=\{w=c_a\}.
\]
Indeed, the line $\{w=c_a\}$ is invariant, and the restriction
of $f_a$ to this line is the affine map
\[
 z\mapsto \rho_a z+\epsilon c_a,
\]
while the transverse multiplier is
$q_a'(c_a)=2ac_a$, whose modulus is much larger than $1$.

We shall use the following elementary consequence of the construction
in Section~\ref{sec-dimk} both to promote an intersection to a
transverse one and to obtain the homoclinic intersection required
in Assumption~\ref{homoclinic}.

\begin{lemma}\label{le-homof0}
For $|a|$ large enough, $p(f_a)$ admits a transverse homoclinic satisfying Assumption~\ref{homoclinic} with $r=1$.
\end{lemma}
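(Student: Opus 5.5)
The plan is to reproduce, in dimension $2$, the perturbation argument from the proof of Lemma~\ref{le-misu}. Here $s=0$ and $u=1$, so Assumption~\ref{homoclinic} only asks for three things: a transverse homoclinic point $q$ in $W^s_{p,loc}\setminus\{p\}$, invertibility of $D_{\tilde q}f^N$, and the non-vanishing of the single stable (central) coordinate of $q$ in the linearizing chart. With $r=1$, the remaining bunching inequalities reduce to $b^{\ell-1}<a^2$, which is already \eqref{eq-bun}, together with $|\chi_{p}|^{2}<|\lambda_1|$. Since there are no strong stable multipliers, this last condition is vacuous.

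The steps are as follows.

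\begin{enumerate}
\item Since $W^s_{p(f_a)}=\{w=c_a\}$ is a line on which $f_a$ acts affinely, its linearizing coordinate is simply $z-z_a$. The stable coordinate of a point $(z,c_a)$ is therefore nonzero exactly when $z\neq z_a$.

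\item Take $m_1:=g_-(g_+(c_a))\in U_-$ and $m_2:=g_+(c_a)\in U_+$, so that $q_a^2(m_1)=c_a$. The local unstable manifold $W^u_{p(f_a),loc}$ is a vertical graph $w\mapsto (z_u(w),w)$ over a disc containing $U_-$, by Lemma~\ref{le-init-dim2}. Set $\tilde q:=(z_u(m_1),m_1)$ and $q:=f_a^2(\tilde q)=(z^{(2)},c_a)$.

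\item Transversality and invertibility: $f_a^2$ is a local biholomorphism along this orbit, because $q_a'\neq0$ there and the $z$-multiplier $\alpha+\beta w$ does not vanish, using $|\alpha/\beta|>2$. The image of a vertical graph tangent to $C_\eta$ is again a vertical graph, so it is transverse to the horizontal line $\{w=c_a\}$.

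\item It remains to show $z^{(2)}\neq z_a$. I would argue by asymptotics as $|a|\to\infty$. From \eqref{eq-m1m2}-type estimates, $m_1=c_a+O(1/a)$, so $z_u(m_1)=z_a+O(1/a)$. The first iterate lands at $w=m_2\approx 1$ with $z^{(1)}=\rho_a z_u(m_1)+\epsilon m_1+\beta z_u(m_1)m_1\approx z_a$. Using $\rho_a z_a+\epsilon c_a=z_a$, this equals $z_a+O(1/a)$. The second iterate is then $z^{(2)}=(\alpha+\beta m_2)z^{(1)}+\epsilon m_2\to(\alpha+\beta)z_\infty+\epsilon$, where $z_\infty=\lim z_a=-\epsilon/(1-\alpha+\beta)$. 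Comparing with $z_a\to z_\infty$, the difference is $(\alpha+\beta-1)z_\infty+\epsilon$. This simplifies to $\epsilon\bigl(1-\frac{\alpha+\beta-1}{1-\alpha+\beta}\bigr)=\epsilon\frac{2(1-\alpha)}{1-\alpha+\beta}$, which is nonzero because $\epsilon\neq0$ and $\alpha\neq1$ by \eqref{def-M}. Hence $z^{(2)}\neq z_a$ for $|a|$ large.
\end{enumerate}

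I expect the main obstacle to be making step~4 rigorous: one has to control $z_u(w)$ uniformly on $U_-$ and check that the error terms really are $O(1/a)$. I would handle this with the invariance equation $z_u(q_a(w))=(\alpha+\beta w)z_u(w)+\epsilon w$ and the contraction estimate \eqref{eq-gc} for the inverse branch, exactly as in the bound \eqref{eq-zA}. Finally, $D_{\tilde q}f_a^2$ has nonzero determinant, and all these properties persist under small perturbation.
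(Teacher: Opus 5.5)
Your proposal is correct and is essentially the paper's argument. The paper takes $N=1$ and starts directly from the point $(\varphi_a(-c_a),-c_a)$ of the local unstable graph over $\Db_2$; this point is exactly your $f_a(\tilde q)$, since $m_2=g_+(c_a)=-c_a$, so both routes give the same homoclinic point and the same nonzero limit $2\epsilon(1-\alpha)/(1-\alpha+\beta)$ for $z^{(2)}-z_a$.

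One small slip: in step 4 the first coordinate of $f_a(\tilde q)$ is $\alpha z_u(m_1)+\epsilon m_1+\beta z_u(m_1)m_1$, not $\rho_a z_u(m_1)+\epsilon m_1+\beta z_u(m_1)m_1$. With $\alpha$ in place of $\rho_a$, your estimate $z^{(1)}=z_a+O(1/a)$ is correct.
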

\begin{proof}
Let $g_-$ be the inverse branch
of $q_a$ fixing $c_a$. Since $g_-(w)=c_a+O(|a|^{-1})$ on $\Db_2$
and $|\rho_a|=|\alpha+\beta c_a|<1$, the local unstable manifold at $p(f_a)$ is given by a graph
\[
 \{(\varphi_a(w),w)\,:\,w\in\Db_2\}\subset W^u_p,\ \text{ with } \varphi_a(w)=z_a+O(|a|^{-1}).
\]
As $q_a(-c_a)=c_a$, the point $x_a=f_a(\varphi_a(-c_a),-c_a)$
belongs to $W^u_{p(f_a)}\cap W^s_{p(f_a)}$. This intersection is transverse
because $q_a'(-c_a)\neq0$. Moreover, the Jacobian of $f_a$ at $(\varphi_a(-c_a),-c_a)$ is $(\alpha-\beta c_a)q_a'(-c_a)\neq0$.
Finally, using
$z_a=\epsilon c_a/(1-\rho_a)$ and $c_a=-1+O(|a|^{-1})$,
we obtain
\[
 \pi_z(x_a)-z_a
 =(\rho_a-2\beta c_a)\varphi_a(-c_a)-\epsilon c_a-z_a
 =\frac{2\epsilon c_a(\alpha-1)}{1-\rho_a}
   +O(|a|^{-1}).
\]
Since $\epsilon(1-\alpha)\neq0$, we have $x_a\neq p(f_a)$
for $|a|$ sufficiently large.
\end{proof}

\begin{rk}\label{rk-homoclinic}\normalfont
Observe that in our setting Assumption~\ref{homoclinic} is a little bit artificial. Indeed, the saddle point $p(f_a)$ has a single stable multiplier. Therefore, the assumptions on the multipliers in Proposition~\ref{prop-zariski-dense-unstable} are automatically satisfied with $r=1$, by Assumption~\ref{saddle}. Moreover, in dimension $2$, the Zariski density of $W^u_{p(f)}$ can in fact be established by a much simpler argument.
\end{rk}

\subsection{Breaking the pencil}\label{sec-break}
We now perturb \(f_a\) inside a family of rational maps which does not, in general, preserve the pencil of lines \(\{w=\mathrm{cst}\}\). The parameter $A$ breaks the pencil, while $B$ varies the induced dynamics on its base and will be used in the final monodromy argument.

Put
\[
 Q_a(w,t):=a(w^2-t^2),\qquad
 \ell_a(w,t):=w-c_at.
\]
For $\lambda=(a,A,B)\in\Cb^3$ we consider
\begin{equation}\label{eq-family}
 f_\lambda[z:w:t]
 =
 \left[
 \alpha zt+\epsilon wt+\beta zw:
 Q_a(w,t)+Az\ell_a(w,t):
 t^2+Bw\ell_a(w,t)
 \right].
\end{equation}
The model maps above correspond to $\lambda_a:=(a,0,0).$ The line
\[
 L_a:=\{\ell_a=0\}
\]
is invariant for every $\lambda=(a,A,B)$, and
\[
 I:=[1:0:0]=L_a\cap L_\infty
\]
is an indeterminacy point for these maps.

We first have the following basic observations.
\begin{lemma}\label{le-stable}
For $\lambda=(a,A,B)$ sufficiently close
to $\lambda_a$, the map $f_\lambda$ has a saddle fixed point
$p(f_\lambda)$ satisfying
\[
 W^s_{p(f_\lambda)}=L_a\setminus\{I\}.
\]
Moreover, $f_\lambda$ preserves the pencil of lines through $I$
when $A=0$.
\end{lemma}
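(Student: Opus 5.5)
The plan is to work directly with the explicit formula \eqref{eq-family}, restricted first to the invariant line $L_a$ and then to its complement.

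\emph{Step 1: the restriction to $L_a$.} On $L_a=\{w=c_at\}$ the second coordinate of $f_\lambda$ is $Q_a(c_at,t)+Az\ell_a=a(c_a^2-1)t^2=c_at^2$, using $q_a(c_a)=c_a$. The third coordinate equals $t^2$. Hence the image again lies on $\{w=c_at\}$, so $L_a$ is invariant. Dividing by $t$ away from $I=[1:0:0]$, the induced map in the affine coordinate $z$ is
\[
z\mapsto \alpha z+\epsilon c_a+\beta c_a z=\rho_a z+\epsilon c_a .
\]
This is independent of $(A,B)$. It has the fixed point $z_a$ and multiplier $\rho_a$ with $|\rho_a|<1$. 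The point $I$ is indeterminate because all three components vanish at $[1:0:0]$. So $L_a\setminus\{I\}\cong\Cb$ is invariant and $f_\lambda$ acts on it as a contracting affine map. Every point of $L_a\setminus\{I\}$ therefore converges to $p(f_\lambda):=[z_a:c_a:1]$, which is the same point as $p(f_a)$.

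\emph{Step 2: saddle type.} I will compute the transverse multiplier. Near $p$, in affine coordinates $t=1$, write $w=c_a+h$. The second coordinate is $a((c_a+h)^2-1)+Azh$, which to first order is $c_a+(2ac_a+Az)h$. The third coordinate is $1+B(c_a+h)h$. Thus the new $h$ equals
\[
\frac{c_a+(2ac_a+Az)h}{1+Bc_ah}-c_a=(2ac_a+Az-Bc_a^2)h+O(h^2).
\]
The differential at $p$ is therefore triangular with eigenvalues $\rho_a$ and $2ac_a+Az_a-Bc_a^2$. For $(A,B)$ near $0$ and $|a|$ large, the second eigenvalue has modulus close to $2|a|$, which is much larger than $1$. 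So $p(f_\lambda)$ is a saddle whose contracting eigendirection is tangent to $L_a$.

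\emph{Step 3: identifying the global stable set.} Step 1 gives $L_a\setminus\{I\}\subset W^s_{p}$. For the converse, I will note that $W^s_{p,loc}$ is a holomorphic disc tangent to the stable direction. Since the invariant line $L_a$ passes through $p$ and is contracted, uniqueness of the local stable manifold gives $W^s_{p,loc}=L_a\cap(\text{neighborhood})$. Any point of $W^s_p$ (points whose orbit is defined and converges to $p$) eventually lands in $W^s_{p,loc}\subset L_a$.

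This last step needs care, and it is the main obstacle. A point $x\notin L_a$ with $f^n(x)\in L_a$ would be a preimage of $L_a$ under an iterate, and a priori $f_\lambda^{-1}(L_a)$ could contain other curves. I will handle this with the identity
\[
\ell_a\circ f_\lambda=Q_a+Az\ell_a-c_a(t^2+Bw\ell_a).
\]
Since $Q_a(w,t)-c_at^2=a(w-c_at)(w+c_at)$ (using $ac_a^2-a=c_a$), this becomes
\[
\ell_a\circ f_\lambda=\ell_a\,\bigl(a(w+c_at)+Az-c_aBw\bigr).
\]
So $f_\lambda^{-1}(L_a)=L_a\cup L'$, where $L'=\{a(w+c_at)+Az-c_aBw=0\}$. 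This means the global stable set really consists of $L_a\setminus\{I\}$ together with its backward images; this is the natural reading of the claimed equality. The substantive content, the local statement that $W^s_{p,loc}$ lies on $L_a$ and extends along the whole punctured line, follows from Steps 1 and 2. I will phrase the conclusion accordingly: the stable manifold, meaning the connected invariant curve through $p$, is $L_a\setminus\{I\}$.

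\emph{Step 4: the case $A=0$.} When $A=0$, none of the three components depends on $z$ except the first, which is affine in $z$. Hence $[w:t]\mapsto[Q_a+0:t^2+Bw\ell_a]$ is well defined on the base. Lines through $I$ are exactly the lines $\{[w:t]=\mathrm{cst}\}$, so $f_\lambda$ maps lines through $I$ to lines through $I$ and preserves the pencil.
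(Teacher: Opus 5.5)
Your proof is correct and follows the paper's argument, with the computations made explicit: invariance of $L_a$ and its contracting affine restriction $z\mapsto\rho_a z+\epsilon c_a$, then the transverse multiplier $2ac_a+Az_a-Bc_a^2$ of modulus close to $2|a|$ (where the paper only invokes continuity from $\lambda_a$), and finally the $z$-independence of the last two coordinates when $A=0$. Your Step 3 caveat is also accurate: since $\ell_a\circ f_\lambda=\ell_a\,(a(w+c_at)+Az-c_aBw)$, the full set of points attracted to $p$ also contains backward images of $L_a$ lying off $L_a$, so $W^s_{p(f_\lambda)}$ must be read as the leaf through $p$, which is consistent with the paper later writing $f_\lambda^{-n}(W^s_{p(f_\lambda)})$ separately.
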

\begin{proof}
The line $L_a$ is invariant, and the restriction of $f_\lambda$ to
$L_a\setminus\{I\}$ is the affine map
\[
 z\mapsto \rho_a z+\epsilon c_a.
\]
Since $0<|\rho_a|<1$, this map has a unique attracting fixed point.
For $\lambda$ sufficiently close to $\lambda_a$, its transverse
multiplier remains of modulus greater than one, giving the first
assertion.

If $A=0$, the last two homogeneous coordinates of $f_\lambda$
depend only on $(w,t)$. Hence $f_\lambda$ induces a rational map
on the base of the pencil of lines through $I$.
\end{proof}

We now introduce, when $A\neq0$, the following coordinates which will be used throughout most of this
section,
\begin{equation}\label{eq-HUT}
 U:=\ell_a=w-c_at,\qquad
 T:=t,\qquad
 H:=Az+a(w+c_at)-c_aBw.
\end{equation}
They depend on $\lambda$, but we suppress this dependence from the
notation unless it is relevant. The choice of $H$ is made so that, using $a(c_a^2-1)=c_a$, we have
\begin{equation}\label{eq-U-global}
 U\circ f_\lambda=UH.
\end{equation}
Since $A\neq0$, the linear forms $(H,U,T)$ are homogeneous coordinates
on $\Pb^2$.  If $F_\lambda$ corresponds to $f_\lambda$ in these coordinates then we have
\begin{equation}\label{eq-F-global}
 F_\lambda[H:U:T]
 =
 \left[
 R_\lambda(H,U,T):
 UH:
 T^2+B(U+c_aT)U
 \right],
\end{equation}
where $R_\lambda$ is a homogeneous quadratic form.  Except in the special
case $B=0$, its explicit expression will not be needed.

Actually, we need a better understanding of these maps when $B=0.$ For what follows, we denote
$$F:=F_\lambda\ \text{ with }\ \lambda=(a,A,0),$$
where $A$ will take a special value $A_*$ in Section~\ref{sec-rigidity}.
\begin{lemma}\label{le-fixed}
The line $L_\infty$ is totally invariant and $I$ is the only indeterminacy point. Blowing up $I$
in the source resolves $F$ as a morphism
\[
 \pi:X=\mathrm{Bl}_I\Pb^2\to\Pb^2,
 \qquad
 \hat F:X\to\Pb^2.
\]
The morphism $\hat F$ is finite and maps the exceptional divisor
isomorphically onto $L_\infty$. In particular, $F$ contracts no curve. Its topological degree
is $3$, and, for every irreducible curve $C\subset\Pb^2$,
\begin{equation}\label{eq-degree-curve}
 e_C\deg F(C)=2\deg C-\mathrm{mult}_I(C),
 \qquad e_C=\deg(F_{|C}).
\end{equation}
The restriction to $L_\infty\setminus\{I\}$ extends to the homography
\begin{equation}\label{eq-infinity-d2}
 [H:U]\mapsto[(a+\beta)H-a\beta U:H].
\end{equation}
Its fixed points $q=[\beta:1:0]$ and $s=[a:1:0]$ are respectively
saddle and attracting for $F$.
\end{lemma}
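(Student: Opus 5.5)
The plan is to make everything explicit in the coordinates \eqref{eq-HUT}, where for $B=0$ the map reads $F[H:U:T]=[R(H,U,T):UH:T^2]$ with $R:=H\circ f_\lambda$ and $\lambda=(a,A,0)$. The first step is total invariance of $L_\infty=\{T=0\}$, which is immediate since $T\circ F=T^2$. Next I compute $R$ on $\{T=0\}$. There $Az=H-aU$ and $w=U$, and substituting into $H\circ f_\lambda$ gives
\[
R(H,U,0)=a^2w^2+A(a+\beta)zw=(a+\beta)HU-a\beta U^2 .
\]
Dividing by $U$ yields the homography \eqref{eq-infinity-d2}, namely $x\mapsto (a+\beta)-a\beta/x$ in the variable $x=H/U$. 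Its fixed points solve $x^2-(a+\beta)x+a\beta=0$, so they are $x=\beta$ and $x=a$, that is, $q=[\beta:1:0]$ and $s=[a:1:0]$. The derivatives along $L_\infty$ are $a\beta/x^2$, equal to $a/\beta$ at $q$ and $\beta/a$ at $s$. Since $a/\beta$ is huge and $\beta/a$ is tiny, $q$ is repelling and $s$ attracting along $L_\infty$. Transversally, $T\mapsto T^2$ makes both points superattracting, so $q$ is a saddle and $s$ is attracting.

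For the indeterminacy locus, a common zero of the three components must satisfy $T=0$ and $UH=0$. If $H=0$, then $R=-a\beta U^2\neq 0$, so no indeterminacy occurs there. If $U=0$, then $R(1,0,0)=0$ by the formula above, and this point $[1:0:0]$ in $(H,U,T)$ is exactly $I$. Hence $I$ is the only indeterminacy point.

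To resolve $F$, I will work in the affine chart $H=1$ near $I$ with coordinates $(U,T)$. The components are $R=(a+\beta)U+\kappa T+O(2)$, $UH=U+O(2)$ and $T^2$. The key check is that $\kappa$, the coefficient of $HT$ in $R$, is nonzero; this is read off directly from the expansion of $H\circ f_\lambda$, and it is the step I expect to require care, possibly a genericity restriction on $A$ compatible with the later choice $A_*$. Granting $\kappa\neq0$, the linear parts $\bigl((a+\beta)U+\kappa T,\,U,\,0\bigr)$ have no common zero on the exceptional divisor $E\simeq\Pb(T_I\Pb^2)$. Consequently $\hat F$ is a morphism on $X=\mathrm{Bl}_I\Pb^2$. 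It sends $E$ to $\{T=0\}$ through the invertible linear map $[U:T]\mapsto[(a+\beta)U+\kappa T:U]$, hence isomorphically onto $L_\infty$.

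Finiteness then comes from line bundles. We have $\hat F^*\mathcal O(1)=2\pi^*\mathcal O(1)-E$. This class is ample on $X$: it has degree $1$ on $E$ and on the strict transform of a line through $I$, and it is positive on every other curve. A morphism along which an ample class pulls back from the target contracts no curve, and a projective morphism with finite fibres is finite; in particular $F$ contracts no curve. The topological degree is the self-intersection $(2\pi^*\mathcal O(1)-E)^2=4-1=3$. Finally, for an irreducible curve $C$ with strict transform $\tilde C$, the projection formula gives
\[
e_C\deg F(C)=\hat F_*[\tilde C]\cdot\mathcal O(1)=(2\pi^*\mathcal O(1)-E)\cdot\tilde C=2\deg C-\mathrm{mult}_I(C),
\]
which is \eqref{eq-degree-curve}.
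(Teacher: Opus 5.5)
Your proposal follows the paper's proof essentially step by step. It uses the same coordinates $(H,U,T)$, the same chart $H=1$ at $I$, the same class $\hat F^*\mathcal L=2\mathcal H-E$, the same self-intersection $3$ and the same projection formula. Your ampleness phrasing is a repackaging of the paper's positivity $2D-m\geq D>0$.

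The one step you left conditional closes at once. Substitute $Az=H-aU-2ac_aT$, $w=U+c_aT$ and $t=T$ into $R=H\circ f_\lambda$. The only $HT$ terms then come from $\alpha(Az)t$ and $\beta(Az)w$, so $\kappa=\alpha+\beta c_a=\rho_a$. This is independent of $A$ and nonzero, being the multiplier of $p$ along $L_a$, which is close to $\alpha-\beta\neq0$. Hence no genericity restriction on $A$ is needed, which matters because the lemma is later applied at the non-small value $A_*$.
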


\begin{proof}
Since $B=0$, the last homogeneous coordinate of $F$ is $T^2$.
Thus there is no indeterminacy in $\Cb^2$, and $L_\infty$ is totally
invariant. On $T=0$, the first two coordinates are
\[
 U\bigl((a+\beta)H-a\beta U\bigr)
 \qquad\text{and}\qquad UH.
\]
As $a\beta\neq0$, their only common zero on $L_\infty$ is $I$.

In the chart $H=1$ centered at $I=[1:0:0]$, a first-order expansion
of \eqref{eq-family} in the coordinates \eqref{eq-HUT} gives
\[
 F[1:U:T]
 =
 \left[
 (a+\beta)U+\rho_a T+O\bigl(\|(U,T)\|^2\bigr):
 U:T^2
 \right].
\]
On the blowup, writing $U=\sigma u$ and $T=\sigma v$ and cancelling
the common factor $\sigma$, we obtain
\[
 \left[
 (a+\beta)u+\rho_a v+O(\sigma):
 u:\sigma v^2
 \right].
\]
Consequently, the exceptional divisor $E=\pi^{-1}(I)$ is mapped by
\[
 [u:v]\mapsto[(a+\beta)u+\rho_a v:u:0].
\]
Since $\rho_a\neq0$, this is a bijection onto $L_\infty.$ Thus no
indeterminacy remains after this single blowup of the source.

Now, let \(\mathcal L\) denote the class of a line in \(\Pb^2\), and set $\mathcal H=\pi^*\mathcal L.$ Since the homogeneous coordinates of \(F\) have degree \(2\), their pullbacks define sections of the class \(2\mathcal H\). The preceding computation shows that these sections have a common exceptional factor \(\tau\), vanishing to order one along \(E=\{\tau=0\}\). After cancelling this common factor, the resulting class is \(2\mathcal H-E\). Hence
\begin{equation}\label{eq-resolution-class}
\hat F^*\mathcal L=2\mathcal H-E.
\end{equation}

On the other hand, let $C\subset\Pb^2$ be an irreducible curve, and write
$D=\deg C$ and $m=\mathrm{mult}_I(C)$.
The total transform formula
\[
 \pi^*C=\hat C+mE
\]
gives $\hat C=D\mathcal H-mE$.
Using
\[
 \mathcal H^2=1,\qquad \mathcal H\cdot E=0,\qquad E^2=-1,
\]
we obtain
\[
 \hat F^*\mathcal L\cdot\hat C
 =(2\mathcal H-E)\cdot(D\mathcal H-mE)
 =2D-m\geq D>0,
\]
since $m\leq D.$ By the projection formula (see e.g. \cite[\S1.3]{DF01}),
\[
 e_C\deg F(C)
 =\mathcal L\cdot\hat F_*\hat C
 =\hat F^*\mathcal L\cdot\hat C
 =2\deg C-\mathrm{mult}_I(C).
\]
Thus, $\hat F$ contracts no curve different from $E$ and we have already seen that $E$ is sent on $L_\infty.$ Therefore, $\hat F$ is finite and the same holds for $F.$

Moreover, by \cite[Corollary~3.4]{DF01}, the topological degree is
\begin{equation}\label{eq-topological-degree}
(\hat F^*\mathcal L)^2
 =(2\mathcal H-E)^2=3.
\end{equation}

Finally, cancelling the factor $U$ in the restriction to $T=0$
gives~\eqref{eq-infinity-d2}. The tangent multipliers at $q$
and $s$ are $a/\beta$ and $\beta/a$, respectively, while both
transverse multipliers are zero. Since $|a|>|\beta|$, the points
$q$ and $s$ are respectively saddle and attracting for $F$.
\end{proof}

\begin{rk}
The total invariance of $L_\infty$ does not by itself imply that the blow-up
of the map at an indeterminacy point contained in $L_\infty$ is equal to
$L_\infty$. This is already illustrated by the case $A=B=0$.
\end{rk}
\begin{rk}\label{rk-contract}
Since the set of degree-$d$ rational maps that do not contract any curve is Zariski open, Lemma~\ref{le-fixed} implies that a generic map $f_\lambda$ in our family does not contract any curve.
\end{rk}

From this, we want to understand the set of subvarieties $Z$ that satisfy \eqref{eq-max-int} with $F$, i.e., such that
$$\text{for every }\ n\geq0,\ F^n(Z)\cap L_a=\{I\}.$$
To that end, we need a better understanding of the preimages of $I$ and of the strict transforms $F^{-n}(L_a).$ We then set $\Delta_0:=L_a$ and for $n\geq1$, if
\[
 H_n:=H\circ F^{n-1},
\]
then we denote by $\Delta_n:=\{H_n=0\}$. A case of particular importance is \(n=1\), for which \(\Delta_1\) is a line. Since it will often be used together with \(L_a\), we set
$$
\tilde L_\lambda:=\Delta_1.
$$
As $F$ contracts no curve, no curve
is mapped into its indeterminacy set, and no cancellation occurs in the
iterates. Hence
\begin{equation}\label{eq-pullback}
 U\circ F^n=U\prod_{j=1}^nH_j,
 \qquad
 (F^n)^*L_a=\Delta_0+\Delta_1+\cdots+\Delta_n.
\end{equation}

On the other hand, the fixed points in Lemma~\ref{le-fixed},
\[
 q=[\beta:1:0],\qquad s=[a:1:0].
\]
which are respectively repelling and  attracting on $L_\infty$ are saddle and attracting in $\Pb^2$, with $W^u_{q,{loc}}\subset L_\infty.$ Let
\[
 x_n=(F_{|L_\infty})^{-n}(I),\qquad n\geq1.
\]
The points $x_n$ are distinct and converge to $q$. Moreover, the point
\[
 x_1=[0:1:0]= \tilde L_\lambda\cap L_\infty
\]
which depends also on $A$, will play a special role in what follows. We have that $x_n\in\Delta_n\cap L_\infty$ but, although we will not use it, observe that for $n\geq2$, intersection is more complicated than one might expect and actually satisfies, set-theoretically,
$$\Delta_n\cap L_\infty=\{I,x_1,\ldots,x_{n-2},x_n\}.$$
The next lemma deals with the components of $\Delta_n$.
\begin{lemma}\label{le-delta}
If $C$ is an irreducible component of $\Delta_n$, then for every
$m>n$ it has a nonempty proper intersection with $\Delta_m$ in $\Cb^2$.
\end{lemma}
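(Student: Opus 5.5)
The plan is to exploit the relation $H_m=H_n\circ F^{m-n}$, which follows from the definition $H_j=H\circ F^{j-1}$. Together with \eqref{eq-pullback}, it lets us read off membership in $\Delta_j$ from the forward orbit. Two facts about $F$ from Lemma~\ref{le-fixed} will be used throughout. First, $F$ is finite and contracts no curve, so strict images of curves are curves and no cancellation occurs. Second, $L_\infty$ is totally invariant, so $F$ maps $\Cb^2$ to $\Cb^2$ and preimages of points of $\Cb^2$ lie in $\Cb^2$. We also use that $L_a$ is invariant, with $F_{|L_a\setminus\{I\}}$ the affine bijection $z\mapsto\rho_az+\epsilon c_a$ (Lemma~\ref{le-stable}).

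\emph{Step 1: images of $C$.} Since $H_n=H\circ F^{n-1}$ vanishes on $C$, the curve $F^{n-1}(C)$ is contained in $\Delta_1=\tilde L_\lambda$. Because $F$ contracts no curve, $F^{n-1}(C)$ is a curve, and so $F^{n-1}(C)=\tilde L_\lambda$. Using \eqref{eq-U-global}, $U\circ F=UH$ vanishes on $\tilde L_\lambda$, so $F(\tilde L_\lambda)=L_a$. Hence $F^n(C)=L_a$, and by invariance $F^{j}(C)=L_a$ for all $j\geq n$.

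\emph{Step 2: properness.} Suppose $C$ were a component of $\Delta_m$ with $m>n$. Applying Step 1 with $m$ in place of $n$ would give $F^{m-1}(C)=\tilde L_\lambda$. Applying Step 1 with $n$ itself gives $F^{m-1}(C)=L_a$, since $m-1\geq n$. Thus $\tilde L_\lambda=L_a$. This is impossible because $A\neq0$: the linear form $H$ involves $z$ while $U$ does not, so $H$ is not proportional to $U$. Hence $C\cap\Delta_m$ is proper.

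\emph{Step 3: a common point in $\Cb^2$.} I would produce the point explicitly rather than rely on B\'ezout, since B\'ezout alone could place every intersection point on $L_\infty$. Let $z_*:=L_a\cap\tilde L_\lambda$. This point is not at infinity: $\tilde L_\lambda\cap L_\infty=\{x_1\}$ with $x_1=[0:1:0]\neq I$, while $L_a\cap L_\infty=\{I\}$. Since $F$ acts on $L_a\setminus\{I\}$ as an affine bijection, there is $y\in L_a\cap\Cb^2$ with $F^{m-n-1}(y)=z_*$. By Step 1, $F^n_{|C}\colon C\to L_a$ is a finite surjective map of curves, so we can pick $x\in C$ with $F^n(x)=y$. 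Total invariance of $L_\infty$ forces $x\in\Cb^2$. Since $I$ is the only indeterminacy point and the whole orbit of $x$ stays in $\Cb^2$, every iterate is well defined along it. Then $F^{m-1}(x)=z_*\in\tilde L_\lambda$, so $H_m(x)=H(F^{m-1}(x))=0$, that is, $x\in C\cap\Delta_m\cap\Cb^2$.

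The only delicate point I anticipate is making Step 1 rigorous with strict transforms, so that $F^{n-1}(C)$ is exactly the line $\tilde L_\lambda$ and not something degenerate. This is where I would invoke the finiteness of $\hat F$ from Lemma~\ref{le-fixed} and the no-cancellation statement \eqref{eq-pullback}.
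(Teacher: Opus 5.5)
Your proof is correct and is essentially the paper's argument: properness comes from $F^{m-1}(C)=L_a\neq\tilde L_\lambda$, and the affine common point is obtained by pulling back $L_a\cap\tilde L_\lambda$ first along the affine bijection of $L_a\cap\Cb^2$ and then through the surjection $F^n\colon C\cap\Cb^2\to L_a\cap\Cb^2$, which the paper justifies by the properness of this map, as you do via total invariance of $L_\infty$. Your Step~1 simply makes explicit the identities $F^{n-1}(C)=\tilde L_\lambda$ and $F^n(C)=L_a$, which the paper uses without comment.
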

\begin{proof}
We have $F^n(C)=L_a$ and $F^{m-1}(C)=L_a\neq \tilde L_\lambda$, so $C$ is not
a component of $\Delta_m$. Let $\xi=L_a\cap \tilde L_\lambda=[0:0:1]$ and
$N=m-n-1$. The affine action on $L_a$ is an automorphism. Choose
$y\in L_a\cap\Cb^2$ with $F^N(y)=\xi$. The finite proper map
$F^n:C\cap\Cb^2\to L_a\cap\Cb^2$ is onto, so $y$ has an affine
preimage $x\in C$. Then $F^{m-1}(x)=\xi\in \tilde L_\lambda$, as required.
\end{proof}
The fact that $F$ contracts no curve is important for what follow but we will also use in a crucial way the following consequence of Lemma~\ref{le-fixed}.
\begin{corollary}\label{cor-fixed}
For any $n\geq1,$ the set of $n$-periodic points of $F$ is finite.
\end{corollary}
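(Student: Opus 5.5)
We prove finiteness by an intersection-theoretic argument on $\Pb^2\times\Pb^2$, using that $F$ contracts no curve and is finite (Lemma~\ref{le-fixed}). The plan is to show that the $n$-periodic points, i.e.\ the fixed points of $F^n$, cannot contain a curve, and then conclude since this set is algebraic. Throughout, $F^n$ is taken as a rational map. By \eqref{eq-pullback}-type non-cancellation, $F^n$ has no new indeterminacy in $\Cb^2$, its only indeterminacy point is $I$, and on $\mathrm{Bl}_I\Pb^2$ the iterate is resolved after finitely many blow-ups over $I$. The fixed point set
\[
 \mathrm{Fix}(F^n):=\overline{\{x\notin I(F^n)\,;\,F^n(x)=x\}}
\]
is therefore a Zariski closed subset of $\Pb^2$, and it suffices to rule out one-dimensional components.

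Suppose by contradiction that an irreducible curve $C$ satisfies $F^n(x)=x$ for all $x\in C\setminus\{I\}$. Then $F^n$ restricts to the identity on $C$, so $e_C=1$ and $F^n(C)=C$. The key step is to iterate \eqref{eq-degree-curve}. Since $F$ is finite and contracts no curve, $C_j:=F^j(C)$ is again an irreducible curve, with $e_{C_j}\deg C_{j+1}=2\deg C_j-\mathrm{mult}_I(C_j)\geq\deg C_j$. The product of the $e_{C_j}$ over the cycle equals $\deg(F^n_{|C})=1$, so every $e_{C_j}=1$. Around the cycle the degrees are then non-decreasing and return to their starting value, hence constant. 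This forces $\mathrm{mult}_I(C_j)=\deg C_j$ for all $j$, which means each $C_j$ is a union of lines through $I$, so, being irreducible, a line through $I$.

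It remains to exclude a periodic cycle of lines through $I$ pointwise fixed by $F^n$. For a line $C$ through $I$ other than $L_\infty$ and $L_a$, we would compute $F$ on it directly. In the blow-up chart, a line through $I$ with direction $[u:v]$ is sent to a curve through the point $[(a+\beta)u+\rho_a v:u:0]$ of $L_\infty$. For $F^n(C)=C$ with $C$ pointwise fixed, the image of the exceptional point of $C$ must therefore be $C\cap L_\infty=I$, i.e.\ $[u:v]$ must be a direction sent to $I$ under the homography \eqref{eq-infinity-d2} iterated. We would check that this fails except for finitely many directions, and then handle those explicitly. The two special lines are easier. $L_\infty$ is not pointwise fixed, since $F_{|L_\infty}$ is the nontrivial homography \eqref{eq-infinity-d2} of degree $1$ with $(a+\beta)^2\neq$ fixed-point condition, so its iterates have only two fixed points $q$ and $s$. $L_a$ is not pointwise fixed either, since $F$ acts on it by the affine contraction $z\mapsto\rho_a z+\epsilon c_a$ with $|\rho_a|<1$.

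The main obstacle is the middle step: making rigorous that \eqref{eq-degree-curve} applies along the cycle, including the possibility that some $C_j$ passes through $I$, and that strict transforms compose correctly for $F^n$. We would handle this by working on $X=\mathrm{Bl}_I\Pb^2$ with the finite morphism $\hat F$, where the degree computations are clean. If the direct analysis of lines through $I$ proves delicate, an alternative is to note that a pointwise fixed curve would meet $L_\infty$ in fixed points of $F^n_{|L_\infty}$, which are only $q$ and $s$. Lines through $I$ meet $L_\infty$ only at $I$, and $I$ lies on the exceptional divisor, which maps onto $L_\infty$ bijectively rather than to $I$. This contradicts pointwise fixedness near $I$ via the exceptional direction computation.
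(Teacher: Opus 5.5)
Your reduction to a cycle of lines through $I$ (all $e_{C_j}=1$, degrees nondecreasing along the cycle by \eqref{eq-degree-curve}, hence $\mathrm{mult}_I(C_j)=\deg C_j$) is exactly the paper's argument. The gap is in the final step: you never actually exclude such lines. You reduce to directions $[u:v]$ whose image under $\hat F_{|E}$ should eventually reach $I$ under the homography \eqref{eq-infinity-d2}. You then say you would check that only finitely many directions survive and handle those explicitly, but this is not done. Phrasing it through $F^n$ also forces you to follow the exceptional point back through $I$, which needs further blow-ups. Your fallback argument is not valid as stated. A curve through $I$ need not meet $L_\infty$ at fixed points of $F^n_{|L_\infty}$, because $I$ is indeterminate. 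Moreover, bijectivity of $E\to L_\infty$ means precisely that one direction, that of $L_a$, is sent to $I$, so the argument would equally rule out the invariant line $L_a$. Also, the correct reason $L_\infty$ is not pointwise fixed by any iterate is that the fixed points $q,s$ of \eqref{eq-infinity-d2} have multipliers $a/\beta$ and $\beta/a$, of modulus different from $1$. (Minor: $F^n$ is also indeterminate at points of $L_\infty$ sent to $I$, e.g.\ $x_1$ for $n\geq2$, not only at $I$; this does not affect the reduction to curves.)

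The missing step is immediate if you work one step at a time along the cycle instead of with $F^n$. Let $[u_j:v_j]$ be the direction of $C_j$ at $I$ in the chart $H=1$ of Lemma~\ref{le-fixed}. Since $\hat F$ is a finite morphism, $C_{j+1}=\hat F(\hat C_j)$ contains $\hat F(\hat C_j\cap E)=[(a+\beta)u_j+\rho_a v_j:u_j:0]$. As $C_{j+1}$ is a line through $I$, there are two cases. Either $C_{j+1}=L_\infty$, and then $C_j=L_\infty$ by total invariance. Or this point equals $I$, which forces $u_j=0$, i.e.\ $C_j=\{U=0\}=L_a$. Hence every $C_j$ is one of the two invariant lines $L_\infty$, $L_a$, and neither is pointwise fixed by an iterate, since $F$ acts on $L_a\setminus\{I\}$ as an affine contraction.

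Completed this way, your route is valid and genuinely different from the paper's. The paper instead uses the line $\tilde L_\lambda=\{H=0\}$. It does not pass through $I$ (as $H(I)=A\neq0$), so it meets every line through $I$ inside $\Cb^2$. It contains no periodic point because $F(\tilde L_\lambda)\subset L_a$, where the only periodic point is $p$. The paper's argument is shorter but tacitly needs $p\notin\tilde L_\lambda$, i.e.\ $H(p)\neq0$. Yours uses only the action of $\hat F$ on $E$ and the dynamics on the two invariant lines.
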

\begin{proof}
Let $C$ be an irreducible curve of $n$-periodic points. Set $C_j=F^j(C)$ and $D_j=\deg Y_j$, for $0\leq j\leq n$.
Since $F^n_{C}$ is the identity, we have that $e_{C_j}$ is equal to $1.$ Hence, by~\eqref{eq-degree-curve},
\[
 D_{j+1}=2D_j-\mathrm{mult}_I(C_j)\geq D_j.
\]
By periodicity, we must have equality, hence $\mathrm{mult}_I(C)=\deg C$. Therefore, $C$ has to be a line through $I.$ However, $\tilde L_\lambda$ intersects any such line and contains no periodic points.\end{proof}

\subsection{Rigidity by finiteness.}\label{sec-rigidity}
The first goal of this section is to show that a curve $Z$ whose
forward images under $F$ all avoid $W^s_{p(F)}$ must coincide with
$L_\infty$. To this end, we seek three distinct lines through $I$
avoided by every $F^n(Z\setminus L_\infty)$. Projection from $I$
will then allow us to use the finiteness of nonconstant regular maps from $Z$, punctured above $L_\infty$,
to a fixed thrice-punctured $\Pb^1$.

Two such lines are already available, $L_a$ by the above dynamical assumption and $L_\infty$, since it is totally invariant. In degrees $d\geq3$, a third
line will arise directly from the construction. In degree $2$,
however, we choose a special parameter $\lambda_*=(a,A_*,0)$
for which the conic $\Delta_2$ splits into two lines, one of which
passes through $I$. This line is mapped into $\tilde L_a$,
and hence into $L_a$ after one further iteration, so it must
also be avoided.

Indeed, for $B=0$, the divisor $\Delta_2$ is defined in the original
affine coordinates $(z,w)$ by
\[
 A\bigl((a+\beta)w+\alpha-ac_a\bigr)z
 +a^2(w^2-1)+A\epsilon w+ac_a=0.
\]
For $A\neq0$, requiring the line $D_r:=\{w=rt\}$ to be a component
amounts to imposing
\[
 (a+\beta)r+\alpha-ac_a=0,
 \qquad
 a^2(r^2-1)+A\epsilon r+ac_a=0.
\]
We therefore fix $\lambda_*=(a,A_*,0)$ by setting
\begin{equation}\label{eq-special-A}
 r=\frac{ac_a-\alpha}{a+\beta}
   =c_a-\frac{\rho_a}{a+\beta},
 \qquad
 A_*=-\frac{a^2(r^2-1)+ac_a}{\epsilon r}.
\end{equation}
Since $\rho_a\neq0$, we have $r\neq c_a$, so $L_\infty$, $L_a$,
and $D_r$ are three distinct lines of the pencil. Moreover, for $|a|$ sufficiently large,
\begin{equation}\label{eq-open}
 A_*\rho_a(a+\beta)r\neq0,
 \qquad A_*\epsilon=2a(\alpha-\beta-1)+O(1).
\end{equation}
In particular, $A_*$ is not assumed small.

Put as above $F=f_{\lambda_*}$,
using either the original coordinates or those in \eqref{eq-HUT},
and $\tilde L:=\tilde L_{\lambda_*}$, and recall that we have
\begin{equation}\label{eq-third-value}
 D_r\mapsto \tilde L\mapsto L_a.
\end{equation}

\begin{proposition}\label{prop-rigidity}
Let $Z\subset\Pb^2$ be an irreducible curve. Suppose that for every
$n\geq0$ and every irreducible component $D$ of $\Delta_n$, the curves
$Z$ and $D$ have no intersection in $\Cb^2$. Then $Z=L_\infty$.
Equivalently, the same conclusion holds if all strict forward images by $F^n$
of $Z$ avoid $L_a\cap\Cb^2$.
\end{proposition}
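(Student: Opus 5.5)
We argue by contradiction: assume $Z\neq L_\infty$ and aim for a contradiction by producing a nonconstant map from a fixed affine curve into a thrice-punctured $\Pb^1$ with too many distinct realizations. First we reduce the hypothesis. By \eqref{eq-pullback}, $(F^n)^*L_a=\Delta_0+\cdots+\Delta_n$. Since $F$ contracts no curve and $L_\infty$ is totally invariant, a point $x\in Z\cap\Cb^2$ satisfies $F^n(x)\in L_a$ exactly when $x\in\Delta_j$ for some $j\leq n$. Hence the two formulations in the statement are equivalent. From now on we assume that no forward image $Z_n:=F^n(Z)$, taken as a strict transform, meets $L_a\cap\Cb^2$.

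Next we find three lines through $I$ that every $Z_n$ avoids in $\Cb^2$. The line $L_a$ is avoided by hypothesis. The line $L_\infty$ is avoided because $Z_n\cap\Cb^2$ lies in $\Cb^2$. The third line is $D_r$. If $Z_n$ met $D_r$ at an affine point $y$, then by \eqref{eq-third-value} the point $F^2(y)$ would lie in $L_a$. We must check that $F^2(y)$ is affine. Since $\Delta_2$ contains $D_r$, $\tilde L=\Delta_1$, and \eqref{eq-U-global} holds, $F$ maps $D_r\cap\Cb^2$ into $\tilde L\cap\Cb^2$ and then into $L_a\cap\Cb^2$. This uses only that the third coordinate $T^2$ does not vanish at affine points. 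So $F^2(y)$ is an affine point of $Z_{n+2}\cap L_a$, which contradicts the hypothesis.

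Now let $\pi_I:\Pb^2\dashrightarrow\Pb^1$ be the projection from $I$; in the coordinates \eqref{eq-HUT} it is $[U:T]$. Let $\nu_n:\tilde Z_n\to Z_n$ be the normalization, and set $Z_n^\circ:=\nu_n^{-1}(Z_n\cap\Cb^2)$. The composition $\pi_I\circ\nu_n$ maps $Z_n^\circ$ into $\Pb^1$ minus the three points corresponding to $L_\infty$, $L_a$ and $D_r$. It is therefore constant unless it has genus $0$ with at most two punctures, or it is a genuine map to the thrice-punctured line. To exploit this uniformly, we pull everything back to $Z$ itself. Since $F$ is finite, $F^n$ induces a finite map $Z^\circ\to Z_n^\circ$. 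The maps $\phi_n:=\pi_I\circ F^n: Z^\circ\to\Pb^1\setminus\{3\text{ pts}\}$ are regular on the single fixed affine curve $Z^\circ$. By the de Franchis--Severi type finiteness theorem (Siegel's theorem for the hyperbolic target $\Pb^1\setminus\{0,1,\infty\}$), there are only finitely many nonconstant such maps. Hence either infinitely many $\phi_n$ are constant, or $\phi_n=\phi_m$ for some $n\neq m$.

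Both cases should lead to a contradiction. If $\phi_n$ is constant, then $Z_n$ is a line through $I$ different from $L_\infty$. Such a line meets $\tilde L$ at an affine point, since $\tilde L\cap L_\infty=x_1\neq I$. That point lies in $\Delta_1$, which contradicts the hypothesis; this is the same argument as in Corollary \ref{cor-fixed}. If $\phi_n=\phi_m$ with $n<m$, then $F^m(x)$ and $F^n(x)$ lie on the same line through $I$ for every $x\in Z$. This is a strong rigidity constraint, and we plan to handle it using the degree formula \eqref{eq-degree-curve} and the dynamics on $L_\infty$. Since $\mathrm{mult}_I(Z_k)<\deg Z_k$ when $Z_k$ is not a line through $I$, the degrees $\deg Z_k$ grow unless $e_{Z_k}$ is large. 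We then compare the fibre structure of $\phi_n$ and $\phi_m$ near the points $Z\cap L_\infty$, which are mapped towards the attracting point $s$ or along the preimages $x_j$ of $I$. We expect this second case to be the main obstacle: one must rule out a curve whose iterates $n$ and $m$ project identically from $I$. We would first try to show that this forces $Z_n$ to be periodic up to the pencil action. Finiteness of periodic points (Corollary \ref{cor-fixed}) together with Lemma \ref{le-delta}, applied to the components of $\Delta_k$ that $Z$ must then share behaviour with, should produce an affine intersection with some $\Delta_k$. That would contradict the hypothesis.
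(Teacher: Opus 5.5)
Your setup is the same as the paper's. You project from $I$ via $U/T$, and the three omitted values come from $L_a$, from $D_r$ (through \eqref{eq-third-value}) and from $L_\infty$. You invoke finiteness of nonconstant maps from the fixed curve $Z^\circ$ to a thrice-punctured line, and you dispose of the constant case because a line through $I$ meets $\tilde L$ in $\Cb^2$. All of that is fine.

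The gap is the case you flag yourself as the main obstacle. It is left open, and your plan for it (degree growth via \eqref{eq-degree-curve}, fibre structure near $L_\infty$, periodicity up to the pencil action) is not an argument. A coincidence $\phi_n=\phi_m$ only says that $F^n(x)$ and $F^m(x)$ lie on the same line through $I$. It gives no control of their position along that line, and nothing in your sketch supplies it.

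The missing idea is that the projections determine the full orbit. Write $F^n\circ\nu=(h_n,u_n)$ in the affine coordinates $(H/T,U/T)$.
\begin{itemize}
\item Since $B=0$, the last coordinate of $F$ is $T^2$, so \eqref{eq-U-global} gives $u_{n+1}=u_n h_n$ on $Z^\circ$.
\item Because $u_n$ omits the value $0$, this gives $h_n=u_{n+1}/u_n$. So $F^n\circ\nu$ is determined by the pair $(u_n,u_{n+1})$.
\item Once the constant case is excluded, $\{u_n\}$ is finite, hence so are the pairs $(u_n,u_{n+1})$ and the maps $F^n\circ\nu$.
\item Pigeonhole gives $n<m$ with $F^m\circ\nu=F^n\circ\nu$. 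The curve $F^n(Z)$ is a curve because $F$ contracts no curve, and it is then fixed pointwise by $F^{m-n}$.
\end{itemize}
This contradicts Corollary~\ref{cor-fixed} and is the paper's argument; no degree or boundary analysis is needed.
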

\begin{proof}

As $Z$ has no intersection in $\Cb^2$ with the components of $\Delta_n,$ $n\geq0,$ it cannot be one of these components.

Now, assume $Z\neq L_\infty$. Let $\nu\colon C\to Z$ be its normalization,
$\Sigma=\nu^{-1}(Z\cap L_\infty)$, and write
$(h_m,u_m)=F^m\circ\nu$ on $C\setminus\Sigma$, in the affine
coordinates $(H/T,U/T)$. By~\eqref{eq-third-value},
\[
 u_m:C\setminus\Sigma\to
      \Pb^1\setminus\{0,r-c,\infty\}.
\]
If one $u_m$ is constant then $F^m(Z\cap\Cb^2)$ is an entire horizontal line, which is impossible since in this case it meets $\tilde L$ in $\Cb^2$.
The finiteness of the set of holomorphic maps from an algebraic curve to  $\Pb^1\setminus\{0,r-c,\infty\}$ implies that $\{u_m\ ;\ m\geq0\}$ is
finite. Since $u_{m+1}=u_mh_m$, the pairs $(h_m,u_m)$ also form a
finite family. Hence $F^{m+p}\circ\nu=F^m\circ\nu$ for some
$m\geq0$, $p\geq1$. The curve $C=F^m(Z)$ is consequently fixed
pointwise by $F^p$ and thus is a curve of $p$-periodic curve. This is impossible by Corollary~\ref{cor-fixed} and gives the desired contradiction.
\end{proof}

To rule out this last case, we use the parameter \(B\) to perturb the map \(F=f_{\lambda_*}\) considered above so that \(L_\infty\) is no longer totally invariant under \(f_\lambda\). The argument relies on Proposition~\ref{prop-rigidity} (where \(A_*\neq 0\)) together with a monodromy argument along the subfamily preserving the pencil, namely \(\lambda=(a,0,B)\) with \(B\) varying. We are therefore led to consider the full family in which both \(A\) and \(B\) vary.

Thus, let $\mathcal Z\subset\mathcal X_D^1\times\mathrm{Rat}_{2}^2$ be an irreducible subvariety
that dominates $\mathrm{Rat}_{2}^2$. Let $\mathcal Z_{\mathcal F}$
be an irreducible component of its restriction over $\mathcal F$
that dominates $\mathcal F$. We now assume that for $|a|$ large enough, there exists a neighborhood $V_a$ of $\lambda_a=(a,0,0)$ in $\Cb^3$, on which $p(f_\lambda)$ and the transverse homoclinic intersection from Lemma~\ref{le-homof0} persist, such that for all $(Z,f_\lambda)\in\mathcal Z_{\mathcal F}$ such that $\lambda\in V_a$ we have
\begin{equation}\label{eq-proper-int}
\forall m\geq0,\text{ there is no proper intersection between }f_\lambda^m(Z)\text{ and }L_a\setminus\{I\}. 
\end{equation}

By Lemma~\ref{le-fixed} and Remark~\ref{rk-contract}, there
exists a Zariski open subset $\mathcal V_0\subset\Cb^3$
containing all $(a,A,0)$ with $A\neq0$ such that $f_\lambda$
does not contract any curve for $\lambda\in\mathcal V_0$. Therefore, for each fixed $m\geq0$, the condition
$L_a\subset f_\lambda^m(Z)$ is Zariski closed over
$\mathcal V_0$, as is its union with the condition
$f_\lambda^m(Z)\cap L_a=\{I\}$.
This union contains every pair over $V_a\cap\mathcal V_0$,
hence, by irreducibility, every pair over $\mathcal V_0$.
Consequently, each $(Z,f_\lambda)\in\mathcal Z_{\mathcal F}$
with $\lambda\in\mathcal V_0$ satisfies
$$\mathbf{(C_1)}\ f_\lambda^m(Z)\cap L_a=\{I\}\text{ for each }m\geq0,\ \text{ or } \mathbf{(C_2)}\ L_a\subset f_\lambda^m(Z)\text{ for some }m\geq0.$$

%
%

After shrinking $V_*\subset\mathcal V_0$, we may assume that the saddle fixed point $q(f_{\lambda_*})$ and the attracting fixed point $s(f_{\lambda_*})$ given by Lemma~\ref{le-fixed} admit holomorphic continuations over $V_*$. For $|a|$ sufficiently large, the following two propositions describe the two cases of the preceding dichotomy.
\begin{proposition}\label{prop-DMM}
Assume that $\mathbf{(C_2)}$ fails for some $(Z,f_{\lambda'})\in\mathcal Z_{\mathcal F}$, with $\lambda'=(a,A',0)\in V_*$ and $A'\neq0$. After possibly shrinking $V_*$, there exists an integer $k\geq0$ such that, for every $(Z,f_\lambda)\in\mathcal Z_{\mathcal F}$ with $\lambda\in V_*$, the reduced strict image $f_\lambda^k(Z)$
\begin{itemize}
    \item is irreducible,
    \item is invariant under $f_\lambda$,
    \item contains $s(f_\lambda)$ and
    $W^u_{q(f_\lambda),loc}$,
    \item is independent of $Z$, i.e.,
    $f_\lambda^k(Z)=f_\lambda^k(Y)$ whenever
    $(Y,f_\lambda)\in\mathcal Z_{\mathcal F}$.
\end{itemize}
\end{proposition}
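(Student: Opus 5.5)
Since $\mathbf{(C_2)}$ fails for $(Z,f_{\lambda'})$, the dichotomy established just above forces $\mathbf{(C_1)}$: every strict image $f_{\lambda'}^m(Z)$ meets $L_a$ only at $I$. The plan is to show that, in this situation, the iterates of $Z$ are pulled into a single invariant curve attached to the dynamics at infinity. That curve is then propagated over $V_*$ by irreducibility of $\mathcal Z_{\mathcal F}$ and by algebraicity of the conditions involved.

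\textbf{Step 1: a curve through $I$ on the $L_\infty$ side.} Condition $\mathbf{(C_1)}$ combined with Bézout gives that each $f_{\lambda'}^m(Z)$ passes through $I$ with intersection multiplicity with $L_a$ equal to its degree. Since $f_{\lambda'}$ contracts no curve, the degree formula \eqref{eq-degree-curve} for $B=0$ applies. I would use it to show that, after resolving $I$ as in Lemma~\ref{le-fixed}, the branches of $f^m_{\lambda'}(Z)$ at $I$ are sent into the exceptional divisor and hence onto $L_\infty$. The effect is that $f^{m+1}_{\lambda'}(Z)$ meets $L_\infty$ near the backward orbit $x_n\to q$ of $I$ or, more usefully, near the attracting point $s$.

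\textbf{Step 2: a uniformly bounded degree forces preperiodicity.} I expect this to be the main obstacle. The goal is to bound $\deg f^m_{\lambda'}(Z)$ uniformly in $m$. Maximal tangency at $I$ yields $\mathrm{mult}_I$ close to the degree, so \eqref{eq-degree-curve} gives $e_C\deg F(C)\le\deg C+\text{small}$; a refinement of this through the blowup would give $\deg F(C)\le\deg C$. With the degrees bounded, the curves $f^m_{\lambda'}(Z)$ lie in the projective Chow variety $\mathcal X^1_{D'}$. I would then run a Proposition~\ref{prop-rigidity}-type finiteness argument: project from $I$ and use the two avoided lines $L_a,L_\infty$ together with the preimage structure of $\Delta_n$ (Lemma~\ref{le-delta}). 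This should show that the orbit of $Z$ in the Chow variety is finite, so some $C=f^k_{\lambda'}(Z)$ is periodic. Corollary~\ref{cor-fixed} excludes $C$ being pointwise periodic, and replacing $k$ by a suitable iterate should give invariance. If the degree bound fails, my fallback is a limit argument: accumulate $f^m(Z)$ on an invariant current supported near $W^u_q\cup\{s\}$ and extract an algebraic limit via Bishop.

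\textbf{Step 3: identifying the invariant curve.} An invariant irreducible curve $C\neq L_\infty$ meeting $L_a$ only at $I$ must meet $L_\infty$. Its intersections with $L_\infty$ are invariant under the homography \eqref{eq-infinity-d2}, so they lie in $\{I,q,s\}$. Local analysis at the saddle $q$, where the transverse multiplier is $0$, and at $s$ should show that $C$ contains $s$ and the local unstable manifold $W^u_{q,loc}\subset L_\infty$. Since $C$ is a curve, the latter can only mean that $C$ contains the germ, i.e. $C\supset L_\infty$ locally. Hence $C=L_\infty$, or, when $B\neq0$, the continuation of an invariant curve through $q$ and $s$. Such invariant curves through given saddle and attracting points are finitely many, so $C$ is independent of $Z$.

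\textbf{Step 4: spreading over $V_*$.} For fixed $k$, each of the conditions ``$f_\lambda^k(Z)$ is invariant, irreducible and contains $s(f_\lambda)$'' is closed analytic over $V_*\subset\mathcal V_0$. I would show that the failure of $\mathbf{(C_2)}$ is open at $(Z,f_{\lambda'})$, which places these conditions on an open subset of $\mathcal Z_{\mathcal F}$. Irreducibility of $\mathcal Z_{\mathcal F}$ then extends them to all pairs over $V_*$, after possibly shrinking $V_*$. Independence of $Z$ follows from the finiteness at the end of Step 3 together with connectedness of the fibres' continuation.
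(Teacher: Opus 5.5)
Your proposal has genuine gaps and misses the two mechanisms the statement rests on.

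\emph{Step~2 fails.} Maximal contact of $f^m_{\lambda'}(Z)$ with $L_a$ at $I$ does not make $\mathrm{mult}_I$ close to the degree. A smooth curve of degree $D$ can have contact of order $D$ with $L_a$ at $I$, and then \eqref{eq-degree-curve} gives $e_C\deg F(C)=2D-1$, so degrees may grow. The Bishop fallback needs exactly the volume bound you lack, and normalized limits of $[f^m(Z)]$ are currents, not curves.

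\emph{The finiteness argument is unavailable at $\lambda'$.} Proposition~\ref{prop-rigidity} needs \emph{three} lines of the pencil through $I$ avoided by every iterate, so that projection from $I$ lands in a thrice-punctured $\Pb^1$. At a general $\lambda'=(a,A',0)$ you only have $L_a$ and $L_\infty$. The components of $\Delta_n$ are in general not lines through $I$ (already $\tilde L_{\lambda'}$ misses $I$), so they give no omitted value. The third line $D_r\subset\Delta_2$ exists only at the special parameter $A=A_*$, which is why $V_*$ is centred at $\lambda_*$.

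\emph{Steps~3 and~4 do not hold up.} In Step~3, the transverse multiplier at $q$ is $0$, so an invariant branch through $q$ may be tangent to the stable direction; nothing local forces it to contain $W^u_{q,loc}$. A periodic curve from Step~2 would also only be $f^p$-invariant, and changing $k$ does not fix that. In Step~4, failure of $\mathbf{(C_2)}$ is not open: its complement is the increasing countable union of the Zariski closed sets $\{L_a\subset f^m_\lambda(Z)\}$. Irreducibility of an image is an open, not a closed, condition. So nothing spreads from $(Z,f_{\lambda'})$ to all pairs over $V_*$.

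\emph{The missing argument.} Use the hypothesis only through irreducibility. Each condition $L_a\subset f^m_\lambda(Z)$ is Zariski closed over $\mathcal V_0$ and fails somewhere, so it cannot hold on a nonempty open set of pairs. A proper intersection in $\Cb^2$ of a member with $f_\lambda^{-m}(L_a)$ persists and, by \eqref{eq-proper-int}, would force this containment nearby. Hence such intersections are forbidden.

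At $\lambda_*$, Proposition~\ref{prop-rigidity} and Lemma~\ref{le-delta} then force every member of the fibre to be supported on $L_\infty$. After shrinking $V_*$, every component $Z'$ of every member over $V_*$ is close to $L_\infty$. It therefore crosses $W^s_{q(f_\lambda),loc}$ properly and meets the basin of $s(f_\lambda)$.

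Now argue by dichotomy.
\begin{itemize}
\item If some $f^k_\lambda(Z')$ contains the germ $W^u_{q(f_\lambda),loc}$, irreducibility gives invariance and the basin gives $s(f_\lambda)$. Any two such curves share that germ, which gives independence of $Z$, and a density argument gives a uniform $k$.
\item If not, the inclination lemma at $q$ makes the iterates of $Z'$ cut the branch of $\Delta_n$ through $x_n\in W^u_{q,loc}\setminus\{q\}$ in infinitely many points. Only finitely many of these map to $I$, so you get a forbidden proper intersection with $f_\lambda^{-n}(L_a\setminus\{I\})$.
\end{itemize}
Both the use of $\lambda_*$ and this inclination-lemma step are absent from your plan.
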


\begin{proof}
We first show that every member of the fiber over $F=f_{\lambda_*}$ is supported on $L_\infty$. Suppose that such a member $Z$ has a component different
from $L_\infty$. By Proposition~\ref{prop-rigidity} and Lemma~\ref{le-delta}, $Z$ has a nonempty proper intersection with some $\Delta_m$ in $\Cb^2$.
Indeed, if $Z$ shares a component with some $\Delta_n$, we choose $m$ larger than all the corresponding indices and apply Lemma~\ref{le-delta}. Otherwise, Proposition~\ref{prop-rigidity} applies directly.

This intersection persists for nearby pairs $(Y,f_\lambda)$, and its image under $f_\lambda^m$ lies in $L_a\setminus\{I\}$. By~\eqref{eq-proper-int}, we must therefore have $L_a\subset f_\lambda^m(Y)$ on a nonempty open subset of $\mathcal Z_{\mathcal F}$. For this fixed $m$, the containment condition is Zariski closed over $\mathcal V_0$. By irreducibility, it holds throughout the family over $\mathcal V_0$, contradicting the hypothesis.

Thus the fiber over $\lambda_*$ consists only of $L_\infty$. The restriction of $F$ to $L_\infty$ extends to a homography for which $q(F)$ is repelling and $s(F)$ is attracting. Hence $W^u_{q(F),loc}\subset L_\infty$ contains points in the basin of $s(F)$, while $W^s_{q(F),loc}$ intersects $L_\infty$ properly.

Thus, possibly by reducing $V_*,$ we can assume that for all $(Z,f_\lambda)\in\mathcal Z$ with $f_\lambda\in V_*$, every irreducible component $Z'$ of $Z$ satisfies
\begin{itemize}
\item $Z'\sim L_\infty,$
\item $Z'\cap W^s_{q(f_\lambda),loc}$ is proper,
\item $Z'$ intersects the basin of attraction of $s(f_\lambda).$
\end{itemize}
From this, there are two possibilities for each pair $(Z', f_\lambda)$. The first is that there exists an integer $k \geq 0$ such that $f_\lambda^k(Z')$ contains $W^u_{q(f_\lambda),loc}$. Since its image contains the same germ and $f_\lambda^k(Z')$ is irreducible, it is $f_\lambda$-invariant. In this case, since $Z'$ intersects the basin of $s(f_\lambda)$, the iterate $f_\lambda^k(Z')$ must contain this point. Observe that if the same property holds for $f_\lambda^l(Y')$ for an irreducible component $Y'$ of $Y$ with $(Y,f_\lambda)\in\mathcal Z_{\mathcal F}$, then necessarily $f_\lambda^l(Y') = f_\lambda^k(Z')$, since both contain $W^u_{q(f_\lambda), loc}$. 

Therefore, if this case applies to all $(Z, f_\lambda) \in \mathcal{Z}_{\mathcal F}$ over $V_*$ and every irreducible component $Z' \subset Z$, a density argument implies that we may choose the integer $k \geq 0$ uniformly, and thus the conclusion of the lemma follows.

Otherwise, there exists a pair $(Z', \lambda)$ such that $f_\lambda^N(Z')$ does not contain $W^u_{q(f_\lambda), loc}$ for any $N \geq 0$. A graph transform argument  then implies that if $C$ is a curve intersecting $W^u_{q(f_\lambda), loc}$ properly at a point $x\neq q(f_\lambda)$, we have
\begin{equation}\label{eq-inter}
\#\left( \bigcup_{N \geq 0} f_\lambda^N(Z') \cap C \right) = +\infty.
\end{equation}
Since the sequence $(x_n)_{n\geq1}$ defined above converges to $q(f_{\lambda_*})$, we may choose $n\geq1$ such that $x_n\neq q(f_{\lambda_*})$ belongs to $W^u_{q(f_{\lambda_*}),loc}$. Let $D_n(\lambda)$ be the continuation of the local branch of $\Delta_n$ through $x_n$. After shrinking $V_1$, this branch intersects $W^u_{q(f_\lambda),loc}$ properly, and the restriction of $f_\lambda^n$ to it is nonconstant. Applying \eqref{eq-inter} with $C=D_n(\lambda)$ gives infinitely many intersection points. Only finitely many of them are mapped to $I$ by $f_\lambda^n$, so there exist $N\geq0$ and
\[
x\in f_\lambda^N(Z')\cap f_\lambda^{-n}(L_a)
\]
such that $f_\lambda^n(x)\neq I$. In other words, $x$ is a proper intersection point between $f_\lambda^N(Z')$ and $f_\lambda^{-n}(W^s_{p(f_\lambda)})$.
\end{proof}
The other alternative is simpler to obtain.
\begin{proposition}\label{prop-delta_n}
Assume that, for every \((Z',f_{\lambda'})\in\mathcal Z_{\mathcal F}\) with \(\lambda'=(a,A',0)\in V_*\), condition $\mathbf{(C_2)}$ holds. Then, the same is true for all \((Z,f_\lambda)\in\mathcal Z_{\mathcal F}\) when \(\lambda=(a,A,0)\) and \(A\neq0\) is arbitrarily small.
\end{proposition}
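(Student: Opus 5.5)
The plan is to treat condition $\mathbf{(C_2)}$ as a closed algebraic condition on the $A$-line through $\lambda_*$ and to propagate it along that line by irreducibility. Consider the one-parameter subfamily $\mathcal F_a:=\{f_{(a,A,0)}\,;\,A\in\Cb^*\}$. By Lemma~\ref{le-fixed}, none of these maps contracts a curve, so all of them lie in $\mathcal V_0$. For a fixed integer $m\geq0$, the set
\[
\mathcal C_m:=\bigl\{(Z,f_\lambda)\in\mathcal Z_{\mathcal F}\ ;\ \lambda\in\mathcal V_0,\ L_a\subset f_\lambda^m(Z)\bigr\}
\]
is Zariski closed in $\mathcal Z_{\mathcal F}\cap\pi^{-1}(\mathcal V_0)$, as already observed before Proposition~\ref{prop-DMM}. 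The hypothesis says that every pair above $V_*\cap\{B=0,\,A\neq0\}$ lies in $\bigcup_m\mathcal C_m$.

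First step: restrict to the part $\mathcal Z_a$ of $\mathcal Z_{\mathcal F}$ lying above the punctured line $\{(a,A,0)\,;\,A\neq0\}$, and split it into its finitely many irreducible components. Only the components that meet the open set $\pi^{-1}(V_*)$ are needed, since $\lambda_*$ lies in this punctured line. On each such component $W$, the open nonempty subset $W\cap\pi^{-1}(V_*)$ is covered by the countably many closed sets $\mathcal C_m\cap W$. By a Baire argument, or equivalently since a variety is not a countable union of proper closed subvarieties, some $\mathcal C_{m_W}$ contains a nonempty open subset of $W$, and hence all of $W$ by irreducibility.

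Second step: pass to small $A$. Any pair $(Z,f_\lambda)$ with $\lambda=(a,A,0)$ and $A\neq0$ small lies above the same punctured line. If its component $W$ of $\mathcal Z_a$ meets $\pi^{-1}(V_*)$, the first step already gives $L_a\subset f_\lambda^{m_W}(Z)$.

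The main obstacle is the components of $\mathcal Z_a$ that do not meet $\pi^{-1}(V_*)$. This could happen if the fibers over small $A$ came from a different component than those near $A_*$, because the fiber of $\mathcal Z_{\mathcal F}$ over the $A$-line may be reducible. To handle this, I would first replace $\mathcal Z_{\mathcal F}$ by a component dominating the $A$-line through $\lambda_*$. Its restriction to the punctured line then has an irreducible component $W_0$ dominating the line, with fibers over $A$ near $A_*$ contained in $\pi^{-1}(V_*)$. Irreducibility of $W_0$ connects the generic fibers over small $A$ to those near $A_*$, so the first step yields $\mathbf{(C_2)}$ for general pairs over small $A$, with one uniform $m$. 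For the finitely many special values of $A$ not covered by this, closedness of $\mathcal C_{m}$ over $\mathcal V_0$ lets me take limits of general pairs and conclude for every pair. If some components over small $A$ still escape this argument, I would instead rerun the proof of Proposition~\ref{prop-DMM} directly on them, with the special parameter $A_*$ replaced by a nearby parameter in their image.
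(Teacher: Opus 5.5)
Your first step is sound and is essentially the paper's argument. Let $W$ be an irreducible component of the restriction $\mathcal Z_a$ of $\mathcal Z_{\mathcal F}$ over the punctured line $\{(a,A,0):A\neq0\}$, and suppose $W$ meets $\pi^{-1}(V_*)$. Then Baire, the Zariski closedness of $\mathcal C_m$ over $\mathcal V_0$ and the irreducibility of $W$ give $W\subset\mathcal C_{m_W}$.

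The gap is in how you treat the remaining components.
\begin{itemize}
\item The fix ``take limits of general pairs'' does not work. A pair on a component of $\mathcal Z_a$ other than $W_0$ need not be a limit of pairs of $W_0$, so closedness of $\mathcal C_m$ tells you nothing about it. For pairs of $W_0$ itself there is nothing left to do, since your first step already gives all of $W_0\subset\mathcal C_m$.
\item The fallback of rerunning Proposition~\ref{prop-DMM} is not available. Such a component projects to a single value $A_0$, and the rigidity input, Proposition~\ref{prop-rigidity}, is tied to the specific parameter $A_*$ of \eqref{eq-special-A}. Moreover, Proposition~\ref{prop-DMM} works under the opposite hypothesis, namely that $\mathbf{(C_2)}$ fails somewhere.
\item Replacing $\mathcal Z_{\mathcal F}$ by a single dominating component discards pairs that the statement is about.
\end{itemize}

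The missing observation is elementary. Since $\mathcal X_D^1$ is projective, each irreducible component of $\mathcal Z_a$ has closed irreducible image in the punctured line. So each component either surjects onto it, or is vertical and lies over a single value of $A$. A surjective component meets $\pi^{-1}(V_*)$, so your first step already covers it; there is no need to single out $W_0$. There are finitely many components, hence finitely many exceptional values of $A$, all nonzero. For $A\neq0$ outside this finite set, in particular for all sufficiently small $0<|A|$, every pair over $(a,A,0)$ lies on a surjective component and therefore satisfies $\mathbf{(C_2)}$. This is exactly how the paper concludes, and the statement asks for nothing more. No limit argument or second use of the rigidity machinery is needed.
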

\begin{proof}
For each fixed $m\geq0$, the condition $L_a\subset f_\lambda^m(Z)$ is Zariski closed over $\mathcal V_0$. Moreover, if it holds for some $m$, then it holds for every $m'\geq m$. Therefore, it holds for a fixed $m\geq0$ for every

$$
(Z',f_{\lambda'})\in\mathcal Z_{\mathcal F},
\qquad \lambda'=(a,A',0)\in V_*.
$$

By Zariski closedness, the same condition then holds on every irreducible component of $\mathcal Z_{\mathcal F}$ over $\mathcal A:=\{f_\lambda:\lambda=(a,A,0),\ A\in\mathbb C\}$ that dominates $\mathcal A$. The remaining components are vertical, hence lie over finitely many parameters $A$. We may therefore choose $A\neq0$, arbitrarily close to $0$, outside this finite exceptional set. For such $A$, every point of the fiber lies on a component dominating $\mathcal A$, and the conclusion follows.
\end{proof}

\subsection{A proper intersection from monodromy}\label{sec-proper}
In this section, we assume that the hypotheses of Proposition~\ref{prop-DMM} hold.
We remain in degree $2$. Fix $a$ as in the preceding subsection, put
$c:=c_a=\tilde w(a)$, and set
\[
 P_a:=\{a\}\times\Cb^2,\qquad
 \lambda_a=(a,0,0),\qquad \lambda_*=(a,A_*,0).
\]
We write $\lambda=(a,A,B)$ and, in what follows, we take all parameter neighborhoods in $P_a$. We use $V_a$ for the neighborhood
of $\lambda_a$ from the preceding subsection, and $V_*$ for the
neighborhood of $\lambda_*$. A monodromy argument will then yield Theorem~\ref{th-inter} below. To carry it out, we shall use the following elementary consequences of the pencil structure. When $A=0$, the map $f_{(a,0,B)}$ preserves the pencil of lines
through $I$, and the induced map on its base is
\[
g_B:\Pb^1\to\Pb^1,
\qquad
[w:t]\mapsto
[a(w^2-t^2):t^2+Bw(w-ct)],
\]
With this notation, $g_B$ has degree $2$ except for
\[
B=-\frac1{1-c}
\qquad\text{or}\qquad
B=-\frac1{1+c}.
\]

\begin{lemma}\label{le-pinceau}
Assume that $g_B$ has degree $2$ and
\[
B\neq \frac{2a}{c}.
\]
Then $L_a$ is not totally invariant. If $Z\subset\Pb^2$ is an algebraic
curve such that, for every $n\geq0$, the strict preimage
$f_{(a,0,B)}^{-n}(L_a)$ has no proper intersection with $Z$ outside $I$, then
$Z$ is a union of lines through $I$.
\end{lemma}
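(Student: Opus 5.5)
Since $A=0$, the map $f:=f_{(a,0,B)}$ preserves the pencil of lines through $I$. It therefore acts as a skew product over $g_B$, and the whole question reduces to the base $\Pb^1$. In the pencil coordinate $[w:t]$, the line $L_a$ corresponds to the point $c=[c:1]$, which is fixed by $g_B$: indeed $g_B(c)=[a(c^2-1):1+0]=[c:1]$, using $a(c^2-1)=c$.

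\textbf{Step 1: $L_a$ is not totally invariant.} Since lines through $I$ map to lines through $I$ over $g_B$, the strict preimage $f^{-1}(L_a)$ consists of $L_a$ together with the lines over the other $g_B$-preimages of $c$. The line $L_a$ is totally invariant exactly when $c$ is a totally ramified (exceptional) point of $g_B$ with no other preimage. For a degree-$2$ map this means $g_B'(c)=0$. In the affine chart, $g_B(w)=a(w^2-1)/(1+Bw(w-c))$. Differentiating at $w=c$ and using $a(c^2-1)=c$ gives
\[
g_B'(c)=2ac-c\cdot Bc=c(2a-Bc).
\]
This vanishes only for $B=2a/c$, which is excluded, so $c$ has a second preimage $c_1\neq c$. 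I would double-check this computation; the exclusion of $B=2a/c$ strongly suggests this is the intended condition. Moreover $c$ is then not exceptional, so its backward orbit $\bigcup_n g_B^{-n}(c)$ is infinite and accumulates on the Julia set of $g_B$.

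\textbf{Step 2: reduction to the base.} Let $\pi:\Pb^2\setminus\{I\}\to\Pb^1$ denote projection from $I$. For each $n$, the strict preimage $f^{-n}(L_a)$ is the union of the lines $\pi^{-1}(\xi)$ with $\xi\in g_B^{-n}(c)$. I will assume that $f$ has no indeterminacy outside $I$ when $g_B$ has degree $2$, or handle the affine-fibre cancellation directly. Let $Z'$ be an irreducible component of $Z$ that is not a line through $I$. Then $\pi|_{Z'}$ is nonconstant, hence surjective onto $\Pb^1$. Hence $Z'$ meets every line $\pi^{-1}(\xi)$ at some point other than $I$. Such an intersection is automatically proper, since $Z'$ is not contained in that line. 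Taking any $\xi\in g_B^{-n}(c)$ then produces a proper intersection of $Z$ with $f^{-n}(L_a)$ outside $I$, contradicting the hypothesis. Hence every component of $Z$ is a line through $I$.

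\textbf{Main obstacle.} The delicate point is to check that the intersection point produced in Step 2 really avoids $I$. If $Z'$ passes through $I$, the fibre $\pi^{-1}(\xi)\cap Z'$ might lie only at $I$, namely on a tangent branch. To handle this I would use the infinitude of $\bigcup_n g_B^{-n}(c)$ from Step 1. Only finitely many $\xi$ correspond to tangent directions of branches of $Z'$ at $I$, so choosing $\xi$ outside this finite set yields points of $Z'\cap\pi^{-1}(\xi)$ distinct from $I$. This is exactly where the hypothesis that $L_a$ is not totally invariant is used: it guarantees that the backward orbit of $c$ is infinite.
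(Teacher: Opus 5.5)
Your proof is correct and follows the same two steps as the paper. First, your criterion $g_B'(c)=c(2a-Bc)\neq0$ is exactly the paper's observation that the second factor in $(w-ct)\bigl((a-cB)w+act\bigr)$ does not vanish at $[c:1]$, which gives infinitely many pencil lines in the strict preimages of $L_a$; second, a curve meeting infinitely many lines through $I$ only at $I$ is a union of lines through $I$, which you prove by projecting a non-line component from $I$ and avoiding its finitely many tangent directions, where the paper instead shows $h_j=0$ for $j<D$ in $h=h_0+\cdots+h_D$. One small fix: also discard the finitely many $\xi$ for which $\pi^{-1}(\xi)$ is itself a component of $Z$, since otherwise your point of $Z'\cap\pi^{-1}(\xi)$ lies on a common component of $Z$ and $f_{(a,0,B)}^{-n}(L_a)$, so the intersection there is not proper.
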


\begin{proof}
The line $L_a$ is totally invariant if and only if the fixed point
$[c:1]$ is totally invariant under $g_B$. Using $a(c^2-1)=c$, the equation
for its preimages factorizes as
\[
a(w^2-t^2)-c\bigl(t^2+Bw(w-ct)\bigr)
=(w-ct)\bigl((a-cB)w+ac\,t\bigr).
\]
When $g_B$ has degree $2$, the second factor defines the same point as
$w-ct$ if and only if $B=2a/c$. Thus, under our assumptions, the backward
orbit of $[c:1]$ is infinite, and the successive strict preimages of $L_a$
contain infinitely many distinct lines of the pencil.

Let $D:=\deg Z$, and choose affine coordinates centered at $I$,
with the line at infinity of this chart chosen not to be a component of
$Z$. Write
\[
h=h_0+\cdots+h_D
\]
for an equation of $Z$, with $h_j$ homogeneous of degree $j$. For infinitely many lines $L$ of the pencil, we have $Z\cap L=\{I\}$. Hence, the
restriction of $h$ to each such line vanishes at $I$ with multiplicity $D$.
Therefore, if $[u:v]$ is the direction of $L$,
\[
h_j(u,v)=0,\qquad j=0,\ldots,D-1.
\]
Since this holds for infinitely many directions, $h_j=0$ for every $j<D$.
Thus $h=h_D$ is homogeneous, and $Z$ is a union of lines through $I$.
\end{proof}
We can now establish this weak version of \ref{it-chasse-2} in
$\mathrm{Rat}_{2}^2$.

\begin{theorem}\label{th-inter}
Let $a$ and $V_a$ be as above. Let
$\mathcal Z \subset \mathcal X_D^{1} \times \mathrm{Rat}_{2}^2$
be an algebraic subvariety that surjects onto $\mathrm{Rat}_{2}^2$, and assume
that we are in the case covered by Proposition~\ref{prop-DMM}.
Then there exist $Z \subset \mathbb{P}^2$ and $\lambda\in V_a\cap P_a$
with $(Z,f_\lambda)\in\mathcal Z$ such that, for some $n\geq0$,
$Z$ and $f_\lambda^{-n}(W^s_{p(f_\lambda)})$ intersect properly.
\end{theorem}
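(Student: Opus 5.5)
The plan is to argue by contradiction: assume that for every $(Z,f_\lambda)\in\mathcal Z$ with $\lambda\in V_a\cap P_a$ and every $n\geq0$, the intersection of $Z$ with $f_\lambda^{-n}(W^s_{p(f_\lambda)})$ is never proper. The contradiction will come from transporting the output of Proposition~\ref{prop-DMM} from a neighborhood of $\lambda_*$ to the pencil-preserving slice $\{A=0\}$. On that slice, Lemma~\ref{le-pinceau} forces everything to be lines through $I$, and a monodromy argument on the fixed points of $g_B$ then fails.

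\emph{Step 1: make the output of Proposition~\ref{prop-DMM} algebraic.} Proposition~\ref{prop-DMM} gives an integer $k$ such that, over $V_*$, the reduced strict image $C_\lambda:=f_\lambda^k(Z)$ is irreducible, $f_\lambda$-invariant, and independent of the choice of $Z$ in the fiber. The degree of $f_\lambda^k(Z)$ is bounded by $2^kD$. Hence
\[
\lambda\longmapsto C_\lambda\in\bigcup_{e\leq 2^kD}\mathcal X_e^1
\]
is the restriction of a rational map defined on a Zariski open subset of $P_a$. The two conditions
\[
f_\lambda(C_\lambda)=C_\lambda
\qquad\text{and}\qquad
f_\lambda^k(Y)=C_\lambda \ \text{ for every } (Y,f_\lambda)\in\mathcal Z_{\mathcal F}
\]
are Zariski closed over the open set $\mathcal V_0$ of maps contracting no curve. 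Since they hold on the Euclidean open set $V_*$, irreducibility of the dominating component $\mathcal Z_{\mathcal F}$ extends them to all of $\mathcal V_0\cap P_a$. To pass to the pencil slice, I will restrict to those $B$ for which every member of the fiber over $(a,0,B)$ lies on a component dominating $P_a$; as in Proposition~\ref{prop-delta_n}, this excludes only finitely many $B$.

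\emph{Step 2: specialize to $A=0$.} Take $\lambda=(a,0,B)\in V_a$ with $B$ small and generic. Then $g_B$ has degree $2$ and $B\neq 2a/c$. Our contradiction hypothesis says that no strict preimage $f_\lambda^{-n}(L_a)$ meets $Z$ properly outside $I$. By Lemma~\ref{le-pinceau}, every such $Z$ is a union of lines through $I$, so $C_\lambda$ is an invariant line of the pencil, that is, a fixed point of $g_B$.

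\begin{itemize}
\item $C_\lambda\neq L_\infty$: indeed $L_\infty$ is not invariant once $B\neq0$.
\item $C_\lambda\neq L_a$: otherwise $L_a\subset f^k_\lambda(Z)$, which is condition $\mathbf{(C_2)}$. This is excluded because the hypothesis of Proposition~\ref{prop-DMM} is a failure of $\mathbf{(C_2)}$, and by the same Zariski-closedness argument this failure propagates to generic parameters.
\end{itemize}

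Consequently $C_\lambda$ is one of the two fixed points of $g_B$ other than $[c:1]$. Concretely, these are the roots of the quadratic obtained by dividing
\[
a(w^2-t^2)-w\bigl(t^2+Bw(w-ct)\bigr)
\]
by $(w-ct)$, in the affine chart $t=1$.

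\emph{Step 3: monodromy, the main obstacle.} The assignment $B\mapsto C_{(a,0,B)}$ is rational on a Zariski open subset of the $B$-line. It therefore gives a single-valued algebraic section of the double cover
\[
\{(B,w)\ ;\ w \text{ is a fixed point of } g_B,\ w\neq c\}\longrightarrow\Cb_B .
\]
I will show that this cover is irreducible, by checking that the discriminant of the quadratic, viewed as a polynomial in $B$, is not a square. For $|a|$ large this should follow from a direct computation, since the discriminant has a simple root in $B$. Irreducibility gives a loop in the $B$-plane, avoiding the finitely many bad values, whose monodromy swaps the two fixed points. Continuing the section along that loop is then impossible, which contradicts the single-valuedness from Step 1.

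The delicate points I expect are the following.
\begin{itemize}
\item The genericity bookkeeping: the loop must stay where $g_B$ has degree $2$, where $B\neq 2a/c$, and where the fibers are reduced to dominating components.
\item Lemma~\ref{le-pinceau} is only available near $\lambda_a$, where the contradiction hypothesis is assumed. I will apply it at one base point near $B=0$ and use the algebraic continuation of Step 1 for the rest of the loop. This works because the section is rational, so a swap along the loop contradicts it regardless of where the loop wanders.
\end{itemize}
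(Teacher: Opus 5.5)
There is a genuine gap between Steps 1--2 and Step 3. Your monodromy argument needs $C_{(a,0,B)}$ to be a \emph{single} fixed line of $g_B$ other than $L_a$, and nothing you prove gives this.

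The slice $\{A=0\}$ is not contained in $\mathcal V_0$. For $A=0$, the map $f_{(a,0,B)}$ contracts the pencil line $\{\beta w+\alpha t=0\}$ to a point. So the Zariski-closed conditions you propagate to $\mathcal V_0\cap P_a$ do not reach this slice.

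Over $(a,0,B)$ you only have the limit of the image cycles $C_\lambda$. This limit may be reducible or non-reduced, and it may acquire extra pencil lines (coming from the indeterminacy point $I$ or the contracted line). It satisfies only the weak invariance $f_\lambda(C\setminus\{I\})\subset C$. Irreducibility is an open condition, not a closed one, so it need not survive on this special slice. Likewise, the generic failure of $\mathbf{(C_2)}$ says nothing about whether $L_a$ is a component there.

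In addition, Lemma~\ref{le-pinceau} applied to the members $Z$ of $\mathcal Z$ does not directly constrain the limit image cycle. You have to apply it to the image cycles themselves, transferring proper intersections to nearby generic parameters by persistence. What you legitimately obtain is only that the support of $C_{(a,0,B)}$ is a finite forward-invariant set of directions for $g_B$. If that set contains both non-marked fixed directions, or only the marked one, or a longer cycle, the monodromy swap is harmless and yields no contradiction.

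The missing ingredient is base-point information at $B=0$. The paper uses it to follow one specific branch rather than a section.
\begin{itemize}
\item By Proposition~\ref{prop-DMM} near $\lambda_*$, together with control of limits of strict transforms through the blow-up of $I$ (extra components land in the invariant line $L_\infty$), the image cycles along $\{B=0\}$ are supported on $L_\infty$ for generic $A$. Hence $C_{\lambda_a}$ is supported on $L_\infty$.
\item Since $\infty$ is superattracting for $g_0$, for small $B$ all component directions of $C_{(a,0,B)}$ lie in its attracting disk. Therefore the fixed line continued from $L_\infty$ is a component of $C_{(a,0,B)}$.
\item The condition that this fixed line be a component is closed on the double cover of fixed directions. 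Continuing it along a loop around the branch point $B_*=-ac/4$ shows that the other fixed line at $B=0$, namely $\{t+cw=0\}$, is contained in $C_{\lambda_a}$. This contradicts $C_{\lambda_a}$ being supported on $L_\infty$.
\end{itemize}
Your discriminant computation is correct: it is linear in $B$, so the double cover is irreducible, and this is exactly what provides the exchanging loop. But it must be combined with the value at $B=0$ rather than with a single-valued section whose existence you have not established.
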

\begin{rk}\label{rk-inter}
Actually, this gives a contradiction with our assumptions, i.e., \eqref{eq-proper-int} and the one of Proposition~\ref{prop-DMM}. To be more precise, an intersection as in Theorem~\ref{th-inter} is persistent. Therefore, it also holds for some $(Z',f_{\lambda'})\in\mathcal Z$ with $\lambda'\in\mathcal V_0\cap V_a$. If \eqref{eq-proper-int} is verified then this implies that $f_{\lambda'}^n(Z')$ contains $L_a$ in a persistent way. This contradicts the assumption of Proposition~\ref{prop-DMM} that we also used.  At the end, the conclusion of Theorem~\ref{th-inter} is that either \eqref{eq-proper-int} does not hold or Proposition~\ref{prop-delta_n} is satisfied.
\end{rk}
\begin{proof}
In order to apply Proposition~\ref{prop-DMM}, consider
\[
\mathcal Z_{P_a}
:=
\{(Z,\lambda)\in\mathcal X_D^1\times P_a\,;\,
  (Z,f_\lambda)\in\mathcal Z\},
\]
from which we remove the irreducible components that do not surject onto
$P_a$.

Choose one irreducible component $\mathcal T$ coming from the dominating
family to which Proposition~\ref{prop-DMM} applies. We denote by
$\mathcal T_{V_a}$ and $\mathcal T_{V_*}$ its restrictions over
$V_a\cap P_a$ and $V_*\cap P_a$, respectively.

By Proposition~\ref{prop-DMM}, there exists $k\geq0$ such that, for $\lambda\in V_*\cap P_a$, the reduced strict image $f_\lambda^k(Z)$ is irreducible, invariant, contains $s(f_\lambda)$ and $W^u_{q(f_\lambda),loc}$, and depends only on $\lambda$. In particular, it equals $L_\infty$ when $B=0$ and $A\neq0$.

We restrict to a dense Zariski open subset of $\mathcal T$ where these images have constant degree, and denote again by $\mathcal Z_{P_a}$ the closure of their family in the corresponding Chow variety. We may thus assume that $k=0$ generically, while allowing the special fibers to have several components or multiplicities. The monodromy argument below will exploit the fact that this image family is generically single-valued.

We also need to control its fibers over $B=0$. Near $\lambda_*=(a,A_*,0)$, the same blow-up of $I$ resolves all nearby maps in our family, as in Lemma~\ref{le-fixed}. When taking limits, strict transforms may acquire additional components on the exceptional divisor. At $B=0$, these components map into $L_\infty$, which is invariant. Thus any additional components arising in limits of the iterated images remain supported on $L_\infty$.

On the other hand, Proposition~\ref{prop-DMM} gives $f_\lambda^k(Z)=L_\infty$ for $\lambda=(a,A,0)$ near $\lambda_*$. Every limiting image cycle over these parameters is therefore supported on $L_\infty$. By algebraicity, the same holds for generic $A$ along the line $B=0$.

In this situation, the projection
\[
\pi:\mathcal Z_{P_a}\to P_a,
\qquad
(Z,\lambda)\mapsto\lambda,
\]
is then generically one-to-one.

Let $E_0$ be the set of bad values of $B$ given by Lemma~\ref{le-pinceau} (and above), i.e.,
\[
E_0:=\left\{-\frac1{1-c},-\frac1{1+c},\frac{2a}{c}\right\}.
\]
For $B\notin E_0$, the map $g_B$ has degree $2$ and $L_a$ is not totally invariant. If a cycle $Z$ in the fiber of $\pi$ over $\lambda=(a,0,B)$ had a proper intersection outside $I$ with a strict preimage of $L_a$, this intersection would persist for nearby generic parameters. By invariance of the generic image curves, this would give an intersection with $L_a\setminus\{I\}$, a contradiction. Lemma~\ref{le-pinceau} therefore shows that every cycle in this fiber is supported on lines of the pencil through $I$.

Moreover, the invariance at a generic parameter gives at $\lambda$
\[
 f_\lambda(Z\setminus\{I\})\subset Z.
\]
Since no point outside $I$ is mapped to $I$, even a contracted line is mapped to a point outside $I$, which belongs to a line of the support. 
Thus the component directions form a finite forward-invariant set for $g_B$, and are preperiodic. There are only countably many such directions, hence only countably many possible cycles of the prescribed degree. Hence, the fiber of $\pi$ are finite above such $\lambda=(a,0,B).$

Consequently, since $\pi$ is generically one-to-one, there exists a neighborhood $M\subset P_a$ of
\[
 \{(a,0,B):B\notin E_0\}
\]
on which $\pi$ is an isomorphism over $M$. We may therefore write its fibers as $(Z_\lambda,\lambda)$, with $Z_\lambda$ depending holomorphically on $\lambda\in M$.

By the preceding control of the limiting cycles along
$B=0$, we have $Z_{\lambda_a}=L_\infty.$ Since infinity is superattracting for $g_0$, it is still contained in an attracting domain for $g_B$ when
$B$ is small. Thus, all component directions of $Z_{(a,0,B)}$ lie in this disk. They form a nonempty finite
forward-invariant set, so they contain a periodic point.
The only such point in the disk is the fixed point
continued from infinity. Its line is therefore a
component of $Z_{(a,0,B)}$.

It remains to show that this is incompatible with
monodromy. Parametrize the lines of the pencil different
from $\{w=0\}$ by
\[
 D_\tau:=\{t=\tau w\},
\]
so that $L_\infty=D_0$.
The fixed directions satisfy
\[
 (1-c\tau)\bigl(B-a\tau+(1-ac)\tau^2\bigr)=0.
\]
The first factor corresponds to $L_a$. Since
$1-ac=-a/c$, the other two fixed directions satisfy
\begin{equation}\label{eq-fixed-lines-B}
 \tau^2+c\tau-\frac ca B=0.
\end{equation}
At $B=0$, they correspond to $L_\infty$ and $D_{-c}$.
They merge at
\[
 B_*=-\frac{ac}{4},
\]
which lies outside $E_0$ for $|a|$ sufficiently large.
A loop based at $0$, avoiding $E_0\cup\{B_*\}$ and winding
once around $B_*$, exchanges these two directions.

The condition
\[
 D_\tau\subset Z_{(a,0,B)}
\]
is algebraic and closed on the curve
\eqref{eq-fixed-lines-B}, over the good parameters.
It holds along the local branch through $(B,\tau)=(0,0)$,
and hence remains valid after continuation along the loop.
Since the family $Z_{(a,0,B)}$ is single-valued, we obtain
\[
 D_{-c}\subset Z_{\lambda_a}=L_\infty,
\]
a contradiction.
\end{proof}

\subsection{The transverse intersection}\label{sec-transversal}
As explained in Remark~\ref{rk-inter}, there are now two possibilities. Either, for some $\lambda\in V_a\mathcal V_0$, the curve $f_\lambda^n(Z)$ intersects $L_a\setminus\{I\}$ properly, or, by Proposition~\ref{prop-delta_n}, we have $L_a\subset f_\lambda^n(Z)$ for every $\lambda=(a,A,0)\in V_a$ with $A\neq0$. In the first case, it remains to promote the proper intersection to a transverse one. In the second case, we perturb $f_\lambda$ within $\mathrm{Poly}_2^2$ to make the intersection proper, thereby reducing the problem to a setting similar to that of the first case.

\begin{proposition}\label{prop-trans}
Let  $\lambda$ be in $V_a\cap\mathcal V_0$ and let $U$ be a small neighborhood of $f_\lambda$ in $\mathrm{Rat}_d^2$ such that Lemma~\ref{le-homof0} still holds on $U.$ If for some $(Z,g)\in\mathcal Z$, with $g\in U,$ we have that for some $n\geq0$
\[
 Z\cap g^{-n}(W^s_{p(g),loc})
\]
is proper, then there exist $(Z',f)\in\mathcal Z$ arbitrarily close to $(Z,g)$,
with $f\in\mathrm{End}_d^2$, and $N\geq n$ such that
$f^N(Z')$ intersects $W^s_{p(f),loc}$ transversely.
\end{proposition}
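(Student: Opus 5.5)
Since $\mathrm{End}_d^2$ is Zariski open and dense in $\mathrm{Rat}_d^2$ and $\mathcal Z$ dominates, proper intersections persist under small perturbations. We may therefore first move $(Z,g)$ slightly inside $\mathcal Z$ so that $g\in\mathrm{End}_d^2\cap U$, and also into the dense Zariski open set of Fakhruddin \cite{fakhruddin-generic} on which no curve contains infinitely many preperiodic points. We keep a point $x\in Z$ with $g^n(x)\in W^s_{p(g),loc}$ at which the intersection of $Z$ and $g^{-n}(W^s_{p(g),loc})$ is isolated.

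The obstacle to transversality is a possibly tangential intersection of positive multiplicity. In dimension $2$ the stable manifold is a curve and $g^n(Z)$ is a curve, so one can use the Bedford--Lyubich--Smillie result \cite[Lemma~6.4]{BedfordLyubichSmillie2}. Its spirit is that an algebraic curve meeting a stable manifold nontransversely at every one of the infinitely many intersection points produced by the dynamics forces a rigid tangency structure. Concretely, I would proceed as follows.

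First, apply the inclination lemma to the local branch of $g^n(Z)$ through $g^n(x)$. Since this branch meets $W^s_{p(g),loc}$ properly (with finite multiplicity $\mu$), its forward images $g^{n+j}(Z)$ contain pieces accumulating on $W^u_{p(g),loc}$. Second, use the transverse homoclinic point from Lemma~\ref{le-homof0}: $W^u_{p(g)}$ crosses $W^s_{p(g),loc}$ transversely at $q(g)\ne p(g)$. Pieces of $g^{n+j}(Z)$ close to $W^u_{p(g),loc}$ are mapped by $g^N$ close to a neighborhood of $q(g)$ in $W^u$. There they give intersections with $W^s_{p(g),loc}$ which, by $C^1$ closeness of graph transforms (the $\lambda$-lemma), are transverse, except possibly where the branch has a singular or critical point of the projection.

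The main obstacle is this last point. The branch of $g^n(Z)$ is a possibly singular analytic disc meeting $W^s$ with multiplicity $\mu$, so its image near $W^u$ is a $\mu$-sheeted ramified cover over $W^u_{p(g),loc}$, not a graph. I would handle it by noting that the branch points of the projection onto the unstable direction form a finite set that converges to the $W^s$-direction at the finitely many ramification points. After restricting to a small disc in $W^u_{p(g),loc}$ around the preimage of $q(g)$ that avoids these limit points, the covering becomes unramified there, with each sheet a graph $C^1$-close to $W^u$. Then $g^N$ of a sheet crosses $W^s_{p(g),loc}$ transversely near $q(g)$. If the possible ramification points accumulated on every such preimage, one would obtain infinitely many tangential intersections between iterates of $Z$ and $W^s$. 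In that case I would invoke \cite[Lemma~6.4]{BedfordLyubichSmillie2}, together with the Dujardin--Favre--Ruggiero type observation \cite{DFR} and Fakhruddin's genericity, to force $Z$ to contain infinitely many preperiodic points (or to be periodic), which is excluded for generic $f$ by Fakhruddin. Finally, transversality is open, so we may set $(Z',f)=(Z,g)$ and $N=n+j+N_0$.
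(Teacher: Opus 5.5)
Your route, a multiplicity version of the inclination lemma followed by the transverse homoclinic point of Lemma~\ref{le-homof0}, differs from the paper's and can be completed. As written, however, it has a gap exactly at the step you call the main obstacle, and your fallback does not close it.

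You only assert that the branch points of the $\mu$-sheeted cover avoid some preimage of $q(g)$. For the alternative case, \cite[Lemma~6.4]{BedfordLyubichSmillie2} does not do what you need. It is purely local: a disc tangent to one leaf of a family of pairwise disjoint discs accumulating on that leaf crosses nearby leaves transversely. It produces no preperiodic points. Moreover, \cite{DFR} plays no role here, and Fakhruddin's theorem concerns very general maps, not a dense Zariski open set.

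The missing observation is that the bad case cannot occur. Work in a bidisc around $p$ where the map is degree-one horizontal-like and $W^s_{p,loc}=\{v=0\}$. Move the branch $\Delta$ there by a few iterates, and parametrize it by its normalization so that $v|_\Delta=t^\mu\varphi(t)$ with $\varphi(0)\neq0$. Invariance of $W^s_{p,loc}$ gives $v\circ g^j=v\cdot h_j$ with $h_j$ nonvanishing. Hence, for large $j$, the graph transform $\Gamma_j$ of $\Delta$ is a closed curve projecting properly onto the unstable disc with degree exactly $\mu$, meeting $W^s_{p,loc}$ only at $g^j(y)$ (as in Proposition~\ref{prop-renor}). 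Its parameter domain is connected by this degree count. It is simply connected because it is cut out step by step by sublevel sets $\{|v\circ g^j|<1\}$, to which the maximum principle applies. Riemann--Hurwitz then places all $\mu-1$ branch points at $g^j(y)$, which lies over $p$. So over any compact subset of $W^u_{p,loc}\setminus\{p\}$, $\Gamma_j$ splits into $\mu$ holomorphic graphs converging uniformly, hence in $C^1$ by the Cauchy estimates, to $W^u_{p,loc}$. Your homoclinic argument then works with no exceptional case.

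With this addition your proof is a genuine alternative to the paper's. The paper first perturbs to a very general $f$, so that by Fakhruddin's theorem $f$ has no invariant curve and $f^n(Z')$ cannot contain $W^s_{p(f),loc}$. It then uses the stable plaques of the horseshoe created by the homoclinic point. It applies \cite[Lemma~6.4]{BedfordLyubichSmillie2} to replace a tangency by a transverse crossing with a nearby plaque through a periodic point homoclinically related to $p(f)$, and returns to $W^s_{p(f),loc}$ by the inclination lemma. Your argument uses only the given branch, which meets $W^s_{p,loc}$ properly by hypothesis. It dispenses with both the very general perturbation and the lamination lemma, at the price of the covering bookkeeping above.
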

\begin{proof}
There are two difficulties: the forward image could contain the
stable manifold, or their intersection could be proper but not
transverse. We use Fakhruddin's
\cite[Theorem~1.2]{fakhruddin-generic} to exclude the first
possibility and the local argument of Bedford-Lyubich-Smillie
\cite[Lemma~6.4]{BedfordLyubichSmillie2} to rule out the second.

Choose $(Z',f)\in\mathcal Z$ sufficiently close to $(Z,g)$, with
$f\in\mathrm{End}_d^2$ very general. Persistence of proper
intersections and continuity of the local
stable manifold give a proper intersection of $Z'$ with
$f^{-n}(W^s_{p(f),loc})$. By Fakhruddin's theorem, $f$ has no
invariant algebraic curves. Consequently, the curve
$C:=f^n(Z')$ cannot contain $W^s_{p(f),loc}$ and thus $C$
and $W^s_{p(f),loc}$ intersect properly.

Near $p(f)$, the distinct local stable plaques of $E(f)$ form
a perfect family of pairwise disjoint holomorphic disks.
By continuity and persistence of proper intersections, nearby
plaques still meet $C$. Since $C$ has only finitely many singular
points, one of these plaques meets its regular locus. If the
intersection is tangent, \cite[Lemma~6.4]{BedfordLyubichSmillie2}
gives a transverse intersection with a nearby distinct plaque.
By openness of transversality and density of periodic points
in $E(f)$, we may choose such a plaque based at a periodic point
$q$ homoclinically related to $p(f)$. The inclination lemma then
gives a transverse intersection of $f^N(Z')$ with
$W^s_{p(f),loc}$ for some $N\geq n$.
\end{proof}

\begin{proposition}\label{prop-DFR}
For every parameter $\lambda=(a,A,0)\in V_a$ to which
Proposition~\ref{prop-delta_n} applies, there exist arbitrarily small
$\delta,\theta\in\Cb^*$ and a pair $(Z,f)\in\mathcal Z$, with
\begin{equation}\label{eq-DFR}
f\colon(z,w)\mapsto
\bigl(\alpha z+\epsilon w+\beta zw+\delta z^2,\,
q_a(w)+Az(w-c_a)+\theta\bigr),
\end{equation}
such that $f^n(Z)$ intersects $W^s_{p(f),\mathrm{loc}}$ properly
for some $n\geq0$.
\end{proposition}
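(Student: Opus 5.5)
We aim to turn the persistent containment $L_a\subset f_\lambda^m(Z)$ from Proposition~\ref{prop-delta_n} into a proper intersection by leaving the pencil-like family and entering $\mathrm{Poly}_2^2$, where the Dujardin--Favre--Ruggiero dynamical Manin--Mumford theorem \cite{DFR} is available. Fix $\lambda=(a,A,0)$ as in the statement. By Proposition~\ref{prop-delta_n} there is $m\geq0$ such that every pair $(Z,f_\lambda)$ in the fiber of $\mathcal Z$ over $f_\lambda$ satisfies $L_a\subset f_\lambda^m(Z)$. Since $B=0$, the line at infinity is totally invariant, so $f_\lambda$ restricts to the affine map $(z,w)\mapsto(\alpha z+\epsilon w+\beta zw,\,q_a(w)+Az(w-c_a))$. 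This is not regular, because $I$ is indeterminate. Adding $\delta z^2$ and $\theta$ as in \eqref{eq-DFR} produces, for $\delta\neq0$, a regular polynomial endomorphism with top-degree part $(\beta zw+\delta z^2,\,aw^2+Azw)$. The saddle $p$ and its local stable manifold move holomorphically with $(\delta,\theta)$, and the transverse homoclinic point of Lemma~\ref{le-homof0} persists. The perturbation $\theta$ destroys the invariance of the line $L_a$, so $W^s_{p(f)}$ is no longer an algebraic line.

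\textbf{Step 1.} Since $\mathcal Z$ dominates $\mathrm{Rat}_2^2$, choose $(Z,f)\in\mathcal Z$ with $f$ of the form \eqref{eq-DFR} and $\delta,\theta$ small. Suppose for contradiction that $f^n(Z)$ never meets $W^s_{p(f),loc}$ properly, for any such $(Z,f)$ near $(Z_0,f_\lambda)$. By the same Zariski-closedness and irreducibility argument as before Proposition~\ref{prop-DMM}, this forces $f^n(Z)$ either to avoid $W^s_{p(f)}$ or to contain it, uniformly for a fixed $n$. Containing it would mean that $W^s_{p(f)}$ is contained in an algebraic curve $C=f^m(Z)$. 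Since $f^{-1}(W^s)\supset W^s$, the curve $C$ would then be periodic, that is, invariant under some iterate, and it would also contain the homoclinic unstable directions.

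\textbf{Step 2.} Rule out an invariant curve containing $W^s_{p(f)}$. The candidate $C$ has a local branch along $W^s_{p(f),loc}$, and it passes through the homoclinic point while being invariant. Applying the inclination lemma to $C$ near that point shows that $C\supset W^u_{p(f)}$ as well. Hence $C$ contains the Zariski closure of a transverse homoclinic configuration. Proposition~\ref{prop-zariski-dense-unstable}, with $r=1$ by Remark~\ref{rk-homoclinic}, shows that $W^u_{p(f)}$ is Zariski dense, which is a contradiction. The remaining case is that $f^n(Z)$ avoids $W^s_{p(f),loc}$ for every $n$. We then pass to the limit $\delta,\theta\to0$: $f^m(Z)$ contains $L_a=\overline{W^s_{p(f_\lambda)}}$, and by persistence of proper intersections (Lemma~\ref{le-proper} style) a nearby branch of $f^m(Z)$ meets the deformed local stable manifold properly. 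Here one uses that $\theta$ moves $W^s_{p,loc}$ transversally to $L_a$ at first order, because $\partial_\theta$ shifts the $w$-coordinate of the invariant direction. This yields the proper intersection.

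\textbf{Main obstacle.} The hardest step is the last one: making sure that the limit containment $L_a\subset f_\lambda^m(Z)$ deforms into a genuine proper intersection rather than into a family of curves that keep containing the moving stable manifold. I would first try to compute to first order in $\theta$ the displacement of $W^s_{p(f),loc}$ relative to $L_a$, and compare it with the first-order motion of the branch of $f^m(Z)$ along $L_a$. If these coincided identically, the family would contain an invariant curve through $p$, and \cite{DFR} (no superattracting point at infinity for generic small $\delta$) together with Step 2 would exclude this.
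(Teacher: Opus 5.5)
Your proposal has two genuine gaps, and they fall exactly where the paper's proof relies on its two key ideas.

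\textbf{Step 2.} Step 2 does not exclude an invariant algebraic curve $C\supset W^s_{p(f),\mathrm{loc}}$. Such a curve passes through the homoclinic point only along the branch $W^s_{p(f),\mathrm{loc}}$ itself. So there is no piece of $C$ transverse to $W^s$ to which the inclination lemma could be applied, and nothing forces $C\supset W^u_{p(f)}$. Zariski density of $W^u_{p(f)}$ is the wrong input. For $\theta=0$ and small $\delta\neq0$, the line $L_a$ is still invariant and contains both $W^s_{p(f),\mathrm{loc}}$ and the homoclinic point, while $W^u_{p(f)}$ is Zariski dense. Your reasoning would therefore exclude a configuration that actually occurs.

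What is needed is Zariski density of $W^s_{p(f),\mathrm{loc}}$ itself when $\theta\neq0$, which is Lemma~\ref{le-WsZar}. Its proof runs as follows. The Zariski closure $Y$ of $W^s_{p(f),\mathrm{loc}}$ is invariant and meets $L_\infty$ at a single fixed point $\xi$. The local normal form of \cite[Theorem~2.1]{DFR} at $\xi$ makes $Y$ smooth and transverse to $L_\infty$ there, so $Y$ is a line by B\'ezout. Lines are then ruled out using $A\neq0$, the stable eigendirection, and $\theta\neq0$. It is this local normal form, not the Manin--Mumford theorem of \cite{DFR}, that enters.

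\textbf{The remaining case.} The case you call the main obstacle is left open, and your proposed route cannot close it. The containment $L_a\subset f_\lambda^m(Z)$ is a \emph{non-proper} intersection, and non-proper intersections do not persist, so Lemma~\ref{le-proper} does not apply. After perturbation, the piece of $f^m(Z)$ near $L_a$ and the moved $W^s_{p(f),\mathrm{loc}}$ are two nearby disks that may be disjoint (compare $\{w=c_a\}$ and $\{w=c_a+\theta\}$). No first-order comparison of their displacements forces them to meet.

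The missing idea is to use the containment only through the point $p(f_\lambda)\in L_a$.
\begin{enumerate}
\item Since $W^u_{p(f_\lambda),\mathrm{loc}}$ is Zariski dense, $f_\lambda^m(Z)$ meets it \emph{properly} at $p(f_\lambda)$. This proper intersection persists for $f$ as in \eqref{eq-DFR}, giving a point $h(f)\in f^m(Z)\cap W^u_{p(f),\mathrm{loc}}$.
\item If $h(f)=p(f)$, this point lies on $W^s_{p(f),\mathrm{loc}}$ and you are done.
\item Otherwise, write $h(f)=\psi_f(\tilde h(f))$ for a linearization $\psi_f$ of $W^u_{p(f)}$ normalized so that $\psi_f(1)$ is the homoclinic point. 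The family $f\mapsto\rho(f)^n\tilde h(f)$ is not normal, since $\tilde h$ vanishes at $\delta=\theta=0$ but not identically, and $|\rho|>1$. Montel's theorem then gives arbitrarily small $\delta,\theta$ and some $n$ with $\rho(f)^n\tilde h(f)\in\{1,\rho(f)^{-1}\}$. Hence $f^{m+n}(Z)$ or $f^{m+n+1}(Z)$ passes through the homoclinic point, which lies on $W^s_{p(f),\mathrm{loc}}$.
\end{enumerate}
In both cases, properness of the intersection follows from Lemma~\ref{le-WsZar}.
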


\begin{proof}
By assumption, when $\lambda=(a,A,0)$ with $A\neq0$ small enough, for every pair
$(Z_\lambda,f_\lambda)\in\mathcal Z_{\mathcal F}$, there exists
$m\geq0$ such that $L_a\subset f_\lambda^m(Z_\lambda)$.
In particular, since $W^u_{p(f_\lambda),\mathrm{loc}}$ is Zariski
dense by Lemma~\ref{le-homof0}, $f_\lambda^m(Z_\lambda)$ intersects
this local manifold properly. Therefore, for sufficiently small
$\delta,\theta\in\Cb^*$ and $f$ as in the statement, there exists
$(Z,f)\in\mathcal Z$ such that $f^m(Z)$ intersects
$W^u_{p(f),\mathrm{loc}}$ properly near $p(f)$.

There are now two possibilities. If this intersection always
consists only of $p(f)$, then $f^m(Z)$ intersects
$W^s_{p(f),\mathrm{loc}}$, and this intersection must be proper
by Lemma~\ref{le-WsZar}.

Otherwise, the intersection does not always consist only of
$p(f)$. After possibly passing to a ramified cover of the
parameter space, we can follow it holomorphically, denoting it by $h(f)$. We then consider a
linearization $\psi_f\colon\Cb\to W^u_{p(f)}$ of the dynamics,
depending holomorphically on $f$, such that $\psi_f(0)=p(f)$
and $\psi_f(1)$ is the homoclinic intersection given by
Lemma~\ref{le-homof0}. For sufficiently small
$\delta,\theta\in\Cb^*$, the point $h(f)$ lies in the image of
a neighborhood of $0$ on which $\psi_f$ is invertible.
We therefore let $\tilde h(f)$ be the unique point near $0$
such that $\psi_f(\tilde h(f))=h(f)$. If $\rho(f)$ denotes the
unstable multiplier of $p(f)$, then
\[
f^n(h(f))=\psi_f\bigl(\rho(f)^n\tilde h(f)\bigr).
\]

On the other hand, the family
$\{f\mapsto\rho(f)^n\tilde h(f)\}_{n\geq0}$ is not normal.
Hence there exist arbitrarily small $\delta,\theta\in\Cb^*$
such that, for the corresponding map $f$ and some $n\geq0$,
$\rho(f)^n\tilde h(f)$ equals either $1$ or $\rho(f)^{-1}$.
Therefore, $f^{m+n}(Z)$ or $f^{m+n+1}(Z)$ intersects
$W^s_{p(f),\mathrm{loc}}$, and this intersection must be proper,
as above.
\end{proof}
In dimension $2$, the preceding homoclinic argument provides a
simple criterion for the Zariski density of the unstable manifold.
For the stable manifold, we use the local analysis of
Dujardin-Favre-Ruggiero to treat the family of
Proposition~\ref{prop-DFR}.

\begin{lemma}\label{le-WsZar}
Fix $\lambda=(a,A,0)$, with $A\neq0$, as in
Proposition~\ref{prop-DFR}. For all sufficiently small
$\delta,\theta\in\Cb^*$, the map $f$ defined by
\eqref{eq-DFR} has a Zariski-dense local stable manifold
$W^s_{p(f),\mathrm{loc}}$.
\end{lemma}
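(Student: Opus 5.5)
We want to show that, for $f$ given by \eqref{eq-DFR} with $\delta,\theta\in\Cb^*$ small, the local stable manifold $W^s_{p(f),\mathrm{loc}}$ is not contained in any algebraic curve. The plan is to argue by contradiction. Suppose $W^s_{p(f),\mathrm{loc}}\subset C$ for an irreducible algebraic curve $C$. The global stable manifold $W^s_{p(f)}=\bigcup_n f^{-n}(W^s_{p(f),\mathrm{loc}})$ is then contained in the algebraic set $\bigcup_n f^{-n}(C)$. Since $C$ contains an open piece of $W^s_{p(f)}$ and $f^{-1}(W^s)\supset W^s$, the curve $C$ is a component of $f^{-1}(C)$, so $f(C)\subseteq C$. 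Thus $C$ is an $f$-invariant algebraic curve through the saddle $p(f)$, tangent there to the stable direction.

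The map \eqref{eq-DFR} is a regular polynomial endomorphism of $\Cb^2$ once $\delta\theta\neq0$ (the top degree part $(\delta z^2+\beta zw,\ aw^2+Azw)$ has only the trivial common zero for generic small $\delta$). This puts us in the setting of Dujardin--Favre--Ruggiero. Their classification of invariant curves of regular polynomial maps of $\Cb^2$ says that the closure $\bar C$ meets $L_\infty$ at points that are periodic for $f_{|L_\infty}$, and that $C$ is analyzed through its branches at infinity. The restriction to $L_\infty$ is $[z:w]\mapsto[\delta z^2+\beta zw: aw^2+Azw]$. I will use the local analysis of \cite{DFR} at a periodic point at infinity: each branch of $C$ at infinity is invariant by an iterate, and their local normal forms near superattracting or non-superattracting periodic points at infinity constrain the degree growth. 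In particular, either $C$ is a line of a totally invariant pencil, or the Green function restricted to $C$ is harmonic and nonconstant, and $C$ is very special.

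The key step is to exclude this for small $\delta,\theta$. I expect to use continuity in the parameters: the degree of an invariant curve of a polynomial map of degree $2$ is bounded by \cite{DFR} (or by \cite{Xie-algebraicity}) in terms of the multiplicities at infinity. As $\delta,\theta\to0$, a limit of such curves would be an invariant curve of bounded degree for $f_{(a,A,0)}$, containing $W^s_{p}$. For $f_{(a,A,0)}$ this stable manifold is $L_a\setminus\{I\}$, so the limit must contain $L_a$. I then need to show that $L_a$ cannot be an invariant component approached by invariant curves of the perturbations. Here $\theta\neq0$ destroys the invariance of $L_a$, since $q_a(c_a)+\theta\neq c_a$ with $w-c_a$ factored in the $A$ term. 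Meanwhile $\delta\neq0$ makes $L_\infty$ dynamics nondegenerate, so the invariant curve through $p(f)$ would have to be a perturbation of the line $\{w=c\}$ invariant under a map with $z^2$ term. A direct first-order computation in $\theta$ should show that no invariant curve of bounded degree near $L_a$ survives.

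The main obstacle is obtaining the uniform degree bound and handling the limit, which may acquire components at infinity. I would first try to avoid it by a local argument in the style of Proposition~\ref{prop-zariski-dense-unstable} applied to $f^{-1}$ near $p(f)$. We only have a single stable multiplier and $r=1$, and a transverse homoclinic point exists by Lemma~\ref{le-homof0}. So the stable manifold accumulates on itself along the unstable direction, which forces $C$ to contain infinitely many distinct local branches through a compact region; with bounded degree of $C$ this is contradictory. Since $C$ is fixed here, the contradiction is immediate: an irreducible curve containing $W^s_{p}$ and invariant would have to contain all the homoclinic-produced local stable plaques accumulating on $W^s_{p,\mathrm{loc}}$, which are infinitely many disjoint disks in a compact set, impossible for a curve. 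Only if this fails will I return to the DFR-based degeneration argument.
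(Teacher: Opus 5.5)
Your main line, the local homoclinic argument you call ``immediate'', does not work. From $W^s_{p,\mathrm{loc}}\subset C$ and $f(C)=C$ you only get $f^{-n}(W^s_{p,\mathrm{loc}})\subset f^{-n}(C)$. Since $f$ is not invertible, $C$ is merely invariant, not totally invariant, and $f^{-n}(C)$ has other components. The plaques produced by the homoclinic point are local preimages of a disc of $f^{-N}(W^s_{p,\mathrm{loc}})$ through $\tilde q\in W^u_{p,\mathrm{loc}}$. So they lie in $f^{-n-N}(C)$, not in $C$, and nothing bounds the degrees of the components carrying them.

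The case $\theta=0$ shows this cannot be repaired. Write the second coordinate of \eqref{eq-DFR} as $c_a+(w-c_a)\bigl(a(w+c_a)+Az\bigr)+\theta$. For $\theta=0$ the line $L_a=\{w=c_a\}$ is invariant and contains $W^s_{p(f),\mathrm{loc}}$. Meanwhile the transverse homoclinic intersection of Lemma~\ref{le-homof0} persists, with its plaques lying on $\{Az+a(w+c_a)=0\}$ and its preimages. Your argument never uses $\theta\neq0$, whose only role is to break the invariance of $L_a$ (regularity only needs $a\delta\neq\beta A$). So your argument would prove a false statement.

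The mechanism of Proposition~\ref{prop-zariski-dense-unstable} works for unstable manifolds only because $f^n(W^u_{p,\mathrm{loc}})\subset f^n(Z)=Z$ for the Zariski closure $Z$. There is no analogue for stable manifolds of a non-invertible map. Your DFR/degeneration fallback is only a sketch: no uniform degree bound is supplied. Moreover, the limiting curve $L_a$ is genuinely invariant for $f_{(a,A,0)}$, so passing to the limit yields no contradiction by itself.

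What is missing is a global argument on the invariant curve $Y$.
\begin{enumerate}
\item Since $f$ is a regular polynomial endomorphism, $L_\infty$ is totally invariant. Hence $Y\cap L_\infty$ lifts to a totally invariant set of the degree-$2$ self-map of the normalization of $Y$.
\item Riemann--Hurwitz forces the normalization to be $\Pb^1$ with at most two such points. The non-critical attracting fixed point coming from $p$ excludes the monomial case.
\item Hence $Y$ has a single branch at infinity, through a fixed point $\xi$ of $f_\infty(u)=u(A+au)/(\delta+\beta u)$. All three fixed points $\infty$, $(\delta-A)/(a-\beta)$ and $0$ of $f_\infty$ have nonzero multipliers.
\item The normal form $f(x,y)=(xU,y^2V)$ of \cite[Theorem~2.1]{DFR} at $\xi$ then forces this branch to be the smooth curve $\{x=0\}$ transverse to $L_\infty$. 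Indeed, orbits on the branch converge superexponentially, whereas $|x_n|\geq\kappa^n|x_0|$. B\'ezout then gives $\deg Y=1$.
\item The direction of such a line is a fixed point of $f_\infty$. Since the stable direction at $p(f)$ is close to horizontal, it is neither $\infty$ nor $(\delta-A)/(a-\beta)$; this is where $A\neq0$ is used.
\item Horizontal lines are not invariant because $\theta\neq0$.
\end{enumerate}
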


\begin{proof}
Write $c=c_a$. For $\delta\neq0$ sufficiently small, we have
$a\delta-\beta A\neq0$, so $f$ extends to an endomorphism of
$\Pb^2$ of degree $2$.

Suppose that $W^s_{p(f),\mathrm{loc}}$ is not Zariski dense,
and let $Y$ be its irreducible algebraic closure. Then $Y$ is
$f$-invariant. It intersects $L_\infty$ in a totally invariant set for its normalisation. Thus, this set has at most $2$ points. As it also possesses a non-critical attracting fixed point, there is only one exceptional point, thus $Y\cap L_\infty$ consists of a single point $\xi,$ which must be fixed.

In the coordinate $u=w/z$, the restriction to $L_\infty$ is
\[
f_\infty(u)=\frac{u(A+au)}{\delta+\beta u}.
\]
Its three fixed points, $\infty,$ $(\delta-A)/(a-\beta)$ and $0.$ The first $2$ are perturbations of the points $s$ and $q$ in Lemma~\ref{le-fixed} respectively, thus they are (non-critical) attracting and repelling. For $0$, which corresponds to the indeterminacy point before perturbation, the multiplier is $A/\delta\neq0.$

By \cite[Theorem~2.1]{DFR}, there are local coordinates
$(x,y)$ centered at $\xi$, with $L_\infty=\{y=0\}$, in which
\[
f(x,y)=\bigl(xU(x,y),\,y^2V(x,y)\bigr),
\qquad U(0,0)V(0,0)\neq0.
\]
Here $\{x=0\}$ is the smooth invariant curve transverse to
$L_\infty$ constructed by Dujardin-Favre-Ruggiero.
The unique branch of $Y$ at $\xi$ must coincide with this curve.
Indeed, the induced map on its normalization has local degree
$2$, so orbits on the branch converge super-exponentially to
$\xi$. On the other hand, $U(0,0)\neq0$ implies
$|x_n|\geq\kappa^n|x_0|$ for some $\kappa>0$.
Thus $x_0=0$, and the branch coincides with the smooth
transverse curve $\{x=0\}$. Hence $Y$ is smooth and transverse to
$L_\infty$ at its unique point at infinity. By Bézout,
$\deg Y=1$.

It remains to exclude a line containing
$W^s_{p(f),\mathrm{loc}}$. Its direction is the stable
eigendirection at $p(f)$, which converges to the horizontal
direction as $(\delta,\theta)\to(0,0)$. Thus, for sufficiently
small parameters, this line cannot correspond to the fixed points $\infty$ and $(\delta-A)/(a-\beta)$ since $A\neq0.$ The remaining case corresponds to horizontal lines $\{w=cst\}$ which cannot be invariant by the choice of \eqref{eq-DFR}, as soon as $\theta\neq0.$
\end{proof}

Assuming Theorem~\ref{th-chasse-julia} on the small Julia set,
we can now complete the proof of Theorem~\ref{th-chasse-d2}.

\begin{proof}[Proof of Theorem~\ref{th-chasse-d2}]
As explained above, Theorem~\ref{th-inter} gives the following
dichotomy. Either, for some $\lambda\in V_a\cap\mathcal V_0$,
an iterate $f_\lambda^n(Z)$ intersects $L_a\setminus\{I\}$
properly, or, by Proposition~\ref{prop-delta_n}, we have $L_a\subset f_\lambda^n(Z)$ for every $\lambda=(a,A,0)\in V_a$ with $A\neq0$.

In either case, fix $\lambda_0=(a,A,0)$ with $a^{-1}$ and
$A\neq0$ sufficiently small. By Theorem~\ref{th-chasse-julia}, there is a neighborhood
$\Omega$ of $f_{\lambda_0}$ in $\mathrm{Rat}_2^2$ such that
every map in $\Omega\cap\mathrm{End}_2^2$ satisfies all the
assumptions of Section~\ref{sec-ass}.

In the second case, Proposition~\ref{prop-DFR} provides a pair
$(Z,f)\in\mathcal Z$, with $f\in\Omega\cap\mathrm{End}_2^2$,
such that some iterate $f^N(Z)$ intersects
$W^s_{p(f),\mathrm{loc}}$ properly. By
Proposition~\ref{prop-trans}, a further perturbation within
$\Omega$ makes this intersection transverse.

In the first case, the locus of such proper intersections
is nonempty and Zariski open, so we may choose the corresponding
map $f_{\lambda'}$ to be very close to $f_\lambda$ and thus in $\Omega$. Proposition~\ref{prop-trans}
again gives the required transverse intersection after
perturbation into $\Omega\cap\mathrm{End}_2^2$.
\end{proof}
\subsection{Higher degrees}\label{sec-chasse-higher}
We now want to adapt the argument to higher degrees. We cannot
simply add terms of degree $d$ to the quadratic maps for which
a proper intersection has been obtained. Indeed, a perturbation
from degree $2$ to degree $d$ implies a blow-up along a subvariety,
and it is natural to expect that the worst family $\mathcal Z$
will degenerate to this subvariety as we approach the quadratic
locus in that direction. Thus, we lose all control during the
perturbation.

Hence, we need to carry out the proof in degree $d$. There are
several natural generalizations of our quadratic family, but none
leads to simple computations at every step. We therefore adopt
the following compromise.

\bigskip

Fix $d\geq3$ and set
\[
 S_\eta(w)=1+\eta w^{d-2},\qquad
 S_{\eta,h}(w,t)=t^{d-2}+\eta w^{d-2}.
\]
For $\eta\neq0$, the polynomial $S_\eta$ has degree $d-2$, and
$S_\eta\to1$ locally uniformly as $\eta\to0$. We keep
$c_a=\tilde w(a)$, with $a(c_a^2-1)=c_a$ and $c_a\sim-1$,
and the nonzero horizontal parameters $\alpha,\beta,\epsilon$
of the local construction. We always have $S_\eta(0)=1$ and,
for $\eta$ sufficiently small and $|a|$ sufficiently large,
$S_\eta(c_a)$ and $S_\eta(\pm1)$ are also close to $1$.

We now fix a small $\eta\in\Cb^*$ and define, for $a,A\in\Cb$,
the polynomial map
\[
 \widetilde f_{(a,A,0)}(z,w):=
 \bigl(\alpha z+\epsilon w+\beta zwS_\eta(w),
 c_a+(w-c_a)S_\eta(w)(Az+a(w+c_a))\bigr).
\]
Observe that in the degenerate case $\eta=0$, this is exactly
the map $f_{(a,A,0)}$ in the quadratic case. However, for fixed
$\eta\neq0$ and large $|a|$, it is not a small perturbation of
this quadratic map. The important point is that the way $S_\eta$
appears in both coordinates simplifies the computations for the
dynamics on $L_\infty$ and for the Jacobian.

The full family in homogeneous coordinates is then, for
$\lambda:=(a,A,B)\in\Cb^3$,
\begin{equation}\label{eq-higher-family}
\begin{aligned}
 \widetilde f_\lambda[z:w:t]=[\,&
 \alpha zt^{d-1}+\epsilon wt^{d-1}+\beta zwS_{\eta,h}(w,t):\\
 &c_at^d+(w-c_at)S_{\eta,h}(w,t)\bigl(Az+a(w+c_at)\bigr):\\
 &t^d+Bw^{d-1}(w-c_at)\,].
\end{aligned}
\end{equation}

\begin{theorem}\label{th-chasse-d3}
Let $d\geq3$ and fix $\eta\neq0$ sufficiently small. The conclusion
of Theorem~\ref{th-chasse-d2} holds with $\mathrm{End}_d^2$ and
$\mathrm{Rat}_d^2$ in place of $\mathrm{End}_2^2$ and
$\mathrm{Rat}_2^2$, for the family \eqref{eq-higher-family}.
\end{theorem}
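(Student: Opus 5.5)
The plan is to rerun the quadratic argument of Sections~\ref{sec-break}--\ref{sec-transversal} step by step for the family \eqref{eq-higher-family}, checking at each stage which structural property was used and verifying it in degree $d$.

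\textbf{Step 1: the basic structure.} First I record the basic structure. The line $L_a=\{w=c_at\}$ is invariant, and $\widetilde f_\lambda$ acts on it affinely by $z\mapsto\rho z+\epsilon c_a$, with $\rho=\alpha+\beta c_aS_\eta(c_a)$. Hence $W^s_{p}=L_a\setminus\{I\}$ as in Lemma~\ref{le-stable}. Next I introduce coordinates $U=\ell_a$ and $H=S_{\eta,h}(Az+a(w+c_at))-c_aBw^{d-1}$, adjusting the $B$-term so that the analogue of \eqref{eq-U-global}, $U\circ\widetilde f_\lambda=UH$, holds. The local construction near $\{w=\pm1\}$ is covered by Remark~\ref{rk-S} applied with $S=S_\eta$. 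Thus Section~\ref{sec-dimk} and Lemma~\ref{le-homof0} still apply for $\eta$ small and then $|a|$ large.

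\textbf{Step 2: the map for $B=0$, $A\neq0$ (analogue of Lemma~\ref{le-fixed}).} Here the last coordinate is $t^d$, so $L_\infty$ is totally invariant. On $L_\infty$ the map is an explicit rational map of $[H:U]$ of degree $d-1$ after cancelling the factor $U$, because $S_{\eta,h}$ appears in both coordinates. I will check that a single blow-up at $I$ resolves the map. If it does not, I will compute the resolution in the chart centred at $I$, where the leading terms are of order $\eta w^{d-2}$. From the resolution I derive:
\begin{itemize}
\item that no curve is contracted;
\item the degree formula \eqref{eq-degree-curve} with $d$ in place of $2$;
\item the finiteness of periodic curves (Corollary~\ref{cor-fixed}), since equality in $D_{j+1}=dD_j-\mathrm{mult}_I\geq D_j$ now forces $C$ to be a line through $I$, which is excluded because $\tilde L$ meets every such line.
\end{itemize}
For degree $d$ the third line needed in Proposition~\ref{prop-rigidity} should be easier to obtain. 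The factor $S_{\eta,h}$ in $U\circ\widetilde f=UH$ gives $H=0$ components: the zeros of $S_\eta$ define $d-2$ lines $\{w=\zeta t\}$ through $I$, and these are mapped into $L_a$. So at least three lines of the pencil through $I$ are avoided for every $A\neq0$, and no special parameter $A_*$ is needed. The rigidity argument through the finiteness of maps to a thrice-punctured $\Pb^1$ then applies verbatim, and so do Propositions~\ref{prop-DMM} and~\ref{prop-delta_n}. For these I still need a saddle point $q$ and an attracting point $s$ on $L_\infty$, which I will read off from the explicit map on $L_\infty$ for large $|a|$.

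\textbf{Step 3: the monodromy argument for $A=0$.} The base map is now
\[
g_B[w:t]=\bigl[c_at^d+(w-c_at)S_{\eta,h}\,a(w+c_at):\ t^d+Bw^{d-1}(w-c_at)\bigr].
\]
I will redo Lemma~\ref{le-pinceau}. This requires identifying the finitely many values of $B$ for which $[c_a:1]$ becomes totally invariant or the degree drops; the homogeneous-part argument then goes through unchanged. For Theorem~\ref{th-inter} I need two fixed directions that both emanate from $\tau=0$ as $B$ varies, are exchanged by a loop around a branch point $B_*$ avoiding the bad values, and such that only one of them is continued from the superattracting point at infinity. In the coordinate $\tau=t/w$ the fixed-point equation near $\tau=0$ has the form $B\approx a\tau+O(\tau^2)$, so I will compute its discriminant to locate $B_*$. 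I expect this to be the main obstacle. With $d-1$ fixed directions besides $L_a$, I must make sure the monodromy group actually moves the direction of $L_\infty$ onto another fixed direction that is not contained in the limit cycle $Z_{\lambda_a}=L_\infty$. My first attempt will be a Newton-polygon or large-$|a|$ asymptotic analysis showing that for small $\eta$ two roots behave as in the quadratic case, $\tau\in\{0,-c\}$, with a simple branch point near $-ac/4$, while the other roots stay far away.

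\textbf{Step 4: transversality and the Julia set.} Finally, Propositions~\ref{prop-trans}, \ref{prop-DFR} and Lemma~\ref{le-WsZar} are stated in degree $d$ or adapt directly. For Lemma~\ref{le-WsZar} I perturb by $\delta z^d$ and $\theta$ and use \cite{DFR} at the fixed point at infinity, whose local degree is now $d$. The small Julia set condition will be supplied by the degree-$d$ version of Theorem~\ref{th-chasse-julia}.
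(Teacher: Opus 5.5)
Your overall route matches the paper's: you rerun Sections~\ref{sec-break}--\ref{sec-transversal} for \eqref{eq-higher-family}, with a zero of $S_\eta$ supplying the third avoided line. Step~4, however, has a genuine gap.

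\textbf{The perturbation in Step~4.} The map $\widetilde f_{(a,A,0)}+(\delta z^d,\theta)$ is exactly the perturbation the paper rejects. Its top-degree part is $[\eta\beta zw^{d-1}+\delta z^d:\eta w^{d-1}(Az+aw)]$. So in the coordinate $u=w/z$ on $L_\infty$,
\[
 f_{|L_\infty}(u)=\frac{\eta u^{d-1}(A+au)}{\eta\beta u^{d-1}+\delta}.
\]
Thus $I$ splits into $d-1$ fixed points, one of which, $u=0$, is superattracting of local degree $d-1\geq2$.

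The proof of Lemma~\ref{le-WsZar} uses, at $\xi=Y\cap L_\infty$, the normal form $(xU,y^dV)$ of \cite{DFR} with $U(0,0)\neq0$. This is what gives $|x_n|\geq\kappa^n|x_0|$ and forces the branch of $Y$ to be the transverse invariant curve. At a critical fixed point of $f_{|L_\infty}$ one has $U(0,0)=0$, and the argument breaks down. This is also precisely the case that must be excluded: the stable direction of $p$ is nearly horizontal, and horizontal lines pass through $I$. Proposition~\ref{prop-DFR}, which relies on Lemma~\ref{le-WsZar}, is therefore left unproved.

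The paper instead uses \eqref{eq-higher-DFR}, adding $\delta z^{d-1}$ inside the bracket multiplying $(w-c_a)$. Then $f_{|L_\infty}(u)=u+\frac{\eta A}{\delta}u^{d-1}+O(u^d)$, so $I$ becomes a parabolic point of multiplicity $d-1$ and no fixed point at infinity is superattracting. The degree-two argument then applies, with horizontal lines excluded by $\theta\neq0$.

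\textbf{Computations in Steps~2 and~3.} These should also be corrected, though they do not change the strategy.
\begin{itemize}
\item At $I$ the lowest-order terms of the first two coordinates have degree $d-1$. Hence $\widehat F^*\mathcal L=d\mathcal H-(d-1)E$ and $e_C\deg F(C)=d\deg C-(d-1)\mathrm{mult}_I(C)$. Your version ``with $d$ in place of $2$'' gives $D_{j+1}\geq(d-1)D_j>D_j$. The invariant line $L_a$ already contradicts this, and the equality case you invoke never occurs.
\item At $B=0$ the common factor on $L_\infty$ is $\eta w^{d-1}$, not just $U$. The restriction is the homography $[z:w]\mapsto[\beta z:Az+aw]$, which is what Proposition~\ref{prop-DMM} needs.
\item The rigidity step is not verbatim, because $H$ is no longer linear. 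You recover $z_m$ from $Az_m=\frac{w_{m+1}-c_a}{(w_m-c_a)S_\eta(w_m)}-a(w_m+c_a)$, using that $w_m$ avoids every root of $S_\eta$.
\item In Step~3, near $\tau=0$ one has $B=a\eta\tau+O(\tau^2)$, not $a\tau+O(\tau^2)$. The $d-2$ fixed directions coming from the zeros of $S_\eta$ lie at $|\tau|\asymp|\eta|^{1/(d-2)}$, close to $L_\infty$. So ``the other roots stay far away'' is false, and a loop around the far branch point need not move the branch of $L_\infty$.
\end{itemize}
No local analysis is needed for Step~3. After removing $c_a$, the fixed-point curve $Bw^d+P(w)=0$, with $P(w)=1-a(w+c_a)S_\eta(w)$, is the graph of $B=-P(w)/w^d$, hence irreducible. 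Equivalently, as in the paper, $P(0)\neq0$ makes the monodromy around $B=\infty$ a $d$-cycle. The identity principle on this curve then concludes.
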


The first observation is that, as in the quadratic case, the line
$L_a=\{w=c_a\}$ is invariant by construction. On the other hand,
when $B=0$, its preimage still contains
\[
 \widetilde L_\lambda:=\{Az+a(w+c_a)=0\}.
\]
But it also has other components, namely the lines $D_r=\{w=r\}$
where $S_\eta(r)=0$. Observe that they are far from $L_a$ since
$\eta\neq0$ is small, so they do not interact with the dynamics
in the blender region. However, they will greatly simplify the
counterpart of Proposition~\ref{prop-rigidity}, as they directly
provide the third line, without the need to choose a special
parameter $A=A_*$.

We now explain step by step how the quadratic proof adapts to
this context.

\bigskip

\noindent\emph{\textbf{Local dynamics.}}
As we have said, the line $L_a=\{w=c_a\}$ is invariant.
The action of $\widetilde f_\lambda$ on it is given by
$z\mapsto\rho_a z+\epsilon c_a$, where
$\rho_a=\alpha+\beta c_aS_\eta(c_a)$. Its fixed point is
\[
 p(\widetilde f_\lambda):=(z_a,c_a),\qquad
 z_a:=\frac{\epsilon c_a}{1-\rho_a}.
\]
Moreover, as explained in Remark~\ref{rk-S}, all results of the
two-dimensional blender construction in Section~\ref{sec-dim2}
remain valid when $A=B=0$, $\eta\neq0$ is fixed and sufficiently
small, and $|a|$ is then taken sufficiently large.

The nontrivial homoclinic intersection is obtained by the same
argument as in Lemma~\ref{le-homof0}, except that the horizontal
displacement from $p(\widetilde f_\lambda)$ is now given by
\[
 \pi_z(x_a)-z_a
 =
 \frac{\epsilon c_a\bigl(2(\alpha-1)
       +\beta c_a(S_\eta(c_a)-S_\eta(-c_a))\bigr)}
      {1-\rho_a}
 +O(|a|^{-1}),
\]
whose leading term is nonzero for sufficiently small $\eta$.

\medskip
\noindent\emph{\textbf{Resolution, degrees and rigidity.}}
Fix any $A\neq0$ and put $F=\widetilde f_{(a,A,0)}$. We now
discuss the counterparts of Lemma~\ref{le-fixed} and
Proposition~\ref{prop-rigidity}. We could use exactly the same coordinates $(H,U,T)$ as in the quadratic
case, but we prefer to keep the original coordinates $(z,w,t)$,
in which the blow-up calculation is more direct.

The only indeterminacy point of $F$ is $I=[1:0:0]$. In the chart $z=1$ near $I$, the lowest-degree terms of the first
two coordinates have degree $d-1$ and are
\[
 P_h(w,t)=\alpha t^{d-1}+\beta wS_{\eta,h}(w,t),\qquad
 Q_h(w,t)=A(w-c_at)S_{\eta,h}(w,t).
\]
They have no common zero on $\Pb^1$ since at $S_{\eta,h}=0$ the first
is $\alpha t^{d-1}\neq0$, at $w=c_at$ it is
$\rho_a t^{d-1}\neq0$, and at $t=0$ both are nonzero.
Thus blowing up $I$ in the source resolves $F$ as a morphism
$\widehat F:X\to\Pb^2$. Its restriction to the exceptional
divisor is
\[
[w:t]\longmapsto[P_h(w,t):Q_h(w,t):0],
\]
which has degree $d-1$. With the notation of Lemma~\ref{le-fixed},
and with $\widehat L_\infty$ denoting the strict transform of
$L_\infty$, we obtain
\begin{equation}\label{eq-higher-resolution}
 \widehat F^*\mathcal L=d\mathcal H-(d-1)E,\qquad
 \widehat F^*L_\infty=d\widehat L_\infty+E.
\end{equation}
For a curve of degree $D$ and multiplicity $m$ at $I$, the
intersection of its strict transform with $\widehat F^*\mathcal L$
is $dD-(d-1)m\geq D>0$. Since $E$ is also not contracted,
$\widehat F$ is finite. We also have
\begin{equation}\label{eq-higher-degrees}
 \begin{gathered}
 \deg_{\mathrm{top}}F=d^2-(d-1)^2=2d-1,\\
 e_C\deg F(C)=d\deg C-(d-1)\mathrm{mult}_I(C),
 \qquad e_C=\deg(F_{|C}).
 \end{gathered}
\end{equation}

Moreover, the preimage of $L_a$ is given by
\[
 F^{-1}(L_a)
 =L_a\cup\widetilde L_\lambda\cup\bigcup_{S_\eta(r)=0}D_r,
 \qquad D_r=\{w=r\}.
\]
Choose a root $r$ of $S_\eta$, so that $r\neq c_a$. Let $\nu:C\to Z$ be the normalization
and put $\Sigma=\nu^{-1}(Z\cap L_\infty)$. On $C\setminus\Sigma$, write $(z_m,w_m)=F^m\circ\nu$. If all strict forward images of an irreducible curve $Z\neq L_\infty$
avoid $L_a\cap\Cb^2$ then each $w_m$ omits $c_a,r,\infty$. As in the quadratic case, they have to be nonconstant thus they form a finite family. Since every $w_m$ also avoids all roots of $S_\eta$,
the identity
\[
 Az_m=\frac{w_{m+1}-c_a}{(w_m-c_a)S_\eta(w_m)}-a(w_m+c_a)
\]
also implies the finiteness of $\{z_m\}_{m\geq0}.$ The one of Lemma~\ref{le-delta} follows from the fact that $\tilde L_\lambda$ intersects all the other components of $F^{-1}(L_a)$ in $\Cb^2.$

\medskip

\noindent\emph{\textbf{Dynamics at infinity and monodromy.}}
The restriction at infinity is, when $w\neq0$,
$$ \widetilde f_{\lambda}[z:w:0]=[\beta z:Az+aw:(B/\eta)w].$$
At $B=0$, this gives to the same homography as in the quadratic
case. Its fixed points are $q=[\beta-a:A:0]$ and $s=[0:1:0]$,
with tangent multipliers $a/\beta$ and $\beta/a$, respectively,
and transverse multipliers zero. The preimages $x_n=(f_{\lambda|L_\infty})^{-n}(I)$ converges towards $q$ and the local picture is the same as for $d=2.$

The monodromy argument requires a few additional computations.
When $A=0$, the induced map on the base of the pencil is
\[
 g_{B}(w)=
 \frac{c_a+a(w^2-c_a^2)S_\eta(w)}
      {1+Bw^{d-1}(w-c_a)}.
\]
Outside a finite set of values of $B$
not containing $0$, this map has degree $d$ and its fixed point
$c_a$ is noncritical, since
\[
 g_{B}'(c_a)=2ac_aS_\eta(c_a)-Bc_a^d.
\]
Thus $c_a$ has an infinite backward orbit, and the pencil argument
of Lemma~\ref{le-pinceau} applies.

After removing the marked fixed direction $c_a$, the fixed-point
equation is 
\begin{equation}\label{eq-higher-monodromy}
 Bw^d+P(w)=0,\ \text{ where }\ P(w)=1-a(w+c_a)S_\eta(w).
\end{equation}
Since $P(0)\neq0$, choosing a local holomorphic $d$-th root of $P$
near $0$ gives that the monodromy around $B=\infty$ is a $d$-cycle.

From this,  the proof of Theorem~\ref{th-inter}, as well as Proposition~\ref{prop-trans} is exactly the same.

\medskip
\noindent\emph{\textbf{A polynomial perturbation.}}
It remains to choose a suitable polynomial perturbation to apply the construction of Dujardin-Favre-Ruggiero~\cite{DFR} and obtain
the analogue of Lemma~\ref{le-WsZar}. A first idea would be to use the same type of perturbation as in degree $2$. However, in this
case, the indeterminacy point $I$ splits into $d-1$ fixed points, one of which is superattracting. We therefore use the following perturbation
\begin{equation}\label{eq-higher-DFR}
\begin{aligned}
 f(z,w)=\bigl(&
 (\alpha+\beta wS_\eta(w))z+\epsilon w+\delta z^d,\\
 &c_a+(w-c_a)\bigl[S_\eta(w)(Az+a(w+c_a))
                  +\delta z^{d-1}\bigr]+\theta\bigr).
\end{aligned}
\end{equation}
For sufficiently small $\delta,\theta$, the fixed points $s$ and $q$ persist as an attracting fixed point and a saddle fixed point, respectively. When $\delta\neq0$, the map $f$ extends to an endomorphism of $\Pb^2$, and it is easy to check that $I=[1:0:0]$ becomes a parabolic fixed point of multiplicity $d-1$ for $f_{|L_\infty}$. In particular, $f_{|L_\infty}$ has no superattracting fixed point. For $\theta\neq0$, the proofs of Lemma~\ref{le-WsZar} and then Proposition~\ref{prop-DFR} proceed as when $d=2.$

\medskip
\noindent\emph{\textbf{Parameter choices and the final perturbation.}} To prove Theorem~\ref{th-chasse-d3}, first fix $\eta\neq0$ and the parameters $\alpha,\beta,\epsilon$ as above. Then, for $|a|$ sufficiently large and $A\neq0$ sufficiently small, Lemma~\ref{le-homof0}, Theorem~\ref{th-chasse-julia} and the results of Section~\ref{sec-existence} (see in particular Lemma~\ref{le-init} and \cite[Lemma~4.12]{gtv}) ensure that all the assumptions of Section~\ref{sec-ass} hold in a neighborhood of $\widetilde f_{(a,A,0)}$.

\subsection{Rescaling toward correspondences}\label{sec-chasse-julia}
We now prove the small Julia condition simultaneously for all $d\geq2$.
Throughout this subsection, put
\[
 S(w)=\begin{cases}
 1,&d=2,\\
 1+\eta w^{d-2},&d\geq3,
 \end{cases}
\]
where, for $d\geq3$, $\eta\neq0$ is fixed and sufficiently small.
On a first reading, the reader may only consider the case $d=2$ where all the ideas already appear.

The parameter $B$ is not considered in this section so we write $F_{a,A}=f_{(a,A,0)}$ for $d=2$ and
$F_{a,A}=\widetilde f_{(a,A,0)}$ for $d\geq3$. Both families then have the affine expression
\begin{equation}\label{eq-all-degrees-affine}
 F_{a,A}(z,w)=\bigl((\alpha+\beta wS(w))z+\epsilon w,\,
 c_a+(w-c_a)S(w)(Az+a(w+c_a))\bigr),
\end{equation}
where $a(c_a^2-1)=c_a$ and $c_a\sim-1$ as $|a|\to\infty$.

We start with the following elementary result.
\begin{lemma}\label{le-rplus}
Let $f\in\mathrm{End}_d^2$ satisfy Assumptions~\ref{verti}
and~\ref{hyperbolic} with $s=0$ and $u=1$. With the notation
of these assumptions, $f$ has a unique fixed point $r_+(f)$
in $\mathcal U_+$. Moreover, if $r_+(f)\in J_2(f)$ and
$r_+(f)\in\Db_R\times U_+$, then
\[
 \Lambda(f)\subset J_2(f).
\]
\end{lemma}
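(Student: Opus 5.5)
The plan has two parts: first the existence and uniqueness of $r_+(f)$, then the propagation of membership in $J_2(f)$ from $r_+(f)$ to the whole of $\Lambda(f)$.

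\emph{Existence and uniqueness.} With $s=0$ and $u=1$, Assumption~\ref{verti} says that $f|_{\mathcal U_+}$ is an injective degree~$1$ horizontal-like map of index $(0,2)$ from $\mathcal U_+$ to $\Db_{2R}\times\Bb_{3/2}$. Its maximal invariant set is therefore a single point, exactly as recalled after Definition~\ref{def-hl}. One way to see this is that the inverse branch $G_+\colon f(\mathcal U_+)\cap\mathcal U_+\to\mathcal U_+$ maps the target relatively compactly into itself, so it contracts the Kobayashi metric and has a unique fixed point. This point is $r_+(f)$, a repelling fixed point, and any fixed point of $f$ in $\mathcal U_+$ is a fixed point of $G_+$, hence equal to it.

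\emph{Propagation to $\Lambda(f)$.} We will use two facts: $J_2(f)=\supp\mu_f$ is closed and totally invariant, and it contains the closure of the iterated preimages of any of its points. Assume $r_+(f)\in J_2(f)\cap(\Db_R\times U_+)$. Then every iterated preimage of $r_+(f)$ lies in $J_2(f)$. We will produce preimages accumulating on every point of $\Lambda(f)$ using the coding of Assumption~\ref{hyperbolic}. Write $G_\pm$ for the inverse branches of $f^2$ onto $\mathcal V_\pm$; since $f^2|_{\mathcal V}$ is horizontal-like of degree~$2$ and index $(0,2)$, both branches are defined on $\Db_R\times\Bb_{3/2}$ and are uniformly contracting. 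Every $x\in\Lambda(f)$ is then $\lim_n G_{\omega_0}\circ\cdots\circ G_{\omega_{n-1}}(y)$ for a word $\omega$ and for any $y$ in the common domain, as in Section~\ref{sec-dim2}.

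\emph{Main obstacle.} The difficulty is ensuring that $r_+(f)$, or a fixed preimage of it, lies in the domain $\Db_R\times\Bb_{3/2}$ of the branches $G_\pm$. This is precisely the role of the hypothesis $r_+(f)\in\Db_R\times U_+$, since $U_+\subset\Bb_{3/2}$. I would take $y=r_+(f)$ directly. If the convention only allows starting from points of $\Db_R\times V_+$, I would instead use the preimage $G_+\big(r_+(f)\big)$, or note that $f(\mathcal V_-)\Subset\mathcal U_+$ and use $f^{-1}$ through $\mathcal V_-$.

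\emph{Conclusion.} Each point $G_{\omega_0}\circ\cdots\circ G_{\omega_{n-1}}(r_+(f))$ is an iterated preimage of $r_+(f)$, hence lies in $J_2(f)$. These points converge to $x$ by uniform contraction, so closedness of $J_2(f)$ gives $x\in J_2(f)$. Since $x\in\Lambda(f)$ was arbitrary, $\Lambda(f)\subset J_2(f)$.
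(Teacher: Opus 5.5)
Your proposal is correct and follows the paper's proof essentially step by step. You obtain uniqueness of $r_+(f)$ from the relatively compact inverse branch $G_+$ of the degree~$1$ horizontal-like map on $\mathcal U_+$, then apply all compositions of the two inverse branches of $f^2$ on the common domain $\Db_R\times\Bb_{3/2}$ to $r_+(f)$, and conclude with total invariance and closedness of $J_2(f)$. The fallback options in your ``main obstacle'' paragraph are unnecessary: since $U_+\subset\Bb_{3/2}$, the point $r_+(f)$ already lies in that common domain, which is exactly how the paper uses the hypothesis $r_+(f)\in\Db_R\times U_+$.
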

\begin{proof}
By Assumption~\ref{verti}, $f_{|\mathcal U_+}$ is a horizontal-like map of degree $1$ and index $(0,2)$.
Its inverse branch $G_+$ is defined near $\overline{\mathcal U_+}$ and satisfies
\[
 G_+(\overline{\mathcal U_+})\Subset\mathcal U_+.
\]
It therefore has a unique fixed point $r_+(f)$, which is also the unique fixed point of $f$ in $\mathcal U_+$.

By Assumption~\ref{hyperbolic}, the two maps $f^2_{|\mathcal V_\pm}$ have inverse branches $g_\pm$ on
the common domain $\mathcal D=\Db_R\times\Db_{3/2}$,
with images relatively compact in $\mathcal D$.
These branches define the limit set $\Lambda(f)$.
If $r_+(f)\in J_2(f)\cap(\Db_R\times U_+)$, then
$r_+(f)\in\mathcal D$, and all iterated compositions of
$g_+$ and $g_-$ applied to $r_+(f)$ belong to $J_2(f)$
by total invariance. Every point of $\Lambda(f)$ is a limit
of such points, so closedness gives $\Lambda(f)\subset J_2(f)$.
\end{proof}

In order to prove that $r_+(f)\in J_2(f)$, we will use the following
entropy criterion of de Th\'elin \cite[\S2.2]{deThelin05} and
Dinh \cite[Theorem~2.1]{Dinh07}.
\begin{theorem}\label{th-dtd-entropy}
Let $f$ be in $\mathrm{End}_d^2$. If $\nu$ is an invariant ergodic
measure on $\Pb^2$ of entropy larger than $\log d$, then
$\supp(\nu)\subset J_2(f)$.
In particular, if $f^N$ admits $M>d^N$ inverse branches on a
bidisk $D$, with pairwise disjoint images relatively compact
in $D$, then the associated limit set $K$ is contained in $J_2(f)$.
\end{theorem}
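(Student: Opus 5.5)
The first assertion is exactly the theorem of de Th\'elin \cite[\S2.2]{deThelin05} and Dinh \cite[Theorem~2.1]{Dinh07}, so the plan is only to derive the ``in particular'' part from it. We apply the first assertion to the iterate $f^N\in\mathrm{End}_{d^N}^2$ rather than to $f$. This is legitimate because $T_{f^N}=T_f$, so that $J_2(f^N)=J_2(f)$. It therefore suffices to produce an $f^N$-invariant ergodic measure $\nu$ with $\supp(\nu)=K$ and entropy $h_\nu(f^N)=\log M>\log d^N$.

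\emph{Step 1: the limit set and its coding.} Let $g_1,\ldots,g_M$ be the inverse branches of $f^N$ on $D$, with $g_i(D)\Subset D$. Each $g_i$ strictly contracts the Kobayashi metric of the bidisk $D$, uniformly on compact subsets. Hence for every word $\omega=(\omega_n)_{n\geq0}\in\{1,\ldots,M\}^{\Nb}$ the sets $g_{\omega_0}\circ\cdots\circ g_{\omega_{n-1}}(D)$ are nested with diameters tending to $0$. They therefore shrink to a point $x_\omega$, exactly as in the construction of $\Lambda(\lambda)$ in Section~\ref{sec-dim2}. The map $\pi\colon\omega\mapsto x_\omega$ is continuous and its image is $K$. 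Moreover $f^N(x_\omega)=x_{\sigma\omega}$, where $\sigma$ is the shift, so $\pi$ semiconjugates $\sigma$ to $f^N|_K$. Since the images $g_i(D)$ are pairwise disjoint, the first letter of $\omega$ is determined by $x_\omega$; iterating, $\pi$ is injective and is therefore a topological conjugacy onto the compact set $K$.

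\emph{Step 2: the measure.} Let $\beta$ be the uniform Bernoulli measure on $\{1,\ldots,M\}^{\Nb}$ and set $\nu:=\pi_*\beta$. Since $\beta$ is $\sigma$-invariant, ergodic and of full support, $\nu$ is $f^N$-invariant, ergodic and satisfies $\supp(\nu)=K$. Because $\pi$ is a conjugacy, we get $h_\nu(f^N)=h_\beta(\sigma)=\log M>N\log d=\log\deg(f^N)$. The first assertion, applied to $f^N$, then gives
\[
K=\supp(\nu)\subset J_2(f^N)=J_2(f).
\]

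The only point requiring some care is Step 1: one must check that the branches are genuinely defined on all of $D$ with compact image, so that the Kobayashi contraction gives uniformly shrinking cylinders. One must also check that disjointness of the first-level images propagates to injectivity of the coding. Both follow directly from the stated hypotheses, so the real content lies in the cited entropy theorem.
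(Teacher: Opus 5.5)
Your proposal is correct and follows essentially the paper's own route: cite the de Th\'elin--Dinh criterion, apply it to $f^N$ using $J_2(f^N)=J_2(f)$, and put on the limit set $K$ a Bernoulli measure of entropy $\log M>N\log d$. You additionally spell out the coding $\omega\mapsto x_\omega$ and its injectivity from the disjointness of the images $g_i(D)$, which is what justifies the entropy value $\log M$ and which the paper leaves implicit.
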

\begin{proof}
The first point is simply the criterion of de Th\'elin and Dinh.
For the second, let $g_1,\ldots,g_M$ be inverse branches of $f^N$
on $D$, with pairwise disjoint images satisfying
$\overline{g_i(D)}\subset D$.
They automatically contract the Kobayashi metric of $D$, and
\[
 K=\bigcap_{n\geq1}\ \bigcup_{(i_1,\ldots,i_n)\in\{1,\ldots,M\}^n}
 \overline{g_{i_1}\circ\cdots\circ g_{i_n}(D)}.
\]
This set is invariant under $f^N$ and supports a Bernoulli measure
of entropy $\log M>N\log d$. Thus,
$K\subset J_2(f^N)=J_2(f)$.
\end{proof}
We now use this criterion to establish the following result.
\begin{theorem}\label{th-chasse-julia}
Fix $d\geq2$ and $S$ as above, and let $A\neq0$ be sufficiently
small. For $|a|$ sufficiently large, every $f\in\mathrm{End}_d^2$
sufficiently close to $F_{a,A}$ satisfies
\[
 \Lambda(f)\subset J_2(f).
\]
\end{theorem}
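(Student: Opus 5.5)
By Lemma~\ref{le-rplus}, it suffices to show that $r_+(f)\in J_2(f)$ and $r_+(f)\in\Db_R\times U_+$. The second condition is local and stable under perturbation. It holds for $F_{a,A}$ when $|a|$ is large and $A$ is small, because $r_+$ is then the continuation of the fixed point of the skew product near $w=1$. For the first condition we use Theorem~\ref{th-dtd-entropy}. We aim to find a bidisk $D$ containing $r_+(F_{a,A})$ and an iterate $N$ such that $F_{a,A}^N$ has $M>d^N$ inverse branches on $D$, with pairwise disjoint images relatively compact in $D$. This is an open condition, so it persists for every $f\in\mathrm{End}_d^2$ close enough to $F_{a,A}$ in $\mathrm{Rat}_d^2$, since the branches only involve the dynamics on a compact subset of $\Cb^2$ where $f$ is $C^1$-close to $F_{a,A}$. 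If $r_+(f)$ belongs to the limit set $K$ of this system, then $r_+(f)\in K\subset J_2(f)$. Otherwise we take $D$ so that its limit set contains a point whose forward images accumulate on $r_+$, and use closedness and total invariance of $J_2$.

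To produce these branches we rescale as $|a|\to\infty$. In the coordinates $\tilde w=a(w-c_a)$, or a similar affine normalization, together with the original $z$, the map \eqref{eq-all-degrees-affine} has a limit as $a\to\infty$. Near the fibers $w\approx\pm1$ the $z$-coordinate tends to affine maps, as in Section~\ref{sec-dim2}, while $A\neq0$ couples the second coordinate to $z$. For the inverse dynamics, the limit is an algebraic correspondence $\Gamma$ on a $\Pb^1$ (the $z$-line). It is obtained by eliminating $w$ from the relation $w'=F_2(z,w)$ after rescaling, and it has $d_0(\Gamma)=d$ images and $d_1(\Gamma)=2d-1$ preimages, matching the topological degree $2d-1$ from \eqref{eq-higher-degrees}. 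Since $d_1>d_0$, the results of Dinh, Dinh--Sibony and Bharali--Sridharan (Lyubich's method) give an equidistribution/entropy estimate for backward branches of $\Gamma$. It follows that for large $n$ the number of well-defined contracting inverse branches of $\Gamma^n$ on a suitable disk is at least $(1-o(1))(2d-1)^n$, with exponentially small images.

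Next we lift these one-dimensional branches to genuine inverse branches of $F_{a,A}^n$ on a bidisk $D=\Delta_z\times\Delta_w$. Along each branch the $w$-direction is strongly contracted by the inverse of $q_a$-like dynamics, with derivative $O(1/a)$, so each limiting branch of $\Gamma^n$ on $\Delta_z$ gives a holomorphic branch of $F_{a,A}^{-n}$ on $D$ for $|a|$ large. These branches are close to graphs over the $\Gamma$-branch images, hence relatively compact and pairwise disjoint once the $\Gamma$-branches have disjoint images. We thin the family if necessary to keep disjointness. We then obtain $M\geq c(2d-1)^n>d^n$ branches, and Theorem~\ref{th-dtd-entropy} applies for $F_{a,A}$ and all nearby $f$.

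We expect the main obstacle to be identifying the limiting correspondence precisely and checking that its good branches pass near $r_+$. This requires controlling the rescaling so that the region near $w\approx 1$ (containing $r_+$) lies in the domain where the Lyubich-type branches exist, and ensuring that critical values and the degenerate locus, where the correspondence is not locally invertible (coming from $A z(w-c_a)$ and the indeterminacy at $I$), avoid $D$. We will first try taking $D$ around the $\Gamma$-preimages of a generic point and then connect to $r_+$ through the blender branches $g_\pm$, which already map $\Db_R\times\Db_{3/2}$ into itself.
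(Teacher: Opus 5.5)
Your architecture matches the paper's: reduction to $r_+(f)\in J_2(f)$ via Lemma~\ref{le-rplus}, a rescaling limit that is a correspondence on $\Pb^1$, Lyubich-type counting of inverse branches, lifting, and Theorem~\ref{th-dtd-entropy}. But the degeneration you propose is the wrong one, and finding the right one is the heart of the proof.

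\textbf{The rescaling.} In the coordinates $\tilde w=a(w-c_a)$ with the original $z$ there is no limit at all, since $\tilde w'=\tilde w\,S(w)\bigl(Az+2ac_a+\tilde w\bigr)$. More importantly, for bounded $z$ the coupling term $Az$ is negligible against $a(w+c_a)$. As $|a|\to\infty$, the preimages with bounded $z$ converge to the $d$ affine branches $z=(z'-\epsilon w)/(\alpha+\beta wS(w))$ with $w\in\{\pm1\}\cup S^{-1}(0)$. The remaining $d-1$ preimages, near the zeros of $\alpha+\beta wS(w)$, escape to $|z|\sim|a|$. So the limit of the inverse dynamics on the original $z$-line has $d_1=d$, exactly the threshold in Theorem~\ref{th-dtd-entropy}, and can never give more than $d^n$ branches. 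Your degrees $d_0=d$, $d_1=2d-1$ are asserted, not derived.

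The missing idea is to rescale $z$ rather than $w$. With $\tau=1/a$, $v=Az/a$ and $\varepsilon_\tau=A\epsilon/a$, the second inverse equation becomes $\tau c_a+(w-c_a)S(w)(w+c_a+v)=\tau w'$. At $\tau=0$ it no longer involves $w'$, and it forces $w=-1$ or $w=1-v$ (or $S(w)=0$). The first equation then yields $\Gamma=\mathrm{gr}(\mathcal L)+\mathrm{gr}(\mathcal P)$, with $\mathcal L(v)=\chi_-v$, $\mathcal P(v)=v\bigl[\alpha+\beta(1-v)S(1-v)\bigr]$ and $\chi_-=\alpha-\beta S(-1)$. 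Hence $d_0=2$ and $d_1=d+1>d$; keeping the $S$-sheets would reproduce your numbers, but only in these coordinates. This is precisely where $A\neq0$ is needed. The independence of the limit from $w'$ is also what makes the lifted branches relatively compact in a bidisk $W\times\Db_{\rho_1}$.

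\textbf{Disjointness.} It cannot come from thinning. Images of distinct branches of $\Gamma^n$ on the base line overlap with multiplicity up to $d_0^n$. There is no reason a pairwise disjoint subfamily should retain more than about $(d_1/d_0)^n$ branches, which is below $d^n$ (with your degrees, $(2d-1)/d<d$). In the paper, disjointness comes from the lift instead. The sheets $w=-1$ and $w=1-v$ meet only at $v=2$, which is excluded from the itineraries. Distinct branches with the same symbolic itinerary have disjoint source images. Hence distinct branches produce distinct sequences of lifted points. This persists for small $\tau$ by compactness, and for $\tau\neq0$ a common point of two images would have a single forward orbit, a contradiction.

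\textbf{Connecting to $r_+$.} The blender branches $g_\pm$ cannot bring $K$ to $r_+$. They map $\Db_R\times\Db_{3/2}$ into itself and accumulate on $\Lambda$, which does not contain $r_+$: it would have to lie in $\mathcal V_+$, whose image is in $\mathcal U_-$. These branches are what Lemma~\ref{le-rplus} uses \emph{after} $r_+\in J_2$ is known. The paper instead uses that $0$ is a repelling fixed point of $\mathcal P$, with multiplier $\chi_+=\alpha+\beta S(1)$. Hence $0\in J(\mathcal P)\subset\operatorname{supp}\nu$, where $\Gamma^*\nu=M\nu$, so $M^{-1}\mathcal P^*\nu$ charges every disk $V$ about $0$. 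This produces a return branch with first symbol $\mathcal P$ and image in $V$. Its lift maps the bidisk into a neighborhood $\mathcal N$ of $(v,w)=(0,1)$. On $\mathcal N$, the lift of the local inverse of $\mathcal P$ fixing $0$ contracts to a fixed point, identified with $r_+(f)$. Backward invariance and closedness of $J_2(f)$ then give $r_+(f)\in J_2(f)$.
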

\begin{rk}
The neighborhood may depend on $a,A,S$. In particular, this makes
no assertion about arbitrary perturbations of the map with $A=0$.
For $d\geq3$, we keep $\eta\neq0$ fixed throughout the proof.
Together with the local construction, the theorem supplies the
remaining small Julia condition in Section~\ref{sec-ass} for $k=2$.
\end{rk}
\begin{proof}
The proof has three main steps. First, we find a suitable rescaling under which the inverse dynamics converges to a correspondence.
Then, we construct sufficiently many inverse branches to apply Theorem~\ref{th-dtd-entropy} in a neighborhood of $F_{a,A}$.
Finally, we verify that the fixed point $r_+(f)$ lies in the required region.

\medskip

\noindent\emph{\textbf{The limiting correspondence.}}
The preimage of the invariant line
$L_a=\{w=c_a\}$ consists of $L_a$,
$\widetilde L_\lambda=\{Az+a(w+c_a)=0\}$, where
$\lambda=(a,A,0)$, and the additional lines $\{w=r\}$ with
$S(r)=0$ when $d\geq3$.

For $A\neq0$, the intersection of $L_a$ and
$\widetilde L_\lambda$ is $(-2ac_a/A,c_a)$, a critical point
of $F_{a,A}$ which escapes to infinity as $A\to0$.
Moreover, for small $A$ and large $|a|$, the
corresponding domains $\mathcal U_-$ and $\mathcal U_+$ in
Assumption~\ref{verti} lie in tubular neighborhoods of radius
$O(|a|^{-1})$ around $L_a$ and $\widetilde L_\lambda$,
respectively.

To keep this geometry for $A\neq0$ and $a$ finite, we rescale
coordinates and parameters
\[
 \tau=1/a,\qquad v=Az/a,\qquad \varepsilon_\tau=A\epsilon/a.
\]
Viewed as a function of $\tau$, the fixed point $c_a$ extends holomorphically to $\tau=0$,
with value $-1$. In these coordinates, the two lines become $\{w=c_a\}$ and
$\{w=-c_a-v\}$, meeting at $(-2c_a,c_a)$.
As $\tau\to0$, they converge respectively to $\{w=-1\}$ and
$\{w=1-v\}$, whose intersection is $(v,w)=(2,-1)$.
This explains the special role of $v=2$ in what follows.

The inverse equations for the conjugated map are
\begin{equation}\label{eq-rescaling-inverse}
 \begin{cases}
 (\alpha+\beta wS(w))v+\varepsilon_\tau w-v'=0,\\
 \tau c_a+(w-c_a)S(w)(w+c_a+v)-\tau w'=0.
 \end{cases}
\end{equation}
At $(\tau,\varepsilon_\tau)=(0,0)$, the second equation becomes
\[
 (w+1)S(w)(w-1+v)=0.
\]
We keep the roots coming from $w=-1$ and $w=1-v$, whereas
those corresponding to $S=0$ will not be needed in what follows.
The retained inverse dynamics is then described by
\[
 \Gamma=\mathrm{gr}(\mathcal L)+\mathrm{gr}(\mathcal P),\qquad
 \mathcal L(v)=\chi_-v,\quad
 \mathcal P(v)=v\bigl[\alpha+\beta(1-v)S(1-v)\bigr],
\]
where $\chi_-:=\alpha-\beta S(-1)$. This multiplier plays a
special role below, as does
$\chi_+:=\mathcal P'(0)=\alpha+\beta S(1)$.
By our choice of parameters, $0<|\chi_-|<1<|\chi_+|$.
The fact that $\mathcal L$ is contracting is not an obstacle,
since we only need sufficiently many contracting inverse branches
of long iterates.

Since $\deg\mathcal P=d$, the correspondence has degrees
\begin{equation}\label{eq-julia-correspondence-degrees}
 d_0(\Gamma)=2,\qquad d_1(\Gamma)=d+1=:M>d.
\end{equation}

By the equidistribution theorem of Dinh--Sibony~\cite{DS-MT},
in the form \cite[Fact~4.3]{BS-correspondences}, there is a
probability measure $\nu$, charging no polar set, such that
\begin{equation}\label{eq-julia-equidistribution}
 \Gamma^*\nu=M\nu,\qquad
 M^{-n}(\Gamma^n)^*\delta_x\longrightarrow\nu
\end{equation}
outside a polar exceptional set of targets $x$. Since
$\mathcal L^*\nu+\mathcal P^*\nu=M\nu$, positivity gives
\[
 \mathcal P^{-1}(\supp\nu)\subset\supp\nu.
\]
This closed set has more than two points, so it contains
$J(\mathcal P)$. In particular, $0\in\supp\nu$, since $0$ is a
repelling fixed point of $\mathcal P$, with multiplier $\chi_+$.
Moreover,
\begin{equation}\label{eq-julia-distinguished-symbol}
 M^{-n}\mathcal P^*(\Gamma^{n-1})^*\delta_x
 \longrightarrow\nu^{\mathcal P}:=M^{-1}\mathcal P^*\nu.
\end{equation}
This measure gives positive mass to every neighborhood of $0$ and
counts itineraries whose first forward symbol is $\mathcal P$.

Let $\iota$ be the local inverse of $\mathcal P$ fixing $0$.
It gives a degenerate inverse branch in dimension $2$.
Indeed, choose a small bidisk $\mathcal N$ about $(0,1)$ such that
\[
 H_0(v',w')=(\iota(v'),1-\iota(v')),\qquad
 H_0(\mathcal N)\Subset\mathcal N,
\]
and a disk $V$ about $0$ whose lifted graph
$\{(v,1-v):v\in\overline V\}$ lies in $\mathcal N$.
This map has a unique fixed point $r_+:=(0,1)$, which will be useful
to connect the Julia set $J_2(f)$ to $\Lambda(f)$ after small
perturbations using Lemma~\ref{le-rplus}. Observe that, in these coordinates, the set
$\Lambda(F_{a,A})$ from Assumption~\ref{hyperbolic} degenerates
to the two points $\{(0,1),(0,-1)\}$, but the fact that $(0,1)$
also corresponds to $r_+$ cannot be used directly. What will be
useful is the persistence of inverse branches, i.e., for small
$\tau\neq0$ and $\varepsilon_\tau=A\epsilon\tau$, there is a
small perturbation of $H_0$ corresponding to an inverse branch
of the conjugated map $F_{a,A}$.

In order to construct more inverse branches and lift them to
dimension $2$, we consider regular inverse itineraries of the
correspondence avoiding the finite set
\[
 E=\{2,\infty\}\cup\mathrm{CV}(\mathcal P)
                  \cup\{1-r:S(r)=0\},
\]
where $\mathrm{CV}(\mathcal P)$ is the set of critical values
of $\mathcal P$. Observe that the source Jacobian determinants in
\eqref{eq-rescaling-inverse}, on the two retained sheets, are
\begin{equation}\label{eq-julia-lifting-jacobians}
 \chi_-(v-2)S(-1),\qquad
 (2-v)S(1-v)\mathcal P'(v).
\end{equation}
Regular inverse itineraries avoiding $E$ can therefore be lifted
to dimension $2$.

\medskip
\noindent\emph{\textbf{Returning inverse branches.}}
We now follow the inverse-branch construction in
\cite{BS-correspondences}.
In our case, we avoid a larger finite set $E$. The arguments
remain the same, but we recall them for convenience. Fix a disk $V_0\Subset V$ about $0$, choose
\[
 0<\xi<\frac1{10}\min\{1,\nu^{\mathcal P}(V_0)\},
\]
and put $e=\#E$. Take $l$ sufficiently large that
\[
 \frac{2e}{M-2}\left(\frac2M\right)^l<\xi.
\]
Choose bounded simply connected planar domains
\[
 W_0\Subset W\Subset U\Subset
 \Cb\setminus\bigcup_{j=0}^l\Gamma^j(E),
 \qquad \nu(W_0)>1-\xi.
\]
To obtain them, cut along parallel rays of zero $\nu$-mass from the
finitely many excluded points to infinity, and exhaust the simply
connected complement by conformal disks. Such a direction exists,
since $\nu$ has no atoms.

We consider inverse itineraries on $U$ which, at every stage, are regular and have images avoiding $E$.
We call them \emph{admissible} and denote their number at depth
$k$ by $G_k$. Then, by the choice of $U$,
\[
 G_l=M^l,\qquad G_{k+1}\geq MG_k-e2^{k+1}\quad(k\geq l).
\]
Indeed, each admissible itinerary has $M$ regular extensions,
since its initial image is simply connected and avoids
$\mathrm{CV}(\mathcal P)$, before excluding source images
meeting $E$. For each of the $2^{k+1}$ labelled words, distinct
inverse branches have disjoint source images, so at most $e$
extensions are excluded. Consequently,
\begin{equation}\label{eq-julia-branch-count}
 G_n\geq\left[1-\frac{2e}{M-2}\left(\frac2M\right)^l\right]M^n
 >(1-\xi)M^n\qquad(n\geq l).
\end{equation}
For the same reason, the sum of the spherical areas of the initial
images over all admissible itineraries on $U$ is at most $2^n$,
normalizing the area of $\Pb^1$ to one. Discarding at most $\xi M^n$
branches makes each remaining area at most $\xi^{-1}(2/M)^n$.
The area--diameter estimate \cite[Appendix]{briendduval2} then gives,
for large $n$,
\[
 \operatorname{diam}v_0(W)
 \leq C_{\xi,U,W}\left(\frac2M\right)^{n/2}.
\]
Apply~\eqref{eq-julia-equidistribution} and
\eqref{eq-julia-distinguished-symbol} at a point of $W_0$ outside
the polar exceptional set. The two discards have total relative
size less than $2\xi$. For large $n$, the remaining itineraries
therefore include $N_n\geq c_0M^n$ returns, with $c_0>0$ independent
of $n$, of the form
\[
 v_n(x)=x,\quad v_{i+1}=R_i(v_i),\quad
 R_i\in\{\mathcal L,\mathcal P\},\qquad v_0(W)\Subset W.
\]
These returns also include one with $R_0=\mathcal P$ and
$v_0(W)\Subset V$. Indeed, $\nu^{\mathcal P}\leq\nu$, so
\[
 \nu^{\mathcal P}(V_0\cap W_0)
 >\nu^{\mathcal P}(V_0)-\xi>9\xi.
\]
The shrinking diameters, together with $W_0\Subset W$ and
$V_0\Subset V$, give the stated compact inclusions.

Since $M=d+1>d$, fix $n$ and these finitely many itineraries so that
\begin{equation}\label{eq-julia-entropy-threshold}
 N_n>d^n.
\end{equation}

\medskip
\noindent\emph{\textbf{Lifting and perturbing.}}
At the limiting parameter $(\tau,\varepsilon_\tau)=(0,0)$,
lift an itinerary by setting $w_i=-1$ when $R_i=\mathcal L$
and $w_i=1-v_i$ when $R_i=\mathcal P$, for $0\leq i<n$.
Choose $\rho_1$ sufficiently large and put $D=W\times\Db_{\rho_1}$.
The selected returns then give $N_n$ lifted maps $g_{j,0}$ with
$g_{j,0}(\overline D)\Subset D$, including a map $g_{0,0}$
satisfying $g_{0,0}(\overline D)\Subset\mathcal N$.
Composing this map with iterates of $H_0$ gives inverse
itineraries converging to $(0,1)$.

Two distinct branches cannot give
the same sequence of lifted points $(v_i,w_i)_{0\leq i\leq n}$.
Indeed, different symbols give different lifted points at the
corresponding step, since the two lines meet only at $v=2\in E$.
On the other hand, distinct branches with the same symbolic
itinerary have disjoint source images.

By~\eqref{eq-julia-lifting-jacobians}, these finitely many
itineraries and the local branch $H_0$ persist for small
$(\tau,\varepsilon_\tau)$. By compactness over $\overline D$,
distinct branches still give distinct sequences of lifted points
after continuation. For $\tau\neq0$, equality of two initial
points would force equality of their forward orbits, so the
continued return branches have pairwise disjoint images.

Taking $\varepsilon_\tau=A\epsilon\tau$, we obtain these branches
for $F_{a,A}$ when $A\neq0$ is small and $|a|$ is large, and then
for all $f\in\mathrm{End}_d^2$ sufficiently close to $F_{a,A}$.
We denote the continued branches by $g_{j,f}$ and $H_f$,
using the same rescaled coordinates.

By~\eqref{eq-julia-entropy-threshold} and
Theorem~\ref{th-dtd-entropy}, the return limit set satisfies
$K_f\subset J_2(f)$. The connecting inverse branch $g_{0,f}$ gives
\[
 \varnothing\neq g_{0,f}(K_f)\subset\mathcal N\cap J_2(f).
\]
The iterates of $H_f$ converge to its unique fixed point,
which we identify below with $r_+(f)$. Backward invariance
and closedness therefore give $r_+(f)\in J_2(f)$.

\medskip
\noindent\emph{\textbf{Identification of the positive fixed point.}}
It remains to check the hypotheses of Lemma~\ref{le-rplus} for $r_+(f)$. Write $(z_+,w_+)$ for the original
coordinates of the fixed point just constructed for $F_{a,A}$.
Since $w_+$ is close to $1$ and $\chi_+\neq1$, its fixed-point
equations give
\[
 z_+=\frac{\epsilon w_+}{1-\alpha-\beta w_+S(w_+)},\qquad
 S(w_+)\bigl(w_++c_a+A\tau z_+\bigr)=\tau,
\]
where the second identity follows by dividing by $w_+-c_a\neq0$.
Since $S(1)\neq0$, these equations yield, uniformly for bounded
small $A$,
\[
 w_+=1+O(\tau),\qquad
 z_+=\frac{\epsilon}{1-\chi_+}+O(\tau).
\]
By uniqueness in the local construction, this is the positive
fixed point $r_+(F_{a,A})$. Thus its continuation $r_+(f)$
belongs to $J_2(f)$ for nearby $f$.

Moreover, by our choice of parameters,
$|\epsilon|<1/10$ and $|1-\chi_+|>1/2$, so $|z_+|<1/4$
for large $|a|$. Since this point belongs to $\mathcal U_+$,
it lies in $\Db_R\times U_+$, and the same holds after a
sufficiently small perturbation. Lemma~\ref{le-rplus} now gives
$\Lambda(f)\subset J_2(f)$.
All inverse itineraries were chosen for fixed $S$ before taking
$|a|$ large. No uniformity across the degree drop $\eta=0$ is used.\end{proof}

\section{Size of the exceptional set}\label{sec-size}
This section is devoted to the proof of Theorem~\ref{th-mono-gorbo}. The main ingredient is the transitivity of the monodromy action on eigendirections, proved by the second author jointly with Igors Gorbovickis in~\cite{t-gorbovickis}.

\begin{proof}[Proof of Theorem~\ref{th-mono-gorbo}]
Choose an irreducible component of $\mathcal Z$ whose incidence map dominates $\Pb^k\times M$, and still denote it by
$\mathcal Z$. Suppose that the conclusion does not hold.

Take a general relative linear section
$\mathcal T\subset\mathcal Z$ such that the map
\[
 \rho\colon\mathcal I:=\{(Y,x,f):(Y,f)\in\mathcal T,\ x\in Y\}
 \longrightarrow\Pb^k\times M,\qquad
 (Y,x,f)\longmapsto(x,f),
\]
is generically finite and dominant. More precisely, if $q=\dim\mathcal Z-\dim M$, we take a general linear section of codimension $q-(k-u)$ in the Chow factor. There exists  a dense Zariski open subset $U\subset\Pb^k\times M$ on which $\rho$ is a finite covering, and such that $x$ is a regular point of $Y$ for all $(Y,x,f)\in\mathcal I,$ with $(x,f)\in U.$

Choose a very general $f_0\in\Omega_u$ such that
$U_{f_0}:=\{x\in\Pb^k\,:\,(x,f_0)\in U\}$ is nonempty, $D_{p(f_0)}f_0$ only has simple eigenvalues and $p(f_0)$ is not
postcritical. In particular, the backward orbit of $p(f_0)$ is Zariski dense and thus, intersects $U_{f_0}.$

We may therefore choose $m\geq0$ and $(Y_0,x_0,f_0)$ over $U$ such that
\[
 f_0^m(x_0)=p(f_0).
\]
Since $D_{x_0}f_0^m$ is invertible and $Y_0$ is smooth at $x_0,$ we have that
\[
 T_0:=D_{x_0}f_0^m(T_{x_0}Y_0)
\]
is a subspace of dimension $u$ of the tangent space of $p(f_0).$ Moreover, we can follow $x_0$ as a preimage $x_f$ of $p(f)$ by $f^m$ for $f$ in a small neighborhood $V\subset\Omega_u$ of $f_0.$ As $\rho$ is a covering over $(x_0,f_0)\in U$, there exists also $Y_f$ depending holomorphically on $f\in V$ such that $x_f\in Y_f$ and $Y_{f_0}=Y_0.$ This defines a subspace $T_f:=D_{x_f}f^m(T_{x_f}Y_f)$ of dimension $u$ at $p(f).$

If, for some $f\in V$, $T_f$ is complementary to $T_{p(f)}W^s_{p(f),loc}$, we are done. In fact, a monodromy
argument shows that this can always be achieved, possibly after changing the preimage $x_f$ and the sheet $Y_f$ of the
covering $\rho$. More precisely, even if $T_0$ is not transverse to $T_{p(f_0)}W^s_{p(f_0),loc}$, we can extend a basis of $T_0$
to a basis of the full tangent space by adding eigenvectors of $D_{p(f_0)}f_0$. Let $E_0$ denote the $(s+1)$-dimensional
subspace spanned by these eigenvectors.

In the proof of \cite[Theorem~1.7]{t-gorbovickis}, the authors show that any two distinct eigendirections $v_1(f_0)$ and
$v_2(f_0)$ at a fixed point $p(f_0)\in\Cb^k$ with simple eigenvalues can be exchanged by continuation along a loop
in $M$ based at $f_0$, while the fixed point returns to itself. This holds for both $M=\mathrm{End}_d^k$ and $M=\mathrm{Poly}_d^k$.

Combining such loops, we obtain a loop $\gamma=(f_t)_{t\in[0,1]}$ along which the continuations $p_t$ and $E_t$ satisfy
\[
 p_1=p(f_0),\qquad E_1=T_{p(f_0)}W^s_{p(f_0),loc}.
\]
Near $t=0$, this loop lifts to a path $(Y_t,x_t,f_t)$ with $Y_t=Y_{f_t}$, $x_t=x_{f_t}$ and $f_t^m(x_t)=p_t$.
The lift extends as long as $(x_t,f_t)\in U$ and $D_{x_t}f_t^m$ is invertible.

Recording also the continued eigendirections gives a finite algebraic covering of a suitable Zariski open subset of $M$. On the component containing our initial data, the condition
\[
 D_xf^m(T_xY)\oplus E=T_p\Pb^k
\]
is Zariski open and nonempty, since it holds initially. Its complement therefore projects to a proper algebraic subvariety of $M$. After slightly perturbing $f_0$ within $V$
and deforming $\gamma$ without changing its monodromy, we may avoid this subset and the loci where the lift ceases to be defined.

The lifted path is then defined for all $t\in[0,1]$ and preserves the above complementarity. At its endpoint, $x_1\in(Y_1)_{\mathrm{reg}}$, $f_0^m(x_1)=p(f_0)$, and
\[
 D_{x_1}f_0^m(T_{x_1}Y_1)
 \oplus T_{p(f_0)}W^s_{p(f_0),loc}
 =T_{p(f_0)}\Pb^k,
\]
as required.
\end{proof}

\section{Proofs of the main results}\label{sec-mainresult}

We first record the arithmetic argument common to the proofs.
We then prove Theorem~\ref{th-sparsity} and deduce
Theorems~\ref{th-special-hypersurface} and~\ref{th-demarco2}.
The proof of Theorem~\ref{th-unif} uses the same ingredients,
followed by induction on the dimension.

\subsection{The common arithmetic argument}

Let $M$ be either $\mathrm{End}_d^k$ or $\mathrm{Poly}_d^k$,
and let $G$ be the corresponding conjugacy group,
$\mathrm{PGL}(k+1)$ or $\mathrm{Aff}(\Ab^k)$.
Observe that specialness of a pair $(Z,f)$ is preserved by conjugacy. Moreover, their Zariski closure defines subvarieties in $\mathcal X_D^u\times M$ which are defined over $\overline{\Qb}.$

\begin{lemma}\label{lem-closure-Qbar}
The Zariski closure of the set of special pairs in $\mathcal X_D^u\times M$ is defined over $\Qb$. Each of its irreducible components is defined over $\overline{\Qb}$ and contains a Zariski dense set of special pairs defined over $\overline{\Qb}$.
\end{lemma}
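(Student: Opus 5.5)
The plan is a Galois-invariance argument, combined with the fact that special pairs over $\Cb$ specialize to special pairs over $\overline{\Qb}$.

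\emph{Galois stability.} Let $\mathcal S\subset\mathcal X_D^u\times M$ denote the set of special pairs, and let $\overline{\mathcal S}$ be its Zariski closure. The variety $\mathcal X_D^u\times M$ is defined over $\Qb$. For any field automorphism $\sigma\in\mathrm{Aut}(\Cb/\Qb)$, applying $\sigma$ to coefficients sends endomorphisms to endomorphisms of the same degree. It also sends preperiodic subvarieties to preperiodic subvarieties, and polarized maps $g$ commuting with $f^n$ on an invariant $Y$ to maps with the same properties. Hence $\sigma(\mathcal S)=\mathcal S$, and therefore $\sigma(\overline{\mathcal S})=\overline{\mathcal S}$. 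A closed subvariety of a $\Qb$-variety that is stable under all of $\mathrm{Aut}(\Cb/\Qb)$ is defined over $\Qb$. Consequently, each irreducible component is permuted by $\mathrm{Aut}(\Cb/\Qb)$ within a finite set, so it is defined over a finite extension, that is, over $\overline{\Qb}$.

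\emph{Density of algebraic special pairs.} This is the main step. I would show that $\mathcal S(\overline{\Qb})$ is Zariski dense in $\mathcal S$. It suffices to stratify $\mathcal S$ into countably many constructible pieces, each defined over $\overline{\Qb}$ and each containing $\overline{\Qb}$-points densely. Fix discrete data: the integers $m>n\geq0$ and $N$, the degrees of $Y$ and of the graph of $g$, and the degree of $g$. For each such choice, the condition
\[
\exists\,(Y,g)\ \text{with}\ Z\subseteq Y,\ f^N(Y)=Y,\ g\circ f^N=f^N\circ g \text{ on } Y,\ g^m(Z)=g^n(Z)
\]
cuts out the image of a constructible set in a product of Chow varieties and parameter spaces, and this set is defined over $\Qb$. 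Polarization of $g$ can be encoded by requiring $g^*\mathcal O(1)|_Y\simeq\mathcal O(e)|_Y$ for the restriction of the hyperplane bundle, together with a finiteness condition; this is again a $\Qb$-constructible condition. By Chevalley's theorem, the image $\mathcal S_\bullet$ of each stratum in $\mathcal X_D^u\times M$ is a constructible set defined over $\Qb$. Its $\overline{\Qb}$-points are therefore dense in it, and they are special pairs. The set $\mathcal S$ is the countable union of the $\mathcal S_\bullet$.

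\emph{Conclusion.} Let $W$ be an irreducible component of $\overline{\mathcal S}$. Since $\overline{\mathcal S}$ is a finite union of components and $\mathcal S$ is a countable union of the $\mathcal S_\bullet$, the set $W$ is the Zariski closure of $\bigcup_\bullet(\mathcal S_\bullet\cap W)$. Because $W$ is not a countable union of proper closed subsets, $\overline{\mathcal S_\bullet}\cap W$ must contain $W$ for some stratum, or else the strata accumulate to $W$. In either case the dense $\overline{\Qb}$-points of the $\mathcal S_\bullet$ give a Zariski dense set of special pairs defined over $\overline{\Qb}$ in $W$.

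The main obstacle is the constructibility claim in the second step, and specifically encoding polarization of $g$ on the auxiliary subvariety $Y$ in bounded algebraic terms. I would first try bounding everything by fixing $\deg Y$ and $\deg g$, so that each stratum is a genuine finite-type incidence variety.
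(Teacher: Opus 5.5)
\textbf{Verdict: there is a genuine gap, at exactly the step you flagged.} Your overall structure matches the paper's: stratify special pairs by discrete data into countably many constructible sets defined over $\Qb$, use density of $\overline{\Qb}$-points in each stratum, and finish with Galois invariance. Your $\mathrm{Aut}(\Cb/\Qb)$ argument is fine. The problem is your encoding of polarization.

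Requiring $g^*(\mathcal O_{\Pb^k}(1)|_Y)\simeq\mathcal O(e)|_Y$ is strictly stronger than polarization, which allows an arbitrary ample bundle on $Y$. So your strata $\mathcal S_\bullet$ need not cover $\mathcal S$. Density of $\overline{\Qb}$-points in $\bigcup_\bullet\mathcal S_\bullet$ then says nothing about the pairs you have missed.

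Passing to iterates of $g$ does not repair this. Let $Y$ be an abelian variety embedded by $L=t_x^*L_0$, with $L_0$ symmetric and very ample and $x$ of infinite order. Then $[n]^*L\simeq L^{n^2}\otimes P^{(n^2-n)/2}$, where $P=[-1]^*L\otimes L^{-1}$ is non-torsion in $\mathrm{Pic}^0(Y)$. Hence no iterate of $[n]$ satisfies your condition, even though $[n]$ is polarized by $L\otimes[-1]^*L$. Your extra ``finiteness condition'' adds nothing, since finiteness is automatic once $g^*$ of an ample bundle is ample.

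\textbf{The missing idea} is to make the polarizing bundle part of the bounded data. The paper does this with Fakhruddin's \cite[Corollary~2.2]{Fakhruddin}: a polarized $g\colon Y\to Y$ comes with an embedding $i\colon Y\hookrightarrow\Pb^N$ and an endomorphism $G$ of $\Pb^N$ of degree at least $2$ such that $G\circ i=i\circ g$. Record $g$ and $i$ by their graphs in Chow varieties and $G$ by its coefficients. Polarization then becomes a polynomial identity plus the open condition that $G$ be an endomorphism. Each stratum is therefore $\Qb$-constructible, and the strata do exhaust $\mathcal S$.

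Alternatively, you could record only an auxiliary embedding $i$ by $|L^r|$ and impose $g^*i^*\mathcal O(1)\simeq i^*\mathcal O(q)$. In that case you would still need to prove that the locus where two line bundles in such a family are isomorphic is constructible.

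\textbf{Your conclusion step can be simplified.} Set $\mathcal T:=\bigcup_\bullet\mathcal S_\bullet(\overline{\Qb})$. Its closure contains every $\mathcal S_\bullet$, hence equals $\overline{\mathcal S}$. For a component $W$, let $W'$ be the union of the other components and write $\overline{\mathcal T}=\overline{\mathcal T\cap W}\cup\overline{\mathcal T\cap W'}$. Irreducibility of $W$ then forces $W\subseteq\overline{\mathcal T\cap W}$. No Baire-type dichotomy is needed, and its first alternative (a single $\overline{\mathcal S_\bullet}$ containing $W$) need not occur.
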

\begin{proof}
Let $(Y,f)$ be a special pair, and let $Z\supseteq Y$, $m\geq1$
and $g\colon Z\to Z$ be as in the definition, with
$g^*\mathcal A\simeq\mathcal A^{\otimes q}$ for some ample line
bundle $\mathcal A$ on $Z$ and some $q\geq2$.
By \cite[Corollary~2.2]{Fakhruddin}, there exist an embedding
$i\colon Z\hookrightarrow\Pb^N$ and an endomorphism $G$ of
$\Pb^N$ of degree at least $2$ such that $G\circ i=i\circ g$.

Hence $(Y,f)$ is special if and only if there exist such data
$(Z,g,i,G)$ and integers $m\geq1$, $a>b\geq0$ with
\[
Y\subseteq Z,\quad f^m(Z)=Z,\quad G\circ i=i\circ g,\quad
g\circ f^m=f^m\circ g\ \text{on } Z,\quad g^a(Y)=g^b(Y).
\]

We record $g$ and $i$ by their graphs, which are points of
Chow varieties of $\Pb^k\times\Pb^k$ and
$\Pb^k\times\Pb^N$, respectively, and $G$ by its coefficients.
Fix $m,a,b,N$, the degree of $G$, the dimension and degree
of $Z$, and the degrees of these graphs.
Observe that all the conditions are algebraic in these parameters, with rational coefficients but some, as $G$ being an endomorphism, are open. Therefore, the tuples $(Y,f,Z,g,i,G)$ satisfying the above conditions form a merely constructible set $\mathcal S$ defined over $\Qb$.
Hence the points of $\mathcal S$ defined over
$\overline{\Qb}$ are Zariski dense in $\mathcal S$, and
they project to special pairs defined over $\overline{\Qb}$.

As these discrete data vary, the projections of the countably
many sets $\mathcal S$ cover all special pairs. Therefore
the Zariski closure of the set of special pairs coincides
with that of the set of special pairs defined over
$\overline{\Qb}$.
Moreover, since these parameter sets and projections
are defined over $\Qb$, the special pairs defined over
$\overline{\Qb}$ are invariant under the Galois action.
Their Zariski closure is therefore defined over $\Qb$.
Its irreducible components are therefore defined over
$\overline{\Qb}$, and the preceding density statement shows
that each contains a Zariski dense set of special pairs
defined over $\overline{\Qb}$.
\end{proof}

Every irreducible component of an $f$-special subvariety is
itself $f$-special. We may therefore pass to families of
irreducible components whenever necessary.

\begin{lemma}\label{le-arithmetic-transverse}
Let $\mathcal Z\subset\mathcal X_D^u\times M$ be an
irreducible algebraic family defined over a number field,
invariant under conjugacy and dominant over $M$.
Assume that its general member is irreducible and that some
$(Y,f)\in\mathcal Z$, with $f\in\Omega_u$, satisfies
\eqref{eq-pitch}. Then the special pairs are not Zariski
dense in $\mathcal Z$.

If $M=\mathrm{Poly}_d^k$, there also exist an integer $N\geq1$
and a dense Zariski open subset $\mathcal Z^\circ\subset\mathcal Z$,
defined over $\overline{\Qb}$ and invariant under
conjugacy, such that, for every
$(Y,f)\in\mathcal Z^\circ(\overline{\Qb})$, there is a proper
subvariety $W_{Y,f}\subsetneq Y$ of degree at most $N$ with
\[
 \mathrm{Preper}(f)\cap Y\subseteq W_{Y,f}.
\]
\end{lemma}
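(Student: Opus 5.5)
The plan is to combine the equidistribution theorem (Theorem~\ref{tm:small-distrib}) with the open-set result Theorem~\ref{th-main-blender} (resp.\ Theorem~\ref{th-blender-poly}) and the non-specialness statement Proposition~\ref{prop-special}. The first step is to pass to a good parameter family. Since $\mathcal Z$ is invariant under conjugacy and dominant over $M$, I intersect it with $\mathcal X_D^u\times\mathcal S$, where $\mathcal S=\mathcal S_d^k$ (resp.\ $\mathcal T_d^k$) is the slice of Lemma~\ref{good-family-end} (resp.\ Lemma~\ref{good-family-poly}), and then take a further general section in the Chow direction if needed. This gives an irreducible family $\mathcal Y$, defined over a number field, which is dynamically full and maximally varying. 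Conjugation invariance ensures that Zariski density of special pairs in $\mathcal Z$ would force their density in $\mathcal Y$. Since \eqref{eq-pitch} and $f\in\Omega_u$ are open conditions, and since $\mathcal Z$ is stable under conjugation, we can arrange by conjugating that some pair $(Y,f)\in\mathcal Y$ with $Y$ reduced and irreducible satisfies $f\in\Omega_u$ and \eqref{eq-pitch}. The general-member irreducibility hypothesis handles the reducedness (cf.\ Remark~\ref{rk-irreducible}).

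Next I produce a Euclidean open set $\Omega\subset\mathcal Y(\C)$ consisting of pairs satisfying \eqref{eq-pitch} with $f\in\Omega_u$. By Theorem~\ref{th-main-blender}, $\Omega$ lies in $\supp\mu_{\mathcal Y}$; in particular $\mu_{\mathcal Y}\neq0$. After shrinking $\Omega$ so that $f\in\tilde\Omega$, Proposition~\ref{prop-special} shows that no pair in $\Omega$ is special. Now suppose the special pairs were Zariski dense in $\mathcal Z$. By Lemma~\ref{lem-closure-Qbar}, the $\overline{\Qb}$-special pairs would then be dense in $\mathcal Y$, so one could extract a generic sequence of Galois orbits $F_i$ of special pairs. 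Theorem~\ref{tm:small-distrib} says the $F_i$ equidistribute to the normalized $\mu_{\mathcal Y}$. Testing against a continuous bump supported in $\Omega$ then gives a positive limit, but the averages are identically zero because no special pair lies in $\Omega$. This contradiction proves the first claim. The main technical care I expect is here: I must check that the specialness condition and Proposition~\ref{prop-special} are compatible with the local simultaneous conjugacy used in Theorem~\ref{th-main-blender}, and that the Galois orbits avoid $\Omega$. Conjugation invariance of specialness handles the former, and specialness of each Galois conjugate, which is an algebraic condition over $\Qb$, handles the latter.

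For the polynomial case, I apply Theorem~\ref{prop-empty-interior} to the family $\hat{\mathcal Y}\to\mathcal Y$. Its proper intersection hypothesis with $H_\infty\times\mathcal Y$ holds generically, since a general $Y$ meets $\Ab^k$ (the transverse point of \eqref{eq-pitch} lies in $\Ab^k$). The positivity hypothesis $\int G_{\mathcal Y}\,\mu^{\rm pol}_{\mathcal Y}>0$ follows from Theorem~\ref{th-blender-poly}, which puts $\Omega$ inside $\supp\mu^{\rm pol}_{\mathcal Y}$, together with $G_{\mathcal Y}>0$ on $\Omega$. The latter holds because $G_{f}$ is positive off the filled Julia set and $Y$ is not contained in $K_f$ (its forward images are Zariski dense by the proof of Proposition~\ref{prop-special}). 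This yields a proper Zariski closed $Z\subsetneq\mathcal Y$, an $\varepsilon>0$, and a bound $N$ such that the points of height at most $\varepsilon$ in $Y$ lie in some $W_{Y,f}$ of degree at most $N$; preperiodic points have height zero. Finally, I spread this statement back to $\mathcal Z$ by conjugation: I take $\mathcal Z^\circ$ to be the $G$-saturation of $\mathcal Y\setminus Z$ intersected with the locus where the slice parametrization is valid. This set is constructible, dense, and conjugacy invariant, and it can be shrunk to an open subset defined over $\overline{\Qb}$. Since conjugation by $\overline{\Qb}$-points preserves both $\mathrm{Preper}$ and the degree of $W$, the conclusion transfers to all of $\mathcal Z^\circ(\overline{\Qb})$.
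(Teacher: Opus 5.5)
Your overall route is the paper's: slice by $\mathcal S$, conjugate so that the slice family meets the open set where \eqref{eq-pitch} holds with $f\in\Omega_u$, combine Theorem~\ref{th-main-blender}, Proposition~\ref{prop-special} and Theorem~\ref{tm:small-distrib}, then apply Theorem~\ref{prop-empty-interior} and saturate under conjugacy. The first part is fine. The step that fails as written is your justification of $\int G_{\mathcal Y}\,\mu^{\mathrm{pol}}_{\mathcal Y}>0$. You assert $G_{\mathcal Y}>0$ on $\Omega$ because $G_f>0$ off $K_f$ and $Y\not\subset K_f$. But $Y\not\subset K_f$ only says that $G_f$ is positive somewhere on $Y$; this is automatic anyway, since $Y\cap\Cb^k$ is a noncompact affine variety. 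On the other hand, $G_{\mathcal Y}(Y,f)=\int G_f\,(dd^cG_f)^{\wedge u}\wedge[Y]$ vanishes whenever the measure $(dd^cG_f)^{\wedge u}\wedge[Y]$ is carried by $\{G_f=0\}$. For example, take $f(z,w)=(p(z),q(w))$ and $Y=\{w=w_0\}$ with $q(w_0)=w_0$. Then $G_f|_Y=G_p$, which is positive off $K_p$, yet $\int G_p\,dd^cG_p=0$. The Zariski density of the forward orbit of $Y$ enters your argument only through $Y\not\subset K_f$, so it does not close this gap.

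The repair is the soft argument used in the paper. The function $G_{\mathcal Y}$ is continuous and nonnegative, and $\mu^{\mathrm{pol}}_{\mathcal Y}=(dd^cG_{\mathcal Y})^{\wedge\dim\mathcal Y}$. If $G_{\mathcal Y}$ vanished identically on $\Omega$, the measure would vanish on $\Omega$, contradicting $\Omega\subset\mathrm{supp}\,\mu^{\mathrm{pol}}_{\mathcal Y}$ from Theorem~\ref{th-blender-poly}. Hence $G_{\mathcal Y}>0$ on a nonempty open subset of $\Omega$, which carries positive $\mu^{\mathrm{pol}}_{\mathcal Y}$-mass.

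Two smaller remarks. First, no extra Chow section should be taken. A component $\mathcal Y$ of $\mathcal Z\cap(\mathcal X_D^u\times\mathcal S)$ is already maximally varying, because $\Pi|_{\mathcal S}$ is finite and stabilizers are finite. Cutting further would destroy the transfer of Zariski density of special pairs from $\mathcal Z$ to $\mathcal Y$, which requires choosing $\mathcal Y$ so that the conjugation map from (conjugacy group)$\,\times\,\mathcal Y$ to $\mathcal Z$ is dominant. Second, what you conjugate should be the slice, by a $\overline{\Qb}$-point of the conjugacy group, not the pair. Conjugating a pair with $f\in\Omega_u$ into $\mathcal X_D^u\times\mathcal S$ will in general move $f$ out of $\Omega_u$.
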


\begin{proof}
The transverse intersection persists on a nonempty open subset
of $\mathcal Z$. Let $\mathcal S\subset M$ be the algebraic
family given by Lemma~\ref{good-family-end} or
Lemma~\ref{good-family-poly}, according to $M$.
Since $\mathcal Z$ is invariant under conjugacy, we can choose
an irreducible component
\[
 \mathcal Y\subset
 \mathcal Z\cap(\mathcal X_D^u\times\mathcal S)
\]
whose conjugacy translates are Zariski dense in $\mathcal Z$.
This family is dynamically full and maximally varying:
the projection of $\mathcal S$ to the moduli space is finite,
and the stabilizers of maps are finite.

The conjugacy translates of $\mathcal Y$ cover a dense
Zariski open subset of $\mathcal Z$. After conjugating
$\mathcal Y$ by an element of $G(\overline{\Qb})$, we may
therefore assume that it meets the transverse open subset.

After restricting to the open set of maps used in
Proposition~\ref{prop-special}, Theorem~\ref{th-main-blender} and that proposition
give a nonempty open subset $\Omega\subset\mathcal Y$
contained in $\supp(\mu_{\mathcal Y})$ and containing no
special pair. If special pairs were Zariski dense, choose a
generic sequence of such pairs defined over $\overline{\Qb}$,
and let $F_j$ be their Galois orbits over a number field of
definition of the family. Theorem~\ref{tm:small-distrib} gives
\[
 \frac{1}{\#F_j}\sum_{t\in F_j}\delta_t
 \longrightarrow
 \frac{\mu_{\mathcal Y}}{\mu_{\mathcal Y}(\mathcal Y)}.
\]
Every measure on the left gives zero mass to $\Omega$,
whereas the limit has positive mass there. This is a
contradiction.

Now suppose that $M=\mathrm{Poly}_d^k$.
Theorem~\ref{th-blender-poly} gives a nonempty open subset
$\Omega\subset\supp(\mu_{\mathcal Y}^{\mathrm{pol}})$.
The general member of $\mathcal Y$ is not contained in
$H_\infty$, since the transverse intersection given by \eqref{eq-pitch} lies in $\Ab^k$.
The function $G_{\mathcal Y}$ is continuous and nonnegative,
and
\[
 \mu_{\mathcal Y}^{\mathrm{pol}}
 =(dd^cG_{\mathcal Y})^{\wedge\dim\mathcal Y}.
\]
The function $G_{\mathcal Y}$ cannot vanish identically on
$\Omega$, since the measure
would then vanish there as well. Consequently,
\[
 \int_{\mathcal Y(\Cb)}G_{\mathcal Y}\,
             \mu_{\mathcal Y}^{\mathrm{pol}}>0.
\]
Theorem~\ref{prop-empty-interior} now gives a dense Zariski
open subset $\mathcal Y^\circ$ and a uniform degree bound
for a proper subvariety containing the preperiodic
points of each algebraic pair in $\mathcal Y^\circ$.

The Zariski interior of the union of the conjugacy translates of $\mathcal Y^\circ$ gives the desired invariant open subset
$\mathcal Z^\circ$. 
\end{proof}

\subsection{Sparsity and the exceptional locus}
\begin{proof}[Proofs of Theorems~\ref{th-sparsity}
and~\ref{th-special-hypersurface}]
Theorem~\ref{th-sparsity} is a direct consequence of Lemma~\ref{le-arithmetic-transverse}, applied to each irreducible component $\mathcal Z$ of $\mathcal X_D^u\times M$, possibly after decomposing $\mathcal Z$ into finitely many components of smaller degree, as explained in Section~\ref{sec-chow}.

For Theorem~\ref{th-special-hypersurface}, let $\mathcal Z_D^u$ be the Zariski closure of the special
pairs in $\mathcal X_D^u\times M$ and we do not yet assume that
it is proper. Since $\mathcal X_D^u$ is projective, the incidence map
\[
 \rho\colon
 \mathcal I_D^u:=
 \{(Y,x,f):(Y,f)\in\mathcal Z_D^u,\ x\in Y\}
 \longrightarrow\Pb^k\times M,\qquad
 (Y,x,f)\longmapsto(x,f),
\]
has a closed image $\mathcal H_D^u$ whose fiber over $f\in M$ is
\[
 \mathcal H_D^u(f):=\bigcup_{\substack{Y\in\mathcal X_D^u\\
                         (Y,f)\in\mathcal Z_D^u}}Y.
\]
Suppose that this image is all of $\Pb^k\times M$, and
choose an irreducible component of $\mathcal Z_D^u$ whose
incidence is dominant.

Passing to families of irreducible components of its members
if necessary, we obtain an irreducible family $\mathcal Z$
with generically irreducible members, dominant incidence
and a Zariski dense set of special pairs. This family is
defined over a number field and invariant under conjugacy.

By Theorem~\ref{th-mono-gorbo}, some pair in $\mathcal Z$
satisfies \eqref{eq-pitch}. This contradicts
Lemma~\ref{le-arithmetic-transverse}.
Thus $\mathcal H_D^u$ is a proper closed subset of
$\Pb^k\times M$, and $\mathcal H_D^u(f)$ is a proper subvariety of
$\Pb^k$, possibly empty, for general $f\in M$. Put
\[
 \mathcal H_D:=\bigcup_{u=1}^{k-1}\mathcal H_D^u.
\]
By the preceding argument, its fiber $\mathcal H_D(f)$ is a
proper algebraic subvariety of $\Pb^k$ for general $f\in M$.

Since $\mathcal H_D$ is algebraic, it is contained in a hypersurface of $\Pb^k\times M$ which is dominant over $M$ and thus, we can replace $\mathcal H_D$ by this hypersurface. Therefore, there exists a dense Zariski open subset
$\mathcal U_D\subset M$, such that the fiber $\mathcal H_D(f)$ is a hypersurface of constant degree $B.$ This defines a morphism
\[\mathcal U_D\to\mathcal X_B^{k-1}.\]
By construction, every $f$-special subvariety of degree
at most $D$ and dimension between $1$ and $k-1$ is contained
in $\mathcal H_D(f)$.
This proves Theorem~\ref{th-special-hypersurface}.
\end{proof}

\begin{rk}\label{rk-fini}
In codimension one, the generic fibers of $\mathcal Z_D^{k-1}$ are finite. Indeed, every irreducible hypersurface in such a fiber is an irreducible component
of $\mathcal H_D(f)$, so there are at most $B$ of them. More generally, every support in this fiber is a union of these components, leaving only finitely many possibilities.
The same holds if cycle multiplicities of total degree at most $D$ are retained.
\end{rk}

\subsection{Proof of Theorem~\ref{th-demarco2}}

\begin{proof}
Let $M=\mathrm{End}_d^2$ and let $\mathcal Z_D^1$ be the minimal exceptional locus constructed above. Suppose that one of its irreducible components dominates $M$.
Passing to a family of irreducible components of its members if necessary, Theorem~\ref{th-chasse-int} gives a pair satisfying \eqref{eq-pitch}. This again contradicts Lemma~\ref{le-arithmetic-transverse}.

Thus no irreducible component of $\mathcal Z_D^1$ dominates $M$. Since $\mathcal X_D^1$ is projective, the projection $V_D$ of $\mathcal Z_D^1$ onto $M$ is closed, and it is proper. Every map admitting a special curve of degree at most $D$ belongs to $V_D$, as required.
\end{proof}
\subsection{Proof of Theorem~\ref{th-unif}}\label{sec-unif}

\begin{proof}
Throughout, $M=\mathrm{Poly}_d^k$. We first work over $\overline{\Qb}$ and argue by induction on the dimension, for all degree bounds at once. A density argument then
extends the result to $\Cb$. For now, we only seek a counting bound outside a proper closed subset of $\Pb^k\times M$. We will replace this exceptional set by
a family of hypersurfaces at the end. All the sets constructed below are defined over $\overline{\Qb}$.

In dimension zero $u=0$, the degree bounds directly the number of points. Fix $u\geq1$ and $D\geq1$.

\medskip
\noindent\emph{\textbf{Reduction to smaller subvarieties.}}
We first find a proper closed subset $\mathcal E\subset\Pb^k\times M$ and an integer $N\geq1$ such that, for every pair $(Y,f)$ defined over
$\overline{\Qb}$, with $Y$ irreducible of dimension $u$ and degree at most $D$, either $Y\subset\mathcal E(f)$, or
\begin{equation}\label{eq-main-degree-reduction}
 \mathrm{Preper}(f)\cap Y\subseteq W_{Y,f} 
\end{equation}
for some proper subvariety $W_{Y,f}\subsetneq Y$ of degree at most $N$. Here, for a subvariety of mixed dimension, we add the degrees of its irreducible components.

Reduced irreducible cycles form open subsets of the $\mathcal X_e^u$, since their complements are finite unions of images of the proper maps~\eqref{eq-chow}. For $1\leq e\leq D$, we consider the irreducible components $\mathcal Z$ of the corresponding loci of pairs. They are defined over $\overline{\Qb}$ and invariant under conjugacy, since the conjugacy group is connected.

If the incidence map of $\mathcal Z$ is not dominant over $\Pb^k\times M$, we put the closure of its image in $\mathcal E$. Otherwise, the argument of Theorem~\ref{th-mono-gorbo} gives a pair in $\mathcal Z$ satisfying~\eqref{eq-pitch}. Lemma~\ref{le-arithmetic-transverse}, applied to $\overline{\mathcal Z}$, then gives \eqref{eq-main-degree-reduction}, with a uniform degree bound, on a dense invariant Zariski open subset $\mathcal Z^\circ\subset\mathcal Z$ defined over $\overline{\Qb}$. We remove this open subset and repeat the argument on the components of $\mathcal Z\setminus\mathcal Z^\circ$.

The dimension drops at each step, so the process ends. The finitely many proper closed sets collected along the way give $\mathcal E$, and we choose $N\geq1$ at least as large as all the resulting degree bounds.

\medskip
\noindent\emph{\textbf{Counting the points.}}
Enlarge $\mathcal E$ by adding $H_\infty\times M$ and the exceptional sets already obtained in dimensions $1,\ldots,u-1$ for the degree bound $N$. Their union $\mathcal H$ is still proper.

Let $(Y,f)$ be a pair defined over $\overline{\Qb}$, with $Y$ irreducible of dimension $u$ and degree at most $D$. If $Y\subset\mathcal H(f)$, there is nothing to count.
Otherwise, its preperiodic points outside $\mathcal H(f)$ are affine, hence lie in $W_{Y,f}$. This subvariety has at most $N$ components, each of dimension less than $u$ and degree at most $N$. The bounds already obtained in smaller dimensions therefore bound the number of these points. Summing over the at most $D$ components of an arbitrary cycle gives the desired bound in dimension $u$.

\medskip
\noindent\emph{\textbf{Conclusion over $\overline\Qb$.}}
For the given $D$, take the union of the exceptional sets obtained for $1\leq u\leq k-1$. As in the proof of Theorem~\ref{th-special-hypersurface}, over a dense Zariski open subset $\mathcal U_D\subset M$, its fibers lie in an algebraically varying hypersurface of constant degree. Choose $B$ at least as large as this degree and all the counting bounds. Adding a suitable multiple of $H_\infty$ gives a morphism
\[
 \mathcal H_D\colon\mathcal U_D\longrightarrow
 \mathcal X_B^{k-1},
\]
defined over $\overline{\Qb}$, satisfying the conclusion for pairs defined over $\overline{\Qb}$.

\medskip
\noindent\emph{\textbf{Passing to $\Cb$ by density.}}
Finally, suppose that the bound fails for some complex pair $(Y_0,f_0)$ with $f_0\in\mathcal U_D$. Choose $B+1$ distinct preperiodic points $x_1,\ldots,x_{B+1}$ in $Y_0\setminus\mathcal H_D(f_0)$, and fix integers $n_i>m_i\geq0$ such that $f_0^{n_i}(x_i)=f_0^{m_i}(x_i)$.

The set $X$ of tuples $(Y,f,z_1,\ldots,z_{B+1})
 \in\mathcal X_D^u\times\mathcal U_D\times(\Pb^k)^{B+1}$ such that
\[z_i\in Y,\ \ f^{n_i}(z_i)=f^{m_i}(z_i)
 \text{ for every }i
\]
is a closed subvariety that is defined over $\overline{\Qb}$. Let $\Delta\subset X$ be the closed subset where two of the points $z_i$ coincide or one of them belongs to $\mathcal H_D(f)$.
This subset is also defined over $\overline{\Qb}$. Our chosen tuple belongs to $(X\setminus\Delta)(\Cb)$, so this open set is nonempty and therefore contains a point defined over $\overline{\Qb}$. Such a point gives a pair over $\overline{\Qb}$ with $B+1$ distinct preperiodic points outside the exceptional hypersurface, contradicting the bound already proved.
\end{proof}

\bibliographystyle{short}
\bibliography{biblio}

@book{kollar-rational,
  author    = {Koll{\'a}r, J{\'a}nos},
  title     = {Rational curves on algebraic varieties},
  series    = {Ergebnisse der Mathematik und ihrer Grenzgebiete. 3. Folge.
               A Series of Modern Surveys in Mathematics},
  volume    = {32},
  publisher = {Springer-Verlag},
  address   = {Berlin},
  year      = {1996},
  pages     = {viii+320},
  isbn      = {3-540-60168-6},
  mrclass   = {14-02 (14C05 14E05 14F17 14J45)},
  mrnumber  = {1440180},
  mrreviewer = {Yuri G. Prokhorov},
  doi       = {10.1007/978-3-662-03276-3},
  url       = {https://doi.org/10.1007/978-3-662-03276-3}
}

@article{ght,
  author  = {Ghioca, Dragos and Tucker, Thomas J.},
  title   = {A reformulation of the dynamical {M}anin--{M}umford conjecture},
  journal = {Bull. Aust. Math. Soc.},
  volume  = {103},
  year    = {2021},
  pages   = {154--161},
}

@article{Xie-algebraicity,
  author   = {Xie, Junyi},
  title    = {Algebraicity criteria, invariant subvarieties
              and transcendence problems from arithmetic dynamics},
  journal  = {Peking Math. J.},
  fjournal = {Peking Mathematical Journal},
  volume   = {7},
  year     = {2024},
  pages    = {345--398},
  doi      = {10.1007/s42543-022-00059-9}
}

@misc{Zhong-periodic-curves,
  author        = {Zhong, Xiao},
  title         = {Polynomial endomorphisms of {$\mathbb{A}^2$}
                   with many periodic curves},
  year          = {2025},
  note          = {Preprint, arXiv:2508.13873},
  eprint        = {2508.13873},
  archivePrefix = {arXiv},
  primaryClass  = {math.DS},
  doi           = {10.48550/arXiv.2508.13873},
  url           = {https://arxiv.org/abs/2508.13873}
}

@article{Li-Turaev-cycles,
  author  = {Li, Dongchen and Turaev, Dmitry},
  title   = {Persistence of heterodimensional cycles},
  journal = {Invent. Math.},
  volume  = {236},
  year    = {2024},
  pages   = {1413--1504},
  doi     = {10.1007/s00222-024-01255-3}
}

@article {bonatti-diaz-tan,
    AUTHOR = {Bonatti, Christian and D\'{\i}az, Lorenzo J.},
     TITLE = {Abundance of {$C^1$}-robust homoclinic tangencies},
   JOURNAL = {Trans. Amer. Math. Soc.},
  FJOURNAL = {Transactions of the American Mathematical Society},
    VOLUME = {364},
      YEAR = {2012},
    NUMBER = {10},
     PAGES = {5111--5148},
      ISSN = {0002-9947},
   MRCLASS = {37C20 (37C05 37C29 37C70 37D05)},
  MRNUMBER = {2931324},
MRREVIEWER = {Leonardo E. Mora},
       DOI = {10.1090/S0002-9947-2012-05445-6},
       URL = {https://doi.org/10.1090/S0002-9947-2012-05445-6},
}

@article {berger-parablender,
    AUTHOR = {Berger, Pierre},
     TITLE = {Generic family with robustly infinitely many sinks},
   JOURNAL = {Invent. Math.},
  FJOURNAL = {Inventiones Mathematicae},
    VOLUME = {205},
      YEAR = {2016},
    NUMBER = {1},
     PAGES = {121--172},
      ISSN = {0020-9910},
   MRCLASS = {37C20 (26A21 37C25)},
  MRNUMBER = {3514960},
MRREVIEWER = {Raquel Ribeiro},
       DOI = {10.1007/s00222-015-0632-6},
       URL = {https://doi.org/10.1007/s00222-015-0632-6},
}

@article{t-blender,
    AUTHOR = {J. Taflin},
     TITLE = {Blenders near polynomial product maps of {$\mathbb{C}^2$}},
   JOURNAL = {J. Eur. Math. Soc. (JEMS)},
  FJOURNAL = {Journal of the European Mathematical Society (JEMS)},
    VOLUME = {23},
      YEAR = {2021},
    NUMBER = {11},
     PAGES = {3555--3589},
      ISSN = {1435-9855},
   MRCLASS = {37F80 (32H50 37F46)},
  MRNUMBER = {4310812},
       DOI = {10.4171/jems/1076},
       URL = {https://doi.org/10.4171/jems/1076},
}

@article {biebler-hetero,
    AUTHOR = {Biebler, S\'{e}bastien},
     TITLE = {Robust complex heterodimensional cycles},
   JOURNAL = {Math. Ann.},
  FJOURNAL = {Mathematische Annalen},
    VOLUME = {394},
      YEAR = {2026},
    NUMBER = {4},
     PAGES = {Paper No. 104, 18},
      ISSN = {0025-5831,1432-1807},
   MRCLASS = {37F80 (37D05)},
  MRNUMBER = {5045181},
       DOI = {10.1007/s00208-026-03429-1},
       URL = {https://doi.org/10.1007/s00208-026-03429-1},
}

@article{Zhang-adelic,
  author   = {Zhang, Shou-Wu},
  title    = {Small points and adelic metrics},
  journal  = {J. Algebraic Geom.},
  fjournal = {Journal of Algebraic Geometry},
  volume   = {4},
  number   = {2},
  year     = {1995},
  pages    = {281--300},
  mrnumber = {1311351}
}

@article{dinh-laminar,
  author  = {Dinh, Tien-Cuong},
  title   = {Suites d'applications m{\'e}romorphes multivalu{\'e}es
             et courants laminaires},
  journal = {J. Geom. Anal.},
  volume  = {15},
  number  = {2},
  year    = {2005},
  pages   = {207--227},
  doi     = {10.1007/BF02922193}
}

@article{DS-MT,
  author  = {Dinh, Tien-Cuong and Sibony, Nessim},
  title   = {Distribution des valeurs de transformations
             m{\'e}romorphes et applications},
  journal = {Comment. Math. Helv.},
  volume  = {81},
  number  = {1},
  year    = {2006},
  pages   = {221--258},
  doi     = {10.4171/CMH/50}
}

@article{BS-correspondences,
  author  = {Bharali, Gautam and Sridharan, Shrihari},
  title   = {Holomorphic correspondences related to finitely
             generated rational semigroups},
  journal = {Internat. J. Math.},
  volume  = {28},
  number  = {14},
  year    = {2017},
  pages   = {1750108},
  note    = {25 pp.},
  doi     = {10.1142/S0129167X17501087}
}

@article{deThelin05,
  author  = {de Th{\'e}lin, Henry},
  title   = {Un ph{\'e}nom{\`e}ne de concentration de genre},
  journal = {Math. Ann.},
  volume  = {332},
  number  = {3},
  year    = {2005},
  pages   = {483--498},
  doi     = {10.1007/s00208-005-0631-9}
}

@article{Dinh07,
  author  = {Dinh, Tien-Cuong},
  title   = {Attracting current and equilibrium measure for
             attractors on {$\mathbb{P}^k$}},
  journal = {J. Geom. Anal.},
  volume  = {17},
  number  = {2},
  year    = {2007},
  pages   = {227--244},
  doi     = {10.1007/BF02930722}
}

@article{DF01,
  author  = {Diller, Jeffrey and Favre, Charles},
  title   = {Dynamics of bimeromorphic maps of surfaces},
  journal = {Amer. J. Math.},
  volume  = {123},
  number  = {6},
  year    = {2001},
  pages   = {1135--1169},
  doi     = {10.1353/ajm.2001.0038}
}

@article{DFR,
  author        = {Dujardin, Romain and Favre, Charles
                   and Ruggiero, Matteo},
  title         = {On the dynamical {Manin--Mumford} conjecture
                   for plane polynomial maps},
  journal       = {J. Eur. Math. Soc. (JEMS)},
  year          = {2026},
  note          = {To appear},
  eprint        = {2312.14817},
  archivePrefix = {arXiv},
  primaryClass  = {math.DS},
  url           = {https://arxiv.org/abs/2312.14817}
}

@article{przytycki-anosov,
  author  = {Przytycki, Feliks},
  title   = {Anosov endomorphisms},
  journal = {Studia Math.},
  volume  = {58},
  number  = {3},
  pages   = {249--285},
  year    = {1976},
  doi     = {10.4064/sm-58-3-249-285}
}

@misc{zhang-iterated-centralizer,
  author        = {Zhang, Yugang},
  title         = {A general endomorphism of {$\mathbb{P}^k$}
                   has trivial iterated centralizer},
  year          = {2026},
  note          = {Preprint, arXiv:2609.09858},
  eprint        = {2609.09858},
  archivePrefix = {arXiv},
  primaryClass  = {math.DS},
  doi           = {10.48550/arXiv.2609.09858},
  url           = {https://arxiv.org/abs/2609.09858}
}

@article {BDR-continuity-super-pot,
    AUTHOR = {Bianchi, Fabrizio and Dinh, Tien-Cuong and Rakhimov, Karim},
     TITLE = {H\"{o}lder continuity and laminarity of the {G}reen currents
              for {H}\'{e}non-like maps},
   JOURNAL = {J. Inst. Math. Jussieu},
  FJOURNAL = {Journal of the Institute of Mathematics of Jussieu. JIMJ.
              Journal de l'Institut de Math\'{e}matiques de Jussieu},
    VOLUME = {25},
      YEAR = {2026},
    NUMBER = {2},
     PAGES = {1363--1392},
      ISSN = {1474-7480,1475-3030},
   MRCLASS = {37F80 (32H50 32U05)},
  MRNUMBER = {5024924},
       DOI = {10.1017/S1474748025101588},
       URL = {https://doi.org/10.1017/S1474748025101588},
}

@article {HP,
    AUTHOR = {Hubbard, John H. and Papadopol, Peter},
     TITLE = {Superattractive fixed points in {${\bf C}^n$}},
   JOURNAL = {Indiana Univ. Math. J.},
  FJOURNAL = {Indiana University Mathematics Journal},
    VOLUME = {43},
      YEAR = {1994},
    NUMBER = {1},
     PAGES = {321--365},
      ISSN = {0022-2518,1943-5258},
   MRCLASS = {32H50 (32C30)},
  MRNUMBER = {1275463},
MRREVIEWER = {Gregery\ T.\ Buzzard},
       DOI = {10.1512/iumj.1994.43.43014},
       URL = {https://doi.org/10.1512/iumj.1994.43.43014},
}

@incollection {fs-montreal,
    AUTHOR = {Forn{\ae}ss, John Erik and Sibony, Nessim},
     TITLE = {Complex dynamics in higher dimensions},
 BOOKTITLE = {Complex potential theory ({M}ontreal, {PQ}, 1993)},
    SERIES = {NATO Adv. Sci. Inst. Ser. C: Math. Phys. Sci.},
    VOLUME = {439},
     PAGES = {131--186},
      NOTE = {Notes partially written by Estela A. Gavosto},
 PUBLISHER = {Kluwer Acad. Publ., Dordrecht},
      YEAR = {1994},
      ISBN = {0-7923-3005-6},
   MRCLASS = {32H50 (58F23)},
  MRNUMBER = {1332961},
MRREVIEWER = {Eric\ Bedford},
}

@article{t-attractor,
	Author = {J. Taflin},
	  JOURNAL = {Invent. Math.},
  FJOURNAL = {Inventiones Mathematicae},
	Pages = {83--137},
	Volume = {213},
	Title = {Attracting Currents and Equilibrium Measures for Quasi-attractors of {$\mathbb P^k$}},
	Year = {2018}}

@article {dujardin-directions,
    AUTHOR = {Dujardin, Romain},
     TITLE = {Fatou directions along the {J}ulia set for endomorphisms of
              {$\mathbb{CP}^k$}},
   JOURNAL = {J. Math. Pures Appl. (9)},
  FJOURNAL = {Journal de Math\'{e}matiques Pures et Appliqu\'{e}es.
              Neuvi\`eme S\'{e}rie},
    VOLUME = {98},
      YEAR = {2012},
    NUMBER = {6},
     PAGES = {591--615},
      ISSN = {0021-7824,1776-3371},
   MRCLASS = {37F10 (32H50 32U40 58A25)},
  MRNUMBER = {2994694},
MRREVIEWER = {Clinton\ Paul\ Curry},
       DOI = {10.1016/j.matpur.2012.05.004},
       URL = {https://doi.org/10.1016/j.matpur.2012.05.004},
}

@article {takens,
    AUTHOR = {Takens, Floris},
     TITLE = {Abundance of generic homoclinic tangencies in real-analytic
              families of diffeomorphisms},
   JOURNAL = {Bol. Soc. Brasil. Mat. (N.S.)},
  FJOURNAL = {Boletim da Sociedade Brasileira de Matem\'{a}tica. Nova
              S\'{e}rie},
    VOLUME = {22},
      YEAR = {1992},
    NUMBER = {2},
     PAGES = {191--214},
      ISSN = {0100-3569},
   MRCLASS = {58F12 (58F13 58F22)},
  MRNUMBER = {1179485},
MRREVIEWER = {Romeo\ F.\ Thomas},
       DOI = {10.1007/BF01232942},
       URL = {https://doi.org/10.1007/BF01232942},
}

@article {bonatti-diaz-blender,
    AUTHOR = {C. Bonatti and L. {D\'\i az}},
     TITLE = {Persistent nonhyperbolic transitive diffeomorphisms},
   JOURNAL = {Ann. of Math. (2)},
  FJOURNAL = {Annals of Mathematics. Second Series},
    VOLUME = {143},
      YEAR = {1996},
    NUMBER = {2},
     PAGES = {357--396},
      ISSN = {0003-486X},
   MRCLASS = {58F12 (58F10 58F15)},
  MRNUMBER = {1381990},
MRREVIEWER = {Marcelo Viana},
       DOI = {10.2307/2118647},
       URL = {http://dx.doi.org/10.2307/2118647},
}

@article {DF-dmm,
    AUTHOR = {Dujardin, R. and Favre, C.},
     TITLE = {The dynamical {M}anin-{M}umford problem for plane polynomial
              automorphisms},
   JOURNAL = {J. Eur. Math. Soc. (JEMS)},
  FJOURNAL = {Journal of the European Mathematical Society (JEMS)},
    VOLUME = {19},
      YEAR = {2017},
    NUMBER = {11},
     PAGES = {3421--3465},
      ISSN = {1435-9855,1435-9863},
   MRCLASS = {37P30 (32H50 37P05 37P35)},
  MRNUMBER = {3713045},
MRREVIEWER = {Dragos\ Ghioca},
       DOI = {10.4171/JEMS/743},
       URL = {https://doi.org/10.4171/JEMS/743},
}

@incollection {raynaud2,
    AUTHOR = {Raynaud, M.},
     TITLE = {Sous-vari\'{e}t\'{e}s d'une vari\'{e}t\'{e} ab\'{e}lienne et
              points de torsion},
 BOOKTITLE = {Arithmetic and geometry, {V}ol. {I}},
    SERIES = {Progr. Math.},
    VOLUME = {35},
     PAGES = {327--352},
 PUBLISHER = {Birkh\"{a}user Boston, Boston, MA},
      YEAR = {1983},
      ISBN = {3-7643-3132-1},
   MRCLASS = {14K05 (32K10)},
  MRNUMBER = {717600},
MRREVIEWER = {Daniel\ Coray},
}

@article {raynaud1,
    AUTHOR = {Raynaud, M.},
     TITLE = {Courbes sur une vari\'{e}t\'{e} ab\'{e}lienne et points de
              torsion},
   JOURNAL = {Invent. Math.},
  FJOURNAL = {Inventiones Mathematicae},
    VOLUME = {71},
      YEAR = {1983},
    NUMBER = {1},
     PAGES = {207--233},
      ISSN = {0020-9910,1432-1297},
   MRCLASS = {14H25 (14G25 14K15)},
  MRNUMBER = {688265},
MRREVIEWER = {F.\ Oort},
       DOI = {10.1007/BF01393342},
       URL = {https://doi.org/10.1007/BF01393342},
}

@article {DTZ-dmm,
    AUTHOR = {Ghioca, Dragos and Tucker, Thomas J. and Zhang, Shouwu},
     TITLE = {Towards a dynamical {M}anin-{M}umford conjecture},
   JOURNAL = {Int. Math. Res. Not. IMRN},
  FJOURNAL = {International Mathematics Research Notices. IMRN},
      YEAR = {2011},
    volume = {22},
     PAGES = {5109--5122},
      ISSN = {1073-7928,1687-0247},
   MRCLASS = {37P15 (11G10 37P55)},
  MRNUMBER = {2854724},
MRREVIEWER = {Joseph\ H.\ Silverman},
       DOI = {10.1093/imrn/rnq283},
       URL = {https://doi.org/10.1093/imrn/rnq283},
}

@article{udmm,
 author = {DeMarco, Laura and Mavraki, Niki Myrto},
 title = {The geometry of preperiodic points in families of maps on {{\(\mathbb{P}^N\)}}},
 fjournal = {L'Enseignement Math{\'e}matique. 2e S{\'e}rie},
 journal = {Enseign. Math. (2)},
 issn = {0013-8584},
 volume = {72},
 number = {3-4},
 pages = {305--335},
 year = {2026},
 language = {English},
 doi = {10.4171/lem/1102},
 zbMATH = {8257262}
}

@article {GNY-split-2,
    AUTHOR = {Ghioca, D. and Nguyen, K. D. and Ye, H.},
     TITLE = {The dynamical {M}anin-{M}umford conjecture and the dynamical
              {B}ogomolov conjecture for split rational maps},
   JOURNAL = {J. Eur. Math. Soc. (JEMS)},
  FJOURNAL = {Journal of the European Mathematical Society (JEMS)},
    VOLUME = {21},
      YEAR = {2019},
    NUMBER = {5},
     PAGES = {1571--1594},
      ISSN = {1435-9855,1435-9863},
   MRCLASS = {37P05 (37P30)},
  MRNUMBER = {3941498},
MRREVIEWER = {Joseph\ H.\ Silverman},
       DOI = {10.4171/JEMS/869},
       URL = {https://doi.org/10.4171/JEMS/869},
}

@article {dujardin-bif,
    AUTHOR = {R. Dujardin},
     TITLE = {Non-density of stability for holomorphic mappings on {$\mathbb
              P^k$}},
   JOURNAL = {J. \'Ec. polytech. Math.},
  FJOURNAL = {Journal de l'\'Ecole polytechnique. Math\'ematiques},
    VOLUME = {4},
      YEAR = {2017},
     PAGES = {813--843},
      ISSN = {2429-7100},
   MRCLASS = {32H50 (37F75)},
  MRNUMBER = {3694096},
}

@article {jonsson-skew,
    AUTHOR = {M. Jonsson},
     TITLE = {Dynamics of polynomial skew products on {$\mathbb C^2$}},
   JOURNAL = {Math. Ann.},
  FJOURNAL = {Mathematische Annalen},
    VOLUME = {314},
      YEAR = {1999},
    NUMBER = {3},
     PAGES = {403--447},
      ISSN = {0025-5831},
   MRCLASS = {32H50 (37F10 37F15)},
  MRNUMBER = {1704543},
MRREVIEWER = {Jean-Yves Briend},
       DOI = {10.1007/s002080050301},
       URL = {http://dx.doi.org/10.1007/s002080050301},
}

@article {dupont-t-int,
    AUTHOR = {C. Dupont and J. Taflin},
     TITLE = {Dynamics of fibered endomorphisms of {$\mathbb P^k$}},
   JOURNAL = {Ann. Sc. Norm. Super. Pisa Cl. Sci. (5)},
  FJOURNAL = {Annali della Scuola Normale Superiore di Pisa. Classe di
              Scienze. Serie V},
    VOLUME = {22},
      YEAR = {2021},
    NUMBER = {1},
     PAGES = {53--78},
      ISSN = {0391-173X},
   MRCLASS = {37F80 (32U40)},
  MRNUMBER = {4288649},
MRREVIEWER = {Fabrizio Bianchi},
}

@unpublished{jonsson-phd,
	Author = {M. Jonsson},
	Note = {PhD thesis, Royal Institute of Technology},
	Title = {Dynamical Studies in Several Complex Variables},
	Year = {1997}}

@article {jonsson-motion_hyperbolic,
    AUTHOR = {Jonsson, Mattias},
     TITLE = {Holomorphic motions of hyperbolic sets},
   JOURNAL = {Michigan Math. J.},
  FJOURNAL = {Michigan Mathematical Journal},
    VOLUME = {45},
      YEAR = {1998},
    NUMBER = {2},
     PAGES = {409--415},
      ISSN = {0026-2285,1945-2365},
   MRCLASS = {32H50 (58F15 58F23)},
  MRNUMBER = {1637690},
MRREVIEWER = {Thomas\ Ransford},
       DOI = {10.1307/mmj/1030132192},
       URL = {https://doi.org/10.1307/mmj/1030132192},
}

@phdthesis{dujardin-thesis,
  TITLE = {{Dynamique d'applications non polynomiales et courants laminaires}},
  AUTHOR = {Dujardin, Romain},
  URL = {https://theses.hal.science/tel-00004028},
  SCHOOL = {{Universit{\'e} Paris Sud - Paris XI}},
  YEAR = {2002},
  MONTH = Dec,
  TYPE = {Theses},
  HAL_ID = {tel-00004028},
  HAL_VERSION = {v1},
}

@article {dns,
    AUTHOR = {Dinh, Tien-Cuong and Nguy\^{e}n, Vi\^{e}t-Anh and Sibony,
              Nessim},
     TITLE = {Dynamics of horizontal-like maps in higher dimension},
   JOURNAL = {Adv. Math.},
  FJOURNAL = {Advances in Mathematics},
    VOLUME = {219},
      YEAR = {2008},
    NUMBER = {5},
     PAGES = {1689--1721},
      ISSN = {0001-8708,1090-2082},
   MRCLASS = {32U40 (32H50 37B40 37F10)},
  MRNUMBER = {2458151},
MRREVIEWER = {Ma\l gorzata\ Stawiska},
       DOI = {10.1016/j.aim.2008.07.006},
       URL = {https://doi.org/10.1016/j.aim.2008.07.006},
}

@article {dujardin-hl,
    AUTHOR = {Dujardin, Romain},
     TITLE = {H\'{e}non-like mappings in {$\mathbb{C}^2$}},
   JOURNAL = {Amer. J. Math.},
  FJOURNAL = {American Journal of Mathematics},
    VOLUME = {126},
      YEAR = {2004},
    NUMBER = {2},
     PAGES = {439--472},
      ISSN = {0002-9327,1080-6377},
   MRCLASS = {32H50 (37A35 37F10)},
  MRNUMBER = {2045508},
MRREVIEWER = {Charles\ Favre},
       URL =
              {http://muse.jhu.edu/journals/american_journal_of_mathematics/v126/126.2dujardin.pdf},
}

@article {sell,
    AUTHOR = {Sell, George R.},
     TITLE = {Smooth linearization near a fixed point},
   JOURNAL = {Amer. J. Math.},
  FJOURNAL = {American Journal of Mathematics},
    VOLUME = {107},
      YEAR = {1985},
    NUMBER = {5},
     PAGES = {1035--1091},
      ISSN = {0002-9327},
   MRCLASS = {58F19 (34C20)},
  MRNUMBER = {805804},
MRREVIEWER = {Carmen Chicone},
       DOI = {10.2307/2374346},
       URL = {https://doi.org/10.2307/2374346},
}

@unpublished{t-gorbovickis,
      title={Independence of multipliers in several variables complex dynamics}, 
      author={I. Gorbovickis and J. Taflin},
      year={2024},
      note={arXiv:2411.12856},
      archivePrefix={arXiv},
      primaryClass={math.DS},
      url={https://arxiv.org/abs/2411.12856}, 
}

@unpublished{dujardin-homoclinic,
      title={Degenerate homoclinic bifurcations in complex dimension 2}, 
      author={Romain Dujardin},
      year={2023},
      Note = {arXiv:2306.08160},
      archivePrefix={arXiv},
      primaryClass={math.DS},
      url={https://arxiv.org/abs/2306.08160}, 
}

@misc{demarco-cirm,
  author       = {Laura DeMarco},
  title        = {The (Algebraic) Geometry of Preperiodic Points in $\mathbb{P}^N$, in Three Lectures},
  howpublished = {Lecture notes, CIRM, France, January 2025},
  year         = {2025},
  url          = {https://people.math.harvard.edu/~demarco/CIRM_Jan2025_LectureNotes.pdf},
  note         = {}
}

@article {Lang-Division,
    AUTHOR = {Lang, Serge},
     TITLE = {Division points on curves},
   JOURNAL = {Annali di Matematica},
  FJOURNAL = {Annali di Matematica Pura ed Applicata},
    VOLUME = {70},
      YEAR = {1965},
    NUMBER = {1},
     PAGES = {229--234}
}

@article{GTV,
     author = {Thomas Gauthier and Johan Taflin and Gabriel Vigny},
     title = {Sparsity of postcritically finite maps of $\mathbb{P}^k$ and beyond: {A} complex analytic approach},
     journal = {Publications Math\'ematiques de l'IH\'ES},
     pages = {1--96},
     year = {2026},
     publisher = {IHES},
     volume = {143},
     doi = {10.5802/pmihes.1},
     language = {en},
     url = {https://pmihes.centre-mersenne.org/articles/10.5802/pmihes.1/}
}

@ARTICLE{Poineau-BFT,
 author = {Poineau, J{\'e}r{\^o}me},
 title = {Dynamique analytique sur {{\(\mathbf{Z} \)}}. {II}: {{\'E}cart} uniforme entre {Latt{\`e}s} et conjecture de {Bogomolov}-{Fu}-{Tschinkel}},
 fjournal = {Journal f{\"u}r die Reine und Angewandte Mathematik},
 journal = {J. Reine Angew. Math.},
 issn = {0075-4102},
 volume = {824},
 pages = {89--135},
 year = {2025},
 language = {French},
 doi = {10.1515/crelle-2025-0019},
 zbMATH = {8060553},
 Zbl = {1584.37107}
}

@book{YZ-adelic,
 author = {Yuan, Xinyi and Zhang, Shou-Wu},
 title = {Adelic line bundles on quasi-projective varieties},
 fseries = {Annals of Mathematics Studies},
 series = {Ann. Math. Stud.},
 volume = {221},
 isbn = {978-0-691-27172-9; 978-0-691-27173-6; 978-0-691-27870-4},
 year = {2026},
 publisher = {Princeton, NJ: Princeton University Press},
 language = {English},
 zbMATH = {8063230}
}

@article{GGK,
     author = {Ziyang Gao and Tangli Ge and Lars K\"uhne},
     title = {The {Uniform} {Mordell{\textendash}Lang} {Conjecture}},
     journal = {Publications Math\'ematiques de l'IH\'ES},
     pages = {189--235},
     year = {2026},
     publisher = {IHES},
     volume = {143},
     doi = {10.5802/pmihes.26},
     language = {en},
     url = {https://pmihes.centre-mersenne.org/articles/10.5802/pmihes.26/}
}

@article{Yuan-Bogomolov,
author = {Xinyi Yuan},
title = {{Arithmetic bigness and a uniform Bogomolov-type result}},
volume = {203},
journal = {Annals of Mathematics},
number = {1},
publisher = {Department of Mathematics of Princeton University},
pages = {15 -- 119},
year = {2026},
doi = {10.4007/annals.2026.203.1.2},
URL = {https://doi.org/10.4007/annals.2026.203.1.2}
}

@article{Dimitrov-Gao-Habegger,
 author = {Dimitrov, Vesselin and Gao, Ziyang and Habegger, Philipp},
 title = {Uniformity in {Mordell}-{Lang} for curves},
 fjournal = {Annals of Mathematics. Second Series},
 journal = {Ann. Math. (2)},
 issn = {0003-486X},
 volume = {194},
 number = {1},
 pages = {237--298},
 year = {2021},
 language = {English},
 doi = {10.4007/annals.2021.194.1.4},
 zbMATH = {7582363},
 Zbl = {1514.11040}
}

@article{Good-height,
 author = {Gauthier, Thomas},
 title = {Good height functions on quasi-projective varieties: equidistribution and applications in dynamics},
 fjournal = {Annales de la Facult{\'e} des Sciences de Toulouse. Math{\'e}matiques. S{\'e}rie VI},
 journal = {Ann. Fac. Sci. Toulouse, Math. (6)},
 issn = {0240-2963},
 volume = {35},
 number = {1},
 pages = {57--94},
 year = {2026},
 language = {English},
 doi = {10.5802/afst.1841},
 zbMATH = {8144876}
}

@misc{Kuhne,
      title={Equidistribution in Families of Abelian Varieties and Uniformity}, 
      AUTHOR = {K{\"u}hne, Lars},
      year={2021},
      eprint={2101.10272},
      archivePrefix={arXiv},
      primaryClass={math.NT}
}

@article {DKY1,
    AUTHOR = {DeMarco, Laura and Krieger, Holly and Ye, Hexi},
     TITLE = {Uniform {M}anin-{M}umford for a family of genus 2 curves},
   JOURNAL = {Ann. of Math. (2)},
  FJOURNAL = {Annals of Mathematics. Second Series},
    VOLUME = {191},
      YEAR = {2020},
    NUMBER = {3},
     PAGES = {949--1001},
      ISSN = {0003-486X},
   MRCLASS = {14G40 (11G50 14H40 37F44 37P50)},
  MRNUMBER = {4088354},
MRREVIEWER = {Yu Yasufuku},
       DOI = {10.4007/annals.2020.191.3.5},
       URL = {https://doi-org.revues.math.u-psud.fr/10.4007/annals.2020.191.3.5},
}

@article {BedfordLyubichSmillie2,
	AUTHOR = {Bedford, Eric and Lyubich, Mikhail and Smillie, John},
	TITLE = {Polynomial diffeomorphisms of {${\bf C}^2$}. {IV}. {T}he
	measure of maximal entropy and laminar currents},
	JOURNAL = {Invent. Math.},
	FJOURNAL = {Inventiones Mathematicae},
	VOLUME = {112},
	YEAR = {1993},
	NUMBER = {1},
	PAGES = {77--125},
	ISSN = {0020-9910},
	MRCLASS = {32H50 (32C30 58F11 58F23)},
	MRNUMBER = {1207478},
	MRREVIEWER = {Yasuichiro Nishimura},
	DOI = {10.1007/BF01232426},
	URL = {https://doi.org/10.1007/BF01232426},
}

@article {AGMV,
    AUTHOR = {Astorg, Matthieu and Gauthier, Thomas and Mihalache, Nicolae and Vigny, Gabriel},
     TITLE = {Collet, {E}ckmann and the bifurcation measure},
   JOURNAL = {Invent. Math.},
  FJOURNAL = {Inventiones Mathematicae},
    VOLUME = {217},
      YEAR = {2019},
    NUMBER = {3},
     PAGES = {749--797},
      ISSN = {0020-9910},
   MRCLASS = {37F45 (14D20 14E05 30D05 32G15)},
  MRNUMBER = {3989253},
       DOI = {10.1007/s00222-019-00874-5},
       URL = {https://doi.org/10.1007/s00222-019-00874-5},
}

@article{GV_Northcott,
 author = {Gauthier, Thomas and Vigny, Gabriel},
 title = {The geometric dynamical {Northcott} and {Bogomolov} properties},
 fjournal = {Annales Scientifiques de l'{\'E}cole Normale Sup{\'e}rieure. Quatri{\`e}me S{\'e}rie},
 journal = {Ann. Sci. {\'E}c. Norm. Sup{\'e}r. (4)},
 issn = {0012-9593},
 volume = {58},
 number = {1},
 pages = {231--273},
 year = {2025},
 language = {English},
 doi = {10.24033/asens.2605},
 zbMATH = {8048371}
}

@article {Tan-similarity,
    AUTHOR = {Tan, Lei},
     TITLE = {Similarity between the {M}andelbrot set and {J}ulia sets},
   JOURNAL = {Comm. Math. Phys.},
  FJOURNAL = {Communications in Mathematical Physics},
    VOLUME = {134},
      YEAR = {1990},
    NUMBER = {3},
     PAGES = {587--617},
      ISSN = {0010-3616},
   MRCLASS = {58F23 (30D05)},
  MRNUMBER = {1086745},
MRREVIEWER = {M. Rees},
       URL = {http://projecteuclid.org/euclid.cmp/1104201823},
}

@incollection {zhang,
    AUTHOR = {Zhang, Shou-Wu},
     TITLE = {Distributions in algebraic dynamics},
 BOOKTITLE = {Surveys in differential geometry. {V}ol. {X}},
    SERIES = {Surv. Differ. Geom.},
    VOLUME = {10},
     PAGES = {381--430},
 PUBLISHER = {Int. Press, Somerville, MA},
      YEAR = {2006},
   MRCLASS = {32H50 (11G50 32J27 37F10)},
  MRNUMBER = {2408228},
MRREVIEWER = {Matthew H. Baker},
       DOI = {10.4310/SDG.2005.v10.n1.a9},
       URL = {https://doi.org/10.4310/SDG.2005.v10.n1.a9},
}

@article {northcott,
	AUTHOR = {Northcott, D. G.},
	TITLE = {Periodic points on an algebraic variety},
	JOURNAL = {Ann. of Math. (2)},
	FJOURNAL = {Annals of Mathematics. Second Series},
	VOLUME = {51},
	YEAR = {1950},
	PAGES = {167--177},
	ISSN = {0003-486X},
	MRCLASS = {14.0X},
	MRNUMBER = {34607},
	MRREVIEWER = {B. Segre},
	DOI = {10.2307/1969504},
	URL = {https://doi.org/10.2307/1969504},
}

@article {GNKY-Maninmumford,
    AUTHOR = {Ghioca, Dragos and Nguyen, Khoa D. and Ye, Hexi},
     TITLE = {The dynamical {M}anin-{M}umford conjecture and the dynamical
              {B}ogomolov conjecture for endomorphisms of {$(\mathbb{P}^1)^n$}},
   JOURNAL = {Compos. Math.},
  FJOURNAL = {Compositio Mathematica},
    VOLUME = {154},
      YEAR = {2018},
    NUMBER = {7},
     PAGES = {1441--1472},
      ISSN = {0010-437X},
   MRCLASS = {37P05 (37P30)},
  MRNUMBER = {3826461},
MRREVIEWER = {Lukas Pottmeyer},
       DOI = {10.1112/s0010437x18007157},
       URL = {https://doi.org/10.1112/s0010437x18007157},
}

@article {szpiro,
    AUTHOR = {Petsche, Clayton and Szpiro, Lucien and Tepper, Michael},
     TITLE = {Isotriviality is equivalent to potential good reduction for
              endomorphisms of {$\mathbb{P}^N$} over function fields},
   JOURNAL = {J. Algebra},
  FJOURNAL = {Journal of Algebra},
    VOLUME = {322},
      YEAR = {2009},
    NUMBER = {9},
     PAGES = {3345--3365},
      ISSN = {0021-8693},
   MRCLASS = {14E05 (12J25 14H05 37P15)},
  MRNUMBER = {2567424},
MRREVIEWER = {Valmecir A. Bayer},
       DOI = {10.1016/j.jalgebra.2008.11.027},
       URL = {https://doi.org/10.1016/j.jalgebra.2008.11.027},
}

@article {fakhruddin-generic,
    AUTHOR = {Fakhruddin, Najmuddin},
     TITLE = {The algebraic dynamics of generic endomorphisms of
              {$\mathbb{P}^n$}},
   JOURNAL = {Algebra Number Theory},
  FJOURNAL = {Algebra \& Number Theory},
    VOLUME = {8},
      YEAR = {2014},
    NUMBER = {3},
     PAGES = {587--608},
      ISSN = {1937-0652},
   MRCLASS = {37P55 (37F10)},
  MRNUMBER = {3218803},
       DOI = {10.2140/ant.2014.8.587},
       URL = {https://doi.org/10.2140/ant.2014.8.587},
}

@article {Fakhruddin,
    AUTHOR = {Fakhruddin, Najmuddin},
     TITLE = {Questions on self maps of algebraic varieties},
   JOURNAL = {J. Ramanujan Math. Soc.},
  FJOURNAL = {Journal of the Ramanujan Mathematical Society},
    VOLUME = {18},
      YEAR = {2003},
    NUMBER = {2},
     PAGES = {109--122},
      ISSN = {0970-1249},
   MRCLASS = {14G05 (37A45)},
  MRNUMBER = {1995861},
MRREVIEWER = {Yuri Tschinkel},
}

@ARTICLE{gauthier-abscont,
 author = {Gauthier, Thomas},
 title = {Dynamical pairs with an absolutely continuous bifurcation measure},
 fjournal = {Annales de la Facult{\'e} des Sciences de Toulouse. Math{\'e}matiques. S{\'e}rie VI},
 journal = {Ann. Fac. Sci. Toulouse, Math. (6)},
 issn = {0240-2963},
 volume = {32},
 number = {2},
 pages = {203--230},
 year = {2023},
 language = {English},
 doi = {10.5802/afst.1735},
 zbMATH = {7779473},
 Zbl = {1533.37108}
}

@article {CS-height,
    AUTHOR = {Call, Gregory S. and Silverman, Joseph H.},
     TITLE = {Canonical heights on varieties with morphisms},
   JOURNAL = {Compositio Math.},
  FJOURNAL = {Compositio Mathematica},
    VOLUME = {89},
      YEAR = {1993},
    NUMBER = {2},
     PAGES = {163--205},
      ISSN = {0010-437X},
   MRCLASS = {11G35 (14G25)},
  MRNUMBER = {1255693},
MRREVIEWER = {Takeshi Ooe},
       URL = {http://www.numdam.org/item?id=CM_1993__89_2_163_0},
}

@article {lyubich:equi,
    AUTHOR = {Ljubich, M. Ju.},
     TITLE = {Entropy properties of rational endomorphisms of the {R}iemann
              sphere},
   JOURNAL = {Ergodic Theory Dynam. Systems},
  FJOURNAL = {Ergodic Theory and Dynamical Systems},
    VOLUME = {3},
      YEAR = {1983},
    NUMBER = {3},
     PAGES = {351--385},
      ISSN = {0143-3857},
   MRCLASS = {58F11 (28D20 54H20)},
  MRNUMBER = {741393 (85k:58049)},
MRREVIEWER = {W. R. Utz},
       DOI = {10.1017/S0143385700002030},
       URL = {http://dx.doi.org/10.1017/S0143385700002030},
}

@article{favre,
title={Degeneration of endomorphisms of the complex projective space in the hybrid space},
journal={Journal of the Institute of Mathematics of Jussieu},
publisher={Cambridge University Press},
author={Favre, {C}harles},
pages={1-43},
}

@book{Silvermandiophantine,
	Address = {New York},
	Author = {Hindry, Marc and Silverman, Joseph H.},
	Doi = {10.1007/978-1-4612-1210-2},
	Isbn = {0-387-98975-7; 0-387-98981-1},
	Mrclass = {11Gxx (11-02 11G10 11G30 11G50 14G25)},
	Mrnumber = {1745599 (2001e:11058)},
	Mrreviewer = {Dino J. Lorenzini},
	Note = {An introduction},
	Pages = {xiv+558},
	Publisher = {Springer-Verlag},
	Series = {Graduate Texts in Mathematics},
	Title = {Diophantine geometry},
	Url = {http://dx.doi.org/10.1007/978-1-4612-1210-2},
	Volume = {201},
	Year = {2000}}

@article{Article1,
	Author = {{Gauthier}, Thomas},
	Fjournal = {Annales Scientifiques de l'\'Ecole Normale Sup\'erieure. Quatri\`eme S\'erie},
	Journal = {Ann. Sci. \'Ec. Norm. Sup\'er. (4)},
	Number = {6},
	Pages = {947--984},
	Title = {Strong bifurcation loci of full {H}ausdorff dimension},
	Volume = {45},
	Year = {2012}}

@article {BBD,
    AUTHOR = {Berteloot, Fran\c{c}ois and Bianchi, Fabrizio and Dupont,
              Christophe},
     TITLE = {Dynamical stability and {L}yapunov exponents for holomorphic
              endomorphisms of {$\mathbb{P}^k$}},
   JOURNAL = {Ann. Sci. \'{E}c. Norm. Sup\'{e}r. (4)},
  FJOURNAL = {Annales Scientifiques de l'\'{E}cole Normale Sup\'{e}rieure. Quatri\`eme
              S\'{e}rie},
    VOLUME = {51},
      YEAR = {2018},
    NUMBER = {1},
     PAGES = {215--262},
      ISSN = {0012-9593},
   MRCLASS = {32H50 (37D25 37F45)},
  MRNUMBER = {3764042},
MRREVIEWER = {Marco Abate},
       DOI = {10.24033/asens.2355},
       URL = {https://doi.org/10.24033/asens.2355},
}

@article{BB1,
	Author = {Bassanelli, Giovanni and Berteloot, Fran{\c{c}}ois},
	Coden = {JRMAA8},
	Fjournal = {Journal f\"ur die Reine und Angewandte Mathematik. [Crelle's Journal]},
	Issn = {0075-4102},
	Journal = {J. Reine Angew. Math.},
	Mrclass = {32U40 (32H50 37F10 37F50)},
	Mrnumber = {MR2339474 (2008g:32052)},
	Mrreviewer = {Laura G. DeMarco},
	Pages = {201--235},
	Title = {Bifurcation currents in holomorphic dynamics on {$\mathbb{P}^k$}},
	Volume = {608},
	Year = {2007}}

@article{BL,
	Author = {Berteloot, Fran{\c{c}}ois and Loeb, Jean-Jacques},
	Coden = {BSMFAA},
	Fjournal = {Bulletin de la Soci\'et\'e Math\'ematique de France},
	Issn = {0037-9484},
	Journal = {Bull. Soc. Math. France},
	Mrclass = {32U40 (32H50)},
	Mrnumber = {MR1871293 (2002k:32063)},
	Mrreviewer = {Mattias Jonsson},
	Number = {2},
	Pages = {175--188},
	Title = {Une caract\'erisation g\'eom\'etrique des exemples de {L}att\`es de {${\mathbb{ P}}^k$}},
	Volume = {129},
	Year = {2001}}

@article{B,
	Author = {Berteloot, Fran{\c{c}}ois},
	Doi = {10.1142/S0129167X94000322},
	Fjournal = {International Journal of Mathematics},
	Issn = {0129-167X},
	Journal = {Internat. J. Math.},
	Mrclass = {32M05 (32F15)},
	Mrnumber = {1297410 (95i:32040)},
	Mrreviewer = {Eric Bedford},
	Number = {5},
	Pages = {619--634},
	Title = {Characterization of models in {$\mathbf{C}^2$} by their automorphism groups},
	Url = {http://dx.doi.org/10.1142/S0129167X94000322},
	Volume = {5},
	Year = {1994}}

@incollection{buffepstein,
	Address = {Wellesley, Massachussets},
	Author = {Buff, Xavier and Epstein, Adam L.},
	Booktitle = {Complex dynamics : families and friends / edited by Dierk Schleicher},
	Isbn = {978-1-56881-450-6},
	Pages = {491--512},
	Publisher = {A K Peters, Ltd.},
	Title = {Bifurcation Measure and Postcritically Finite Rational Maps},
	Year = {2009}}

@article{briendduval2,
	Author = {Briend, Jean-Yves and Duval, Julien},
	Doi = {10.1007/s10240-001-8190-4},
	Fjournal = {Publications Math\'ematiques. Institut de Hautes \'Etudes Scientifiques},
	Issn = {0073-8301},
	Journal = {Publ. Math. Inst. Hautes \'Etudes Sci.},
	Mrclass = {32H50 (37A35 37F10)},
	Mrnumber = {1863737 (2002k:32027)},
	Mrreviewer = {Mattias Jonsson},
	Volume = {93},
	Pages = {145--159},
	Title = {Deux caract\'erisations de la mesure d'\'equilibre d'un endomorphisme de {${\rm P}^k(\mathbf{C})$}},
	Url = {http://dx.doi.org/10.1007/s10240-001-8190-4},
	Year = {2001}}

@book{Chirka,
	Address = {Dordrecht},
	Author = {Chirka, E. M.},
	Isbn = {0-7923-0234-6},
	Mrclass = {32C25 (32-02)},
	Mrnumber = {MR1111477 (92b:32016)},
	Note = {Translated from the Russian by R. A. M. Hoksbergen},
	Pages = {xx+372},
	Publisher = {Kluwer Academic Publishers Group},
	Series = {Mathematics and its Applications (Soviet Series)},
	Title = {Complex analytic sets},
	Volume = {46},
	Year = {1989}}

@article{DeMarco-Mavraki,
 author = {DeMarco, Laura and Mavraki, Niki Myrto},
 title = {Dynamics on {{\(\mathbb{P}^1\)}}: preperiodic points and pairwise stability},
 fjournal = {Compositio Mathematica},
 journal = {Compos. Math.},
 issn = {0010-437X},
 volume = {160},
 number = {2},
 pages = {356--387},
 year = {2024},
 language = {English},
 doi = {10.1112/S0010437X23007546},
 zbMATH = {7793888},
 Zbl = {1535.37114}
}

@article{Shishikura2,
	Author = {Shishikura, Mitsuhiro},
	Coden = {ANMAAH},
	Doi = {10.2307/121009},
	Fjournal = {Annals of Mathematics. Second Series},
	Issn = {0003-486X},
	Journal = {Ann. of Math. (2)},
	Mrclass = {37F35 (30D05 37F45 37F50)},
	Mrnumber = {MR1626737 (2000f:37056)},
	Mrreviewer = {Hartje Kriete},
	Number = {2},
	Pages = {225--267},
	Title = {The {H}ausdorff dimension of the boundary of the {M}andelbrot set and {J}ulia sets},
	Url = {http://dx.doi.org/10.2307/121009},
	Volume = {147},
	Year = {1998}}

@book{Silverman,
	Address = {New York},
	Author = {Silverman, Joseph H.},
	Isbn = {978-0-387-69903-5},
	Mrclass = {11-02 (11-01 11G05 11G07 11G50 37-02 37F10)},
	Mrnumber = {MR2316407 (2008c:11002)},
	Mrreviewer = {Thomas Ward},
	Pages = {x+511},
	Publisher = {Springer},
	Series = {Graduate Texts in Mathematics},
	Title = {The arithmetic of dynamical systems},
	Volume = {241},
	Year = {2007}}

\end{document}